\newtheorem{teo}{Theorem}[section]
\newtheorem{lem}[teo]{Lemma}
\newtheorem{cor}[teo]{Corollary}
\newtheorem{prop}[teo]{Proposition}
\newtheorem{defi}[teo]{Definition}
\newtheorem{conj}[teo]{Conjecture}
\newtheorem{remark}[teo]{Remark}
\newcommand{\fonc}[5]{                     
            \begin{array}{rcll}#1 :& #2 & \longrightarrow & #3 \\   %
                         &#4 &\longmapsto & #5          %
            \end{array}}
\newcommand{\e}{\epsilon}
\newcommand{\mc}{\mathbb{C}}
\newcommand{\mz}{\mathbb{Z}}
\newcommand{\mn}{\mathbb{N}}
\newcommand{\Gg}{\mathcal{G}}
\newcommand{\Hh}{\mathcal{H}}
\newcommand{\Ll}{\mathcal{L}}
\newcommand{\Oo}{\mathcal{O}}
\newcommand{\Tt}{\mathcal{T}}
\newcommand{\Zz}{\mathcal{Z}}
\newcommand{\qD}{q_{\scalebox{0.5}{D}}}
\newcommand{\qDi}{q_{\scalebox{0.5}{D}i}}
\newcommand{\eD}{\epsilon_{\scalebox{0.5}{D}}}
\newcommand{\eDi}{\epsilon_{\scalebox{0.5}{D}i}}
\newcommand{\AD}{A_{\scalebox{0.5}{D}}}
\newcommand{\Q}{{\scalebox{0.5}{${\rm Q}$}}}
\newcommand{\Y}{{\scalebox{0.5}{Y}}}
\newcommand{\D}{{\scalebox{0.5}{D}}}
\newcommand{\Pup}{{\scalebox{0.5}{${\rm P}$}}}
\newcommand{\Lam}{{\scalebox{0.5}{$\Lambda$}}}
\newcommand{\smallbullet}{%
\vcenter{\hbox{\,\scalebox{0.5}{$\bullet$}\,}}
}
\title[Quantum graph algebras at roots of unity]{On the structure and representations of \\ quantum graph algebras at roots of unity}
\author[St\'ephane Baseilhac, Matthieu Faitg, Philippe Roche]{St\'ephane Baseilhac\textsuperscript{1}, Matthieu Faitg\textsuperscript{2}, Philippe Roche\textsuperscript{3}}
\begin{document}
\maketitle

\begin{center}
\textsuperscript{1,3}IMAG, Univ Montpellier, CNRS, Montpellier, France.

\textsuperscript{2}Univ Toulouse, CNRS, IMT, Toulouse, France.

\medskip

\textsuperscript{1}stephane.baseilhac@umontpellier.fr, \textsuperscript{2}matthieu.faitg@univ-tlse3.fr, \textsuperscript{3}philippe.roche@umontpellier.fr
\end{center}

\begin{abstract} We study the specializations $\Ll_{g,n}^\e$ at roots of unity $\e$ of odd order of the graph algebras, associated to a simply-connected complex semi-simple algebraic group $G$ and a compact oriented surface $\Sigma_{g,n}^{\circ}$ with genus $g$, $n$ punctures, and one boundary component. We prove that the central localizations of $\Ll_{g,n}^\e$, and its subalgebra $\Ll_{g,n}^{u_\e}$ of invariant elements under the coadjoint action of a small quantum group, are central simple algebras of PI degrees that we compute. Also, we describe their centers, and show they are integrally closed rings.   
\end{abstract} 
\medskip

{\it Keywords: Hopf algebras, quantum groups, invariant theory, representation theory, quantized character varieties}

{\it AMS subject classification 2020:  17B37, 20G42, 16T20} 

\tableofcontents

\section{Introduction}
Let $G$ be a simply-connected complex semi-simple algebraic group, and $\mathfrak{g}$ its Lie algebra. Let $\Sigma_{g,n}^{\circ}$ be a compact oriented surface of genus $g$, with $n$ punctures and one boundary component.

The quantum graph algebra $\Ll_{g,n}$ is a quantization of the algebra of functions on the variety of representations ${\rm Hom}_{\rm Grp}\bigl(\pi_1(\Sigma_{g,n}^{\circ}),G\bigr)$ endowed with the Fock-Rosly Poisson structure \cite{FR}. It was introduced in the mid '$90$s by Alekseev-Grosse-Schomerus \cite{Al, AGS1,AGS2,AS} and Buffenoir-Roche \cite{BuR1,BuR2}, and has been actively studied in recent years in connection with non-semisimple TQFTs (see, e.g., the survey \cite{JordanSurvey}). $\Ll_{g,n}$ is a module-algebra over the Drinfeld-Jimbo quantum envelopping algebra $U_q$ of $\mathfrak{g}$, and can be naturally identified with $\Oo_q^{\otimes (2g+n)}$ as a module with a coadjoint action of $U_q$, where $\Oo_q$ is the quantum coordinate algebra of $G$.  There is an explicit isomorphism (called ``holonomy'') between $\Ll_{g,n}$ and the stated $\mathfrak{g}$-skein algebra of $\Sigma_{g,n}^{\circ,\bullet}$, the surface $\Sigma_{g,n}^{\circ}$ with one marked point on the boundary component \cite{BFR, FaitgHol}. The subalgebra $\Ll_{g,n}^{{\scalebox{0.6}{$U_q$}}}$ of $U_q$-invariant elements of $\Ll_{g,n}$ quantizes the character variety ${\rm Hom}_{\rm Grp}\bigl(\pi_1(\Sigma_{g,n}^{\circ}),G\bigr)/\!\!/ G$, and the holonomy isomorphism identifies it with the $\mathfrak{g}$-skein algebra of $\Sigma_{g,n}^{\circ}$. Thus, we see that $\Ll_{g,n}$ provides tools from quantum group theory to study the skein algebras and their representations.  

Studying these representations when $q$ is specialized to roots of unity is a challenging problem, both from the perspectives of quantum group theory and quantum topology, where these representations are expected to form state spaces of TQFTs defined for manifolds endowed with $G$-characters of the fundamental group.

In this paper we tackle this problem for the specializations $\Ll_{g,n}^{\e}$ of $\Ll_{g,n}$ at roots of unity $\e$ of odd order (plus some mild assumptions), and its subalgebra $\Ll_{g,n}^{u_\e}$ of invariant elements under the action of a small quantum group $u_\e$. The algebra $\Ll_{g,n}^{u_\e}$ should not be confused with the specialization of $\Ll_{g,n}^{{\scalebox{0.6}{$U_q$}}}$ at $\e$, which it contains strictly. That specialization was denoted by $\mathcal{M}_{g,n}^{A,\e}$ in \cite{BR2}. It can be shown that $\mathcal{M}_{g,n}^{A,\e} \cong (\mathcal{L}_{g,n}^{u_\e})^{U(\mathfrak{g})}$; details will appear in \cite{BF}.

\smallskip

Our main results are that $\Ll_{g,n}^{\e}$ and $\Ll_{g,n}^{u_\e}$ are domains whose central localizations are central simple algebras, the computation of their PI-degrees, and a complete description of their centers, which are integrally closed rings. From classical tools of PI ring theory we can deduce the maximal dimension of the irreducible representations of $\mathfrak{A} =\Ll_{g,n}^{\e}$ or $\Ll_{g,n}^{u_\e}$, and identify the {\it Azumaya locus} of $\mathfrak{A}$ (that is, the set of central characters of the irreducible representations of $\mathfrak{A}$ whose dimension is maximal) with the complement of the discriminant subvariety of ${\rm MaxSpec}(\mathcal{Z}(\mathfrak{A}))$.

An intermediate result within our constructions is a surjective morphism from  $\Ll_{g,n}^{\e}$ to the quantum graph algebra $\Ll_{g,n}(u_\e)$ associated to a small quantum group $u_\e$ (see Th.\,\ref{LgnLgnpetit}). It is known (\cite{FaitgMCG, CF}) that $\Ll_{g,n}(u_\e)$ is the algebra of endomorphisms of the state space of Kerler--Lyubashenko's TQFT for the modular category $u_\e\text{-}\mathrm{mod}$ (\cite{KeLy}), so one can expect this result will have interesting consequences in quantum topology.

\smallskip

In an Appendix we extend some of our results for $\Ll_{g,n}^{\e}$ to specializations $\Ll_{g,n}^{\eD}$ at more general roots of unity $\eD$. Ideas are similar, but results are different in general.
\smallskip

In the case of $\mathfrak{g} = \mathfrak{sl}_{m+1}$ and $n=0$, the PI-degree of $\Ll_{g,0}^\e$ that we compute could be derived from \cite[Th.\,4]{KaruoWang}, by using the specialization of the holonomy isomorphism of \cite[Th.\,2]{BFR} (see \cite{BF}), mentioned above, between $\Ll_{g,n}$ and the stated $\mathfrak{sl}_{m+1}$-skein algebra of $\Sigma_{g,n}^{\circ,\bullet}$. Also, a combinatorial description of the center of that stated skein algebra has been obtained in \cite[Th.\,2]{KaruoWang}.

As compared to skein theoretic tools like those used in \cite{KaruoWang}, an advantage of our treatment of $\Ll_{g,n}^{\e}$ is that we can deal with all complex simple Lie algebras $\mathfrak{g}$, simultaneously and in an uniform way. No presentation of $\Ll_{g,n}^{\e}$ by generators and relations is required, and tools from invariant theory can be used to study subalgebras of invariant elements like $\Ll_{g,n}^{u_\e}$, for instance to describe the structure of its center.

\smallskip

For $n=0$, the algebra $\Ll_{g,0}^\e$ and the specialization of $\Ll_{g,0}^{{\scalebox{0.6}{$U_q$}}}$ at $\e$ have been studied in \cite{GJS} using coends at roots of unity, factorization homology, and the theory of Poisson orders of \cite{BrGo}. In particular, the center $\mathcal{Z}(\Ll_{g,0}^\e)$ was computed explicitly (that is, identified with $\mathcal{Z}_0(\Ll_{g,0}^\e)$ in the notations below), and it was shown that $\Ll_{g,0}^\e$ is a finitely generated $\mathcal{Z}(\Ll_{g,0}^\e)$-module, and that the Azumaya locus of $\Ll_{g,0}^\e$ is the preimage of the big Bruhat cell $G^0\subset G$ by the quantum moment map $\mu_{g,n}^\e$ (see \cite[Th.\,1.1]{GJS}).

A main difference between \cite{GJS} and the present paper is that we define and study the algebras $\Ll_{g,n}^{\e}$ and $\Ll_{g,n}^{u_\e}$ by means of twists, co-R-matrices, and specializations of integral forms of quantum groups (following our previous works \cite{BR1,BR2,BFR}). Thus our approach lies entirely within Hopf algebra and quantum group theory. This allows us to treat all cases $n\geq 0$ and specializations at more general roots of unity $\eD$, and to obtain additional structure results. In particular, in our approach a key r\^ole is played by (variants of) the Alekseev morphism, and its interaction with the quantum moment map $\mu_{g,n}^\e$. This is crucial in our proofs that $\Ll_{g,n}^{\e}$ and $\Ll_{g,n}^{u_\e}$ are domains, whose central localizations are central simple algebras, and to eventually compute their PI-degrees. We have tried to let no point unclear.

\smallskip

In other forthcoming works, we will study the specializations at roots of unity of the algebra $\Ll_{g,n}^{U_q}$, maximal order issues, and  provide concrete indecomposable representations of these algebras when $\mathfrak{g}=\mathfrak{sl}_2$. 

\medskip

\noindent {\bf Statements and plan of the paper.} The proofs of our main results are contained in \S \ref{secdefLgn}--\ref{sec:proofLgn3}. Arguments that follow fairly directly from the existing literature or can be obtained after simple adaptations are detailed in the appendices \ref{sec:qKilling}--\ref{Lgndomain}. In \S \ref{sectionnotations}-\ref{sectionPreliminaires} we introduce mainly the material on graph algebras and quantum groups, and prove also some important preliminary results about them.

Section \ref{sectionnotations} introduces notations, and basic results on specializations of $\mc[q^{\pm 1}]$-modules and the central localization of domains, used throughout the paper.

Section \ref{sectiongraphalg} recalls basic properties of the graph algebra $\Ll_{g,n}(H)$ associated with a Hopf algebra $H$ with invertible antipode. Guided by the case $H=U_q^\Q(\mathfrak{g})$ and its specializations, we discuss with some extent the weakest structures on $H$ required to define $\Ll_{g,n}(H)$, that we call a $(H,\Phi^\pm)$ datum. Such a datum implies the existence of a co-R-matrix which is used to define the product in $\mathcal{L}_{g,n}(H)$. Also, we prove in this setup a number of general facts on the quantum moment map (QMM) $\mu_{g,n}$ (\S \ref{subsecQMMgeneral}) and the Alekseev morphism $\Phi_{g,n}$ (\S \ref{subsecAlekseev}), in particular concerning sufficient conditions for their injectivity. These will play a key role in the sequel.

Section \ref{sec:Uq}-\ref{specialisOA} contain background results on quantum groups. Let $G$ be a complex simple and simply-connected Lie group with Lie algebra $\mathfrak{g}$, and $\Lambda$ a lattice such that $Q\subset \Lambda \subset P$, where $Q$ is the root lattice and $P$ the weight lattice of $\mathfrak{g}$. Let $A=\mc[q,q^{-1}]$ (the generalization to semi-simple $G$ is straightforward). We use the integral forms $U_A^\Lam$ (De Concini-Kac-Procesi \cite{DCK,DCP}) and $\Gamma_A^\Lam$ (De Concini-Lyubashenko \cite{DC-L}) of the Drinfeld-Jimbo quantum group $U_q^\Lam:=U_q^\Lam(\mathfrak{g})$ associated to $\mathfrak{g}$ and $\Lambda$, and the integral form $\Oo_A$ (Lusztig \cite{Lusztig}) of the quantum coordinate algebra $\Oo_q:=\Oo_q(G)$. We recall some facts on these algebras, their specializations $U_\e^\Lam$, $\Gamma_\e^\Lam$ and $\Oo_\e$ at an odd $l$-th root of unity $\e$ (satisfying some mild assumptions, see \S\ref{secnotationq}), the quantum Frobenius epimorphism $\mathbb{F}\mathrm{r}_\e \colon \Gamma_\e^\Q \to U(\mathfrak{g})$ and its dual monomorphism $\mathbb{F}\mathrm{r}_\e^* : \mathcal{O}(G) \to \mathcal{O}_\e(G)$, and the small quantum group $u_\e^\Lam$ (also called Frobenius-Lusztig kernel).\\
\indent Here are some important features which are recalled in detail. The inclusion map $U_A^\Q \to \Gamma_A^\Q$ has a remarkable degeneracy at the specialization to $q=\e$, as it acquires a big kernel, the quotient of $U_\e^\Q$ by this kernel being precisely the small quantum group $u_\e^\Q$. The specialization $\langle \text{-},\text{-} \rangle_{\e}^\Q:{\mathcal O}_\epsilon\times \Gamma_\epsilon^\Q\to {\mathbb C}$ of the evaluation pairing is non-degenerate, a known fact for which we provide a proof in Lemma \ref{nondegpairepsD} using Lusztig's \cite{Lusztig} idempotent completion $\stackrel{.}{\mathbf{U}}$ of $U_q^\Q$ (recalled in Appendix \ref{appLusztigModif}). Also, by using an embedding of $\Gamma_\e^\Lam$ in the specialization $\hat{\mathbf{U}}_\e$ of the canonical basis completion of $\stackrel{.}{\mathbf{U}}$ (App.\,\ref{appLusztigModif}), we provide an alternative proof (Prop.\,\ref{pipropO}) of the following important exact sequence of Hopf algebras (\cite[Th. III.7.10]{BG}, \cite{AG}), where $\pi$ is the restriction of the evaluation map to $u_\e^\Q\subset  \Gamma_\epsilon^\Q$:
\begin{equation}\label{seqmapsdualintro}
\xymatrix@C=3em{1 \ar[r] & \Oo(G)\ar[r]^-{\ \mathbb{F}\mathrm{r}_\e^{*}} & \Oo_\e \ar[r]^-{\pi} & (u_\epsilon^\Q)^{*} \ar[r] & 1}.
\end{equation}
This preliminary material allows us in \S\ref{sec:qtsroot1} to relate the co-quasitriangular structures of $\Oo_\e$ and $(u_\e^\Q)^*$. 
 
Section \ref{subsecQuantumGraphAlg} gives the definition of the graph algebra $\Ll_{g,n} := \Ll_{g,n}(U_q^\Q)$, its integral form $\Ll_{g,n}^{\AD}$, and specialization $\Ll_{g,n}^\e$; these definitions use the results of \S \ref{sectionPreliminaires} to specify the datum $(H,\Phi^\pm)$ from \S \ref{sectiongraphalg}. As a $\mathbb{C}$-vector space we have $\Ll_{g,n}^\e = \Oo_\e^{\otimes (2g+n)}$ by definition.

It should be noted that the co-R-matrix on $\Oo_q$ takes values in the field $\mc(\qD)$, which is an extension of $\mathbb{C}(q)$ by a formal variable $\qD$ satisfying $(\qD)^D=q$, where $D$ is the smallest positive integer such that $DP\subset Q$ (the values of $D$, depending on $\mathfrak{g}$, are recalled in \S \ref{subsecLieAlg}). So the theory needs to fix $\qD$: the algebra $\Ll_{g,n}$ is defined over $\mc(\qD)$, and $\Ll_{g,n}^{\AD}$ over $\AD := \mc[\qD,\qD^{-1}]$. As for specializations, we consider only roots of unity $\e$ of odd order $l$ such that ${\rm gcd}(l,D)=1$; hence there exists $\bar{D} \in \mathbb{N}$ such that $\bar{D}D\equiv 1 \pmod{l}$ and $\eD := \e^{\bar{D}}$ is a $D$-th root of $\e$ such that $\eD^l =1$. In the main body of the text we focus on the specialization of $\Ll_{g,n}^{\AD}$ for this peculiar choice $\eD = \e^{\bar{D}}$ of $D$-th root of $\e$, as it simplifies all statements. Being determined by $\e$, we denote this specialization $\Ll_{g,n}^\e$.

We consider the generalizations of Th.\,\ref{LgnLgnpetit} and Th.\,\ref{Lgn} below to a primitive $D$-th root $\eD$ of $\e$ in Appendix \ref{appDlRoot}, where we write the corresponding specialization $\Ll_{g,n}^{\eD}$. Proofs of analogs of  Th.\,\ref{Lgn2} and Th.\,\ref{Lgn3} would require additional work; for instance the results of De Concini-Kac-Procesi on the center $\mathcal{Z}(U_\e^\Pup)$ used in \S \ref{sec:proofLgn3} do not hold in general for $\mathcal{Z}(U_\e^\Lam)$ with $\Lambda$ arbitrary (see \cite[Rk.\,(a) on p.\,179]{DC-K-P1}).

\smallskip

In Section \ref{subsecStructResLgnEps} we show that the image of the Frobenius embedding
\begin{equation}\label{Frobembintro}(\mathbb{F}\mathrm{r}^*_\e)^{\otimes (2g+n)} \colon \Oo(G)^{\otimes (2g+n)}\to \Ll_{g,n}^\e
\end{equation}
is a central subalgebra, where $\mathbb{F}\mathrm{r}^*_\e : \mathcal{O}(G) \to \Oo_\e(G)$ is the dual of the Frobenius morphism $\mathbb{F}\mathrm{r}\colon \Gamma_\e^\Q \to U(\mathfrak{g})$. We denote $$\mathcal{Z}_0(\mathcal{L}_{g,n}^\e) := \mathbb{F}\mathrm{r}^*_\e\bigl( \Oo(G) \bigr)^{\otimes (2g+n)}.$$ On the other hand we have the graph algebra $\Ll_{g,n}(u_\e^\Q)$ which can be defined as in \S \ref{subsecDefLgnH} by means of the standard quasitriangular structure of the small quantum group $u_\e^\Q$, coming from the Drinfeld double construction. Our next result shows that $\Ll_{g,n}(u_\e^\Q)$ is the quotient of $\Ll_{g,n}^{\e}$ by the ideal generated by the augmentation ideal of $\mathcal{Z}_0(\mathcal{L}_{g,n}^\e) \cong \mathcal{O}(G)^{\otimes (2g+n)}$. More precisely, let $\Oo^+(G) = \ker(\varepsilon) \cap \Oo(G)$ be the augmentation ideal of $\mathcal{O}(G)$, which consists of the functions vanishing at the unit element of $G$. Using the results of \S \ref{sectionPreliminaires} we  deduce the following generalization of the exact sequence \eqref{seqmapsdualintro}:

\begin{teo}\label{LgnLgnpetit} {\rm (see Th.\,\ref{centralextLgn})} The algebra $\Ll_{g,n}^{\e}$ is a central extension of the $l^{(2g+n)\dim(\mathfrak{g})}$-dimensional algebra $\mathcal{L}_{g,n}(u_\epsilon^\Q)$ by $\Oo^+(G)^{\otimes (2g+n)}$, where $l$ is the order of the root of unity $\e$.
\end{teo}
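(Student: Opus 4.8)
The plan is to build the quotient map by taking the $(2g+n)$-th tensor power of the exact sequence \eqref{seqmapsdualintro}. By definition $\Ll_{g,n}^\e=\Oo_\e^{\otimes(2g+n)}$, and likewise $\Ll_{g,n}(u_\e^\Q)=\bigl((u_\e^\Q)^*\bigr)^{\otimes(2g+n)}$ as vector spaces, so the surjection $\pi\colon\Oo_\e\to(u_\e^\Q)^*$ induces a surjective linear map $\Pi:=\pi^{\otimes(2g+n)}\colon\Ll_{g,n}^\e\to\Ll_{g,n}(u_\e^\Q)$. I would (i) prove that $\Pi$ is an algebra homomorphism, (ii) identify $\ker\Pi$ with the two-sided ideal generated by the augmentation ideal $\mathcal{Z}_0^+$ of $\mathcal{Z}_0:=\mathcal{Z}_0(\Ll_{g,n}^\e)\cong\Oo(G)^{\otimes(2g+n)}$, and (iii) compute $\dim_\mathbb{C}\Ll_{g,n}(u_\e^\Q)$. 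Since $\mathcal{Z}_0$ is central by \S\ref{subsecStructResLgnEps}, steps (i) and (ii) together yield the asserted central extension.

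The main obstacle is step (i). The product of a graph algebra $\Ll_{g,n}(H)$ is assembled entirely from the structure maps (product, coproduct, antipode) of the coordinate Hopf algebra in each of the $2g+n$ slots, together with the co-R-matrix, which controls both the exchange relations across distinct slots and the twisting of the product within a single slot. Now $\pi$ is a morphism of Hopf algebras by \eqref{seqmapsdualintro}, hence intertwines the structure maps; and by the results of \S\ref{sec:qtsroot1} it is a morphism of co-quasitriangular Hopf algebras, i.e. the co-R-matrix of $\Oo_\e$ equals the pullback along $\pi\otimes\pi$ of that of $(u_\e^\Q)^*$. Matching these two facts against the explicit product formula of \S\ref{subsecDefLgnH}, checking that every occurrence of the coproduct, of the antipode, and of the co-R-matrix is transported correctly by $\Pi$, is the delicate bookkeeping I expect to be the crux of the argument.

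For step (ii), once $\Pi$ is known to be multiplicative its kernel is a two-sided ideal, equal as a vector space to $\sum_{j=1}^{2g+n}\Oo_\e^{\otimes(j-1)}\otimes\ker(\pi)\otimes\Oo_\e^{\otimes(2g+n-j)}$. Exactness of \eqref{seqmapsdualintro} gives $\ker(\pi)=\Oo_\e\cdot\mathbb{F}\mathrm{r}_\e^*(\Oo^+(G))=\mathbb{F}\mathrm{r}_\e^*(\Oo^+(G))\cdot\Oo_\e$, the last equality because $\mathbb{F}\mathrm{r}_\e^*(\Oo(G))$ is central in $\Oo_\e$. The inclusion $\langle\mathcal{Z}_0^+\rangle\subseteq\ker\Pi$ is immediate, since $\pi\circ\mathbb{F}\mathrm{r}_\e^*$ annihilates $\Oo^+(G)$. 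For the reverse inclusion I would argue by dimension: from \eqref{seqmapsdualintro}, $\Oo_\e$ is free of rank $\dim u_\e^\Q$ over the central subalgebra $\mathbb{F}\mathrm{r}_\e^*(\Oo(G))$, so $\Ll_{g,n}^\e$ is free of rank $(\dim u_\e^\Q)^{2g+n}$ over $\mathcal{Z}_0$, whence $\dim_\mathbb{C}\bigl(\Ll_{g,n}^\e/\langle\mathcal{Z}_0^+\rangle\bigr)=(\dim u_\e^\Q)^{2g+n}$. As this already equals $\dim_\mathbb{C}\Ll_{g,n}(u_\e^\Q)=\dim_\mathbb{C}(\Ll_{g,n}^\e/\ker\Pi)$, the inclusion $\langle\mathcal{Z}_0^+\rangle\subseteq\ker\Pi$ forces equality. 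Here one uses that the $\mathcal{Z}_0$-module structure coming from the graph-algebra product agrees with slotwise multiplication, which rests on the centrality of $\mathcal{Z}_0$ proved in \S\ref{subsecStructResLgnEps}.

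Finally, for step (iii), the Frobenius--Lusztig kernel $u_\e^\Q$ has dimension $l^{\dim(\mathfrak{g})}$: its triangular decomposition contributes a factor $l^{|\Delta^+|}$ from the $E_\alpha$'s, another $l^{|\Delta^+|}$ from the $F_\alpha$'s, and $l^{\,\mathrm{rank}\,\mathfrak{g}}$ from the group-likes, and $2|\Delta^+|+\mathrm{rank}\,\mathfrak{g}=\dim(\mathfrak{g})$. Therefore $\dim_\mathbb{C}\Ll_{g,n}(u_\e^\Q)=(\dim u_\e^\Q)^{2g+n}=l^{(2g+n)\dim(\mathfrak{g})}$, which completes the proof.
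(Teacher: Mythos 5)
Your proposal is correct and takes essentially the same route as the paper: your step (i) is exactly the paper's functoriality statement (Lemma \ref{lemmaLgnIsFunctorial}) combined with the fact that $\pi$ preserves co-R-matrices (Corollary \ref{relationsCoRMatAtEps}(1), i.e. the results of \S\ref{sec:qtsroot1} you invoke), and your steps (ii)--(iii) match the paper's use of Proposition \ref{pipropO}(2) and the centrality/slotwise-multiplication property of $\mathcal{Z}_0(\Ll_{g,n}^\e)$ (Proposition \ref{Z0Lgn}), down to the dimension count $\dim u_\e^\Q = l^{\dim(\mathfrak{g})}$. One small caveat: the freeness of $\Oo_\e$ of rank $l^{\dim(\mathfrak{g})}$ over $\mathbb{F}\mathrm{r}_\e^*\bigl(\Oo(G)\bigr)$ that you use in the dimension argument does not follow from the exact sequence \eqref{seqmapsdualintro} itself, but is a separate theorem (Theorem \ref{DCLteo1}(2), due to De Concini--Lyubashenko and Brown--Gordon--Stafford); since that theorem is available, your argument goes through unchanged.
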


In Appendix \ref{appDlRoot}, Cor.\,\ref{centralextLgneD}, we obtain an analogous result for $\Ll_{g,n}^{\eD}$ where $\eD$ is a primitive $Dl$-th root of unity; the central extension is by $\Oo^+(G^\Q)^{\otimes 2g}\otimes \Oo^+(G)^{\otimes n}$, where $G^\Q$ is the adjoint group. For $\mathfrak{g}=\mathfrak{sl}_{m+1}$ and $(g,n)=(0,1)$ a similar quotient of $\Ll_{g,n}^{\e}$ has been obtained in \cite{CooLau} by means of a presentation by generators and relations.

One can show that the extension in Th.\,\ref{LgnLgnpetit} is compatible with natural actions \cite{AS,FaitgMCG} of the mapping class group ${\rm MCG}(\Sigma_{g,n}^\circ)$ on $\mathcal{O}(G)^{\otimes (2g+n)}$, $\Ll_{g,n}^{\e}$ and $\Ll_{g,n}(u_\e^\Q)$. This will be developed in \cite{BF}, among other topologically-flavored results. 

\smallskip

By factorisability of the Hopf algebra $u_\epsilon^\Q$ (because the order $l$ of $\e$ is odd, see \cite{Lyub95}) we can deduce from Th.\,\ref{LgnLgnpetit} that $\Ll_{g,n}^{\e}$ has an irreducible representation of dimension $l^{g.\mathrm{dim}(\mathfrak{g})+nN}$ (Prop.\,\ref{irrep}). Also, the Alekseev morphism $\Phi_{g,n}^\e$  on $\Ll_{g,n}^{\e}$ from \S \ref{subsecAlekseev}, defined by the datum $\Phi^\pm$ from \S \ref{subsecQuantumGraphAlg}, is injective (Prop.\,\ref{AlekseevInjRootOf1}; to be precise this statement proves injectivity of the {\it modified} Alekseev morphism on $\Ll_{g,n}^{\e}$ discussed below, but the same arguments work for $\Phi_{g,n}^\e$), and from this we show that $\Ll_{g,n}^{\e}$ is a domain (Prop.\,\ref{propLgndomain}). We can thus consider the central localization
$$Q(\Ll_{g,n}^\e) := Q(\mathcal{Z}(\Ll_{g,n}^\e))  \otimes_{\mathcal{Z}(\Ll_{g,n}^\e)} \Ll_{g,n}^\e.$$
We prove:
\begin{teo}\label{Lgn} {\rm (see Prop.\,\ref{Z0Lgn}, Th.\,\ref{Lgnteo1})} (i) The algebra $\Ll_{g,n}^{\e}$ is a free $\mathcal{Z}_0(\mathcal{L}_{g,n}^\e)$-module of rank $l^{(2g+n).\mathrm{dim}(\mathfrak{g})}$, and $Q(\Ll_{g,n}^{\e})$ is a division algebra and a central simple algebra.

\noindent (ii) $Q(\Ll_{g,n}^\e)$ has PI-degree $l^{g.\mathrm{dim}(\mathfrak{g})+nN}$, where $N$ is the number of positive roots of $\mathfrak{g}$ .
\end{teo}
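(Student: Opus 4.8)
The statement collects two assertions: the freeness of $\Ll_{g,n}^{\e}$ over $\mathcal{Z}_0(\mathcal{L}_{g,n}^\e)$ of the stated rank, and the structure of $Q(\Ll_{g,n}^{\e})$ together with its PI-degree. The plan is to prove freeness first, since it shows that $\Ll_{g,n}^{\e}$ is module-finite over the affine Noetherian domain $\mathcal{Z}_0(\mathcal{L}_{g,n}^\e) \cong \Oo(G)^{\otimes(2g+n)}$, hence over its full center $\mathcal{Z}(\Ll_{g,n}^{\e})$; combined with the fact that $\Ll_{g,n}^{\e}$ is a domain (Prop.\,\ref{propLgndomain}), this makes it a prime affine PI ring to which Posner's theorem applies. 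The remaining work is then the computation of the PI-degree.

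For freeness, I would first record that the Frobenius embedding $\mathbb{F}\mathrm{r}^*_\e\colon \Oo(G)\hookrightarrow \Oo_\e$ makes $\Oo_\e$ a free $\Oo(G)$-module of rank $l^{\dim(\mathfrak{g})}$; this follows from the exact sequence \eqref{seqmapsdualintro}, which exhibits $\Oo_\e$ as a faithfully flat $(u_\e^\Q)^*$-Galois extension of $\Oo(G)$ with fibre of dimension $l^{\dim(\mathfrak{g})}$ (see \cite{BG}). The second ingredient is that left multiplication by $\mathcal{Z}_0(\mathcal{L}_{g,n}^\e)=\mathbb{F}\mathrm{r}^*_\e(\Oo(G))^{\otimes(2g+n)}$ in the twisted product of $\Ll_{g,n}^{\e}$ is \emph{componentwise}: because Frobenius images are transparent for the co-R-matrix --- the very mechanism underlying the centrality of $\mathcal{Z}_0(\mathcal{L}_{g,n}^\e)$ established in \S\ref{subsecStructResLgnEps} --- the $\mathcal{Z}_0(\mathcal{L}_{g,n}^\e)$-module structure of $\Ll_{g,n}^{\e}$ coincides with that of the plain tensor product $\Oo_\e^{\otimes(2g+n)}$ over $\Oo(G)^{\otimes(2g+n)}$. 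Tensoring the base-case freeness then yields a free module of rank $\bigl(l^{\dim(\mathfrak{g})}\bigr)^{2g+n}=l^{(2g+n)\dim(\mathfrak{g})}$, which is Prop.\,\ref{Z0Lgn}.

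With module-finiteness in hand, Posner's theorem shows that $Q(\Ll_{g,n}^{\e})$ is a central simple algebra over the field $Q(\mathcal{Z}(\Ll_{g,n}^{\e}))$ of dimension equal to the square of the PI-degree; being a localisation of the domain $\Ll_{g,n}^{\e}$ it is itself a domain, and a finite-dimensional domain over a field is a division algebra. It remains to compute the PI-degree. For a prime affine PI algebra that is module-finite over its center, the PI-degree equals the maximal $\mathbb{C}$-dimension of a simple module, so the irreducible representation of dimension $l^{g\dim(\mathfrak{g})+nN}$ produced in Prop.\,\ref{irrep} (via factorisability of $u_\e^\Q$) already gives the lower bound. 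For the matching upper bound I would invoke the injectivity of the modified Alekseev morphism $\Phi_{g,n}^\e$ (Prop.\,\ref{AlekseevInjRootOf1}) to realise $\Ll_{g,n}^{\e}$ as a subalgebra of a tensor product of $g$ handle algebras --- each a specialised Heisenberg double of $\Oo_\e$, of PI-degree $l^{\dim(\mathfrak{g})}$ --- and $n$ copies of $\Oo_\e$, of PI-degree $l^N$ each. Since any prime PI algebra of PI-degree $d$ embeds into $M_d(\Omega)$ for a splitting field $\Omega$ of its quotient algebra, a $\otimes_{\mathbb{C}}$ of such factors embeds into $M_{l^{g\dim(\mathfrak{g})+nN}}$ over a commutative ring, and so does the subalgebra $\Ll_{g,n}^{\e}$; it therefore satisfies the polynomial identities of matrices of that size, bounding its PI-degree above by $l^{g\dim(\mathfrak{g})+nN}$. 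Equality follows, giving Th.\,\ref{Lgnteo1}.

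The hard part is this last upper bound, which rests on two geometric inputs: the precise identification of the target of $\Phi_{g,n}^\e$ as a tensor product of handle and puncture algebras, and the computation of the handle PI-degree $l^{\dim(\mathfrak{g})}$. The puncture factor $l^N$ for $\Oo_\e$ is classical, but the handle factor must be established on its own --- for instance by checking that the Heisenberg double of $\Oo_\e$ is free of rank $l^{2\dim(\mathfrak{g})}$ over a central subalgebra isomorphic to $\Oo(G\times G)$ over which it is generically Azumaya. As a consistency check, combining the two bounds with the rank from the first part forces the generic rank of $\mathcal{Z}(\Ll_{g,n}^{\e})$ over $\mathcal{Z}_0(\mathcal{L}_{g,n}^\e)$ to be $l^{(2g+n)\dim(\mathfrak{g})}/l^{2g\dim(\mathfrak{g})+2nN}=l^{n(\dim(\mathfrak{g})-2N)}=l^{n\,\mathrm{rank}(\mathfrak{g})}$, so that each puncture contributes exactly $\mathrm{rank}(\mathfrak{g})$ new central coordinates --- precisely the invariants of the puncture holonomies one expects, and the starting point for the subsequent description of $\mathcal{Z}(\Ll_{g,n}^{\e})$.
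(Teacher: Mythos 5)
Your freeness argument and your lower bound are essentially the paper's own: the componentwise nature of multiplication by $\mathcal{Z}_0(\mathcal{L}_{g,n}^\e)$ is exactly Prop.\,\ref{Z0Lgn} (resting on Cor.\,\ref{relationsCoRMatAtEps}(2)), the rank $l^{\dim(\mathfrak{g})}$ freeness of $\Oo_\e$ over $\mathbb{F}\mathrm{r}_\e^*(\Oo(G))$ is quoted from the literature (Th.\,\ref{DCLteo1}(2); note that faithful flatness alone gives projectivity, freeness is the actual content of the cited result of \cite{BGS}), and the lower bound on the PI-degree comes from Prop.\,\ref{irrep} together with Th.\,\ref{unicityThm}(2). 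Where you diverge is the upper bound, and there your proposal has a genuine gap, precisely at the point you yourself flag as ``the hard part''. The modified Alekseev morphism of Prop.\,\ref{AlekseevInjRootOf1} does \emph{not} land in $\mathcal{H}_\e^{\otimes g} \otimes (U_\e^\Pup)^{\otimes n}$: its handle factors are the extensions $\widehat{\mathcal{H}}_\e = \mathcal{H}_\e \otimes_{\mathbb{C}[\mathbb{T}_{2-}]} \mathbb{C}[\mathbb{T}]$ of Def.\,\ref{defiHhatq}, because the dressing map $\widehat{\mathsf{D}}_{g,n}$ involves the elements $\widehat{K_\lambda}$ for \emph{all} $\lambda \in P$, which do not lie in the plain Heisenberg double (also, the puncture factors are copies of $U_\e^\Pup$, not of $\Oo_\e$; this is harmless since both have PI-degree $l^N$). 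Consequently the lemma you propose to supply --- $\mathcal{H}_\e$ free of rank $l^{2\dim(\mathfrak{g})}$ over the central subalgebra $\mathcal{Z}_0(\Oo_\e) \otimes \mathcal{Z}_0(U_\e^\Pup)$, which incidentally is $\Oo(G) \otimes \Oo(G^*)$ rather than $\Oo(G \times G)$ --- is true but insufficient: you need the bound $l^{\dim(\mathfrak{g})}$ for $\widehat{\mathcal{H}}_\e$ itself. This is repairable, but only by a further argument: the elements $\widehat{K_\lambda}^{\,l} = \widehat{K_{l\lambda}}$ are central in $\widehat{\mathcal{H}}_\e$ (since $K_{l\lambda}$ acts trivially on $\Oo_\e$ at $\qD = \eD$), and because $l$ is odd one has $lP + 2P = P$, so after adjoining these central elements and localizing at the elements $\widehat{K_{-2\lambda}}$, which already lie in $\mathcal{H}_\e$, the extension adds nothing generically; combined with the domain property of $\widehat{\mathcal{H}}_\e$ (App.\,\ref{Lgndomain}) this yields PI-degree $\le l^{\dim(\mathfrak{g})}$. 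None of this appears in your sketch, and none of it appears in the paper either, because the paper's upper bound takes an entirely different route.

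The paper avoids any PI-degree computation for Heisenberg doubles. It enlarges $\mathcal{Z}_0(\mathcal{L}_{g,n}^\e)$ to the central subalgebra $\hat{\mathcal{Z}}_0(\mathcal{L}_{g,n}^\e)$ generated by the quantum traces $\mathrm{qTr}_\lambda$ on the $n$ puncture factors, imports from \cite[Rmk.\,5.3]{BR2} the equality $\bigl[ Q\bigl(\hat{\mathcal{Z}}_0(\mathcal{L}_{0,1}^\e)\bigr) : Q\bigl(\mathcal{Z}_0(\mathcal{L}_{0,1}^\e)\bigr) \bigr] = l^m$, and propagates it through Lemma \ref{lemmaSupMultDim} to get $\bigl[ Q\bigl(\mathcal{Z}(\mathcal{L}_{g,n}^\e)\bigr) : Q\bigl(\mathcal{Z}_0(\mathcal{L}_{g,n}^\e)\bigr) \bigr] \geq l^{mn}$; dividing the total rank $l^{(2g+n)\dim(\mathfrak{g})}$ by this then bounds $\mathfrak{r}^2$ above by $l^{2(g\dim(\mathfrak{g})+nN)}$, using $\dim(\mathfrak{g}) = 2N+m$. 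Your approach and the paper's are in fact dual to one another: the ``consistency check'' you perform at the end --- that the two bounds force $\bigl[ Q(\mathcal{Z}):Q(\mathcal{Z}_0) \bigr] = l^{mn}$ --- is exactly the paper's Cor.\,\ref{coroIsolateFacts}(1), which there is a \emph{consequence} of the PI-degree computation, whereas in the paper's logic a lower bound on that degree is the \emph{input} producing the PI-degree upper bound. So if you complete the handle lemma for $\widehat{\mathcal{H}}_\e$ as indicated, your route is viable and buys a statement the paper never proves (a PI-degree bound for the modified Heisenberg double); but as written, the proof is incomplete at its decisive step, and the missing lemma is subtler than your sketch suggests.
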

The (easier) case $g=0$ was obtained for $\Ll_{0,n}^{\eD}$, $\eD$ arbitrary, in \cite[Th.\,4.9 and Th.\,5.2]{BR2}. In Appendix \ref{appDlRoot}, Prop.\,\ref{Z0LgneD}, we show that Th.\,\ref{Lgn} (i) can be extended to $\Ll_{g,n}^{\eD}$ also when $g>0$, where $\eD$ is a primitive $Dl$-th root of unity. If $D$ is odd we can show that $Q(\Ll_{g,n}^\e)$ has PI-degree $D^gl^{g.\mathrm{dim}(\mathfrak{g})+nN}$ (see Th.\,\ref{Lgnteo1eD}). For $D$ even we expect that $Q(\Ll_{g,n}^\e)$ has PI-degree $(D/2)^gl^{g.\mathrm{dim}(\mathfrak{g})+nN}$, which we can prove for $\mathfrak{g}=\mathfrak{sl}_2$ (see Conj.\,\ref{Lgnteo1eDconj}).
\smallskip

Recall that $\Gamma_\e^\Q$ is the specialization of the De Concini-Lyubashenko integral form $\Gamma_A^\Q$. Fundamental in the theory is a coadjoint action of $\Gamma^\Q_\e$ on $\Ll_{g,n}^\e$, which makes it a module algebra. The inclusion of the De Concini-Kac-Procesi integral form $U_A^\Q \subset \Gamma_A^\Q$ factorizes after specialization as $U_\e^\Q \twoheadrightarrow u_\e^\Q \subset \Gamma_\e^\Q$, and we consider the subalgebra $\mathcal{L}_{g,n}^{u_\e}$ of invariant elements for the coadjoint action of the small quantum group $u_\e^\Q$. We stress that $\mathcal{L}_{g,n}^{u_\e}$ strictly contains the subalgebra $\mathcal{L}_{g,n}^{\Gamma_\e^{{\scalebox{0.35}{Q}}}}$ (see \eqref{factoraction} for their precise relation), whose properties will be studied in \cite{BF}. 

The sections \ref{subsecModifiedAlekseev}-\ref{secQMMandInv} are devoted to a modification $$\widehat{\Phi}_{g,n}^\e\colon \Ll_{g,n}^\e \to \widehat{\mathcal{H}}^{\otimes g}_\e \otimes (U_\e^\Pup)^{\otimes n}$$ of the {\em Alekseev morphism} $\Phi_{g,n}^\e\colon \Ll_{g,n}^\e \to \mathcal{H}\mathcal{H}^{\otimes g}_\e \otimes (U_\e^\Pup)^{\otimes n}$ (where $\widehat{\mathcal{H}}_\e$ and $\mathcal{H}\mathcal{H}_\e$ are the specializations to $\e$ of certain extensions of the Heisenberg double of $\Oo_q$), and to the {\em quantum moment map} (QMM) $\mu_{g,n}^\e\colon \Phi_{0,1}^\e(\Ll_{0,1}^\e) \to \Ll_{g,n}^\e$; both play a key role in the proof of Th.\,\ref{Lgn2} below. Besides being injective (Prop.\,\ref{AlekseevInjRootOf1}) and equivariant (Prop.\,\ref{propModifAlekseev}) like $\Phi_{g,n}^\e$, a key property of $\widehat{\Phi}_{g,n}^\e$ is a certain compatibility with the QMM $\mu_{g,n}^\e$ which is not satisfied by $\Phi_{g,n}^\e$ (Prop.\,\ref{propAlekseevModifAndQMM}). It allows us to prove that $\mathcal{L}_{g,n}^{u_\e}$ is the centralizer of the image of $\mu_{g,n}^{\e}$ in $\mathcal{L}_{g,n}^{\e}$ (Th.\,\ref{centralizerMu}). We show that $\mu_{g,n}^\e$ is injective by a ``reduction to the classical QMM'' (Prop.\,\ref{QMMinExactSeq}); for completeness we show also that $\mu_{g,n}$, the QMM defined over $\mc(q)$, is injective (Prop.\,\ref{muinjq}). From this, Theorem \ref{Lgn}, and the double centralizer theorem, we can prove :

\begin{teo}\label{Lgn2} {\rm (see Th.\,\ref{Lgnteo2})} If $(g,n) \neq (0,1)$, the central localization $Q(\Ll_{g,n}^{u_\e})$ is a division algebra, and a central simple algebra of PI-degree $l^{g.\mathrm{dim}(\mathfrak{g})+N(n-1)-m}$, where $l$ is the order of $\e$, $N$ is the number of positive roots of $\mathfrak{g}$ and $m$ is the rank of $\mathfrak{g}$.
\end{teo}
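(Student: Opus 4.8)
The plan is to combine the centralizer description of $\Ll_{g,n}^{u_\e}$ with the double centralizer theorem, feeding in the PI-datum of $\Ll_{g,n}^\e$ from Theorem \ref{Lgn} and of the ``boundary factor'' $\Ll_{0,1}^\e$ carried by the quantum moment map.

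The qualitative part is immediate. Since $\Ll_{g,n}^{u_\e}$ is a subalgebra of $\Ll_{g,n}^\e$, which is a domain by Prop.\,\ref{propLgndomain}, it is itself a Noetherian affine PI domain; Posner's theorem then gives at once that $Q(\Ll_{g,n}^{u_\e})$ is a division algebra, central simple over its center $Q(\mathcal{Z}(\Ll_{g,n}^{u_\e}))$. So only the PI-degree remains. For this I set $A := Q(\Ll_{g,n}^\e)$ and $K := Q(\mathcal{Z}(\Ll_{g,n}^\e))$; by Theorem \ref{Lgn}, $A$ is central simple over $K$ with $\dim_K A = l^{2(g\,\mathrm{dim}(\mathfrak{g})+nN)}$ and, being a division algebra, so is every subalgebra containing $K$. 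By injectivity of $\Phi_{0,1}^\e$ (Prop.\,\ref{AlekseevInjRootOf1}) and of $\mu_{g,n}^\e$ (Prop.\,\ref{QMMinExactSeq}), the image $S := \mu_{g,n}^\e(\Phi_{0,1}^\e(\Ll_{0,1}^\e))$ is isomorphic as an algebra to $\Ll_{0,1}^\e$, and by Th.\,\ref{centralizerMu} one has $\Ll_{g,n}^{u_\e} = C_{\Ll_{g,n}^\e}(S)$. Localizing over $K$ and letting $B := K\cdot S \subseteq A$ be the $K$-subalgebra generated by $S$, the equality $C_A(B) = C_A(S)$ (as $K$ is central) identifies $C_A(B)$ with $K\cdot \Ll_{g,n}^{u_\e} = Q(\Ll_{g,n}^{u_\e})$, the center of the central localization of a prime PI ring being the fraction field of its center. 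Because $A$ is a division algebra, both $B$ and $C_A(B)$ are finite-dimensional $K$-algebras which are domains, hence division algebras; in particular $B$ is simple and $Z(B)$ is a field, so the double centralizer theorem applies without any further simplicity check.

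The computation then rests on two invariants of $B$. First, $B$ is the base change to its center of the central simple algebra associated to $S\cong\Ll_{0,1}^\e$, so its PI-degree is preserved under this operation and equals $l^{N}$ by the case $(g,n)=(0,1)$ of Theorem \ref{Lgn}. Second, I must compute $[Z(B):K]$. Here I would invoke the reduction to the classical QMM (Prop.\,\ref{QMMinExactSeq}) together with Frobenius compatibility: the restriction of $\mu_{g,n}^\e$ to the central subalgebra $\mathcal{Z}_0(\Ll_{0,1}^\e)\cong\Oo(G)$ factors through $\mathbb{F}\mathrm{r}_\e^*$ and the classical boundary map, so its image $R$ lies in $\mathcal{Z}_0(\Ll_{g,n}^\e)\subseteq K$. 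Thus $S$ is free of rank $l^{\mathrm{dim}(\mathfrak{g})}=l^{2N+m}$ over $R$ (Theorem \ref{Lgn} at $(0,1)$), whence $\dim_K B = l^{2N+m}$, and since $B$ is a division algebra of PI-degree $l^N$ this forces $[Z(B):K]=l^{2N+m}/l^{2N}=l^{m}$. Feeding $\dim_K A$, $\dim_K B = l^{2N+m}$ and $Z(C_A(B))=Z(B)$ into the identity $\dim_K A = \dim_K B\cdot\dim_K C_A(B)$ gives
$$\dim_{Z(B)} C_A(B) = \frac{\dim_K A}{\dim_K B\cdot [Z(B):K]} = \frac{l^{2(g\,\mathrm{dim}(\mathfrak{g})+nN)}}{l^{2N+m}\cdot l^{m}} = l^{2(g\,\mathrm{dim}(\mathfrak{g})+N(n-1)-m)},$$
that is, $Q(\Ll_{g,n}^{u_\e})$ has PI-degree $l^{g\,\mathrm{dim}(\mathfrak{g})+N(n-1)-m}$, as claimed. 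The hypothesis $(g,n)\neq(0,1)$ is precisely what ensures $S\subsetneq\Ll_{g,n}^\e$, so that $C_A(B)$ is a proper factor and the exponent is nonnegative.

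The hard part, I expect, is the exact value $\dim_K B = l^{2N+m}$, equivalently $[Z(B):K]=l^m$: one must rule out any collapse of the center of $S$ under the map into $A$, i.e. show that the ``boundary Casimirs'' generating $\mathcal{Z}(\Ll_{0,1}^\e)$ over $\mathcal{Z}_0(\Ll_{0,1}^\e)$ remain maximally independent over the larger center $K$ (built from the puncture Casimirs and the Frobenius center). This is a genuine statement about the interaction of $\mu_{g,n}^\e$ with the centers, and is exactly where the reduction to the classical QMM and the integral-closedness of the centers do the real work, ensuring that $K\otimes_R S\to A$ is injective and that $Z(B)$ stays a field of full degree $l^m$; once this is in place, the PI-theoretic bookkeeping via the double centralizer theorem is routine.
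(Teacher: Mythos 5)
Your skeleton is the same as the paper's: identify $\Ll_{g,n}^{u_\e}$ with the centralizer of $S=\mathfrak{d}_{g,n}^\e(\Ll_{0,1}^\e)$ via Th.\,\ref{centralizerMu}, pass to $A=Q(\Ll_{g,n}^\e)$, apply the double centralizer theorem to $B=K\cdot S$, and do the dimension bookkeeping, which indeed yields the stated exponent. The qualitative part (division algebra, central simple) and the numerology are all fine.

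The genuine gap is exactly the step you flag at the end but then treat as if it were available: the equality $\dim_K B = l^{2N+m}$. Freeness of $S$ of rank $l^{\dim\mathfrak{g}}$ over $R:=\mathfrak{d}_{g,n}^\e\bigl(\mathcal{Z}_0(\Ll_{0,1}^\e)\bigr)\subset K$ only gives a $K$-spanning set of $B$ of that cardinality, i.e. $\dim_K B \leq l^{2N+m}$; equality is precisely the injectivity of the evaluation map $K\otimes_R S \to A$, and without it your identity $[Z(B):K]=l^{2N+m}/l^{2N}$ and the final quotient both collapse. Moreover, the mechanism you invoke to fill it (``reduction to the classical QMM and integral-closedness of the centers'') is not what actually does the work. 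In the paper this is Lemma \ref{lemmaQZsimple}: one shows that $Q\bigl(\mathcal{Z}(\Ll_{g,n}^\e)\bigr)\otimes_{\mathcal{Z}_0(S)} S$ is a \emph{simple} algebra, so the map onto $Q_R(S)=B$ has zero kernel. Simplicity in turn rests on the fact that $\mathcal{Z}(\Ll_{g,n}^\e)\otimes_{\mathcal{Z}_0(S)}\mathcal{Z}(S)$ is a field, and this is where the center-factorization theorem (Th.\,\ref{Lgnteo3}) enters: the multiplication map $j\colon \mathcal{Z}_0\otimes_{\mathcal{Z}_0\cap\mathcal{Z}_1}\mathcal{Z}_1\to\mathcal{Z}$ is an isomorphism for $\Ll_{0,1}^\e$ (a De Concini--Kac--Procesi input via the quantum Killing form, Lem.\,\ref{Z0Z1Phimai25}), and the map $j_2\colon \mathcal{Z}(\Ll_{g,n}^\e)\otimes_{(\mathcal{Z}_0\cap\mathcal{Z}_1)(S)}\mathcal{Z}_1(S)\to\Ll_{g,n}^\e$ is injective, so the tensor product embeds in the domain $\Ll_{g,n}^\e$, is a domain, and being of finite degree $l^m$ over $Q\bigl(\mathcal{Z}(\Ll_{g,n}^\e)\bigr)$ (Cor.\,\ref{coroIsolateFacts}) it is a field. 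Integral-closedness is used in the paper to prove the \emph{surjectivity} statements of Th.\,\ref{Lgnteo3}, which Th.\,\ref{Lgnteo2} does not need; so your proposed route to closing the gap points at the wrong lemma, and the proof as written is incomplete at its load-bearing step.
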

When $(g,n) = (0,1)$ the algebra $\Ll_{0,1}^{u_\e}$ is Abelian and its structure is described in Cor.\,\ref{FrisoG-inv}. The case $g=0$ was obtained in \cite[Theorem 5.4]{BR2} by a similar procedure. In this case the QMM is much simpler, being essentially given by the iterated coproduct $\Delta^{(n)}$ on $U_\e^\Pup$. Also, the modified morphism $\widehat{\Phi}_{g,n}^\e$ is unnecessary, because the compatibility of $\Phi_{0,n}^\e$ with $\mu_{0,n}^{\e}=\Delta^{(n)}$ is much easier, and injectivity of the latter is obvious.

The proof of Th.\,\ref{Lgn2} relies heavily on the structure of the centers of $\Ll_{g,n}^\e$ and $\Ll_{g,n}^{u_\e}$, namely the fact that the multiplication map $j$ in Th.\,\ref{Lgn3} below is an isomorphism, and $j_2$ is injective. To keep clear the line of reasoning of Th.\,\ref{Lgn2} we have preferred to postpone the proof of these facts to \S \ref{sec:proofLgn3}. We introduce a central subalgebra $\mathcal{Z}_1(\mathcal{L}_{g,n}^{\e}) $ of $\Ll_{g,n}^{\e}$, which is essentially the specialization of the center of the subalgebra $1^{\otimes 2g} \otimes \mathcal{L}_{0,n}^{\e} \subset \Ll_{g,n}$. By using the quantum Killing form (see App.\,\ref{sec:qKilling}) we show in Lem.\,\ref{Z0Z1Phimai25} that $\Phi_{0,1}^\e$ restricts to  isomorphisms $\mathcal{Z}_1(\Ll_{0,1}^\e) \cong \mathcal{Z}_1(U_\e^\Pup)$ and $\mathcal{Z}_1(\Ll_{0,1}^\e) \cap \mathcal{Z}_0(\Ll_{0,1}^\e) \cong \mathcal{Z}_1(U_\e^\Pup) \cap \mathcal{Z}_0(U_\e^\Pup)$, the latter intersection having been described by De Concini-Kac-Procesi in their study of the center $\mathcal{Z}(U_\e^\Pup)$ of $U_\e^\Pup$ in \cite{DC-K-P1,DCP}. Let $\mathfrak{d}_{g,n}^{\e} := \mu_{g,n}^\e \circ \Phi_{0,1}^\e : \Ll_{0,1}^\e \to \Ll_{g,n}^{\e}$, and $(\mathcal{Z}_0 \cap \mathcal{Z}_1)(\mathcal{L}_{g,n}^{\e}) := \mathcal{Z}_0(\mathcal{L}_{g,n}^{\e}) \cap \mathcal{Z}_1(\mathcal{L}_{g,n}^{\e})$. Then, relying strongly on results of De Concini-Kac-Procesi we can prove:

\begin{teo}\label{Lgn3} {\rm (Th.\,\ref{Lgnteo3}, Cor.\,\ref{corZZ0})} 1. The rings $\mathcal{Z}(\Ll_{g,n}^{\e})$ and $\mathcal{Z}(\Ll_{g,n}^{u_\e})$ are integrally closed and Noetherian.

\noindent 2. The multiplication maps
\begin{align*}
j\colon \mathcal{Z}_0(\mathcal{L}_{g,n}^{\e}) \otimes_{(\mathcal{Z}_0 \cap \mathcal{Z}_1)(\mathcal{L}_{g,n}^{\e})} \mathcal{Z}_1(\mathcal{L}_{g,n}^{\e}) & \to \mathcal{Z}(\Ll_{g,n}^{\e})\\ j_2\colon \mathcal{Z}(\Ll_{g,n}^{\e}) \otimes_{\mathfrak{d}_{g,n}^{\e}\left((\mathcal{Z}_0\cap \mathcal{Z}_1)(\Ll_{0,1}^\e)\right)} \mathfrak{d}_{g,n}^{\e}\bigl( \mathcal{Z}_1(\Ll_{0,1}^\e) \bigr)& \to \mathcal{Z}(\Ll_{g,n}^{u_\e})
\end{align*} are isomorphisms. 

\noindent 3. $\mathcal{Z}(\Ll_{g,n}^{\e})$ and $\mathcal{Z}(\Ll_{g,n}^{u_\e})$ coincide with the trace rings of $\Ll_{g,n}^{\e}$ and $\Ll_{g,n}^{u_\e}$ respectively. Moreover they are free $\mathcal{Z}_0(\Ll_{g,n}^{\e})$-modules of respective ranks $l^{mn}$ and $l^{m(n+1)}$.
\end{teo}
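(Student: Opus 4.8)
The plan is to import the De Concini--Kac--Procesi description of the center $\mathcal{Z}(U_\e^\Pup)$ into $\Ll_{g,n}^\e$ through the Alekseev morphism $\Phi_{0,1}^\e$ and the quantum moment map $\mu_{g,n}^\e$, and then to pass from $\Ll_{g,n}^\e$ to $\Ll_{g,n}^{u_\e}$ by means of the centralizer description of Th.\,\ref{centralizerMu}. Noetherianity is the easy half of part~1: by Th.\,\ref{Lgn}, $\Ll_{g,n}^\e$ is a finite module over $\mathcal{Z}_0(\Ll_{g,n}^\e) \cong \Oo(G)^{\otimes (2g+n)}$, which is a finitely generated commutative $\mathbb{C}$-algebra, hence Noetherian and integrally closed; therefore $\Ll_{g,n}^\e$, as well as the intermediate rings $\mathcal{Z}(\Ll_{g,n}^\e)$ and $\mathcal{Z}(\Ll_{g,n}^{u_\e})$, are finite $\mathcal{Z}_0$-modules and in particular Noetherian. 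I would prove integral-closedness together with part~2, deducing it from the tensor-product descriptions of $j$ and $j_2$.

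For the isomorphism $j$ I would first settle the case $(g,n)=(0,1)$. By Lem.\,\ref{Z0Z1Phimai25}, $\Phi_{0,1}^\e$ identifies $\mathcal{Z}_1(\Ll_{0,1}^\e)$ and $(\mathcal{Z}_0\cap\mathcal{Z}_1)(\Ll_{0,1}^\e)$ with the corresponding subrings of $U_\e^\Pup$; for the latter the De Concini--Kac--Procesi theorems give the fiber-product decomposition $\mathcal{Z}(U_\e^\Pup)\cong \mathcal{Z}_0\otimes_{\mathcal{Z}_0\cap\mathcal{Z}_1}\mathcal{Z}_1$, its normality, and freeness of rank $l^m$ over $\mathcal{Z}_0$, the Harish-Chandra center $\mathcal{Z}_1(U_\e^\Pup)$ being a ring of $W$-invariants that is free of rank $l^m$ over the subring $\mathcal{Z}_0\cap\mathcal{Z}_1$. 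Transporting this yields the statement for $\Ll_{0,1}^\e$. For general $(g,n)$ I would use that $\mathcal{Z}_1(\Ll_{g,n}^\e)$ lives entirely in the puncture factors $1^{\otimes 2g}\otimes\Ll_{0,n}^\e$, while the genus factors contribute nothing beyond $\mathcal{Z}_0$, because the Heisenberg-double factors $\widehat{\mathcal{H}}_\e$ appearing in the target of $\Phi_{g,n}^\e$ are Azumaya over their Frobenius centers. The problem thus reduces to $\Ll_{0,n}^\e$, where the $n$ punctures contribute $n$ mutually commuting copies of the $\Ll_{0,1}^\e$ structure and the rank becomes $l^{mn}$. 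The crux is surjectivity of $j$, i.e. that $\mathcal{Z}_0$ and $\mathcal{Z}_1$ generate the whole center: I would establish that $\mathrm{Im}(j)$ and $\mathcal{Z}(\Ll_{g,n}^\e)$ have the same generic rank $l^{mn}$ over $\mathcal{Z}_0$ (computed from the PI-degree in Th.\,\ref{Lgn} via $\dim\mathfrak{g}=2N+m$), observe that $\mathrm{Im}(j)$ is normal by a flatness and Serre-criterion argument as in DCKP, and conclude equality from the integrality of $\mathcal{Z}(\Ll_{g,n}^\e)$ over $\mathrm{Im}(j)$. Injectivity of $j$ is subsumed in this rank computation.

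For the isomorphism $j_2$ I would invoke Th.\,\ref{centralizerMu}, which realizes $\Ll_{g,n}^{u_\e}$ as the centralizer of $\mathrm{Im}(\mu_{g,n}^\e)$ in $\Ll_{g,n}^\e$, together with the double centralizer input behind Th.\,\ref{Lgn2}. The central elements of $\Ll_{g,n}^{u_\e}$ that are new relative to $\mathcal{Z}(\Ll_{g,n}^\e)$ are exactly those coming from $\mathfrak{d}_{g,n}^\e\bigl(\mathcal{Z}_1(\Ll_{0,1}^\e)\bigr)$, the image of the Harish-Chandra center under the composite moment-map--Alekseev morphism, injectivity of $\mu_{g,n}^\e$ being Prop.\,\ref{QMMinExactSeq}; the fiber product over $\mathfrak{d}_{g,n}^\e\bigl((\mathcal{Z}_0\cap\mathcal{Z}_1)(\Ll_{0,1}^\e)\bigr)$ records that these elements already overlap $\mathcal{Z}(\Ll_{g,n}^\e)$ in their common $l$-th-power part. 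The same rank bookkeeping as for $j$ gives an extra factor $l^m$, hence freeness of rank $l^{m(n+1)}$ over $\mathcal{Z}_0$, and normality of this fiber product completes the integral-closedness claim in part~1. For the trace-ring statement in part~3 I would show that the reduced trace ring $T$ of $\mathfrak{A}=\Ll_{g,n}^\e$ (resp. $\Ll_{g,n}^{u_\e}$) satisfies $\mathcal{Z}_0\subseteq T\subseteq\mathcal{Z}(\mathfrak{A})$ and already contains both generating families: $\mathcal{Z}_0$ arises as coefficients of the minimal polynomials of the Frobenius-type generators, while $\mathcal{Z}_1$ (resp. its $\mathfrak{d}_{g,n}^\e$-image) arises from the Casimir-type traces of the puncture factors. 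Combined with the surjectivity of $j$ and $j_2$, this forces $T=\mathcal{Z}(\mathfrak{A})$, and the freeness and ranks over $\mathcal{Z}_0$ are then read off from the tensor-product descriptions.

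I expect the main obstacle to be the surjectivity of $j$ and, entangled with it, the integral-closedness in part~1: proving that no central elements exist beyond those generated by $\mathcal{Z}_0$ and $\mathcal{Z}_1$, and that the resulting fiber product is normal, are precisely the delicate points in the De Concini--Kac--Procesi analysis. The additional difficulty here is to run their argument through $\Phi_{0,1}^\e$ and $\mu_{g,n}^\e$ rather than inside $U_\e^\Pup$, which requires tight control of the decoupling of the genus factors from $\mathcal{Z}_1$ and of the flatness needed to preserve normality under the tensor products defining $j$ and $j_2$.
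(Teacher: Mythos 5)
Your proposal follows essentially the same route as the paper's: transport of the De Concini--Kac--Procesi factorization of $\mathcal{Z}(U_\e^\Pup)$ into $\Ll_{0,1}^\e$ via $\Phi_{0,1}^\e$ (Lem.\,\ref{Z0Z1Phimai25}), the tensor decomposition of the source of $j$ over the puncture factors, normality of that source by the complete-intersection/Serre-criterion argument, and surjectivity of $j$ and $j_2$ from the coincidence of generic ranks over $\mathcal{Z}_0$ (forced by the PI-degree computations behind Th.\,\ref{Lgn} and the double-centralizer bookkeeping behind Th.\,\ref{Lgn2}) combined with normality and integrality; this is precisely the mechanism of Lem.\,\ref{lemTechnicalCommRings}, Prop.\,\ref{mainDCKPsec6} and Cor.\,\ref{coroIsolateFacts}. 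One step of yours, however, is flawed as stated: justifying the ``decoupling of the genus factors'' by claiming the factors $\widehat{\mathcal{H}}_\e$ are Azumaya over their Frobenius centers. This is unproved, and logically it would not suffice anyway: $\widehat{\Phi}_{g,n}^\e$ is injective but far from surjective, and the center of a subalgebra is not controlled by the center of the ambient algebra, so no property of $\mathcal{Z}(\widehat{\mathcal{H}}_\e)$ by itself rules out extra central elements of $\Ll_{g,n}^\e$ supported on the genus factors. Fortunately this reduction is also unnecessary: $\mathcal{Z}_1(\Ll_{g,n}^\e)$ is supported on the puncture factors by definition \eqref{defZ1Lgn}, and your rank/normality/integrality argument applies directly to general $(g,n)$, which is exactly what the paper does; so the flaw is in an inessential aside, not in the proof skeleton.

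Two further points of comparison. Your freeness argument in part 3 (reading freeness off the tensor-product descriptions, i.e.\ base change from the DCKP freeness of $\mathcal{Z}_1$ over $\mathcal{Z}_0\cap\mathcal{Z}_1$ of rank $l^m$) genuinely differs from the paper's proof of Cor.\,\ref{corZZ0}, which instead splits $\mathcal{Z}(\mathfrak{A})$ off $\mathfrak{A}$ via the reduced trace, deduces projectivity over $\mathcal{Z}_0$, and invokes Bass cancellation together with Marlin's computation of $K_0\bigl(G^{\times(2g+n)}\bigr)$; your route is valid and simpler, given that $j$ and $j_2$ are already isomorphisms. Finally, your trace-ring argument emphasizes the trivial inclusion: in characteristic zero $\mathcal{Z}(\mathfrak{A}) \subseteq t_{\mathrm{red}}(\mathfrak{A})$ is automatic since $z = t_{\mathrm{red}}(z/d_\mathfrak{A})$, so there is no need to exhibit $\mathcal{Z}_0$ and $\mathcal{Z}_1$ as traces; the nontrivial inclusion is $t_{\mathrm{red}}(\mathfrak{A}) \subseteq \mathcal{Z}(\mathfrak{A})$, which follows from integral closedness via Reiner's theorem, exactly as in the paper.
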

We also provide a simple consequence of this result for the Frobenius embedding \eqref{Frobembintro} of $\bigl(\mathcal{O}(G)^{\otimes (2g+n)}\bigr)^{G}$, which is the coordinate ring of the variety of $G$-characters of $\pi_1(\Sigma_{g,n}^{\circ})$. Namely, setting $\mathcal{Z}_0^{\rm inv}  := (\mathbb{F}\mathrm{r}_{\epsilon}^*)^{(2g+n)}\bigl[ \bigl(\mathcal{O}(G)^{\otimes (2g+n)}\bigr)^{G} \bigr]$ we prove in Cor.\,\ref{FrisoG-inv} that the following multiplication map is an isomorphisms of algebras
$$\mathcal{Z}_0^{\rm inv}  \otimes_{\mathcal{Z}_0^{\rm inv} \cap \mathcal{Z}_1(\mathcal{L}_{g,n}^{\e})}  \mathcal{Z}_1(\mathcal{L}_{g,n}^{\e}) \to \mathcal{Z}(\mathcal{L}_{g,n}^{\e})^{\Gamma_\e}.$$
This result will be developed in \cite{BF}.
\medskip

\noindent {\bf Applications to representations.} As a consequence of Th.\,\ref{Lgn} and \ref{Lgn2} (see Th.\,\ref{unicityThm}), the simple modules of $\mathfrak{A} := \Ll_{g,n}^{\e}$ or $\Ll_{g,n}^{u_\e}$ have dimension equal or less than the PI-degree $d_\mathfrak{A}$ of $\mathfrak{A}$ described in these statements, each $\chi \in \mathrm{MaxSpec}(\mathcal{Z})$, where $\mathcal{Z}:= \mathcal{Z}(\mathfrak{A})$ is the center, is the central character of some simple finite dimensional $\mathfrak{A}$-module, and there is a ``Unicity theorem for representations'', that is, there is an open and dense Zariski subset $\mathcal{S}$ of $\mathrm{MaxSpec}(\mathcal{Z})$ such that every $\chi \in \mathcal{S}$ is the central character of a unique simple $\mathfrak{A}$-module (up to isomorphism) which has dimension $d_\mathfrak{A}$.

Th.\,\ref{Lgn3} provides an algebraic description of $\mathcal{S}$ by means of the following general facts from the theory of PI rings (see, e.g., \cite[Chap.\,13]{MC-R}, \cite[Chap.\,6]{Rowen}, \cite{Artin}). Let $\mathfrak{A}$ be a division PI algebra, with center $\Zz$ a Noetherian and integrally closed ring. By Th.\,\ref{Lgn}, \ref{Lgn2} and \ref{Lgn3} we can take
$$\mathfrak{A} = \Ll_{g,n}^\e\ {\rm or}\ \Ll_{g,n}^{u_\e}$$ with PI-degree $d_\mathfrak{A}$ as described in these statements. The {\it discriminant ideal} ${\rm Disc}(\mathfrak{A})$ is the ideal of $\Zz$ generated by the elements ${\rm det}\bigl[\bigl(t_{\mathrm{red}}(x_ix_j)\bigr)_{1\leq i,j\leq d_\mathfrak{A}}\bigr]$, where $x_1,\ldots x_{d_\mathfrak{A}}\in \mathfrak{A}$ and $t_{\mathrm{red}}: \mathfrak{A} \rightarrow \Zz$ is the reduced trace map of $\mathfrak{A}$ (see \cite[\S 10]{Reiner}). The set $\mathcal{D}(\mathfrak{A}) \subset {\rm MaxSpec}(\Zz)$ of zeros of ${\rm Disc}(\mathfrak{A})$ is a closed (proper) subvariety of ${\rm MaxSpec}(\Zz)$, called the discriminant variety. Now, given a central character $\chi\in {\rm MaxSpec}(\Zz)$, put $\mathfrak{A}^\chi := \mathfrak{A}/{\rm Ker}(\chi)\mathfrak{A}$. Since $\mathfrak{A}$ is a domain, Nakayama's lemma (in the form of \cite[Cor.\,2.5]{AMacD}) implies that $\mathfrak{A}^\chi\ne 0$. Then we have (see e.g. \cite[Lem.\,3.7]{DCK}, or \cite[Th.\,I.13.5]{BG} for a more general statement):
\begin{cor}\label{disAz} $\mathfrak{A}^\chi$ is isomorphic to $M_{d_\mathfrak{A}}(\mc)$ if and only if $\chi\notin \mathcal{D}(\mathfrak{A})$, and if $\chi\in \mathcal{D}(\mathfrak{A})$ then every irreducible representation of $\mathfrak{A}^\chi$ has dimension less than ${d_\mathfrak{A}}$. 
\end{cor}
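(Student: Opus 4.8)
The plan is to verify that $\mathfrak{A}$ meets the hypotheses of the standard Azumaya-locus theory for prime PI algebras, and then to argue through the reduced-trace form. By Theorems~\ref{Lgn} and~\ref{Lgn2}, $\mathfrak{A}$ is a domain whose central localization $Q(\mathfrak{A})$ is a central simple $Q(\Zz)$-algebra of dimension $d_\mathfrak{A}^2$; by Theorem~\ref{Lgn3}, $\Zz$ is Noetherian and integrally closed, it coincides with the trace ring of $\mathfrak{A}$, and $\mathfrak{A}$ is module-finite over the affine $\mc$-algebra $\Zz$. In particular each $\mathfrak{m}=\ker(\chi)$ has residue field $\mc$, and $\mathfrak{A}^\chi$ is a nonzero finite-dimensional $\mc$-algebra. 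First I would record that, since $\Zz$ is integrally closed, the reduced trace $t_{\mathrm{red}}\colon Q(\mathfrak{A})\to Q(\Zz)$ carries $\mathfrak{A}$ into $\Zz$: every $x\in\mathfrak{A}$ is integral over $\Zz$, hence so is $t_{\mathrm{red}}(x)\in Q(\Zz)$, which therefore lies in $\Zz$ (this is exactly the trace-ring statement of Th.~\ref{Lgn3}.3). Thus ${\rm Disc}(\mathfrak{A})$ is a genuine ideal of $\Zz$ and $\mathcal{D}(\mathfrak{A})=V({\rm Disc}(\mathfrak{A}))$ is a well-defined closed subvariety.

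The core of the first assertion is the Artin--Procesi characterization of the Azumaya locus, which I would invoke in the form of \cite[Th.~I.13.5]{BG}: for $\mathfrak{m}=\ker(\chi)$, the algebra $\mathfrak{A}$ is Azumaya at $\mathfrak{m}$ if and only if the reduced-trace form $(x,y)\mapsto t_{\mathrm{red}}(xy)$ remains non-degenerate after reduction modulo $\mathfrak{m}$, and this non-degeneracy holds precisely when some generating determinant of ${\rm Disc}(\mathfrak{A})$ does not vanish at $\chi$, i.e. when $\chi\notin\mathcal{D}(\mathfrak{A})$. Because the residue field is $\mc$, which is algebraically closed and hence splits every central simple algebra over it, being Azumaya at $\mathfrak{m}$ is equivalent to $\mathfrak{A}^\chi\cong M_{d_\mathfrak{A}}(\mc)$. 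Chaining these equivalences gives $\mathfrak{A}^\chi\cong M_{d_\mathfrak{A}}(\mc)\iff\chi\notin\mathcal{D}(\mathfrak{A})$.

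For the second assertion I would combine Kaplansky's bound --- every simple $\mathfrak{A}$-module, hence every simple $\mathfrak{A}^\chi$-module, has dimension at most $d_\mathfrak{A}$ --- with the fact that, for a prime PI algebra module-finite over its center, a simple module of top dimension $d_\mathfrak{A}$ with central character $\chi$ exists if and only if $\chi$ lies in the Azumaya locus, in which case $\mathfrak{A}^\chi\cong M_{d_\mathfrak{A}}(\mc)$. Hence if $\chi\in\mathcal{D}(\mathfrak{A})$, so $\chi$ is not Azumaya, no simple $\mathfrak{A}^\chi$-module attains dimension $d_\mathfrak{A}$, and all of them have dimension strictly smaller than $d_\mathfrak{A}$.

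The one genuinely non-formal input is the equivalence ``${\rm Disc}(\mathfrak{A})$ nonzero at $\chi$ $\iff$ $\mathfrak{A}$ Azumaya at $\mathfrak{m}$'', which is the Artin--Procesi theorem; the point where our hypotheses are indispensable is that integral closedness of $\Zz$ (Th.~\ref{Lgn3}) is what forces $t_{\mathrm{red}}(\mathfrak{A})\subseteq\Zz$ and $\Zz$ to be its own trace ring, so that the reduced-trace form is defined over $\Zz$ and its degeneracy locus is exactly $\mathcal{D}(\mathfrak{A})$. Granting this, the statement follows directly from \cite[Lem.~3.7]{DCK} or \cite[Th.~I.13.5]{BG}.
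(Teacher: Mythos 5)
Your proposal is correct and takes essentially the same approach as the paper: the paper likewise sets up the framework (domain, PI-degree $d_\mathfrak{A}$ from Theorems \ref{Lgn} and \ref{Lgn2}, center Noetherian, integrally closed and equal to the trace ring from Theorem \ref{Lgn3}, $\mathfrak{A}^\chi \neq 0$ by Nakayama) and then derives the statement directly from the standard discriminant/Azumaya theory of PI rings, citing \cite[Lem.\,3.7]{DCK} and \cite[Th.\,I.13.5]{BG}. Your extra details (integrality of $t_{\mathrm{red}}$ over the integrally closed center, splitting over the algebraically closed residue field $\mc$, and Kaplansky's bound for the strict inequality on the discriminant variety) simply unpack what those cited results contain.
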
 
The set $\mathcal{A}_{\mathfrak{A}} := {\rm MaxSpec}(\Zz)\setminus \mathcal{D}(\mathfrak{A})$ is the {\it Azumaya locus} of $\mathfrak{A}$ \cite[III.1.6-III.1.7]{BG}. Loci of irreducible representations of dimension $< d$ can be reached by using lower discriminant ideals (see the Main Theorem of \cite{BY}).

It is in general a difficult problem to describe Azumaya loci. However $\mathfrak{A} = \Ll_{g,n}^\e$ and $\Ll_{g,n}^{u_\e}$ have a natural structure of $\mathcal{Z}_0(\Ll_{g,n}^\e)$-Poisson order described in \cite[\S 2.1-2.2]{BrGo}, which gives ${\rm MaxSpec}\bigl(\Zz_0(\Ll_{g,n}^\e)\bigr)$ a structure of Poisson variety that may be used to study $\mathcal{A}_{\mathfrak{A}}$ geometrically. The Poisson structure coincides with the one defined by the Fock-Rosly Poisson bracket (\cite{Frolov}; see also \cite[Prop. 4.3]{GJS}, and \cite{BR1} for the detailed case of $\mathcal{Z}_0(\Ll_{0,n}^\e(\mathfrak{sl}_2)$), and by \cite[Th.\,4.2]{BrGo} the set $\mathcal{A}_{\mathfrak{A}}$ is a union of symplectic leaves of the pull-back of that Poisson bracket by the restriction map ${\rm MaxSpec}(\Zz(\Ll_{g,n}^\e))\to {\rm MaxSpec}(\Zz_0(\Ll_{g,n}^\e))$. This viewpoint has been used in \cite{GJS} (case $n=0$) and \cite{KaruoKorinman} (case of $\mathfrak{g}=\mathfrak{sl}_2$).

\medskip

\noindent \textbf{Acknowledgements.} We are grateful to Joao-Pedro Dos Santos for valuable discussions on topics in algebraic geometry. Part of this work was done during the postdoctoral contract of M.F. at IMT Toulouse, funded by the CIMI Labex ANR 11-LABX-0040.

\section{General preliminaries} \label{sectionnotations}

In all this paper $\Bbbk$ denotes a field and except otherwise stated ``algebra'' means ``associative $\Bbbk$-algebra''. The center of an algebra $A$ is denoted $\mathcal{Z}(A)$.

\subsection{Simple Lie algebras}\label{subsecLieAlg}
We fix a complex finite dimensional simple Lie algebra $\mathfrak{g}$ and denote by $U(\mathfrak{g})$ its universal enveloping algebra. 

Denote by $m$ the rank of $\mathfrak{g}$. We fix a Cartan subalgebra $\mathfrak{h}\subset \mathfrak{g}$, and a basis of simple roots $\alpha_1,\cdots, \alpha_m\in \mathfrak h^*$. Let $C:=(a_{ij}) \in \mathrm{Mat}_m(\mathbb{Z})$ be the Cartan matrix of $\mathfrak{g}$. Let $d_i\in \{1,2,3\}$ the smallest positive integers such that $(d_ia_{ij})$ is a symmetric matrix. We denote by $(\cdot , \cdot )$ the scalar product on $\mathfrak h^*$ defined by the Killing form of $\mathfrak{g}$, normalized so that $(\alpha_i,\alpha_j) := d_ia_{ij}$. Then $(\alpha,\alpha)=2$ for the short roots $\alpha$. The set of positive roots is denoted $\phi^+=\{\beta_1,\cdots,\beta_N\}$, so ${\rm dim}({\mathfrak{g}})=2N+m$, and we have the Cartan decomposition $\mathfrak{g}=\mathfrak{n}^-\oplus\mathfrak{h}\oplus\mathfrak{n}^+$. We put $\rho:=\textstyle \frac{1}{2}\sum_{i=1}^N\beta_i$, and the coroot of a simple root $\alpha_i$ is $\check{\alpha}_i := 2\alpha_i/(\alpha_i,\alpha_i)$.

Let $P=\textstyle \sum_{i=1}^m \mathbb{Z}. \varpi_i\subset{\mathfrak h}^*$ be the lattice of integral weights, with the fundamental weights $\varpi_i$ satisfying $(\varpi_i,\check{\alpha}_j) = \delta_{i,j}$, $P_+ = \textstyle \sum_{i=1}^m \mathbb{N}. \varpi_i$ the set of integral dominant weights, and $Q = \textstyle \sum_{i=1}^m \mathbb{Z}. \alpha_i$, $\check{Q} = \textstyle \sum_{i=1}^m \mathbb{Z}. \check{\alpha}_i$ are the the root and coroot lattices.

We denote by $D$ the smallest positive integer such that $DP\subset Q$; it is also the smallest positive integer such that $D(\lambda,\varpi_i)\in d_i\mz$ for all $\lambda\in P$, $i\in \{1,\ldots,m\}$. Note that $D$ is the exponent of the group $P/Q$. The entries of $C^{-1}$ belong to $D^{-1}\mn$. The values of max$(d_i)$, $D$ and $P/Q$, are recalled in the following table (where $\mz_n:
=\mz/n\mz$):
\medskip

\begin{center}
\begin{tabular}{||c|| c | c | c |c |c |c |c |c|c|c ||} 
\hline 
type of $\mathfrak{g}$ & $A_{n\geq 1}$ & $B_{n\geq 2}$ & $C_{n\geq 3}$ & \begin{tabular}{c}$D_{n\geq 5}$ \\ $n$ odd \end{tabular} & \begin{tabular}{c}$D_{n\geq 4}$ \\ $n$ even \end{tabular} & $E_6$ & $E_7$ & $E_8$ & $F_4$ & $G_2$ \\ 
\hline
\rule{0cm}{0.6cm} \raisebox{0.15cm}{$\mathfrak{g}$} & \raisebox{0.1cm}{$\mathfrak{sl}_{n+1}$} & \raisebox{0.1cm}{$\mathfrak{so}_{2n+1}$} & \raisebox{0.1cm}{$\mathfrak{sp}_{2n}$} & \raisebox{0.1cm}{$\mathfrak{so}_{2n}$} & \raisebox{0.1cm}{$\mathfrak{so}_{2n}$} & \raisebox{0.1cm}{$\mathfrak{e}_6$} & \raisebox{0.1cm}{$\mathfrak{e}_7$} & \raisebox{0.1cm}{$\mathfrak{e}_8$} & \raisebox{0.1cm}{$\mathfrak{f}_4$} & \raisebox{0.1cm}{$\mathfrak{g}_2$}\\
\hline 
\rule{0cm}{0.6cm} \raisebox{0.1cm}{$\max(d_i)$} & \raisebox{0.1cm}{$1$} &  \raisebox{0.1cm}{$2$} &  \raisebox{0.1cm}{$2$} &  \raisebox{0.1cm}{$1$} & \raisebox{0.1cm}{$1$} & \raisebox{0.1cm}{$1$} & \raisebox{0.1cm}{$1$} & \raisebox{0.1cm}{$1$} & \raisebox{0.1cm}{$2$} & \raisebox{0.1cm}{$3$} \\
\hline
\rule{0cm}{0.6cm} \raisebox{0.1cm}{$D$} &
\raisebox{0.1cm}{$n+1$} & \raisebox{0.1cm}{$2$} & \raisebox{0.1cm}{$2$} & \raisebox{0.1cm}{$4$} &  \raisebox{0.1cm}{$2$} &
\raisebox{0.1cm}{$3$} & \raisebox{0.1cm}{$2$} & \raisebox{0.1cm}{$1$} & \raisebox{0.1cm}{$1$}& \raisebox{0.1cm}{$1$}\\ 
\hline
\rule{0cm}{0.6cm} \raisebox{0.1cm}{$P/Q$} &
\raisebox{0.1cm}{$\mz_{n+1}$} & \raisebox{0.1cm}{$\mz_2$} & \raisebox{0.1cm}{$\mz_2$} & \raisebox{0.1cm}{$\mz_4$} & \raisebox{0.1cm}{$\mz_2\times \mz_2$} &
\raisebox{0.1cm}{$\mz_3$} & \raisebox{0.1cm}{$\mz_2$} & \raisebox{0.1cm}{$\{0\}$}& \raisebox{0.1cm}{$\{0\}$}& \raisebox{0.1cm}{$\{0\}$} \\ 
\hline
\end{tabular}
\end{center}
\medskip

\noindent Any intermediate lattice $Q\subset \Lambda\subset P$ has rank $m = \dim(\mathfrak{h})$. 

\indent We denote by $W$ the Weyl group of $\mathfrak{g}$, by $w_0$ the longest element of $W$, and by ${\mathcal B}(\mathfrak{g})$ the braid group of $\mathfrak{g}$.

\indent We denote by $G^{\Lam}$ the connected simple complex algebraic group with Lie algebra $\mathfrak{g}$ such that its maximal torus $T^{\Lam}\subset G^{\Lam}$ associated with $\mathfrak{h}$ has character group ${\rm Hom}_{\rm Ab}(T^{\Lam},\mc^\times)\cong \Lambda$. In particular $G := G^{\Pup}$ is the simply-connected group, and $G^\Q ={\rm Image}(Ad\colon G \to {\rm Aut}(\mathfrak{g}))$ is the adjoint group. We denote by $B_\pm$ the Borel subgroup of $G$ with Lie algebra $\mathfrak{b}_\pm$, and by $T\subset G$ the corresponding maximal torus. 

\subsection{Parameters \texorpdfstring{$q$}{q}, \texorpdfstring{$q_{\scalebox{0.5}{D}}$}{qD}, \texorpdfstring{$\e$}{e} and \texorpdfstring{$\e_{\scalebox{0.5}{D}}$}{eD}}\label{secnotationq}
We let $q$, $\qD$ be indeterminates such that $\qD^D=q$, and denote $$A=\mc[q,q^{-1}]\ , \AD =\mc[\qD,\qD^{-1}]\ ,\ q_i=q^{d_i}\ ,\ \qDi=\qD^{d_i}.$$

By $\mathbb{C}(q)$ we denote the field of rational fractions in the indeterminate $q$. We put $[0]_q !:=1$, $[ x; 0]_{q}:=1$, $(x; 0)_{q}:=1$ and for $k,t\in \mathbb{N}\setminus \{0\}$, 
$$\begin{array}{ccc} [ x; t ]_{q}:= \prod_{s=1}^t \frac{xq^{-s+1}-x^{-1}q^{s-1}}{q^s-q^{-s}} & , & (x; t )_{q}:= \prod_{s=1}^t \frac{xq^{-s+1}-1}{q^s-1}\\ \left[ k \right]_q:=\frac{q^k-q^{-k}}{q-q^{-1}} & , & [k]_q!:=[k]_q\cdots [1]_q.
\end{array}$$
For $m,t\in \mn$ such that $m\geq t$ we have $[ q^m; t ]_{q}$, $(q^m; t )_{q} \in A$. They are quantum versions of the standard binomial coefficient $\begin{pmatrix} m \\ t \end{pmatrix}$. Note that $[q^m;t]_q = (q^m;t)_q = 0$ if $m < t$, still with $m,t \in \mathbb{N}$.

\indent We fix an integer $l\geq 3$ and let $\e$ be a primitive $l$-th root of unity. For simplicity, in all the paper we require that:
\begin{itemize}
\item $l$ is odd, and in case $\mathfrak{g}$ is of type $G_2$, $l$ is coprime to $3$. Thus ${\rm gcd}(l,d_i)=1$, and $l$ is the smallest positive integer such that $\epsilon_i^l=1$ for all $i$. 
\item ${\rm gcd}(l,D)=1$; because $l$ is odd, this is a constraint only in type $A_{n}$ for $n$ even and type $E_6$.
\end{itemize}
In particular, there exists a unique integer $\overline{D}$ modulo $l$ such that $D\overline{D} \equiv 1 \pmod{l}$. We set $\e_{\scalebox{0.5}{D}} = \epsilon^{\overline{D}}$, so that $\eD^D = \e$. Note that $\eD$ satisfies $\eD^l =1$. Also we put $\e_i:=\e^{d_i}$ and $\eDi:=\eD^{d_i}$. A different choice of $\eD$ is discussed in App.\,\ref{appDlRoot}.

\subsection{Specialization of \texorpdfstring{$A$}{A}-modules}\label{generalRmkSpe}
Let $z \in \mathbb{C}^\times$ and $A = \mathbb{C}[q^{\pm 1}]$. Then $\mathbb{C}$ can be seen as an $A$-module $\mathbb{C}_z$ by means of the evaluation morphism $q \mapsto z$. If $X$ is an $A$-module, we denote by $X_z = X \otimes_A \mathbb{C}_z$ the {\em specialization of $X$ at $q=z$}; it is a $\mathbb{C}$-vector space. For any $x \in X$, we let $x_{|z} = x \otimes_A 1$. The map $X \to X_z$ given by $x \mapsto x_{|z}$ is surjective by definition and satisfies $(P \cdot x)_{|z} = P(z)x_{|z}$ for all Laurent polynomial $P \in A$ and $x \in X$.

\begin{lem}\label{lemmaSpecialisation}
1. We have $X_z \cong X\big/(q-z)X$ as $\mathbb{C}$-vector spaces. It follows that the kernel of the specialization map $x \mapsto x_{|z}$ is $(q-z)X$.
\\2. If the elements $\{ x_i \}$ are an $A$-basis of $X$, then the elements $\{ (x_i)_{|z} \}$ are a $\mathbb{C}$-basis of $X_z$.
\end{lem}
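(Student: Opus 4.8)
The plan is to realize the one-dimensional module $\mathbb{C}_z$ as a cyclic $A$-module and then exploit the right-exactness of the functor $- \otimes_A \mathbb{C}_z$. First I would identify $\mathbb{C}_z \cong A/(q-z)A$ as $A$-modules. The evaluation morphism $\mathrm{ev}_z \colon A \to \mathbb{C}$, $q \mapsto z$, is a surjective $\mathbb{C}$-algebra (hence $A$-linear) map whose target is precisely $\mathbb{C}_z$ by the very definition of the $A$-action on $\mathbb{C}_z$, so it remains only to check that $\ker(\mathrm{ev}_z) = (q-z)A$. The inclusion $(q-z)A \subseteq \ker(\mathrm{ev}_z)$ is clear. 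For the reverse inclusion, given a Laurent polynomial $P$ with $P(z)=0$, I would write $P = q^{-k}\tilde{P}$ with $\tilde{P} \in \mathbb{C}[q]$ and $k \geq 0$; since $z \neq 0$ this gives $\tilde{P}(z)=0$, so $(q-z) \mid \tilde{P}$ in $\mathbb{C}[q]$ and hence $P \in (q-z)A$. (Equivalently, one may simply invoke that $A$ is a PID and that $(q-z)$ generates a maximal ideal with residue field $\mathbb{C}$.)

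Next I would tensor the free presentation $A \xrightarrow{\,\cdot(q-z)\,} A \to \mathbb{C}_z \to 0$ with $X$ over $A$. Right-exactness of $X \otimes_A -$ then yields an exact sequence $X \xrightarrow{\,\cdot(q-z)\,} X \to X_z \to 0$, and hence the canonical isomorphism $X_z \cong X/(q-z)X$ asserted in part~1. The key observation for the second half of part~1 is that under this identification the specialization map $x \mapsto x_{|z} = x \otimes_A 1$ becomes exactly the quotient projection $X \to X/(q-z)X$; its kernel is therefore precisely $(q-z)X$.

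For part~2, I would use that an $A$-basis $\{x_i\}$ exhibits $X \cong \bigoplus_i A\,x_i$ as a free $A$-module, together with the fact that tensor products commute with direct sums: $X_z \cong \bigoplus_i \bigl(A\,x_i \otimes_A \mathbb{C}_z\bigr) \cong \bigoplus_i \mathbb{C}_z$, the element $(x_i)_{|z} = x_i \otimes_A 1$ corresponding to the $i$-th standard generator of the right-hand side. Thus $\{(x_i)_{|z}\}$ is a $\mathbb{C}$-basis of $X_z$.

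The argument is essentially routine, and I do not anticipate a genuine obstacle. The only point demanding a brief verification is the identification $\ker(\mathrm{ev}_z) = (q-z)A$, which uses crucially that $z$ is \emph{nonzero}: this is what allows one to clear denominators and pass freely between Laurent polynomials and ordinary polynomials, so that divisibility by $q-z$ is detected by vanishing at $z$. Everything else is a formal consequence of the right-exactness of the tensor product and its compatibility with direct sums.
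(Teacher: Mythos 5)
Your proof is correct, and it takes a somewhat different route from the paper's. For part 1, the paper verifies directly that $X/(q-z)X$ satisfies the universal property of $X \otimes_A \mathbb{C}_z$ (constructing the $A$-balanced map $(x,c) \mapsto cx + (q-z)X$ and checking the factorization by hand), whereas you present $\mathbb{C}_z$ as $A/(q-z)A$ and apply right-exactness of $X \otimes_A -$ to the free presentation $A \xrightarrow{\cdot(q-z)} A \to \mathbb{C}_z \to 0$. For part 2, the paper gives an explicit coefficient computation: from a vanishing combination $\sum_i \lambda_i (x_i)_{|z} = 0$ it deduces via part 1 that $\sum_i \lambda_i \cdot x_i = (q-z)x'$, expands $x'$ in the basis, and identifies coefficients to get $\lambda_i = (q-z)P_i$, hence $\lambda_i = 0$; you instead use freeness to write $X \cong \bigoplus_i A\,x_i$ and the compatibility of $\otimes$ with direct sums, so the basis claim falls out of $X_z \cong \bigoplus_i \mathbb{C}_z$. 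Your version is more modular and makes part 2 essentially automatic; it also has the merit of isolating and carefully proving the one place where $z \neq 0$ genuinely matters, namely $\ker(\mathrm{ev}_z) = (q-z)A$ — a divisibility fact which the paper's proof actually also relies on (it is exactly what makes its map $u$ "obviously $A$-balanced") but leaves implicit. The paper's version, in exchange, is fully self-contained: it never invokes right-exactness or direct-sum compatibility of the tensor product, only the definition of $\otimes_A$ via its universal property.
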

\begin{proof}
1. We check that $X\big/(q-z)X$ satisfies the universal property of $X \otimes_A \mathbb{C}_z$. Let $u : X \times \mathbb{C}_z \to X\big/(q-z)X$, $(x,c) \mapsto cx + (q-z)X$ which is obviously an $A$-balanced map, i.e. $u(P\cdot x,c) = u\bigl( x, P(z)c \bigr)$ for all $P \in A$. Let $h : X \times \mathbb{C}_z \to G$ be another morphism of abelian groups which is $A$-balanced. Define $\bar{h} : X\big/(q-z)X \to G$ by $\bar{h}\bigl(x + (q-z)X \bigr) = h(x,1)$. It is easy to check that $\bar{h}$ is well-defined and satisfies $\bar{h} \circ u = h$. By surjectivity of $u$ this factorization of $h$ is unique, which is the desired universal property. With this model of $X \otimes_A \mathbb{C}_z$ we have $x \otimes_A 1 = u(x,1) = x + (q-z)X$, whence the second claim.
\\2. Assume we have a vanishing linear combination $\textstyle \sum_i \lambda_i (x_i)_{|z} = 0$ with $\lambda_i \in \mathbb{C}$. Viewing the $\lambda_i$'s as constant polynomials in $A$, this reads $\textstyle \bigl( \sum_i \lambda_i \cdot x_i \bigr)_{|z} = 0$. By item 1 it follows that $\textstyle \sum_i \lambda_i \cdot x_i = (q-z)x'$ for some $x' \in X$. Write $x' = \textstyle \sum_i P_i \cdot x_i$ with $P_i \in A$. Identifying coefficients in the basis $\{x_i\}$ gives $\lambda_i = (q-z)P_i$. Hence $\lambda_i = 0$ for all $i$, proving that the family $\{ (x_i)_{|z} \}$ is linearly independent. It is a generating family because the specialization map is surjective.
\end{proof}

\indent Let $Y$ be another $A$-module and $f \in \mathrm{Hom}_A(X,Y)$. Then we have the $\mathbb{C}$-linear map $f_z = f \otimes_A \mathrm{id}_{\mathbb{C}_z} : X_z \to Y_z$. It holds $f_z(x_{|z}) = f(x)_{|z}$, which is another way of defining $f_z$. We record the basic properties:
\begin{equation}\label{speSurjAndIso}
f \text{ surjective} \: \implies \: f_z \text{ surjective}, \qquad f \text{ isomorphism} \: \implies \: f_z \text{ isomorphism}.
\end{equation}
The first assertion is clear (more generally any tensor product functor is right exact); for the second it suffices to note that $(f^{-1})_z = (f_z)^{-1}$ by functoriality. Injectivity is not preserved in general; so one must be careful with specializations of submodules, because the specialization of the inclusion map is in general no longer injective.

\smallskip

\indent Note that, using associativity of $\otimes$ and the fact that $\mathbb{C}_z$ is an $(A,\mathbb{C})$-bimodule,
\[ (X \otimes_A Y)_z = X \otimes_A Y \otimes_A \mathbb{C}_z = X \otimes_A Y_z = X \otimes_A \mathbb{C}_z \otimes_{\mathbb{C}} Y_z = X_z \otimes_{\mathbb{C}} Y_z. \]
Hence $(x \otimes_A y)_{|z}$ is identified with $x_{|z} \otimes y_{|z}$, where we simply write $\otimes$ instead of $\otimes_{\mathbb{C}}$.

\indent We will use pairings over $A$, which are $A$-bilinear maps of the form $b : X \times Y \to A$. By the above remarks, the specialized version $b_z = b \otimes_A \mathrm{id}_{\mathbb{C}_z}$ is the $\mathbb{C}$-bilinear map $b_z : X_{|z} \times Y_{|z} \to \mathbb{C}$ defined by $b_z(x_{|z}, y_{|z}) = b(x,y)_{|z}$.

\subsection{Central localization of a domain}\label{subsecCentralLoc}
A {\em domain} (a.k.a. ring without zero divisors) is a ring in which $xy=0$ implies $x=0$ or $y=0$. If $\mathcal{Z}$ is a commutative domain, we denote by $Q(\mathcal{Z})$ its field of fractions.

\indent Let $R$ be a domain and denote by $\mathcal{Z} := \mathcal{Z}(R)$ its center. The {\em central localization} of $R$ is
\[ Q(R) := Q(\mathcal{Z}) \otimes_{\mathcal{Z}} R. \]
The functor $Q(\mathcal{Z}) \otimes_{\mathcal{Z}} - : \mathcal{Z}\text{-}\mathrm{Mod} \to Q(\mathcal{Z})\text{-}\mathrm{Vect}$ is isomorphic to the localization functor at $\mathcal{Z}\setminus\! \{0\}$, see e.g. \cite[Prop.\,1.10.18]{Rowen}. Hence $Q(R)$ can also be defined as the localization $(\mathcal{Z}\setminus \{0\})^{-1}R$, and their natural ring structures are isomorphic \cite[Cor.\,1.10.18']{Rowen}. Explicitly, $\textstyle \frac{z}{z'} \otimes_{\mathcal{Z}} r$ is identified with the fraction $\textstyle \frac{zr}{z'}$. Note that $Q(R)$ has a natural structure of $Q(\mathcal{Z})$-algebra, and we denote by $\bigl[ Q(R):Q(\mathcal{Z}) \bigr]$ its dimension over that field.

\begin{lem}\label{lemTrucsGenerauxQR}
1. The ring morphism $q_R : R \to Q(R)$, $r \mapsto 1 \otimes_{\mathcal{Z}} r$ is injective.
\\2. The center of $Q(R)$ is $Q( \mathcal{Z})$, or more precisely $Q( \mathcal{Z}) \otimes _{\mathcal{Z}} 1$.
\\3. If $Z_0 \subset \mathcal{Z}$ is a subring such that $Q(\mathcal{Z})$ is a finite extension of $Q(Z_0)$, then $Q(R) \cong Q(Z_0) \otimes_{Z_0} R$.
\\4. If $R$ is finitely generated as a $\mathcal{Z}$-module then $\bigl[ Q(R):Q(\mathcal{Z}) \bigr] < \infty$. If moreover $R$ is free over $\mathcal{Z}$ then $\mathrm{rank}_{\mathcal{Z}}(F) = \bigl[ Q(R):Q(\mathcal{Z}) \bigr]$.
\end{lem}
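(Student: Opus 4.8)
The plan is to carry out all four items inside the localization model $Q(R) = (\mathcal{Z}\setminus\{0\})^{-1}R$ supplied just above the statement, writing a general element as a fraction $r/s$ with $r\in R$ and $s\in\mathcal{Z}\setminus\{0\}$ central, since each claim then reduces to a standard property of localization at a central multiplicative set. For item 1, the map $q_R$ sends $r$ to $r/1$, whose kernel consists of the $r\in R$ annihilated by some $s\in\mathcal{Z}\setminus\{0\}$; as $R$ is a domain and $s\neq 0$, this forces $r=0$, giving injectivity.

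For item 2, one inclusion is immediate: every $z/z'$ with $z,z'\in\mathcal{Z}$ commutes with all $r/1$, so $Q(\mathcal{Z})\otimes_{\mathcal{Z}}1$ is central. For the converse I would first note that $Q(R)=Q(\mathcal{Z})\cdot q_R(R)$, so being central is equivalent to commuting with $q_R(R)$. Taking a central $\xi=r/s$ with common central denominator $s$ and imposing that it commutes with every $q_R(r')$, the centrality of $s$ together with injectivity of $x\mapsto x/s$ on the domain $R$ yields $rr'-r'r=0$ for all $r'\in R$, hence $r\in\mathcal{Z}(R)=\mathcal{Z}$ and $\xi\in Q(\mathcal{Z})$.

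Item 4 follows from elementary base change. If $r_1,\dots,r_k$ generate $R$ over $\mathcal{Z}$, then the elements $1\otimes r_i$ span $Q(R)=Q(\mathcal{Z})\otimes_{\mathcal{Z}}R$ over $Q(\mathcal{Z})$, whence $[Q(R):Q(\mathcal{Z})]\leq k<\infty$. If the $r_i$ form a $\mathcal{Z}$-basis, then $Q(\mathcal{Z})\otimes_{\mathcal{Z}}R\cong Q(\mathcal{Z})\otimes_{\mathcal{Z}}\mathcal{Z}^{k}\cong Q(\mathcal{Z})^{k}$, so the $1\otimes r_i$ form a $Q(\mathcal{Z})$-basis and $[Q(R):Q(\mathcal{Z})]=\mathrm{rank}_{\mathcal{Z}}(R)$.

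The substantive step is item 3, which I expect to be the main obstacle. Identify $Q(Z_0)\otimes_{Z_0}R=(Z_0\setminus\{0\})^{-1}R=:R'$; since $Z_0\setminus\{0\}\subset\mathcal{Z}\setminus\{0\}$, transitivity of localization gives $Q(R)=(\mathcal{Z}\setminus\{0\})^{-1}R'$ via a canonical map $R'\to Q(R)$. It therefore suffices to show that every $z\in\mathcal{Z}\setminus\{0\}$ is already invertible in $R'$, for then the further localization is trivial and the map is an isomorphism. Because $Q(\mathcal{Z})$ is a finite, hence algebraic, extension of $Q(Z_0)$, each such $z$ satisfies a monic minimal polynomial $x^{n}+a_{n-1}x^{n-1}+\dots+a_0$ over $Q(Z_0)$; rewriting $z(z^{n-1}+\dots+a_1)=-a_0$ exhibits $z^{-1}=-a_0^{-1}(z^{n-1}+\dots+a_1)$ as a $Q(Z_0)$-polynomial in $z$, which lies in $Q(Z_0)\otimes_{Z_0}\mathcal{Z}\subset R'$. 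The one delicate point, where both the domain hypothesis and minimality intervene, is the nonvanishing of the constant term $a_0$: if $a_0=0$ one could factor out $x$ and, since $z\neq 0$ in the field $Q(\mathcal{Z})$, obtain a lower-degree monic relation, contradicting minimality. Establishing this and the resulting invertibility is the crux; the remaining identifications are formal.
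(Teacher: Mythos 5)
Your proof is correct, and the essential algebraic content --- especially the minimal-polynomial trick in item 3, where minimality forces the constant term $a_0 \neq 0$ and thereby produces an inverse of $z$ with coefficients in $Q(Z_0)$ --- is the same as in the paper. The difference is one of packaging. The paper argues with tensor products throughout: its item 1 follows by applying the exact functor $Q(\mathcal{Z}) \otimes_{\mathcal{Z}} -$ to the injective map $z \mapsto zr$, and its item 3 is organized as: $Q(Z_0) \otimes_{Z_0} \mathcal{Z}$ is a field, hence coincides with $Q(\mathcal{Z})$, and then $Q(\mathcal{Z}) \otimes_{\mathcal{Z}} R \cong Q(Z_0) \otimes_{Z_0} R$ by associativity of $\otimes$. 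You work instead in the localization model: items 1 and 2 become elementary fraction computations (no flatness or exactness needed), and item 3 becomes the statement that elements of $\mathcal{Z}\setminus\{0\}$ are already invertible in $R' = (Z_0\setminus\{0\})^{-1}R$, so the further localization $R' \to Q(R)$ is trivial. Your version buys elementary transparency in the injectivity statements; the paper's buys a purely formal conclusion to item 3. One step you should spell out: the identity $z(z^{n-1}+\dots+a_1) = -a_0$ holds a priori in $Q(\mathcal{Z})$, and to use it inside $R'$ you should either clear denominators so as to obtain a polynomial relation with coefficients in $Z_0$ that holds in $\mathcal{Z}$ itself (legitimate since $\mathcal{Z}$ embeds in $Q(\mathcal{Z})$), or note that $R'$ is a domain whose canonical map to $Q(R) \supset Q(\mathcal{Z})$ is injective; this is routine, and the paper makes the same silent identification when it rewrites $P(z)=0$ inside $Q(Z_0)\otimes_{Z_0}\mathcal{Z}$, so it is a shared elision rather than a gap in substance.
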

\begin{proof}
1. Let $0 \neq r \in R$. The $\mathcal{Z}$-linear map $m_r : \mathcal{Z} \to R$, $z \mapsto zr$ is injective because $R$ is a domain. The functor $Q(\mathcal{Z}) \otimes_{\mathcal{Z}} -$ being exact \cite[Th.\,3.1.20]{Rowen}, the resulting $Q(\mathcal{Z})$-linear map $\widetilde{m}_r : Q(\mathcal{Z}) \to Q(R)$, $\textstyle \frac{z}{z'} \mapsto \frac{z}{z'} \otimes_{\mathcal{Z}} r$ is injective. Hence $\widetilde{m}_r(1) = 1 \otimes_{\mathcal{Z}} r \neq 0$.
\\2. Note that a general element $\textstyle \sum_{i=1}^n \frac{a_i}{b_i} \otimes_{\mathcal{Z}} r_i$ can be rewritten as $\textstyle \frac{1}{b_1\ldots b_n} \otimes_{\mathcal{Z}} \bigl( \sum_{i=1}^n r_i a_i\prod_{j \neq i}b_j \bigr)$ and hence we are reduced to elements of the form $z^{-1} \otimes_{\mathcal{Z}} r$. Assume such an element is central. Then $1 \otimes _{\mathcal{Z}} r$ is central as well, which by item 1 implies that $r$ is central in $R$. But then $z^{-1} \otimes_{\mathcal{Z}} r = z^{-1}r \otimes_{\mathcal{Z}} 1$ and we are done.
\\3. The key is to prove that $Q(\mathcal{Z}) \cong Q(Z_0) \otimes_{Z_0} \mathcal{Z}$. For this let us first show that $Q(Z_0) \otimes_{Z_0} \mathcal{Z}$ is a field, which is easily seen to be equivalent to the claim that any non-zero element $1 \otimes_{Z_0} z$ has an inverse. Let $P = \alpha_0 + \alpha_1X + \ldots + \alpha_dX^d \in Q(Z_0)[X]$ be the minimal polynomial of $z \in Q(\mathcal{Z})$ over $Q(Z_0)$; note that $\alpha_0 \neq 0$ by minimality of $P$. In $ Q(Z_0) \otimes_{Z_0} \mathcal{Z}$ we can rewrite the equality $P(z) = 0$ as $(1 \otimes_{Z_0} z)\bigl[ -(\alpha_0^{-1} \otimes_{Z_0} \bigl( \alpha_d z^{d-1} + \alpha_{d-1}z^{d-2} + \ldots + \alpha_1 \bigr) \bigr] = 1$, giving the inverse of $1 \otimes_{Z_0} z$. We deduce that the morphism $Q(Z_0) \otimes_{Z_0} \mathcal{Z} \to Q(\mathcal{Z})$, $\frac{z_0}{z'_0} \otimes_{Z_0} z \mapsto \frac{z_0z}{z'_0}$ is surjective; it is also injective because its source is a field. As a result
\[ Q(\mathcal{Z}) \otimes_{\mathcal{Z}} R \cong \bigl( Q(Z_0) \otimes_{Z_0} \mathcal{Z} \bigr) \otimes_{\mathcal{Z}} R = Q(Z_0) \otimes_{Z_0} \bigl( \mathcal{Z} \otimes_{\mathcal{Z}} R \bigr) \cong Q(Z_0) \otimes_{Z_0} R. \]
4. If $\{r_i\}_{i=1}^n \subset R$ is a generating family over $\mathcal{Z}$ then any $z^{-1} \otimes_{\mathcal{Z}} r \in Q(R)$ can be written as $z^{-1} \otimes_{\mathcal{Z}} \bigl( \textstyle \sum_{i=1}^n z_ir_i \bigr) = \textstyle \sum_{i=1}^n z^{-1}z_i \otimes_{\mathcal{Z}} r_i$, proving that $\{1 \otimes_{\mathcal{Z}} r_i\}_{i=1}^n \subset Q(R)$ is a generating family over $Q(\mathcal{Z})$. If $\{r_i\}_{i=1}^n$ is a basis then so is $\{1 \otimes_{\mathcal{Z}} r_i\}_{i=1}^n$ thanks to item 1.
\end{proof}

\indent The central localization is especially interesting when $Q(R)$ is finite over $Q(\mathcal{Z})$.
\begin{lem}\label{lemQRCSA}
If $\bigl[ Q(R):Q(\mathcal{Z}) \bigr] < \infty$ then $Q(R)$ is a division algebra. In particular, it is a central simple algebra over the field $Q(\mathcal{Z})$ and $[Q(R) : Q(\mathcal{Z})]=d^2$ for some integer $d$ called the {\em PI-degree} of $Q(R)$.
\end{lem}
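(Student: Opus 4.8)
The plan is to go through three classical steps: first show that $Q(R)$ is again a domain, then upgrade this to a division algebra using finite-dimensionality, and finally invoke the structure theory of central simple algebras to extract the perfect-square statement.

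First I would observe that $Q(R)$ has no zero divisors. Since $\mathcal{Z}\setminus\{0\}$ is a central (hence automatically Ore) multiplicative subset of $R$, the localization $Q(R) \cong (\mathcal{Z}\setminus\{0\})^{-1}R$ has elements of the form $s^{-1}r$ with $s \in \mathcal{Z}\setminus\{0\}$ and $r \in R$, and the product of two such elements equals $(st)^{-1}(rr')$ because $s,t$ are central. By Lemma \ref{lemTrucsGenerauxQR}(1) the localization map $R \to Q(R)$ is injective, so this product vanishes iff $rr' = 0$ in $R$; as $R$ is a domain this forces $r=0$ or $r'=0$, whence $Q(R)$ is a domain.

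Next I would use the hypothesis $\bigl[ Q(R):Q(\mathcal{Z}) \bigr] < \infty$ to turn this domain into a division algebra. For any nonzero $a \in Q(R)$, left multiplication $L_a : Q(R) \to Q(R)$ is a $Q(\mathcal{Z})$-linear map which is injective (because $Q(R)$ has no zero divisors), hence bijective on the finite-dimensional $Q(\mathcal{Z})$-vector space $Q(R)$; the preimage of $1$ is a right inverse of $a$, and symmetrically $a$ admits a left inverse, so $a$ is invertible. Thus $Q(R)$ is a division algebra. In particular it is simple, and by Lemma \ref{lemTrucsGenerauxQR}(2) its center is precisely $Q(\mathcal{Z})$, so $Q(R)$ is a finite-dimensional central simple algebra over $Q(\mathcal{Z})$.

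For the final assertion I would base-change to an algebraic closure $\overline{K}$ of $K := Q(\mathcal{Z})$. The algebra $Q(R) \otimes_K \overline{K}$ is central simple over the algebraically closed field $\overline{K}$, hence by Wedderburn's theorem isomorphic to a matrix algebra $M_d(\overline{K})$ for some integer $d$; since the dimension is unchanged under base change, $[Q(R):K] = \dim_{\overline{K}} M_d(\overline{K}) = d^2$, and this $d$ is the PI-degree. None of these steps presents a real obstacle—each is standard—the only point requiring care is verifying that the central localization really is a domain with center exactly $Q(\mathcal{Z})$, which is already supplied by Lemma \ref{lemTrucsGenerauxQR}.
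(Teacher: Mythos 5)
Your proposal is correct and follows essentially the same route as the paper: domain plus finite dimension over $Q(\mathcal{Z})$ gives a division algebra, the center is identified via Lemma \ref{lemTrucsGenerauxQR}(2), and the perfect-square claim comes from a splitting field (your base change to $\overline{K}$ with Wedderburn is exactly the standard instantiation of the splitting-field fact the paper cites from \cite{Rowen} and \cite{MC-R}). You merely fill in details the paper leaves as citations, namely the proof that a finite-dimensional algebra without zero divisors is a division algebra and the verification that $Q(R)$ is a domain.
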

\begin{proof}
The first claim uses the easy fact that a finite-dimensional algebra without zero divisors is a division algebra. In particular $Q(R)$ is a simple $Q(\mathcal{Z})$-algebra, and its center is $Q(\mathcal{Z})$ by Lemma \ref{lemTrucsGenerauxQR}(2). The last claim is a general fact: if $S$ is a finite-dimensional central simple algebra over a field $F$, then $[S : F] = d^2$. This follows from the existence of a splitting field, i.e. a field extension $\mathbb{F}$ of $F$ such that $\mathbb{F} \otimes_F S \cong M_d(\mathbb{F})$; see e.g. \cite[Cor.\,2.3.25]{Rowen} or \cite[\S 13.3]{MC-R}.
\end{proof}

Although it looks a bit technical the next lemma will allow us to perform a ``reduction to the classical case'' in \S\ref{subsecPIdegInv}, when $R$ is a specialization of a ``quantum algebra'':
\begin{lem}\label{lemmeTechniqueInjectivite}
Let $R, S$ be domains, $Z_0 \subset \mathcal{Z}(R)$ be a central subring and $f : R \to S$ be a ring morphism. Assume that $\bigl[ Q(R) : Q(Z_0) \bigl] < \infty$ and $f\bigl( Z_0 \setminus \{0\} \bigr) \subset \mathcal{Z}(S) \setminus \{ 0 \}$. Then $f$ is injective.
\end{lem}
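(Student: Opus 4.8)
The plan is to exploit the finiteness hypothesis to make every element of $R$ a root of a polynomial with coefficients in the central subring $Z_0$, and then to transport such a relation across $f$. First I would record that $R$ embeds in its central localization $Q(R)$ by Lemma~\ref{lemTrucsGenerauxQR}(1), and that $Q(R)$ is a finite-dimensional vector space over the field $Q(Z_0)$ by hypothesis: since $Z_0 \subset \mathcal{Z}(R)$ we have $Q(Z_0) \subseteq Q(\mathcal{Z}(R))$, and $Q(R)$ is a $Q(\mathcal{Z}(R))$-algebra, hence a $Q(Z_0)$-vector space whose dimension $\bigl[ Q(R):Q(Z_0) \bigr]$ is assumed finite. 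Fix a nonzero $r \in R$; the goal is to show $f(r) \neq 0$, which gives injectivity.

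Since $\dim_{Q(Z_0)} Q(R) < \infty$, the powers $1, r, r^2, \dots$ of $r$, viewed in $Q(R)$, are linearly dependent over $Q(Z_0)$. Writing such a dependence and clearing denominators, i.e. multiplying through by a common denominator lying in $Z_0 \setminus \{0\}$ (nonzero because $Z_0$ is a domain), I obtain a nontrivial relation $\sum_{i=0}^d c_i r^i = 0$ with $c_i \in Z_0$; as $R \hookrightarrow Q(R)$, this identity already holds in $R$. Among all such nontrivial relations I then choose one of minimal degree.

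The crux is that this minimal relation has nonzero constant term $c_0$. Indeed, the $c_i$ lie in $Z_0 \subset \mathcal{Z}(R)$, hence commute with $r$, so if $c_0 = 0$ one may factor $\sum_{i \geq 1} c_i r^i = \bigl( \sum_{i \geq 1} c_i r^{i-1} \bigr) r$; since $R$ is a domain and $r \neq 0$, this produces a nontrivial relation of strictly smaller degree, contradicting minimality. With $c_0 \neq 0$ in hand, I apply the ring morphism $f$ to $\sum_{i=0}^d c_i r^i = 0$. If $f(r) = 0$, then every term with $i \geq 1$ vanishes and we are left with $f(c_0) = 0$. But $c_0 \in Z_0 \setminus \{0\}$, so the hypothesis $f\bigl( Z_0 \setminus \{0\} \bigr) \subset \mathcal{Z}(S) \setminus \{0\}$ forces $f(c_0) \neq 0$, a contradiction. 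Hence $f(r) \neq 0$ for every nonzero $r$, so $f$ is injective.

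I expect the only delicate point to be the nonvanishing of $c_0$: this is exactly where both the domain property of $R$ and the centrality of $Z_0$ (enabling the factorization of $r$) are used together, and passing to a relation of minimal degree is what makes the argument close. Everything else is a routine use of the finiteness hypothesis and of the embedding $R \hookrightarrow Q(R)$; in particular note that the hypothesis that $S$ is a domain is not needed for this argument.
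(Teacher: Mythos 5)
Your proof is correct, and it takes a genuinely different route from the paper's. The paper's proof is a quick consequence of the surrounding lemmas: by Lemma \ref{lemTrucsGenerauxQR}(3) the hypothesis gives $Q(R) = Q(Z_0)\otimes_{Z_0} R$, the assumptions on $f$ (including that $S$ is a domain and that $f(Z_0\setminus\{0\})$ is \emph{central} and nonzero in $S$) make the map $\widetilde{f}\colon Q(R)\to Q(S)$, $\frac{z_0}{z_0'}\otimes_{Z_0} r \mapsto \frac{f(z_0)}{f(z_0')}\otimes_{\mathcal{Z}(S)} f(r)$ well-defined, and since $Q(R)$ is a division algebra by Lemma \ref{lemQRCSA} the ring morphism $\widetilde{f}$ is automatically injective; injectivity of $f$ then follows from $q_S\circ f = \widetilde{f}\circ q_R$. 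Your argument instead works pointwise inside $R$: finiteness of $\bigl[Q(R):Q(Z_0)\bigr]$ forces each nonzero $r$ to satisfy a nontrivial polynomial relation over $Z_0$, and the combination of a minimal-degree choice, centrality of $Z_0$ (to factor out $r$), and the domain property of $R$ guarantees a nonzero constant term $c_0$, which $f$ cannot annihilate. What your approach buys: it is elementary (no division-algebra input, no localization of $S$), and it proves a slightly stronger statement --- as you observe, $S$ need not be a domain, and in fact you also never use that $f(Z_0\setminus\{0\})$ lands in $\mathcal{Z}(S)$, only that it avoids $0$, whereas both of these hypotheses are genuinely consumed by the paper's construction of $\widetilde{f}$. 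What the paper's approach buys: given that Lemmas \ref{lemTrucsGenerauxQR} and \ref{lemQRCSA} are already established and used throughout, its proof is essentially two lines, and it makes transparent the structural reason for injectivity (any ring morphism out of a division ring is injective).
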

\begin{proof}
The dimension assumption implies $\bigl[ Q\bigl( \mathcal{Z}(R) \bigr) : Q(Z_0) \bigl] < \infty$ and thus $Q(R)$ $=$ $Q(Z_0)$ $\otimes_{Z_0} R$ thanks to Lemma \ref{lemTrucsGenerauxQR}(3). Due to the assumption on $f$ the following map is well-defined:
\[ \widetilde{f} : Q(R) = Q(Z_0) \otimes_{Z_0} R \to Q(S), \quad  \frac{z_0}{z'_0} \otimes_{Z_0} r \mapsto \frac{f(z_0)}{f(z'_0)} \otimes_{\mathcal{Z}(S)} f(r). \]
The dimension assumption also gives $\bigl[ Q(R) : Q\bigl( \mathcal{Z}(R) \bigr) \bigr] < \infty$, so that $Q(R)$ is a division algebra by Lemma \ref{lemQRCSA}. Hence the ring morphism $\widetilde{f}$ is necessarily injective. Recall the embeddings $q_R : R \hookrightarrow Q(R)$ and $q_S : S \hookrightarrow Q(S)$ from Lemma \ref{lemTrucsGenerauxQR}(1). By very definition we have $q_S \circ f = \widetilde{f} \circ q_R$, which forces injectivity of $f$.
\end{proof}

\indent Under mild assumptions on $R$ the PI-degree of $Q(R)$ defined in Lemma \ref{lemQRCSA} is of extreme relevance for the representation theory of $R$, as we now recall. Assume that $R$ is an associative algebra over an algebraically closed field $\Bbbk$, still without zero divisors. Let $V$ be a simple $R$-module which is finite-dimensional over $\Bbbk$; then by Schur's lemma if $z \in \mathcal{Z} = \mathcal{Z}(R)$ there exists $\lambda_z \in \Bbbk$ such that $z \cdot v = \lambda_z v$ for all $v \in V$. The morphism
\[ \chi_V : \mathcal{Z} \to \Bbbk, \quad z \mapsto \lambda_z. \]
is called the {\em central character} of $V$. More generally, we denote by $\mathrm{MaxSpec}(\mathcal{Z})$ the set of characters of $\mathcal{Z}$ (i.e. algebra morphisms $\mathcal{Z} \to \Bbbk$) and recall that it is endowed with the Zariski topology.

\begin{teo}\label{unicityThm}
Let $R$ be an associative algebra over an algebraically closed field $\Bbbk$. Assume that $R$ is a domain, finitely generated as an algebra, and finitely generated as a module over its center $\mathcal{Z}$. Denote by $d$ the PI-degree of $Q(R)$ defined in Lemma \ref{lemQRCSA}. Then:
\\1. Each $\chi \in \mathrm{MaxSpec}(\mathcal{Z})$ is the central character of some simple $R$-module, which is finite-dimensional over $\Bbbk$.
\\2. Any finite-dimensional simple $R$-module has dimension $\leq d$.
\\3. {\em (``Unicity theorem for representations'')} There is an open and dense Zariski subset $\mathcal{S}$ of $\mathrm{MaxSpec}(\mathcal{Z})$ such that $R\big/R\ker(\chi) \cong M_d(\Bbbk)$ for all $\chi \in \mathcal{S}$. It follows that every $\chi\in \mathcal{S}$ is the central character of a unique simple $R$-module (up to isomorphism), which has dimension $d$.
\end{teo}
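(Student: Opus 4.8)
The plan is to deduce Theorem \ref{unicityThm} from the classical representation theory of prime affine PI algebras that are module-finite over their centre (Posner's theorem and the Azumaya-locus formalism; see \cite[Chap.\,13]{MC-R}, \cite[Chap.\,6]{Rowen}, \cite[III.1]{BG}). First I would fix the standing data. Since $R$ is a domain that is finitely generated as a $\mathcal{Z}$-module, Lemma \ref{lemTrucsGenerauxQR}(4) yields $[Q(R):Q(\mathcal{Z})]<\infty$, so Lemma \ref{lemQRCSA} applies: $Q(R)$ is a central simple $Q(\mathcal{Z})$-algebra with $[Q(R):Q(\mathcal{Z})]=d^2$. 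Because $R$ is affine and module-finite over $\mathcal{Z}$, the Artin--Tate lemma shows that $\mathcal{Z}$ is itself an affine $\Bbbk$-algebra; as $\mathcal{Z}$ is a domain, $\mathrm{MaxSpec}(\mathcal{Z})$ is an irreducible affine variety. These are exactly the hypotheses under which the three assertions hold.

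For Part 1, I would fix $\chi$ with maximal ideal $\mathfrak{m}=\ker(\chi)$ and set $R^\chi=R/\mathfrak{m}R$. Nakayama's lemma, applied to the nonzero finitely generated $\mathcal{Z}$-module $R$, gives $R^\chi\neq 0$, while a finite $\mathcal{Z}$-generating set of $R$ spans $R^\chi$ over $\mathcal{Z}/\mathfrak{m}=\Bbbk$, so $\dim_\Bbbk R^\chi<\infty$. A nonzero finite-dimensional algebra has a simple module $V$, on which the centre acts through $\chi$; thus $\chi$ is a central character. For Part 2, let $V$ be finite-dimensional simple of dimension $n$. Schur's lemma gives $\mathrm{End}_R(V)=\Bbbk$, so the Jacobson density theorem produces a surjection $R\twoheadrightarrow \mathrm{End}_\Bbbk(V)=M_n(\Bbbk)$. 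Over a splitting field the central simple algebra $Q(R)$ becomes $M_d$, which satisfies the standard identity $s_{2d}$ by Amitsur--Levitzki; since $s_{2d}$ is multilinear it descends to $Q(R)$, hence to its subring $R$ and to the quotient $M_n(\Bbbk)$. As $M_n(\Bbbk)$ satisfies $s_{2d}$ precisely when $n\leq d$, we conclude $n\leq d$.

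Part 3 is the substantial one and identifies $\mathcal{S}$ with the Azumaya locus $\mathcal{A}=\{\chi:R^\chi\cong M_d(\Bbbk)\}$. Two facts are needed. First, $\mathcal{A}$ is open: this is the openness of the Azumaya locus of a module-finite algebra over its centre \cite[III.1.6]{BG}. Second, $\mathcal{A}$ is nonempty: since $Q(R)$ is central simple it is Azumaya over $Q(\mathcal{Z})$, so the Azumaya locus in $\mathrm{Spec}(\mathcal{Z})$ contains the generic point, hence a nonempty distinguished open $\{\chi:\chi(f)\neq 0\}$ for some nonzero $f\in\mathcal{Z}$, whose closed points lie in $\mathcal{A}$. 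By irreducibility of $\mathrm{MaxSpec}(\mathcal{Z})$ the nonempty open set $\mathcal{S}=\mathcal{A}$ is dense. Finally, for $\chi\in\mathcal{S}$ the fibre $R^\chi$ is a central simple algebra of dimension $d^2$ over the algebraically closed field $\Bbbk=\mathcal{Z}/\mathfrak{m}$, hence $R^\chi\cong M_d(\Bbbk)$; any simple $R$-module with central character $\chi$ is killed by the central ideal $\mathfrak{m}R$ and is therefore a simple $M_d(\Bbbk)$-module, of which there is exactly one up to isomorphism, of dimension $d$. This yields the unicity statement.

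I expect the main obstacle to be the rigorous treatment of Part 3, specifically the nonemptiness of $\mathcal{A}$. The delicate point is that $Q(R)$ is a division algebra, so the generic fibre is not a matrix ring; the matrix structure $M_d(\Bbbk)$ can only appear at closed points, where the residue field is the algebraically closed $\Bbbk$. Making precise that this occurs on a nonempty open set requires either the generic-Azumaya argument sketched above or, when $\mathcal{Z}$ is integrally closed (as in our applications), the cleaner route through the reduced trace: its form $(x,y)\mapsto t_{\mathrm{red}}(xy)$ is nondegenerate on the central simple algebra $Q(R)$, so the Gram determinant of a $Q(\mathcal{Z})$-basis of $Q(R)$ chosen inside $R$ is a nonzero element of $\mathcal{Z}$ whose nonvanishing locus lies in $\mathcal{A}$; this is the discriminant criterion underlying Corollary \ref{disAz}. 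In either case I would ultimately invoke \cite[III.1.6--III.1.7]{BG} (see also \cite[Chap.\,13]{MC-R}) for the open-dense Azumaya locus and the induced unicity of maximal-dimensional simple modules.
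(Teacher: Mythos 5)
Your proposal is correct and takes essentially the same approach as the paper, whose proof simply delegates to \cite[\S III.1.6]{BG} and \cite[\S 2]{FKL}: the Artin--Tate/Nakayama/standard-identity/Azumaya-locus machinery you write out is exactly the classical argument those references develop, and you invoke the same sources at the decisive steps (openness and nonemptiness of the Azumaya locus). Your closing remark correctly distinguishes the generic-Azumaya route, valid without assuming $\mathcal{Z}$ integrally closed, from the reduced-trace discriminant criterion underlying Corollary \ref{disAz}, which does require that extra hypothesis.
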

\begin{proof}
See for instance \cite[\S III.1.6]{BG}. A pedagogical detailed proof is also given in \cite[\S 2]{FKL}. 
\end{proof}
\noindent Theorem \ref{unicityThm} means that ``generically'' a character of $\mathcal{Z}$ determines a unique simple $R$-module.

\section{General facts on graph algebras}\label{sectiongraphalg}
Here we explain how to define the so-called {\em graph algebra} $\mathcal{L}_{g,n}(H)$ when the Hopf algebra $H$ is not necessarily quasitriangular in the strict sense ({\it i.e.}, $H$ has an $R$-matrix in an appropriate completion of $H\otimes H$), as well as the associated notions of Alekseev morphism and quantum moment map. This discussion is of course guided by the case $H = U_q(\mathfrak{g})$.

\subsection{A substitute for quasitriangularity}\label{subsecSubstQuasi}
Let $\Bbbk$  be a field and $H = (H,\cdot,1_H,\Delta,\varepsilon,S)$ be a Hopf $\Bbbk$-algebra with invertible antipode $S$. The $n$-th iterated coproduct of $H$ is denoted $\Delta^{(n)} := (\Delta \otimes \mathrm{id})\circ \Delta^{(n-1)}$, $n\geq 2$ (and by convention $\Delta^{(1)} :={\rm Id}$, $\Delta^{(2)} := \Delta$). We use Sweedler's coproduct notation: $\Delta^{(n)}(x) = \textstyle \sum_{(x)} x_{(1)}\otimes \ldots \otimes x_{(n)}$, sometimes with implicit summation.

\smallskip

\indent If $V$ is a finite-dimensional $H$-module, then for any $v \in V$ and $f \in V^*$ the linear form $f(? \cdot v) : H \to \Bbbk$ given by $h \mapsto f(h \cdot v)$ is called a {\em matrix coefficient} of $V$. Denote by $H^{\circ}$ the {\em restricted dual} of $H$ (a.k.a. finite dual), which is the subspace of $H^*$ generated by matrix coefficients of finite-dimensional $H$-modules. Endow this space with the usual Hopf structure $(H^\circ,\star,{\rm 1}_{H^\circ},\Delta_{H^\circ},\varepsilon_{H^\circ},S_{H^\circ})$, which is dual to that of $H$:
\begin{equation}\label{usualHopfDual}
\begin{array}{c}
(\varphi \star \psi)(x) = \sum_{(x)}\varphi(x_{(1)}) \psi(x_{(2)}), \quad \varphi(xy) = \sum_{(\varphi)}\varphi_{(1)}(x) \varphi_{(2)}(y) = \bigl\langle \Delta_{H^{\circ}}(\varphi), x\otimes y \bigr\rangle\\[.3em] \varepsilon_{H^\circ}(\varphi) = \varphi(1_H), \quad 1_{H^\circ} = \varepsilon, \quad S_{H^\circ}(\varphi) = \varphi \circ S
\end{array}
\end{equation}
for all $\varphi,\psi \in H^\circ$ and $x,y \in H$. The coregular actions $\rhd$, $\lhd$ of $H$ on $H^\circ$ are defined by
\begin{equation}\label{coregActions}
\forall \, \varphi \in H^\circ, \:\: \forall \, h,x \in H, \quad
(h \rhd \varphi)(x) = \varphi(xh), \quad (\varphi \lhd h)(x) = \varphi(hx).
\end{equation}
which can also be written as $h \rhd \varphi =  \textstyle \sum_{(\varphi)}\varphi_{(1)}\,\varphi_{(2)}(h)$, $\varphi \lhd h =  \textstyle \sum_{(\varphi)}\varphi_{(1)}(h)\varphi_{(2)}$.

\begin{remark}\label{remarkRestDualForC}
{\em More generally, if $\mathcal{C}$ is a full subcategory of $H\text{-}\mathrm{mod}$ (the finite-dimensional $H$-modules) which is stable by monoidal product, finite direct sums, and duals then there is a Hopf algebra $H^\circ_{\mathcal{C}} \subset H^\circ$ spanned by the matrix coefficients of objects in $\mathcal{C}$; note that this space is stable by the coregular actions of $H$. One can replace $H^\circ$ with such an $H^\circ_{\mathcal{C}}$ everywhere in the discussion below.}
\end{remark}

\indent We now make three assumptions on the Hopf algebra $H$.

\smallskip

\noindent \textit{Assumption 1:} $H^\circ$ separates the points of $H$, \textit{i.e.} the duality pairing $\langle \: , \: \rangle : H^\circ \times H \to \Bbbk$ is non-degenerate on the right (it is non-degenerate on the left by definition of $H^\circ$). We write indifferently $\varphi(h)$ or $\langle \varphi, h \rangle$.

\smallskip

\noindent \textit{Assumption 2:} We are given a morphism of Hopf algebras $\Phi^+ : H^{\circ} \to H^{\mathrm{cop}}$ (where $H^{\mathrm{cop}}$ is $H$ with opposite coproduct) such that
\begin{equation}\label{quasiCocommPhi}
\forall \, \varphi \in H^{\circ}, \:\: \forall\, h \in H, \quad \sum_{(h)} h_{(1)}\Phi^+\bigl( \varphi \lhd h_{(2)} \bigr) = \sum_{(h)} \Phi^+\bigl( h_{(1)} \rhd \varphi \bigr) h_{(2)}.
\end{equation}

\smallskip

\noindent \textit{Assumption 3:} There exists a $\Bbbk$-linear map $\Phi^- : H^{\circ} \to H$ such that
\begin{equation}\label{PhipmInverses}
\forall \, \varphi, \psi \in H^{\circ}, \quad \varphi\bigl( \Phi^-(\psi) \bigr) = \psi\bigl( \Phi^+(S(\varphi)) \bigr).
\end{equation}

\indent Several comments are in order.  First, since $H^\circ$ separates the points of $H$, it is easy to see that $\Phi^-$ has exactly the same properties as $\Phi^+$: it is a morphism of Hopf algebras $H^{\circ} \to H^{\mathrm{cop}}$ which satisfies \eqref{quasiCocommPhi}.

\indent It is a straightforward exercise to check from Assumption 2 that the bilinear form
\begin{equation}\label{defCoRMat}
\mathcal{R} : H^{\circ} \otimes H^{\circ} \to \Bbbk, \quad \mathcal{R}(\varphi \otimes \psi) = \psi\bigl( \Phi^+(\varphi) \bigr)
\end{equation}
is a co-R-matrix in the sense of Majid \cite[Def.\,2.2.1]{Majid}, \textit{i.e.} we have
\begin{align}
&\sum_{(\varphi),(\psi)}\mathcal{R}(\varphi_{(1)} \otimes \psi_{(1)})\varphi_{(2)} \star \psi_{(2)} = \sum_{(\varphi),(\psi)}\mathcal{R}(\varphi_{(2)} \otimes \psi_{(2)})\psi_{(1)}\star \varphi_{(1)}, \label{braidedComm}\\
&\mathcal{R}(\varphi \star \psi \otimes \eta) = \sum_{(\eta)} \mathcal{R}\bigl(\varphi \otimes \eta_{(1)}\bigr)\mathcal{R}\bigl(\psi \otimes \eta_{(2)}\bigr),\label{coTriang1}\\
&\mathcal{R}(\varphi \otimes \psi \star \eta) = \sum_{(\varphi)} \mathcal{R}\bigl(\varphi_{(1)} \otimes \eta\bigr)\mathcal{R}\bigl(\varphi_{(2)} \otimes \psi\bigr)\label{coTriang2}
\end{align}
for all $\varphi,\psi,\eta \in H^\circ$. The equality \eqref{braidedComm} comes from \eqref{quasiCocommPhi} while \eqref{coTriang1}--\eqref{coTriang2} directly follow from the fact that $\Phi^+$ is a morphism of Hopf algebra and the definition of $\star$. Note that these identities imply
\begin{equation}\label{coReps}\mathcal{R}\bigl(\varphi\otimes 1_{H^\circ}\bigr) = \mathcal{R}\bigl(1_{H^\circ} \otimes \varphi\bigr) = \varepsilon_{H^\circ}(\varphi).
\end{equation}
The map $\mathcal{R}^{-1} : H^\circ \otimes H^\circ \to \Bbbk$ given by $\mathcal{R}^{-1}(\varphi \otimes \psi) = \psi\bigl( \Phi^+(S(\varphi)) \bigr) = \mathcal{R}\bigl( S(\varphi) \otimes \psi \bigr)$ is the {\em convolution-inverse} of $\mathcal{R}$, meaning that
\[ \sum_{(\varphi),(\psi)} \mathcal{R}(\varphi_{(1)} \otimes \psi_{(1)})\mathcal{R}^{-1}(\varphi_{(2)} \otimes \psi_{(2)}) = \sum_{(\varphi),(\psi)} \mathcal{R}^{-1}(\varphi_{(1)} \otimes \psi_{(1)})\mathcal{R}(\varphi_{(2)} \otimes \psi_{(2)}) = \varepsilon_{H^\circ}(\varphi)\varepsilon_{H^\circ}(\psi) \]
for all $\varphi,\psi \in H^\circ$. We note however that the existence of the map $\Phi^+$ is stronger than the existence of a co-R-matrix; indeed, $\mathcal{R}$ yields the map $H^\circ \to (H^\circ)^*$ given by $\varphi \mapsto \mathcal{R}(\varphi \otimes \text{-})$ but it does not necessarily takes values in $H \subset (H^\circ)^*$.

\indent When $H$ is quasitriangular with R-matrix $R \in H^{\otimes 2}$, we put $R^+:=R$, $R^- := (R^{fl})^{-1}$, where $R^{fl}$ is $R$ with tensorands permuted. Note that $R^-$ is also an $R$-matrix for $H$. Then there is the canonical choice
\begin{equation}\label{LfuncFromRmat}
\Phi^+(\varphi) = (\varphi \otimes \mathrm{id})(R), \qquad \Phi^-(\varphi) = (\mathrm{id} \otimes \varphi)(R^{-1})=(\varphi \otimes \mathrm{id})(R^-).
\end{equation}
\noindent When $H$ is finite-dimensional, we have $H^\circ = H^*$ and the existence of $\Phi^+$ is equivalent to the existence of a co-R-matrix, which is itself equivalent to the existence of an R-matrix. Thus, in this case, $\Phi^+$ is an alternative definition of quasitriangularity in terms of Hopf algebra morphisms $H^* \to H$ satisfying \eqref{quasiCocommPhi}; this was noted by Radford in \cite{radfordQuasitriang}.

\smallskip

\indent The quantum enveloping algebra $U_q(\mathfrak{g})$ and its dual quantum function algebra $\mathcal{O}_q(G)$ form an example of the above setup, as will be explained in \S\ref{sec:Oq}.

\smallskip

\indent Finally, let us record the following commutation relations, for any signs $\sigma, \tau \in \{\pm\}$ 
\begin{align}
\begin{split}\label{RLLdeguise}
\sum_{(\varphi),(\psi)} \mathcal{R}^{(\sigma)}\bigl( \varphi_{(1)} \otimes \psi_{(1)} \bigr) \Phi^{\sigma}(\varphi_{(2)}) \Phi^\tau(\psi_{(2)}) &=  \sum_{(\varphi),(\psi)} \mathcal{R}^{(\sigma)}\bigl( \varphi_{(2)} \otimes \psi_{(2)} \bigr) \Phi^\tau(\psi_{(1)}) \Phi^{\sigma}(\varphi_{(1)})\\
\sum_{(\varphi),(\psi)} \mathcal{R}^{(\sigma)}\bigl( \varphi_{(1)} \otimes \psi_{(1)} \bigr) \Phi^{\tau}(\varphi_{(2)}) \Phi^\tau(\psi_{(2)}) &=  \sum_{(\varphi),(\psi)} \mathcal{R}^{(\sigma)}\bigl( \varphi_{(2)} \otimes \psi_{(2)} \bigr) \Phi^\tau(\psi_{(1)}) \Phi^{\tau}(\varphi_{(1)})
\end{split}
\end{align}
where we use the notation
\begin{equation}\label{coRMatPM}
\mathcal{R}^{(+)} = \mathcal{R} \quad \text{ and } \quad \mathcal{R}^{(-)}(\varphi \otimes \psi) = \mathcal{R}^{-1}(\psi \otimes \varphi) = \mathcal{R}\bigl(S(\psi) \otimes \varphi\bigr).
\end{equation}
The second equality in \eqref{RLLdeguise} is obvious as $\Phi^\tau$ is an algebra morphism and $\mathcal{R}^{\sigma}$ satisfies \eqref{braidedComm}; the first is independant of the second in case $\sigma$ and $\tau$ are distinct, and in this case it follows by taking $h=\Phi^\sigma(\varphi)$ in the property \eqref{quasiCocommPhi} for $\Phi^\tau$ (replacing $\varphi$ with $\psi$). With the notation introduced in \eqref{coRMatPM} we have
\begin{equation}\label{coRMatSign}
\forall \, \varphi, \psi \in H^\circ, \quad \mathcal{R}^{(\sigma)}(\varphi \otimes \psi) = \psi\bigl( \Phi^{\sigma}(\varphi) \bigr)
\end{equation}
and we note that $\mathcal{R}^{(\sigma)}$ is a co-R-matrix for all $\sigma \in \{\pm\}$. 

\subsection{Definition of \texorpdfstring{$\mathcal{L}_{g,n}(H)$}{the graph algebra}}\label{subsecDefLgnH}
\indent Given a datum $(H,\Phi^{\pm})$ as in the previous section, we define an algebra $\mathcal{L}_{g,n}(H)$ for any integers $g,n \geq 0$ not both equal to $0$. As a $\Bbbk$-vector space, $\mathcal{L}_{g,n}(H) = (H^{\circ})^{\otimes (2g+n)}$. Let $b_1,a_1,\ldots,b_g,a_g,m_{g+1},\ldots,m_{g+n}$ be formal symbols\footnote{These symbols have to be interpreted as the standard generators of the fundamental group of the oriented surface of genus $g$ with $n$ punctures and one boundary component.} and label each tensorand by one of these symbols as follows:
\begin{equation}\label{labelCopiesHcirc}
\mathcal{L}_{g,n}(H) = \underset{(b_1)}{H^{\circ}} \otimes \underset{(a_1)}{H^{\circ}} \otimes \ldots \otimes \underset{(b_g)}{H^{\circ}} \otimes \underset{(a_g)}{H^{\circ}} \otimes \underset{(m_{g+1})}{H^{\circ}} \otimes \ldots \otimes \underset{(m_{g+n})}{H^{\circ}}.
\end{equation}
In this way we get $\Bbbk$-linear embeddings $\mathfrak{i}_{b_i}, \mathfrak{i}_{a_i}, \mathfrak{i}_{m_{g+j}} : H^{\circ} \to \mathcal{L}_{g,n}(H)$ defined by $\mathfrak{i}_{b_1}(\varphi) = \varphi \otimes 1_{H^\circ} \otimes \ldots \otimes 1_{H^\circ}$ {\it etc}. An associative product in $\mathcal{L}_{g,n}(H)$ is defined by the following four rules, which use the co-R-matrix $\mathcal{R} : H^\circ \otimes H^\circ \to \Bbbk$ from \eqref{defCoRMat}.

\smallskip

\noindent (i) Elements in $(H^\circ)^{\otimes (2g+n)}$ can be written as linear combinations of ``ordered monomials'':
\begin{equation}\label{monomialRelLgn}
\varphi_1 \otimes \ldots \otimes \varphi_{2g+n} = \mathfrak{i}_{b_1}(\varphi_1)\mathfrak{i}_{a_1}(\varphi_2) \ldots \mathfrak{i}_{b_g}(\varphi_{2g-1})\mathfrak{i}_{a_g}(\varphi_{2g})\mathfrak{i}_{m_{g+1}}(\varphi_{2g+1}) \ldots \mathfrak{i}_{m_{g+n}}(\varphi_{2g+n}).
\end{equation}

\noindent (ii) For all $s \in \{ b_1,a_1, \ldots, b_g,a_g,m_{g+1}, \ldots, m_{g+n} \}$, we have a ``fusion relation'':
\begin{equation}\label{fusionRelL01}
\mathfrak{i}_s(\varphi) \mathfrak{i}_s(\psi) = \sum_{(\varphi),(\psi)}\mathcal{R}\bigl( \psi_{(1)} \otimes S(\varphi_{(3)}) \bigr) \mathcal{R}\bigl( \psi_{(3)} \otimes \varphi_{(2)} \bigr) \, \mathfrak{i}_s\bigl(\varphi_{(1)} \star \psi_{(2)} \bigr)
\end{equation}
where we use the usual Hopf algebra structure on $H^{\circ}$ recalled in \eqref{usualHopfDual}.

\smallskip

\noindent (iii) For all $1 \leq i \leq g$, we have an exchange relation:
\begin{align}
\begin{split}\label{L01prodsept25}
&\mathfrak{i}_{a_i}(\varphi)\mathfrak{i}_{b_i}(\psi)=\\
&\sum_{(\varphi),(\psi)}\mathcal{R}\bigl( \psi_{(1)} \otimes S(\varphi_{(5)}) \bigr) \mathcal{R}\bigl( \psi_{(2)} \otimes \varphi_{(1)} \bigr) \mathcal{R}\bigl( \psi_{(5)} \otimes \varphi_{(4)} \bigr) \mathcal{R}\bigl( \varphi_{(2)} \otimes \psi_{(4)} \bigr) \, \mathfrak{i}_{b_i}(\psi_{(3)}) \mathfrak{i}_{a_i}(\varphi_{(3)}).
\end{split}
\end{align}
(iv) Put $\mathrm{ind}(b_i) = \mathrm{ind}(a_i) = i$, $\mathrm{ind}(m_{g+j}) = g+j$. Then for all $s,t \in \{ b_1,a_1, \ldots, b_g,a_g$, $m_{g+1}, \ldots, m_{g+n} \}$ such that $\mathrm{ind}(t) > \mathrm{ind}(s)$, we have another exchange relation:
\begin{align}
\begin{split}\label{braidedTensProdComm}
&\mathfrak{i}_t(\varphi)\mathfrak{i}_s(\psi) =\\
&\sum_{(\varphi),(\psi)} \mathcal{R}\bigl( \psi_{(1)} \otimes S(\varphi_{(5)}) \bigr) \mathcal{R}\bigl( \psi_{(2)} \otimes \varphi_{(1)} \bigr) \mathcal{R}\bigl( S(\psi_{(4)}) \otimes \varphi_{(2)} \bigr) \mathcal{R}\bigl( \psi_{(5)} \otimes \varphi_{(4)} \bigr) \, \mathfrak{i}_s(\psi_{(3)}) \mathfrak{i}_t(\varphi_{(3)}).\!\!\!\!\!\!
\end{split}
\end{align}
\indent The algebra $\mathcal{L}_{g,n}(H)$ is called the {\em graph algebra} of the surface $\Sigma_{g,n}^{\circ}$ (oriented, genus $g$, $n$ punctures and one boundary circle). The rules (ii)--(iv) allow one to compute the product of any pair of elements of $\mathcal{L}_{g,n}(H)$ by \eqref{monomialRelLgn}.

Note that $\mathcal{L}_{0,1}(H)$ is just the vector space $H^{\circ}$ equipped with the product given by taking $\mathfrak{i}_s = \mathrm{id}_ {H^\circ}$ in the rule (ii); it is a version of Majid's braided dual construction, also called transmutation \cite{Majid0,Majid}. The linear maps $\mathfrak{i}_t : \mathcal{L}_{0,1}(H) \to \mathcal{L}_{g,n}(H)$ are morphisms of algebras for any $t \in \bigl\{ b_1,a_1,\ldots,b_g,a_g,m_{g+1},\ldots,m_{g+n} \bigr\}$. 

\begin{remark}
{\rm 1. When $H$ is quasitriangular and the datum $\Phi^{\pm}$ is chosen as in \eqref{LfuncFromRmat}, the algebra $\mathcal{L}_{g,n}(H)$ defined above is exactly the same as in \cite{BFR}.}
\end{remark}

\indent The (right) coadjoint action $\mathrm{coad}^r$ of any $h \in H$ on any $\varphi \in H^{\circ}$ is

\begin{equation}\label{defCoad}
\mathrm{coad}^r(h)(\varphi) = \sum_{(h)} S(h_{(2)}) \rhd \varphi \lhd h_{(1)} = \sum_{(\varphi)}\bigl\langle \varphi_{(1)} \star S(\varphi_{(3)}),h \bigr\rangle \,  \varphi_{(2)}
\end{equation}
with the coregular actions \eqref{coregActions}. Thanks to the iterated coproduct $\Delta^{(2g+n)} : H \to H^{\otimes (2g+n)}$ we get an action of $H$ on $\mathcal{L}_{g,n}(H) = (H^\circ)^{\otimes (2g+n)}$, still denoted by $\mathrm{coad}^r$:
\begin{equation}\label{coadLgn}
\mathrm{coad}^r(h)(\varphi_1 \otimes \ldots \otimes \varphi_{2g+n}) = \sum_{(h)} \mathrm{coad}^r(h_{(1)})(\varphi_1) \otimes \ldots \otimes \mathrm{coad}^r(h_{(2g+n)})(\varphi_{2g+n}).
\end{equation}
{\em Equipped with this action, $\mathcal{L}_{g,n}(H)$ becomes a right $H$-module-algebra}:
\[ \forall \, h \in H, \:\: \forall \, a,b \in \mathcal{L}_{g,n}(H), \quad \mathrm{coad}^r(h)(ab) = \sum_{(h)} \mathrm{coad}^r(h_{(1)})(a)\,\mathrm{coad}^r(h_{(2)})(b). \]
The subspace of invariant elements forms therefore a subalgebra called the {\em moduli algebra} and denoted by $\Ll^H_{g,n}(H)$ (the notation $\Ll_{g,n}^{\mathrm{inv}}(H)$ is also used in some other papers on graph algebras):
\begin{equation}\label{moduliAlgH}
\Ll^H_{g,n}(H) = \bigl\{ x \in \mathcal{L}_{g,n}(H) \,\big| \, \forall \, h \in H, \:\: \mathrm{coad}^r(h)(x) = \varepsilon(h)x \bigr\}.
\end{equation}

\indent It is convenient to introduce the left coaction $\delta : \mathcal{L}_{0,1}(H) \to H^\circ \otimes \mathcal{L}_{0,1}(H)$  which is dual to the right action $\mathrm{coad}^r : \mathcal{L}_{0,1}(H) \otimes H \to \mathcal{L}_{0,1}(H)$
 \begin{equation}\label{defCoactCoad}
\delta(\varphi) = \sum_{(\varphi)} \bigl( \varphi_{(1)} \star S(\varphi_{(3)}) \bigr) \otimes \varphi_{(2)}, \quad \text{written as } \delta(\varphi) = \sum_{[\varphi]} \varphi_{[1]} \otimes \varphi_{[2]}
\end{equation}
with a Sweedler type notation for coactions. More generally we have a left coaction $\delta : \mathcal{L}_{g,n}(H) \to H^\circ \otimes \mathcal{L}_{g,n}(H)$   dual to $\mathrm{coad}^r$, still denoted $\delta(x) = \textstyle \sum_{[x]} x_{[1]} \otimes x_{[2]}$; it yields a structure of left $H^\circ$-comodule-algebra. For instance in $\mathcal{L}_{1,0}(H)$ it is
\[ \delta(\beta \otimes \alpha) = \sum_{[\beta],[\alpha]} (\alpha_{[1]} \star \beta_{[1]}) \otimes \beta_{[2]} \otimes \alpha_{[2]}. \]
In particular $\varphi\in \Ll^H_{g,n}(H)$ if, and only if,
\begin{equation}\label{coinvinv}
\sum_{[\varphi]} \varphi_{[1]} \otimes \varphi_{[2]} = \varepsilon \otimes \varphi.
\end{equation}
This coaction can be used to rewrite the exchange relation \eqref{braidedTensProdComm} as follows. As vector spaces we have the decomposition $\mathcal{L}_{g,n}(H) =  \mathcal{L}_{1,0}(H)^{\otimes g} \otimes \mathcal{L}_{0,1}(H)^{\otimes n}$, which gives tensor-wise embeddings of algebras $\mathfrak{j}_k : \mathcal{L}_{1,0}(H) \to \mathcal{L}_{g,n}(H)$ for $k=1,\ldots,g$ and $\mathfrak{j}_{g+l} = \mathfrak{i}_{m_{g+l}} : \mathcal{L}_{0,1}(H) \to \mathcal{L}_{g,n}(H)$ for $l=1,\ldots,n$; the relation with notations introduced after \eqref{labelCopiesHcirc} is $\mathfrak{i}_{a_k} = \mathfrak{j}_k \circ \mathfrak{i}_a$ and $\mathfrak{i}_{b_k} = \mathfrak{j}_k \circ \mathfrak{i}_b$, where $\mathfrak{i}_b,\mathfrak{i}_a$ are embeddings of $\mathcal{L}_{0,1}(H) = H^\circ$ into $\mathcal{L}_{1,0}(H) = H^\circ \otimes H^\circ$. Then by \eqref{monomialRelLgn} we have
\begin{equation}\label{monomialFactoBraided}
x_1 \otimes \ldots \otimes x_g \otimes \varphi_1 \otimes \ldots \otimes \varphi_n = \mathfrak{j}_1(x_1) \ldots \mathfrak{j}_g(x_g)\mathfrak{j}_{g+1}(\varphi_1)\ldots \mathfrak{j}_{g+n}(\varphi_n)
\end{equation}
and using basic properties of the co-R-matrix $\mathcal{R}$, \eqref{braidedTensProdComm} is easily seen to be equivalent to
\begin{equation}\label{braidedTensProdLgn}
\forall \,1 \leq k < l \leq g+n, \quad \mathfrak{j}_l(y) \mathfrak{j}_k(x) = \sum_{[x],[y]}\mathcal{R}\bigl( x_{[1]} \otimes y_{[1]} \bigr) \, \mathfrak{j}_k(x_{[2]}) \mathfrak{j}_l(y_{[2]}).
\end{equation}
The properties \eqref{monomialFactoBraided}--\eqref{braidedTensProdLgn} mean that $\mathcal{L}_{g,n}(H) = \mathcal{L}_{1,0}(H)^{\widetilde{\otimes}\,g} \,\widetilde{\otimes}\, \mathcal{L}_{0,1}(H)^{\widetilde{\otimes}\,n}$, where $\widetilde{\otimes}$ is the {\em braided tensor product} in the category of $H^\circ$-comodules endowed with the braiding defined by $\mathcal{R}$; see \cite[\S 4.1]{BFR} for details in the quasitriangular case.

\begin{lem}\label{Z1general}
1. We have $\mathcal{L}_{0,1}^H(H) \subset \mathcal{Z}\bigl( \mathcal{L}_{0,1}(H) \bigr)$.
\\2. The subspace $\bigl( 1_{\mathcal{L}_{1,0}(H)} \bigr)^{\otimes g} \otimes \mathcal{L}_{0,1}^H(H)^{\otimes n}$ is a central subalgebra of $\mathcal{L}_{g,n}(H)$.
\end{lem}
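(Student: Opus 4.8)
The plan is to prove part~1 first and then bootstrap part~2 from it using the braided tensor product relation \eqref{braidedTensProdLgn}, the common mechanism being that the coaction $\delta$ is \emph{trivial} on invariant elements, so that every $\mathcal{R}$-factor coupling an invariant element to a neighbour collapses via \eqref{coReps} and the counit axiom of $\delta$.

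For part~1 I would use that $\mathcal{L}_{0,1}(H)$ is the transmutation (braided dual) of the coquasitriangular Hopf algebra $(H^\circ,\mathcal{R})$: its product \eqref{fusionRelL01} with $\mathfrak{i}_s=\mathrm{id}$ is exactly the covariantized product, so it is braided-commutative with respect to $\delta$ and the braiding induced by $\mathcal{R}$ (Majid). Concretely, the opposite product $\psi\varphi$ is obtained from $\varphi\psi$ by applying that braiding, whose only coupling between $\varphi$ and $\psi$ is the pairing of their coaction legs $\varphi_{[1]},\psi_{[1]}$ through $\mathcal{R}$. If $\varphi\in\mathcal{L}_{0,1}^H(H)$ then \eqref{coinvinv} gives $\delta(\varphi)=\varepsilon\otimes\varphi$, forcing the leg $\varphi_{[1]}$ to be the counit; using $\mathcal{R}(\varepsilon\otimes-)=\mathcal{R}(-\otimes\varepsilon)=\varepsilon_{H^\circ}(-)$ from \eqref{coReps} together with $(\varepsilon_{H^\circ}\otimes\mathrm{id})\circ\delta=\mathrm{id}$, the braiding involving $\varphi$ reduces to the identity and $\varphi\psi=\psi\varphi$, whence $\mathcal{L}_{0,1}^H(H)\subset\mathcal{Z}\bigl(\mathcal{L}_{0,1}(H)\bigr)$. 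If one prefers to avoid citing transmutation, the same equality can be extracted directly from \eqref{fusionRelL01}; this direct verification is the technical heart of the statement.

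For part~2 I would write a spanning element of $1^{\otimes 2g}\otimes\mathcal{L}_{0,1}^H(H)^{\otimes n}$ as $\mathfrak{j}_{g+1}(\varphi_1)\cdots\mathfrak{j}_{g+n}(\varphi_n)$ with each $\varphi_i\in\mathcal{L}_{0,1}^H(H)$, using \eqref{monomialRelLgn} and $\mathfrak{i}_s(1_{H^\circ})=1$. Since \eqref{monomialFactoBraided} shows that $\mathcal{L}_{g,n}(H)$ is generated as an algebra by $\bigcup_k\mathrm{Im}(\mathfrak{j}_k)$, it suffices to prove that each $\mathfrak{j}_{g+i}(\varphi)$ with $\varphi$ invariant is central, which I would check against an arbitrary generator $\mathfrak{j}_k(x)$ by cases. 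For $k<g+i$ I apply \eqref{braidedTensProdLgn} with $l=g+i$ and $y=\varphi$; since $\delta(\varphi)=\varepsilon\otimes\varphi$, the factor $\mathcal{R}(x_{[1]}\otimes\varphi_{[1]})$ becomes $\mathcal{R}(x_{[1]}\otimes\varepsilon)=\varepsilon_{H^\circ}(x_{[1]})$ and the counit axiom collapses the sum to $\mathfrak{j}_k(x)\mathfrak{j}_{g+i}(\varphi)$. For $k>g+i$ I apply \eqref{braidedTensProdLgn} with the invariant element as $x$ in position $g+i$, and the same trivialization gives $\mathfrak{j}_k(x)\mathfrak{j}_{g+i}(\varphi)=\mathfrak{j}_{g+i}(\varphi)\mathfrak{j}_k(x)$. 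For $k=g+i$ the map $\mathfrak{j}_{g+i}=\mathfrak{i}_{m_{g+i}}$ is an algebra embedding and part~1 yields centrality of $\varphi$ inside that copy. Hence each $\mathfrak{j}_{g+i}(\varphi)$ is central; products of such elements are central, and since $\mathcal{L}_{0,1}^H(H)$ is a subalgebra and the punctures pairwise commute, the subspace is closed under multiplication, so it is a central subalgebra.

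The main obstacle is part~1: the reduction in part~2 is purely formal once the coaction is known to be trivial on invariants, whereas part~1 rests on the braided-commutativity of the transmutation, i.e. on the precise compatibility between the fusion product \eqref{fusionRelL01} and the coaction $\delta$. If one insists on a self-contained argument, the difficulty concentrates in verifying $\varphi\psi=\psi\varphi$ directly from \eqref{fusionRelL01} for invariant $\varphi$, which requires repeatedly using the co-R-matrix bicharacter identities \eqref{coTriang1}--\eqref{coTriang2} together with \eqref{coinvinv} to merge and cancel the four $\mathcal{R}$-factors appearing on the two sides.
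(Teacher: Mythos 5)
Your proposal is correct and follows essentially the same route as the paper: both parts hinge on the coaction being trivial on invariant elements together with \eqref{coReps}, and your treatment of part 2 (reduce via \eqref{monomialFactoBraided} to commutation with each $\mathfrak{j}_k(x)$, handle $k\neq g+i$ by collapsing the $\mathcal{R}$-factor in \eqref{braidedTensProdLgn}, and $k=g+i$ by part 1) is precisely the paper's argument. The only difference is in part 1, where the paper neither cites Majid nor performs the four-factor cancellation you describe as the ``technical heart'': it derives the needed braided-commutativity on the spot as formula \eqref{productL01InTermsOfCoaction}, a short consequence of \eqref{coTriang1}--\eqref{coTriang2} and \eqref{braidedComm}, from which both $\varphi\psi$ and $\psi\varphi$ are seen to equal $\psi\star\varphi$ when $\varphi$ is invariant.
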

\begin{proof}
1. Note that the product in $\mathcal{L}_{0,1}(H)$ can be rewritten as
\begin{equation}\label{productL01InTermsOfCoaction}
\varphi\psi = \sum_{(\varphi),[\psi]} \mathcal{R}\bigl( \psi_{[1]} \otimes S(\varphi_{(2)}) \bigr) \,\varphi_{(1)} \star \psi_{[2]} \quad\text{and}\quad \varphi\psi = \sum_{[\varphi],(\psi)} \mathcal{R}\bigl( \psi_{(1)} \otimes \varphi_{[1]} \bigr) \, \psi_{(2)} \star \varphi_{[2]}
\end{equation}
thanks to the basic properties \eqref{coTriang1}--\eqref{coTriang2} of a co-R-matrix and also \eqref{braidedComm} for the second equality. By \eqref{coinvinv} and \eqref{coReps} the first equality gives $\psi\varphi = \psi \star \varphi$ (we switch $\varphi$ and $\psi$) while the second reduces to $\varphi\psi = \psi \star \varphi$, whence the claim.

\noindent 2. Fix $1 \leq l \leq n$ and $\varphi \in \mathcal{L}_{0,1}^H(H)$. By \eqref{monomialFactoBraided} it suffices to show that $\mathfrak{j}_{g+l}(\varphi)$ commutes with $\mathfrak{j}_k(x)$ for all $1 \leq k \leq g+n$. For $k\ne g+l$ the commutation is clear from \eqref{braidedTensProdLgn} and \eqref{coReps}. When $k=g+l$, by the first item $\mathfrak{j}_{g+l}(\varphi)$ commutes with $\mathfrak{j}_{g+l}(\psi)$. 
\end{proof}
\noindent The inclusion in Lemma \ref{Z1general}(1) is an equality for $H = U_q(\mathfrak{g})$ \cite[Prop.\,6.19]{BR1} and also for $H$ finite-dimensional and factorizable \cite[Th.\,3.7]{FaitgSL2Z}. We remark that it is not true that $\mathcal{L}_{1,0}^H(H) \subset \mathcal{Z}\bigl( \mathcal{L}_{1,0}(H) \bigr)$, which explains that Lemma \ref{Z1general}(2) only concerns the ``$\mathcal{L}_{0,n}$ part'' of $\mathcal{L}_{g,n}(H)$.

\smallskip

\indent It should be noted that the definition of $\mathcal{L}_{g,n}$ only uses the co-R-matrix $\mathcal{R} : H^\circ \otimes H^\circ \to \Bbbk$; the morphisms $\Phi^\pm$ will enter in the game only in the next sections. Hence the $\mathcal{L}_{g,n}$ construction applies to any co-quasitriangular Hopf algebra $(O,\mathcal{R})$, not necessarily of the form $H^\circ$. We could denote the resulting algebra as $\mathcal{L}_{g,n}[O]$, which is $O^{\otimes (2g+n)}$ as a vector space.\footnote{This notation is only used here and in the proof of Prop.\,\ref{piMorphBetweenLgn}.} It generalizes the above definition because $\mathcal{L}_{g,n}(H) = \mathcal{L}_{g,n}[H^\circ]$. This construction is ``functorial'':

\begin{lem}\label{lemmaLgnIsFunctorial}
Let $(O_1,\mathcal{R}_1)$ and $(O_2, \mathcal{R}_2)$ be co-quasitriangular Hopf algebras. Let $f : O_1 \to O_2$ be a morphism of Hopf algebras such that $\mathcal{R}_1 = \mathcal{R}_2 \circ (f \otimes f)$. Then $f^{\otimes (2g+n)} : \mathcal{L}_{g,n}[O_1] \to \mathcal{L}_{g,n}[O_2]$ is a morphism of algebras.
\end{lem}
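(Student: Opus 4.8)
The plan is to exploit that the product of $\mathcal{L}_{g,n}[O]$ is defined by the \emph{uniform} formulas (i)--(iv), which involve only the Hopf operations $\star,\Delta,S,\varepsilon$ of $O$ together with the co-R-matrix $\mathcal{R}$. Write $F := f^{\otimes(2g+n)}$. Since $f$ is a morphism of Hopf algebras it satisfies $f(1_{O_1}) = 1_{O_2}$, so $F$ sends the unit $1_{O_1}^{\otimes(2g+n)}$ of $\mathcal{L}_{g,n}[O_1]$ to the unit of $\mathcal{L}_{g,n}[O_2]$; it remains to prove that $F$ is multiplicative. By bilinearity of both products it suffices to check $F(x \cdot_1 y) = F(x) \cdot_2 F(y)$ when $x,y$ are elementary tensors, which by rule (i) (equation \eqref{monomialRelLgn}) are ordered monomials in the images of the maps $\mathfrak{i}_s$. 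From $f(1_{O_1}) = 1_{O_2}$ one gets $F \circ \mathfrak{i}_s = \mathfrak{i}_s \circ f$ for each label $s$, so $F$ carries ordered monomials to ordered monomials. As the product $x \cdot_1 y$ is computed by repeatedly applying the straightening rules (ii)--(iv) (equations \eqref{fusionRelL01}, \eqref{L01prodsept25}, \eqref{braidedTensProdComm}) until an ordered expression is reached, the whole verification reduces, by induction on the number of straightening steps, to showing that $F$ transports each single application of a rule (ii)--(iv) in $\mathcal{L}_{g,n}[O_1]$ to the corresponding application in $\mathcal{L}_{g,n}[O_2]$.

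This last point is a direct consequence of $f$ intertwining every structure map in play. Indeed $f$ respects $\star$ and $\Delta$, so that $f(\varphi_{(k)})$ equals the $k$-th Sweedler component $f(\varphi)_{(k)}$ of $f(\varphi)$, and being a Hopf morphism it also commutes with the antipode, $f \circ S = S \circ f$. Combined with the hypothesis $\mathcal{R}_1 = \mathcal{R}_2 \circ (f \otimes f)$, each scalar coefficient is converted correctly, e.g.
$$\mathcal{R}_1\bigl( \psi_{(1)} \otimes S(\varphi_{(3)}) \bigr) = \mathcal{R}_2\bigl( f(\varphi)_{(1)} \otimes S(f(\varphi)_{(3)}) \bigr),$$
where I have used $f\circ S = S\circ f$ to move $f$ past the antipode, and likewise each slot content $\mathfrak{i}_s(\varphi_{(1)} \star \psi_{(2)})$ maps to $\mathfrak{i}_s\bigl( f(\varphi)_{(1)} \star f(\psi)_{(2)} \bigr)$. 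Applying $F$ to the fusion relation \eqref{fusionRelL01} in $\mathcal{L}_{g,n}[O_1]$ therefore yields exactly the fusion relation for $f(\varphi),f(\psi)$ in $\mathcal{L}_{g,n}[O_2]$. The exchange relations \eqref{L01prodsept25} and \eqref{braidedTensProdComm} are treated identically, since their coefficients are again products of $\mathcal{R}$-values whose arguments are built from Sweedler components and antipodes, and the identities \eqref{coReps} used to simplify units are preserved because $\varepsilon_2\circ f = \varepsilon_1$.

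I expect no genuine difficulty here; the only step deserving care, which I would single out as the main one, is the bookkeeping observation that the composite of structure maps defining $\cdot_1$ is literally the \emph{same} composite as the one defining $\cdot_2$ (this is precisely the content of rules (i)--(iv) being written uniformly in $O$), so that the intertwining properties of $f$ with $\star,\Delta,S,\varepsilon$ and the co-R-matrices force $F \circ m_1 = m_2 \circ (F \otimes F)$. Once this is granted, the lemma follows.
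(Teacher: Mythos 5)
Your proposal is correct and takes essentially the same route as the paper: the paper's proof likewise observes that $f^{\otimes(2g+n)} \circ \mathfrak{i}^1_s = \mathfrak{i}^2_s \circ f$, reduces via the ordered-monomial rule \eqref{monomialRelLgn} to checking $f^{\otimes(2g+n)}\bigl(\mathfrak{i}^1_s(\varphi)\mathfrak{i}^1_t(\psi)\bigr) = \mathfrak{i}^2_s\bigl(f(\varphi)\bigr)\mathfrak{i}^2_t\bigl(f(\psi)\bigr)$ for all labels $s,t$, and declares that verification straightforward, which is exactly the intertwining computation you spell out (using that a Hopf morphism commutes with $\Delta$, $S$, $\varepsilon$ and that $\mathcal{R}_1 = \mathcal{R}_2 \circ (f \otimes f)$). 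Only note the typo in your displayed equation, where $\mathcal{R}_2\bigl( f(\varphi)_{(1)} \otimes S(f(\varphi)_{(3)}) \bigr)$ should read $\mathcal{R}_2\bigl( f(\psi)_{(1)} \otimes S(f(\varphi)_{(3)}) \bigr)$.
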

\begin{proof}
For $r=1,2$, let $\mathfrak{i}_{b_i}^r, \mathfrak{i}_{a_i}^r, \mathfrak{i}_{m_{g+j}}^r : O_r \to \mathcal{L}_{g,n}[O_r]$ be the embeddings from which the product of $\mathcal{L}_{g,n}[O_r]$ is defined. They satisfy $f^{\otimes (2g+n)} \circ \mathfrak{i}^1_s = \mathfrak{i}^2_s \circ f$. By \eqref{monomialRelLgn}, it suffices to check that $f^{\otimes (2g+n)}\bigl( \mathfrak{i}^1_s(\varphi)\mathfrak{i}^1_t(\psi) \bigr) = \mathfrak{i}^2_s\bigl( f(\varphi)\bigr) \mathfrak{i}^2_t\bigl(f(\psi) \bigr)$ for all $\varphi,\psi \in F_1$ and all $s,t \in \{b_i,a_i,m_{g+j} \}$. This is straightforward and left to the reader.
\end{proof}

\subsection{Quantum moment map}\label{subsecQMMgeneral}
We continue with the datum $(H,\Phi^\pm)$ as in \S\ref{subsecSubstQuasi}. The following map is of extreme importance in the theory of graph algebras
\begin{equation}\label{RSDphi}
\Phi_{0,1} : \mathcal{L}_{0,1}(H) \to H, \quad \varphi \mapsto \sum_{(\varphi)} \Phi^+(\varphi_{(1)})\Phi^-\bigl( S(\varphi_{(2)}) \bigr)
\end{equation}
where we use the usual coproduct \eqref{usualHopfDual} in $H^\circ$.  $\Phi_{0,1}$ is a morphism of $H$-module-algebras provided $H$ is equipped with the right adjoint action
\begin{equation}\label{defAdr}
\forall \, h,x \in H, \quad \mathrm{ad}^r(h)(x) = \sum_{(h)} S(h_{(1)})xh_{(2)}.
\end{equation}
This is shown thanks to \eqref{RLLdeguise} and \eqref{quasiCocommPhi}. When $H$ is quasitriangular with R-matrix $\textstyle R = \sum_{(R)} R_{(1)} \otimes R_{(2)}$ and $\Phi^\pm$ is chosen as in \eqref{LfuncFromRmat}, this reads $\Phi_{0,1}(\varphi) = (\varphi \otimes \mathrm{id})(RR^{fl})$, where as usual $\textstyle R^{fl} = \sum_{(R)} R_{(2)} \otimes R_{(1)}$ is the flip of $R$.

\smallskip

\indent Now let us make the following assumptions on $(H,\Phi^\pm)$:

\smallskip

\indent \textbullet ~$H^\circ$ is {\em coribbon}, meaning that there is a  linear form $\mathsf{v} : H^\circ \to \Bbbk$ satisfying $\mathsf{v} \circ S = \mathsf{v}$ and\footnote{These axioms are dual to those of a ribbon element \cite[\S 4.2.C]{CP}, in the sense that if $v \in H$ is a ribbon element, then $\mathsf{v} = \langle -,v \rangle : H^\circ \to \Bbbk$ is a coribbon element.}
 \begin{align}
&\textstyle \sum_{(\varphi)} \mathsf{v}(\varphi_{(1)})\varphi_{(2)} = \sum_{(\varphi)}\varphi_{(1)}\mathsf{v}(\varphi_{(2)})\label{coribbonCocentral}\\
&\textstyle \mathsf{v}(\varphi\psi) = \sum_{(\varphi),(\psi)}\mathcal{R}^{-1}\bigl( \psi_{(3)} \otimes \varphi_{(3)} \bigr) \mathcal{R}^{-1}\bigl( \varphi_{(2)} \otimes \psi_{(2)} \bigr) \mathsf{v}(\varphi_{(1)}) \, \mathsf{v}(\psi_{(1)})\label{axiomCoribbonProduct}\\
&\mathsf{v}^2(\varphi) = \textstyle \sum_{(\varphi)}\mathsf{u}(\varphi_{(1)}) \mathsf{u}\bigl( S(\varphi_{(2)}) \bigr) \quad \text{where } \mathsf{u}(\varphi) = \sum_{(\varphi)}\mathcal{R}\bigl( \varphi_{(2)}\otimes S(\varphi_{(1)}) \bigr) \label{axiomCorribonSquare}
\end{align}
where $\mathsf{v}^2 = (\mathsf{v} \otimes \mathsf{v}) \circ \Delta$ and $\mathsf{u}$ is called the {\em co-Drinfeld element}. These axioms imply that $\mathsf{v}$ is convolution-invertible because so is $\mathsf{u}$ \cite[Prop.\,2.2.4]{Majid} and it holds $\mathsf{v}(1_{H^\circ}) = 1$.

\smallskip

\indent \textbullet ~The morphism $\Phi_{0,1}$ is injective.

\smallskip

\noindent We introduce auxiliary maps from which the so-called quantum moment map will be defined. First let
\begin{equation}\label{frakD10}
\mathfrak{d}_{1,0} : \mathcal{L}_{0,1}(H) \to \mathcal{L}_{1,0}(H), \quad \varphi \mapsto \sum_{(\varphi)} \mathsf{v}^2(\varphi_{(1)})\, \mathfrak{i}_b(\varphi_{(2)}) \mathfrak{i}_a\bigl( S_{\mathcal{L}}(\varphi_{(3)})\bigr) \mathfrak{i}_b\bigl( S_{\mathcal{L}}(\varphi_{(4)}) \bigr)\mathfrak{i}_a(\varphi_{(5)})
\end{equation}
where we use the usual coproduct \eqref{usualHopfDual} in $H^\circ$ and $S_{\mathcal{L}}$ is the antipode of $\mathcal{L}_{0,1}(H)$, uniquely defined by $\textstyle \sum_{(\varphi)} \varphi_{(1)}S_{\mathcal{L}}(\varphi_{(2)}) = \sum_{(\varphi)} S_{\mathcal{L}}(\varphi_{(1)})\varphi_{(2)} = \varphi(1_H)$. Explicitly:
\begin{equation}\label{antipodeL01}
\forall \, \varphi \in \mathcal{L}_{0,1}(H), \quad S_{\mathcal{L}}(\varphi) = \textstyle \sum_{(\varphi)} \mathcal{R}^{-1}\bigl( \varphi_{(4)} \otimes \varphi_{(1)} \bigr) \mathsf{u}^{-1}(\varphi_{(2)}) \, S_{H^\circ}(\varphi_{(3)})
\end{equation}
with $\mathsf{u}$ from \eqref{axiomCorribonSquare} (see the discussion after \cite[eq. (3.24)]{BFR}). Now observe that as a vector space $\mathcal{L}_{g,n}(H)$ is $\mathcal{L}_{1,0}(H)^{\otimes g} \otimes \mathcal{L}_{0,1}(H)^{\otimes n}$. Hence we can define
\begin{equation}\label{pseudoQMMgnH}
\mathfrak{d}_{g,n} : \mathcal{L}_{0,1}(H) \to \mathcal{L}_{g,n}(H), \quad \varphi \mapsto \sum_{(\varphi)} \mathfrak{d}_{1,0}(\varphi_{(1)}) \otimes \ldots \otimes \mathfrak{d}_{1,0}(\varphi_{(g)}) \otimes \varphi_{(g+1)} \otimes \ldots \otimes \varphi_{(g+n)}.
\end{equation}
The map $\mathfrak{d}_{g,n}$ is easily seen to be a morphism of $H$-modules with respect to the right actions $\mathrm{coad}^r$.

\begin{remark}
{\rm Let $X$ be a finite-dimensional $H$-module and $x^i(?\cdot x_j) \in H^\circ$ be the matrix coefficients of $X$ in some basis $(x_j)$. Then the matrix $\overset{X}{C} = \bigl[ \mathfrak{d}_{g,n}\bigl( x^i(?\cdot x_j) \bigr) \bigr]_{i,j}$ with coefficients in $\mathcal{L}_{g,n}(H)$ equals the matrix from \cite[Def.\,7.11]{BFR}. The use of the antipode $S_{\mathcal{L}}$ corresponds to matrix inverse in the notations of \cite{BFR}.}
\end{remark}

\indent Denote by $H'$ the image of the morphism $\Phi_{0,1}$ from \eqref{RSDphi}. The following map is called {\em quantum moment map} and will play an important role
\begin{equation}\label{defQMMGen}
\mu_{g,n} : H' = \Phi_{0,1}\bigl( \mathcal{L}_{0,1}(H) \bigr) \xrightarrow{\quad\Phi_{0,1}^{-1}\quad} \mathcal{L}_{0,1}(H) \xrightarrow{\quad\mathfrak{d}_{g,n}\quad} \mathcal{L}_{g,n}(H).
\end{equation}
From the definitions we have
\begin{equation}\label{coproduitSurPhi01}
\forall \, \varphi \in H^\circ, \quad \Delta\bigl( \Phi_{0,1}(\varphi) \bigr) = \sum_{(\varphi)} \Phi_{0,1}(\varphi_{(2)}) \otimes \Phi^+(\varphi_{(1)})\Phi^-\bigl( S(\varphi_{(3)}) \bigr)
\end{equation}
which implies that $\Delta(H') \subset H' \otimes H$ (right coideal), and thus equality \eqref{mudef} below makes sense.
\begin{teo} {\em \cite[Th.\,7.14]{BFR}}
$\mu_{g,n}$ is a morphism of algebras satisfying the quantum moment map property, i.e.
\begin{equation}\label{mudef}
\forall h' \in H', \:\: \forall \, x \in \mathcal{L}_{g,n}(H), \quad \mu_{g,n}(h')\,x = \sum_{(h')} \, \mathrm{coad}^r\bigl(  S^{-1}(h'_{(2)}) \bigr)(x) \, \mu_{g,n}(h'_{(1)}).
\end{equation}
\end{teo}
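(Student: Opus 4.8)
The plan is to exploit the factorisation $\mu_{g,n} = \mathfrak{d}_{g,n} \circ \Phi_{0,1}^{-1}$ from \eqref{defQMMGen} and to reduce everything to the two ``elementary handles'' $(g,n)=(1,0)$ and $(0,1)$. First I would record that, since $\Phi_{0,1} : \mathcal{L}_{0,1}(H) \to H'$ is a bijective algebra morphism onto $H'$, the claim that $\mu_{g,n}$ is an algebra morphism is equivalent to $\mathfrak{d}_{g,n}$ being one: indeed $\mu_{g,n}(h'k') = \mathfrak{d}_{g,n}\bigl(\Phi_{0,1}^{-1}(h')\,\Phi_{0,1}^{-1}(k')\bigr)$ while $\mu_{g,n}(h')\mu_{g,n}(k') = \mathfrak{d}_{g,n}\bigl(\Phi_{0,1}^{-1}(h')\bigr)\mathfrak{d}_{g,n}\bigl(\Phi_{0,1}^{-1}(k')\bigr)$. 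So the target is to prove that $\mathfrak{d}_{g,n}$ is multiplicative for the braided product of $\mathcal{L}_{0,1}(H)$.

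Next, although \eqref{mudef} is linear in $h'$, I would first establish that it is ``multiplicative'' in the variable $x$: if \eqref{mudef} holds for $x$ and for $y$, then it holds for $xy$. This is a purely formal computation using only that $\mathcal{L}_{g,n}(H)$ is a right $H$-module-algebra for $\mathrm{coad}^r$ and that $\Delta \circ S^{-1} = (S^{-1}\otimes S^{-1})\circ \Delta^{\mathrm{op}}$, together with coassociativity: expanding $\mu_{g,n}(h')xy$ by applying \eqref{mudef} twice and regrouping reproduces $\sum \mathrm{coad}^r(S^{-1}(h'_{(2)}))(xy)\,\mu_{g,n}(h'_{(1)})$. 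Consequently it suffices to verify \eqref{mudef} when $x$ runs over the generating subalgebras $\mathfrak{j}_k(\mathcal{L}_{1,0}(H))$ ($1\le k\le g$) and $\mathfrak{j}_{g+l}(\mathcal{L}_{0,1}(H))$ ($1\le l\le n$). Combined with the decomposition $\mathcal{L}_{g,n}(H)=\mathcal{L}_{1,0}(H)^{\widetilde{\otimes} g}\,\widetilde{\otimes}\,\mathcal{L}_{0,1}(H)^{\widetilde{\otimes} n}$ of \eqref{monomialFactoBraided}--\eqref{braidedTensProdLgn} and the slot-wise definition \eqref{pseudoQMMgnH}, whereby $\mathfrak{d}_{g,n}(\varphi)=\sum \mathfrak{j}_1(\mathfrak{d}_{1,0}(\varphi_{(1)}))\cdots \mathfrak{j}_g(\mathfrak{d}_{1,0}(\varphi_{(g)}))\,\mathfrak{j}_{g+1}(\varphi_{(g+1)})\cdots\mathfrak{j}_{g+n}(\varphi_{(g+n)})$, this pins the whole statement down to the two cases $(1,0)$ and $(0,1)$, to be glued by the braiding relation \eqref{braidedTensProdLgn}.

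For $(0,1)$ one has $\mathfrak{d}_{0,1}=\mathrm{id}$, so $\mu_{0,1}=\Phi_{0,1}^{-1}$ is an algebra morphism for free, and \eqref{mudef} becomes a commutation identity in the braided dual $\mathcal{L}_{0,1}(H)$; writing $h'=\Phi_{0,1}(\varphi)$, I would feed the coproduct formula \eqref{coproduitSurPhi01} into the two expressions \eqref{productL01InTermsOfCoaction} for the product of $\mathcal{L}_{0,1}(H)$ and reduce it to the co-R-matrix axioms and the defining relation \eqref{quasiCocommPhi} of $\Phi^\pm$. The case $(1,0)$ is the heart of the matter, and I expect it to be the main obstacle: here $\mathfrak{d}_{1,0}$ is the explicit five-fold expression \eqref{frakD10} involving the coribbon element $\mathsf{v}^2$, the antipode $S_{\mathcal{L}}$, and alternating copies $\mathfrak{i}_b,\mathfrak{i}_a$, and one must check both its multiplicativity and \eqref{mudef}. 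This is a dense ``reflection-equation'' computation that simultaneously invokes the fusion relation \eqref{fusionRelL01}, the exchange relation \eqref{L01prodsept25}, the disguised commutation relations \eqref{RLLdeguise}, and the coribbon axioms \eqref{coribbonCocentral}--\eqref{axiomCorribonSquare}, the coribbon element being precisely what makes $S_{\mathcal{L}}$ interact correctly with left multiplication by $\mathfrak{d}_{1,0}$.

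Finally I would reassemble the general case. Since $\mathrm{coad}^r$ acts on $\mathcal{L}_{g,n}(H)=(H^\circ)^{\otimes(2g+n)}$ diagonally through $\Delta^{(2g+n)}$, see \eqref{coadLgn}, its restriction to a generator $\mathfrak{j}_k(\xi)$ only touches the factors in the $k$-th slot, so \eqref{mudef} for such an $x$ follows from the corresponding handle identity once $\mu_{g,n}(h')$ has been commuted past the untouched slots. This commutation is governed by \eqref{braidedTensProdLgn}, and the bookkeeping --- tracking how the co-R-matrix factors produced by moving across slots recombine into the diagonal coadjoint action, while \eqref{coproduitSurPhi01} distributes $h'$ over the slots --- is the second delicate point. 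The equivariance of $\Phi_{0,1}$ (a morphism of $H$-module-algebras) and of $\mathfrak{d}_{g,n}$ (a morphism of $H$-modules), both already established in the excerpt, is what guarantees that these recombinations are consistent, and completes the proof.
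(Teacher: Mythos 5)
Your formal reductions are correct as far as they go: the equivalence between multiplicativity of $\mu_{g,n}$ and of $\mathfrak{d}_{g,n}$, the propagation of \eqref{mudef} from $x$ and $y$ to $xy$ (legitimate because $\Delta(H')\subset H'\otimes H$ keeps $h'_{(1)}$ inside $H'$, so the identity may be re-applied), and the resulting reduction to generators $\mathfrak{j}_k(\xi)$. But what you have produced is a skeleton, not a proof: the two steps that carry all of the mathematical content are only named, never executed. First, the case $(1,0)$ --- multiplicativity of the five-fold expression \eqref{frakD10} and the identity \eqref{mudef} for $\mu_{1,0}$ --- is declared to be ``a dense reflection-equation computation'' and left undone; this is exactly where the coribbon element and $S_{\mathcal{L}}$ must conspire, and nothing you wrote certifies that the computation closes. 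Second, the reassembly: for $x=\mathfrak{j}_k(\xi)$ you must move the factors of $\mu_{g,n}(h')=\sum\mathfrak{j}_1\bigl(\mathfrak{d}_{1,0}(\varphi_{(1)})\bigr)\cdots\mathfrak{j}_{g+n}(\varphi_{(g+n)})$ sitting in slots $\neq k$ across $\mathfrak{j}_k(\xi)$ via \eqref{braidedTensProdLgn}, apply the handle identity in slot $k$, and then show that the co-R-matrix legs so produced recombine into the single diagonal action $\mathrm{coad}^r\bigl(S^{-1}(h'_{(2)})\bigr)$. The point you skip over is that the slots of $\mathfrak{d}_{g,n}$ are indexed by the $H^\circ$-coproduct of $\varphi=\Phi_{0,1}^{-1}(h')$, whereas the right-hand side of \eqref{mudef} is indexed by the $H$-coproduct of $h'$, and these are related only through the nontrivial formula \eqref{coproduitSurPhi01}; invoking ``equivariance of $\Phi_{0,1}$ and $\mathfrak{d}_{g,n}$'' does not perform this reconciliation, since equivariance as module maps says nothing about commutation relations in the algebra.

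Two remarks on positioning relative to the paper. The paper itself contains no proof of this statement: it is quoted verbatim from \cite[Th.\,7.14]{BFR}, so there is no inline argument to compare with. However, the machinery the paper does develop yields cleaner routes to your base cases than the direct braided computations you envision. For $(0,1)$, since $\mu_{0,1}=\Phi_{0,1}^{-1}$ and $\Phi_{0,1}$ is an injective morphism of $H$-module-algebras (both standing assumptions of \S\ref{subsecQMMgeneral}), the identity follows in two lines by transporting to $(H,\mathrm{ad}^r)$, where $\sum_{(h')}\mathrm{ad}^r\bigl(S^{-1}(h'_{(2)})\bigr)(x)\,h'_{(1)}=\sum_{(h')}h'_{(3)}\,x\,S^{-1}(h'_{(2)})h'_{(1)}=h'x$ is a trivial Hopf-algebra identity. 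For $(1,0)$, Lemma \ref{lemmeChapeau} and Proposition \ref{propCommDiag10} let one push the problem through $\Phi_{1,0}$ into the Heisenberg double, where $\widehat{\mathsf{D}}_{1,0}$ is an algebra morphism satisfying the quantum moment map property by the elementary commutation relations of the hat elements; granting injectivity of $\Phi_{1,0}$ (which holds for $H=U_q(\mathfrak{g})$ by \cite{BFR}), both multiplicativity of $\mathfrak{d}_{1,0}$ and \eqref{mudef} for $\mu_{1,0}$ pull back at once. Either of these would replace your hardest computations; as written, your proposal leaves them open.
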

\noindent This can also be written as $\textstyle x\mu_{g,n}(h') = \sum_{(h')} \mu_{g,n}(h_{(1)}')\mathrm{coad}^r(h_{(2)}')(x)$.

\noindent Note in particular that $\mathfrak{d}_{g,n} : \mathcal{L}_{0,1}(H) \to \mathcal{L}_{g,n}(H)$ is a morphism of algebras.

\smallskip

\indent The question of whether the quantum moment map is injective reduces to a ground case:

\begin{lem}\label{lemmaInjQMMtrivial}
1. $\mu_{g,n}$ is always injective for all $g$ and $n \neq 0$.
\\2. If $\mu_{1,0}$ is injective then $\mu_{g,0}$ is injective for all $g$.
\end{lem}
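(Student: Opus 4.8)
The plan is to reduce both statements to the injectivity of the map $\mathfrak{d}_{g,n}$ of \eqref{pseudoQMMgnH}. Indeed, by \eqref{defQMMGen} we have $\mu_{g,n} = \mathfrak{d}_{g,n} \circ \Phi_{0,1}^{-1}$, and $\Phi_{0,1}$ is injective by the standing assumption, so $\mu_{g,n}$ is injective if and only if $\mathfrak{d}_{g,n}$ is. The computational engine is the single identity
\[ (\varepsilon_{H^\circ} \otimes \varepsilon_{H^\circ}) \circ \mathfrak{d}_{1,0} = \mathsf{v}^2, \]
which I would obtain by applying the counit $\varepsilon_{\mathcal{L}} := \varepsilon_{H^\circ} \otimes \varepsilon_{H^\circ}$ of $\mathcal{L}_{1,0}(H)$ to \eqref{frakD10}: since $\varepsilon_{\mathcal{L}}$ is an algebra morphism one distributes it across the four factors $\mathfrak{i}_b,\mathfrak{i}_a$, after which $\varepsilon_{H^\circ} \circ S_{\mathcal{L}} = \varepsilon_{H^\circ}$ and the counit axioms collapse everything to the prefactor $\mathsf{v}^2(\psi)$. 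As $\mathsf{v}$ is convolution-invertible (noted after \eqref{axiomCorribonSquare}), so is every convolution power $\mathsf{v}^{\star 2k}$; hence the left and right convolution operators $L_f(\varphi) = \sum_{(\varphi)} f(\varphi_{(1)})\varphi_{(2)}$ and $R_f(\varphi) = \sum_{(\varphi)} \varphi_{(1)} f(\varphi_{(2)})$ on $H^\circ$ are bijective for $f = \mathsf{v}^{\star 2k}$, with inverses $L_{f^{-1}}$ and $R_{f^{-1}}$.

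For item 1, where $n \geq 1$, I would post-compose $\mathfrak{d}_{g,n}$ with the linear projection $\pi = \varepsilon_{\mathcal{L}}^{\otimes g} \otimes \varepsilon_{H^\circ}^{\otimes (n-1)} \otimes \mathrm{id} : \mathcal{L}_{g,n}(H) \to H^\circ$, which applies a counit to every tensorand except the last $\mathcal{L}_{0,1}(H)$ one. Feeding \eqref{pseudoQMMgnH} into $\pi$, applying the identity above on each of the first $g$ blocks and collapsing the remaining $n-1$ counits by the counit axiom, yields $\pi \circ \mathfrak{d}_{g,n} = L_{\mathsf{v}^{\star 2g}}$. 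This is a bijection, so $\mathfrak{d}_{g,n}$, and hence $\mu_{g,n}$, is injective.

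For item 2, where $n = 0$, assume $\mu_{1,0}$ --- equivalently $\mathfrak{d}_{1,0}$ --- is injective. I would instead post-compose $\mathfrak{d}_{g,0}$ with $\pi' = \mathrm{id} \otimes \varepsilon_{\mathcal{L}}^{\otimes (g-1)} : \mathcal{L}_{g,0}(H) \to \mathcal{L}_{1,0}(H)$, retaining only the first $\mathcal{L}_{1,0}(H)$ block. The same collapsing computation gives $\pi' \circ \mathfrak{d}_{g,0} = \mathfrak{d}_{1,0} \circ R_{\mathsf{v}^{\star 2(g-1)}}$. Since $R_{\mathsf{v}^{\star 2(g-1)}}$ is bijective and $\mathfrak{d}_{1,0}$ is injective by hypothesis, the composite is injective, forcing injectivity of $\mathfrak{d}_{g,0}$ and thus of $\mu_{g,0}$.

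The one point deserving care --- and the main obstacle --- is the justification that $\varepsilon_{\mathcal{L}}$ is an algebra morphism, so that it may be distributed across the product in \eqref{frakD10}. I would verify this on the defining relations \eqref{fusionRelL01}--\eqref{braidedTensProdComm}: evaluating $\varepsilon_{\mathcal{L}}$ on a right-hand side, the normalization \eqref{coReps}, the co-triangularity identities \eqref{coTriang1}--\eqref{coTriang2}, and the antipode axiom $\sum_{(\varphi)} \varphi_{(1)} \star S(\varphi_{(2)}) = \varepsilon_{H^\circ}(\varphi) 1_{H^\circ}$ force every $\mathcal{R}$-factor to degenerate to a counit, leaving precisely $\varepsilon_{H^\circ}(\varphi)\varepsilon_{H^\circ}(\psi)$. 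Once this multiplicativity is in place, both items are the Sweedler bookkeeping sketched above.
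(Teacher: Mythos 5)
Your reduction scheme (compose $\mathfrak{d}_{g,n}$ with counits on all tensor factors but one, which is exactly the paper's strategy) is sound, but the computational identity it rests on is wrong, and so is the argument you give for it. The claim that $\varepsilon_{\mathcal{L}} = \varepsilon_{H^\circ} \otimes \varepsilon_{H^\circ}$ is an algebra morphism on $\mathcal{L}_{1,0}(H)$ is false. Structurally: when $H$ is finite-dimensional and factorizable, $\Phi_{1,0}$ identifies $\mathcal{L}_{1,0}(H)$ with the Heisenberg double $\mathcal{H}(H^*) \cong \mathrm{End}_{\Bbbk}(H^*)$ (Prop.\,\ref{coroLgnFacto} and injectivity of $\rho$ in \eqref{HeisenbergRep}), which is a simple algebra and therefore admits \emph{no} algebra morphism to $\Bbbk$ at all. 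Your verification of multiplicativity does work for the fusion relation \eqref{fusionRelL01} and for the exchange relation \eqref{braidedTensProdComm} between distinct indices (there the $\mathcal{R}$-factors pair off via convolution-inverse identities), but it fails for the handle exchange relation \eqref{L01prodsept25}: applying $\varepsilon \otimes \varepsilon$ to its right-hand side, the four $\mathcal{R}$-factors assemble (in the quasitriangular case) to the pairing of $\psi \otimes \varphi$ against $\sum_{(R^{-1})} \bigl( R^{-1}_{(1)} \otimes 1 \bigr)\, R R^{fl} R \,\bigl( 1 \otimes R^{-1}_{(2)} \bigr)$, which differs from $1 \otimes 1$ whenever $R R^{fl}$ is nontrivial. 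For a concrete counterexample take $H = \mathbb{C}[\mathbb{Z}/3]$ with $R = \sum_{a,b} \omega^{ab} e_a \otimes e_b$, $\omega = e^{2\pi i/3}$, $e_a$ the primitive idempotents (this $H$ is factorizable, the dual basis elements $e_a^*$ are grouplike in $H^*$, and $S_{\mathcal{L}}(e_a^*) = e_{-a}^*$): then \eqref{L01prodsept25} reads $\mathfrak{i}_a(e_{-1}^*)\mathfrak{i}_b(e_{-1}^*) = \omega^{2}\, \mathfrak{i}_b(e_{-1}^*)\mathfrak{i}_a(e_{-1}^*)$, so applying $\varepsilon \otimes \varepsilon$ to the two sides gives $1 \neq \omega^{2}$.

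The same example refutes your formula for the composite and confirms the correct one: the normal-ordering of the product in \eqref{frakD10} produces exactly the braiding factor $\omega^{2}$ above, which cancels the prefactor $\mathsf{v}^2(e_1^*) = \omega^{-2}$, giving $\mathfrak{d}_{1,0}(e_1^*) = 1_{\mathcal{L}_{1,0}}$ and hence $(\varepsilon \otimes \varepsilon)\bigl( \mathfrak{d}_{1,0}(e_1^*) \bigr) = 1 = \varepsilon(e_1^*)$, \emph{not} $\mathsf{v}^2(e_1^*)$. This is the general mechanism: the ribbon prefactor $\mathsf{v}^2(\varphi_{(1)})$ is inserted in \eqref{frakD10} precisely to compensate the braiding factors arising from \eqref{L01prodsept25}, so the correct identity is $(\varepsilon_{H^\circ} \otimes \varepsilon_{H^\circ}) \circ \mathfrak{d}_{1,0} = \varepsilon_{H^\circ}$, which is what the paper asserts and which must be checked as an identity of \emph{linear} maps (normal-order first, then apply the counits), never by distributing a counit across the product. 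Once this is corrected, your two projections give $\pi \circ \mathfrak{d}_{g,n} = \mathrm{id}_{H^\circ}$ and $\pi' \circ \mathfrak{d}_{g,0} = \mathfrak{d}_{1,0}$ (no convolution operators appear), and both items follow just as in the paper; as written, however, your central identity is false and its derivation invalid, so the proof has a genuine gap.
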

\begin{proof}
By very definition, $\mu_{g,n}$ is injective if and only if $\mathfrak{d}_{g,n}$ is injective. One can check that $(\varepsilon_{H^\circ})^{\otimes 2}\bigl( \mathfrak{d}_{1,0}(\varphi) \bigr) = \varepsilon_{H^\circ}(\varphi)$, where we use that $\mathcal{L}_{1,0}(H) = H^\circ \otimes H^\circ$ as a vector space. Hence if $n > 0$ we have $\bigl[ \bigl((\varepsilon_{H^\circ})^{\otimes 2}\bigr)^{\otimes g} \otimes (\varepsilon_{H^\circ})^{\otimes (n-1)} \otimes \mathrm{id}_{H^\circ}\bigr] \bigl( \mathfrak{d}_{g,n}(\varphi) \bigr) = \varphi$, thus proving claim 1. For claim 2, note similarly that $\bigl[ \bigl((\varepsilon_{H^\circ})^{\otimes 2}\bigr)^{\otimes (g-1)} \otimes \mathrm{id}_{(H^\circ)^{\otimes 2}}\bigr] \bigl( \mathfrak{d}_{g,n}(\varphi) \bigr) = \mathfrak{d}_{1,0}(\varphi)$.
\end{proof}

\subsection{\texorpdfstring{$\mathcal{L}_{1,0}(H)$}{The handle algebra} and the Heisenberg double}\label{subsecHeisenbergL10}
\indent The {\em Heisenberg double} $\mathcal{H}(H^\circ)$ is the vector space $H^\circ \otimes H$ endowed with the algebra structure defined by the following rules:

\indent (i) $H^\circ \otimes 1_H$ and $1_{H^\circ} \otimes H$ are subalgebras of $\mathcal{H}(H^{\circ})$ isomorphic to $H^\circ$ and $H$. We write $\varphi$ instead of $\varphi \otimes 1_H$, and $a$ instead of $1_{H^\circ} \otimes a$ (where $1_{H^\circ} = \varepsilon$).

\indent (ii) With these identifications, we have the relations 
\begin{equation}\label{productHeisenberg}
\forall \, a \in H, \:\: \forall \, \varphi \in H^\circ, \quad \varphi a = \varphi \otimes a, \quad a\varphi = \sum_{(a)}(a_{(1)} \rhd \varphi) \, a_{(2)}.
\end{equation}
If we define a right action of $H$ on $\mathcal{H}(H^\circ)$ by
\begin{equation}\label{modAlgStructHeisenberg}
(\varphi a) \cdot h = \sum_{(h)} \bigl( S(h_{(2)}) \rhd \varphi \lhd h_{(3)} \bigr) S(h_{(1)})ah_{(4)}
\end{equation}
then $\mathcal{H}(H^\circ)$ becomes a right $H$-module algebra \cite[\S 3.3]{BFR}.  There is a representation
\begin{equation}\label{HeisenbergRep}
\rho : \mathcal{H}(H^\circ) \to \mathrm{End}_\Bbbk(H^\circ), \quad \rho(\varphi a)(\psi) = \varphi \star (a \rhd \psi) \qquad (\forall \, \psi \in H^\circ).
\end{equation}
The morphism $\rho$ is injective, without any assumption on $H$ \cite[Lem.\,9.4.2]{Mo}.

\smallskip

\indent Recall from \eqref{monomialRelLgn} that any element in $\mathcal{L}_{1,0}(H)$ is a sum of products $\mathfrak{i}_b(\beta)\mathfrak{i}_a(\alpha)$ for $\alpha,\beta \in \mathcal{L}_{0,1}(H)$. There is a morphism of right $H$-module-algebras $\Phi_{1,0} : \mathcal{L}_{1,0}(H) \to \mathcal{H}(H^{\circ})$ uniquely characterized by
\begin{equation}\label{defPhi10}
\forall \, \varphi \in H^\circ, \quad \Phi_{1,0}\bigl( \mathfrak{i}_a(\varphi) \bigr) = \Phi_{0,1}(\varphi), \qquad \Phi_{1,0}\bigl( \mathfrak{i}_b(\varphi) \bigr) = \sum_{(\varphi)}\Phi^+(\varphi_{(1)})\varphi_{(2)}\Phi^-\bigl( S(\varphi_{(3)}) \bigr)
\end{equation}
where we use the morphism $\Phi_{0,1} : \mathcal{L}_{0,1}(H) \to H$ from \eqref{RSDphi}. In the case where $H$ is quasitriangular with $\Phi^\pm$ as in \eqref{LfuncFromRmat}, the fact that $\Phi_{1,0}$ is a morphism of right $H$-module-algebras was proven in \cite{Al} (see also \cite[\S 3.3]{BFR}). In the present general setting one must show that the defining relations of $\mathcal{L}_{1,0}(H)$ (fusion relation for $\mathfrak{i}_b$ and $\mathfrak{i}_a$ and exchange relation between $\mathfrak{i}_b$ and $\mathfrak{i}_a$) are compatible with \eqref{defPhi10}; the computations use \eqref{RLLdeguise} and the following exchange relation in $\mathcal{H}(H^{\circ})$:
\begin{equation}\label{exchangeRelHeisenberg}
\forall \, \sigma \in \{\pm\}, \:\: \forall \, \varphi,\psi \in H^{\circ}, \quad \Phi^{\sigma}(\varphi)\psi = \sum_{(\varphi),(\psi)}\mathcal{R}^{(\sigma)}\bigl(\varphi_{(2)} \otimes \psi_{(2)}\bigr)\psi_{(1)}\Phi^{\sigma}\bigl(\varphi_{(1)}\bigr)
\end{equation}
which directly follows from \eqref{productHeisenberg} and \eqref{coRMatSign}. Details are left to the reader.

\smallskip

\indent Let us now investigate how the QMM $\mu_{1,0}$ interacts with the morphism $\Phi_{1,0}$; thus we resume the assumptions on $H$ from \S\ref{subsecQMMgeneral} used to construct $\mu_{g,n}$ ($\Phi_{0,1}$ injective and existence of coribbon element $\mathsf{v}$). Recall that $H'$ denotes the image of the morphism $\Phi_{0,1}$ from \eqref{RSDphi}. For all $h' = \Phi_{0,1}(\varphi) \in H'$ we define
\begin{equation}\label{elmtsChapeau}
\widehat{h'} = \sum_{[\varphi]} \varphi_{[1]} \otimes S^{-1}\bigl( \Phi_{0,1}(\varphi_{[2]}) \bigr) \in H^\circ \otimes H = \mathcal{H}(H^\circ)
\end{equation}
using the coaction $\mathcal{L}_{0,1}(H) \to H^\circ \otimes \mathcal{L}_{0,1}(H)$ from \eqref{defCoactCoad}. 

The definition of the elements $\widehat{h'}$ in the form of \eqref{elmtsChapeauBis} below, and their remarkable properties given in Lemma \ref{lemmeChapeau}, are directly inspired from the computations in \cite[Lem.\,4.1, also see \S 2.2]{FaitgMCG} which was for finite-dimensional $H$. Note that there the elements $\widehat{h}$ were denoted $\widetilde{h}$, not to be confused with the elements $\widetilde{h}$ in \S\ref{subsecAlekseev} of the present paper. Also the equality $\mu_{1,0} \circ \Phi_{1,0} = \widehat{\mathsf{D}}_{1,0}$ from item 2 of Prop.\,\ref{propCommDiag10} was proven in \cite[eq.\,(4.22)]{FaitgThesis} in the case where $H$ is finite-dimensional.

\begin{lem}\label{lemmeChapeau}
1. With the representation $\rho : \mathcal{H}(H^\circ) \to \mathrm{End}_\Bbbk(H^\circ)$ from \eqref{HeisenbergRep} we have
\begin{equation}\label{formuleRepChapeau}
\forall \, h' \in H', \:\: \forall \, \psi \in H^\circ, \quad \rho\bigl( \widehat{h'} \bigr)(\psi) = \psi \lhd S^{-1}(h').
\end{equation}
2. In $\mathcal{H}(H^\circ)$ we have the relations
\[ \textstyle \widehat{h'h''} = \widehat{h'} \, \widehat{h''}, \qquad \widehat{h'}\,\varphi a = \sum_{(h')} \bigl( \varphi \lhd S^{-1}(h'_{(2)}) \bigr)a \, \widehat{\,h'_{(1)}} \]
for all $h', h'' \in H'$, and all $\varphi a \in \mathcal{H}(H^\circ)$. In particular $\widehat{h'} \, a = a \, \widehat{h'}$ for all $a \in H \subset \mathcal{H}(H^\circ)$.
\\3. The right action \eqref{modAlgStructHeisenberg} of $h \in H$ on $\widehat{h'} \in \mathcal{H}(H^\circ)$ is given by
\[ \widehat{h'} \cdot h = \textstyle \sum_{(h)} S(h_{(1)})h_{(4)} \, \widehat{S(h_{(2)})h'h_{(3)}}. \]
4. The map $H' \to \mathcal{H}(H^\circ)$, $h' \mapsto \widehat{h'}$ is injective.
\end{lem}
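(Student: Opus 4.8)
The plan is to prove item 1 first, since it is the foundation from which items 2 and 4 follow formally, and then to treat item 3 by a more involved direct computation. Throughout, the two structural tools are the injectivity of the Heisenberg representation $\rho$ from \eqref{HeisenbergRep}, which reduces any identity in $\mathcal{H}(H^\circ)$ to an identity of operators on $H^\circ$, and the right non-degeneracy of the pairing (Assumption 1), which lets one recognize an element of $H$ (resp. an operator on $H^\circ$) from its pairings against $H^\circ$ (resp. against $H$). The one input specific to $\Phi_{0,1}$ is its equivariance for $\mathrm{coad}^r$ on $\mathcal{L}_{0,1}(H)$ and $\mathrm{ad}^r$ on $H$, noted after \eqref{RSDphi}.

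For item 1, I would apply $\rho$ to $\widehat{h'}$, expand $\widehat{h'}$ through the coaction \eqref{defCoactCoad} (so $\varphi_{[1]}=\varphi_{(1)}\star S(\varphi_{(3)})$, $\varphi_{[2]}=\varphi_{(2)}$) and rewrite the coregular actions using \eqref{coregActions}, obtaining
\[ \rho(\widehat{h'})(\psi) = \sum_{(\varphi),(\psi)} \bigl((\varphi_{(1)}\star S(\varphi_{(3)}))\star\psi_{(1)}\bigr)\,\bigl\langle\psi_{(2)}, S^{-1}(\Phi_{0,1}(\varphi_{(2)}))\bigr\rangle. \]
Pairing the target identity $\rho(\widehat{h'})(\psi)=\psi\lhd S^{-1}(h')$ against an arbitrary $x\in H$ and using that $H^\circ$ separates the points of $H$, the claim becomes the identity in $H$
\[ \sum_{(x)} x_{(2)}\,S^{-1}\bigl(\mathrm{ad}^r(x_{(1)})(h')\bigr) = S^{-1}(h')\,x, \]
where I have recognized $\sum_{(\varphi)}\langle\varphi_{(1)}\star S(\varphi_{(3)}),x_{(1)}\rangle\,\Phi_{0,1}(\varphi_{(2)}) = \mathrm{ad}^r(x_{(1)})(h')$ from \eqref{defCoad} and the equivariance of $\Phi_{0,1}$. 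Expanding $\mathrm{ad}^r$ and applying $S^{-1}$ as an algebra anti-automorphism turns the left-hand side into $\sum_{(x)} x_{(3)}S^{-1}(x_{(2)})\,S^{-1}(h')\,x_{(1)}$, and the antipode identity $\sum_{(x)} x_{(3)}S^{-1}(x_{(2)})\otimes x_{(1)} = 1\otimes x$ collapses this to $S^{-1}(h')x$, proving item 1.

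Items 2 and 4 are then formal. For multiplicativity, item 1 gives $\rho(\widehat{h'})\rho(\widehat{h''})(\psi) = (\psi\lhd S^{-1}(h''))\lhd S^{-1}(h') = \psi\lhd S^{-1}(h'h'') = \rho(\widehat{h'h''})(\psi)$, using $(\psi\lhd a)\lhd b=\psi\lhd(ab)$ and that $S^{-1}$ is an anti-automorphism; since $\rho$ is an injective algebra map, $\widehat{h'h''}=\widehat{h'}\,\widehat{h''}$. For the second relation I would apply $\rho$ and compare the two endomorphisms of $H^\circ$ using item 1, the commutation of $\rhd$ and $\lhd$, the fact that $\lhd$ makes $H^\circ$ a module-algebra, and $\Delta\circ S^{-1}=(S^{-1}\otimes S^{-1})\circ\Delta^{\mathrm{op}}$, then conclude by injectivity of $\rho$; the relation is well-posed since $\Delta(H')\subset H'\otimes H$. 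The specialization $\varphi=\varepsilon$, for which $1_{H^\circ}\lhd S^{-1}(h'_{(2)})=\varepsilon(h'_{(2)})\,1_{H^\circ}$, yields $\widehat{h'}\,a=a\,\widehat{h'}$. Item 4 is immediate from item 1: if $\widehat{h'}=0$ then $\psi\lhd S^{-1}(h')=0$ for all $\psi$, whence $S^{-1}(h')x=0$ for all $x$ by right non-degeneracy, and $x=1$ gives $h'=0$.

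Item 3 is the main obstacle, being a genuine computation with the module-algebra action \eqref{modAlgStructHeisenberg}. First one checks it is well-posed: $S(h_{(2)})h'h_{(3)}=\mathrm{ad}^r(h_{(2,3)})(h')$ lies in $H'$ by equivariance of $\Phi_{0,1}$, so $\widehat{\phantom{h}\cdot\phantom{h}}$ applies. I would then establish, directly from \eqref{modAlgStructHeisenberg} and \eqref{HeisenbergRep} using distributivity of $\rhd$ over $\star$ and the commutation of $\rhd$ with $\lhd$, the equivariance formula
\[ \rho\bigl((\varphi a)\cdot h\bigr)(\psi) = \sum_{(h)} S(h_{(1)})\rhd\bigl[(\varphi\lhd h_{(2)})\star\bigl(a\rhd(h_{(3)}\rhd\psi)\bigr)\bigr], \]
valid for every $\varphi a\in\mathcal{H}(H^\circ)$. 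Applying it to $\widehat{h'}$, and on the other side computing $\rho$ of the right-hand side of item 3 through item 1 together with $S^{-1}(S(h_{(2)})h'h_{(3)})=S^{-1}(h_{(3)})S^{-1}(h')h_{(2)}$, reduces item 3 to an equality of two explicit endomorphisms of $H^\circ$, to be settled by injectivity of $\rho$. The hard part is purely the bookkeeping of the four-fold coproduct of $h$ and the tracking of which legs act via $\rhd$, via $\lhd$, and through $\Phi_{0,1}$; as in item 1, it is the equivariance of $\Phi_{0,1}$ that makes the $\Phi_{0,1}$-dependent legs reassemble correctly.
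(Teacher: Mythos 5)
Your proposal is correct, and its overall architecture (prove item 1 first, then deduce items 2--4 from it and from injectivity of $\rho$) is the same as the paper's; but your proof of the core identity \eqref{formuleRepChapeau} takes a genuinely different route. The paper works entirely inside $H^\circ$: it expands $\rho\bigl(\widehat{h'}\bigr)(\psi)$ and runs a chain of co-R-matrix manipulations, using \eqref{coRMatSign}, the quasi-cocommutativity \eqref{braidedComm} twice, and $\mathcal{R}^{-1}=\mathcal{R}\circ(\mathrm{id}\otimes S^{-1})$, before reassembling $\Phi_{0,1}$ at the end. You instead evaluate both functionals on an arbitrary $x\in H$, recognize $\langle\varphi_{[1]},x_{(1)}\rangle\,\Phi_{0,1}(\varphi_{[2]})=\mathrm{ad}^r(x_{(1)})(h')$ via the $\mathrm{ad}^r$-equivariance of $\Phi_{0,1}$ (available from the discussion after \eqref{RSDphi}), and reduce the claim to the antipode identity $\sum_{(x)}x_{(3)}S^{-1}(x_{(2)})\otimes x_{(1)}=1\otimes x$ in $H$. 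This buys a shorter, more conceptual argument with no co-R-matrix bookkeeping, at the cost of invoking the equivariance of $\Phi_{0,1}$ as a black box; note that for the direction you actually use (identity in $H$ implies equality of the two functionals) separation of points is not even needed, so your citation of Assumption 1 there is harmless but superfluous. Your item 4 also differs: the paper extracts injectivity from the purely formal retraction $(\varepsilon_{H^\circ}\otimes S)\bigl(\widehat{h'}\bigr)=h'$, read off from \eqref{elmtsChapeau}, whereas you deduce it from item 1 plus right non-degeneracy; both work, the paper's being slightly more elementary since it needs neither item 1 nor Assumption 1. For items 2 and 3 you do what the paper does (the paper dismisses item 3 as ``a straightforward computation using \eqref{formuleRepChapeau}''), and your outline for item 3 is in fact more detailed than the paper's and does close: your equivariance formula for $\rho\bigl((\varphi a)\cdot h\bigr)$ combined with the coaction identity $\varphi_{[1]}\lhd g\otimes\varphi_{[2]}=\delta\bigl(\mathrm{coad}^r(g)(\varphi)\bigr)$ and equivariance of $\Phi_{0,1}$ turns the inner bracket into $\rho\bigl(\widehat{\mathrm{ad}^r(h_{(2)})(h')}\bigr)(h_{(3)}\rhd\psi)$, after which item 1 and the commutation of $\rhd$ with $\lhd$ make the two endomorphisms of $H^\circ$ coincide term by term.
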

\begin{proof}
1. Write $h' = \Phi_{0,1}(\varphi)$. The proof is a direct computation (using Sweedler's notation with implicit summation for coproducts and coactions):
\begin{align*}
\rho\bigl( \widehat{h'} \bigr)(\psi) &= \varphi_{[1]} \star \bigl[ S^{-1}\bigl( \Phi_{0,1}(\varphi_{[2]})  \bigr) \rhd \psi \bigr] = \bigl\langle \psi_{(2)}, S^{-1}\bigl( \Phi_{0,1}(\varphi_{(2)}) \bigr) \bigr\rangle \, \varphi_{(1)} \star S(\varphi_{(3)}) \star \psi_{(1)}\\
&=\bigl\langle S^{-1}(\psi_{(2)}), \Phi^-\bigl( S(\varphi_{(3)}) \bigr) \bigr\rangle \, \bigl\langle S^{-1}(\psi_{(3)}), \Phi^+(\varphi_{(2)}) \bigr\rangle \, \varphi_{(1)} \star S(\varphi_{(4)}) \star \psi_{(1)}\\
&=\mathcal{R}\bigl( \psi_{(2)} \otimes S(\varphi_{(3)}) \bigr) \, \mathcal{R}^{-1}\bigl( \varphi_{(2)} \otimes \psi_{(3)} \bigr) \, \varphi_{(1)} \star S(\varphi_{(4)}) \star \psi_{(1)}\\
&=\mathcal{R}\bigl( \psi_{(1)} \otimes S(\varphi_{(4)}) \bigr) \, \mathcal{R}^{-1}\bigl( \varphi_{(2)} \otimes \psi_{(3)} \bigr) \, \varphi_{(1)} \star \psi_{(2)} \star S(\varphi_{(3)})\\
&=\mathcal{R}\bigl( \psi_{(1)} \otimes S(\varphi_{(4)}) \bigr) \, \mathcal{R}^{-1}\bigl( \varphi_{(1)} \otimes \psi_{(2)} \bigr) \, \psi_{(3)} \star \varphi_{(2)} \star S(\varphi_{(3)})\\
&=\mathcal{R}\bigl( \psi_{(1)} \otimes S(\varphi_{(2)}) \bigr) \, \mathcal{R}^{-1}\bigl( \varphi_{(1)} \otimes \psi_{(2)} \bigr) \, \psi_{(3)}\\
&=\bigl\langle S^{-1}(\psi_{(1)}), \Phi^-\bigl( S(\varphi_{(2)}) \bigr) \bigr\rangle \, \bigl\langle S^{-1}(\psi_{(2)}), \Phi^+(\varphi_{(1)}) \bigr\rangle \, \psi_{(3)}\\
&=\bigl\langle \psi_{(1)}, S^{-1}\bigl[ \Phi^+(\varphi_{(1)}) \Phi^-\bigl( S(\varphi_{(2)}) \bigr) \bigr] \bigr\rangle \, \psi_{(2)} = \psi \lhd S^{-1}\bigl( \Phi_{0,1}(\varphi) \bigr) = \psi \lhd S^{-1}(h')
\end{align*}
where the first equality is by definition of $\rho$, the second equality is by definition of the coaction \eqref{defCoactCoad} and of $\rhd$ in \eqref{coregActions}, the third is by definition of $\Phi_{0,1}$ in \eqref{RSDphi}, the fourth is by \eqref{coRMatSign}, the fifth and sixth use \eqref{braidedComm}, and the eighth is by \eqref{coRMatSign} also using that $\mathcal{R}^{-1} = \mathcal{R} \circ (\mathrm{id} \otimes S^{-1})$ and the remaining equalities are easy.
\\2. These claims are easily deduced from item 1, thanks to injectivity of $\rho$. For instance:
\begin{align*}
\rho\bigl( \widehat{h'h''} \bigr)(\psi) = \psi \lhd S^{-1}(h'h'') &= \bigl( \psi \lhd S^{-1}(h'') \bigr) \lhd S^{-1}(h')\\
&= \bigl[ \rho\bigl(\widehat{h'}\bigr) \circ \rho\bigl( \widehat{h''} \bigr) \bigr](\psi) = \rho\bigl( \widehat{h'}\,\widehat{h''} \bigr)(\psi)
\end{align*}
for all $\psi \in H^\circ$; hence $\rho\bigl( \widehat{h'h''} \bigr) = \rho\bigl( \widehat{h'}\,\widehat{h''} \bigr)$ and $\widehat{h'h''} = \widehat{h'}\,\widehat{h''}$.
\\3. It suffices to check this equality through the injective morphism $\rho : \mathcal{H}(H^\circ) \to \mathrm{End}_\Bbbk(H^\circ)$, which is a straightforward computation using \eqref{formuleRepChapeau}.
\\4. Recall that $\mathcal{H}(H^\circ) = H^\circ \otimes H$ as a vector space. By definition of a coaction it holds $\varepsilon_{H^\circ}(\varphi_{[1]})\varphi_{[2]} = \varphi$. As a result writing $h' = \Phi_{0,1}(\varphi)$ we find $(\varepsilon_{H^\circ} \otimes S)\bigl(\widehat{h'} \bigr) = h'$.
\end{proof}

Thanks to item 2 in the previous lemma we have a morphism of algebras
\[ \widehat{\mathsf{D}}_{1,0} : H' \to \mathcal{H}(H^\circ), \quad h' \mapsto \textstyle \sum_{(h')} \widehat{h'_{(1)}} \, h'_{(2)} \]
whose definition makes sense because $\Delta(H') \subset H' \otimes H$. The next result uses the right $H$-module-algebra structure \eqref{modAlgStructHeisenberg} on $\mathcal{H}(H^\circ)$.

\begin{prop}\label{propCommDiag10}
1. $\widehat{\mathsf{D}}_{1,0}$ is a quantum moment map for $\mathcal{H}(H^\circ)$, meaning that
\[ \forall \, w \in \mathcal{H}(H^\circ), \:\: \forall \, h' \in H', \quad \widehat{\mathsf{D}}_{1,0}(h')\,w = \sum_{(h')} \, \bigl( w \cdot  S^{-1}(h'_{(2)}) \bigr)\, \widehat{\mathsf{D}}_{1,0}(h'_{(1)}). \]
2. It holds $\mu_{1,0} \circ \Phi_{1,0} = \widehat{\mathsf{D}}_{1,0}$; said differently we have a commutative diagram
\[ \xymatrix@R=1.5em@C=4em{
\mathcal{L}_{0,1}(H) \ar[r]^-{\mathfrak{d}_{1,0}} \ar[d]_-{\Phi_{0,1}} & \mathcal{L}_{1,0}(H) \ar[d]^-{\Phi_{1,0}}\\
H' \ar[r]_{\widehat{\mathsf{D}}_{1,0}} \ar[ur]^-{\mu_{1,0}}& \mathcal{H}(H^\circ)
}  \]
\end{prop}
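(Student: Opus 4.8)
The plan is to treat the two items separately: item 1 (the quantum-moment-map property of $\widehat{\mathsf{D}}_{1,0}$) by a direct normal-ordering computation in $\mathcal{H}(H^\circ)$ that rests only on Lemma \ref{lemmeChapeau}, and item 2 (commutativity of the diagram) by expanding both composites explicitly. For item 1, since the asserted identity is linear in $w$ and every element of $\mathcal{H}(H^\circ)=H^\circ\otimes H$ is a sum of products $\varphi a$, it suffices to check it on a generic $w=\varphi a$. On the left-hand side I would normal-order $\widehat{\mathsf{D}}_{1,0}(h')\,\varphi a=\sum_{(h')}\widehat{h'_{(1)}}\,h'_{(2)}\,\varphi\,a$: first push $h'_{(2)}$ past $\varphi$ with the Heisenberg relation \eqref{productHeisenberg}, then move $\widehat{h'_{(1)}}$ to the right of the resulting $H^\circ$-factor using $\widehat{k}\,\psi b=\sum_{(k)}(\psi\lhd S^{-1}(k_{(2)}))b\,\widehat{k_{(1)}}$ from Lemma \ref{lemmeChapeau}(2); writing $\Delta^{(4)}(h')=\sum g_1\otimes g_2\otimes g_3\otimes g_4$ this gives $\sum (g_3\rhd\varphi\lhd S^{-1}(g_2))\,g_4 a\,\widehat{g_1}$. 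On the right-hand side I would expand $w\cdot S^{-1}(h'_{(2)})$ with the module action \eqref{modAlgStructHeisenberg} together with $\Delta\circ S^{-1}=(S^{-1}\otimes S^{-1})\circ\Delta^{\mathrm{op}}$, reattach $\widehat{\mathsf{D}}_{1,0}(h'_{(1)})$, and use repeatedly that each $\widehat{h'_{(i)}}$ commutes with every element of $H\subset\mathcal{H}(H^\circ)$ (last part of Lemma \ref{lemmeChapeau}(2)). Two adjacent Sweedler legs then collapse through the antipode identity $\sum_{(k)}S^{-1}(k_{(2)})k_{(1)}=\varepsilon(k)1$, and the surviving expression coincides termwise with the left-hand side. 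This step is pure bookkeeping of Sweedler indices and involves no genuine difficulty; in particular it does not use $\Phi_{1,0}$, so item 1 is logically independent of item 2.

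For item 2, since $\mu_{1,0}=\mathfrak{d}_{1,0}\circ\Phi_{0,1}^{-1}$, commutativity of the diagram is equivalent to the identity $\Phi_{1,0}\circ\mathfrak{d}_{1,0}=\widehat{\mathsf{D}}_{1,0}\circ\Phi_{0,1}$ of algebra morphisms $\mathcal{L}_{0,1}(H)\to\mathcal{H}(H^\circ)$. For the right-hand side I would combine the coproduct formula \eqref{coproduitSurPhi01} for $\Phi_{0,1}(\varphi)$ with the definition \eqref{elmtsChapeau} of the elements $\widehat{h'}$ and the coaction \eqref{defCoactCoad}, obtaining a five-fold-coproduct expression whose $H^\circ$-leg has the form $\varphi_{(2)}\star S(\varphi_{(4)})$ and whose $H$-legs are built from $\Phi^\pm$ and $\Phi_{0,1}$. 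For the left-hand side I would expand $\mathfrak{d}_{1,0}(\varphi)$ via \eqref{frakD10}, distribute the algebra morphism $\Phi_{1,0}$ over the five factors using \eqref{defPhi10} (each $\mathfrak{i}_a$ contributing a $\Phi_{0,1}$ and each $\mathfrak{i}_b$ a sandwich $\Phi^+(\cdot)(\cdot)\Phi^-(S(\cdot))$), and then normal-order the product in $\mathcal{H}(H^\circ)$ with the Heisenberg relations \eqref{productHeisenberg} and the exchange relation \eqref{exchangeRelHeisenberg}. Matching the two sides finally requires simplifying the coribbon factor $\mathsf{v}^2(\varphi_{(1)})$ and the antipode $S_{\mathcal{L}}$ of \eqref{antipodeL01} through the coribbon axioms \eqref{coribbonCocentral}--\eqref{axiomCorribonSquare} and the co-R-matrix identities \eqref{coTriang1}--\eqref{coTriang2}. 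Alternatively, since the representation $\rho$ of \eqref{HeisenbergRep} is injective, one may compose with $\rho$ and check an identity of operators on $H^\circ$: here Lemma \ref{lemmeChapeau}(1) yields the compact target $\rho(\widehat{\mathsf{D}}_{1,0}(h'))(\psi)=(h'_{(2)}\rhd\psi)\lhd S^{-1}(h'_{(1)})$, against which one tests the expansion of $\rho(\Phi_{1,0}(\mathfrak{d}_{1,0}(\varphi)))$ obtained from the generator formulas above.

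I expect item 2 to be the main obstacle. The explicit normal-ordering of the five-fold product $\Phi_{1,0}(\mathfrak{d}_{1,0}(\varphi))$ in the Heisenberg double, and its reconciliation with the compact expression $\widehat{\mathsf{D}}_{1,0}(\Phi_{0,1}(\varphi))$, is a lengthy co-R-matrix and antipode computation in which the coribbon element and the antipode $S_{\mathcal{L}}$ must be carefully untangled; keeping the many Sweedler legs in order is where errors are easy to make. Item 1, by contrast, is a short mechanical application of Lemma \ref{lemmeChapeau}. The finite-dimensional specialization of item 2 is carried out in \cite[eq.\,(4.22)]{FaitgThesis}, which provides a reliable template for organizing the computation in the present general setting.
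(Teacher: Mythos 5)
Your proposal is correct and follows essentially the same route as the paper: item 1 is exactly the Sweedler-calculus verification via Lemma \ref{lemmeChapeau} that the paper leaves to the reader, and your main route for item 2 (expanding $\Phi_{1,0}\circ\mathfrak{d}_{1,0}$ via \eqref{frakD10}, \eqref{defPhi10}, \eqref{exchangeRelHeisenberg} and reconciling it with $\widehat{\mathsf{D}}_{1,0}\circ\Phi_{0,1}$ through the coribbon and co-Drinfeld identities) is precisely the paper's computation, which merely organizes that reconciliation by first isolating the identity $\widehat{\Phi_{0,1}(\varphi)} = \sum_{(\varphi)}\mathsf{v}^2(\varphi_{(1)})\,\varphi_{(2)}\,\Phi^+\bigl(S(\varphi_{(3)})\bigr)\,\Phi^-(\varphi_{(4)})\,S(\varphi_{(5)})$ as a standalone intermediate step (eq. \eqref{elmtsChapeauBis}). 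Your alternative route through the injective representation $\rho$ of \eqref{HeisenbergRep}, using $\rho\bigl(\widehat{\mathsf{D}}_{1,0}(h')\bigr)(\psi)=\bigl(h'_{(2)}\rhd\psi\bigr)\lhd S^{-1}\bigl(h'_{(1)}\bigr)$ from Lemma \ref{lemmeChapeau}(1), would also work and is consistent with how the paper later argues in Prop.\,\ref{Fmuformula}.
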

\begin{proof}
We use Sweedler's notation with implicit summation.
\\1. Obtained by a straightforward computation using the relations established in Lemma \ref{lemmeChapeau}; this is left to the reader.
\\2. A preliminary step is required. For all $\varphi \in \mathcal{L}_{0,1}(H)$ we claim that
\begin{equation}\label{elmtsChapeauBis}
\widehat{\Phi_{0,1}(\varphi)} = \mathsf{v}^2(\varphi_{(1)}) \, \varphi_{(2)} \, \Phi^+\bigl( S(\varphi_{(3)}) \bigr) \, \Phi^-(\varphi_{(4)}) \, S(\varphi_{(5)}) \in \mathcal{H}(H^\circ).
\end{equation}
where $\mathsf{v}$ is the coribbon element. In order to prove this, recall the co-Drinfeld element $\mathsf{u} : H^\circ \to \Bbbk$ from \eqref{axiomCorribonSquare}. Its convolution inverse is
\begin{equation}\label{inverseu}
\mathsf{u}^{-1}(\alpha) = \mathcal{R}\bigl( S^2(\alpha_{(2)}) \otimes \alpha_{(1)} \bigr),
\end{equation}
and by \cite[Prop.\,2.2.4]{Majid} it satisfies the key property
\begin{equation}\label{coPivotality}
\forall \, \alpha \in H^\circ, \quad S^2(\alpha) = \mathsf{u}(\alpha_{(1)})\alpha_{(2)}\mathsf{u}^{-1}(\alpha_{(3)}). 
\end{equation}
Using it and \eqref{exchangeRelHeisenberg}, straightforward computations give the first equality below, and then we obtain:
\begin{align*}
\widehat{\Phi_{0,1}(\varphi)}=\:& \mathsf{v}^2(\varphi_{(1)})\mathsf{u}^{-1}(\varphi_{(6)})\mathcal{R}\bigl(\varphi_{(3)} \otimes \varphi_{(7)}\bigr) \Bigl[\varphi_{(2)} \star S(\varphi_{(8)}) \otimes \Phi^+\bigl( S(\varphi_{(4)}) \bigr) \Phi^-(\varphi_{(5)}) \Bigr]\\
=\:& \mathsf{v}^2(\varphi_{(1)})\mathsf{u}^{-1}(\varphi_{(5)})\mathcal{R}\bigl(\varphi_{(3)} \otimes \varphi_{(7)}\bigr) \Bigl[\varphi_{(2)} \star S(\varphi_{(8)}) \otimes S^{-1}\Bigl( \Phi^-\bigl( S(\varphi_{(6)}) \bigr)\Phi^+( \varphi_{(4)}) \Bigr)\Bigr]\\
=\:& \mathsf{v}^2(\varphi_{(1)})\mathsf{u}^{-1}(\varphi_{(5)})\mathcal{R}\bigl(\varphi_{(4)} \otimes \varphi_{(6)}\bigr) \Bigl[\varphi_{(2)} \star S(\varphi_{(8)}) \otimes S^{-1}\Bigl( \Phi^+( \varphi_{(3)}) \Phi^-\bigl( S(\varphi_{(7)}) \bigr) \Bigr)\Bigr]\\
=\:& \mathsf{v}^2(\varphi_{(1)})\mathsf{u}^{-1}(\varphi_{(4)}) \mathsf{u}^{-1}\bigl( S(\varphi_{(5)}) \bigr) \Bigl[\varphi_{(2)} \star S(\varphi_{(7)}) \otimes S^{-1}\Bigl( \Phi^+( \varphi_{(3)}) \Phi^-\bigl( S(\varphi_{(6)}) \bigr) \Bigr)\Bigr]\\
=\:&\varphi_{(1)} \star S(\varphi_{(4)}) \otimes S^{-1}\Bigl( \Phi^+( \varphi_{(2)}) \Phi^-\bigl( S(\varphi_{(3)}) \bigr) \Bigr) = \varphi_{[1]} \otimes S^{-1}\bigl( \Phi_{0,1}(\varphi_{[2]}) \bigr)
\end{align*}
where the second equality uses \eqref{coPivotality} together with $\Phi^\pm \circ S = S^{-1} \circ \Phi^{\pm}$, the third uses \eqref{RLLdeguise}, the fourth uses the outcome of the following computation based on properties of $\mathsf{u}$ recalled just above and the fact that $\mathcal{R} \circ (S \otimes \mathrm{id}) = \mathcal{R} \circ (\mathrm{id} \otimes S^{-1})$:
\begin{align*}
\mathsf{u}^{-1}(\varphi_{(5)})\mathcal{R}\bigl(\varphi_{(4)} \otimes \varphi_{(6)}\bigr) &= \mathsf{u}^{-1}(\varphi_{(4)})\mathcal{R}\bigl(S^2(\varphi_{(5)}) \otimes \varphi_{(6)}\bigr)\\
&= \mathsf{u}^{-1}(\varphi_{(4)})\mathcal{R}\bigl[ S\bigl( S(\varphi_{(5)})_{(2)} \bigr) \otimes S^{-1}\bigl( S(\varphi_{(5)})_{(1)} \bigr) \bigr]\\
&= \mathsf{u}^{-1}(\varphi_{(4)})\mathcal{R}\bigl[ S^2\bigl( S(\varphi_{(5)})_{(2)} \bigr) \otimes  S(\varphi_{(5)})_{(1)} \bigr] = \mathsf{u}^{-1}(\varphi_{(4)}) \mathsf{u}^{-1}\bigl( S(\varphi_{(5)}) \bigr)
\end{align*}
the fifth uses \eqref{coribbonCocentral} and \eqref{axiomCorribonSquare}, and the last is by definitions of the coaction and of $\Phi_{0,1}$.

\indent We now use \eqref{elmtsChapeauBis} to prove the result in the Proposition. One can check that\footnote{These equalities are obvious in the matrix formalism explained in \cite[\S 3.2]{BFR}, which is why we don't give further details. }
\[ \Phi_{1,0} \circ \mathfrak{i}_a\bigl( S_{\mathcal{L}}(\varphi) \bigr) = \Phi^-(\varphi_{(1)}) \Phi^+\bigl( S(\varphi_{(2)}) \bigr), \quad \Phi_{1,0} \circ \mathfrak{i}_b\bigl( S_{\mathcal{L}}(\varphi) \bigr) = \Phi^-(\varphi_{(1)}) S(\varphi_{(2)}) \Phi^+\bigl( S(\varphi_{(3)}) \bigr). \]
From this and \eqref{defPhi10} and \eqref{elmtsChapeauBis} we find
\begin{align*}
&\Phi_{1,0} \circ \mathfrak{d}_{1,0}(\varphi) = \mathsf{v}^2(\varphi_{(1)}) \Phi_{1,0}\bigl[ \mathfrak{i}_b(\varphi_{(2)}) \, \mathfrak{i}_a\bigl( S_{\mathcal{L}}(\varphi_{(3)}) \bigr) \, \mathfrak{i}_b\bigl( S_{\mathcal{L}}(\varphi_{(4)}) \bigr) \, \mathfrak{i}_a(\varphi_{(5)}) \bigr]\\
&=\mathsf{v}^2(\varphi_{(1)})\, \bigl[ \Phi^+(\varphi_{(2)})\varphi_{(3)}\Phi^-\bigl( S(\varphi_{(4)}) \bigr)\bigr] \, \, \bigl[ \Phi^-(\varphi_{(5)}) \Phi^+\bigl( S(\varphi_{(6)}) \bigr) \bigr]\\
&\qquad\qquad\quad \bigl[ \Phi^-(\varphi_{(7)}) S(\varphi_{(8)}) \Phi^+\bigl( S(\varphi_{(9)}) \bigr) \bigr] \bigl[ \Phi^+(\varphi_{(10)}) \Phi^-\bigl( S(\varphi_{(11)}) \bigr) \bigr]\\
&= \mathsf{v}^2(\varphi_{(1)})\, \Phi^+(\varphi_{(2)})\varphi_{(3)} \Phi^+\bigl( S(\varphi_{(4)}) \bigr) \Phi^-(\varphi_{(5)}) S(\varphi_{(6)}) \Phi^-\bigl( S(\varphi_{(7)}) \bigr)\\
&=\Phi^+(\varphi_{(1)}) \widehat{\Phi_{0,1}(\varphi_{(2)})} \Phi^-\bigl( S(\varphi_{(3)}) \bigr) \\ & = \widehat{\Phi_{0,1}(\varphi_{(2)})} \Phi^+(\varphi_{(1)}) \Phi^-\bigl( S(\varphi_{(3)}) \bigr) = \widehat{\Phi_{0,1}(\varphi)_{(1)}} \Phi_{0,1}(\varphi)_{(2)} = \widehat{\mathsf{D}}_{1,0} \circ \Phi_{1,0}
\end{align*}
where the first equality uses the definition of $\mathfrak{d}_{1,0}$ in \eqref{frakD10}, the second uses the values of $\Phi_{1,0}$, the third uses that $\Phi^\pm$ is a morphism of algebras to apply the antipode axiom, the fourth uses \eqref{coribbonCocentral} and \eqref{elmtsChapeauBis}, the fifth uses Lemma \ref{lemmeChapeau}(2) and the sixth is by \eqref{coproduitSurPhi01}.
\end{proof}

\subsection{Alekseev morphism and representations}\label{subsecAlekseev}
\indent The morphisms $\Phi_{0,1} : \mathcal{L}_{0,1}(H) \to H$ and $\Phi_{1,0} : \mathcal{L}_{1,0}(H) \to \mathcal{H}(H^\circ)$ discussed previously can be combined into a morphism $\Phi_{g,n}$ called {\em Alekseev morphism}. We recall here a general way of defining $\Phi_{g,n}$, following \cite[\S 5]{BFR}. This approach has the advantage to work for any datum $(H,\Phi^\pm)$ but does not interact well with the QMM $\mu_{g,n}$ (see Rmk.\,\ref{remarqueProbleme}). This is an issue for later purposes and will force us to construct in \S\ref{subsecModifiedAlekseev} another version of $\Phi_{g,n}$ in the case $H = U_q(\mathfrak{g})$.

\smallskip

\indent  Recall the Heisenberg double $\mathcal{H}(H^\circ)$ from \S\ref{subsecHeisenbergL10}. The {\em two-sided Heisenberg double} $\mathcal{HH}(H^\circ)$ is the vector space $\mathcal{H}(H^\circ) \otimes H = (H^\circ \otimes H) \otimes H$ endowed with the product defined by the following rules introduced in \cite[\S 5.1]{BFR}:

\indent (i) $\mathcal{H}(H^\circ) \otimes 1_H$ and $1_{\mathcal{H}(H^\circ)} \otimes H$ are subalgebras of $\mathcal{HH}(H^{\circ})$ isomorphic to $\mathcal{H}(H^\circ)$ and $H$. We write $\varphi a$ instead of $(\varphi a) \otimes 1_H $ and $\widetilde{b}$ instead of $1_{\mathcal{H}(H^\circ)} \otimes b$.

\indent (ii) With these identifications, we have the relations 
\begin{equation}\label{2sidedproductHeisenberg}
\forall \, \varphi a \in \mathcal{H}(H^\circ), \:\: \forall \, b \in H, \quad  \varphi a \, \widetilde{b} = \varphi a \otimes b, \quad \widetilde{b}\,\varphi a = \sum_{(b)}\bigl( \varphi \lhd S^{-1}(b_{(2)})\bigr)a \, \widetilde{b_{(1)}}.
\end{equation}
Note in particular that $\widetilde{b}\,a = a \,\widetilde{b}$ for all $a,b \in H$. The elements $\widetilde{b}$ are a substitute for the elements $\widehat{h'}$ from Lemma \ref{lemmeChapeau} which are defined only for $h' \in H'$. The point is that we need elements satisfying the commutation relation \eqref{2sidedproductHeisenberg} for all $h$ in order to define the Alekseev morphism; since in general we do not know how $H'$ compares to $H$ we add them formally instead of using the already present elements $\widehat{h'}$. See Remark \ref{remarqueProbleme} for more comments.

\smallskip

\indent There is a right $H$-module-algebra structure on $\mathcal{HH}(H^\circ) = \mathcal{H}(H^\circ) \otimes H$ given by
\begin{equation}\label{modAlgStructTwoSidedHeis}
(X \otimes b) \cdot h = \sum_{(h)} (X \cdot h_{(1)}) S(h_{(2)})h_{(5)} \otimes S(h_{(3)})bh_{(4)}
\end{equation}
for all $X \otimes b \in \mathcal{HH}(H^\circ)$ and $h \in H$, where we use the already defined right $H$-module-algebra structure \eqref{modAlgStructHeisenberg} on $\mathcal{H}(H^\circ)$. The proof of this fact is left as an exercise for the reader; one must check that it is indeed a right action and it is compatible with the relations \eqref{2sidedproductHeisenberg}.

\smallskip

\indent For all integers $g,n \geq 0$, let $\mathsf{D}_{g,n} : H \to \mathcal{HH}(H^\circ)^{\otimes g} \otimes H^{\otimes n}$ be the algebra morphism defined by
\begin{equation}\label{dressingMap}
 \mathsf{D}_{g,n}(h) = \sum_ {(h)} \widetilde{h_{(1)}}\,h_{(2)} \otimes \ldots \otimes \widetilde{h_{(2g-1)}}\,h_{(2g)} \otimes h_ {(2g+1)} \otimes \ldots \otimes h_{(2g+n)}
\end{equation}
with the convention that $\mathsf{D}_{0,0}(h) = \varepsilon(h)$. As explained around \eqref{monomialFactoBraided} we have a decomposition $\mathcal{L}_{g,n}(H) = \mathcal{L}_{1,0}(H)^{\widetilde{\otimes}\,g} \,\widetilde{\otimes}\, \mathcal{L}_{0,1}(H)^{\widetilde{\otimes}\,n}$; this allows us to inductively define a map $\Phi_{g,n} : \mathcal{L}_{g,n}(H) \to \mathcal{HH}(H^\circ)^{\otimes g} \otimes H^{\otimes n}$ for all values of $g$ and $n$. First since $\mathcal{L}_{0,n+1}(H) = \mathcal{L}_{0,1}(H) \,\widetilde{\otimes}\, \mathcal{L}_{0,n}(H)$ we let
\begin{equation}\label{AlekseevValue1}
\Phi_{0,n+1}(\varphi \otimes w) =  \sum_{[\varphi]} \Phi_{0,1}(\varphi_{[2]}) \otimes \mathsf{D}_{0,n}\bigl( \Phi^+(\varphi_{[1]}) \bigr)\Phi_{0,n}(w)
\end{equation}
for all $\varphi \in \mathcal{L}_{0,1}(H)$ and $w \in \mathcal{L}_{0,n}(H)$, using the left $H^\circ$-coaction \eqref{defCoactCoad} which is dual to $\mathrm{coad}^r$. Second since $\mathcal{L}_{g+1,n}(H) = \mathcal{L}_{1,0}(H) \,\widetilde{\otimes}\, \mathcal{L}_{g,n}(H)$ we let
\begin{equation}\label{AlekseevValue2}
\Phi_{g+1,n}(x \otimes w) =  \sum_{[x]} \Phi_{1,0}(x_{[2]}) \otimes \mathsf{D}_{g,n}\bigl( \Phi^+(x_{[1]}) \bigr)\Phi_{g,n}(w).
\end{equation}
for all $x \in \mathcal{L}_{1,0}(H)$ and $w \in \mathcal{L}_{g,n}(H)$. Note that actually $\Phi_{g,n}$ takes values in the subalgebra $\mathcal{H}(H^\circ) \otimes \mathcal{HH}(H^\circ)^{\otimes (g-1)} \otimes H^{\otimes n}$. When $H$ is quasitriangular and the datum $\Phi^{\pm}$ is chosen as in \eqref{LfuncFromRmat}, the definition of $\Phi_{g,n}$ coincides with the one in \cite[\S 5.2]{BFR}. For general $\Phi^\pm$ the arguments leading to the main of properties of $\Phi_{g,n}$ must be rephrased without using the R-matrix; this is why we give a detailed proof of the following proposition.

\begin{prop}\label{propAlekseevMorph}
1. The following holds in the algebra $\mathcal{HH}(H^\circ)^{\otimes g} \otimes H^{\otimes n}$ endowed with the usual tensor-wise product:
\[ \forall \, h \in H, \:\: \forall \, x \in \mathcal{L}_{g,n}(H), \quad \Phi_{g,n}\bigl( \mathrm{coad}^r(h)(x) \bigr) = \sum_{(h)} \mathsf{D}_{g,n}\bigl( S(h_{(1)}) \bigr) \, \Phi_{g,n}(x) \, \mathsf{D}_{g,n}(h_{(2)}). \]
2. The map $\Phi_{g,n}$ is a morphism of right $H$-module-algebras.\footnote{The target is equipped with the usual action of $H$ on a tensor product, obtained by combining the action \eqref{modAlgStructTwoSidedHeis} on $\mathcal{HH}(H^\circ)$ and the right adjoint action \eqref{defAdr} on $H$ {\it via} iterated coproduct $H \to H^{\otimes (g+n)}$.} 
\end{prop}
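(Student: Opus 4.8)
The plan is to prove both items simultaneously by induction on $g+n$, exploiting the two one-step extensions \eqref{AlekseevValue1} and \eqref{AlekseevValue2} that build $\Phi_{g,n}$ out of the ground cases $\Phi_{0,1}$ and $\Phi_{1,0}$, which are already known to be morphisms of right $H$-module-algebras. The first observation is that item 1 is essentially the equivariance half of item 2 in disguise. Indeed, the relations \eqref{2sidedproductHeisenberg} and \eqref{modAlgStructTwoSidedHeis}, together with \eqref{defAdr}, are tailored so that the dressing map $\mathsf{D}_{g,n}$ from \eqref{dressingMap} is a \emph{quantum moment map} for the target algebra $\mathcal{HH}(H^\circ)^{\otimes g}\otimes H^{\otimes n}$; concretely, a direct computation from these relations (a global analogue of Prop.\,\ref{propCommDiag10}(1)) yields $t\cdot h = \sum_{(h)} \mathsf{D}_{g,n}\bigl(S(h_{(1)})\bigr)\,t\,\mathsf{D}_{g,n}(h_{(2)})$ for every $t$ in the target, the action on $\mathcal{H}(H^\circ)\otimes 1\subset\mathcal{HH}(H^\circ)$ restricting to \eqref{modAlgStructHeisenberg}. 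Granting this, item 1 asserts exactly that $\Phi_{g,n}\bigl(\mathrm{coad}^r(h)(x)\bigr) = \Phi_{g,n}(x)\cdot h$, i.e. that $\Phi_{g,n}$ is $H$-equivariant; so it remains to establish item 1 and the \emph{algebra}-morphism property.

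I would prove item 1 first, by induction, since it involves $\Phi_{g,n}$ only on single elements and does not use multiplicativity. In the base cases $(g,n)=(0,1)$ and $(1,0)$ one has $\mathsf{D}_{0,1}=\mathrm{id}$ and the formula reduces to equivariance of $\Phi_{0,1}$, respectively to equivariance of $\Phi_{1,0}$ combined with the moment-map identity above. For the inductive step, write the coadjoint action on the braided factorization \eqref{monomialFactoBraided} as $\mathrm{coad}^r(h)(x\otimes w)=\sum_{(h)} \mathrm{coad}^r(h_{(1)})(x)\otimes\mathrm{coad}^r(h_{(2)})(w)$ following \eqref{coadLgn}, apply the defining formula \eqref{AlekseevValue2} (resp. \eqref{AlekseevValue1}), and then invoke: the induction hypothesis (covariance of $\Phi_{g,n}$ and of $\Phi_{1,0}$), the fact that $\mathsf{D}_{g,n}$ is an algebra morphism, and the coproduct formula \eqref{coproduitSurPhi01} to rewrite $\Phi^+$ applied to the coaction components of $\mathrm{coad}^r(h_{(1)})(x)$ as dressing factors. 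The several $\mathsf{D}$-factors arising from the induction hypothesis and from the prefactor $\mathsf{D}_{g,n}(\Phi^+(x_{[1]}))$ should recombine, using multiplicativity of $\mathsf{D}$ and the quasi-cocommutativity \eqref{quasiCocommPhi}, into the single conjugation by $\mathsf{D}_{g+1,n}(h)$.

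For the algebra-morphism property I would again induct with \eqref{AlekseevValue1}--\eqref{AlekseevValue2}. By \eqref{monomialFactoBraided}, and since $\Phi_{0,1},\Phi_{1,0}$ are multiplicative, the only thing to check is that $\Phi_{g,n}$ turns the braided commutation \eqref{braidedTensProdLgn} between two different copies into an honest commutation in the target. Expanding $\Phi_{g+1,n}$ of a product via \eqref{AlekseevValue2}, the cross term requires moving the block $\mathsf{D}_{g,n}(\Phi^+(x_{[1]}))\Phi_{g,n}(w)$ of one factor past that of the other; here the dressing prefactor $\mathsf{D}_{g,n}(\Phi^+(-))$ is precisely what reproduces the co-R-matrix in \eqref{braidedTensProdLgn}, the exchange being carried out with the Heisenberg relations \eqref{exchangeRelHeisenberg} and \eqref{2sidedproductHeisenberg} together with the commutation identities \eqref{RLLdeguise}. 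At the point where a dressing factor must be commuted past an already-computed $\Phi_{g,n}$-image I would invoke the moment-map property of $\mathsf{D}_{g,n}$ (equivalently, the covariance formula of item 1), which is why item 1 is established first.

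The hard part will be the Sweedler-index bookkeeping in the inductive step of item 1: reconciling the dressing factor $\mathsf{D}_{g,n}(\Phi^+(\varphi_{[1]}))$ appearing in \eqref{AlekseevValue1}/\eqref{AlekseevValue2} with the conjugation produced by the induction hypothesis, so that the two collapse to conjugation by $\mathsf{D}_{g+1,n}(h)$ exactly. This rests on the interplay of the coaction \eqref{defCoactCoad}, the coproduct of $\Phi_{0,1}$ given by \eqref{coproduitSurPhi01}, multiplicativity of $\mathsf{D}_{g,n}$, and \eqref{quasiCocommPhi}; one must also verify at the end that the output lies in the subalgebra $\mathcal{H}(H^\circ)\otimes\mathcal{HH}(H^\circ)^{\otimes(g-1)}\otimes H^{\otimes n}$, i.e. that the formal elements $\widetilde{h}$ occupying the first slot behave like the genuine elements $\widehat{h'}$ of Lemma \ref{lemmeChapeau}. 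These are lengthy but mechanical computations of the type already carried out in Prop.\,\ref{propCommDiag10}.
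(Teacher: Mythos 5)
Your proposal is correct and follows essentially the same route as the paper: item 1 is proved first by induction from the base cases $\Phi_{0,1}$, $\Phi_{1,0}$, it is then invoked to move dressing factors past $\Phi_{g,n}$-images in the inductive proof of multiplicativity, and $H$-linearity is obtained by observing that the target action is conjugation by $\mathsf{D}_{g,n}$ (the moment-map computations \eqref{QMMHeisDansPreuve} and \eqref{computationProofQMM2SidedHeis}). The only slip is the citation of \eqref{coproduitSurPhi01} in the inductive step of item 1 — the ingredient actually needed there is the identity \eqref{trucTechniquePreparation}, built from \eqref{defCoad}, \eqref{quasiCocommPhi} and naturality of the coaction, which you otherwise correctly identify.
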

\begin{proof}
In this proof we use Sweedler's notation with implict summation.
\\1. The proof is by induction. We start with the base cases \textit{i.e.} $(g,n) = (0,1)$ and $(g,n)=(1,0)$. Since $\Phi_{0,1}:\bigl( \mathcal{L}_{0,1}(H), \mathrm{coad}^r \bigr) \to (H,\mathrm{ad}^r)$ is $H$-linear it satisfies the desired property. For $\Phi_{1,0} : \mathcal{L}_{1,0}(H) \to \mathcal{H}(H^\circ)$, recall the $H$-module-algebra structure \eqref{modAlgStructHeisenberg} on $\mathcal{H}(H^\circ)$. It satisfies $\Phi_{1,0}\bigl( \mathrm{coad}^r(h)(x) \bigr) = \Phi_{1,0}(x) \cdot h$ \cite[\S 3.3]{BFR}. The desired property is then established by the following computation for all $\varphi a \in \mathcal{H}(H^\circ) \subset \mathcal{HH}(H^\circ)$ and $h \in H$
\begin{align}
\begin{split}\label{QMMHeisDansPreuve}
\mathsf{D}_{1,0}\bigl( S(h_{(1)}) \bigr) \varphi a \mathsf{D}_{1,0}(h_{(2)}) &=  \widetilde{S(h_{(2)})} S(h_{(1)}) \varphi a \, \widetilde{h_{(3)}} h_{(4)}\\
&=\bigl( S(h_{(2)}) \rhd \varphi \lhd h_{(3)} \bigr) \widetilde{S(h_{(4)})} S(h_{(1)}) \, a\, \widetilde{h_{(5)}} h_{(6)}\\
&= \bigl( S(h_{(2)}) \rhd \varphi \lhd h_{(3)} \bigr) S(h_{(1)}) a h_{(4)} = (\varphi a) \cdot h
\end{split}
\end{align}
where the second and third equalities use \eqref{2sidedproductHeisenberg}.

Let us now prove the claim for $\Phi_{0,n}$ by induction on $n$. As a preparation, note that the coaction $\mathcal{L}_{g,n}(H) \to H^\circ \otimes \mathcal{L}_{g,n}(H)$ is characterized by $\langle x_{[1]}, h \rangle \, x_{[2]} = \mathrm{coad}^r(h)(x)$ for all $h \in H$ and $x \in \mathcal{L}_{g,n}(H)$. One thus checks the two following facts:
\begin{align*}
&\forall \, h \in H, \quad (h \rhd x_{[1]}) \otimes x_{[2]} = x_{[1]} \otimes \mathrm{coad}^r(h)(x_{[2]}),\\
&\forall \, h \in H, \quad \mathrm{coad}^r(h)(x)_{[1]} \otimes \mathrm{coad}^r(h)(x)_{[2]} = \mathrm{coad}^r(h_{(1)})(x_{[1]}) \otimes \mathrm{coad}^r(h_{(2)})(x_{[2]}).
\end{align*}
The first identity is easy, and true for any coaction $\textstyle x\mapsto \sum_{[x]} x_{[1]}\otimes x_{[2]}$. The second is obtained as follows, where $a\in H$:
\begin{align*}
&\langle \mathrm{coad}^r(h_{(1)})(x_{[1]}) ,a\rangle \mathrm{coad}^r(h_{(2)})(x_{[2]})  = \langle x_{[1]} , h_{(1)}aS(h_{(2)})\rangle \mathrm{coad}^r(h_{(3)})(x_{[2]})\\
 =\:\,& \mathrm{coad}^r(h_{(3)})(\mathrm{coad}^r(h_{(1)}aS(h_{(2)}))(x)) = \mathrm{coad}^r(h_{(1)}aS(h_{(2)})h_{(3)})(x)\\
 =\:\,& \mathrm{coad}^r(ha)(x) = \mathrm{coad}^r(a)(\mathrm{coad}^r(h)(x)) = \langle \mathrm{coad}^r(h)(x)_{[1]} ,a\rangle \mathrm{coad}^r(h)(x)_{[2]}.
\end{align*}
Using these identities we have the following computation for all $h \in H$ and $x \in \mathcal{L}_{g,n}(H)$:
\begin{align}
\begin{split}\label{trucTechniquePreparation}
&\Phi^+\bigl( \mathrm{coad}^r(h)(x)_{[1]} \bigr) \otimes \mathrm{coad}^r(h)(x)_{[2]} = \Phi^+\bigl( \mathrm{coad}^r(h_{(1)})(x_{[1]}) \bigr) \otimes \mathrm{coad}^r(h_{(2)})(x_{[2]})\\
=\:&\Phi^+\bigl( S(h_{(2)}) \rhd x_{[1]} \lhd h_{(1)} \bigr) \otimes \mathrm{coad}^r(h_{(3)})(x_{[2]}) \quad \text{\footnotesize by \eqref{defCoad}}\\
=\:& S(h_{(1)})\Phi^+\bigl( h_{(2)}S(h_{(4)}) \rhd x_{[1]} \bigr) h_{(3)} \otimes \mathrm{coad}^r(h_{(5)})(x_{[2]}) \quad \text{\footnotesize by \eqref{quasiCocommPhi}}\\
=\:&S(h_{(1)})\Phi^+\bigl( x_{[1]} \bigr) h_{(3)} \otimes \mathrm{coad}^r(h_{(5)})\bigl( \mathrm{coad}^r(h_{(2)}S(h_{(4)}))(x_{[2]}) \bigr)\\
=\:&S(h_{(1)})\Phi^+\bigl( x_{[1]} \bigr) h_{(3)} \otimes \mathrm{coad}^r(h_{(2)})(x_{[2]})
\end{split}
\end{align}
where the last equality uses that $\mathrm{coad}^r$ is a right action. Hence for all $\varphi \otimes w \in \mathcal{L}_{0,n+1}(H)$ we have
\begin{align*}
&\Phi_{0,n+1}\bigl( \mathrm{coad}^r(h)(\varphi \otimes w) \bigr) = \Phi_{0,n+1}\bigl( \mathrm{coad}^r(h_{(1)})(\varphi) \otimes \mathrm{coad}^r(h_{(2)})(w) \bigr)\\
=\:&\Phi_{0,1}\bigl( \mathrm{coad}^r(h_{(1)})(\varphi)_{[2]} \bigr) \otimes \mathsf{D}_{0,n}\bigl[ \Phi^+\bigl(\mathrm{coad}^r(h_{(1)})(\varphi)_{[1]} \bigr) \bigr] \Phi_{0,n}\bigl( \mathrm{coad}^r(h_{(2)})(w) \bigr) \quad \text{\footnotesize by \eqref{AlekseevValue1}}\\
=\:&\Phi_{0,1}\bigl( \mathrm{coad}^r(h_{(2)})(\varphi_{[2]}) \bigr) \otimes \mathsf{D}_{0,n}\bigl[ S(h_{(1)}) \Phi^+(\varphi_{[1]}) h_{(3)} \bigr] \Phi_{0,n}\bigl( \mathrm{coad}^r(h_{(4)})(w) \bigr) \quad \text{\footnotesize by \eqref{trucTechniquePreparation}}\\
=\:&S(h_{(2)})\Phi_{0,1}(\varphi_{[2]})h_{(3)} \otimes \mathsf{D}_{0,n}\bigl[ S(h_{(1)}) \Phi^+(\varphi_{[1]}) h_{(4)} \bigr] \mathsf{D}_{0,n}\bigl( S(h_{(5)}) \bigr) \Phi_{0,n}(w) \mathsf{D}_{0,n}(h_{(6)})\\
=\:&S(h_{(2)})\Phi_{0,1}(\varphi_{[2]})h_{(3)} \otimes \mathsf{D}_{0,n}\bigl( S(h_{(1)}) \bigr) \mathsf{D}_{0,n}\bigl( \Phi^+(\varphi_{[1]}) \bigr) \Phi_{0,n}(w) \mathsf{D}_{0,n}(h_{(4)})\\
=\:&\mathsf{D}_{0,n+1}\bigl( S(h_{(1)}) \bigr) \bigl[ \Phi_{0,1}(\varphi_{[2]}) \otimes \mathsf{D}_{0,n}\bigl( \Phi^+(\varphi_{[1]}) \bigr) \Phi_{0,n}(w) \bigr] \mathsf{D}_{0,n+1}(h_{(2)})
\end{align*}
which by \eqref{AlekseevValue1} is the desired result. The fourth equality is by induction hypothesis and the last equality uses that $S$ is an antimorphism of coalgebras.

The proof for $\Phi_{g,n}$ is by induction on $g$ and uses exactly the same computation.
\\2. The hard part is ``algebra morphism'', which we do first. As a technical preliminary note that for all $\psi \in H^\circ$ and $v \in \mathcal{L}_{g,n}(H)$ we have
\begin{align*}
&\mathcal{R}\bigl( \psi_{(1)} \otimes v_{[1]} \bigr)  \mathrm{coad}^r\bigl( S^{-1}\bigl(\Phi^+(\psi_{(2)})\bigr) \bigr)(v_{[2]}) = \mathcal{R}\bigl( \psi_{(1)} \otimes v_{[1]} \bigr) \bigl\langle v_{[2][1]}, S^{-1}\bigl(\Phi^+(\psi_{(2)})\bigr) \bigr\rangle v_{[2][2]}\\
=\:&\mathcal{R}\bigl( \psi_{(1)} \otimes v_{[1](1)} \bigr) \bigl\langle S^{-1}(v_{[1](2)}), \Phi^+(\psi_{(2)}) \bigr\rangle v_{[2]} = \mathcal{R}\bigl( \psi_{(1)} \otimes v_{[1](1)} \bigr) \mathcal{R}\bigl( \psi_{(2)} \otimes S^{-1}(v_{[1](2)}) \bigr) v_{[2]}\\
=\:&\varepsilon(\psi) \varepsilon\bigl( v_{[1]} \bigr) v_{[2]} = \varepsilon(\psi)v
\end{align*}
by \eqref{defCoRMat} and usual properties of a co-R-matrix. The proof of the claim is again by induction on $n$ and $g$. Assume for instance that $\Phi_{g,n}$ is a morphism of algebras (the case $\Phi_{0,n}$ is similar). Let $x \otimes v, y \otimes w \in \mathcal{L}_{g+1,n}(H) = \mathcal{L}_{1,0}(H) \,\widetilde{\otimes}\, \mathcal{L}_{g,n}(H)$; it follows from \eqref{braidedTensProdLgn} that their product is
\[ (x \otimes v)(y \otimes w) = \mathcal{R}\bigl( y_{[1]} \otimes v_{[1]} \bigr) \, xy_{[2]} \otimes v_{[2]}w. \]
Hence
\begin{align*}
&\Phi_{g+1,n}\bigl( (x \otimes v)(y \otimes w) \bigr) = \mathcal{R}\bigl( y_{[1]} \otimes v_{[1]} \bigr) \, \Phi_{1,0}\bigl(x_{[2]}y_{[2][2]}\bigr) \otimes \mathsf{D}_{g,n}\bigl( \Phi^+(x_{[1]}y_{[2][1]}) \bigr) \Phi_{g,n}\bigl(v_{[2]}w\bigr)\\
=\:& \mathcal{R}\bigl( y_{[1](1)} \otimes v_{[1]} \bigr) \, \Phi_{1,0}\bigl(x_{[2]}\bigr)\Phi_{1,0}\bigl(y_{[2]}\bigr) \otimes \mathsf{D}_{g,n}\bigl( \Phi^+(x_{[1]}) \bigr)  \mathsf{D}_{g,n}\bigl( \Phi^+(y_{[1](2)}) \bigr) \Phi_{g,n}\bigl(v_{[2]}\bigr)\Phi_{g,n}(w)\\
=\:& \mathcal{R}\bigl( y_{[1](1)} \otimes v_{[1]} \bigr) \, \Phi_{1,0}\bigl(x_{[2]}\bigr)\Phi_{1,0}\bigl(y_{[2]}\bigr)\\
&\qquad\qquad\otimes \mathsf{D}_{g,n}\bigl( \Phi^+(x_{[1]}) \bigr) \Phi_{g,n}\bigl[ \mathrm{coad}^r\bigl( S^{-1}\bigl(\Phi^+(y_{[1](2)})\bigr) \bigr)(v_{[2]})\bigr] \mathsf{D}_{g,n}\bigl( \Phi^+(y_{[1](3)})\bigr) \Phi_{g,n}(w)\\
=\:&\Phi_{1,0}\bigl(x_{[2]}\bigr)\Phi_{1,0}\bigl(y_{[2]}\bigr) \otimes \mathsf{D}_{g,n}\bigl( \Phi^+(x_{[1]}) \bigr) \Phi_{g,n}\bigl(v_{[2]}\bigr) \mathsf{D}_{g,n}\bigl( \Phi^+(y_{[1]})\bigr) \Phi_{g,n}(w)\\
=\:& \Phi_{g+1,n}(x \otimes v) \Phi_{g+1,n}(y \otimes w)
\end{align*}
where the first equality is by definition of $\Phi_{g+1,n}$, the second uses the coaction property and the fact that all maps involved are morphism of algebras, the third uses item 1 of the present Proposition and the fact that $\Phi^+$ is an anti-morphism of coalgebras, the fourth uses the technical preliminary and the last is by definition of $\Phi_{g+1,n}$.
\\To prove $H$-linearity, we first show it when $(g,n)=(1,0)$. For all $X \otimes b = X \, \widetilde{b} \in \mathcal{HH}(H^\circ) = \mathcal{H}(H^\circ) \otimes H$ and $h \in H$ we have
\begin{align}
\begin{split}\label{computationProofQMM2SidedHeis}
&\mathsf{D}_{1,0}\bigl( S(h_{(1)}) \bigr) \, X \, \widetilde{b} \, \mathsf{D}_{1,0}(h_{(2)}) = \mathsf{D}_{1,0}\bigl( S(h_{(1)}) \bigr) X \mathsf{D}_{1,0}(h_{(2)}) \mathsf{D}_{1,0}\bigl( S(h_{(3)}) \bigr)  \widetilde{b} \mathsf{D}_{1,0}(h_{(4)})\\
&\overset{\eqref{QMMHeisDansPreuve}}{=} \bigl( X \cdot h_{(1)} \bigr) \,  \mathsf{D}_{1,0}\bigl( S(h_{(3)}) \bigr)  \widetilde{b} \,\mathsf{D}_{1,0}(h_{(4)}) \overset{\eqref{dressingMap}}{=} \bigl( X \cdot h_{(1)} \bigr) \, \widetilde{S(h_{(3)})} S(h_{(2)}) \widetilde{b} \,\widetilde{h_{(4)}} h_{(5)}\\
&\overset{\eqref{2sidedproductHeisenberg}}{=} \bigl( X \cdot h_{(1)} \bigr) \, S(h_{(2)})h_{(5)} \widetilde{S(h_{(3)})bh_{(4)}} \overset{\eqref{modAlgStructTwoSidedHeis}}{=} \bigl( X \, \widetilde{b} \, \bigr) \cdot h.
\end{split}
\end{align}
Since $\mathsf{D}_{g,n}(h) = \mathsf{D}_{1,0}(h_{(1)}) \otimes \ldots \otimes \mathsf{D}_{1,0}(h_{(g)}) \otimes h_{(g+1)} \otimes \ldots \otimes h_{(g+n)}$ by very definition in \eqref{dressingMap}, we conclude that $x \cdot h = \mathsf{D}_{g,n}\bigl( S(h_{(1)}) \bigr) x \mathsf{D}_{g,n}(h_{(2)})$ for all $x \in \mathcal{HH}(H^\circ)^{\otimes g} \otimes H^{\otimes n}$, so that item 1 in the present Proposition actually means $H$-linearity.
\end{proof}
\noindent The computation \eqref{computationProofQMM2SidedHeis} shows that $\mathsf{D}_{1,0}$ is a quantum moment map for $\mathcal{HH}(H^\circ)$. Actually the action of $H$ in \eqref{modAlgStructTwoSidedHeis} was defined precisely in order to have this property.

\smallskip

We record that the question of injectivity of $\Phi_{g,n}$ reduces to the base cases:
\begin{lem}\label{lemmaInjAlekseev}
If $\Phi_{0,1}$ and $\Phi_{1,0}$ are injective then $\Phi_{g,n}$ is injective for all $g,n$.
\end{lem}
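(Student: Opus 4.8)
The plan is to deduce the general case from the two base cases by a double induction --- first on the number $n$ of punctures, then on the genus $g$ --- by reading each inductive clause \eqref{AlekseevValue1}--\eqref{AlekseevValue2} as a composite of three maps, two of which are tensor products of maps known to be injective and the third of which is a linear \emph{isomorphism}. The engine is the following elementary ``dressing is invertible'' lemma, which I would isolate first: let $C$ be a Hopf algebra with invertible antipode $S$, let $M$ be a left $C$-comodule with coaction $\delta(m)=\sum m_{[1]}\otimes m_{[2]}$ (so $m_{[1]}\in C$, $m_{[2]}\in M$), and let $N$ be a left $C$-module via an algebra morphism $\lambda\colon C\to\mathrm{End}_\Bbbk(N)$. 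Then $G\colon M\otimes N\to M\otimes N$, $G(m\otimes n)=\sum m_{[2]}\otimes\lambda(m_{[1]})\,n$, is a linear isomorphism, with inverse $\bar G(m\otimes n)=\sum m_{[2]}\otimes\lambda\bigl(S^{-1}(m_{[1]})\bigr)\,n$.

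I would then apply this with $C=H^\circ$, whose antipode $S_{H^\circ}(\varphi)=\varphi\circ S$ is invertible precisely because $S$ is (its inverse being $\varphi\mapsto\varphi\circ S^{-1}$). For the \emph{puncture step}, take $M=\mathcal{L}_{0,1}(H)$ with the coaction \eqref{defCoactCoad}, and $N=H^{\otimes n}$ made a left $H^\circ$-module by letting $\lambda(b)$ be left multiplication by $\Delta^{(n)}\bigl(\Phi^+(b)\bigr)$; this is an algebra morphism because $\Phi^+\colon H^\circ\to H$ and $\Delta^{(n)}\colon H\to H^{\otimes n}$ are. With these choices \eqref{AlekseevValue1} reads exactly
\[ \Phi_{0,n+1}=(\Phi_{0,1}\otimes\mathrm{id}_{H^{\otimes n}})\circ G\circ(\mathrm{id}_{H^\circ}\otimes\Phi_{0,n}). \]
For the \emph{handle step}, take $M=\mathcal{L}_{1,0}(H)$ with its coaction and $N=\mathcal{HH}(H^\circ)^{\otimes g}\otimes H^{\otimes n}$ (the target of $\Phi_{g,n}$) made a left $H^\circ$-module by letting $\lambda(b)$ be left multiplication by $\mathsf{D}_{g,n}\bigl(\Phi^+(b)\bigr)$, which is an algebra morphism since $\mathsf{D}_{g,n}$ is. Then \eqref{AlekseevValue2} reads
\[ \Phi_{g+1,n}=(\Phi_{1,0}\otimes\mathrm{id}_{N})\circ G\circ(\mathrm{id}_{\mathcal{L}_{1,0}(H)}\otimes\Phi_{g,n}). \]
In each identity the inner map $\mathrm{id}\otimes\Phi_{0,n}$ (resp. $\mathrm{id}\otimes\Phi_{g,n}$) and the outer map $\Phi_{0,1}\otimes\mathrm{id}$ (resp. $\Phi_{1,0}\otimes\mathrm{id}$) are injective as soon as the relevant base/inductive maps are, while $G$ is bijective by the lemma. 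Hence injectivity propagates, and the induction runs: first $\Phi_{0,n}$ for all $n\ge 1$ (induction on $n$, base $\Phi_{0,1}$), then $\Phi_{g,n}$ for all $g$ and all $n$ (induction on $g$, base $\Phi_{0,n}$ when $n\ge1$ and $\Phi_{1,0}$ when $n=0$).

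The only substantive point --- and the step I expect to be the main obstacle --- is recognizing the $\mathsf{D}$-dressing in \eqref{AlekseevValue1}--\eqref{AlekseevValue2} as an instance of the comodule--module twist $G$, and checking the lemma with the antipode on the correct side. Verifying $\bar G\circ G=\mathrm{id}$ and $G\circ\bar G=\mathrm{id}$ is a short computation: after applying $G$ then $\bar G$ one invokes coassociativity of $\delta$ (i.e. $(\mathrm{id}\otimes\delta)\delta=(\Delta_{H^\circ}\otimes\mathrm{id})\delta$) to merge the two $H^\circ$-legs into $\Delta_{H^\circ}$ of a single leg $c$, uses that $\lambda$ is an algebra morphism to combine $\lambda(S^{-1}(c_{(2)}))\lambda(c_{(1)})=\lambda\bigl(S^{-1}(c_{(2)})\star c_{(1)}\bigr)$, and then the antipode identities $\sum S^{-1}(c_{(2)})\star c_{(1)}=\varepsilon(c)1_{H^\circ}=\sum c_{(2)}\star S^{-1}(c_{(1)})$ (valid for $H^\circ$ since $S_{H^\circ}$ is invertible) together with the counit axiom $\sum\varepsilon(m_{[1]})\,m_{[2]}=m$ collapse everything back to the identity. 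It is essential that $S^{-1}$, and not $S$, appears, because the $C$-leg of the coaction acts on $N$ from the left; this is exactly what makes the relevant antipode axiom applicable. Everything else is formal, so no further computation is needed.
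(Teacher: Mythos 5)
Your proof is correct and follows essentially the same route as the paper: the paper runs the same double induction by writing each recursive clause as a tensor product of injective maps composed with a linear isomorphism $T_{g,n}$ whose inverse replaces $x_{[1]}$ by $S^{-1}(x_{[1]})$, justified by exactly the coassociativity and antipode argument you give. The only (cosmetic) difference is that your twist $G$ lives on the source $\mathcal{L}_{0,1}(H)\otimes N$ (resp. $\mathcal{L}_{1,0}(H)\otimes N$) before applying $\Phi_{0,1}\otimes\mathrm{id}$ (resp. $\Phi_{1,0}\otimes\mathrm{id}$), while the paper's $T_{g,n}$ is the conjugate map on the image $H'\otimes N$ (resp. $\mathrm{im}(\Phi_{1,0})\otimes N$) after applying both injections at once, so the two arguments carry identical content.
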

\begin{proof} Recall that a tensor product over $\Bbbk$ of injective $\Bbbk$-linear maps is again injective ($\Bbbk$ is a field). We first do the case $\Phi_{0,n}$ by induction on $n$. Let
\[ T_{0,n} : H' \otimes H^{\otimes (n-1)} \to H' \otimes H^{\otimes (n-1)}, \quad \Phi_{0,1}(\alpha) \otimes w \mapsto \Phi_{0,1}(\alpha_{[2]}) \otimes \mathsf{D}_{0,n-1}\bigl( \Phi^+(\alpha_{[1]}) \bigr)w \]
It is a linear isomorphism, with $T_{0,n}^{-1}\bigl( \Phi_{0,1}(\alpha) \otimes w \bigr) = \Phi_{0,1}(\alpha_{[2]}) \otimes \mathsf{D}_{0,n-1}\bigl( \Phi^+\bigl( S^{-1}(\alpha_{[1]}) \bigr) \bigr)w$, which is seen by the coaction property $\alpha_{[1]} \otimes \alpha_{[2][1]} \otimes \alpha_{[2][2]} = \alpha_{[1](1)} \otimes \alpha_{[1](2)} \otimes \alpha_{[2]}$. Now by very definition we have $\Phi_{0,n} = T_{0,n} \circ (\Phi_{0,1} \otimes \Phi_{0,n-1})$ and induction hypothesis applies. The general case $\Phi_{g,n}$ is done similarly by induction on $g$ with $n$ fixed. Write $\mathcal{H}' = \mathrm{im}(\Phi_{1,0})$, and use the isomorphism
\begin{align*}
T_{g,n} : \mathcal{H}' \otimes \bigl( \mathcal{HH}(H^\circ)^{\otimes (g-1)} \otimes H^{\otimes n} \bigr) &\to \mathcal{H}' \otimes \bigl( \mathcal{HH}(H^\circ)^{\otimes (g-1)} \otimes H^{\otimes n} \bigr)\\
\Phi_{1,0}(x) \otimes w &\mapsto \Phi_{1,0}(x_{[2]}) \otimes \mathsf{D}_{g-1,n}\bigl( \Phi^+(x_{[1]}) \bigr)w
\end{align*}
whose inverse is $T_{g,n}^{-1}\bigl( \Phi_{1,0}(x) \otimes w \bigr) = \Phi_{1,0}(x_{[2]}) \otimes \mathsf{D}_{g,n-1}\bigl( \Phi^+\bigl( S^{-1}(x_{[1]}) \bigr) \bigr)w$. It satisfies $\Phi_{g,n} = T_{g,n} \circ \bigl( \Phi_{1,0} \otimes \Phi_{g-1,n} \bigr)$, whence the result.
\end{proof}

\begin{remark}\label{remarqueProbleme}
{\rm When $g > 0$ it is not true that $\mu_{g,n} \circ \Phi_{g,n} = \mathsf{D}_{g,n}$. Said differently, the bottom triangle in the following diagram
\[ \xymatrix@R=1.5em@C=4em{
\mathcal{L}_{0,1}(H) \ar[r]^-{\mathfrak{d}_{g,n}} \ar[d]_-{\Phi_{0,1}} & \mathcal{L}_{g,n}(H) \ar[d]^-{\Phi_{g,n}}\\
H' \ar[r]_-{\mathsf{D}_{g,n}} \ar[ur]^-{\mu_{g,n}} & \mathcal{HH}(H^\circ)^{\otimes g} \otimes H^{\otimes n}
} \]
does not commute when $g > 0$ (a proof that it commutes for $g=0$ is given in \cite[Prop.\,6.18]{BR1}). For instance in the case $g=1$ we have $\mu_{1,0} \circ \Phi_{1,0} = \widehat{\mathsf{D}}_{1,0} \neq \mathsf{D}_{1,0}$  by Prop.\,\ref{propCommDiag10}. The failure of the diagram comes from the use of elements $\widetilde{h}$ instead of elements $\widehat{h}$ to define $\mathsf{D}_{g,n}$ and hence $\Phi_{g,n}$. In general there is no solution to this problem: $\widehat{h}$ might not exist for all $h \in H$ and thus to define $\mathsf{D}_{g,n}$ we are forced to formally introduce elements $\widetilde{h}$ for all $h \in H$ with the same commutation relations as $\widehat{h'}$, leading to the extension $\mathcal{HH}(H^\circ)$. In the case where $H$ is finite-dimensional it is easy to see that $\widehat{h}$ exist for all $h \in H$ so we can define a ``hat version'' $\widehat{\mathsf{D}}_{g,n}$, yielding an alternative Alekseev morphism $\widehat{\Phi}_{g,n} : \mathcal{L}_{g,n}(H) \to \mathcal{H}(H^*)^{\otimes g} \otimes H^{\otimes n}$ such that the diagram commutes (\cite[\S 5.3]{BFR}, where $\widehat{\Phi}_{g,n}$ was denoted $\Phi_{g,n}^{\rm fin}$). In the case where $H = U_q(\mathfrak{g})$ we know precisely how $H'$ compares to $H$ and then we can use a minimal extension $\widehat{\mathcal{H}}$ instead of $\mathcal{HH}(H^\circ)$, yielding again a commutative diagram; see \S\ref{subsecModifiedAlekseev}.}
\end{remark}

\indent The faithful representation $\rho : \mathcal{H}(H^\circ) \to \mathrm{End}_\Bbbk(H^\circ)$ from \eqref{HeisenbergRep} can be extended to a morphism $\widetilde{\rho} : \mathcal{HH}(H^\circ) \to \mathrm{End}_\Bbbk(H^\circ)$ defined by
\begin{equation}\label{HeisenbergRepHH}
\widetilde{\rho}\bigl( \varphi a  \widetilde{b} \bigr)(\psi) = \varphi \star \bigl( a \rhd \psi \lhd S^{-1}(b) \bigr) \qquad (\forall \, \psi \in H^\circ).
\end{equation}
Unlike $\rho$, the morphism $\widetilde{\rho}$ is not injective: by Lemma \ref{lemmeChapeau} we have $\widetilde{\rho}(\widehat{h'}) = \widetilde{\rho}(\widetilde{h'})$ for all $h' \in H'$ but $\widehat{h'} \neq \widetilde{h'}$ since $\widetilde{h'} \not\in \mathcal{H}(H^\circ)$. Consider however the following algebra morphism
\begin{equation}\label{defFgn}
F_{g,n} : \mathcal{L}_{g,n}(H) \xrightarrow{\quad\Phi_{g,n}\quad} \mathcal{HH}(H^\circ)^{\otimes g} \otimes H^{\otimes n} \xrightarrow{\quad\widetilde{\rho}^{\otimes g} \,\otimes\, \mathrm{id}_H^{\otimes n}\quad} \mathrm{End}_\Bbbk(H^\circ)^{\otimes g} \otimes H^{\otimes n}.
\end{equation}
One can show by induction as in Lemma \ref{lemmaInjAlekseev} that if $\Phi_{0,1}$ and $\Phi_{1,0}$ are injective then $F_{g,n}$ is injective as well. Indeed, let
\[ \begin{array}{c}
 \widetilde{T}_{g,n} : \mathcal{H}' \otimes \bigl( \mathrm{End}_\Bbbk(H^\circ)^{\otimes (g-1)} \otimes H^{\otimes n} \bigr) \to \mathcal{H}' \otimes \bigl( \mathrm{End}_\Bbbk(H^\circ)^{\otimes (g-1)} \otimes H^{\otimes n} \bigr)\\[.3em]
\Phi_{1,0}(x) \otimes w \mapsto \Phi_{1,0}(x_{[2]}) \otimes \bigl( (\widetilde{\rho}^{\otimes (g-1)} \otimes \mathrm{id}_H^{\otimes n}) \circ \mathsf{D}_{g-1,n}\bigr)\bigl( \Phi^+(x_{[1]}) \bigr)w
\end{array}\]
with $\mathcal{H}' = \mathrm{im}(\Phi_{1,0})$. It is a linear  isomorphism (for the inverse, replace $x_{[1]}$ by $S^{-1}(x_{[1]})$). Then $F_{g,n} = \bigl( \rho \otimes \mathrm{id}_{\mathrm{End}_\Bbbk(H^\circ)^{\otimes (g-1)} \otimes H^{\otimes n}} \bigr) \circ \widetilde{T}_{g,n} \circ \bigl( \Phi_{1,0} \otimes F_{g-1,n} \bigr)$. Since $\rho$ is injective it follows that $F_{g,n}$ is also injective.

\indent  Assume that the Hopf algebra $H$ is finite-dimensional; then, having the morphisms $\Phi^{\pm}$ is equivalent to having an R-matrix $R \in H^{\otimes 2}$. The quasitriangular Hopf algebra $(H,R)$ is called {\em factorizable} if the map $\Phi_{0,1} : H^* \to H$, $\varphi \mapsto (\varphi \otimes \mathrm{id}_H)(RR')$ is a bijection.
\begin{prop}\label{coroLgnFacto}
If $H$ is finite-dimensional and factorizable then $F_{g,n}$ is an isomorphism.
\end{prop}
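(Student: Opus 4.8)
The plan is to reduce the statement to a dimension count. Since $H$ is finite-dimensional we have $H^\circ = H^*$, so both the source $\mathcal{L}_{g,n}(H) = (H^*)^{\otimes(2g+n)}$ and the target $\mathrm{End}_\Bbbk(H^*)^{\otimes g}\otimes H^{\otimes n}$ have dimension $(\dim_\Bbbk H)^{2g+n}$. As $F_{g,n}$ is an algebra morphism between spaces of equal finite dimension, it is an isomorphism as soon as it is injective. By the discussion preceding the statement, $F_{g,n}$ is injective once $\Phi_{0,1}$ and $\Phi_{1,0}$ are injective. Factorizability is exactly the bijectivity of $\Phi_{0,1}$, so the whole proposition rests on proving that $\Phi_{1,0}\colon\mathcal{L}_{1,0}(H)\to\mathcal{H}(H^*)$ is injective. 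I will in fact show it is bijective, again by a dimension count once surjectivity is established (note $\dim\mathcal{L}_{1,0}(H) = \dim\mathcal{H}(H^*) = (\dim H)^2$).

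To prove $\Phi_{1,0}$ surjective, first observe that its image is a subalgebra of $\mathcal{H}(H^*)$ containing the subalgebra $H = 1_{H^*}\otimes H$: indeed $\Phi_{1,0}\bigl(\mathfrak{i}_a(\varphi)\bigr) = \Phi_{0,1}(\varphi)$ by \eqref{defPhi10}, and $\Phi_{0,1}$ is onto $H$ by factorizability. Since $\mathcal{H}(H^*)$ is generated as an algebra by its subalgebras $H^*$ and $H$, it then suffices to show that every $\psi\in H^* = H^*\otimes 1_H$ also lies in the image. The key computation is the identity, valid in $\mathcal{H}(H^*)$,
$$\sum_{(\psi)}\Phi^+\bigl(S(\psi_{(1)})\bigr)\,\Phi_{1,0}\bigl(\mathfrak{i}_b(\psi_{(2)})\bigr)\,\Phi^-(\psi_{(3)}) = \psi.$$
Its left-hand side lies in $\mathrm{im}(\Phi_{1,0})$, because each factor $\Phi^\pm(\cdots)$ lies in $H\subseteq\mathrm{im}(\Phi_{1,0})$, the middle factor is $\Phi_{1,0}\bigl(\mathfrak{i}_b(\psi_{(2)})\bigr)$, and $\mathrm{im}(\Phi_{1,0})$, being a subalgebra and a linear subspace, is closed under this finite $\Bbbk$-linear combination of products. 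The identity itself follows by writing the left-hand side with a fivefold coproduct: using $\Phi_{1,0}\bigl(\mathfrak{i}_b(\psi)\bigr)=\sum\Phi^+(\psi_{(1)})\psi_{(2)}\Phi^-\bigl(S(\psi_{(3)})\bigr)$ from \eqref{defPhi10} it becomes
$$\sum_{(\psi)}\Phi^+\bigl(S(\psi_{(1)})\bigr)\Phi^+(\psi_{(2)})\,\psi_{(3)}\,\Phi^-\bigl(S(\psi_{(4)})\bigr)\Phi^-(\psi_{(5)}).$$
Since $\Phi^+$ and $\Phi^-$ are algebra morphisms $H^\circ\to H^{\mathrm{cop}}$, the two adjacent $H$-factors on the left combine as $\Phi^+\bigl(S(\psi_{(1)})\star\psi_{(2)}\bigr)$ and those on the right as $\Phi^-\bigl(S(\psi_{(4)})\star\psi_{(5)}\bigr)$; the antipode axiom in $H^\circ$ turns each into $\varepsilon(\cdot)1_H$, collapsing the corresponding coproduct legs and leaving the single surviving leg $\psi$. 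Hence $H^*\subseteq\mathrm{im}(\Phi_{1,0})$, so $\Phi_{1,0}$ is surjective and therefore bijective.

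With $\Phi_{0,1}$ and $\Phi_{1,0}$ both injective, the established reduction gives injectivity of $F_{g,n}$, and the dimension count of the first paragraph upgrades this to the desired isomorphism of algebras. I expect the only delicate point to be the middle step: arranging the Sweedler bookkeeping in the two displayed identities so that the outer $\Phi^+$- and $\Phi^-$-pairs collapse via the antipode, while the non-commuting Heisenberg product between the $H^*$-factor $\psi_{(3)}$ and the surrounding $H$-factors is tracked correctly. Everything else is formal: the dimension counts and the generation/subalgebra arguments are immediate, and no appeal to the representation $\rho$, to Burnside-type density, or to algebraic closedness of $\Bbbk$ is needed.
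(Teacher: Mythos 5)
Your proof is correct, and it shares the paper's outer skeleton (reduce to injectivity of $\Phi_{0,1}$ and $\Phi_{1,0}$ via the $\widetilde{T}_{g,n}$-factorization given before the statement, then conclude by equality of dimensions), but it differs in how the key input is obtained: the paper simply cites \cite[Th.\,4.8]{FaitgSL2Z} for the fact that factorizability forces $\Phi_{1,0}$ to be an isomorphism, whereas you prove this from scratch. Your surjectivity argument is sound: the image of $\Phi_{1,0}$ is a subalgebra containing $H$ (since $\Phi_{1,0}\circ\mathfrak{i}_a = \Phi_{0,1}$ is onto $H$ by factorizability), and your identity $\sum_{(\psi)}\Phi^+\bigl(S(\psi_{(1)})\bigr)\,\Phi_{1,0}\bigl(\mathfrak{i}_b(\psi_{(2)})\bigr)\,\Phi^-(\psi_{(3)}) = \psi$ checks out — after expanding with \eqref{defPhi10} into a fivefold coproduct, the two antipode collapses occur on \emph{adjacent} $H$-factors (products taken inside the subalgebra $H\subset\mathcal{H}(H^*)$, where $\Phi^\pm$ are algebra morphisms), so the non-commutativity of the Heisenberg product between $\psi_{(3)}$ and the $H$-factors never has to be confronted; since $\mathcal{H}(H^*)$ is generated by $H^*$ and $H$, surjectivity follows, and the count $\dim\mathcal{L}_{1,0}(H)=\dim\mathcal{H}(H^*)=(\dim H)^2$ upgrades it to bijectivity. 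What each route buys: the paper's proof is two lines but outsources the real content; yours is self-contained within the formalism of \S 3 and essentially reconstructs the proof of the cited theorem, which is valuable given that the paper's general $(H,\Phi^\pm)$ setup deliberately avoids leaning on external quasitriangular machinery. One small inaccuracy in your closing remark: you claim no appeal to the representation $\rho$ is needed, but the injectivity reduction you invoke (the paper's factorization $F_{g,n} = (\rho\otimes\mathrm{id})\circ\widetilde{T}_{g,n}\circ(\Phi_{1,0}\otimes F_{g-1,n})$) does use injectivity of $\rho$; this does not affect the validity of your proof, only the accuracy of the aside.
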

\begin{proof}
These assumptions imply that $\Phi_{1,0}$ is an isomorphism \cite[Th.\,4.8]{FaitgSL2Z}. In particular $\Phi_{0,1}$ and $\Phi_{1,0}$ are injective which as noted above implies that $F_{g,n}$ is injective. By equality of the dimensions of source and target, it is an isomorphism.
\end{proof}

\indent Les us go back to the general situation of a Hopf algebra $H$ satisfying the assumptions 1,2 and 3.

\indent The existence of the morphism $F_{g,n}$ implies that $\mathcal{L}_{g,n}(H)$ has a representation on the space $(H^\circ)^{\otimes g} \otimes V_1 \otimes \ldots \otimes V_n$ for any $H$-modules $V_1,\ldots,V_n$, which is an important point in the theory of graph algebras.

\smallskip

To conclude, we note as a side remark that the diagram in Remark \ref{remarqueProbleme} becomes commutative through the representation $\widetilde{\rho}$:
\begin{prop}\label{Fmuformula}
For all $g,n$, we have a commutative diagram
\[ \xymatrix@R=1.5em@C=4.5em{
\mathcal{L}_{0,1}(H) \ar[r]^-{\mathfrak{d}_{g,n}} \ar[d]_-{\Phi_{0,1}} & \mathcal{L}_{g,n}(H) \ar[r]^-{\Phi_{g,n}} \ar[dr]^-{F_{g,n}}& \mathcal{HH}(H^\circ)^{\otimes g} \otimes H^{\otimes n} \ar[d]^{\widetilde{\rho}^{\otimes g} \otimes \mathrm{id}_H^{\otimes n}}\\
H' \ar[r]_-{\mathsf{D}_{g,n}} \ar[ur]^-{\mu_{g,n}} & \mathcal{HH}(H^\circ)^{\otimes g} \otimes H^{\otimes n} \ar[r]_{\widetilde{\rho}^{\otimes g} \otimes \mathrm{id}_H^{\otimes n}} & \mathrm{End}_\Bbbk(H^\circ)^{\otimes g} \otimes H^{\otimes n}
} \]
\end{prop}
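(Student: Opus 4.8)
The plan is to reduce the diagram to a single identity and prove it by induction following the recursive construction of $\Phi_{g,n}$. By the very definition $F_{g,n} = \bigl(\widetilde{\rho}^{\otimes g} \otimes \mathrm{id}_H^{\otimes n}\bigr) \circ \Phi_{g,n}$ in \eqref{defFgn} the right-hand triangle commutes, and since $\mu_{g,n} \circ \Phi_{0,1} = \mathfrak{d}_{g,n}$ by definition of the quantum moment map, the upper-left triangle commutes as well. As $\Phi_{0,1}$ surjects onto $H'$, commutativity of the whole diagram is therefore equivalent to the single equation
\[ \bigl(\widetilde{\rho}^{\otimes g} \otimes \mathrm{id}_H^{\otimes n}\bigr)\bigl(\Phi_{g,n}(\mathfrak{d}_{g,n}(\varphi))\bigr) = \bigl(\widetilde{\rho}^{\otimes g} \otimes \mathrm{id}_H^{\otimes n}\bigr)\bigl(\mathsf{D}_{g,n}(\Phi_{0,1}(\varphi))\bigr), \qquad \varphi \in \mathcal{L}_{0,1}(H). \]
This is exactly the equality $\Phi_{g,n} \circ \mu_{g,n} = \mathsf{D}_{g,n}$, which fails for $g>0$ by Remark \ref{remarqueProbleme}, now to be recovered after composing with $\widetilde{\rho}^{\otimes g} \otimes \mathrm{id}_H^{\otimes n}$.

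The mechanism repairing the failure is the collapse identity $\widetilde{\rho}(\widehat{h'}) = \widetilde{\rho}(\widetilde{h'})$, valid for all $h' \in H'$: by \eqref{formuleRepChapeau} and the formula \eqref{HeisenbergRepHH} for $\widetilde{\rho}$, both operators send $\psi \in H^\circ$ to $\psi \lhd S^{-1}(h')$. The base case $(g,n)=(1,0)$ then follows at once from Proposition \ref{propCommDiag10}(2), which gives $\Phi_{1,0}(\mu_{1,0}(h')) = \widehat{h'_{(1)}}\,h'_{(2)}$; applying the algebra morphism $\widetilde{\rho}$ and collapsing the factor $\widehat{h'_{(1)}}$ (legitimate since $h'_{(1)} \in H'$, because $\Delta(H') \subset H' \otimes H$) turns the right-hand side into $\widetilde{\rho}(\widetilde{h'_{(1)}}\,h'_{(2)}) = \widetilde{\rho}(\mathsf{D}_{1,0}(h'))$, as wanted. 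The case $(g,n)=(0,1)$ is trivial, since there $\mathfrak{d}_{0,1}=\mathrm{id}$, $F_{0,1}=\Phi_{0,1}$ and $\mathsf{D}_{0,1}=\mathrm{id}_H$.

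For the inductive step (handle addition; puncture addition is identical but simpler, via \eqref{AlekseevValue1} and the base case $(0,1)$) I would substitute the decomposition $\mathfrak{d}_{g+1,n}(\varphi) = \mathfrak{d}_{1,0}(\varphi_{(1)}) \otimes \mathfrak{d}_{g,n}(\varphi_{(2)})$ from \eqref{pseudoQMMgnH} into the recursion \eqref{AlekseevValue2}, with $x := \mathfrak{d}_{1,0}(\varphi_{(1)})$ in the handle slot. Applying $\widetilde{\rho}^{\otimes(g+1)}\otimes\mathrm{id}_H^{\otimes n}$ and using that every map involved is an algebra morphism, the induction hypothesis applied to the block $\Phi_{g,n}(\mathfrak{d}_{g,n}(\varphi_{(2)})) = \Phi_{g,n}(\mu_{g,n}(\Phi_{0,1}(\varphi_{(2)})))$, together with the multiplicativity $\mathsf{D}_{g,n}(ab)=\mathsf{D}_{g,n}(a)\mathsf{D}_{g,n}(b)$, collapses that block to $\bigl(\widetilde{\rho}^{\otimes g}\otimes\mathrm{id}_H^{\otimes n}\bigr)\bigl(\mathsf{D}_{g,n}(\Phi^+(x_{[1]})\,\Phi_{0,1}(\varphi_{(2)}))\bigr)$. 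Next I would use that $\mathfrak{d}_{1,0}$ is a morphism of left $H^\circ$-comodules, so that the coaction leg $x_{[1]}$ may be rewritten through the coaction $\delta$ of $\varphi_{(1)}$ as in \eqref{defCoactCoad}, which reduces the handle block, via the base case, to a $\widetilde{\rho}\circ\mathsf{D}_{1,0}$ term. A final application of the coproduct formula \eqref{coproduitSurPhi01} for $\Delta\circ\Phi_{0,1}$, with adjacent factors of the form $\Phi^+\bigl(S(\cdot)\bigr)\Phi^+(\cdot)$ cancelling by the antipode axiom, identifies the two blocks with $\widetilde{\rho}(\mathsf{D}_{1,0}(\Phi_{0,1}(\varphi_{(2)})))$ and $\bigl(\widetilde{\rho}^{\otimes g}\otimes\mathrm{id}_H^{\otimes n}\bigr)\bigl(\mathsf{D}_{g,n}(\Phi^+(\varphi_{(1)})\Phi^-(S(\varphi_{(3)})))\bigr)$, i.e.\ with $\bigl(\widetilde{\rho}^{\otimes(g+1)}\otimes\mathrm{id}_H^{\otimes n}\bigr)\bigl(\mathsf{D}_{g+1,n}(\Phi_{0,1}(\varphi))\bigr)$, closing the induction.

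I expect the only genuine difficulty to be the Sweedler bookkeeping in this last step: one must track carefully which coproduct legs of $\varphi$ feed each of the two blocks and verify that the antipode cancellations leave exactly the arguments dictated by $\mathsf{D}_{g+1,n}$. Conceptually nothing is at stake once the collapse identity $\widetilde{\rho}(\widehat{h'}) = \widetilde{\rho}(\widetilde{h'})$ is available — it is precisely what neutralizes the obstruction of Remark \ref{remarqueProbleme} — but the computation is of the same length and flavour as those proving Propositions \ref{propAlekseevMorph} and \ref{propCommDiag10}, and is best organized in parallel with them.
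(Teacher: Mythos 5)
Your proposal is correct and follows essentially the same route as the paper: the base case $(1,0)$ is handled via Prop.\,\ref{propCommDiag10} together with the collapse identity $\rho\bigl(\widehat{h'}\bigr) = \widetilde{\rho}\bigl(\widetilde{h'}\bigr)$ from Lemma \ref{lemmeChapeau} and \eqref{HeisenbergRep}--\eqref{HeisenbergRepHH}, and the inductive step is the computation of Prop.\,\ref{propAlekseevModifAndQMM} with $\widehat{\Phi}_{g,n}$ replaced by $F_{g,n}$ and $\widehat{\mathsf{D}}_{g,n}$ replaced by $\mathrm{act}_{g,n} = \bigl(\widetilde{\rho}^{\otimes g} \otimes \mathrm{id}_H^{\otimes n}\bigr) \circ \mathsf{D}_{g,n}$, which is exactly what the paper prescribes. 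Your Sweedler bookkeeping in the inductive step (comodule-morphism property of $\mathfrak{d}_{1,0}$, antipode cancellation of $\Phi^+\bigl(S(\cdot)\bigr)\Phi^+(\cdot)$, and \eqref{coproduitSurPhi01}) matches the paper's computation step for step.
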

\noindent Equivalently, by definition of $\widetilde{\rho}$, $F_{g,n}$ and $\mathsf{D}_{g,n}$ we can write this property as
\begin{equation}\label{HmodStructThroughQMM}
(F_{g,n}\circ \mu_{g,n})(h') = \textstyle \sum_{(h')}  {\rm act}(h'_{(1)})\otimes \ldots \otimes {\rm act}(h'_{(g)})\otimes h'_{(g+1)}\otimes \ldots h'_{(g+n)}
\end{equation}
for all $h' \in H'$, where $\mathrm{act} : H \to \mathrm{End}_\Bbbk(H^\circ)$ is defined by $\textstyle \mathrm{act}(h)(\varphi) = \sum_{(h)}h_{(2)} \rhd \varphi \lhd S^{-1}(h_{(1)})$.
\begin{proof}
For the case $(g,n) = (1,0)$ note that $\rho \circ \mathsf{D}_{1,0} = \rho \circ \widehat{\mathsf{D}}_{1,0}$ by \eqref{HeisenbergRep} and Lemma \ref{lemmeChapeau}, so that Prop.\,\ref{propCommDiag10} gives the result in this case. For the general case the proof is completely similar to the one of Prop.\,\ref{propAlekseevModifAndQMM} below, where the reader will find all the details. For the induction step, the only changes are typographical: $\widehat{\Phi}_{g,n}$ must be replaced by $F_{g,n}$ and $\widehat{\mathsf{D}}_{g,n}$ must be replaced by $\mathrm{act}_{g,n} := \bigl( \widetilde{\rho}^{\otimes g} \otimes \mathrm{id}_H^{\otimes n} \bigr) \circ \mathsf{D}_{g,n}$, which is given by the right-hand side of \eqref{HmodStructThroughQMM}.
\end{proof}
The following consequence is motivated by Lemma \ref{lemmaInjQMMtrivial} regarding injectivity of $\mu_{g,n}$:
\begin{cor}\label{rmkMu10Inj} Assume that $\Phi_{1,0}$ is injective. Then $\mu_{1,0}$ is injective if and only if the linear map $H' \to \mathrm{End}_\Bbbk(H^\circ)$, $h \mapsto \mathrm{coad}^r(h)$ is injective (with the right action $\mathrm{coad}^r$ from \eqref{defCoad}).
\end{cor}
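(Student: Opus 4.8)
The plan is to read the corollary off the commutative diagram of Prop.\,\ref{Fmuformula} at $(g,n)=(1,0)$, once both factors of $F_{1,0}$ are known to be injective. Recall that by the standing assumptions of \S\ref{subsecQMMgeneral} the map $\Phi_{0,1}$ is injective, so by definition $\mu_{1,0}=\mathfrak{d}_{1,0}\circ\Phi_{0,1}^{-1}$ is injective if and only if $\mathfrak{d}_{1,0}$ is (this is also the content of Lemma \ref{lemmaInjQMMtrivial}). The only new input is the hypothesis that $\Phi_{1,0}$ is injective, and the whole point will be to convert it, together with the faithfulness of the Heisenberg representation $\rho$, into injectivity of an explicit endomorphism-valued map on $H'$.

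First I would record that $F_{1,0}=\rho\circ\Phi_{1,0}$ is injective: indeed $\Phi_{1,0}$ takes values in $\mathcal{H}(H^\circ)\subset\mathcal{HH}(H^\circ)$, on which $\widetilde{\rho}$ from \eqref{HeisenbergRepHH} restricts to the representation $\rho$ of \eqref{HeisenbergRep}; since $\rho$ is injective \cite{Mo} and $\Phi_{1,0}$ is injective by hypothesis, so is their composite $F_{1,0}$. Consequently $\mu_{1,0}$ is injective if and only if $F_{1,0}\circ\mu_{1,0}$ is injective. Now Prop.\,\ref{Fmuformula}, specialized to $(g,n)=(1,0)$, gives the explicit formula
\[ \bigl(F_{1,0}\circ\mu_{1,0}\bigr)(h')=\mathrm{act}(h')\in\mathrm{End}_\Bbbk(H^\circ),\qquad \mathrm{act}(h')(\psi)=\sum_{(h')}h'_{(2)}\rhd\psi\lhd S^{-1}(h'_{(1)}), \]
for all $h'\in H'$. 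Equivalently, one can bypass Prop.\,\ref{Fmuformula} and compute directly: writing $\widehat{\mathsf{D}}_{1,0}(h')=\sum_{(h')}\widehat{h'_{(1)}}\,h'_{(2)}$ and using $\rho(\widehat{h'})(\psi)=\psi\lhd S^{-1}(h')$ from Lemma \ref{lemmeChapeau}(1) together with $\rho(h'_{(2)})(\psi)=h'_{(2)}\rhd\psi$, one obtains $\rho\bigl(\widehat{\mathsf{D}}_{1,0}(h')\bigr)=\mathrm{act}(h')$, which combined with $\Phi_{1,0}\circ\mu_{1,0}=\widehat{\mathsf{D}}_{1,0}$ from Prop.\,\ref{propCommDiag10}(2) and injectivity of $\rho$ yields the same conclusion.

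Putting the two reductions together, $\mu_{1,0}$ is injective if and only if the linear map $H'\to\mathrm{End}_\Bbbk(H^\circ)$, $h'\mapsto\mathrm{act}(h')$, is injective, which is precisely the asserted coadjoint-type criterion. I expect the one point deserving care to be this last identification: the operator produced by the computation is the action $\mathrm{act}$ of \eqref{HmodStructThroughQMM}, $\psi\mapsto\sum h'_{(2)}\rhd\psi\lhd S^{-1}(h'_{(1)})$, and one must match it with the right coadjoint action $\mathrm{coad}^r$ of \eqref{defCoad} restricted to $H'$. The two actions differ only by the antipode, so their restrictions to $H'$ should have the same kernel thanks to the coribbon (pivotal) structure on $H^\circ$ carried by $\mathsf{v}$ and $\mathsf{u}$ — which makes $S^2$ inner by a grouplike and $H'$ stable under $S$; this is the structural input I would check explicitly, and in its absence the clean, convention-free statement is simply injectivity of $h'\mapsto\mathrm{act}(h')$.
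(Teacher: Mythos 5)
Your reduction to the injectivity of $h'\mapsto\mathrm{act}(h')$ on $H'$ is correct and is exactly how the paper's proof begins: $F_{1,0}=\widetilde{\rho}\circ\Phi_{1,0}$ restricts to $\rho\circ\Phi_{1,0}$ because $\Phi_{1,0}$ takes values in $\mathcal{H}(H^\circ)\subset\mathcal{HH}(H^\circ)$, the representation $\rho$ is faithful, so $F_{1,0}$ is injective, and Prop.\,\ref{Fmuformula} (equivalently, Prop.\,\ref{propCommDiag10}(2) composed with $\rho$, as in your alternative computation) gives $F_{1,0}\circ\mu_{1,0}=\mathrm{act}|_{H'}$. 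Up to this point your argument and the paper's coincide.

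The gap is the step you flag and then leave unverified: passing from $\mathrm{act}$ to $\mathrm{coad}^r$. This is the only nontrivial content of the corollary, and the structural inputs you propose for it are not available. First, $H'$ is \emph{not} stable under $S$ in the very situation this section is modelled on: for $H=U_q^\Pup(\mathfrak{g})$ one has $H'=(U_q^\Pup)^{\mathrm{lf}}$, which contains $K_{-2\lambda}$ for $\lambda\in P_+$ but not $S(K_{-2\lambda})=K_{2\lambda}$ (see the Joseph--Letzter decomposition \eqref{relatingLfAndUq}; the paper even records, in the proof of Lemma \ref{lemmaJLforUA}, that $\ell^{-1}=K_{-2\rho}\in U_A'$ while $\ell\notin U_A'$). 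Second, the coribbon element $\mathsf{v}$ and the co-Drinfeld element $\mathsf{u}$ live on $H^\circ$ and make $S^2_{H^\circ}$ \emph{co}-inner, as in \eqref{coPivotality}; they do not provide a grouplike of $H$ making $S^2_H$ inner, and no pivotal structure on $H$ is assumed in \S\ref{sectiongraphalg}. So neither of the two facts you would ``check explicitly'' holds, and your fallback (``the clean statement is simply injectivity of $\mathrm{act}$'') is a different statement from the corollary, not a proof of it. The paper closes this step with a concrete device absent from your proposal: the linear automorphism $T$ of $H^\circ$ defined by $T(\varphi)=\sum_{(\varphi)}\mathcal{R}\bigl(S(\varphi_{(3)})\otimes\varphi_{(1)}\bigr)\varphi_{(2)}$, built from the co-R-matrix, which satisfies $\mathrm{coad}^r(h)=T^{-1}\circ\mathrm{act}\bigl(S(h)\bigr)\circ T$ for all $h\in H$. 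It is this conjugation identity --- a genuine computation with \eqref{braidedComm}--\eqref{coTriang2}, not a formal consequence of the coribbon axioms --- that relates the kernels of the two maps; to complete your argument you would need to state and verify it (or an equivalent identity).
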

\begin{proof} Indeed $F_{1,0} \circ \mu_{1,0} = \mathrm{act}$ by Prop.\,\ref{Fmuformula}, and $F_{1,0}$ is injective because so is $\Phi_{1,0}$. Consider the linear automorphism $T$ of $H^\circ$ given by $\varphi \mapsto \textstyle \sum_{(\varphi)}\mathcal{R}\bigl( S(\varphi_{(3)}) \otimes \varphi_{(1)} \bigr)\varphi_{(2)}$. Then it can be checked that $\mathrm{coad}^r(h) = T^{-1} \circ \mathrm{act}\bigl( S(h) \bigr) \circ T$ for all $h$, whence the result.
\end{proof}

\section{Preliminaries on quantum groups}\label{sectionPreliminaires}
Here we set conventions and collect facts on quantum groups which will eventually allow us to define various instances of graph algebras associated to quantum groups: generic parameter, integral form, specialization at roots of unity and finite-dimensional quotient of these. References for the results discussed in this section are \cite{CP,BG,VY}.

\subsection{The quantum enveloping algebra \texorpdfstring{$U_q^\Lam(\mathfrak {g})$}{ }}\label{sec:Uq} Recall from \S\ref{sectionnotations} our notations regarding Lie algebras. Fix a Cartan decomposition of $\mathfrak {g}$ and a $\mathbb{Z}$-lattice $\Lambda$ such that $Q \subset \Lambda \subset P$. We denote by $U_q^\Lam := U_q^\Lam(\mathfrak{g})$ the ${\mathbb C}(q)$-algebra generated by elements $E_i, F_i, K_{\lambda}$, ($\lambda \in \Lambda$, $i=1,\ldots,m$) with relations:
\begin{eqnarray}
&&K_0 = 1_{U_q^\Lam}, \quad K_{\lambda}K_{\lambda'} = K_{\lambda + \lambda'}, \quad K_{\lambda}E_i = q^{(\lambda,\alpha_i)}E_iK_{\lambda}, \quad K_{\lambda}F_i = q^{-(\lambda,\alpha_i)}F_iK_{\lambda}, \notag \\
&&E_iF_j-F_jE_i=\delta_{i,j}\frac{K_{\alpha_i}-K_{\alpha_i}^{-1}}{q_i-q_i^{-1}},\label{EFK}\notag\\
&&\sum_{r=0}^{1-a_{ij}} (-1)^r \left[\begin{array}{c} 1-a_{ij} \\ r \end{array}\right]_{q_i} E_i^{1-a_{ij}-r}E_jE_i^{r} = 0 \quad {\rm if}\ i\ne j,\label{Serre1}\notag\\
&&\sum_{r=0}^{1-a_{ij}} (-1)^r  \left[\begin{array}{c} 1-a_{ij} \\ r \end{array}\right]_{q_i}  F_i^{1-a_{ij}-r}F_jF_i^{r} = 0 \quad {\rm if}\ i\ne j, \quad \text{where } \left[\begin{array}{c} s \\ t \end{array}\right]_q = \frac{[s]_q!}{[t]_q!\,[s-t]_q!}.\label{Serre2}\notag
\end{eqnarray}
Usual notations are $K_i := K_{\alpha_i}$, $L_i := K_{\varpi_i}$ and $q_i = q^{d_i}$ (see \S\ref{sectionnotations}). Note that if $\textstyle \lambda=\sum_{i=1}^m n_i \varpi_i \in P$ then $\textstyle K_\lambda=\prod_{i=1}^m L_i^{n_i}$ and $\textstyle K_i =\prod_{j=1}^m L_j^{a_{ji}}$. Also $L_iE_j = q_i^{\delta_{i,j}}E_jL_i$ and $L_iF_j=q_i^{-\delta_{i,j}}F_jL_i$.

\indent For $\Lambda = P$ this is the so-called {\em simply-connected} quantum group $U_q^\Pup$, generated by $E_i,F_i,L_i$. For $\Lambda = Q$ this is the so-called {\em adjoint} quantum group $U_q^\Q$, generated by $E_i,F_i,K_i$. We will mostly use these two versions, except in  Appendix \ref{appDlRoot}.

$U_q^\Lam$ is a Hopf algebra whose coproduct $\Delta$, antipode $S$, and counit $\varepsilon$ are given by
\begin{equation*}\begin{array}{c}
\Delta(K_{\lambda})=K_{\lambda} \otimes K_{\lambda}\ ,\ \Delta(E_i)=E_i\otimes K_i+1\otimes E_i\ ,\ \Delta(F_i)=F_i\otimes 1 + K_i^{-1}\otimes F_i \\ S(E_i) = -E_iK_i^{-1}\ ,\ S(F_i) = -K_iF_i\ ,\ S(K_{\lambda}) = K_{-\lambda} = K_{\lambda}^{-1}\\ \varepsilon(E_i) = \varepsilon(F_i)=0,\ \varepsilon(K_{\lambda})=1.
\end{array}
\end{equation*}
Note that any $U_q^\Lam$ is a Hopf subalgebra of $U_q^\Pup$.
The element
$$\textstyle \ell := K_{2\rho} = \prod_{j=1}^mL_j^2\in U_q^\Q (\mathfrak{g})$$ 
is pivotal for the Hopf algebra $U_q^\Lam$, i.e., it is grouplike and satisfies $S^2(x) = \ell x\ell^{-1}$ for all $x\in U_q^\Lam$.

The {\em Cartan subalgebra} of $U_q^\Lam$, denoted by $U_q^\Lam(\mathfrak{h})$, is the (commutative) subalgebra generated by the elements $K_\lambda$ for all $\lambda \in \Lambda$. We denote by $U_q(\mathfrak{n}_-)$ (resp. $U_q(\mathfrak{n}_+)$) the subalgebras of $U_q^\Lam$ generated by the elements $F_i$ (resp. $E_i$). Then the {\em Borel subalgebras} are $U_q^\Lam(\mathfrak{b}_\pm) := U_q(\mathfrak{n}_\pm)U_q^\Lam(\mathfrak{h})$. 

We fix a reduced expression $s_{i_1}\ldots s_{i_N}$ of the longest element $w_0$ of the Weyl group of $\mathfrak{g}$. It induces a total ordering of the positive roots, 
\begin{equation}\label{orderingbeta}
\beta_1 = \alpha_{i_1}, \beta_2 = s_{i_1}(\alpha_{i_2}),\ldots, \beta_N = s_{i_1}\ldots s_{i_{N-1}}(\alpha_{i_N}).
\end{equation}
The braid group ${\mathcal B}(\mathfrak{g})$ acts on $U_q^\Lam$ by means of Lusztig's algebra automorphisms $T_i$, associated to the simple roots $\alpha_i$. The root vectors of $U_q^\Lam$ with respect to the above ordering are defined by
\begin{equation}\label{rootvectdef}
E_{\beta_k} =  T_{i_1}\ldots T_{i_{k-1}}(E_{i_k})\ ,\ F_{\beta_k} =  T_{i_1}\ldots T_{i_{k-1}}(F_{i_k}).
\end{equation}
The monomials in the elements $E_{\beta}$ and $F_{\beta}$ ($\beta\in \phi^+$) form bases of $U_q(\mathfrak{n}_+)$ and $U_q(\mathfrak{n}_-)$, respectively. The elements $K_{\lambda}$, $\lambda\in \Lambda$, form a basis of $U_q^\Lam(\mathfrak{h})$, and a PBW basis of $U_q^\Lam(\mathfrak{g})$ is deduced from the polarisation $U_q^\Lam(\mathfrak{g})=U_q(\mathfrak{n}_-)U_q^\Lam(\mathfrak{h})U_q(\mathfrak{n}_+)$. 

The center ${\mathcal  Z}(U_q^\Pup)$ of $U_q^\Pup$ is isomorphic to the subalgebra of elements in $U_q^\Pup({\mathfrak h})$ invariant under (a $(\mz/2)^m$ extension of) the Weyl group; this is the quantum analog of the Harish-Chandra isomorphism.

\smallskip

\indent  Let ${\mathcal C}$ be the category of finite dimensional $U_q^\Q$-modules {\it of type $1$}, i.e. where the elements $K_i$ act semisimply with eigenvalues in $q_i^{\mathbb Z}$. The category ${\mathcal C}$ is monoidal and rigid (i.e. closed under finite direct sums, tensor products and duals), and it is semisimple. The simple objects up to isomorphism are the highest weight modules $V_\mu$ of weights $\mu\in P_+$.

Occasionally we will use the simple finite dimensional $U(\mathfrak{g})$-modules; they are parametrized by weights $\mu\in P_+$ and we denote them by $L(\mu).$  

Recall that $\qD$ is a formal variable such that $\qD^D = q$. Extending scalars to ${\mathbb C}(\qD)$, any module $V \in \mathcal{C}$ can be regarded as an $U_q^\Pup$-module by stipulating that
\begin{equation}\label{actionLi}
\forall \, \mu \in P, \quad K_\mu \cdot v_\lambda := \qD^{D(\mu,\lambda)} v_\lambda
\end{equation} 
for every weight vector $v_\lambda \in V$ of weight $\lambda\in P$ (recall that for all $\lambda,\mu \in P$, $\textstyle (\mu,\lambda)\in \frac{1}{D}\mz$). A $\mathbb{C}(\qD)$-vector space is a type 1 $U_q^\Pup$-module if $L_i$ acts semisimply with eigenvalues in $\qD^{\mathbb{Z}}$ for all $i$. Formula \eqref{actionLi} then establishes an equivalence of categories between $U_q^\Pup$-modules of type 1 and $U_q^\Q$-modules of type 1 (with scalars extended to $\mc(\qD)$).

\smallskip

\indent The category $\mathcal{C}$ is a braided category by means of Drinfeld's universal $R$-matrix. Namely, the braiding is the natural isomorphism $(\sigma_{V,W} \circ R_{V,W})_{V,W\in Ob(\mathcal{C})}$, where $\sigma_{V,W}\colon v\otimes w \mapsto w\otimes v$, and $R_{V,W}\colon V\otimes W \rightarrow V\otimes W$, $v\otimes w \mapsto R(v\otimes w)$, is defined by $R = \Theta \hat{R}$ with
\begin{equation}\label{thetadef}
\Theta(v\otimes w) = \qD^{D(\mu,\nu)} v\otimes w
\end{equation} on weight vectors $v$, $w$ of weights $\mu$, $\nu$ respectively\footnote{The braiding thus requires an extension of scalars of $\mathcal{C}$ from $\mathbb{C}(q)$ to $\mathbb{C}(\qD)$.} and
\begin{equation}\label{Rhat}
\hat{R} = \hat{R}_{\beta_N} \ldots \hat{R}_{\beta_2} \hat{R}_{\beta_1}, \quad \text{with } \: \hat{R}_{\beta} = \sum_{s=0}^{\infty} q_{\beta}^{s(s-1)/2}\frac{(q_{\beta} - q_{\beta}^{-1})^s}{[s]_{q_{\beta}}!} (E_{\beta})^s \otimes (F_{\beta})^s
\end{equation}
(the order of factors in the product $\hat{R}$ is extremely important) for all positive root $\beta \in \phi^+$, where $q_\beta := q^{(\beta,\beta)/2}$. Since the elements $E_{\beta}$, $F_{\beta}$ act nilpotently on the objects of $\mathcal{C}$, the operator $\hat{R}_{V,W} \in \mathrm{End}_{\mathbb{C}(\qD)}(V \otimes W)$ is well-defined for all $V,W\in \mathrm{Ob}(\mathcal{C})$, and thus $R_{V,W}$ is well-defined as well. Actually $R$ can be regarded as an element in a Hopf algebra which is categorical completion of $U_q^\Pup$, as defined e.g. in \cite[\S 2 and \S 3.2]{BR1}; such a completion is an instance of multiplier Hopf algebra in the sense of \cite{VD1} and \cite{VY}.

We remark that if we set
\[ E_{\beta}^{(s)} := \frac{E_{\beta}^s}{[s]_{q_{\beta}}!}, \quad F_{\beta}^{(s)} := \frac{F_{\beta}^s}{[s]_{q_{\beta}}!}, \quad \underline{E}_{\beta} := (q_\beta - q_\beta^{-1})E_\beta, \quad \underline{F}_{\beta} := (q_\beta - q_\beta^{-1})F_\beta \]
for every $\beta \in \phi^+$ and $s \in \mathbb{N}$, then we can rewrite
\begin{equation}\label{Rhatdiv}
\hat{R}_\beta = \sum_{s=0}^{\infty} q_{\beta}^{s(s-1)/2} (\underline{E}_{\beta})^s \otimes F_{\beta}^{(s)} = \sum_{s=0}^{\infty} q_{\beta}^{s(s-1)/2} E_{\beta}^{(s)} \otimes (\underline{F}_{\beta})^s
\end{equation} 
which is a power series with coefficients in $\mathbb{C}[q^{\pm 1}]$; this will be used in the discussion of integral forms in \S\ref{integralO}.

\subsection{The quantum function algebra \texorpdfstring{${\mathcal O}_q(G)$}{ }}\label{sec:Oq}
The algebra ${\mathcal O}_q:={\mathcal O}_q(G)$ is the subspace of $(U_q^\Q)^{\circ}$ spanned by the matrix coefficients of the objects in the subcategory $\mathcal{C}$ of type 1 modules; this is a Hopf algebra (Rmk.\,\ref{remarkRestDualForC}). It is also a $U_q^\Q$-bimodule by means of the left and right coregular actions $\rhd$ and $\lhd$. We have the Peter--Weyl decomposition as a bimodule, 
\begin{equation}\label{PWdecomp}
{\mathcal O}_q =\bigoplus_{\mu\in P_+}C(\mu) \quad {\rm where }\;\;C(\mu):=V_{\mu}^*\otimes V_\mu,
\end{equation}
where the left action of $U_q^\Q$ on $C(\mu)$  is on the tensorand $V_\mu$, and the right action is its transpose on $V_{\mu}^*$. Once a basis of the modules $V_\mu$ has been fixed for every $\mu\in P_+$, the set of matrix coefficients in these bases is a ${\mathbb C}(q)$-basis of ${\mathcal O}_q$. Extending scalars to ${\mathbb C}(\qD)$ and using \eqref{actionLi}, the $C(\mu)$'s become $U_q^\Pup$-bimodules.

\smallskip

\indent The duality (evaluation) bilinear form $\langle \text{-},\text{-} \rangle^\Q :{\mathcal O}_q\times U_q^\Q\to {\mathbb C}(q)$ is a Hopf pairing, and it is non degenerate.
It is non degenerate on the left by the definition of $\Oo_q$ as a restricted dual, and it is non degenerate on the right because the modules in ${\mathcal C}$ separate the points of $U_q^\Q$ (see, e.g., \cite[Th.\,3.46]{VY}). Extending scalars to $\mc(\qD)$ and thanks to \eqref{actionLi}, we obtain a non-degenerate pairing
\[ \langle \text{-},\text{-} \rangle^\Pup:{\mathcal O}_q\times U_q^\Pup \to {\mathbb C}(\qD). \]
The pairing $\langle \text{-},\text{-} \rangle^\Q$ is a quantum analog of the classical non-degenerate Hopf pairing $\langle \text{-},\text{-} \rangle :{\mathcal O}(G)\times U(\mathfrak g)\to {\mathbb C}$, defined for {\it any finite dimensional} Lie algebra (not necessarily semisimple), which is obviously non degenerate on the left, and is non degenerate on the right because of the Harish-Chandra theorem \cite[Th.\,2.5.7]{Di}.

\smallskip

We put $R^+:=R$, $R^- := (R^{fl})^{-1}$, where $R^{fl}$ is $R$ with tensorands permuted. Although the $R$-matrix does not exist in $U_q^\Pup \otimes U_q^\Pup$, the following maps are well-defined 
\begin{equation}\label{phipmdebut}
\fonc{\Phi^\pm}{\Oo_q}{(U_q^\Pup)^{\rm cop}}{\alpha}{(\alpha \otimes \mathrm{id})(R^\pm).}
\end{equation}
Indeed let $V$ be a finite-dimensional $U_q^\Q$-module with basis of weight vectors $\{ v_1, \ldots, v_s \}$ and dual basis $\{ v^1, \ldots, v^s \}$. Then $\bigl( v^i(? \cdot v_j) \otimes \mathrm{id} \bigr)(\Theta) = \delta_{i,j}K_{\mu_j}$ where $\mu_j$ is the weight of $v_j$ while $\bigl( v^i(? \cdot v_j) \otimes \mathrm{id} \bigr)(\hat{R})$ is an element in $U_q^\Pup$ by nilpotency of the action of the elements $E_{\beta_r}$ on $V$. Hence $\bigl( v^i(? \cdot v_j) \otimes \mathrm{id} \bigr)(R) := \textstyle \sum_k\bigl( v^i(? \cdot v_k) \otimes \mathrm{id} \bigr)(\Theta) \bigl( v^k(? \cdot v_j) \otimes \mathrm{id} \bigr)(\hat{R})$ gives an element in $U_q^\Pup$. The same argument applies for $R^-$.

\indent The maps $\Phi^{\pm}$ are morphisms of Hopf algebras which satisfy the assumptions from \S\ref{subsecSubstQuasi}; we stress that they take values in the simply-connected quantum group $U_q^\Pup$. Actually $\Phi^\pm$ takes values in the subalgebra $U_q^\Pup(\mathfrak{b}_\mp)^{\rm cop}$.

\indent Denote by $\Oo_q(B_\pm)$ the image of $\Oo_q$ by the Hopf epimorphism dual to the inclusion $U_q^\Q(\mathfrak{b}_\pm)\hookrightarrow U_q^\Q(\mathfrak{g})$. The maps $\Phi^\pm$ factor through these projections and yield isomorphisms of Hopf algebras $\Phi^\pm : \Oo_q(B_\pm) \overset{\sim}{\rightarrow} U_q^\Pup(\mathfrak{b}_\mp)^{\mathrm{cop}}$.

\smallskip

\indent A convenient way of manipulating the maps $\Phi^{\pm}$ without using $R$ is through pairings, as follows. Consider the pairing $\rho : U_q^\Pup(\mathfrak{b}_-)^{\rm cop}\times U_q^\Pup(\mathfrak{b}_+) \to \mc(\qD)$ defined on generators by
\begin{equation}\label{defrho}
\rho(K_\lambda, K_\mu) = \qD^{D(\lambda,\mu)}, \quad \rho(K_\lambda,E_i) = \rho(F_i, K_\lambda) = 0, \quad \rho(F_i,E_j) = \frac{\delta_{i,j}}{q_i - q_i^{-1}}
\end{equation}
for every $1 \leq i,j\leq m$, $\lambda,\mu\in P$ and extended by the Hopf pairing property. Note that the map $(x_-,x_+)\mapsto \rho\bigl(x_-,S^{-1}(x_+)\bigr)$ is the convolution inverse of $\rho$ and it holds $\rho \circ (S \otimes S) = \rho$. Also consider the so-called {\em Drinfeld pairing}
\begin{equation}\label{deftau}
\tau:U_q^\Pup(\mathfrak b_+)^{\rm cop} \times  U_q^\Pup(\mathfrak b_-) \to \mc(\qD), \quad \tau(x_+,x_-) = \rho\bigl( S(x_-), x_+ \bigr).
\end{equation}
The pairings $\rho$ and $\tau$ are non degenerate by the classical arguments (see e.g. \cite[Th.\,3.92]{VY}). We record the following formula for $\rho$ on PBW monomials:
\begin{equation}\label{expressionRho}
\rho\left( h_- F_{\beta_N}^{s_N} \ldots F_{\beta_1}^{s_1}, \,h_+ E_{\beta_N}^{t_N} \ldots E_{\beta_1}^{t_1} \right) = \rho(h_-,h_+)\prod_{i=1}^N \delta_{s_i,t_i} \, q_{\beta_i}^{-s_i(s_i - 1)/2}\,\frac{[s_i]_{q_{\beta_i}}!}{(q_{\beta_i} - q_{\beta_i}^{-1})^{s_i}}
\end{equation}
for all $h_-,h_+ \in U_q^\Pup(\mathfrak{h})$; this formula  is given e.g. in \cite[Th.\,2.4]{DC-L} (and modified here to match with our conventions). The following lemma uncovers the relation between $\rho$, $\tau$ and the $R$-matrix:
\begin{lem}\label{lemRhoVsRMat}
For all $\boldsymbol{s} = (s_N,\ldots,s_2,s_1) \in \mathbb{N}^N$ let $X_+^{\boldsymbol{s}} = E_{\beta_N}^{s_N} \ldots E_{\beta_1}^{s_1}$ and $X_-^{\boldsymbol{s}} = F_{\beta_N}^{s_N} \ldots F_{\beta_1}^{s_1}$.
\\1. We have $\widehat{R} = \sum_{\boldsymbol{s}\in \mathbb{N}^N} \frac{X_+^{\boldsymbol{s}} \otimes X_-^{\boldsymbol{s}}}{\rho(X_-^{\boldsymbol{s}}, X_+^{\boldsymbol{s}})}$, where $\widehat{R}$ is defined in \eqref{Rhat}.
\\2. For all $\alpha\in \Oo_q$ and  $x_\pm \in U_q^\Pup(\mathfrak{b}_\pm)$ it holds
\begin{equation}\label{RhoTauQgen}
\rho\bigl( \Phi^+(\alpha),x_+ \bigr) = \langle \alpha,x_+\rangle^\Pup \quad \text{and} \quad \tau\bigl( \Phi^-(\alpha),x_- \bigr) = \langle \alpha,x_-\rangle^\Pup.
\end{equation}
\end{lem}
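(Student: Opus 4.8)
The plan is to prove item~1 by a direct expansion, and to deduce item~2 from the fact that the $R$-matrix $R=\Theta\widehat{R}$ is the canonical element of the pairing $\rho$. For item~1, I would expand the ordered product $\widehat{R}=\hat{R}_{\beta_N}\cdots\hat{R}_{\beta_1}$ of \eqref{Rhat} using $(A\otimes B)(A'\otimes B')=AA'\otimes BB'$. The summand indexed by $\boldsymbol{s}=(s_N,\ldots,s_1)$ then reads $X_+^{\boldsymbol{s}}\otimes X_-^{\boldsymbol{s}}$ (the $\beta_N$-factor sitting leftmost in each tensorand, matching the definition of $X_\pm^{\boldsymbol{s}}$) with scalar coefficient $\prod_{i=1}^N q_{\beta_i}^{s_i(s_i-1)/2}(q_{\beta_i}-q_{\beta_i}^{-1})^{s_i}/[s_i]_{q_{\beta_i}}!$. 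Specializing \eqref{expressionRho} to $h_-=h_+=1$ and $\boldsymbol{t}=\boldsymbol{s}$ gives $\rho(X_-^{\boldsymbol{s}},X_+^{\boldsymbol{s}})=\prod_{i=1}^N q_{\beta_i}^{-s_i(s_i-1)/2}[s_i]_{q_{\beta_i}}!/(q_{\beta_i}-q_{\beta_i}^{-1})^{s_i}$, which is exactly the reciprocal of that coefficient. This is a routine matching of coefficients.

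For item~2, the key observation is that \eqref{expressionRho} makes the PBW bases $\rho$-orthogonal, $\rho(X_-^{\boldsymbol{t}},X_+^{\boldsymbol{s}})=\delta_{\boldsymbol{s}\boldsymbol{t}}\,\rho(X_-^{\boldsymbol{s}},X_+^{\boldsymbol{s}})$, so by item~1 the nilpotent factor $\widehat{R}$ pairs $\rho$-dual bases of $U_q(\mathfrak{n}_+)$ and $U_q(\mathfrak{n}_-)$, while $\Theta$ from \eqref{thetadef} is the canonical element of the Cartan pairing $\rho(K_\lambda,K_\mu)=\qD^{D(\lambda,\mu)}$. Writing $R=\sum_k a_k\otimes b_k$ with $a_k\in U_q^\Pup(\mathfrak{b}_+)$ and $b_k\in U_q^\Pup(\mathfrak{b}_-)$, these two facts combine (checking on a PBW monomial $x_+=K_\mu X_+^{\boldsymbol{t}}$) into the reproducing identity $\sum_k\rho(b_k,x_+)a_k=x_+$. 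Since $\Phi^+(\alpha)=(\alpha\otimes\mathrm{id})(R)=\sum_k\alpha(a_k)b_k$, I then obtain
\[ \rho\bigl(\Phi^+(\alpha),x_+\bigr)=\sum_k\alpha(a_k)\,\rho(b_k,x_+)=\alpha\Bigl(\textstyle\sum_k\rho(b_k,x_+)a_k\Bigr)=\alpha(x_+)=\langle\alpha,x_+\rangle^\Pup. \]
Here finiteness is not an issue: exactly as in the discussion of well-definedness after \eqref{phipmdebut}, applying the matrix coefficient $\alpha$ kills all but finitely many $a_k$.

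The second identity reduces to the first. Using $\Phi^-(\alpha)=(\mathrm{id}\otimes\alpha)(R^{-1})$ together with the standard identity $R^{-1}=(S\otimes\mathrm{id})(R)$ I would write $\Phi^-(\alpha)=S(y)$ with $y=(\mathrm{id}\otimes\alpha)(R)=\sum_k a_k\,\alpha(b_k)$. Then \eqref{deftau}, the relation $\rho\circ(S\otimes S)=\rho$, and the dual form $\sum_k\rho(x_-,a_k)b_k=x_-$ of the reproducing identity give
\[ \tau\bigl(\Phi^-(\alpha),x_-\bigr)=\rho\bigl(S(x_-),S(y)\bigr)=\rho(x_-,y)=\alpha\Bigl(\textstyle\sum_k\rho(x_-,a_k)b_k\Bigr)=\langle\alpha,x_-\rangle^\Pup. \]

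The hard part will be the Cartan bookkeeping hidden in the reproducing identity $\sum_k\rho(b_k,x_+)a_k=x_+$: in $R=\Theta\widehat{R}$ the factor $\Theta$ lives in a completion rather than being a finite sum $\sum K\otimes K$, and it interacts with the weights of the nilpotent monomials $X_\pm^{\boldsymbol{s}}$, producing weight shifts that must be tracked carefully. Once $\alpha$ is fixed, however, everything becomes finite, and the identity reduces to item~1 together with the nondegeneracy of the Cartan pairing.
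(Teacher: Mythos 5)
Your proposal is correct, and for the substance of both items it is essentially the paper's proof: item~1 is the same coefficient matching between \eqref{Rhat} and \eqref{expressionRho}, and the ``reproducing identity'' $\sum_k\rho(b_k,x_+)a_k=x_+$ that you defer as ``the hard part'' is, when checked on PBW monomials $K_\mu X_+^{\boldsymbol{t}}$, exactly the computation the paper performs (it handles the completion issue for $\Theta$ by the collapse $\sum_{\lambda,\mu}\qD^{D(\lambda,\mu)}\boldsymbol{1}_\lambda v\otimes\boldsymbol{1}_\mu=\sum_\lambda\boldsymbol{1}_\lambda v\otimes K_\lambda$, and orthogonality $\rho(X_-^{\boldsymbol{s}},X_+^{\boldsymbol{t}})=\delta_{\boldsymbol{s},\boldsymbol{t}}\,\rho(X_-^{\boldsymbol{s}},X_+^{\boldsymbol{s}})$ then kills all but one PBW index, leaving $\sum_\lambda\qD^{D(\lambda,\mu)}\alpha(\boldsymbol{1}_\lambda X_+^{\boldsymbol{t}})=\alpha(K_\mu X_+^{\boldsymbol{t}})$). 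The one genuine, though minor, departure is your treatment of the $\tau$-identity: the paper simply redoes the parallel computation (``the computation for $\tau$ is similar''), whereas you deduce it from the $\rho$-identity using $R^{-1}=(S\otimes\mathrm{id})(R)$, the definition $\tau(x_+,x_-)=\rho(S(x_-),x_+)$ from \eqref{deftau}, and $\rho\circ(S\otimes S)=\rho$; this is a clean reduction that avoids a second PBW computation, at the cost of invoking the antipode identity for $R$ in the completion (harmless here, since it holds as operators on objects of $\mathcal{C}(\qD)$ and everything becomes finite once $\alpha$ is applied).
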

\noindent When the second variable of $\rho$ and $\tau$ is restricted to $U_q^\Q(\mathfrak b_{\mp})$ then these pairings take values in $\mc(q)$ and the evaluation pairing $\langle\text{-},\text{-}\rangle^\Pup$ can be replaced by $\langle\text{-},\text{-}\rangle^\Q$ in formula \eqref{RhoTauQgen}.
\begin{proof}
1. Note first by comparison of \eqref{Rhat} and \eqref{expressionRho} that for all positive root $\beta \in \phi^+$ we have $\hat{R}_\beta = \textstyle \sum_{s \in \mathbb{N}} \rho(F_\beta^s,E_\beta^s)^{-1} (E_\beta)^s \otimes (F_\beta)^s$. Also by \eqref{expressionRho} we see that $\textstyle \rho(X_-^{\boldsymbol{s}}, X_+^{\boldsymbol{t}}) = \prod_{i=1}^N \rho(F_{\beta_i}^{s_i}, E_{\beta_i}^{t_i})$. Hence by definition of $\hat{R}$ one gets
\[ \hat{R} = \hat{R}_{\beta_N} \ldots \hat{R}_{\beta_1} = \sum_{s_N,\ldots,s_1 \in \mathbb{N}} \prod_{i=N}^1 \frac{(E_{\beta_i})^{s_i} \otimes (F_{\beta_i})^{s_i}}{\rho(F_{\beta_i}^{s_i},E_{\beta_i}^{s_i})} = \sum_{\boldsymbol{s} \in \mathbb{N}^N} \frac{X_+^{\boldsymbol{s}} \otimes X_-^{\boldsymbol{s}}}{\rho(X_-^{\boldsymbol{s}}, X_+^{\boldsymbol{s}})}. \]
2. Recall that $R = \Theta \hat{R}$. For each finite-dimensional $U_q^\Q(\mathfrak{g})$-module, denote by $\boldsymbol{1}_\lambda$ the projection on the subspace of vectors having weight $\lambda \in P$. With this notation, the action of $K_\mu$ on a module can be written as the formal sum $\textstyle \sum_{\lambda \in P} \qD^{D(\lambda,\mu)}\boldsymbol{1}_\lambda$ while the element $\Theta$ defined in \eqref{thetadef} is $\textstyle \sum_{\lambda,\mu \in P} \qD^{D(\lambda,\mu)} \boldsymbol{1}_\lambda \otimes \boldsymbol{1}_\mu$. If $v_\nu$ is a vector with weight $\nu$ we thus have $\textstyle \sum_{\lambda,\mu \in P} \qD^{D(\lambda,\mu)} \boldsymbol{1}_\lambda v_\nu \otimes \boldsymbol{1}_\mu = v_\nu \otimes \sum_{\mu \in P} \qD^{D(\nu,\mu)} \boldsymbol{1}_\mu  = v_\nu \otimes K_\nu$. It follows that $\textstyle \sum_{\lambda,\mu \in P} \qD^{D(\lambda,\mu)} \boldsymbol{1}_\lambda v \otimes \boldsymbol{1}_\mu = \sum_{\lambda \in P} \boldsymbol{1}_\lambda v \otimes K_\lambda$ for any vector $v$. This implies that for all $c \in \mathcal{O}_q$ we have $\textstyle (c \otimes \mathrm{id})(\Theta) = \sum_{\lambda \in P} c(\boldsymbol{1}_\lambda)K_\lambda$. Also observe from \eqref{expressionRho} that $\rho(X_-^{\boldsymbol{s}}, X_+^{\boldsymbol{t}}) = \delta_{\boldsymbol{s},\boldsymbol{t}} \rho(X_-^{\boldsymbol{s}}, X_+^{\boldsymbol{t}})$. We thus get, for all $\boldsymbol{a} \in \mathbb{N}^N$ and $\mu \in P$,
\begin{align*}
&\rho\bigl( \Phi^+(\alpha), K_\mu X_+^{\boldsymbol{a}} \bigr) = \rho\bigl( (\alpha \otimes \mathrm{id})(\Theta\hat{R}), K_\mu X_+^{\boldsymbol{a}} \bigr) = \sum_{\boldsymbol{s},\lambda} \alpha(\boldsymbol{1}_\lambda X_+^{\boldsymbol{s}}) \frac{\rho\bigl( K_\lambda X_-^{\boldsymbol{s}}, K_\mu X_+^{\boldsymbol{a}} \bigr)}{\rho\bigl( X_-^{\boldsymbol{s}}, X_+^{\boldsymbol{s}} \bigr)}\\
=\:& \sum_{\boldsymbol{s},\lambda} \qD^{D(\lambda,\mu)}\alpha(\boldsymbol{1}_\lambda X_+^{\boldsymbol{s}}) \frac{\rho\bigl( X_-^{\boldsymbol{s}}, X_+^{\boldsymbol{a}} \bigr)}{\rho\bigl( X_-^{\boldsymbol{s}}, X_+^{\boldsymbol{s}} \bigr)} = \sum_{\lambda} \qD^{D(\lambda,\mu)}\alpha(\boldsymbol{1}_\lambda X_+^{\boldsymbol{a}}) = \alpha(K_\mu X_+^{\boldsymbol{a}})
\end{align*}
where the second equality uses item 1 of the present lemma and the third equality is by \eqref{expressionRho}. The computation for $\tau$ is similar.
\end{proof}

Let $\mathcal{C}(\qD) = \mathcal{C} \otimes_{\mathbb{C}(q)} \mathbb{C}(\qD)$ and $\mathcal{O}_q(\qD) = \mathcal{O}_q \otimes_{\mathbb{C}(q)} \mathbb{C}(\qD)$ be the extension of scalars. Since the category $\mathcal{C}(\qD)$ is braided by means of the action of the R-matrix, $\mathcal{O}_q(\qD)$ is naturally co-quasitriangular. The co-R-matrix can be expressed in terms of the morphisms $\Phi^+$ as
\begin{equation}\label{coRmatOq}
\forall \, \varphi, \psi \in \mathcal{O}_q(\qD), \quad \mathcal{R}(\varphi \otimes \psi) = \bigl\langle \psi, \Phi^+(\varphi)  \bigr\rangle^\Pup
\end{equation}
which corresponds to the definition in \textsection \ref{subsecSubstQuasi}. By \eqref{RhoTauQgen}, $\mathcal{R}$ relates to the Drinfeld pairing $\tau$ by
\[ \mathcal{R}(\varphi \otimes \psi) = \tau\bigl( \Phi^-(\psi), \Phi^+(\varphi) \bigr). \]
In particular, $\mathcal{R}$ factors through the projection $\Oo_q^{\otimes 2} \to \Oo_q(B_+) \otimes \Oo_q(B_-)$.

\smallskip

\indent Moreover the category $\mathcal{C}(\qD)$ is ribbon, with ribbon natural transformation given by the action of $u \ell^{-1}$, where $u = \textstyle \sum_{(R)} S(R_{(2)})R_{(1)}$ is the Drinfeld element and $\ell = K_{2\rho}$ (although $u \not\in U_q^\Pup$, its action on objects in $\mathcal{C}(\qD)$ is well-defined). This element acts by scalar on any irreducible module; looking at the highest weight vector reveals that this action on $V_\lambda$ is $q^{-(\lambda,\lambda+2\rho)}\mathrm{id}_{V_\lambda}$ for all $\lambda \in P_+$. As a result the algebra $\mathcal{O}_q(\qD)$ is coribbon with $\mathsf{v} : \mathcal{O}_q(\qD) \to \mathbb{C}(\qD)$ given by
\begin{equation}\label{coribbonOq}
\mathsf{v}\bigl( _\lambda\phi^f_x \bigr) = \qD^{-D(\lambda,\lambda+2\rho)}f(x)
\end{equation}
where $_\lambda\phi^f_x \in \mathcal{O}_q$ is the matrix coefficient of $V_\lambda$ given by $h \mapsto f(h\cdot x)$ for $f \in V_\lambda^*$, $x \in V_\lambda$. We have to use $\qD$ in order to ensure an integer exponent, because $\textstyle (\lambda,\lambda) \in \frac{1}{D}\mathbb{Z}$.

\subsection{Integral forms of \texorpdfstring{$U_q^\Lam(\mathfrak{g})$}{the quantum enveloping algebra}}\label{integralFormsUq}
Recall that $A=\mc[q,q^{-1}]$. An integral form of a $\mc(q)$-algebra $R$ is an $A$-subalgebra $R_A$ such that the canonical map $R_A \otimes_{A} \mc(q) \to R$ is an isomorphism. Given a $\mathbb{Z}$-lattice $\Lambda$ such that $Q \subset \Lambda \subset P$ we shall use two integrals versions of $U_q^\Lam(\mathfrak{g})$, respectively called unrestricted and restricted.

\smallskip

\indent The {\em unrestricted integral form} $U_A^\Lam$ of $U_q^\Lam$ is its smallest $A$-subalgebra containing the elements
$$\underline{E}_i:=(q_i-q_i^{-1})E_i\ ,\ \underline{F}_i:=(q_i-q_i^{-1})F_i\ , \ K_\lambda\ \  \mathrm{for}\ i=1,\ldots,m, \:\lambda \in \Lambda$$ and stable under the action of the braid group ${\mathcal B}(\mathfrak{g})$ by means of Lusztig's automorphisms $T_i$. 

We denote by $U_A(\mathfrak{n}_-)$ (resp.  $U_A(\mathfrak{n}_+)$, resp. $U_A^\Lam(\mathfrak{h})$)  the smallest $A$-subalgebra of $U_A^\Lam$ containing the elements $\underline{E}_i$ (resp. $\underline{F}_i$, resp. $K_\lambda$ for $\lambda \in \Lambda$) and stable under the action of ${\mathcal B}(\mathfrak{g})$. We put $U_A^\Lam(\mathfrak{b}_\pm) := U_A(\mathfrak{n}_\pm)U_A^\Lam(\mathfrak{h})$. 

Bases of $U_A(\mathfrak{n}_+)$ and $U_A(\mathfrak{n}_-)$ are respectively formed by the monomials in the elements
\begin{equation}\label{defRootVectorsUnrestricted}
\underline{E}_{\beta}:=(q_\beta-q_\beta^{-1})E_{\beta} \quad \text{and} \quad \underline{F}_{\beta}:=(q_\beta-q_\beta^{-1})F_{\beta_k}
\end{equation}
for $\beta \in \phi_+$, where $q_\beta = q^{(\beta,\beta)/2}$ and $E_{\beta}$, $F_{\beta}$ are the root vectors \eqref{rootvectdef}. Together with the basis $K_\lambda$ ($\lambda\in \Lambda$) of $U_A^\Lam(\mathfrak{h})$, this and the polarisation $U_A^\Lam(\mathfrak{g})=U_A(\mathfrak{n}_-)U_A^\Lam(\mathfrak{h})U_A(\mathfrak{n}_+)$ yield a PBW basis of $U_A^\Lam(\mathfrak{g})$. 
\smallskip

Note that the center $\mathcal Z(U_A^\Lam)$ of $U_A^\Lam(\mathfrak{g})$ satisfies $\mathcal Z(U_A^\Lam)\otimes_A {\mathbb C}(q)=\mathcal Z(U_q^\Lam)$.

\begin{remark}
{\rm Denote by $A'$ the localisation of $A$ with respect to the elements  $q_i-q_i^{-1}, i=1,\ldots,m$. Clearly $U_A^\Lam\otimes_A A'$ is isomorphic to the $A'$-subalgebra of $U_q^\Lam$  generated by $E_i, F_i , K_{\lambda}$ ($i=1,\ldots, m$, $\lambda \in \Lambda$), with no condition involving the braid group action.

There is another unrestricted integral form of $U_q^\Lam$, simpler to define \cite{DCK}: it is the $A$-subalgebra of $U_q^\Lam$ generated by  $E_i, F_i, K_\lambda, [K_i, 0]_{q_i}$ ($i=1,\ldots,m$, $\lambda \in \Lambda$). We will not use it. It is not $A$-isomorphic to $U_A^\Lam$, but it is over $A'$. This last definition of $U_A^\Lam$ is suited to the study of the quotient (commutative) algebra $U_A/(q-1)U_A$, and its Poisson geometry \cite[\S 12.4, \S 14, \S 15]{DCP}.}
\end{remark}

\indent The {\em restricted integral form} $\Gamma_A^\Lam = \Gamma_A^\Lam(\mathfrak{g})$ of $U_q^\Lam$ (in the version of De Concini--Lyubashenko \cite{DC-L}) is the $A$-subalgebra of $U_q^\Lam$ generated by the  elements
$$ E_i^{(p)} = \frac{E_i^p}{[p]_{q_i}!},\ F_i^{(p)} = \frac{F_i^p}{[p]_{q_i}!}\ , \ (K_i,\,t)_{q_i} := \prod_{s=1}^t \frac{K_iq_i^{-s+1}-1}{q_i^s-1}, \quad K_{\lambda}^{\pm 1} $$
for all $i=1,\ldots,m$, $p, t \geq 0$ and $\lambda \in \Lambda$. Denote $\Gamma_A^\Lam({\mathfrak h}):=\Gamma_A^\Lam \cap U_q^\Lam(\mathfrak h)$ and $\Gamma_A^\Lam({\mathfrak b}_{\pm}):=\Gamma_A^\Lam \cap U_q^\Lam(\mathfrak b_{\pm})$.  In particular $\Gamma_A({\mathfrak n}_{\pm}):=\Gamma_A^\Q \cap U_q(\mathfrak n_{\pm})$. For all $\beta \in \phi^+$, introduce the notations
\begin{equation}\label{defDivPowerBeta}
E_\beta^{(p)} = \frac{E_\beta^p}{[p]_{q_\beta}!},\ F_i^{(p)} = \frac{F_\beta^p}{[p]_{q_\beta}!}
\end{equation}
with as usual $\textstyle [p]_{q_\beta}! = \prod_{i=1}^p \frac{q_\beta^i - q_\beta^{-i}}{q_\beta - q_\beta^{-1}}$. An $A$-basis of $\Gamma_A^\Q$ is formed by the elements
\begin{equation}\label{basegamma}
Y({\bf r}, {\bf t}, {\bf s})=E_{\beta_N}^{(r_N)}\ldots E_{\beta_1}^{(r_1)}\left(\prod_{i=1}^m K_i^{-\sigma(t_i)}(K_i; t_i)_{q_i} \right)F_{\beta_N}^{(s_N)}\ldots F_{\beta_1}^{(s_1)}
\end{equation}
where ${\bf r}=(r_1,...,r_N), {\bf s}=(s_1,...,s_N)\in {\mathbb N}^N$, ${\bf t}=(t_1,...,t_m)\in {\mathbb N}^m$ and $\sigma(t)$ is the integer part of $t/2$. Moreover we have an  isomorphism of $A$-modules $\Gamma_A^{\Lam} \cong A[\Lambda/Q] \otimes_A \Gamma_A^\Q$, which gives an $A$-basis of $\Gamma_A^\Lam$ for all $\Lambda$.

\begin{remark}\label{comparaisonRestrictedVersions}
{\rm The restricted integral form often appears in the literature in Lusztig's version, denoted by $U_A^{\Lam,{\rm res}}$ \cite{Lusztig0,Lusztig1}. It is the $A$-subalgebra of $U_q^\Lam$ generated by the  elements $E_i^{(p)}$, $F_i^{(p)}$ and $K_{\lambda}^{\pm 1}$ for $i=1,\ldots,m$, $p\geq 0$ and $\lambda \in \Lambda$. In particular, $U_A^{\Q,{\rm res}}$ is the smallest $A$-subalgebra of $U_q^\Q$ containing the elements $E_i^{(p)}$, $F_i^{(p)}$ and $K_i^{\pm 1}$ for all $i$ and $p$. Also $U_A^{\Q,{\rm res}}$ contains $\textstyle [ K_i; t ]_{q_i}:= \prod_{s=1}^t \frac{K_iq_i^{-s+1}-K_i^{-1}q_i^{s-1}}{q_i^s-q_i^{-s}}$ for all $t \geq 0$ but does not contain $(K_i,t)_{q_i}$. There are thus strict inclusions of algebras $U_A^\Lam \subset U_A^{\Lam,{\rm res}} \subset \Gamma_A^\Lam$. We note that suitably-defined categories of representations (\S \ref{integralO}) of $U_A^{\Q,{\rm res}}$ and $\Gamma_A^\Lam$ are equivalent. We prefer to use $\Gamma_A^\Lam$ because it is from it that De Concini--Lyubashenko defined the integral version of the quantum coordinate algebra $\mathcal{O}_q(G)$ and also because it leads to a version of the small quantum group more suitable for us (see Remark \ref{versionsSmallQG}).}
\end{remark}

\subsection {Integral form of \texorpdfstring{$\Oo_q(G)$}{the quantum function algebra}}\label{integralO}
We denote by $\mathcal{C}_A$ the category of $\Gamma_A^\Q$-modules which are free $A$-modules of finite rank and which have a basis such that for all $i$ the elements $K_i$ and $(K_i,t)_{q_i}$ act diagonally with eigenvalues of the form $q_i^m$ and $(q_i^m,t)_{q_i}$ for some $m \in \mathbb{Z}$, where $\textstyle (x,t)_{q} = \prod_{s=1}^t \frac{xq^{-s+1}-1}{q^s-1}$ for all $t \in \mathbb{N}$ and formal variables $x,q$.

The category $\mathcal{C}_A$ is rigid, monoidal, but not semisimple. Every type $1$ finite dimensional simple $U_q^\Q$-module $V_\mu$, $\mu \in P_+$, has a $\Gamma_A^\Q$-invariant full $A$-sublattice, that we denote by ${}_AV_\mu$ (``full'' means that $_AV_\mu \otimes_A \mathbb{C}(q) = V_{\mu}$). These $\Gamma_A^\Q$-modules form the simple objects of $\mathcal{C}_A$.

If we extend scalars of ${\mathcal C}_A$ to $\AD$, the objects of $\mathcal{C}_A$ can be regarded as $\Gamma_A^\Lam$-modules by means of \eqref{actionLi} for any lattice $Q\subset \Lambda \subset P$.

The Hopf algebra $\Oo_A=\Oo_A(G)$ \cite{Lusztig2} is the $A$-Hopf subalgebra of $\Oo_q$ defined as the $A$-span of the matrix coefficients $h\mapsto v^i(h \cdot v_j)$, for every module $V$ in the category $\mathcal{C}_A$, where $(v_j)$ is an $A$-basis of $V$ with dual $A$-basis $(v^i)$. It is a free $A$-module, and an integral form of $\Oo_q$ \cite{Lusztig2,Lusztig}. So $\Oo_A$ can be regarded as a restricted dual of $\Gamma_A^\Q$ with respect to the category $\mathcal{C}_A$.

 The duality (evaluation) Hopf pairing
\begin{equation}\label{pairingA}\langle \text{-}, \text{-} \rangle_A^\Q :{\mathcal O}_A\times \Gamma_A^\Q \to A
\end{equation}
is non-degenerate because it recovers the usual pairing $\langle \text{-}, \text{-} \rangle^\Q : \mathcal{O}_q \otimes U^\Q_q \to \mathbb{C}(q)$ when scalars are extended from $A$ to ${\mathbb C}(q)$.

\indent Let $\mathcal{O}_{\AD} = \mathcal{O}_A \otimes_A \AD$, $\Gamma_{\AD}^\Pup = \Gamma_A^\Pup \otimes_A \AD$ be the extension of ground ring to $\AD = \mathbb{C}[\qD^{\pm 1}]$, with $\qD^D = q$. Then we see that $\langle \text{-},\text{-} \rangle_A^\Q$ extends to a non degenerate Hopf pairing
\begin{equation}\label{defpairAD}
\langle \text{-}, \text{-} \rangle_{\AD}^\Pup :{\mathcal O}_{\AD}\times \Gamma_{\AD}^\Pup\to A_{\D}.
\end{equation}
which is the integral version of the pairing $\langle \text{-},\text{-} \rangle^\Pup$ from \S \ref{sec:Oq}.

Observe that the expression of the pairing $\rho : U_q(\mathfrak{b}_-)^{\mathrm{cop}} \times U_q(\mathfrak{b}_+) \to \mathbb{C}(\qD)$ in \eqref{expressionRho} can be rewritten as
\[ \rho\left( h_- \underline{F}_{\beta_N}^{s_N} \ldots \underline{F}_{\beta_1}^{s_1}, \,h_+ E_{\beta_N}^{(t_N)} \ldots E_{\beta_1}^{(t_1)} \right) = \rho(h_-,h_+)\prod_{i=1}^N \delta_{s_i,t_i} \, q_{\beta_i}^{-s_i(s_i - 1)/2} \]
with notations from \eqref{defRootVectorsUnrestricted} and \eqref{defDivPowerBeta}. Clearly $\rho(h_-,h_+) \in \AD$ if $h_- \in U_A^\Pup(\mathfrak{h})$ and $h_+ \in \Gamma_A^\Pup(\mathfrak{h})$. It follows that the pairing $\rho$, and hence $\tau$ as well, take values in $A$ if we restrict them as follows:
\begin{equation}\label{tauA}
\rho_A\colon U_A^\Pup(\mathfrak{b}_-)^{\rm cop}\times \Gamma_A^\Q(\mathfrak{b}_+)\to A, \qquad \tau_A\colon U_A^\Pup(\mathfrak{b}_+)^{\rm cop}\times \Gamma_A^\Q(\mathfrak{b}_-)\to A.
\end{equation}
These integral pairings are of course still non-degenerate, as restrictions of the non-degenerate pairings $\rho$ and $\tau$. This shows that the restricted integral version $\Gamma_A^\Q$ and the unrestricted integral version $U_A^\Pup$ have triangular factors in duality.

Let $\Phi_A^{\pm}$ be defined by restriction to $\Oo_A \subset \mathcal{O}_q$ of $\Phi^{\pm} : \mathcal{O}_q \to (U_q^\Pup)^{\mathrm{cop}}$ from \eqref{phipmdebut}. By \eqref{Rhat} and \eqref{Rhatdiv} it defines a map $\Phi_A^{\pm} : \mathcal{O}_A \to (U_A^\Pup)^{\mathrm{cop}}$. By restriction to integral forms, the equalities in \eqref{RhoTauQgen} become
\begin{equation}\label{identPhibracket}
\rho_A\bigl(\Phi_A^+(\alpha),x_+\bigr) = \langle \alpha,x_+\rangle_A^\Q,\qquad \tau_A\bigl(\Phi_A^-(\alpha),x_-\bigr) = \langle \alpha,x_-\rangle_A^\Q
\end{equation}
for every $\alpha\in \Oo_A$, $x_\pm\in \Gamma_A^\Q(\mathfrak{b}_\pm)$. As a result $\Phi_A^{\pm}$ factors through the projection $\mathcal{O}_A \twoheadrightarrow \mathcal{O}_A(B_\pm)$ and yields an isomorphism of Hopf algebras
\begin{equation}\label{Phi+-A}
\Phi_A^\pm\colon \Oo_A(B_\pm) \overset{\sim}{\longrightarrow} U_A^\Pup(\mathfrak{b}_\mp)^{\rm cop}.
\end{equation}

\indent Recall the co-R-matrices $\mathcal{R}^{(\pm)}$ from \eqref{RLLdeguise}. They and the coribbon element on $\mathcal{O}_q$ described in \S\ref{sec:Oq} admit integral versions:
\begin{lem}\label{lemIntegralRandCorib}
1. The restriction of the co-R-matrix $\mathcal{R}^{(\pm)} : \mathcal{O}_q(\qD)^{\otimes 2} \to \mathbb{C}(\qD)$ to $\mathcal{O}_{\AD}^{\otimes 2}$ takes values in $\AD$. Hence we have an integral version $\mathcal{R}_{\AD}^{(\pm)}  : \mathcal{O}_{\AD} \otimes \mathcal{O}_{\AD} \to \AD$. It satisfies
\begin{equation}\label{integralCoRmat}
\forall \, \varphi, \psi \in \Oo_{\AD}, \quad \mathcal{R}_{\AD}^{(\pm)} (\varphi \otimes \psi) = \bigl\langle  \psi, \widetilde{i}\bigl(\Phi^\pm_A(\varphi) \bigr) \bigr\rangle_{\AD}^\Pup
\end{equation}
where $\widetilde{i} :  U_A^\Pup \hookrightarrow \Gamma_A^\Pup$ is the natural embedding $\underline{E}_i \mapsto (q_i-q_i^{-1})E_i^{(1)}$, $\underline{F}_i \mapsto (q_i-q_i^{-1})F_i^{(1)}$, $K_\lambda \mapsto K_\lambda$.
\\2. The restriction of the coribbon element $\mathsf{v} : \mathcal{O}_q(\qD) \to \mathbb{C}(\qD)$ to $\mathcal{O}_{\AD}$ takes values in $\AD$. Hence we have an integral version $\mathsf{v}_{\AD} : \mathcal{O}_{\AD} \to \AD$.
\end{lem}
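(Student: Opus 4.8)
For the first part, the plan is to assemble three facts already established above. By \eqref{coRMatSign} the co-R-matrix is given, uniformly in the sign, by $\mathcal{R}^{(\pm)}(\varphi \otimes \psi) = \langle \psi, \Phi^\pm(\varphi) \rangle^\Pup$; from the expressions \eqref{Rhat}--\eqref{Rhatdiv} the restrictions $\Phi_A^{\pm}$ map $\Oo_A$ into the unrestricted integral form $U_A^\Pup$; and by Remark \ref{comparaisonRestrictedVersions} one has $U_A^\Pup \subset \Gamma_A^\Pup$, this inclusion being exactly the map $\widetilde{i}$, since $\widetilde{i}(\underline{E}_i) = (q_i - q_i^{-1})E_i^{(1)} = \underline{E}_i$ inside $U_q^\Pup$ (and likewise for the $\underline{F}_i$ and $K_\lambda$). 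So I would fix $\varphi, \psi \in \Oo_{\AD}$ and read $\mathcal{R}^{(\pm)}(\varphi \otimes \psi) = \langle \psi, \Phi^\pm(\varphi) \rangle^\Pup$; because $\Phi_A^\pm(\varphi) \in U_A^\Pup$ and $\widetilde{i}$ is the inclusion inside $U_q^\Pup$, the element $\widetilde{i}(\Phi_A^\pm(\varphi))$ lies in $\Gamma_A^\Pup$ and coincides with $\Phi^\pm(\varphi)$. Hence the pairing is an instance of the integral pairing \eqref{defpairAD}, so it takes values in $\AD$; this proves both integrality and the formula \eqref{integralCoRmat} at once.

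For the second part, I would start from the explicit value \eqref{coribbonOq}, $\mathsf{v}({}_\lambda\phi^f_x) = \qD^{-D(\lambda,\lambda+2\rho)}f(x)$, on the matrix coefficients of $V_\lambda$, and check that it stays in $\AD$ on integral generators. The exponent is an integer: since $2\rho \in Q \subset P$, both $\lambda$ and $\lambda + 2\rho$ lie in $P$, whence $(\lambda,\lambda+2\rho) \in \frac{1}{D}\mz$ and $D(\lambda,\lambda+2\rho) \in \mz$, so $\qD^{-D(\lambda,\lambda+2\rho)} \in \AD$. Choosing $f$ and $x$ in dual $A$-bases of a full $\Gamma_A^\Q$-invariant lattice $_AV_\lambda$ (a simple object of $\mathcal{C}_A$), one has $f(x) \in A \subset \AD$, so $\mathsf{v}({}_\lambda\phi^f_x) \in \AD$. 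Since such matrix coefficients span $\Oo_{\AD}$ over $\AD$, the restriction $\mathsf{v}|_{\Oo_{\AD}}$ is $\AD$-valued, giving the integral form $\mathsf{v}_{\AD}$.

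I do not expect a serious obstacle here, since the integrality of $\Phi_A^\pm$ and of the evaluation pairing, together with the computation of the ribbon on highest-weight modules, are already available. The only point requiring care is the harmless bookkeeping identifying $\widetilde{i}(\Phi_A^\pm(\varphi))$ with $\Phi^\pm(\varphi)$ inside $U_q^\Pup$, which guarantees that the $\mc(\qD)$-valued co-R-matrix and coribbon element and their integral avatars genuinely agree on $\Oo_{\AD}$.
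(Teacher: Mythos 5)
Your proof of part 1 is correct, and it takes a genuinely different route from the paper's. The paper first proves integrality of $\mathcal{R}$ on $\Oo_{\AD}^{\otimes 2}$ by quoting Lusztig's theorem that $R^{\pm 1}$ acts on the canonical-basis lattices ${}_AV_\lambda \otimes {}_AV_\mu$ with matrix entries in $q^{\pm(\lambda,\mu)}A$ \cite[Th.\,32.1.5]{Lusztig}, and then propagates integrality from these matrix coefficients --- which only \emph{generate} $\Oo_A$ as an $A$-algebra --- to all of $\Oo_A^{\otimes 2}$ via the co-R-matrix axioms \eqref{coTriang1}--\eqref{coTriang2}; formula \eqref{integralCoRmat} is then read off by restriction of \eqref{coRmatOq}. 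You instead get integrality and \eqref{integralCoRmat} in one stroke from two facts recorded before the lemma: $\Phi_A^\pm$ maps $\Oo_A$ into $U_A^\Pup$ (so that $\widetilde{i}\bigl(\Phi_A^\pm(\varphi)\bigr) = \Phi^\pm(\varphi)$ lies in $\Gamma_{\AD}^\Pup$), and the evaluation pairing \eqref{defpairAD} is $\AD$-valued. There is no circularity, since both facts rest only on \eqref{Rhat}--\eqref{Rhatdiv} and the definition of $\Oo_A$; your argument is the more economical one, and it makes transparent that the integrality claim and the formula are really the same statement. Crucially, it also needs no spanning or generation statement, since it applies verbatim to arbitrary $\varphi,\psi \in \Oo_{\AD}$.

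Part 2, however, has a genuine gap: the matrix coefficients of the simple lattices ${}_AV_\lambda$ do \emph{not} span $\Oo_{\AD}$ over $\AD$, so your final step fails. Recall that $\Oo_A$ is by definition the $A$-span of matrix coefficients of \emph{all} objects of $\mathcal{C}_A$, and $\mathcal{C}_A$ is not semisimple: the Peter--Weyl decomposition \eqref{PWdecomp} does not descend to $A$ or $\AD$. Concretely, take $\mathfrak{g} = \mathfrak{sl}_2$, let $a,b,c,d$ be the matrix coefficients of ${}_AV_{\varpi_1}$ and let $c^2_{ij}$ ($0 \leq i,j \leq 2$) be those of ${}_AV_{2\varpi_1}$ in its canonical basis $w_0$, $w_1 = Fw_0$, $w_2 = F^{(2)}w_0$. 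Decomposing ${}_AV_{\varpi_1}\otimes_A {}_AV_{\varpi_1}$ one finds $[2]_q\, bc = c^2_{11} - 1$. Since $[2]_q = q+q^{-1}$ is not a unit of $\AD$ and the elements $c^2_{ij}$, $1$ are linearly independent, the product $bc \in \Oo_A$ is \emph{not} an $\AD$-linear combination of matrix coefficients of simple lattices. Hence integrality of $\mathsf{v}$ on such matrix coefficients says nothing, by linearity alone, about $\mathsf{v}(bc)$.

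The repair is exactly the paper's mechanism, and your computation is its base case: the matrix coefficients of the ${}_AV_\lambda$ in $A$-bases \emph{generate} $\Oo_A$ as an $A$-algebra, and the axiom \eqref{axiomCoribbonProduct} expresses $\mathsf{v}$ on a product in terms of $\mathsf{v}$ on the factors, values of $\mathcal{R}^{-1}$ (which are in $\AD$ by part 1), and iterated coproducts (which stay integral because $\Oo_A$ is a Hopf $A$-algebra). Induction on products of generators then gives $\AD$-valuedness of $\mathsf{v}$ on all of $\Oo_{\AD}$. In the example above this propagation encodes a genuine cancellation that the naive linear argument cannot see: $\mathsf{v}(bc) = (q^{-4}-1)/[2]_q = q^{-3} - q^{-1} \in A$, even though $bc$ itself leaves the lattice spanned by matrix coefficients of simples.
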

\begin{proof}
For all $\lambda \in P_+$, there is a canonical $A$-basis $\mathcal{B}_\lambda$ of $_AV_\lambda$ due to Kashiwara--Lusztig, see e.g. \cite[Chap.\,14]{CP}.
\\1. It is known that the matrix entries in the basis $\mathcal{B}_\lambda \otimes \mathcal{B}_\mu$ of the action of $R^{\pm 1}$ on ${}_AV_\lambda \otimes {}_AV_\mu$ belong to $q^{\pm (\lambda,\mu)}A \subset \AD$ \cite[Th.\,32.1.5]{Lusztig}. Moreover $\mathcal{O}_A$ is generated as an $A$-algebra by the collection of matrix coefficients of $_AV_\lambda$ in the basis $\mathcal{B}_\lambda$ for all $\lambda \in P_+$. It follows that $\mathcal{R}$ takes values in $\AD$ on such matrix coefficients and the co-R-matrix axioms \eqref{coTriang1}--\eqref{coTriang2} together with the fact that $\mathcal{O}_A$ is a Hopf $A$-algebra imply that $\mathcal{R}$ takes values in $\AD$ for all elements of $\mathcal{O}_A^{\otimes 2}$. Formula \eqref{integralCoRmat} is simply obtained by restriction of formula \eqref{coRmatOq} to integral versions.
\\2. By formula \eqref{coribbonOq} for $\mathsf{v} : \mathcal{O}_q \to \mathbb{C}(\qD)$ we see that $\mathsf{v}$ takes values in $\AD$ when applied to matrix coefficients in $\mathcal{B}_\lambda$. One then conclude as in the previous item thanks to the axiom \eqref{axiomCoribbonProduct} of $\mathsf{v}$.
\end{proof}

\subsection{Specialization of \texorpdfstring{$U_q(\mathfrak{g})$}{the quantum enveloping algebra}}\label{specializUq} Recall the notations $A = \mathbb{C}[q^{\pm 1}]$ and $\AD = \mathbb{C}[\qD^{\pm 1}]$ with $\qD^D = q$, where $D$ is the smallest integer such that $DP \subset Q$, see \S\ref{subsecLieAlg}. General facts on specialization at $q=z$ were remembered in \S\ref{generalRmkSpe}. Here we will work with specializations at a root of unity. Recall from the introduction that we use the following assumptions:
\medskip

\noindent \fbox{\textbf{Assumption:}} From now on, $l \geq 3$ is an integer such that
\begin{equation}\label{assumptionl}
\mathrm{gcd}(l,2) = 1, \qquad \mathrm{gcd}(l,D) = 1, \quad \text{and also } \mathrm{gcd}(l,3) = 1 \text{ if } \mathfrak{g} \text{ is of type } G_2.
\end{equation}
We denote by $\epsilon \in \mathbb{C}$ a primitive root of unity or order $l$. Let $\overline{D}$ be the inverse of $D$ modulo $l$, meaning that $D\overline{D} \equiv 1 \pmod{l}$; it exists because $l$ and $D$ are coprime. We define
\begin{equation}\label{choixED}
\eD = \epsilon^{\overline{D}} \qquad \text{so that } \:\: \eD^D = \e \:\: \text{ and } \:\: \eD^l = 1.
\end{equation}
The specialization of an object defined over $\AD = \mathbb{C}[\qD^{\pm 1}]$ is always done with this $\eD$; since $\eD$ depends on $\e$, the specialization of $\AD$-modules and of $\AD$-(bi)linear maps is denoted with a subscript $\e$ (instead of $\eD$). The only exception is  App.\,\ref{appDlRoot}, where we discuss the case of general $Dl$-th roots of unity.

\medskip

Let $U_\e^\Lam$ and $\Gamma_\e^\Lam$ be respectively the unrestricted and restricted specializations, of the integral forms defined in \S\ref{integralFormsUq}. By Lemma \ref{lemmaSpecialisation}, the PBW basis of $U_A^\Lam$ yields a basis of $U_\e^\Lam$,  and the basis (\ref{basegamma}) yields a basis of $\Gamma_\e^\Q$, with elements we shall denote by $Y({\bf p}, {\bf t}, {\bf n})_\e$. When $q=1$ there is an isomorphism $\Gamma_1^\Q\cong U(\mathfrak{g})$ \cite[\S 6]{DC-L}.

In $\Gamma_\epsilon^\Q$ we have the relations
\begin{equation}\label{toto}\forall \beta\in \phi^+, \:\: E_{\beta}^l=F_{\beta}^l=0 \quad \text{and} \quad K_i^{l}=1 \:\:\text{for}\; i=1,\ldots,m.
\end{equation}
The first two are obvious, and the last one comes by clearing denominators in the definition of the elements $(K_i;t)_{q_i}$, and then evaluating at $q_i=\e_i$. Therefore the specialization $i_\e:U_\e^\Q\rightarrow \Gamma_{\e}^\Q$ of the inclusion map $i:U_A^\Q\hookrightarrow \Gamma_{A}^\Q$ is not injective. This degeneracy (i.e. an injection acquiring a big kernel at $\e$) will be of fundamental importance in the sequel.

\begin{defi}\label{defSmallUeps}
The image of $i_\e\colon U_\e^\Q\rightarrow \Gamma_{\e}^\Q$ is a finite-dimensional Hopf algebra called {\it small quantum group} and denoted $u_\e^\Q(\mathfrak{g})$ or more shortly $u_\e^\Q$.
\end{defi}
\noindent This algebra is considered in \cite[App]{Lyub95}. By definition, $u_\e^\Q$ is the Hopf subalgebra of $\Gamma_\e^\Q$ generated by the elements $E_i$, $F_i$ and $K_i$ for $1 \leq i \leq m$.

\begin{remark}\label{versionsSmallQG}
\indent {\rm In $U_{\epsilon}^{\Q, {\rm res}}$ the elements $K_i^l$ are central (but not equal to $1$) and it holds $K_i^{2l} = 1$. The image of $U_\epsilon^\Q \rightarrow U_{\epsilon}^{\Q, {\rm res}}$ is thus another finite-dimensional Hopf algebra, denoted by $U_\e^{\rm fin}$ and introduced by Lusztig (\cite{Lusztig1,Lusztig2}, \cite[Chap.\,36]{Lusztig}; see also \cite[\S 9.3.B]{CP}). We will not use it.} 
\end{remark}

\indent A PBW basis of $u_\epsilon^\Q$ is formed by the elements
\begin{equation}\label{baseue}
E_{\beta_N}^{s_N}\cdots E_{\beta_1}^{s_1} K_1^{r_1}\cdots K_m^{r_m}F_{\beta_N}^{t_N}\cdots F_{\beta_1}^{t_1}
\end{equation}
where $0\leq t_j, s_j, r_i< l$ for all $i,j$; this fact is an adaptation of \cite[Th.\,8.3]{Lusztig2} to $\Gamma_\e^\Q$. Therefore
\begin{equation}\label{dlambdae}
\dim(u_\epsilon^\Q) = l^{\dim {\mathfrak g}}.
\end{equation}
Comparing the PBW basis of  $u_\e$ and $U_\e^\Q$ (see \S\ref{integralFormsUq}) we see that $u_\e^\Q  \cong U_{\epsilon}^\Q\big/[{\rm relations}\ \eqref{toto}]$. Following \cite{DCK,DCP}, for every sublattice $Q \subset \Lambda  \subset P$ introduce the central subalgebra
\begin{equation}\label{Z0Uedefsep25}
\mathcal{Z}_0(U_\e^\Lam) := \mathbb{C}\bigl[ E_\beta^l, \:F_\beta^l, \:K_\lambda^l \,\big| \, \beta \in \phi_+, \: \lambda \in \Lambda \bigr] \subset \mathcal{Z}(U_\e^\Lam).
\end{equation}
Put $\mathcal{Z}_0(U_\e^\Lam(\mathfrak{b}_\pm)) = \mathcal{Z}_0(U_\e^\Lam) \cap U_\e^\Lam(\mathfrak{b}_\pm)$, $\mathcal{Z}_0^+(U_\e^\Lam(\mathfrak{b}_\pm)) = \mathcal{Z}_0(U_\e^\Lam(\mathfrak{b}_\pm)) \cap {\rm Ker}(\varepsilon)$. Then the relations \eqref{toto} are rewritten as $u_\e^\Q \cong U_\e^\Q/\mathcal{Z}_0^+(U_\e^\Q)U_\e^\Q$ where  $\mathcal{Z}_0^+(U_\e^\Q):= \ker (\varepsilon) \cap \mathcal{Z}_0(U_\e^\Q)$. Since $\mathcal{Z}_0^+(U_\e^\Q)U_\e^\Q$ is a Hopf ideal, the surjection
\begin{equation}\label{defqmaps}
p\colon U_\e^\Q \twoheadrightarrow U_\e^\Q/\mathcal{Z}_0^+(U_\e^\Q)U_\e^\Q \overset{\sim}{\longrightarrow} u_\e^\Q
\end{equation}
 is a morphism of Hopf algebras. By definition we have 
\begin{equation}\label{injectionBecomesProjection}
\forall \, h \in U_A^\Q, \quad i(h)_{|\e} = i_\e(h_{\vert \e}) = p(h_{|\e}).
\end{equation}

\subsection{The quantum Frobenius morphism}\label{subsecDefFrobGamma} The morphism $\mathbb{F}\mathrm{r}_\e$ defined below is the version due to De Concini-Lyubashenko of Lusztig's Frobenius morphism $U_\e^{\Q,\mathrm{res}} \to U(\mathfrak{g})$, introduced in \cite{Lusztig1}; see Remarks \ref{comparaisonRestrictedVersions} and \ref{versionsSmallQG} for a comparison between $\Gamma_\e^\Q$ and $U_\e^{\Q,\mathrm{res}}$. 

The quantum Frobenius morphism is the epimorphism of Hopf algebras
\[ \mathbb{F}\mathrm{r}_\e :\Gamma_\e^\Q \to U(\mathfrak{g}) \]
given by \cite[Th.\,6.3]{DC-L}
\begin{equation}\label{etadef}
\begin{array}{l} \mathbb{F}\mathrm{r}_\e (E_i^{(p)}) = \left\lbrace\begin{array}{cl} \frac{e_i^{p/l}}{(p/l)!} & {\rm if}\ l \ {\rm divides}\ p\\ 0 & {\rm otherwise}\end{array}\right. \ ,\ \mathbb{F}\mathrm{r}_\e (F_i^{(p)}) = \left\lbrace\begin{array}{cl} \frac{f_i^{p/l}}{(p/l)!} & {\rm if}\ l \ {\rm divides}\ p\\ 0 & {\rm otherwise}\end{array}\right.\ \\ \\ \mathbb{F}\mathrm{r}_\e\bigl( (K_i;p)_{\e_i} \bigr) = \left\lbrace\begin{array}{cl} \frac{h_i(h_i-1)\ldots (h_i-(p/l)+1)}{(p/l)!} & {\rm if}\ l \ {\rm divides}\ p\\ 0 & {\rm otherwise}\end{array}\right.
\end{array}
\end{equation}
where $p\in \mn$, and $e_i$, $f_i$, $h_i$ are the standard Serre generators of $U(\mathfrak{g})$. We have $\mathbb{F}\mathrm{r}_\e(K_i^{\pm 1})=1$. $\mathbb{F}\mathrm{r}_\e$ commutes with the braid group action, so the formulas in the first line of \eqref{etadef} still hold true by replacing $E_i^{(p)}$, $F_i^{(p)}$ (resp. $e_i,f_i$) with the root vectors $E_\beta^{(p)}$, $F_\beta^{(p)}$ (resp. $e_\beta, f_\beta$) for all $\beta\in \phi^+$.

We are going to recall a number of properties of $\mathbb{F}\mathrm{r}_\e$ that will be crucial in the sequel. First note that in $\Gamma_\e^\Q$ we have 
\begin{equation}\label{relEnov25}
\begin{array}{c} (K_j; t+l)_{\e_j} = (K_j; t)_{\e_j}(K_j; l)_{\e_j} \text{for all } t\in \mn,\\[.3em]
\text{and } (K_j; t)_{\e_j} \text{ is a  polynomial of degree } t \text{ in } K_j \text{ if } 0\leq t<l.
\end{array}
\end{equation}
Also, for all $0\leq r_0<l$, $r_1\in \mn$, and $\beta\in \phi^+$ we have
\begin{equation}\label{relE2nov25}
E_\beta^{(r_0+lr_1)} = E_\beta^{(r_0)}\frac{(E_\beta^{(l)})^{r_1}}{r_1!}\ ,\ E_\beta^{(r_0)}\in \mc E_\beta^{r_0},
\end{equation}
and similarly by replacing $E_\beta$ with $F_\beta$ (see \cite{Lusztig0} and, e.g., \cite[Rk. 1 on page 300, and top of page 302]{CP}).

Now recall the basis elements of $\Gamma_A^\Q$ in \eqref{basegamma}. Consider the free $A$-submodules $J_l,\Gamma_l\subset \Gamma_A^\Q$ defined by
\begin{align} J_l := & A\left\lbrace\prod_{i=N}^1 E_{\beta_i}^{(p_i)}\left(\prod_{j=1}^m K_j^{-\sigma(t_j)}(K_j; t_j)_{q_j} \right)\prod_{k=N}^1 F_{\beta_k}^{(n_k)},\ \exists p_i,t_j,n_k \not\equiv 0 \!\!\pmod{l}\right\rbrace \label{Jldef}\\ \Gamma_l := & A\left\lbrace\prod_{i=N}^1 E_{\beta_i}^{(p_i)}\left(\prod_{j=1}^m K_j^{-\sigma(t_j)}(K_j; t_j)_{q_j} \right)\prod_{k=N}^1 F_{\beta_k}^{(n_k)},\ \mathrm{all}\ p_i,t_j,n_k \equiv 0 \!\!\pmod{l}\right\rbrace.\label{Gammaldef}\end{align}
We have $\Gamma_A^\Q = J_l \oplus \Gamma_l$ as a free $A$-module.

Specializing to $q=\e$, the observations \eqref{relEnov25}-\eqref{relE2nov25} imply that the vector space $(\Gamma_l)_\e$ has a basis given by the elements $\textstyle (\prod_{i=N}^1 (E_{\beta_i}^{(l)})^{p_i'})M(\prod_{k=N}^1 (F_{\beta_k}^{(l)})^{n_k'})$, where $p_i',n_k'\in \mn$ and $M$ is a polynomial in the elements $(K_j; l)_{\e_j}$, $j\in \{1,\ldots,m\}$. (Beware that $(\Gamma_l)_\e$ is not a subalgebra of $\Gamma_\e^\Q$ - this is immediate from the commutation relations between elements $E_{\beta}^{(l)}$ and $F_{\beta}^{(l)}$, $\beta\in \phi^+$ (see, e.g., \cite[Prop.\,3.23]{VY})).

The observations \eqref{relEnov25}-\eqref{relE2nov25} imply also that $\Gamma_\e^\Q$ is generated as an algebra by the elements $E_i,F_i, E_i^{(l)}, F_i^{(l)},K_i$ and $(K_i; l)_{\e_i}$, $i\in \{1,\ldots,m\}$. Therefore\begin{equation}\label{genGammae}\Gamma_\e^\Q \mathrm{\ is\ generated\ by\ } u_\e \mathrm{\ and\ } (\Gamma_l)_\e.\end{equation}
Moreover we have \cite[Th. 6.3 and Prop 6.5 (i)]{DC-L}: 
\begin{itemize}
\item[(i)] the map $\mathbb{F}\mathrm{r}_\e\colon (\Gamma_l)_\e \to U(\mathfrak{g})$ is bijective (and hence it is a linear isomorphism).
\item[(ii)] 
the Hopf ideal $\ker(\mathbb{F}\mathrm{r}_\e)$ is the space $(J_l)_\e$, it is generated as an ideal by the elements $E_i$, $F_i$ and $K_i-1$ ($i\in \{1,\ldots,m\}$), and it has a $\mc$-basis formed by the monomials
\begin{equation}\label{baseIidealoct24}
\left(\prod_{i=N}^1 E_{\beta_i}^{(p_i)}\right) M\left(\prod_{k=N}^1 F_{\beta_k}^{(n_k)}\right)
\end{equation}
such that there are divided powers $p_i, n_k$ which are not divisible by $l$, or $M$ is in the ideal of $\Gamma_\e^\Q(\mathfrak{h})$ generated by the elements $K_i-1$, $i\in \{1,\ldots,m\}$.
\end{itemize}
\begin{remark}\label{rkbaseuedec25}{\rm Because $S$ is an isomorphism of $\Gamma_\e^\Q(\mathfrak{b}_+)$, $\ker(\mathbb{F}\mathrm{r}_\e)$ has also a $\mc$-basis formed by the monomials $\textstyle S\!\left(\prod_{k=N}^1 E_{\beta_k}^{(n_k)}\right)M \left(\prod_{i=N}^1 F_{\beta_i}^{(p_i)}\right)$, with the same assumptions on the integers $p_i$, $n_k$ and on $M$ as in (ii) above.}
\end{remark}

It follows from \eqref{baseue} and \eqref{baseIidealoct24} that
\begin{equation}
\ker(\mathbb{F}\mathrm{r}_\e) = \text{two-sided ideal in } \Gamma_\e^\Q \text{ generated by } \ker(\varepsilon) \cap u_\e^\Q.\label{kerFrbasic}
\end{equation}
From this, the small quantum group $u_\epsilon^\Q$ is often called the {\it Frobenius-Lusztig kernel}. In particular the composition of maps   
\begin{equation}\label{seqmaps}
\xymatrix{ & u_\e^\Q\ar@{^{(}->}[r]^{{\rm incl.\ }} & \Gamma_\e^\Q \ar@{->>}[r]^{\mathbb{F}\mathrm{r}_\e\ } & U(\mathfrak{g})} 
\end{equation}
sends $h \in u_\e^\Q$ to $\varepsilon(h)1_{U(\mathfrak{g})}$.

\subsection {Specialisation of \texorpdfstring{${\mathcal O}_q(G)$}{the quantum function algebra}}\label{specialisOA} 

By specializing $q$ to $\e$, define ${\mathcal O}_\epsilon={\mathcal O}_A\otimes_A {\mathbb C}_{\epsilon}$. Since $\Oo_A$ is generated as an algebra by matrix coefficients of the modules ${}_A V_{\varpi_i}$ ($i=1,\ldots,m$) \cite{Lusztig2}, $\Oo_\e$ is generated as an algebra by matrix coefficients of the modules $V_{\varpi_i,\e}:={_AV_{\varpi_i}} \otimes_A \mathbb{C}_\e$ ($i=1,\ldots,m$). 

The Hopf pairing \eqref{pairingA} specializes to the Hopf pairing 
\begin{equation}\label{nondegpaireps}
\langle \text{-}, \text{-} \rangle_\epsilon^\Q :{\mathcal O}_\epsilon\times \Gamma_\epsilon^\Q\to {\mathbb C} \quad \text{such that } \forall \, \varphi \in \Oo_A, \:\: \forall \, x \in \Gamma_A^\Q, \:\: \langle \varphi_{|\e}, x_{|\e} \rangle_\e^\Q = \bigl( \langle \varphi, x \rangle^\Q_A \bigr)_{|\e}.
\end{equation}

\indent By specializing to $q=\e$ the pairings \eqref{tauA} one obtains non degenerate pairings (see \cite[\S 6]{DC-L}, and \cite[Th.\,2.20 and 2.29]{BR2} in the present notations)
\begin{equation}\label{taue}
\tau_\e\colon U_\e^\Pup(\mathfrak{b}_+)^{\rm cop}\times \Gamma_\e^\Q(\mathfrak{b}_-)\to \mc, \qquad \rho_\e\colon U_\e^\Pup(\mathfrak{b}_-)^{\rm cop}\times \Gamma_\e^\Q(\mathfrak{b}_+)\to \mc.
\end{equation}
By the observation \eqref{speSurjAndIso}, the isomorphisms \eqref{Phi+-A} specialize to isomorphisms 
\begin{equation}\label{Phi+-e}
\Phi_\e^\pm\colon \Oo_\e(B_\pm) \to U_\e^\Pup(\mathfrak{b}_\mp)^{\rm cop}
\end{equation}
satisfying, for every $\alpha\in \Oo_\e$ and $x_\pm\in \Gamma_\e^\Q(\mathfrak{b}_\pm)$,
\begin{equation}\label{identPhibrackete}
\rho_\e\bigl(\Phi_\e^+(\alpha),x_+\bigr) = \langle \alpha,x_+\rangle_\e^\Q, \qquad  \tau_\e\bigl(\Phi_\e^-(\alpha),x_-\bigr) = \langle \alpha,x_-\rangle_\e^\Q.
\end{equation}

\indent Recall from \eqref{choixED} the $l$-root of unity $\eD$ such that $\eD^D = \e$, and to which $\qD$ is specialized. 

\begin{lem}\label{nondegpairepsD}
The pairing $\langle \text{-},\text{-} \rangle_{\AD}^\Pup:{\mathcal O}_{\AD}\times \Gamma_{\AD}^\Pup\to A_\D$ from \eqref{defpairAD} specializes at $\qD = \eD$ to a non-degenerate Hopf pairing $\langle \text{-},\text{-} \rangle_{\e}^\Pup:{\mathcal O}_\epsilon\times \Gamma_\epsilon^\Pup\to {\mathbb C}$.
\end{lem}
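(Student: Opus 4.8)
The plan is to separate the two assertions hidden in the statement. That the $\AD$-valued pairing \eqref{defpairAD} specializes at $\qD=\eD$ is immediate: its values already lie in $\AD=\mc[\qD^{\pm1}]$, so tensoring with $\mc_{\eD}$ yields a well-defined $\mc$-bilinear Hopf pairing by the generalities of \S\ref{generalRmkSpe}. The real content is non-degeneracy of $\langle\text{-},\text{-}\rangle_\e^\Pup$, which I would prove block-wise. Both $\Oo_\e$ and $\Gamma_\e^\Pup$ carry a weight grading (the matrix-coefficient bigrading on the first, the adjoint $Q$-grading on the second), and the pairing vanishes between mismatched graded pieces, so the radical on either side is graded. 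Using the triangular decomposition $\Gamma_\e^\Pup=\Gamma_\e(\mathfrak n_-)\,\Gamma_\e^\Pup(\mathfrak h)\,\Gamma_\e(\mathfrak n_+)$ and the dual factorisation of $\Oo_\e$ through $\Oo_\e(B_\pm)$, I would arrange a PBW-type basis in which the Gram matrix of $\langle\text{-},\text{-}\rangle_\e^\Pup$ is block-triangular, with diagonal blocks given by the three triangular-factor pairings.

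For the two unipotent factors this reduces to facts already recorded: the specialized Drinfeld-type pairings $\tau_\e,\rho_\e$ of \eqref{taue} are non-degenerate, and through the isomorphisms $\Phi_\e^\pm$ of \eqref{Phi+-e} together with the identities \eqref{identPhibrackete} they compute exactly the restriction of $\langle\text{-},\text{-}\rangle_\e^\Pup$ to the $\mathfrak n_\pm$-factors, so these diagonal blocks are invertible. To make the block-triangular reduction rigorous at the integral level — so that ``non-degenerate over $\AD$'' actually survives specialization — I would invoke Lusztig's idempotent completion and its canonical basis $\dot{\mathbf B}$ recalled in App.\,\ref{appLusztigModif}: embedding $\Gamma_{\AD}^\Pup$ there and pairing $\dot{\mathbf B}$ against the Kashiwara--Lusztig matrix coefficients that generate $\Oo_{\AD}$, one obtains within each finite weight block $1_\mu\dot U 1_\lambda$ a Gram matrix that is unitriangular with diagonal entries a power of $\qD$ (a unit of $\AD$). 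Its determinant is then a unit, and by Lemma~\ref{lemmaSpecialisation} it stays nonzero after setting $\qD=\eD$.

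The one place where the arithmetic of $\e$ genuinely enters, and what I expect to be the main obstacle, is the Cartan factor, i.e.\ the interaction between the idempotents $1_\mu$ of the completion and the actual generators $K_\lambda$ ($\lambda\in P$) of $\Gamma_\e^\Pup(\mathfrak h)$. After specialization $K_\lambda$ acts on a weight-$\mu$ vector by $\eD^{D(\lambda,\mu)}$ (cf.\ \eqref{actionLi}), and since $\eD^l=1$ the detection of weights amounts to non-degeneracy of the pairing on $P/lP$ given by $(\lambda,\mu)\mapsto \eD^{D(\lambda,\mu)}$. This is governed by the determinant of the integral matrix $\bigl(D(\varpi_i,\varpi_j)\bigr)_{i,j}$, which is, up to sign, a product of the $d_i$, of $D$, and of $|P/Q|$; the assumptions \eqref{assumptionl} — $l$ odd, $\gcd(l,D)=1$, and $\gcd(l,3)=1$ in type $G_2$ — are precisely what force this determinant to be coprime to $l$ (note $|P/Q|=D$ except for $D_{2n}$, where $|P/Q|=4$ is coprime to the odd $l$, and $\gcd(l,d_i)=1$ throughout). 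Hence the Cartan diagonal block is invertible modulo $l$ as well, the full Gram determinant is a unit specializing to a nonzero scalar at $\eD$, and $\langle\text{-},\text{-}\rangle_\e^\Pup$ is non-degenerate. Finally, restricting the second variable to $\Gamma_\e^\Q$ recovers the non-degeneracy of $\langle\text{-},\text{-}\rangle_\e^\Q$ of \eqref{nondegpaireps}, since $\gcd(l,|P/Q|)=1$ makes the reduction $Q/lQ\to P/lP$ an isomorphism, leaving the Cartan block unchanged modulo $l$.
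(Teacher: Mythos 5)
Your toolkit is the right one --- the paper's own proof (deferred to App.\,\ref{appLusztigModif}, \S\ref{sec:pflemnondegpair}) also runs through Lusztig's completion and the canonical basis, and your handling of the unipotent blocks via $\tau_\e$, $\rho_\e$ and \eqref{identPhibrackete} is sound in spirit. But there is a genuine gap in your Cartan factor, and it sits exactly where the difficulty of the lemma lies. You describe the Cartan block as ``the interaction between the idempotents $1_\mu$ and the actual generators $K_\lambda$ ($\lambda\in P$) of $\Gamma_\e^\Pup(\mathfrak h)$'' and reduce its non-degeneracy to the finite pairing $P/lP\times P/lP\to\mathbb{C}^\times$, $(\lambda,\mu)\mapsto\eD^{D(\lambda,\mu)}$, controlled by $\det\bigl(D(\varpi_i,\varpi_j)\bigr)$ being prime to $l$. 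The $K_\lambda$ are, however, \emph{not} generators of $\Gamma_\e^\Pup(\mathfrak{h})$: the restricted Cartan algebra also contains the divided powers $(K_i;t)_{q_i}$ for all $t\in\mathbb{N}$, which survive specialization as genuinely new elements (for instance $(K_i;l)_{\e_i}$ is the element that $\mathbb{F}\mathrm{r}_\e$ sends to $h_i$, see \eqref{etadef} and \eqref{relEnov25}), so that $\Gamma_\e^\Pup(\mathfrak{h})$ is infinite-dimensional. The evaluation of $(K_i;t)_{\e_i}$ against a weight-$\mu$ vector is the specialized quantum binomial $\bigl(q_i^{(\check{\alpha}_i,\mu)};t_i\bigr)_{\e_i}$, which for $t\geq l$ is an unbounded function of $\mu$ (a $q$-Lucas computation gives essentially $\lfloor(\check{\alpha}_i,\mu)/l\rfloor$), hence does \emph{not} factor through $P/lP$. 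So the Cartan block is an infinite-dimensional non-degeneracy statement that your finite determinant argument cannot reach; your coprimality analysis only covers the $l^m$-dimensional span of the group-likes. This is precisely the point the paper must work for: in Lemma \ref{embedGammaeps}, the elements $Y(\mathbf{0},\mathbf{t},\mathbf{0})_\e K_{\mu_i}$ are detected by evaluations at weights via formula \eqref{mapYnov25}, using a lexicographic-minimality choice $\lambda=\sum_i t_i'\varpi_i$ (which kills or preserves the binomials $\bigl(q_i^{t_i'};t_i\bigr)_{\e_i}$) followed by the small linear system where $\gcd(l,d_i)=1$ enters.

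A second, closely related omission: you embed $\Gamma_{\AD}^\Pup$ into the completion over $\AD$ and then specialize, but injectivity of the specialized map $\Gamma_\e^\Pup\to\hat{{\mathbf{U}}}_\e$ is tacitly assumed and never proved. Specialization does not preserve injectivity (\S\ref{generalRmkSpe}) --- the inclusion $U_A^\Q\subset\Gamma_A^\Q$ degenerating at $q=\e$ to a map with enormous kernel (see \eqref{injectionBecomesProjection}) is the standing example in this very paper --- and proving this injectivity \emph{is} the missing Cartan analysis above. Once it is in place, the block-triangular Gram-matrix scaffolding becomes unnecessary: the paper simply uses that $\stackrel{.}{\mathbf{B}}{\!\!^*}$ and $\stackrel{.}{\mathbf{B}}$ are dual $A$-bases (Lusztig, recalled in \eqref{pairOAnov24}), so by Lemma \ref{lemmaSpecialisation}(2) the pairing $\mathcal{O}_\e\times\stackrel{.}{\mathbf{U}}_\e$ stays perfect after specialization, relates it to $\langle\text{-},\text{-}\rangle_\e^\Pup$ by \eqref{relpairnov25}, and observes that a right-kernel element, viewed in $\hat{{\mathbf{U}}}_\e$, has all its canonical-basis coefficients equal to its pairings with the $\varphi_b^\e$, hence vanishes; injectivity of the embedding then gives the result. (Incidentally, your claim that the blocks $1_\mu\stackrel{.}{\mathbf{U}}1_\lambda$ are finite-dimensional is also false --- they contain $\Gamma(\mathfrak{n}_+)_{\nu^+}\,1\,\Gamma(\mathfrak{n}_-)_{\nu^-}$ for infinitely many $(\nu^+,\nu^-)$ --- but this is minor compared with the Cartan gap.) I would restructure your argument around proving the injectivity of $\Gamma_\e^\Pup\hookrightarrow\hat{{\mathbf{U}}}_\e$, divided powers included, rather than around a triangular Gram matrix.
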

\begin{proof}
It is non degenerate on the left because if $\varphi\in {\mathcal O}_\epsilon$ vanishes on $\Gamma_\epsilon^\Q$, then it is the zero linear form (by definition of $\Oo_\e$). Our proof that it is non degenerate on the right uses Lusztig's modified quantum group and is thus postponed to Appendix \ref{appLusztigModif}.
\end{proof}

Consider the dual map $\mathbb{F}\mathrm{r}_\e^{*} :U(\mathfrak{g})^{*}\to (\Gamma_\e^\Q)^{* }$ of the quantum Frobenius morphism $\mathbb{F}\mathrm{r}_\e$. By restriction to $U(\mathfrak{g})^\circ$ it becomes a monomorphism of Hopf algebras $U(\mathfrak{g})^\circ = \mathcal{O}(G) \to (\Gamma_\e^\Q)^\circ$. Actually it holds $\mathbb{F}\mathrm{r}_\e^{*}(U(\mathfrak{g})^{\circ})\subset \Oo_\e = ((\Gamma_A^\Q)^\circ)_\e \subset  (\Gamma_\e^\Q)^{\circ}$, because for every $\lambda\in P^+$ the $\Gamma_\e^\Q$-module $V_{l\lambda,\e}:= {}_AV_{l\lambda} \otimes_A \mathbb{C}_\e$ is isomorphic to the pull-back by $\mathbb{F}\mathrm{r}_\e$ of the irreducible $U(\mathfrak{g})$-module $L(\lambda)$ with highest weight $\lambda$ (see \cite[Prop.\,6.4]{DC-L}, \cite[Prop.\,11.2.11]{CP} or \cite[\S III.7.2]{BG}). This leads to the following definition:
\begin{defi}
We denote
\begin{equation}\label{Z0def}
\mathcal{Z}_0(\mathcal{O}_\e):=\mathbb{F}\mathrm{r}_\e^{*}\bigl({\mathcal O}(G)\bigr),
\end{equation}
which is a Hopf subalgebra of $\mathcal{O}_\e(G)$.
\end{defi}
\noindent Here is another characterization of this subspace:
\begin{lem}\label{lemAutreDefZ0}
$\mathcal{Z}_0(\mathcal{O}_\e) \subset \mathcal{O}_\e$ is the orthogonal of $\ker(\mathbb{F}\mathrm{r}_\e) \subset \Gamma_\e^\Q$ with respect to the non-degenerate pairing $\langle \text{-},\text{-} \rangle^\Q_\e$. Said differently, for all $\varphi \in \mathcal{O}_\e$ we have
\[ \varphi \in \mathcal{Z}_0(\mathcal{O}_\e) \: \iff \: \bigl\langle \varphi,\ker(\mathbb{F}\mathrm{r}_\e) \bigr\rangle^\Q_\e = 0. \]
\end{lem}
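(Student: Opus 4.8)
The plan is to view $\Oo_\e$ as a subspace of the finite dual $(\Gamma_\e^\Q)^\circ$ through the left non-degeneracy of $\langle\cdot,\cdot\rangle^\Q_\e$, so that each $\varphi\in\Oo_\e$ is identified with the linear form $\langle\varphi,\cdot\rangle^\Q_\e$ on $\Gamma_\e^\Q$, and to recall that by definition $\mathbb{F}\mathrm{r}_\e^{*}(f)=f\circ\mathbb{F}\mathrm{r}_\e$, that is $\langle\mathbb{F}\mathrm{r}_\e^{*}(f),x\rangle^\Q_\e=f(\mathbb{F}\mathrm{r}_\e(x))$ for $f\in\Oo(G)=U(\mathfrak g)^\circ$ and $x\in\Gamma_\e^\Q$. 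The inclusion $\mathcal Z_0(\Oo_\e)\subseteq\ker(\mathbb{F}\mathrm{r}_\e)^\perp$ is then immediate: if $\varphi=\mathbb{F}\mathrm{r}_\e^{*}(f)$ and $x\in\ker(\mathbb{F}\mathrm{r}_\e)$ then $\langle\varphi,x\rangle^\Q_\e=f(\mathbb{F}\mathrm{r}_\e(x))=f(0)=0$. All the content of the lemma is in the reverse inclusion.

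For the converse I would take $\varphi\in\Oo_\e$ with $\langle\varphi,\ker(\mathbb{F}\mathrm{r}_\e)\rangle^\Q_\e=0$. Since $\mathbb{F}\mathrm{r}_\e\colon\Gamma_\e^\Q\to U(\mathfrak g)$ is a surjection with kernel $\ker(\mathbb{F}\mathrm{r}_\e)$, it identifies $U(\mathfrak g)$ with $\Gamma_\e^\Q/\ker(\mathbb{F}\mathrm{r}_\e)$, and the functional $\langle\varphi,\cdot\rangle^\Q_\e$, vanishing on $\ker(\mathbb{F}\mathrm{r}_\e)$, descends to a unique linear form $\bar\varphi\in U(\mathfrak g)^{*}$ with $\varphi=\mathbb{F}\mathrm{r}_\e^{*}(\bar\varphi)=\bar\varphi\circ\mathbb{F}\mathrm{r}_\e$. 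It then suffices to prove that $\bar\varphi$ lies in the \emph{finite} dual $U(\mathfrak g)^\circ=\Oo(G)$, for this yields $\varphi=\mathbb{F}\mathrm{r}_\e^{*}(\bar\varphi)\in\mathbb{F}\mathrm{r}_\e^{*}\bigl(\Oo(G)\bigr)=\mathcal Z_0(\Oo_\e)$, which is exactly the claim.

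The main point, and the only non-formal step, is precisely this membership $\bar\varphi\in U(\mathfrak g)^\circ$; I would establish it by transporting the finiteness of coefficient spaces across $\mathbb{F}\mathrm{r}_\e$. Because $\varphi\in\Oo_\e\subset(\Gamma_\e^\Q)^\circ$, its coefficient space $\Gamma_\e^\Q\rhd\varphi\lhd\Gamma_\e^\Q$ (the subcoalgebra it generates, with the coregular actions of \eqref{coregActions}) is finite-dimensional. Writing $\rightharpoonup,\leftharpoonup$ for the analogous coregular actions of $U(\mathfrak g)$ on $U(\mathfrak g)^{*}$, namely $(u\rightharpoonup\bar\varphi)(v)=\bar\varphi(vu)$ and $(\bar\varphi\leftharpoonup u')(v)=\bar\varphi(u'v)$, a direct computation using only that $\mathbb{F}\mathrm{r}_\e$ is an algebra morphism gives
\[ h\rhd\varphi\lhd h'=\mathbb{F}\mathrm{r}_\e^{*}\bigl(\mathbb{F}\mathrm{r}_\e(h)\rightharpoonup\bar\varphi\leftharpoonup\mathbb{F}\mathrm{r}_\e(h')\bigr)\qquad(h,h'\in\Gamma_\e^\Q). \]
As $\mathbb{F}\mathrm{r}_\e$ is surjective, $\mathbb{F}\mathrm{r}_\e(h)$ and $\mathbb{F}\mathrm{r}_\e(h')$ range over all of $U(\mathfrak g)$, and as $\mathbb{F}\mathrm{r}_\e^{*}$ is injective, this identity maps the coefficient space $U(\mathfrak g)\rightharpoonup\bar\varphi\leftharpoonup U(\mathfrak g)$ of $\bar\varphi$ bijectively onto $\Gamma_\e^\Q\rhd\varphi\lhd\Gamma_\e^\Q$; hence it is finite-dimensional. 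By the standard characterization of the finite dual (a form on $U(\mathfrak g)$ belongs to $U(\mathfrak g)^\circ$ iff its coefficient space is finite-dimensional, equivalently iff its kernel contains a two-sided ideal of finite codimension) we obtain $\bar\varphi\in U(\mathfrak g)^\circ=\Oo(G)$, completing the proof. The one place to keep track of is the bookkeeping of left versus right coregular actions through $\mathbb{F}\mathrm{r}_\e$ in the displayed identity; everything else is formal.
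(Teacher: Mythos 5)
Your proof is correct, but it takes a genuinely different route from the paper's. Both arguments begin identically: the inclusion $\mathcal{Z}_0(\Oo_\e)\subseteq\ker(\mathbb{F}\mathrm{r}_\e)^\perp$ is immediate, and for the converse both descend $\varphi$ along the surjection $\mathbb{F}\mathrm{r}_\e$ to a linear form $\bar\varphi\in U(\mathfrak{g})^*$ and reduce the problem to showing $\bar\varphi\in\Oo(G)$. The divergence is in that last step. The paper argues representation-theoretically: it invokes the sequence of functors $\mathcal{C}_{G}\to\mathcal{C}_{\e}\to\mathcal{C}_{u_\e^\Q}$ of \eqref{seqfunctorL} and its exactness in the middle (due to Andersen--Paradowski and Arkhipov--Gaitsgory, and resting on the structure of $\ker(\mathbb{F}\mathrm{r}_\e)$, namely the linear isomorphism $\mathbb{F}\mathrm{r}_\e\colon(\Gamma_l)_\e\to U(\mathfrak{g})$ and \eqref{kerFrbasic}), to conclude that elements of $\Oo_\e$ vanishing on $\ker(\mathbb{F}\mathrm{r}_\e)$ are combinations of matrix coefficients of modules pulled back from $\mathcal{C}_{G}$. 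You instead transport finiteness of coefficient spaces across $\mathbb{F}\mathrm{r}_\e$: your intertwining identity $h\rhd\varphi\lhd h'=\mathbb{F}\mathrm{r}_\e^{*}\bigl(\mathbb{F}\mathrm{r}_\e(h)\rightharpoonup\bar\varphi\leftharpoonup\mathbb{F}\mathrm{r}_\e(h')\bigr)$ is correct and indeed uses only that $\mathbb{F}\mathrm{r}_\e$ is an algebra morphism; combined with surjectivity of $\mathbb{F}\mathrm{r}_\e$ and injectivity of $\mathbb{F}\mathrm{r}_\e^{*}$ it identifies the coefficient space of $\bar\varphi$ with the (finite-dimensional) coefficient space of $\varphi$, and the standard characterization of the finite dual does the rest. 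Your argument is more elementary and self-contained at the Hopf-algebra level --- it needs no input from the module categories at roots of unity --- but it leans on the classical identification $U(\mathfrak{g})^\circ=\Oo(G)$, which holds precisely because $\mathfrak{g}$ is semisimple (Weyl complete reducibility plus integration of all finite-dimensional $\mathfrak{g}$-modules to the simply connected $G$); since the paper already takes this identification for granted when defining $\mathbb{F}\mathrm{r}_\e^{*}$ in \S\ref{specialisOA}, this is a legitimate ingredient. What the paper's heavier categorical route buys is reuse: the same exact sequence of functors is the mechanism behind the surjectivity of $\pi$ in Prop.\,\ref{pipropO}, which your finiteness-transport argument would not yield by itself.
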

\begin{proof} If $\varphi \in \mathcal{Z}_0(\mathcal{O}_\e)$ then by definition in \eqref{Z0def} there exists $\psi \in \mathcal{O}(G)$ such that $\varphi = \psi \circ \mathbb{F}\mathrm{r}_\e$ and hence $\varphi$ vanishes on $\ker(\mathbb{F}\mathrm{r}_\e)$. Conversely, assume that $\varphi \in \mathcal{O}_\e$ vanishes on $\ker(\mathbb{F}\mathrm{r}_\e)$. Since $\mathbb{F}\mathrm{r}_\e$ is surjective we can define a linear form $\psi \in U(\mathfrak{g})^*$ by $\psi\bigl( \mathbb{F}\mathrm{r}_\e(h) \bigr) = \varphi(h)$ for all $h \in \Gamma_\e^\Q$. We have to show that such a $\psi$ belongs to $\Oo(G)$. Denote by ${\mathcal C}_{u_\e^\Q}$ and ${\mathcal C}_\e$ the categories of finite dimensional $u_\e^\Q$-modules and $\Gamma_\e^\Q$-modules (they are weight modules in the sense of \cite[\S 11.2]{CP}, and modules of type $1$ since $K_i^l = 1$ in $\Gamma_\e^\Q$). Denote also by ${\mathcal C}_G$ the category of finite dimensional $G$-modules (or equivalently $U(\mathfrak{g})$-modules). The sequence \eqref{seqmaps} yields a sequence of functors
\begin{equation}\label{seqfunctorL}
\xymatrix{\mathcal{C}_{G} \ar[r]^{({\mathbb F}r_\e)^\sharp} & \mathcal{C}_{\e} \ar[r]^{{\rm res}^\sharp\ } & \mathcal{C}_{u_\e^\Q}}
\end{equation}
where ${\rm {\rm res}}^\sharp$ is the restriction of modules along the inclusion $u_\e^\Q\subset \Gamma_\e^\Q$, and $({\mathbb F}r_\e)^\sharp$ pulls-back modules along ${\mathbb F}r_\e$ (so, given $M$ in $\mathcal{C}_{G}$ the action of $\Gamma_\e^\Q$ on ${\mathbb F}r_\e^\sharp(M)$ is given by the formulas \eqref{etadef}). As observed in \cite[\S 3.11]{AP}, the sequence \eqref{seqfunctorL} is exact in the middle. That is, for any module $V$ in $\mathcal{C}_{G}$ the $u_\e^\Q$-module $({\rm {\rm res}}^\sharp \circ {\mathbb F}r_\e^\sharp)(V)$ is trivial (i.e., with action given by the counit), and for any $M$ in $\mathcal{C}_{\e}$ such that ${\rm res}^\sharp(M)$ is a trivial $u_\e^\Q$-module, there is a module $V$ in $\mathcal{C}_{G}$ such that $M={\mathbb F}r_\e^\sharp(V)$. The two claims follow from the linear isomorphism $\mathbb{F}\mathrm{r}_\e\colon (\Gamma_l)_\e \to U(\mathfrak{g})$ (see (i) above \eqref{baseIidealoct24}), together with \eqref{kerFrbasic} and \eqref{seqmaps} (see \cite[Prop.\,1.5 (2)]{ArGa} for some details). To prove the lemma, take in particular the modules in $\mathcal{C}_{\e}$ induced by specialization from modules in $\mathcal{C}_{A}$. Consider the elements of $\mathcal{O}_\e$, i.e. the span of matrix coefficients of such modules, vanishing on $\ker(\mathbb{F}\mathrm{r}_\e)$.  They define through ${\rm res}^\sharp$ linear combinations of matrix coefficients of trivial $u_\e^\Q$-modules, so they belong to $\mathrm{im}(\mathbb{F}\mathrm{r}_\e^*) \cong \mathcal{O}(G)$ by exactness of \eqref{seqfunctorL}. It follows that $\psi \in \mathcal{O}(G)$.
\end{proof}
\begin{remark}{\rm By the isomorphism $V_{l\lambda,\e}\cong ({\mathbb F}\mathrm{r}_\e)^\sharp(L(\lambda))$ discussed above \eqref{Z0def}, the functor $({\mathbb F}\mathrm{r}_\e)^\sharp$ is fully faithful. }\end{remark}

The following important theorem collects results from \cite{DC-L, Enr, BGS} (for the correspondence with the present notations, see \cite[Th.\,2.29]{BR2}).

\begin{teo}  \label{DCLteo1} (1) $\mathcal{Z}_0(\Oo_\e)$ is a central Hopf subalgebra of $\Oo_\e $, generated by certain matrix coefficients of simple $\Gamma_\e^\Q$-modules of highest weights $l\lambda$, $\lambda\in P_+$, and $Q\bigl(\mathcal{Z}(\Oo_\e)\bigr)$ is an extension of $Q\bigl(\mathcal{Z}_0(\Oo_\e)\bigr)$ of degree $l^m$.

\noindent (2) $\Oo_\e$ is a domain, and it is a free $\mathcal{Z}_0(\Oo_\e)$-module of rank $l^{dim \mathfrak{g}}$. Moreover the central localization $Q(\Oo_\e) := Q\bigl(\mathcal{Z}(\Oo_\e)\bigr)\otimes_{\mathcal{Z}(\Oo_\e)} \Oo_\e$ is a central simple algebra of PI degree $l^{N}$, where $N$ is the number of positive roots of $\mathfrak{g}$. 
\end{teo}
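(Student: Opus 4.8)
The plan is to assemble the cited structural inputs from \cite{DC-L,Enr,BGS} around the quantum Frobenius morphism $\mathbb{F}\mathrm{r}_\e$, organizing the dimension bookkeeping with the localization lemmas of \S\ref{subsecCentralLoc}; the translation to the present conventions is the one of \cite[Th.\,2.29]{BR2}.

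For the generation claim in (1) I would start from the isomorphism $V_{l\lambda,\e}\cong(\mathbb{F}\mathrm{r}_\e)^\sharp\bigl(L(\lambda)\bigr)$ recalled above \eqref{Z0def}. Since $\mathcal{O}(G)=U(\mathfrak{g})^\circ$ is spanned by the matrix coefficients of the simple modules $L(\lambda)$, $\lambda\in P_+$, the Hopf embedding $\mathbb{F}\mathrm{r}_\e^*$ carries these to matrix coefficients of the $V_{l\lambda,\e}$, which therefore generate $\mathcal{Z}_0(\Oo_\e)$; that it is a Hopf subalgebra was already recorded after Lemma \ref{lemAutreDefZ0}. Centrality I would deduce from the trivialization of the braiding on Frobenius pullbacks at $\eD$: on a weight vector of $V_{l\lambda,\e}$ the factor $\Theta$ of \eqref{thetadef} contributes $\eD^{D(\mu,\nu)}$ with $\mu,\nu\in lP$, which equals $1$ as $\eD^l=1$ and $D(\mu,\nu)\in\mz$, while the root vectors $E_\beta,F_\beta$ act by zero (the small quantum group acts through the counit, cf. \eqref{seqmaps}), so $\hat{R}$ of \eqref{Rhat} acts as the identity. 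Hence for $\varphi\in\mathcal{Z}_0(\Oo_\e)$ formula \eqref{coRmatOq} gives $\mathcal{R}_\e(\varphi\otimes\psi)=\varepsilon(\varphi)\varepsilon(\psi)=\mathcal{R}_\e(\psi\otimes\varphi)$ for all $\psi$, and the braided commutativity \eqref{braidedComm} then forces $\mathcal{Z}_0(\Oo_\e)$ into the center of $\Oo_\e$. The extension degree $\bigl[Q\bigl(\mathcal{Z}(\Oo_\e)\bigr):Q\bigl(\mathcal{Z}_0(\Oo_\e)\bigr)\bigr]=l^m$ I would obtain by transporting, through the non-degenerate pairing of Lemma \ref{nondegpairepsD}, the De Concini--Kac--Procesi description of $\mathcal{Z}(U_\e^\Pup)$.

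For part (2), freeness is cleanest from the exact sequence \eqref{seqmapsdualintro}: a Hopf algebra extension $1\to\mathcal{O}(G)\to\Oo_\e\to(u_\e^\Q)^*\to1$ with finite-dimensional cokernel is faithfully flat, so $\Oo_\e$ is free over $\mathcal{Z}_0(\Oo_\e)\cong\mathcal{O}(G)$ of rank $\dim(u_\e^\Q)^*=l^{\dim\mathfrak{g}}$ by \eqref{dlambdae}. The domain property follows from the filtered--graded structure of \cite{DC-L}, whose associated graded is a quantum affine space (equivalently, $\Oo_\e$ is an iterated Ore extension over $\mathcal{Z}_0(\Oo_\e)$). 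Finally the PI degree is a dimension count: by Lemma \ref{lemTrucsGenerauxQR}(3) we have $Q(\Oo_\e)\cong Q\bigl(\mathcal{Z}_0(\Oo_\e)\bigr)\otimes_{\mathcal{Z}_0(\Oo_\e)}\Oo_\e$, so freeness gives $\bigl[Q(\Oo_\e):Q\bigl(\mathcal{Z}_0(\Oo_\e)\bigr)\bigr]=l^{\dim\mathfrak{g}}=l^{2N+m}$ (using $\dim\mathfrak{g}=2N+m$); dividing by the central degree $l^m$ from (1) yields $\bigl[Q(\Oo_\e):Q\bigl(\mathcal{Z}(\Oo_\e)\bigr)\bigr]=l^{2N}=(l^N)^2$, and Lemma \ref{lemQRCSA} identifies $Q(\Oo_\e)$ as a central simple algebra of PI degree $l^N$.

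The main obstacle will be the two structural inputs feeding part (1): making precise the trivialization of the braiding on Frobenius pullbacks at $\eD$ (controlling both the $\Theta$ and $\hat{R}$ factors) so as to obtain centrality, and transporting the center computation of $U_\e^\Pup$ through the pairing of Lemma \ref{nondegpairepsD} to pin down the degree $l^m$. Once these are in hand, the domain property, freeness, and PI degree are routine consequences of the localization lemmas of \S\ref{subsecCentralLoc}.
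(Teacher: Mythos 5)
The paper does not actually prove Theorem \ref{DCLteo1}: it imports it from \cite{DC-L}, \cite{Enr} and \cite{BGS}, with \cite[Th.\,2.29]{BR2} as the dictionary. So the question is whether your reconstruction of those imported ingredients is sound. The parts you genuinely re-derive are correct: the generation statement via $V_{l\lambda,\e}\cong(\mathbb{F}\mathrm{r}_\e)^\sharp\bigl(L(\lambda)\bigr)$; the centrality argument via trivialization of $\Theta$ and $\hat{R}$ on Frobenius pullbacks, which is in effect Cor.\,\ref{relationsCoRMatAtEps}(2) of the paper proved directly (note only that for centrality you pair $\varphi\in\mathcal{Z}_0(\Oo_\e)$ against an \emph{arbitrary} $\psi$, so only one of the two weights lies in $lP$ --- this still kills the exponent since $\eD^{lD(\mu,\nu)}=1$); and the closing arithmetic $l^{\dim\mathfrak{g}}/l^m=l^{2N}=(l^N)^2$ using Lemmas \ref{lemTrucsGenerauxQR} and \ref{lemQRCSA} is the standard and correct way to extract the PI degree once the other inputs are in place.

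The two deep inputs, however, are not obtained by your proposed mechanisms, and both mechanisms fail as stated. (a) The degree $\bigl[Q\bigl(\mathcal{Z}(\Oo_\e)\bigr):Q\bigl(\mathcal{Z}_0(\Oo_\e)\bigr)\bigr]=l^m$ cannot be obtained by ``transporting'' the De Concini--Kac--Procesi description of $\mathcal{Z}(U_\e^\Pup)$ through the pairing of Lemma \ref{nondegpairepsD}: that pairing is a duality between $\Oo_\e$ and the \emph{restricted} form $\Gamma_\e^\Pup$, not the unrestricted $U_\e^\Pup$, and more fundamentally a duality pairing matches the product on one side with the \emph{coproduct} on the other, so it induces no relation whatsoever between the centers of the two algebras. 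The determination of $\mathcal{Z}(\Oo_\e)$ and of this degree is precisely Enriquez's theorem \cite{Enr} --- which is why \cite{Enr} appears among the citations --- and it requires a genuine computation (e.g. embedding $\Oo_\e$ into Borel parts via $\Phi_\e^\pm$), not a formal transport. (b) Freeness: faithful flatness (or even finite projectivity) over $\mathcal{O}(G)$ does \emph{not} imply freeness; the passage from projective to free is exactly the content of \cite{BGS}, and it uses constancy of the rank together with Marlin's computation of $K_0$ \cite{Marlin} and Bass cancellation --- the very argument the paper invokes in Cor.\,\ref{corZZ0}. Moreover the faithful flatness itself is a Schneider-type theorem, not a formal consequence of exactness of \eqref{seqmapsdualintro}; and leaning on that sequence risks circularity, since Prop.\,\ref{pipropO}(2) is proved in the paper using the dimension count of $\Oo_\e/\Oo_\e\mathcal{Z}_0^+(\Oo_\e)$ from \cite[\S III.7.7]{BG}, which belongs to the same circle of structure results. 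Since your part (2) consumes both (a) and (b), the proposal has genuine gaps at exactly the two places where the theorem is deep; these must be handled, as the paper does, by citation to \cite{Enr} and \cite{BGS}, or by reproducing their proofs.
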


\begin{remark}{\rm The sequence of inclusions ${\mathcal Z}_{0}({\mathcal O}_\epsilon)\subset {\mathcal Z}({\mathcal O}_\epsilon)\subset {\mathcal O}_\e$ is a PI Hopf triple in the sense of \cite{Br,BG}. This notion is a unifying concept which encompasses properties of $\Oo_\epsilon$ described in Theorem \ref{DCLteo1}.}
\end{remark}

\indent We saw in \eqref{injectionBecomesProjection} that the injection $i : U_A^\Q \to \Gamma_A^\Q$ specializes to a surjection $p : U_\e^\Q \to u_\e^\Q$. By very definition of $\langle \text{-},\text{-} \rangle_\e^\Q$, this may be written as:
\begin{equation}\label{fundamentalDegeneracy}
\forall \, \varphi \in \mathcal{O}_A, \:\: \forall \, h \in U_A^\Q \subset \Gamma_A^\Q, \quad \bigl(\langle \varphi, h \rangle_A^\Q\bigr)_{|\e} =  \bigl\langle \varphi_{|\e}, p(h_{|\e}) \bigr\rangle_{\e}^\Q.
\end{equation}

\indent Denote by $(u_\epsilon^\Q)^{*}$ the Hopf algebra dual to $u_\e^\Q$ and define a map $\pi: {\mathcal O}_\epsilon \rightarrow (u_\epsilon^\Q)^{*}$ by  restriction, {\it i.e.}
\begin{equation}\label{piPdef}
\forall \, \varphi \in {\mathcal O}_\epsilon, \:\: \forall \, x\in u_\epsilon^\Q \subset \Gamma_\epsilon^\Q, \quad \pi(\varphi)(x):=\langle \varphi,x\rangle_\e^\Q
\end{equation}
We have the sequence of maps 
\begin{equation}\label{seqmapsdual}
\xymatrix{ & \Oo(G)\ar@{^{(}->}[rr]^{\ (\mathbb{F}\mathrm{r}_\e)^{*}} & & \Oo_\e \ar[r]^{\pi} & (u_\epsilon^\Q)^{*}}.
\end{equation}
The following result is proved in e.g. \cite[Th.\,III.7.10]{BG}. A delicate point of the proof is the surjectivity of $\pi$, obtained by means of representation theory. We give a different proof, based on the canonical monomorphism $\Gamma_\e^\Q\hookrightarrow \hat{{\mathbf{U}}}_\e$ from Lemma \ref{embedGammaeps}.

\begin{prop}\label{pipropO} (1) The map $\pi$ is a surjective morphism of Hopf algebras.

\noindent (2) We have $\ker(\pi)={\mathcal O}_\epsilon {\mathcal Z}_{0}^+(\Oo_\e)$, where $\mathcal{Z}_0^+(\Oo_\e) := \ker \bigl(\varepsilon\vert_{\mathcal{Z}_0(\Oo_\e)} \bigr) = \ker(\varepsilon_{\Oo_\e}) \cap {\mathcal Z}_0({\mathcal O}_\e)$
\end{prop}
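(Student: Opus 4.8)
The plan is to base both parts on the non-degeneracy of the specialized pairing $\langle \text{-},\text{-} \rangle_\e^\Q$ (Lemma \ref{nondegpairepsD}, itself proved via the canonical embedding $\Gamma_\epsilon^\Q \hookrightarrow \hat{{\mathbf{U}}}_\e$) together with the freeness statement of Theorem \ref{DCLteo1}(2), so as to bypass the representation-theoretic argument for surjectivity. First I would record that $\pi$ is a morphism of Hopf algebras. Since $u_\epsilon^\Q \subset \Gamma_\epsilon^\Q$ is a Hopf subalgebra (Def.\,\ref{defSmallUeps}), restricting the Hopf pairing \eqref{nondegpaireps} in its second variable yields a Hopf pairing $\Oo_\e \times u_\epsilon^\Q \to \mathbb{C}$. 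Because $u_\epsilon^\Q$ is finite dimensional, this pairing carries exactly the same information as the linear map $\pi$, and the Hopf pairing axioms translate verbatim into the statement that $\pi$ is a Hopf algebra morphism; e.g. multiplicativity $\pi(\varphi \star \psi) = \pi(\varphi)\pi(\psi)$ follows from $\langle \varphi \star \psi, x \rangle_\e^\Q = \langle \varphi \otimes \psi, \Delta(x) \rangle_\e^\Q$ and the fact that the coproduct of $u_\epsilon^\Q$ is the restriction of that of $\Gamma_\epsilon^\Q$.

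For surjectivity I would consider the annihilator of $\mathrm{im}(\pi)$ inside $u_\epsilon^\Q$, namely $\{ x \in u_\epsilon^\Q : \langle \varphi, x \rangle_\e^\Q = 0 \text{ for all } \varphi \in \Oo_\e \}$. As $u_\epsilon^\Q \subset \Gamma_\epsilon^\Q$ and $\langle \text{-},\text{-} \rangle_\e^\Q$ is non-degenerate on the right (Lemma \ref{nondegpairepsD}), this annihilator is $0$; since $(u_\epsilon^\Q)^*$ is finite dimensional, this forces $\mathrm{im}(\pi) = (u_\epsilon^\Q)^*$, completing part (1).

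For part (2), I would first prove $\Oo_\e \mathcal{Z}_0^+(\Oo_\e) \subseteq \ker(\pi)$. For $\varphi = \mathbb{F}\mathrm{r}_\e^*(\psi) \in \mathcal{Z}_0(\Oo_\e)$, so that $\varphi = \psi \circ \mathbb{F}\mathrm{r}_\e$, and any $x \in u_\epsilon^\Q$, the composite \eqref{seqmaps} gives $\mathbb{F}\mathrm{r}_\e(x) = \varepsilon(x) 1$, whence
\[ \pi(\varphi)(x) = \psi\bigl( \mathbb{F}\mathrm{r}_\e(x) \bigr) = \varepsilon(x)\,\psi(1) = \varepsilon(x)\,\varepsilon_{\Oo_\e}(\varphi). \]
Thus $\pi$ restricted to $\mathcal{Z}_0(\Oo_\e)$ equals $\varepsilon_{\Oo_\e}(\text{-})\,1_{(u_\epsilon^\Q)^*}$, so it kills $\mathcal{Z}_0^+(\Oo_\e) = \ker(\varepsilon_{\Oo_\e}) \cap \mathcal{Z}_0(\Oo_\e)$; being multiplicative, $\pi$ then kills the whole ideal $\Oo_\e \mathcal{Z}_0^+(\Oo_\e)$, and so factors through a surjection $\overline{\pi} \colon \Oo_\e \big/ \Oo_\e \mathcal{Z}_0^+(\Oo_\e) \twoheadrightarrow (u_\epsilon^\Q)^*$. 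By Theorem \ref{DCLteo1}(2), $\Oo_\e$ is free of rank $l^{\dim \mathfrak{g}}$ over $\mathcal{Z}_0(\Oo_\e)$, hence
\[ \Oo_\e \big/ \Oo_\e \mathcal{Z}_0^+(\Oo_\e) \cong \Oo_\e \otimes_{\mathcal{Z}_0(\Oo_\e)} \bigl( \mathcal{Z}_0(\Oo_\e)/\mathcal{Z}_0^+(\Oo_\e) \bigr) \cong \Oo_\e \otimes_{\mathcal{Z}_0(\Oo_\e)} \mathbb{C} \]
has dimension $l^{\dim \mathfrak{g}}$, which equals $\dim (u_\epsilon^\Q)^* = l^{\dim \mathfrak{g}}$ by \eqref{dlambdae}. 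A surjection between vector spaces of equal finite dimension is an isomorphism, so $\overline{\pi}$ is bijective and therefore $\ker(\pi) = \Oo_\e \mathcal{Z}_0^+(\Oo_\e)$.

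The substantive input is the right non-degeneracy of $\langle \text{-},\text{-} \rangle_\e^\Q$ supplied by Lemma \ref{nondegpairepsD}; once it is granted, surjectivity is immediate and the rest of (2) is dimension bookkeeping. The one point I expect to require care is that Lemma \ref{nondegpairepsD} is stated for the $\Pup$-pairing on $\Gamma_\epsilon^\Pup$, so I must observe explicitly that restricting its second variable to $\Gamma_\epsilon^\Q \subset \Gamma_\epsilon^\Pup$ yields the right non-degeneracy of $\langle \text{-},\text{-} \rangle_\e^\Q$ used in the surjectivity argument.
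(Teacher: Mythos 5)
Your proof is correct, and it takes a genuinely different route at the two substantive steps. For surjectivity, the paper re-runs a canonical-basis argument: it embeds $u_\e^\Q \subset \Gamma_\e^\Q \subset \hat{\mathbf{U}}_\e$, extracts an invertible matrix $(A_{ib_j})$ by a hyperplane argument, and explicitly constructs elements $\psi_j \in \Oo_\e$ dual to a basis of $u_\e^\Q$. You instead invoke Lemma \ref{nondegpairepsD} and note that a proper subspace $\mathrm{im}(\pi) \subsetneq (u_\e^\Q)^*$ would yield a nonzero element of $u_\e^\Q$ in the right kernel of the pairing; since the proof of that lemma (App.\,\ref{appLusztigModif}) rests on the same embedding into $\hat{\mathbf{U}}_\e$ and the dual canonical basis, the underlying machinery is identical, but your packaging is shorter and avoids duplicating it inside the proposition, at the cost only of the restriction point you already flag (right non-degeneracy on $\Gamma_\e^\Pup$ trivially passes to the subspace $u_\e^\Q \subset \Gamma_\e^\Q \subset \Gamma_\e^\Pup$, since the $\Pup$-pairing restricts to the $\Q$-pairing there). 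For the kernel computation, the paper verifies $\pi\bigl(\mathcal{Z}_0^+(\Oo_\e)\bigr)=0$ by testing against the generators $E_i,F_i,K_i$ of $u_\e^\Q$, whereas your use of \eqref{seqmaps} gives at once the stronger identity $\pi(\varphi)=\varepsilon_{\Oo_\e}(\varphi)\varepsilon$ on all of $\mathcal{Z}_0(\Oo_\e)$ (a fact the paper later extracts from this proposition anyway, in Cor.\,\ref{relationsCoRMatAtEps}(2)); and where the paper cites \cite[\S III.7.7]{BG} for $\dim\bigl(\Oo_\e/\Oo_\e\mathcal{Z}_0^+(\Oo_\e)\bigr)=l^{\dim(\mathfrak{g})}$, you derive this internally from the freeness statement of Theorem \ref{DCLteo1}(2) via $\Oo_\e/\Oo_\e\mathcal{Z}_0^+(\Oo_\e) \cong \Oo_\e \otimes_{\mathcal{Z}_0(\Oo_\e)} \mathbb{C}$, making the dimension count self-contained given results already recalled in the paper. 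The overall skeleton of part (2) — kill the ideal, pass to the quotient, conclude by equality of finite dimensions — is the same in both proofs.
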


\begin{proof}
(1) Showing that $\pi$ is Hopf algebra morphism is trivial, since $u_\epsilon^\Q$ is a sub-Hopf algebra of $\Gamma_\epsilon^\Q$ and $\langle \text{-}, \text{-} \rangle_{\e}^\Q$ is a Hopf pairing. For surjectivity, let $\{x_i\}_{1 \leq i \leq n}$ be a basis of $u_\epsilon^\Q$. By the inclusions $u_\epsilon^\Q \subset \Gamma_\epsilon^\Q \subset \hat{{\mathbf{U}}}_\e$ in Lemma \ref{embedGammaeps}, we can write
$\textstyle x_i=\sum_{b\in \stackrel{.}{\mathbf{B}}}A_{ib} b_\epsilon$ with $A_{ib}\in {\mathbb C}.$ The family $(x_i)$ being free, if $(\lambda_i)\in \mc^n$ satisfies $\textstyle \sum_{i}\lambda_i A_{ib}=0$ for all $b \in \:\stackrel{.}{\mathbf{B}}$, then $\lambda_1=\ldots=\lambda_n=0$. Therefore $\textstyle\bigcap_{b\in \stackrel{.}{\mathbf{B}}}H_b=\{0\}$, where $H_b \subset  {\mathbb C}^{n}$ is the hyperplane with equation $\textstyle \sum_{i}\lambda_i A_{ib}=0$. We can extract exactly $n$ hyperplanes such that $\textstyle \bigcap_{j=1}^nH_{b_j}=\{0\},$ which amounts to say that the matrix $(A_{ib_j})$  is invertible.
Let $\overline{A}$ be the inverse of this matrix, and recall the canonical basis $\stackrel{.}{\mathbf{B}}{\!\!^*} = \{\varphi_b \}_{b \in \stackrel{.}{\mathbf{B}}}$ of $\mathcal{O}_A$, which is dual to $\stackrel{.}{\mathbf{B}}$. Then the elements $\psi_j\in {\mathcal O}_\epsilon$ defined by $\textstyle \psi_j=\sum_{k=1}^n \overline{A}_{b_k,j} \varphi_{b_k}^{\epsilon} \in \mathcal{O}_{\epsilon}$ satisfy $\langle \psi_j, x_i\rangle_\epsilon^\Q =\delta_{ij}$. Hence $\bigl\{\pi(\psi_j) \bigr\}_{1 \leq j \leq n}$ is a basis of $(u_\epsilon^\Q)^*$ and $\pi$ is surjective.

(2) We first show that $\mathcal{Z}_0^+(\mathcal{O}_\e) \subset \ker(\pi)$. Because $\pi$ is an algebra morphism it is sufficient to show that $\pi\bigl( {\mathcal Z}_{0}^+(\Oo_\e) \bigr)=\{0\}$. Let $\varphi\in {\mathcal Z}_{0}^+(\Oo_\e)$. Because ${\mathcal Z}_0({\mathcal O}_\e)$ is a sub-Hopf algebra of ${\mathcal O}_\epsilon$ and $\langle \text{-},\text{-} \rangle_{\e}^\Q$ is a Hopf pairing, it is sufficient to show that $\langle \varphi ,\text{-} \rangle_{\e}^\Q$ vanishes on the generators $E_i,F_i,K_i$ of $u_\epsilon^\Q$. We have $\varphi=\mathbb{F}\mathrm{r}_\e^{*}(\psi)$ where $\psi \in {\mathcal  O}(G)$ satisfies $\varepsilon(\psi) = \psi(1)=0$ (because $\mathbb{F}\mathrm{r}_\e^{*}$ is an injective morphism of Hopf algebras). By definitions $\langle \mathbb{F}\mathrm{r}_\e^{*}(\psi), E_i\rangle_{\e}^\Q=\langle \psi , \mathbb{F}\mathrm{r}_\e(E_i)\rangle=0$, similarly $\langle \mathbb{F}\mathrm{r}_\e^{*}(\psi), F_i\rangle_{\e}^\Q=0$, and $\langle \mathbb{F}\mathrm{r}_\e^{*}(\psi), K_i\rangle_{\e}^\Q=\langle \psi , \mathbb{F}\mathrm{r}_\e(K_i)\rangle = \psi(1) =0$ as desired.

Hence we can define the quotient morphism $\bar{\pi}: {\mathcal O}_\e/{\mathcal O}_\epsilon {\mathcal Z}_{0}^+(\Oo_\e) \rightarrow (u_\epsilon^\Q)^*$, which is still surjective. From \cite[\S III.7.7]{BG}, the algebra ${\mathcal O}_\e/{\mathcal O}_\epsilon {\mathcal Z}_{0}^+(\Oo_\e)$ has dimension $l^{\dim(\mathfrak g)}$, which is exactly $\dim\bigl( (u_\e^\Q)^*\bigr)$. By equality of dimensions, we deduce that $\bar{\pi}$ is an isomorphism, proving the claim.
\end{proof}

\subsection{Quasitriangular structures at roots of unity}\label{sec:qtsroot1}

Our goal here is to relate the specialization at $\e$ of the co-R-matrix on $\mathcal{O}_A$ (\S\ref{integralO}) with the usual R-matrix on the small quantum $u_\e^\Q$, which by finite-dimensionality is equivalent to a co-R-matrix on $(u_\e^\Q)^*$. We start by recalling this standard quasitriangular structure of $u_\e^\Q$.

\subsubsection{R-matrix for the small quantum group}\label{subsecRmatSmallQG}
\indent We explain the R-matrix of $u_\e^\Q$ by means of the Drinfeld double construction. Recall that $\tau$ denotes the Drinfeld pairing \eqref{deftau} and $\tau_{\e} : U_\e^\Pup(\mathfrak{b}_+)^{\rm cop}\times \Gamma_\e^\Q(\mathfrak{b}_-)\to \mc$ is its specialized version \eqref{taue}. Recall also the notation \eqref{Z0Uedefsep25}.
\begin{lem}\label{basicqpairingnov24} For all $x\in \mathcal{Z}_0^+(U_\e^\Q(\mathfrak{b}_+))$ and $h\in u_\e^\Q(\mathfrak{b}_-) \subset \Gamma_\e^\Q(\mathfrak{b}_-)$ we have $\tau_{\e}(x,h)=0$. Therefore we have a well-defined pairing
$$\bar{\tau}_{\e} : u_\e^\Q(\mathfrak{b}_+)^{\rm cop}\times u_\e^\Q(\mathfrak{b}_-)\to \mc \quad \text{given by } \:\: \bar{\tau}_{\e}(p(x),h) = \tau_{\e}(x,h).$$
Moreover $\bar{\tau}_{\e}$ is a non degenerate Hopf pairing.
\end{lem}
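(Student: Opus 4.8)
The plan is to treat the three assertions in turn: the vanishing and the descent are fairly direct consequences of the explicit shape of the Drinfeld pairing, while non-degeneracy is where the real work lies.

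\textbf{Vanishing.} First I would record the ``block-diagonal'' shape of $\tau_\e$ coming from \eqref{expressionRho}: writing $x_+\in U_\e^\Pup(\mathfrak b_+)$ and $h\in\Gamma_\e^\Q(\mathfrak b_-)$ in PBW form, the pairing $\tau_\e(x_+,h)=\rho_\e(S(h),x_+)$ (cf. \eqref{deftau}) is nonzero only when the nilpotent multidegrees — the exponents of the root vectors $E_\beta$ resp. $F_\beta$ — coincide, while on the Cartan part it is governed by $\rho(K_\lambda,K_\mu)=\qD^{D(\lambda,\mu)}$ from \eqref{defrho}. From this I claim that for every $z\in\mathcal Z_0(U_\e^\Q(\mathfrak b_+))$ and every $h\in u_\e^\Q(\mathfrak b_-)$ one has $\tau_\e(z,h)=\varepsilon(z)\varepsilon(h)$. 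Indeed, a PBW monomial of $\mathcal Z_0(U_\e^\Q(\mathfrak b_+))$ (see \eqref{Z0Uedefsep25}) has all $E$-exponents divisible by $l$; if some exponent is nonzero it is $\ge l$, hence strictly larger than every $F$-exponent occurring in $h\in u_\e^\Q(\mathfrak b_-)$, whose PBW monomials \eqref{baseue} have exponents $<l$, so the degrees cannot match and the pairing vanishes — consistent with $\varepsilon(z)=0$ in that case. For a purely Cartan monomial $K_\nu^l$ one computes $\tau_\e(K_\nu^l,K_\mu)=\eD^{-Dl(\mu,\nu)}=1$ because $D(\mu,\nu)\in\mz$ and $\eD^l=1$, so again $\tau_\e(K_\nu^l,h)=\varepsilon(h)=\tau_\e(1,h)$. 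Restricting to the augmentation ideal $\mathcal Z_0^+(U_\e^\Q(\mathfrak b_+))$, where $\varepsilon(z)=0$, yields the first statement $\tau_\e(x,h)=0$.

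\textbf{Descent and Hopf property.} For well-definedness I would use that $\ker\!\big(p|_{U_\e^\Q(\mathfrak b_+)}\big)=\mathcal Z_0^+(U_\e^\Q(\mathfrak b_+))\,U_\e^\Q(\mathfrak b_+)$, the Borel-restricted form of the description of $p$ in \eqref{defqmaps}, which follows from the triangular decomposition and the fact that $u_\e^\Q$ respects it. Given $x=z\,y$ with $z\in\mathcal Z_0^+(U_\e^\Q(\mathfrak b_+))$ and $y\in U_\e^\Q(\mathfrak b_+)$, the Hopf-pairing identity $\tau_\e(zy,h)=\sum_{(h)}\tau_\e(z,h_{(1)})\,\tau_\e(y,h_{(2)})$, together with the fact that $u_\e^\Q(\mathfrak b_-)$ is a Hopf subalgebra (so both coproduct components lie in $u_\e^\Q(\mathfrak b_-)$) and the vanishing just proved, forces $\tau_\e(x,h)=0$. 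Hence $\bar\tau_\e(p(x),h):=\tau_\e(x,h)$ is well-defined on $u_\e^\Q(\mathfrak b_+)^{\rm cop}\times u_\e^\Q(\mathfrak b_-)$. That $\bar\tau_\e$ is a Hopf pairing is then immediate, since all the defining identities are obtained from those of $\tau_\e$ by applying the Hopf algebra morphism $p$ in the first variable.

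\textbf{Non-degeneracy.} This is the step I expect to be the main obstacle, and I would prove it by exhibiting the matrix of $\bar\tau_\e$ in the PBW bases \eqref{baseue} of $u_\e^\Q(\mathfrak b_\pm)$. Passing through the partner pairing $\bar\rho_\e$ (the analogous descent of $\rho_\e$ obtained by the previous two steps with the roles of $\pm$ swapped, related by $\bar\tau_\e(\xi,\eta)=\bar\rho_\e(S(\eta),\xi)$ with $S$ bijective) lets me use the \emph{exactly} diagonal formula \eqref{expressionRho}: the matrix is block-diagonal in the nilpotent multidegree $\mathbf a=\mathbf b$, and each diagonal block is a nonzero scalar multiple — by the constants $q_{\beta_i}^{-a_i(a_i-1)/2}[a_i]_{q_{\beta_i}}!/(q_{\beta_i}-q_{\beta_i}^{-1})^{a_i}$ evaluated at $\e$, which are nonzero since $a_i<l$ — of the Cartan pairing $(\lambda,\mu)\mapsto\e^{(\lambda,\mu)}$ on $Q/lQ\times Q/lQ$. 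This Cartan pairing is perfect because the Gram matrix $\big((\alpha_i,\alpha_j)\big)=(d_ia_{ij})$ has determinant $(\prod_i d_i)\,|P/Q|$, invertible modulo $l$: indeed $\gcd(l,d_i)=1$ for all $i$, and $\gcd(l,|P/Q|)=1$ since $\gcd(l,D)=1$ while the exponent $D$ and the order $|P/Q|$ share the same prime divisors. Any character twist coming from the weight of $\mathbf a$ merely rescales rows and columns, leaving non-degeneracy intact. Hence every diagonal block is non-singular, the whole matrix is non-singular, and $\bar\tau_\e$ is non-degenerate. The delicate points to get right are the precise diagonal structure of $\tau_\e$ after applying the antipode, and the coprimality bookkeeping ensuring the Cartan form remains non-degenerate after reduction mod $l$.
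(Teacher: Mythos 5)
Your proposal is correct, and the first two steps (vanishing and descent) run along essentially the same lines as the paper's: the paper checks the vanishing on generators and invokes the Hopf-ideal/Hopf-pairing structure, which is just a compressed version of your PBW-orthogonality computation, and the Hopf property of $\bar{\tau}_\e$ is in both cases immediate from $p$ being a Hopf morphism. The real divergence is in the non-degeneracy step. The paper argues softly: $\tau_\e$ is already known to be non-degenerate (this is the cited specialization result \eqref{taue} from De Concini--Lyubashenko and \cite{BR2}), and since $p$ is surjective this gives non-degeneracy of $\bar{\tau}_\e$ on the right; then the equality $\dim u_\e^\Q(\mathfrak{b}_+) = \dim u_\e^\Q(\mathfrak{b}_-) = l^{N+m}$ forces the injective map $u_\e^\Q(\mathfrak{b}_-) \to u_\e^\Q(\mathfrak{b}_+)^*$ to be an isomorphism, whence two-sided non-degeneracy. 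Your route instead computes the Gram matrix of $\bar{\rho}_\e$ on the PBW bases \eqref{baseue} (correctly routing around the antipode via $\bar{\tau}_\e(\xi,\eta)=\bar{\rho}_\e(S(\eta),\xi)$, which avoids the fact that $S$ of a PBW monomial is not a PBW monomial), reducing everything to the invertibility mod $l$ of the symmetrized Cartan matrix $(d_ia_{ij})$; your observation that $\gcd(l,D)=1$ implies $\gcd(l,\lvert P/Q\rvert)=1$ because exponent and order of a finite abelian group share the same prime divisors is correct and is exactly the right arithmetic input. What the paper's argument buys is brevity and independence from explicit formulas; what yours buys is self-containedness (it does not rely on the nontrivial cited non-degeneracy of $\tau_\e$ at a root of unity), two-sided non-degeneracy without a dimension count, and a transparent accounting of precisely where the hypotheses \eqref{assumptionl} on $l$ enter. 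Both are valid proofs.
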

\begin{proof}
The first claim is immediate on generators, and the fact that $\mathcal{Z}_0^+(U_\e^\Q(\mathfrak{b}_+))U_\e^\Q(\mathfrak{b}_+)$ is a Hopf ideal. It implies that $\bar{\tau}_{\e}$ is well-defined; it is a Hopf pairing since $\tau_{\e}$ is a Hopf pairing and $p : U_\e^\Q \to u_\e^\Q$ is a Hopf morphism. Because $\tau_{\e}$ is non degenerate and $p$ is surjective, $\bar{\tau}_{\e}$ is non degenerate on the right, and therefore yields an injective map $u_\e^\Q(\mathfrak{b}_-) \rightarrow u_\e^\Q(\mathfrak{b}_+)^*$. By equality of dimensions of the source and target spaces this map is an isomorphism. Therefore $\bar{\tau}_{\e}$ is a non degenerate Hopf pairing.
\end{proof}

\noindent We also let $\bar{\rho}_{\e} : u_\e^\Q(\mathfrak{b}_-)^{\rm cop}\times u_\e^\Q(\mathfrak{b}_+)\to \mc$ be the pairing defined by $$\bar{\rho}_{\e}(x,y) := \bar{\tau}_{\e}(y,S^{-1}(x)) = \bar{\tau}_{\e}(S(y),x).$$
Since $S$ is an isomorphism, this pairing is non degenerate. Note that we also have, for every $x\in U_\e(\mathfrak{b}_-)$, $h\in u_\e(\mathfrak{b}_+)$:
\begin{equation}\label{relrhobar}
\bar{\rho}_{\e}(p(x),h)  = \rho_{\e}(x,h)
\end{equation}
with $\rho_\e : U_\e^\Pup(\mathfrak{b}_-)^{\rm cop} \times \Gamma_\e^\Q(\mathfrak{b}_+)$ the specialization of $\rho$ in \eqref{defrho}.
\smallskip

The Drinfeld double $\mathcal{D}(\bar{\tau}_\e)$ is the coalgebra $u_\e^\Q(\mathfrak{b}_-) \otimes u_\e^\Q(\mathfrak{b}_+)$ endowed with the product 
\begin{equation}\label{DrinfeldDoubleRel}
\quad (x^- \otimes x^+)(y^- \otimes y^+) = \sum_{(x^+),(y^-)} \bar\tau_\e\bigl( S(x^+_{(1)}), y^-_{(1)} \bigr) \bar\tau_\e\bigl( x^+_{(3)}, y^-_{(3)} \bigr)x^-y^-_{(2)} \otimes x^+_{(2)}y^+.
\end{equation}
Then $u_\e^\Q$ is isomorphic to the quotient of $\mathcal{D}(\bar{\tau}_\e)$ by the relations $1 \otimes K_i - K_i \otimes 1$ for all $i$. As for any Drinfeld double, the R-matrix of $\mathcal{D}(\bar{\tau}_\e)^{\otimes 2}$ is $\widetilde{R} = \textstyle \sum_i (1 \otimes h^+_i) \otimes (h_i^- \otimes 1)$, where $(h_i^\pm)$ are a pair of dual bases for $\bar{\tau}_\e$. The image of $\widetilde{R}$ through the quotient morphism $\mathcal{D}(\bar{\tau}_\e) \twoheadrightarrow u_\e^\Q$ is denoted $\overline{R} \in (u_\e^\Q)^{\otimes 2}$ and is the R-matrix that we fix for $u_\e^\Q$; for an explicit formula see \cite[App.\,A]{Lyub95}. We have morphisms of Hopf algebras $\bar \Phi^\pm_\e : (u_\e^\Q)^* \to (u_\e^\Q)^{\rm cop}$ given by
\begin{equation}\label{PhiPmeSmall}
\bar \Phi^+_\e(\varphi) = (\varphi \otimes \mathrm{id})(\overline{R}), \qquad \bar \Phi_\e^-(\varphi) = (\mathrm{id} \otimes \varphi)(\overline{R}^{-1}).
\end{equation}
By the very definition of $\overline{R}$, they factorize through $u_\e^\Q(\mathfrak{b}_\pm)^*$ and take values in $u_\e(\mathfrak{b}_\mp)$. Moreover they are easily seen to satisfy
\begin{equation}\label{wellnowndualityR}
\bar{\rho}_{\e}\bigl(\bar\Phi_\e^+(\alpha), h_+\bigr) = \alpha_+(h_+), \quad  \bar{\tau}_{\e}\bigl(\bar\Phi_\e^{-}(\alpha), h_-\bigr) = \alpha_-(h_-)
\end{equation}
for every $\alpha\in (u_\e^\Q)^*$ and $h_\pm\in u_\e^\Q(\mathfrak{b}_\pm)$. It follows that they descend to isomorphisms of Hopf algebras $\bar{\Phi}^\pm_\e : u_\e^\Q(\mathfrak{b}_\pm)^* \overset{\sim}{\to} u_\e^\Q(\mathfrak{b}_\mp)^{\rm cop}$.

\subsubsection{Relating (co)quasitriangular structures}\label{sectionlemmas} 

In this section we prove results that will be used in \S\ref{subsecStructResLgnEps}.

\smallskip

Recall the duality pairing $\langle \text{-}, \text{-} \rangle_{\AD}^\Pup : \Oo_{\AD} \times \Gamma_{\AD}^\Pup \to \AD$ from \eqref{defpairAD} which extends $\langle \text{-},\text{-} \rangle_A^\Q : \Oo_A \times \Gamma_A^\Q \to A$. The $\AD$ subscripts indicate extension of ground ring to $\mathbb{C}[\qD^{\pm 1}]$; recall that we specialize $\qD$ to the $l$-th root of unity $\eD$ fixed in \eqref{choixED}.
\begin{lem}\label{evalinnerprodA}
1. The surjection $p : U_\e^\Q \to u_\e^\Q$ in \eqref{defqmaps} can be extended to a morphism of Hopf algebras $\widetilde{p} : U_\e^\Pup \to u_\e^\Q$ (note that the target is unchanged), given by $\textstyle \widetilde{p}(L_i) = \prod_{s=1}^m (K_s^{\overline{D}})^{D\overline{a}_{si}}$ and $\textstyle \widetilde{p}(E_i)=E_i$, $\textstyle \widetilde{p}(F_i)=F_i$, where $(\overline{a}_{ij})$ is the inverse of the Cartan matrix $(a_{ij})$.
\\2. Let $\widetilde{i} : U_A^\Pup \hookrightarrow \Gamma_A^\Pup$ be the natural inclusion. For every $\alpha\in \Oo_A$ and $h\in U_A^\Pup$ we have
\[ \bigl( \bigl\langle \alpha,\widetilde{i}(h) \bigr\rangle_{\!\AD}^\Pup \bigr)_{\vert \eD} = \bigl\langle \alpha_{|\e}, \widetilde{p}(h_{|\e}) \bigr\rangle_\e^\Q = \pi(\alpha_{|\e})\bigl( \widetilde{p}(h_{|\e}) \bigr). \]
3. The pairings $\bar{\tau}_\e : u_\e^\Q(\mathfrak{b}_+)^{\rm cop} \times u_\e^\Q(\mathfrak{b}_-) \to \mathbb{C}$ from Lem.\,\ref{basicqpairingnov24} and $\tau_{\e} : U_\e^\Pup(\mathfrak{b}_+)^{\rm cop}\times \Gamma_\e^\Q(\mathfrak{b}_-)\to \mc$ from \eqref{identPhibrackete} satisfy
\[ \forall \, x \in U_\e^\Pup(\mathfrak{b}_+), \quad \forall \, h \in u_\e^\Q(\mathfrak{b}_-) \subset \Gamma_ \e^\Q(\mathfrak{b}_-), \quad \bar{\tau}_\e\bigl( \widetilde{p}(x),h \bigr) = \tau_{\e}(x,h). \]
The same property holds with the pairings $\rho_{\e}$ and $\bar{\rho}_\e$.
\end{lem}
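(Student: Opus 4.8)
The plan is to build the Hopf morphism $\widetilde p$ of part 1 first, and then to obtain parts 2 and 3 by transporting the corresponding $\Q$-level facts (the map $p$, the pairing $\langle\text{-},\text{-}\rangle_\e^\Q$, and $\bar\tau_\e$) along $\widetilde p$, checking compatibility on generators. For part 1, recall that $U_\e^\Pup$ is generated by $E_i,F_i$ and the grouplikes $K_\lambda$ ($\lambda\in P$), while the Cartan part of $u_\e^\Q$ is $\mathbb{C}[Q/lQ]$ by the relation $K_i^l=1$ in \eqref{toto}. Since $\mathrm{gcd}(D,l)=1$, Bézout shows that $l\nu\in Q$ and $D\nu\in Q$ force $\nu\in Q$, so the inclusion $Q\hookrightarrow P$ induces an isomorphism $Q/lQ\xrightarrow{\sim}P/lP$; concretely, for $\lambda\in P$ the element $D\overline{D}\lambda=\overline{D}(D\lambda)$ lies in $Q$ (as $D\lambda\in Q$), and I would set $\widetilde p(K_\lambda):=K_{D\overline{D}\lambda}\in u_\e^\Q$ together with $\widetilde p(E_i)=E_i$, $\widetilde p(F_i)=F_i$. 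I would then check that $\widetilde p$ extends $p$ (for $\lambda\in Q$ one has $D\overline{D}\lambda-\lambda\in lQ$ since $D\overline{D}\equiv1\pmod l$, hence $K_{D\overline{D}\lambda}=K_\lambda$ in $u_\e^\Q$) and is a morphism of Hopf algebras. Every relation-check collapses via $D\overline{D}\equiv1\pmod l$: for instance $K_\lambda E_i=\e^{(\lambda,\alpha_i)}E_iK_\lambda$ is preserved because $(D\overline{D}\lambda,\alpha_i)=D\overline{D}(\lambda,\alpha_i)\equiv(\lambda,\alpha_i)\pmod l$ with $(\lambda,\alpha_i)\in\mathbb{Z}$, while compatibility with $\Delta,\varepsilon,S$ is immediate since the $K_\lambda$ are grouplike and $\widetilde p(K_i)=K_i$. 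The target is indeed $u_\e^\Q$ (not $u_\e^\Pup$), as advertised.

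For part 2, the second equality is just the definition \eqref{piPdef} of $\pi$, since $\widetilde p(h_{|\e})\in u_\e^\Q$. For the first equality I would use that $\langle\text{-},\text{-}\rangle_{\AD}^\Pup$ specializes at $\qD=\eD$ to $\langle\text{-},\text{-}\rangle_\e^\Pup$ (Lemma \ref{nondegpairepsD}) and that $\widetilde i(h)_{|\eD}=\widetilde i_\e(h_{|\e})$, giving $\bigl(\langle\alpha,\widetilde i(h)\rangle_{\AD}^\Pup\bigr)_{|\eD}=\langle\alpha_{|\e},\widetilde i_\e(h_{|\e})\rangle_\e^\Pup$. It then remains to trade the $\Pup$-pairing against $\widetilde i_\e(h_{|\e})\in\Gamma_\e^\Pup$ for the $\Q$-pairing against $\widetilde p(h_{|\e})\in u_\e^\Q\subset\Gamma_\e^\Q$. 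The crux is that $\widetilde i_\e(h_{|\e})$ and $\widetilde p(h_{|\e})$ induce the \emph{same} operator on every object of $\mathcal{C}_\e$: both act by $E_i,F_i$ on the nilpotent part, and on a weight vector $v_\nu$ the grouplike $\widetilde i_\e(L_i)=K_{\varpi_i}$ acts by $\eD^{D(\varpi_i,\nu)}$ while $\widetilde p(L_i)=K_{D\overline{D}\varpi_i}$ acts by $\eD^{D\,D\overline{D}(\varpi_i,\nu)}$, and these agree by \eqref{actionLi} because $D(\varpi_i,\nu)\in\mathbb{Z}$ and $D\overline{D}\equiv1\pmod l$. Since $\langle\text{-},\text{-}\rangle_\e^\Pup$ evaluates a matrix coefficient on the operator induced by its second argument, and restricts to $\langle\text{-},\text{-}\rangle_\e^\Q$ on $\Gamma_\e^\Q$, the two pairings of $\alpha_{|\e}$ coincide.

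For part 3, both $(x,h)\mapsto\bar\tau_\e(\widetilde p(x),h)$ and $(x,h)\mapsto\tau_\e(x,h)$ are Hopf pairings $U_\e^\Pup(\mathfrak b_+)^{\mathrm{cop}}\times u_\e^\Q(\mathfrak b_-)\to\mathbb{C}$ (the first because $\widetilde p$ is a Hopf morphism and $\bar\tau_\e$ is a Hopf pairing by Lemma \ref{basicqpairingnov24}), so it suffices to verify equality on algebra generators $x\in\{E_i,L_i^{\pm1}\}$ and $h\in\{F_j,K_j^{\pm1}\}$. For $x=E_i$ the identity is exactly the defining relation $\bar\tau_\e(p(E_i),h)=\tau_\e(E_i,h)$ of Lemma \ref{basicqpairingnov24}. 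For $x=L_i$ one has $\widetilde p(L_i)=K_{D\overline{D}\varpi_i}=p(K_{D\overline{D}\varpi_i})$ with $D\overline{D}\varpi_i\in Q$, so $\bar\tau_\e(\widetilde p(L_i),h)=\tau_\e(K_{D\overline{D}\varpi_i},h)$, and a direct computation from \eqref{defrho}--\eqref{deftau} shows that both $\tau_\e(L_i,h)$ and $\tau_\e(K_{D\overline{D}\varpi_i},h)$ vanish for $h=F_j$ and coincide for $h=K_j$, the latter equality again holding because $D\overline{D}\equiv1\pmod l$ absorbs the discrepancy between the weights $\varpi_i$ and $D\overline{D}\varpi_i$. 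The statement for $\rho_\e,\bar\rho_\e$ follows by the identical argument, starting instead from the defining relation \eqref{relrhobar}.

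The genuinely substantive content is concentrated in part 1 together with the module-action step of part 2: one must recognize that passing from the simply-connected Cartan $\mathbb{C}[P]$ to the adjoint small-quantum-group Cartan $\mathbb{C}[Q/lQ]$ is governed by the isomorphism $Q/lQ\cong P/lP$, and that multiplication by $D\overline{D}$ is the identity modulo $l$, so that the simply-connected grouplikes $K_{\varpi_i}$ and their adjoint replacements $K_{D\overline{D}\varpi_i}$ are indistinguishable both by the action on $\mathcal{C}_\e$ and by the evaluation pairing. The hard part will be setting up this arithmetic cleanly and checking well-definedness of $\widetilde p$; once it is in place, parts 2 and 3 are routine transport-of-structure and generator computations.
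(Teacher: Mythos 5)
Your proposal is correct and follows essentially the same route as the paper: your $\widetilde p(K_\lambda):=K_{D\overline{D}\lambda}$ is literally the paper's definition $\widetilde p(L_i)=\prod_{s}(K_s^{\overline{D}})^{D\overline{a}_{si}}$ (since $D\overline{D}\varpi_i=\sum_s \overline{D}D\overline{a}_{si}\alpha_s$), and all three parts rest on the same arithmetic $D\overline{D}\equiv 1\pmod l$ checked on the generators $E_i,F_i,L_i$ and on weight vectors, exactly as in the paper. The only cosmetic differences — your $Q/lQ\cong P/lP$ framing, and replacing the paper's Hopf-pairing reduction in part 2 by the equivalent observation that $\widetilde i_\e(h_{|\e})$ and $\widetilde p(h_{|\e})$ induce the same operator on every object of $\mathcal{C}_\e$ — do not change the substance of the argument.
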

\begin{remark}\label{tildepfactomars26}{\rm The morphism $\tilde{p}$ can be factorized as $\tilde{p}\colon U_\e^\Pup \twoheadrightarrow U_\e^\Pup/\mathcal{Z}_0^+(U_\e^\Pup)U_\e^\Pup \overset{\sim}{\longrightarrow} u_\e^\Q$. }
\end{remark}
\begin{proof}
Recall from \eqref{assumptionl} that $D$ and $l$ are assumed to be coprime, and we denote by $\overline{D}$ the multiplicative inverse of $D$ modulo $l$, so that $(\e^{\overline{D}})^D = \e$.

1. The version $U_\e^\Pup$ is obtained from $U_\e^\Q$ by adding the generators $L_i = K_{\varpi_i}$ for all $1 \leq i \leq m$. Remember that the matrix $(\overline{a}_{ij})$ has coefficients in $\textstyle \frac{1}{D}\mathbb{N}$. The relations $\textstyle \alpha_i = \sum_{j=1}^m a_{ji}\varpi_j$ for $1 \leq i \leq m$ are equivalent to $\textstyle \varpi_i = \sum_{j=1}^m \overline{a}_{ji}\alpha_j$. From the relation $K_i^l = 1$ in $u_\e^\Q$, we see that $K_i^{\overline{D}}$ can be used to play the role of $K_i^{1/D}$. We thus let $\textstyle \widetilde{p}$ be as in the statement. The only non-trivial thing to check is compatibility with the relations $L_iE_j = \epsilon_i^{\delta_{i,j}}E_jL_i$ (and similarly with $F_j$):
\[ \widetilde{p}(L_i) E_j = \prod_{s=1}^m (\epsilon^{\overline{D}})^{Dd_ja_{js}\overline{a}_{si}}E_j (K_s^{\overline{D}})^{D\overline{a}_{si}} = \epsilon^{d_j\overline{D}D\delta_{i,j}}E_j \widetilde{p}(L_i) = \epsilon_i^{\delta_{i,j}}E_j \widetilde{p}(L_i) \]
where we used that $K_sE_j = K_{\alpha_s}E_{\alpha_j} = q^{(\alpha_s,\alpha_j)} E_jK_s = q^{d_ja_{js}} E_jK_s$ and that $\epsilon^{\overline{D}D} = \epsilon$.

2. It suffices to check this when $h$ is a generator of $U_A^\Pup$, since $\langle \cdot, \cdot \rangle_{\AD}^\Pup$ is a Hopf pairing. We use again that $U_A^\Pup$ is an extension of $U_A^\Q$ by the elements $L_i$. If $h \in U_A^\Q$ we have $\widetilde{i}(h) = i(h)$ with $i : U_A^\Q \hookrightarrow \Gamma_A^\Q$ and thus
\[ \bigl( \langle \alpha,\widetilde{i}(h) \rangle_{\!\AD}^\Pup\bigr)_{\vert \eD} = \bigl( \langle \alpha, i(h) \rangle_A^\Q \bigr)_{|\e} = \langle \alpha_{|\e}, i(h)_{|\e} \rangle_\e^\Q \overset{\eqref{injectionBecomesProjection}}{=} \langle \alpha_{|\e}, p(h_{|\e}) \rangle_\e^\Q \overset{\eqref{piPdef}}{=} \pi(\alpha_{|\e})\bigl( p(h_{|\e}) \bigr). \]
Now consider the case $h = L_i$. By definition of $\mathcal{O}_A$ (\S\ref{integralO}), we can assume that $\alpha$ is a matrix coefficient $f(?\cdot v)$ of some $_A V \in \mathcal{C}_A$, with $f \in \mathrm{Hom}_A(_A V,A)$ and $v \in {_A V}$ a vector of weight $\mu \in P$. Then on one hand $\bigl( \langle \alpha,\tilde{i}(L_j) \rangle_{\AD}^\Pup\bigr)_{|\eD} = f(L_j \cdot v)_{|\eD} = \eD^{D(\varpi_j,\mu)}f(v)_{|\e}$ by \eqref{actionLi}. On the other hand, in the specialization $_\e V$, which is a $\Gamma_\e^\Q$-module, we have
\[ \textstyle\bigl\langle f(?\cdot v)_{|\e}, \widetilde{p}(L_i) \bigr\rangle_\e^\Q = f_{|\e}\left( \prod_{j=1}^m (K_j^{\overline{D}})^{D\overline{a}_{ji}} \cdot v_{|\e} \right) = \prod_{j=1}^m \epsilon^{\overline{D}D\overline{a}_{ji}(\alpha_j,\mu)}f(v)_{|\e} = \eD^{D(\varpi_i,\mu)}f(v)_{|\e} \]
where we used the relation between fundamental weights $(\varpi_i)$ and simple roots $(\alpha_j)$.

3. By definition of $\bar{\tau}_\e$, this is true for  the restriction $p$ of $\widetilde{p}$ to $U^\Q_\e(\mathfrak{b}_+)$. By the Hopf pairing property, it remains only to check this when $x = L_i$ and $h$ is a generator of $u_\e^\Q$. For $h=E_j$ and $h = F_j$ this is obvious from the definition of $\tau$ in \eqref{deftau}. For $h = K_i$ this follows by an argument which is completely similar to the ones in previous items.
\end{proof}

The ``quasitriangular datum'' (in the sense of \S\ref{subsecSubstQuasi})  $\Phi_\e^\pm$ from \eqref{Phi+-e} and $\bar{\Phi}^\pm_\e$ from \eqref{PhiPmeSmall} can now be related by means of the surjections $\pi$ from  \eqref{piPdef} and $\widetilde{p}$ from Lemma \ref{evalinnerprodA}:
\begin{lem}\label{commutephiate} We have a commutative diagram of Hopf algebra morphisms
\[ \xymatrix{\Oo_\e \ar[r]^{\Phi_\e^\pm\quad } \ar[d]_{\pi} & U_\e^\Pup(\mathfrak{b}_\mp)^{\rm cop} \ar[d]^{\widetilde{p}} \\ (u_\e^\Q)^* \ar[r]_{\ \bar \Phi_\e^{\pm}\quad } & u_\e^\Q(\mathfrak{b}_\mp)^{\rm cop}} \]
\end{lem}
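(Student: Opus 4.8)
The plan is to prove the two equalities $\widetilde{p} \circ \Phi_\e^+ = \bar{\Phi}_\e^+ \circ \pi$ and $\widetilde{p} \circ \Phi_\e^- = \bar{\Phi}_\e^- \circ \pi$ separately, each by testing both sides against a non-degenerate pairing. Since $\Phi_\e^+$ factors through $\Oo_\e \twoheadrightarrow \Oo_\e(B_+)$ and takes values in $U_\e^\Pup(\mathfrak{b}_-)^{\rm cop}$, and since $\widetilde{p}$ carries $U_\e^\Pup(\mathfrak{b}_-)$ into $u_\e^\Q(\mathfrak{b}_-)$ (the extra generators $L_i$ go to products of the $K_j$, which lie in the Cartan part of either Borel), both $\widetilde{p}\bigl(\Phi_\e^+(\alpha)\bigr)$ and $\bar{\Phi}_\e^+\bigl(\pi(\alpha)\bigr)$ are elements of $u_\e^\Q(\mathfrak{b}_-)$. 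The pairing $\bar{\rho}_\e : u_\e^\Q(\mathfrak{b}_-)^{\rm cop} \times u_\e^\Q(\mathfrak{b}_+) \to \mathbb{C}$ is non-degenerate, so it suffices to show that these two elements pair identically against every $h_+ \in u_\e^\Q(\mathfrak{b}_+)$.

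For the right-hand side I would use \eqref{wellnowndualityR}, which gives $\bar{\rho}_\e\bigl(\bar{\Phi}_\e^+(\pi(\alpha)), h_+\bigr) = \pi(\alpha)_+(h_+)$; since $h_+ \in u_\e^\Q(\mathfrak{b}_+) \subset u_\e^\Q$, the definition \eqref{piPdef} of $\pi$ turns this into $\langle \alpha, h_+\rangle_\e^\Q$. For the left-hand side, the $\rho$-version of Lemma \ref{evalinnerprodA}(3), applied to $x = \Phi_\e^+(\alpha) \in U_\e^\Pup(\mathfrak{b}_-)$ and $h_+ \in u_\e^\Q(\mathfrak{b}_+) \subset \Gamma_\e^\Q(\mathfrak{b}_+)$, gives $\bar{\rho}_\e\bigl(\widetilde{p}(\Phi_\e^+(\alpha)), h_+\bigr) = \rho_\e\bigl(\Phi_\e^+(\alpha), h_+\bigr)$; and the defining relation \eqref{identPhibrackete} identifies this with $\langle \alpha, h_+\rangle_\e^\Q$. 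The two sides therefore agree for every $h_+$, and non-degeneracy of $\bar{\rho}_\e$ in its first variable yields the commutativity of the $+$ square.

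The $-$ square is handled by the exact same argument with $\mathfrak{b}_\pm$ interchanged: one tests in $u_\e^\Q(\mathfrak{b}_+)$ against $h_- \in u_\e^\Q(\mathfrak{b}_-)$ using non-degeneracy of $\bar{\tau}_\e$, invokes the $\tau$-part of \eqref{wellnowndualityR} and the $\tau$-statement of Lemma \ref{evalinnerprodA}(3), and closes with the second identity in \eqref{identPhibrackete}. No genuine difficulty arises in the computation itself; the only point requiring care, and hence the one I would write out most explicitly, is the bookkeeping of the various Borel restrictions and of the inclusion $u_\e^\Q \subset \Gamma_\e^\Q$, so that each pairing is evaluated on arguments lying in the domain where the cited identities \eqref{identPhibrackete}, \eqref{wellnowndualityR} and Lemma \ref{evalinnerprodA}(3) actually apply. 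In particular I would double-check that $\widetilde{p}$ indeed preserves the triangular decomposition, which is what guarantees that both composites land in the same small-Borel subalgebra where the non-degenerate test pairing lives.
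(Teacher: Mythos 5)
Your proposal is correct and is essentially the paper's own proof: the same chain of identities (Lemma \ref{evalinnerprodA}(3), \eqref{identPhibrackete}, \eqref{piPdef}, \eqref{wellnowndualityR}) tested against the non-degenerate pairings $\bar{\rho}_\e$ and $\bar{\tau}_\e$, with the only differences being that the paper writes out the $-$ square and leaves $+$ as "similar" while you do the reverse, and that you make explicit the (implicit in the paper) check that both composites land in the same small Borel subalgebra.
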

\begin{proof} For every $h\in u_\e^\Q(\mathfrak{b}_-)$ and $\alpha \in \Oo_\e$ we have
\begin{align*}
\bar{\tau}_{\e}\bigl(\widetilde{p} \circ \Phi_\e^-(\alpha), h\bigr) & =  \tau_{\e}(\Phi_\e^-(\alpha), h) = \langle \alpha_, h\rangle_\e^\Q = \pi(\alpha) (h)= \bar{\tau}_{\e}\bigl(\bar\Phi_\e^{-} \circ \pi(\alpha), h\bigr).
\end{align*}
The first equality follow from Lemma \ref{evalinnerprodA}(3), the second from \eqref{identPhibrackete}, the third from \eqref{piPdef}, and the fourth from \eqref{wellnowndualityR}. Since $\bar{\tau}_{\e}$ is non degenerate, the desired equality follows. The proof for $\Phi^+_\e$ is similar, but now using the pairing $\bar{\rho}_\e$.
\end{proof}

Let $\mathcal{R}_{\e} : \mathcal{O}_\e \otimes \mathcal{O}_\e \to \mathbb{C}$ be the specialization to $q=\e$ (with $\qD = \eD = \e^{\overline{\D}}$, see \eqref{choixED}) of the co-R-matrix $\mathcal{R}_{\AD} : \mathcal{O}_{\AD} \otimes \mathcal{O}_{\AD} \to \AD$. Also, the R-matrix $\overline{R} \in (u_\e^\Q)^{\otimes 2}$ yields the co-R-matrix $\overline{\mathcal{R}} : (u_\e^\Q)^* \otimes (u_\e^\Q)^* \to \mathbb{C}$ given by $\overline{\mathcal{R}}(\alpha \otimes \beta) = (\alpha \otimes \beta)(\overline{R}) = \beta\bigl( \bar{\Phi}^+_\e(\alpha) \bigr)$. The next corollary will have important consequences in \S\ref{subsecStructResLgnEps}.

\begin{cor}\label{relationsCoRMatAtEps}
1. The projection $\pi : \Oo_\e \to (u_\e^\Q)^*$ defined in \eqref{piPdef} is a morphism of co-quasitriangular Hopf algebras, i.e.
\[ \forall \, \varphi, \psi \in \Oo_\e, \quad \mathcal{R}_{\e}(\varphi \otimes \psi) = \overline{\mathcal{R}}\bigl( \pi(\varphi) \otimes \pi(\psi) \bigr). \]
\noindent 2. For any $\varphi \in \Oo_\e$ and $\psi \in \mathcal{Z}_0(\Oo_\e)$, it holds
\[ \mathcal{R}_{\e}(\varphi \otimes \psi) = \mathcal{R}_{\e}(\psi \otimes \varphi) = \varepsilon_{\Oo_\e}(\varphi)\,\varepsilon_{\Oo_\e}(\psi) \]
where $\varepsilon_{\Oo_\e}$ is the counit of $\Oo_\e$, given by $\varepsilon_{\Oo_\e}(\varphi) = \langle \varphi, 1 \rangle_\e^\Q$.
\end{cor}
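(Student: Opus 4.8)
The plan is to prove part 1 by collapsing both sides to a single pairing evaluation, and then chaining Lemmas \ref{evalinnerprodA} and \ref{commutephiate}. Since the specialization map $\Oo_A \to \Oo_\e$ is surjective (\S\ref{generalRmkSpe}), $\mathbb{C}$-bilinearity of $\mathcal{R}_\e$ lets me assume $\varphi = \alpha_{|\e}$ and $\psi = \beta_{|\e}$ with $\alpha,\beta \in \Oo_A$. For such elements the integral formula \eqref{integralCoRmat} gives $\mathcal{R}_{\AD}(\alpha \otimes \beta) = \bigl\langle \beta, \widetilde{i}\bigl(\Phi^+_A(\alpha)\bigr) \bigr\rangle_{\AD}^\Pup$ (note $\Phi^+_A(\alpha) \in U_A^\Pup$ by the discussion after \eqref{Phi+-A}), so specializing at $\qD = \eD$ yields
\[ \mathcal{R}_\e(\varphi \otimes \psi) = \Bigl( \bigl\langle \beta, \widetilde{i}\bigl(\Phi^+_A(\alpha)\bigr) \bigr\rangle_{\AD}^\Pup \Bigr)_{\vert \eD}. \]

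Next I would apply Lemma \ref{evalinnerprodA}(2) with $h = \Phi^+_A(\alpha)$, which rewrites the right-hand side as $\pi(\psi)\bigl( \widetilde{p}\bigl( (\Phi^+_A(\alpha))_{|\e} \bigr) \bigr)$. Since $\Phi^+_A$ specializes to $\Phi^+_\e$ (see \eqref{Phi+-e}), we have $(\Phi^+_A(\alpha))_{|\e} = \Phi^+_\e(\varphi)$, and then the commutative diagram of Lemma \ref{commutephiate} gives $\widetilde{p}\bigl(\Phi^+_\e(\varphi)\bigr) = \bar{\Phi}_\e^+\bigl(\pi(\varphi)\bigr)$. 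Therefore
\[ \mathcal{R}_\e(\varphi \otimes \psi) = \pi(\psi)\bigl( \bar{\Phi}_\e^+(\pi(\varphi)) \bigr) = \overline{\mathcal{R}}\bigl( \pi(\varphi) \otimes \pi(\psi) \bigr), \]
the last equality being the definition of $\overline{\mathcal{R}}$ recalled before the statement. This settles part 1.

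For part 2 I would feed part 1 into the structural description $\ker(\pi) = \Oo_\e\,\mathcal{Z}_0^+(\Oo_\e)$ of Prop.\,\ref{pipropO}(2). Given $\psi \in \mathcal{Z}_0(\Oo_\e)$, write $\psi = \varepsilon_{\Oo_\e}(\psi)\,1 + \psi'$ with $\psi' := \psi - \varepsilon_{\Oo_\e}(\psi)\,1 \in \mathcal{Z}_0^+(\Oo_\e) \subset \ker(\pi)$; hence $\pi(\psi) = \varepsilon_{\Oo_\e}(\psi)\,1_{(u_\e^\Q)^*}$, where $1_{(u_\e^\Q)^*} = \varepsilon_{u_\e^\Q}$. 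Plugging this into part 1 and using the co-R-matrix axioms \eqref{coReps} for $\overline{\mathcal{R}}$, namely $\overline{\mathcal{R}}(\pi(\varphi) \otimes 1) = \overline{\mathcal{R}}(1 \otimes \pi(\varphi)) = \varepsilon_{(u_\e^\Q)^*}(\pi(\varphi))$, together with $\varepsilon_{(u_\e^\Q)^*}(\pi(\varphi)) = \pi(\varphi)(1_{u_\e^\Q}) = \langle \varphi, 1 \rangle_\e^\Q = \varepsilon_{\Oo_\e}(\varphi)$, delivers both $\mathcal{R}_\e(\varphi \otimes \psi) = \varepsilon_{\Oo_\e}(\varphi)\varepsilon_{\Oo_\e}(\psi)$ and, symmetrically, $\mathcal{R}_\e(\psi \otimes \varphi) = \varepsilon_{\Oo_\e}(\varphi)\varepsilon_{\Oo_\e}(\psi)$.

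The only delicate part is the specialization bookkeeping in part 1: one must check that \eqref{integralCoRmat} specializes correctly, that $\widetilde{i}\circ\Phi^+_A$ specializes to $\widetilde{p}\circ\Phi^+_\e$ (so that Lemma \ref{evalinnerprodA}(2) applies to the integral lift $h=\Phi^+_A(\alpha)$), and that $\Phi^+_A$ specializes to $\Phi^+_\e$. Once these identifications are secured, both parts follow formally with no genuine obstacle; the corresponding statement for $\mathcal{R}^{(-)}_\e$ would be obtained by the same chain using $\Phi^-$ and the pairing $\bar{\tau}_\e$ in place of $\Phi^+$ and $\bar{\rho}_\e$.
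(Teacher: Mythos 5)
Your proof is correct and follows essentially the same route as the paper: part 1 is the identical chain (specialize $\mathcal{R}_{\AD}$ via \eqref{integralCoRmat}, apply Lemma \ref{evalinnerprodA}(2) to the integral lift $\Phi^+_A(\alpha)$, then Lemma \ref{commutephiate} and the definition of $\overline{\mathcal{R}}$), and part 2 likewise reduces $\pi(\psi)$ to $\varepsilon_{\Oo_\e}(\psi)\,1_{(u_\e^\Q)^*}$ via Prop.\,\ref{pipropO}(2) and concludes with the unit axiom \eqref{coReps} of the co-R-matrix. Your explicit decomposition $\psi = \varepsilon_{\Oo_\e}(\psi)\,1 + \psi'$ just spells out what the paper asserts directly, so there is nothing to add.
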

\begin{proof}
1. Let $\varphi', \psi' \in \Oo_A$ be such that $\varphi'_{|\e} = \varphi$ and $\psi'_{|\e} = \psi$. Then
\begin{align*}
\mathcal{R}_{\e}(\varphi \otimes \psi) &= \mathcal{R}_{\AD}(\varphi' \otimes \psi')_{|\eD} = \bigl\langle  \psi', \widetilde{i}\bigl(\Phi^+_A(\varphi') \bigr) \bigr\rangle_{\!\AD|\eD} = \pi(\psi)\bigl[ \, \widetilde{p}\bigl(\Phi^+_A(\varphi')_{|\e} \bigr) \bigr]\\
&= \pi(\psi)\bigl[ \, \widetilde{p}\bigl(\Phi^+_\e(\varphi) \bigr) \bigr] = \pi(\psi)\bigl[ \bar{\Phi}^+_\e\bigl(\pi(\varphi) \bigr) \bigr] = \overline{\mathcal{R}}\bigl( \pi(\varphi) \otimes \pi(\psi) \bigr)
\end{align*}
where the first equality is by definition of the specialization of a bilinear map (see \S\ref{generalRmkSpe}), the second is by definition of $\mathcal{R}_{\AD}$ in \eqref{integralCoRmat}, the third is by Lemma \ref{evalinnerprodA}(2), the fourth is by definition of $\Phi^+_\e$, the fifth is by Lemma \ref{commutephiate} and the last is by definition of $\overline{\mathcal{R}}$.
\\2. By Prop.\,\ref{pipropO}(2) we have $\pi(\psi) = \varepsilon_{\Oo_\e}(\psi)\varepsilon$ for $\psi \in \mathcal{Z}_0(\Oo_\e)$, where $\varepsilon$ is the counit of $u_\e^\Q$. Any co-R-matrix $\mathcal{R} : F \otimes F \to \Bbbk$ on a Hopf algebra $F$ satisfies $\mathcal{R}(1_F \otimes \text{-}) = \mathcal{R}(\text{-} \otimes 1_F) = \varepsilon_F$, and the previous item gives the result with $F=\Oo_\e$.
\end{proof}

\section{Quantum graph algebras at roots of unity}\label{secdefLgn}
Recall from \S\ref{subsecDefLgnH} the definition of the graph algebra $\mathcal{L}_{g,n}(H)$ associated to a Hopf algebra endowed with morphisms $\Phi^\pm : H^\circ \to H^{\rm cop}$ which are substitute for quasitriangularity. By a {\em quantum graph algebra} we mean a graph algebra obtained from a quantum group $U_q(\mathfrak{g})$. Here we introduce the specialization at a root of unity of such an algebra and one of our main results is that, in some sense, it can be decomposed in two parts: the finite-dimensional part $\mathcal{L}_{g,n}(u_\e^\Q)$ and the ``classical'' part $\mathcal{L}_{g,n}\bigl( U(\mathfrak{g}) \bigr)$.

\subsection{Quantum graph algebras and their specialization}\label{subsecQuantumGraphAlg}
Recall that $\qD$ is a formal variable such that $\qD^D = q$, where $D$ is the smallest integer such that $DP \subset Q$, see \S\ref{subsecLieAlg}.

\indent For the sake of precision, we introduce notations which indicate extension of scalars from $\mathbb{C}(q)$ to $\mathbb{C}(\qD)$:
\[ \Oo_q(\qD) := \Oo_q \otimes_{\mc(q)} \mc(\qD), \quad U_q^\Lam(\qD) :=U_q^\Lam \otimes_{\mc(q)} \mc(\qD) \]
where as usual $\Lambda$ is a lattice such that $Q \subset \Lambda \subset P$.

\indent By definition $\mathcal{O}_q$ is a restricted dual for $U_q^\Q$ based on type 1 $U_q^\Q$-modules (\S\ref{sec:Oq}). However, we noticed in \eqref{actionLi} that $U_q^\Pup(\qD)$ naturally acts on $U_q^\Q$-modules provided scalars are extended to $\mathbb{C}(\qD)$, and this yields a non-degenerate pairing $\langle \cdot,\cdot\rangle^\Pup : \mathcal{O}_q(\qD) \times U^\Pup_q(\qD) \to \mathbb{C}(\qD)$. As a result we can make the following definition:

\begin{defi}
We let $\mathcal{L}_{g,n}(\mathfrak{g})$ be the $\mathbb{C}(\qD)$-algebra $\mathcal{L}_{g,n}(H)$ for $H = U_q^\Pup(\qD)$ as in \S\ref{subsecDefLgnH}, except that we use $\mathcal{O}_q(\qD)$ instead of $H^\circ$. Very often we will write $\mathcal{L}_{g,n}$ instead of $\mathcal{L}_{g,n}(\mathfrak{g})$.
\end{defi}
\noindent Explicitly:
\begin{itemize}
\item $\mathcal{L}_{g,n}$ is $\mathcal{O}_q(\qD)^{\otimes (2g+n)}$ as a $\mathbb{C}(\qD)$-vector space.
\item The algebra structure on $\mathcal{L}_{g,n}$ from \S\ref{subsecDefLgnH} is here governed by the co-R-matrix $\mathcal{R} : \mathcal{O}_q(\qD) \otimes \mathcal{O}_q(\qD) \to \mathbb{C}(\qD)$ defined by $\mathcal{R}(\varphi \otimes \psi) = \langle \psi, \Phi^+(\varphi) \rangle^\Pup$ with $\Phi^+$ from \eqref{phipmdebut}. We have to make the definition of $\mathcal{L}_{g,n}$ with $U_q^\Pup$ because $\Phi^\pm$ takes values in it.
\end{itemize}
 The algebras $\mathcal{L}_{g,n}$ are finitely generated and were shown to be Noetherian domains in \cite{BFR}. When $g=0$ it is possible to work over $\mathbb{C}(q)$ because the appearances of the variable $\qD$ compensate each other \cite[Prop.\,6.2]{BR1}.

\indent The right coadjoint action of $U_q^\Q(\qD)$ on $\Oo_q(\qD)$ is defined by \eqref{defCoad}; so we have
\begin{equation}\label{coadUqOq}
\textstyle \mathrm{coad}^r(h)(\varphi) = \sum_{(h),(\varphi)} \bigl\langle \varphi_{(1)}, h_{(1)} \bigr\rangle^\Q \, \bigl\langle \varphi_{(3)}, S(h_{(2)}) \bigr\rangle^\Q \, \varphi_{(2)}.
\end{equation}
for all $h \in U_q^\Q(\qD)$ and $\varphi \in \Oo_q(\qD)$. Using the coproduct of $U_q(\qD)$ we have the diagonal action $\mathrm{coad}^r$ on $\mathcal{L}_{g,n}$, which makes it a $U_q^\Q(\qD)$-module-algebra (see \eqref{coadLgn}). Again due to \eqref{actionLi}, this action can be extended to $U^\Pup_q$. However the subalgebras of invariant elements \eqref{moduliAlgH} are the same under $U_q^\Q$ and $U_q^\Pup$. To avoid overweighted notations, we thus denote by $\mathcal{L}_{g,n}^{U_q}$ this subalgebra of invariant elements.

\smallskip

\indent Let $\mathcal{O}_{\AD} = \mathcal{O}_A \otimes_A \AD$, which is the integral form of $\mathcal{O}_q(\qD)$. We saw in \S\ref{integralO} that the co-R-matrix restricts to an $\AD$-bilinear form $\mathcal{R}_{\AD} : \mathcal{O}_{\AD} \otimes_{\AD} \mathcal{O}_{\AD} \to \AD$. As a result we can make the following definition:
\begin{defi}\label{defIntVersionLgn}
We let $\mathcal{L}_{g,n}^{\AD} = \mathcal{L}_{g,n}^{\AD}(\mathfrak{g})$ be the $\AD$-subalgebra $\mathcal{O}_{\AD}^{\otimes_{\AD} (2g+n)}$ in $\mathcal{L}_{g,n}$.
\end{defi}
\noindent $\mathcal{L}_{g,n}^{\AD}$ is a free $\AD$-module because $\Oo_A$ is a free $A$-module (\S\ref{integralO}) and extension of scalars preserves freeness as it commutes with direct sums. As already recalled just above, when $g=0$ we can work over $A$ instead of $\AD$. We thus allow ourselves to use the notation $\mathcal{L}_{0,n}^A$, which matches with \cite{BR1,BR2}. Of course, $\mathcal{L}_{0,n}^{\AD} = \mathcal{L}_{0,n}^A \otimes_A \AD$.

\indent The right coadjoint action $\mathrm{coad}^r : \Oo_q(\qD) \otimes U_q^\Q(\qD) \to \Oo_q(\qD)$ from \eqref{coadUqOq} restricts to a right action $\mathrm{coad}^r : \Oo_{\AD} \otimes_{\AD} \Gamma_{\AD}^\Q \to \Oo_{\AD}$. Note that $\Oo_{\AD}$ is stable under this action because it is a Hopf $\AD$-algebra. We denote by $\mathcal{L}_{g,n}^{\Gamma_A}$ the $\AD$-subalgebra of invariant elements under this action. It is an integral form of $\mathcal{L}_{g,n}^{U_q}$, meaning that $\mathcal{L}_{g,n}^{\Gamma_A} \otimes_{\AD} \mathbb{C}(\qD) = \mathcal{L}_{g,n}^{U_q}$. We note that $\mathcal{L}_{g,n}^{U_A} = \mathcal{L}_{g,n}^{\Gamma_A}$, where $U_A \subset \Gamma_A$ is the unrestricted integral form (\S\ref{integralFormsUq}, note that the choice of the lattice $\Lambda$ has no effect here).

\smallskip

\indent Recall that $\epsilon$ is a root of unity of order $l$, subject to the assumptions \eqref{assumptionl}, and that $\eD$ is chosen in \eqref{choixED} to be $\e^{\overline{D}}$ where $\overline{D}$ is the inverse of $D$ modulo $l$.

\begin{defi}\label{DefLgnesept25}
We let $\mathcal{L}_{g,n}^{\e} = \mathcal{L}_{g,n}^{\e}(\mathfrak{g})$ be the $\mathbb{C}$-algebra $\mathcal{L}_{g,n}^{\AD} \otimes_{\AD} \mathbb{C}_{\eD}$, that is the specialization of $\mathcal{L}_{g,n}^{\AD}$ at $\qD = \eD$.
\end{defi}

\noindent Note that we write $\mathcal{L}_{g,n}^{\e}$ instead of $\mathcal{L}_{g,n}^{\eD}$ because $\eD$ is defined in terms of $\e$ in \eqref{choixED}, contrarily to the variable $\qD$ which realizes a genuine extension of $\mathbb{C}(q)$. As a $\mathbb{C}$-vector space $\mathcal{L}_{g,n}^{\e}(\mathfrak{g})$ is $\Oo_\e(G)^{\otimes (2g+n)}$ and the algebra structure is given by the formulas of \S\ref{subsecDefLgnH}, but using the specialized co-R-matrix $\mathcal{R}_{\e} : \Oo_\e \otimes \Oo_\e \to \mathbb{C}$ (specialized in the sense of \S\ref{generalRmkSpe}).

\indent The right action $\mathrm{coad}^r : \mathcal{L}_{g,n}^{\AD} \otimes_{\AD} \Gamma_{\AD}^\Q \to \mathcal{L}_{g,n}^{\AD}$ gives by specialization a right action of $\Gamma^\Q_\e$ on  $\mathcal{L}_{g,n}^\e$ and we denote by $\mathcal{L}_{g,n}^{\Gamma_\e}$ the subalgebra of elements which are invariant under it. Said differently, it is the specialization at $\qD = \eD$ of the integral form $\mathcal{L}_{g,n}^{\Gamma_A}$. There is however another subalgebra of invariant elements which deserves interest:
\begin{equation}\label{defSmallInvEps}
\mathcal{L}_{g,n}^{u_\e} = \bigl\{ x \in \mathcal{L}_{g,n}^\e \, \big|\, \forall \, h \in u_\e^\Q \subset \Gamma_\e^\Q, \:\: \mathrm{coad}^r(h)(x) = \varepsilon(h)x \bigr\}.
\end{equation}
Here again there is no effect if one replaces $u_\e^\Q$ with $u_\e^\Lam$ for any lattice $Q \subset \Lambda \subset P$. Of course, $\mathcal{L}_{g,n}^{\Gamma_\e} \subset \mathcal{L}_{g,n}^{u_\e}$ but the converse is not true. Another notation for $\mathcal{L}_{g,n}^{u_\e}$ is $\mathcal{L}_{g,n}^{U_\e}$ because the inclusion $U_A^\Lam \subset \Gamma_A^\Lam$ factorizes after specialization as $U_\e^\Lam \twoheadrightarrow u_\e^\Lam \subset \Gamma_\e^\Lam$, see \eqref{injectionBecomesProjection} and \eqref{fundamentalDegeneracy}. But beware that $\mathcal{L}_{g,n}^{U_\e}$ is {\em not} the specialization of $\mathcal{L}_{g,n}^{U_A} = \mathcal{L}_{g,n}^{\Gamma_A}$, so we prefer to use $\mathcal{L}_{g,n}^{u_\e}$ to prevent confusion.

Finally, note that \eqref{genGammae}, and the isomorphism (i) below it, imply
\begin{equation}\label{factoraction}
\mathcal{L}_{g,n}^{\Gamma_\e} = (\mathcal{L}_{g,n}^{u_\e})^{U(\mathfrak{g})}.
\end{equation}
\subsection{Modified Alekseev morphism \texorpdfstring{$\widehat{\Phi}_{g,n}$}{ }}\label{subsecModifiedAlekseev}
For notational convenience and readability, extension of scalars from $\mathbb{C}(q)$ to $\mathbb{C}(\qD)$ is kept implicit in this subsection. For instance we write $\mathcal{O}_q$ (resp. $\mathcal{O}_A$) instead of $\mathcal{O}_q(\qD)$ (resp. $\mathcal{O}_{\AD}$).

\smallskip

\indent Let $\mathcal{H}_q = \mathcal{H}_q(\mathfrak{g})$ be the Heisenberg double of $\mathcal{O}_q(G)$; it is the $\mathbb{C}(\qD)$-vector space $\mathcal{O}_q \otimes U_q^\Pup$ with product such that $\mathcal{O}_q \otimes 1_{U_q} \cong \mathcal{O}_q$, $1_{\mathcal{O}_q} \otimes U^\Pup_q \cong U_q^\Pup$ are subalgebras and
\begin{equation}\label{relationEchangeHq}
\textstyle  \forall \, a \in U_q^\Pup, \:\: \forall \, \varphi \in \mathcal{O}_q, \quad \varphi a = \varphi \otimes a, \quad a\varphi = \sum_{(a),(\varphi)} \langle \varphi_{(2)}, a_{(1)} \rangle^\Pup \, \varphi_{(1)}a_{(2)}
\end{equation}
with the pairing $\langle\text{-},\text{-}\rangle^\Pup : \mathcal{O}_q \times U_q^\Pup \to \mathbb{C}(\qD)$ from \S\ref{sec:Oq}. Thus an element in $\mathcal{H}_q$ is a sum of elements $\varphi a$ with $\varphi \in \mathcal{O}_q$ and $a \in U_q^\Pup$.

\smallskip

Recall from \eqref{RSDphi} that the two maps $\Phi^\pm : \mathcal{O}_q \to U_q^\Pup$ in \eqref{phipmdebut} combine into a map $\Phi_{0,1}$. It can be written as follows (although $RR^{fl} \not\in U_q^\Pup \otimes U_q^\Pup$, this formula makes sense by the same argument as for \eqref{phipmdebut})
\begin{equation}\label{defPhi01janv25}
\fonc{\Phi_{0,1}}{\Oo_q}{U_q^\Pup}{\alpha}{(\alpha\otimes \mathrm{id})(RR^{fl})}
\end{equation}
where $R^{fl}$ means that tensorands are permuted. We denote by $U_q'$ the image of the map $\Phi_{0,1}$. From \eqref{elmtsChapeau} we have elements $\widehat{h'}$ which exist for all $h' \in U'_q$. Our goal is to construct an extension $\widehat{\mathcal{H}}_q(\mathfrak{g})$ of $\mathcal{H}_q(\mathfrak{g})$ which for all $h \in U_q^\Pup$ contains an element $\widehat{h}$ satisfying suitable commutation relations. This will allow us to define a modified (compared to \S\ref{subsecAlekseev}) Alekseev morphism $\widehat{\Phi}_{g,n} : \mathcal{L}_{g,n}(\mathfrak{g}) \to \widehat{\mathcal{H}}_q(\mathfrak{g})^{\otimes g} \otimes U_q^\Pup(\mathfrak{g})^{\otimes n}$.

\smallskip

\indent It is known that $\Phi_{0,1}$ is an embedding of $U_q^\Pup$-modules, where $\Oo_q$ is endowed with the right coadjoint action ${\rm coad}^r$ from \eqref{defCoad} and $U_q^\Pup$ with the right adjoint action ${\rm ad}^r$ from \eqref{defAdr}. Moreover $U'_q := \Phi_{0,1}(\Oo_q)$ equals $(U_q^\Pup)^{\rm lf}$, namely the (coideal) subalgebra of locally finite elements of $U_q^\Pup$ for the action ${\rm ad}^r$ (\text{i.e.}, the elements of $U_q^\Pup$ whose orbit under the right adjoint action \eqref{defAdr} of $U_q^\Pup$ is a  finite-dimensional subspace). A proof of these properties can be found in \cite{baumann} and \cite[Th.\,3.113 and Prop.\,3.116]{VY} up to slightly different conventions. The relation between $(U_q^\Pup)^{\mathrm{lf}}$ and $U_q^\Pup$ has been elucidated by Joseph--Letzter \cite{JL}. Let us introduce a few notations to explain it. Consider the multiplicative subset $\mathbb{T}_{2-} = \bigl\{ K_{-2\lambda} \, \big|\, \lambda \in P_+ \bigr\}$. It holds $\mathbb{T}_{2-} \subset (U_q^\Pup)^{\mathrm{lf}}$ and $\mathbb{T}_{2-}$ satisfies the Ore condition (obvious), so we have the localization $(U_q^\Pup)^{\mathrm{lf}}\mathbb{T}_{2-}^{-1}$. Consider moreover $\mathbb{T} = \bigl\{ K_\lambda \, \big|\, \lambda \in P \bigr\}$ and $\mathbb{T}_2 = \bigl\{ K_{2\lambda} \, \big|\, \lambda \in P \bigr\}$. Note that $\mathbb{T}/\mathbb{T}_2$ is an abelian group of cardinal $2^m$, $m = \mathrm{rank}(\mathfrak{g})$, and choose representatives $s_1, \ldots, s_{2^m}$. Then \cite[\S 6.4]{JL} asserts that $\textstyle U_q^\Pup = \bigoplus_{j=1}^{2^m} \bigl((U_q^\Pup)^{\mathrm{lf}}\mathbb{T}_{2-}^{-1}\bigr)s_j$. It is convenient to restate this as an isomorphism of algebras
\begin{equation}\label{relatingLfAndUq}
U_q^\Pup \cong (U_q^\Pup)^{\mathrm{lf}} \underset{\mathbb{C}(q)[\mathbb{T}_{2-}]}{\otimes} \mathbb{C}(q)[\mathbb{T}]
\end{equation}
with product $(h' \otimes K_\lambda)(h'' \otimes K_\eta) = h'\bigl( K_\lambda h''K_\lambda^{-1} \bigr) \otimes K_{\lambda + \eta}$ on the right-hand side; note that $\mathbb{C}(q)[\mathbb{T}]$ is just $U_q(\mathfrak{h})$. Of course all this remains true after extension of scalars to $\mathbb{C}(\qD)$.

\smallskip

\indent Since $K_{-2\lambda} \in U_q'$, we have for all $\lambda \in P_+$ an element $\widehat{K_{-2\lambda}} \in \mathcal{H}_q(\mathfrak{g})$ defined in \eqref{elmtsChapeau}. There is thus a natural right action of $\mathbb{C}(\qD)[\mathbb{T}_{2-}]$ on $\mathcal{H}_q$ given by $x \cdot K_{2\lambda} = x\,\widehat{K_{2\lambda}}$. As a result we may consider the following algebra.
\begin{defi}\label{defiHhatq} {\rm We let $\widehat{\mathcal{H}}_q(\mathfrak{g}) := \mathcal{H}_q(\mathfrak{g}) \underset{\mathbb{C}(\qD)[\mathbb{T}_{2-}]}{\otimes} \mathbb{C}(\qD)[\mathbb{T}]$ be the algebra with the associative product defined by the following rules (where $\otimes = \otimes_{\mathbb{C}(\qD)[\mathbb{T}_2]}$):
\smallskip

\indent \textbullet ~ $\mathcal{H}_q(\mathfrak{g}) \otimes K_{\boldsymbol{0}}$ and $1_{\mathcal{H}_q} \otimes \mathbb{C}(\qD)[\mathbb{T}]$ are subalgebras isomorphic to $\mathcal{H}_q(\mathfrak{g})$ and $\mathbb{C}(\qD)[\mathbb{T}]$.

\smallskip

\indent \textbullet ~ $\bigl( \varphi a \otimes K_{\boldsymbol{0}} \bigr) \bigl( 1_{\mathcal{H}_q} \otimes K_\lambda \bigr) = \varphi a \otimes K_\lambda$ and $\bigl( 1_{\mathcal{H}_q} \otimes K_\lambda \bigr) \bigl( \varphi a \otimes K_{\boldsymbol{0}} \bigr) = (\varphi \lhd K_{-\lambda})a \otimes K_\lambda$.}
\end{defi}

\noindent We often write $\widehat{\mathcal{H}}_q$ instead of $\widehat{\mathcal{H}}_q(\mathfrak{g})$. Given $h \in U_q^\Pup$, use \eqref{relatingLfAndUq} to decompose it as $h = \textstyle \sum_j h_j \otimes K_{\lambda_j}$ with $h_j \in(U_q^\Pup)^{\mathrm{lf}}$ and introduce the notation
\begin{equation}\label{elmtsChapeauHq}
\widehat{h} = \textstyle \sum_j  \widehat{h_j} \otimes K_{\lambda_j} \in \widehat{\mathcal{H}}_q
\end{equation}
with $\widehat{h_j}$ defined in \eqref{elmtsChapeau}; note in particular that $\varphi a \otimes_{\mathbb{C}(\qD)[\mathbb{T}_2]} K_\lambda = \varphi a \, \widehat{K_\lambda}$. With these definitions it holds in $\widehat{\mathcal{H}}_q(\mathfrak{g})$:
\begin{equation}\label{commutationHatHq}
\forall \, h,g \in U_q^\Pup, \:\: \forall \, \varphi x \in \mathcal{H}_q, \quad \widehat{hg} = \widehat{h}\,\widehat{g}, \quad \widehat{h} \, \varphi a = \sum_{(h)} \bigl( \varphi \lhd S^{-1}(h_{(2)}) \bigr)a \, \widehat{h_{(1)}}.
\end{equation}

\indent The relations \eqref{commutationHatHq} are formally the same as \eqref{2sidedproductHeisenberg}, replacing $\widetilde{h}$ by $\widehat{h}$. The modified Alekseev morphism is obtained by using elements $\widehat{h}$ instead of $\widetilde{h}$. Namely, let $\widehat{\mathsf{D}}_{g,n} : U_q^\Pup \to \widehat{\mathcal{H}}_q^{\otimes g} \otimes (U_q^\Pup)^{\otimes n}$ be the following morphism of $\mathbb{C}(\qD)$-algebras
\begin{equation}\label{dressingMap2}
\widehat{\mathsf{D}}_{g,n}(h) = \sum_ {(h)} \widehat{h_{(1)}}\,h_{(2)} \otimes \ldots \otimes \widehat{h_{(2g-1)}}\,h_{(2g)} \otimes h_ {(2g+1)} \otimes \ldots \otimes h_{(2g+n)}.
\end{equation}
Define $\widehat{\Phi}_{g,n} : \mathcal{L}_{g,n} \to \widehat{\mathcal{H}}_q^{\otimes g} \otimes (U_q^\Pup)^{\otimes n}$ inductively by (compare with \eqref{AlekseevValue1}--\eqref{AlekseevValue2}):
\begin{align}
\begin{split}\label{modifiedAlekseevValue}
\forall \, \varphi \in \mathcal{L}_{0,1}, \:\: \forall \, w \in \mathcal{L}_{0,n}, \quad\widehat{\Phi}_{0,n+1}(\varphi \otimes w) &=  \sum_{[\varphi]} \Phi_{0,1}(\varphi_{[2]}) \otimes \widehat{\mathsf{D}}_{0,n}\bigl( \Phi^+(\varphi_{[1]}) \bigr)\widehat{\Phi}_{0,n}(w),\\
\forall \, x \in \mathcal{L}_{1,0}, \:\: \forall \, w \in \mathcal{L}_{g,n}, \quad \widehat{\Phi}_{g+1,n}(x \otimes w) &=  \sum_{[x]} \Phi_{1,0}(x_{[2]}) \otimes \widehat{\mathsf{D}}_{g,n}\bigl( \Phi^+(x_{[1]}) \bigr) \widehat{\Phi}_{g,n}(w)
\end{split}
\end{align}
using the left coaction which is dual to $\mathrm{coad}^r$, see \eqref{defCoactCoad}. The base cases are $\widehat{\Phi}_{0,1} = \Phi_{0,1} : \mathcal{L}_{0,1} \to U_q^\Pup$ defined in \eqref{RSDphi} and $\widehat{\Phi}_{1,0} = \Phi_{1,0} : \mathcal{L}_{1,0} \to \mathcal{H}_q \subset \widehat{\mathcal{H}}_q$ defined in \eqref{defPhi10}.

\smallskip

\indent The algebra morphism $\widehat{\mathsf{D}}_{g,n}$ yields a right action of $U_q^\Pup$ on $\widehat{\mathcal{H}}_q$:
\begin{align*}
(\varphi a \otimes_{\mathbb{C}(\qD)[\mathbb{T}_2]} K_\lambda) \cdot h &:= \sum_{(h)} \widehat{\mathsf{D}}_{1,0}\bigl( S(h_{(1)}) \bigr) \,  (\varphi a \otimes_{\mathbb{C}(\qD)[\mathbb{T}_2]} K_\lambda) \, \widehat{\mathsf{D}}_{1,0}(h_{(2)})\\
&=\sum_{(h)} \bigl( S(h_{(2)}) \rhd \varphi \lhd h_{(3)} \bigr) S(h_{(1)})a h_{(6)} \, \widehat{S(h_{(4)}) K_\lambda h_{(5)}}
\end{align*}
where the second equality is obtained from \eqref{commutationHatHq} by a computation completely analogous to \eqref{computationProofQMM2SidedHeis}. It readily endows $\widehat{\mathcal{H}}_q$ with the structure of a right $U_q^\Pup$-module-algebra. By iterated coproduct $U_q^\Pup \to (U_q^\Pup)^{\otimes (g+n)}$ we get a right $U_q^\Pup$-module-algebra structure on $\widehat{\mathcal{H}}_q^{\otimes g} \otimes (U_q^\Pup)^{\otimes n}$, using the right adjoint action \eqref{defAdr} on the $(U_q^\Pup)^{\otimes n}$ part.

\begin{prop}\label{propModifAlekseev}
In the algebra $\widehat{\mathcal{H}}_q^{\otimes g} \otimes (U_q^\Pup)^{\otimes n}$ endowed with the usual tensor-wise product it holds
\[ \forall \, h \in U_q^\Pup, \:\: \forall \, x \in \mathcal{L}_{g,n}, \quad \widehat{\Phi}_{g,n}\bigl( \mathrm{coad}^r(h)(x) \bigr) = \sum_{(h)} \widehat{\mathsf{D}}_{g,n}\bigl( S(h_{(1)}) \bigr) \, \widehat{\Phi}_{g,n}(x) \, \widehat{\mathsf{D}}_{g,n}(h_{(2)}). \]
so that $\widehat{\Phi}_{g,n}$ is $U_q^\Pup$-linear. Moreover $\widehat{\Phi}_{g,n}$ is an algebra morphism which is injective.
\end{prop}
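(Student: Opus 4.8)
The plan is to prove everything at once by observing that the objects $\widehat{\mathsf{D}}_{g,n}$, $\widehat{\Phi}_{g,n}$ and $\widehat{\mathcal{H}}_q$ are given by formulas \eqref{dressingMap2}, \eqref{modifiedAlekseevValue} and Definition \ref{defiHhatq} which are verbatim copies of \eqref{dressingMap}, \eqref{AlekseevValue1}--\eqref{AlekseevValue2} and of the definition of $\mathcal{HH}(H^\circ)$, under the substitution replacing $\widetilde{h}$ by $\widehat{h}$. The point is that the elements $\widehat{h}$ now exist for \emph{every} $h \in U_q^\Pup$, thanks to the Joseph--Letzter decomposition \eqref{relatingLfAndUq}, and that by \eqref{commutationHatHq} they obey exactly the commutation relations \eqref{2sidedproductHeisenberg} satisfied by the formal elements $\widetilde{h}$ in $\mathcal{HH}(H^\circ)$. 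Consequently every manipulation in the proofs of Proposition \ref{propAlekseevMorph} and Lemma \ref{lemmaInjAlekseev} transports unchanged, and the whole proposition is obtained by replaying those proofs with $\widehat{\mathcal{H}}_q$ in place of $\mathcal{HH}(H^\circ)$. Below I indicate where each piece comes from.

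First I would establish the displayed $U_q^\Pup$-linearity by copying the induction of Proposition \ref{propAlekseevMorph}(1). The base case $(g,n)=(0,1)$ holds because $\Phi_{0,1}=\widehat{\Phi}_{0,1}$ is a morphism of $U_q^\Pup$-modules from $(\mathcal{L}_{0,1},\mathrm{coad}^r)$ to $(U_q^\Pup,\mathrm{ad}^r)$. The base case $(g,n)=(1,0)$ is precisely the assertion that $\widehat{\mathsf{D}}_{1,0}$ is a quantum moment map for $\widehat{\mathcal{H}}_q$, namely the computation already recorded just before the statement when defining the right $U_q^\Pup$-action on $\widehat{\mathcal{H}}_q$ (the analogue of \eqref{QMMHeisDansPreuve}--\eqref{computationProofQMM2SidedHeis}, obtained from \eqref{commutationHatHq}). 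The induction steps on $n$ and then on $g$ use only the coaction identities \eqref{trucTechniquePreparation}, which involve $\mathrm{coad}^r$ and $\Phi^+$ and are untouched by the substitution.

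Next I would prove the algebra-morphism property by the same induction as Proposition \ref{propAlekseevMorph}(2). Its ``technical preliminary'' — the co-R-matrix cancellation $\mathcal{R}(\psi_{(1)} \otimes v_{[1]})\,\mathrm{coad}^r(S^{-1}(\Phi^+(\psi_{(2)})))(v_{[2]}) = \varepsilon(\psi)v$ — lives entirely in $\mathcal{O}_q$ and is unchanged. The induction step factors the product of two ordered monomials via \eqref{braidedTensProdLgn}, then uses that $\Phi_{1,0}$ and $\widehat{\Phi}_{g,n}$ are algebra morphisms, the first part of the present proposition, and the relations \eqref{commutationHatHq} in $\widehat{\mathcal{H}}_q$ in place of \eqref{2sidedproductHeisenberg}. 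This is the most calculation-heavy step, but it is a pure transcription.

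Finally, for injectivity I would mimic Lemma \ref{lemmaInjAlekseev}. Writing $\mathcal{H}' := \mathrm{im}(\Phi_{1,0})$, introduce the linear isomorphisms $\widehat{T}_{g,n}$ of $\mathcal{H}' \otimes (\widehat{\mathcal{H}}_q^{\otimes(g-1)} \otimes (U_q^\Pup)^{\otimes n})$ sending $\Phi_{1,0}(x) \otimes w$ to $\Phi_{1,0}(x_{[2]}) \otimes \widehat{\mathsf{D}}_{g-1,n}(\Phi^+(x_{[1]}))w$, whose inverse replaces $x_{[1]}$ by $S^{-1}(x_{[1]})$ (and similarly an isomorphism for the $\widehat{\Phi}_{0,n}$ induction), so that $\widehat{\Phi}_{g,n} = \widehat{T}_{g,n} \circ (\Phi_{1,0} \otimes \widehat{\Phi}_{g-1,n})$. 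This reduces injectivity to the base cases $\widehat{\Phi}_{0,1} = \Phi_{0,1}$ and $\widehat{\Phi}_{1,0} = \Phi_{1,0}$, both of which are injective ($\Phi_{0,1}$ being an embedding onto $(U_q^\Pup)^{\mathrm{lf}}$, and $\Phi_{1,0}$ being injective for $H=U_q^\Pup$). I expect the main obstacle to be not any single calculation but the bookkeeping needed to keep the $\widehat{\,\cdot\,}$-objects well defined throughout: one must repeatedly invoke that $\widehat{h}$ is defined for all $h$ via \eqref{relatingLfAndUq} and that the relations \eqref{commutationHatHq} genuinely hold in $\widehat{\mathcal{H}}_q$, so that the parallel with $\mathcal{HH}(H^\circ)$ is a legitimate substitution of honest elements rather than a merely formal one.
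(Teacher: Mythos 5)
Your proposal is correct and follows essentially the same route as the paper: the paper likewise establishes the linearity formula and the algebra-morphism property by declaring the computations of Proposition \ref{propAlekseevMorph} to transfer with only typographical changes ($\widetilde{x} \mapsto \widehat{x}$, $\mathsf{D}_{g,n} \mapsto \widehat{\mathsf{D}}_{g,n}$), notes that $U_q^\Pup$-linearity is immediate once the action on $\widehat{\mathcal{H}}_q^{\otimes g} \otimes (U_q^\Pup)^{\otimes n}$ is written as $w \cdot h = \sum_{(h)} \widehat{\mathsf{D}}_{g,n}\bigl(S(h_{(1)})\bigr)\,w\,\widehat{\mathsf{D}}_{g,n}(h_{(2)})$, and deduces injectivity from that of $\Phi_{0,1}$ and $\Phi_{1,0}$ by the argument of Lemma \ref{lemmaInjAlekseev}. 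Your emphasis on the fact that the $\widehat{h}$ are honest elements defined for all $h$ via \eqref{relatingLfAndUq} and satisfy \eqref{commutationHatHq} is exactly the point that legitimizes the substitution in both your write-up and the paper's.
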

\begin{proof}
The formula and the algebra morphism claim are established by {\em exactly} the same computations than the ones used to prove Prop.\,\ref{propAlekseevMorph}. The only changes are typographical: $\Phi_{g,n}$ (resp. $\mathsf{D}_{g,n}$) must be replaced by $\widehat{\Phi}_{g,n}$ (resp. $\widehat{\mathsf{D}}_{g,n}$) and the ``tilda elements'' $\widetilde{x}$ must be replaced by the ``hat elements'' $\widehat{x}$. For the $U_q^\Pup$-linearity claim, it suffices to note that the action of $U_q^\Pup$ on $\widehat{\mathcal{H}}_q^{\otimes g} \otimes (U_q^\Pup)^{\otimes n}$ can be written as $w \cdot h = \textstyle \sum_{(h)} \widehat{\mathsf{D}}_{g,n}\bigl( S(h_{(1)}) \bigr) \, w \, \widehat{\mathsf{D}}_{g,n}(h_{(2)})$. For the injectivity, we know that $\Phi_{0,1}$ and $\Phi_{1,0}$ are injective \cite[Th.\,3.13]{BFR}. The same arguments as in Lemma \ref{lemmaInjAlekseev} (with only typographical changes) allow us to conclude that $\widehat{\Phi}_{g,n}$ is injective.
\end{proof}

\begin{remark}
{\rm By \eqref{elmtsChapeauHq} and Lemma \ref{lemmeChapeau} the representation $\rho : \mathcal{H}_q \to \mathrm{End}_{\mathbb{C}(\qD)}( \mathcal{O}_q )$ from \eqref{HeisenbergRep} satisfies $\widehat{\rho}(h')(\psi) = \psi \lhd S^{-1}(h')$ for all $h' \in (U_q^\Pup)^{\rm lf}$. Then $\rho$ can be extended to $\widehat{\rho} : \widehat{\mathcal{H}}_q \to \mathrm{End}_{\mathbb{C}(\qD)}( \mathcal{O}_q )$ simply by $\widehat{\rho}\bigl( \widehat{K_\lambda} \bigr)(\psi) = \psi \lhd K_{-\lambda}$ for all $\lambda \in P$. Another extension is $\widetilde{\rho} : \mathcal{HH}_q \to \mathrm{End}_{\mathbb{C}(\qD)}( \mathcal{O}_q)$ defined in general in \eqref{HeisenbergRepHH}, where $\mathcal{HH}_q = \mathcal{HH}( \mathcal{O}_q ) = \mathcal{H}_q \otimes U_q^\Pup$ is the two-sided Heisenberg double (\S\ref{subsecAlekseev}). We have a commutative diagram
\[ \xymatrix@C=5em@R=1.5em{
\mathcal{L}_{g,n} \ar[r]^-{\widehat{\Phi}_{g,n}} \ar[d]_-{\Phi_{g,n}} & \widehat{\mathcal{H}}_q^{\otimes g} \otimes (U_q^\Pup)^{\otimes n} \ar[d]^-{\widehat{\rho}^{\otimes g} \otimes \mathrm{id}^{\otimes n}}\\
\mathcal{HH}_q^{\otimes g} \otimes (U_q^\Pup)^{\otimes n} \ar[r]_-{\widetilde{\rho}^{\otimes g} \otimes \mathrm{id}^{\otimes n}} & \mathrm{End}_{\mathbb{C}(\qD)}( \mathcal{O}_q )^{\otimes g} \otimes (U_q^\Pup)^{\otimes n}
} \]
meaning that the usual Alekseev morphism $\Phi_{g,n}$ from \S\ref{subsecAlekseev} and its modified version $\widehat{\Phi}_{g,n}$ yield the same families of representations.}
\end{remark}

\indent The constructions above admit integral versions, \textit{i.e.} over $\AD = \mathbb{C}[\qD^{\pm 1}]$. Indeed, using the fact that $\mathcal{O}_A$ is a Hopf $A$-algebra and the integral versions $\Phi^\pm_A$ of $\Phi^\pm$ (\S\ref{integralO}), we get an integral version $\Phi_{0,1}^A : \mathcal{L}_{0,1}^A \to U_A^\Pup$ of $\Phi_{0,1}$. The following result is the analog over $A$ of the equality \eqref{relatingLfAndUq} from \cite[\S 6.4]{JL}. 
\begin{lem}\label{lemmaJLforUA}
Denote by $U'_A$ the image of $\Phi_{0,1}^A$ and $U_A^{\mathrm{lf}} := U_q^{\mathrm{lf}} \cap U_A^\Pup$. Given a family $(s_j)$ of representatives for $\mathbb{T}/\mathbb{T}_2$, we have
\[ \textstyle U_A^\Pup = \bigoplus_{j=1}^{2^m} \bigl( U'_A \mathbb{T}_{2-}^{-1} \bigr)s_j = \bigoplus_{j=1}^{2^m} \bigl( U_A^{\rm lf} \mathbb{T}_{2-}^{-1} \bigr)s_j. \]
This can also be written as $U_A^\Pup = U'_A \otimes_{A[\mathbb{T}_{2-}]} A[\mathbb{T}]$.
\end{lem}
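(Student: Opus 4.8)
The plan is to deduce the integral statement from its $\mathbb{C}(q)$-counterpart \eqref{relatingLfAndUq} of Joseph--Letzter \cite{JL} by a saturation argument controlled by the canonical basis. First I would observe that both asserted equalities reduce to the single assertion
\[ U_A^\Pup = \sum_{j=1}^{2^m}\bigl(U'_A\,\mathbb{T}_{2-}^{-1}\bigr)s_j \qquad (\ast). \]
Indeed $\Phi_{0,1}^A$ takes values in $U_A^\Pup$ (its triangular factors $\Phi^\pm_A$ do, by \S\ref{integralO}) and its $\mathbb{C}(q)$-image is $(U_q^\Pup)^{\mathrm{lf}}$, so $U'_A\subseteq (U_q^\Pup)^{\mathrm{lf}}\cap U_A^\Pup = U_A^{\mathrm{lf}}$. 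Granting $(\ast)$ we then get
\[ U_A^\Pup = \sum_j(U'_A\mathbb{T}_{2-}^{-1})s_j\subseteq\sum_j(U_A^{\mathrm{lf}}\mathbb{T}_{2-}^{-1})s_j\subseteq U_A^\Pup, \]
forcing equality throughout. Directness of both sums is automatic, since all the $A$-submodules involved sit inside $U_q^\Pup$, whose $\mathbb{C}(q)$-decomposition $\bigoplus_j\bigl((U_q^\Pup)^{\mathrm{lf}}\mathbb{T}_{2-}^{-1}\bigr)s_j$ from \eqref{relatingLfAndUq} is direct.

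Next I would record the structural facts needed to make sense of $(\ast)$ and to obtain the easy inclusion $M:=\sum_j(U'_A\mathbb{T}_{2-}^{-1})s_j\subseteq U_A^\Pup$. Because $\Phi_{0,1}$ is a morphism of module-algebras (\S\ref{subsecQMMgeneral}) and the transmuted product on $\mathcal{L}_{0,1}^A=\mathcal{O}_A$ is defined over $A$ through the integral co-R-matrix of Lemma \ref{lemIntegralRandCorib}, the image $U'_A=\Phi_{0,1}^A(\mathcal O_A)$ is an $A$-subalgebra of $U_A^\Pup$. Evaluating $\Phi_{0,1}=(\,\cdot\otimes\mathrm{id})(RR^{fl})$ as in \eqref{defPhi01janv25} on the matrix coefficient of ${}_AV_\lambda$ attached to its lowest weight vector—which lies in $\mathcal{O}_A$ because the Kashiwara--Lusztig canonical basis is an $A$-basis—yields, once the nilpotent $\hat R$-factors collapse against the lowest weight vector, the grouplike $K_{2w_0\lambda}=K_{-2\lambda^*}$ with $\lambda^*=-w_0\lambda$. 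Hence $\mathbb{T}_{2-}\subset U'_A$, the localization $U'_A\mathbb{T}_{2-}^{-1}$ is a genuine subring, and writing $K_{2\eta}=(K_{-2\eta_1})^{-1}K_{-2\eta_2}$ (for $\eta=\eta_1-\eta_2$, $\eta_i\in P_+$) shows the whole torus $\mathbb{T}$ lies in it. Using that conjugation by the grouplikes $s_j$ equals $\mathrm{ad}^r(s_j^{-1})$ and that $\Phi_{0,1}^A$ is $\mathrm{coad}^r/\mathrm{ad}^r$-equivariant, one checks $M$ is an $A$-subalgebra of $U_A^\Pup$ containing $\mathbb{T}$, giving the inclusion $M\subseteq U_A^\Pup$.

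It remains to prove $U_A^\Pup\subseteq M$, which is the crux. Since $M$ is a subalgebra containing $\mathbb{T}$ and $U_A^\Pup$ is generated as an $A$-algebra by the torus together with the PBW root vectors $\underline{E}_\beta,\underline{F}_\beta$ ($\beta\in\phi^+$), it suffices to show $\underline{E}_\beta,\underline{F}_\beta\in M$ for every $\beta$. Here I would invoke the Joseph--Letzter description $(U_q^\Pup)^{\mathrm{lf}}=\bigoplus_{\lambda\in P_+}(\mathrm{ad}^r U_q^\Pup)(K_{-2\lambda})$: the weight-$(\pm\beta)$ components of these adjoint orbits are spanned by elements $\underline{E}_\beta K_{\mu}$, $\underline{F}_\beta K_{\mu'}$ with $\mu,\mu'\in P$, each realized as $\Phi_{0,1}$ of an explicit matrix coefficient of a fundamental module. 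The decisive point, and the main obstacle, is integrality: one must verify that these matrix coefficients already lie in $\mathcal{O}_A$, i.e. that $\underline{E}_\beta K_\mu,\underline{F}_\beta K_{\mu'}$ belong to $U'_A$ and not merely to $U'_q$. This is precisely what the canonical basis secures, since $\mathcal{O}_A$ is the $A$-span of canonical-basis matrix coefficients and the integrality of the $R$-matrix entries in that basis (Lemma \ref{lemIntegralRandCorib}, \cite[Th.\,32.1.5]{Lusztig}) forces $\Phi_{0,1}^A$ to hit the relevant $A$-lattice of $U_A^{\mathrm{lf}}$. Multiplying by $K_\mu^{-1}\in\mathbb{T}\subset M$ then places $\underline{E}_\beta,\underline{F}_\beta$ in $M$, yielding $(\ast)$. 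Finally, the tensor-product reformulation $U_A^\Pup = U'_A\otimes_{A[\mathbb{T}_{2-}]}A[\mathbb{T}]$ follows formally as over $\mathbb{C}(q)$, since $A[\mathbb{T}]$ inverts $\mathbb{T}_{2-}$ and $\{s_j\}$ is a basis of $A[\mathbb{T}]$ over $A[\mathbb{T}_2]$.
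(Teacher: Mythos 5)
Your reduction to the single statement $(\ast)$, the directness of the sums, and the verification that $M=\sum_j\bigl(U'_A\mathbb{T}_{2-}^{-1}\bigr)s_j$ is an $A$-subalgebra of $U_A^\Pup$ containing $\mathbb{T}$ are all fine, as is the lowest-weight computation showing $\mathbb{T}_{2-}\subset U'_A$ (this reproves part of \cite[Prop.\,2.24]{BR2}). The gap is at the step you yourself flag as the crux. To place $\underline{E}_\beta,\underline{F}_\beta$ in $M$ for \emph{every} positive root $\beta$ you must show that elements of the form $\underline{E}_\beta K_\mu$, $\underline{F}_\beta K_{\mu'}$ lie in $U'_A=\Phi_{0,1}^A(\mathcal{O}_A)$, and for this you offer only the assertion that the integrality of the $R$-matrix entries in the canonical basis ``forces $\Phi_{0,1}^A$ to hit the relevant $A$-lattice''. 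This is not an argument: Lemma \ref{lemIntegralRandCorib} and \cite[Th.\,32.1.5]{Lusztig} give that $\Phi_{0,1}^A$ maps $\mathcal{O}_A$ \emph{into} $U_A^\Pup$, i.e.\ the inclusion $U'_A\subseteq U_A^{\mathrm{lf}}$, which is the easy direction; they say nothing about which elements of $U_A^{\mathrm{lf}}$ are actually attained, since the image could a priori be a strictly smaller lattice. The statement you need --- that after inverting $\mathbb{T}_{2-}$ the image fills up all of $U_A^{\mathrm{lf}}$ --- is exactly \cite[Prop.\,2.24]{BR2}, namely $\Phi_{0,1}\bigl(\mathcal{L}_{0,1}^A[d^{-1}]\bigr)=U_A^{\mathrm{lf}}\mathbb{T}_{2-}^{-1}$, and it is the key external input of the paper's own proof, which runs through the chain $U'_A\mathbb{T}_{2-}^{-1}=U'_A[\ell]=\Phi_{0,1}\bigl(\mathcal{L}_{0,1}^A[d^{-1}]\bigr)=U_A^{\mathrm{lf}}\mathbb{T}_{2-}^{-1}$ (using the Ore condition of \cite[Cor.\,2.23]{BR2}) and then cites the integral Joseph--Letzter decomposition, also from \cite[Prop.\,2.24]{BR2}. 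So your proposal silently assumes the hardest part of the lemma.

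Two further points make the gap worse rather than better. First, the weight-$(\pm\beta)$ components of $\bigoplus_{\lambda\in P_+}(\mathrm{ad}^r U_q^\Pup)(K_{-2\lambda})$ are \emph{not} spanned by $\underline{E}_\beta K_\mu$, $\underline{F}_\beta K_{\mu'}$ (they contain monomials such as $F_\gamma K_\nu E_\gamma E_\beta$); what is true is only that such elements exist in the locally finite part, which is weaker than what you wrote. Second, and more seriously for an integral argument, for non-simple $\beta$ the root vector $\underline{E}_\beta$ does not lie in the $A$-algebra generated by the $\underline{E}_i$, $\underline{F}_i$, $K_\lambda$ --- this is precisely why the definition of $U_A^\Pup$ in \S\ref{integralFormsUq} demands stability under the braid group (over the localization $A'=A\bigl[(q_i-q_i^{-1})^{-1}\bigr]$ the issue disappears, but not over $A$). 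Hence you cannot finesse the non-simple roots, and exhibiting for each such $\beta$ an element of $\mathcal{O}_A$ (not merely of $\mathcal{O}_q$) whose image under $\Phi_{0,1}$ is $\underline{E}_\beta K_\mu$ is a genuine computation that your proposal does not carry out; doing it is essentially equivalent to re-proving \cite[Prop.\,2.24]{BR2}.
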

\begin{proof}
Recall the pivotal element $\ell = K_{2\rho}$. We have $l^{-1} \in U'_A$ but $l \not\in U'_A$. Consider the subalgebra $U_A'[\ell] \subset U_A$ generated by $U_A'$ and $\ell$, which can also be seen as the localization of $U_A'$ at $\ell^{-1}$. We claim that $U'_A[\ell] = \mathbb{T}_{2-}^{-1}U_A'\mathbb{T}_{2-}^{-1}$. Indeed, we have $K_{-2\lambda} \in U_A'$ for all $\lambda \in P_+$ \cite[Prop.\,2.24]{BR2} so in particular $L_i^{-2} \in U_A'$ for all $1 \leq i \leq m$. Since $\ell = \textstyle \prod_{i=1}^m L_i^2$ we deduce that $L_i^2 \in U_A'[\ell]$ for all $i$. Therefore $\mathbb{T}_{2-}^{-1} \subset U_A'[\ell]$, whence the claim.
Now we use that there is an element $d \in \mathcal{L}^A_{0,1}$ such that $\Phi_{0,1}(d) = \ell^{-1}$. The multiplicative subset
$\bigl\{ d^n \, \big\vert\, n \in \mathbb{N} \bigr\} \subset \mathcal{L}^A_{0,1}$ satisfies the Ore condition \cite[Cor.\,2.23]{BR2}, so we denote by $\mathcal{L}_{0,1}^A[d^{-1}]$ the resulting localization. It is known from \cite[Prop.\,2.24]{BR2} that $\Phi_{0,1}\bigl( \mathcal{L}_{0,1}^A [d^{-1}] \bigr) = U_A^{\mathrm{lf}}\mathbb{T}_{2-}^{-1}$. As a result
\[ U_A^{\mathrm{lf}}\mathbb{T}_{2-}^{-1} = \Phi_{0,1}\bigl( \mathcal{L}_{0,1}^A[d^{-1}] \bigr) = \Phi_{0,1}(\mathcal{L}_{0,1}^A)\bigl[ \Phi_{0,1}(d^{-1}) \bigr] = U'_A[\ell] = U'_A \mathbb{T}_{2-}^{-1}. \]
Finally by \cite[Prop.\,2.24]{BR2} we have $\textstyle U_A^\Pup = \bigoplus_{j=1}^{2^m} \bigl(U_A^{\mathrm{lf}}\,\mathbb{T}_{2-}^{-1}\bigr)s_j$, thus proving the lemma.
\end{proof}
\begin{remark}\label{rmkTechnicalPropUA}
{\rm 1. It follows from Cor.\,\ref{cor:dec25PhiA} that $U_A'= U_A^{\rm lf}$. This fact gives another proof of Lemma \ref{lemmaJLforUA} (but less elementary as it relies on the quantum Killing form).

2. Note that, because $L_i^{-2} \in U_A'$ for all $1 \leq i \leq m$, Lemma \ref{lemmaJLforUA} implies that the centralizer of $U_A'$ in $U_A^\Pup$ is equal to $\mathcal{Z}(U_A^\Pup)$. In particular, $\mathcal{Z}(U_A') \subset \mathcal{Z}(U_A^\Pup)$.}
\end{remark}

\indent Now consider the $\AD$-module $\mathcal{H}_A = \mathcal{O}_A \otimes_{\AD} U_A^\Pup \subset \mathcal{H}_q$, which is an integral form of $\mathcal{H}_q$. By the existence of $\Phi_{0,1}^A$, for all $h' \in U'_A \subset U'_q$, the element $\widehat{h'} \in \mathcal{H}_q$ defined in \eqref{elmtsChapeau} actually lies in $\mathcal{H}_A$. Since $K_{-2\lambda} \in U'_A$ for all $\lambda \in P_+$ \cite[Prop.\,2.24]{BR2} we have $\widehat{K_{-2\lambda}} \in \mathcal{H}_A$ and thus it makes sense to define $$\widehat{\mathcal{H}}_A := \mathcal{H}_A \otimes_{\AD[\mathbb{T}_{2-}]} \AD[\mathbb{T}]$$ which is an integral version of $\widehat{\mathcal{H}}_q$ in \eqref{defiHhatq}. Then by Lemma \ref{lemmaJLforUA} for all $h \in U_A^\Pup$ there is an element $\widehat{h} \in \widehat{\mathcal{H}}_A$ defined as in \eqref{elmtsChapeauHq}. It follows that restriction of \eqref{dressingMap2} to $U_A^\Pup$ gives an integral version $\widehat{\mathsf{D}}_{g,n}^{\AD} : U_A^\Pup \to \widehat{\mathcal{H}}_A^{\otimes g} \otimes (U_A^\Pup)^{\otimes n}$.

\indent When restricted to $\mathcal{L}_{0,1}^{\AD} \subset \mathcal{L}_{0,1}$ and $\mathcal{L}_{1,0}^{\AD} \subset \mathcal{L}_{1,0}$, the morphisms $\Phi_{0,1}$ and $\Phi_{1,0}$ take values in $U_A^\Pup$ and $\mathcal{H}_A$. We deduce by induction from the definition of $\widehat{\Phi}_{g,n}$ that it has an integral version $\widehat{\Phi}_{g,n}^{\AD} : \mathcal{L}_{g,n}^{\AD} \to \widehat{\mathcal{H}}_A^{\otimes g} \otimes (U_A^\Pup)^{\otimes n}$; it is obviously injective as restriction of $\widehat{\Phi}_{g,n}$ (Prop.\,\ref{propModifAlekseev}).

\smallskip

\indent Let $\widehat{\Phi}_{g,n}^{\e} :  \mathcal{L}_{g,n}^{\e} \to \widehat{\mathcal{H}}_{\e}^{\otimes g} \otimes (U_\e^\Pup)^{\otimes n}$ be the specialization of $\widehat{\Phi}_{g,n}^{\AD}$ at the root of unity $\eD$ (again recall from \eqref{choixED} that $\eD$ depends on $\e$, whence the notation).

\begin{prop}\label{AlekseevInjRootOf1}
The morphism of algebras $\widehat{\Phi}_{g,n}^{\e}$ is injective.
\end{prop}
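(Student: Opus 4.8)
The plan is to imitate the inductive proof of Lemma~\ref{lemmaInjAlekseev}, but to carry it out over $\AD$ so that it survives specialization, and then to settle the two base cases $\Phi_{0,1}^\e$ and $\Phi_{1,0}^\e$ by a purity argument. First I would record that the linear isomorphisms $T_{0,n}$ and $T_{g,n}$ appearing in the proof of Lemma~\ref{lemmaInjAlekseev} (with $\widehat{\mathsf D}$ in place of $\mathsf D$, as indicated in Prop.~\ref{propModifAlekseev}) are defined over $\AD$ and admit explicit $\AD$-linear inverses, obtained by replacing $x_{[1]}$ with $S^{-1}(x_{[1]})$. Hence their specializations $T_{0,n}^\e$, $T_{g,n}^\e$ are isomorphisms of $\mathbb{C}$-vector spaces by \eqref{speSurjAndIso}. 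Specializing the factorizations $\widehat{\Phi}_{0,n}^{\AD}=T_{0,n}^{\AD}\circ(\Phi_{0,1}^{\AD}\otimes \widehat{\Phi}_{0,n-1}^{\AD})$ and $\widehat{\Phi}_{g,n}^{\AD}=T_{g,n}^{\AD}\circ(\Phi_{1,0}^{\AD}\otimes \widehat{\Phi}_{g-1,n}^{\AD})$ at $\qD=\eD$, and using that over the field $\mathbb{C}$ a tensor product of injective linear maps is injective, the injectivity of $\widehat{\Phi}_{g,n}^\e$ follows by induction from the injectivity of the base cases $\widehat{\Phi}_{0,1}^\e=\Phi_{0,1}^\e$ and $\widehat{\Phi}_{1,0}^\e=\Phi_{1,0}^\e$.

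It then remains to prove that $\Phi_{0,1}^\e\colon \mathcal{L}_{0,1}^\e\to U_\e^\Pup$ is injective; this is where the real difficulty lies, since injectivity is not preserved by specialization in general. The map $\Phi_{0,1}^{\AD}\colon \mathcal{L}_{0,1}^{\AD}\to U_A^\Pup$ is injective (being a restriction of the generic $\Phi_{0,1}$, injective by \cite[Th.~3.13]{BFR}) and, by definition, is an isomorphism onto $U_A'$. Thus $\Phi_{0,1}^\e$ factors as the specialization of $\mathcal{L}_{0,1}^{\AD}\xrightarrow{\sim}U_A'$, which is again an isomorphism by \eqref{speSurjAndIso}, followed by the specialization of the inclusion $U_A'\hookrightarrow U_A^\Pup$. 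So it suffices to show that this inclusion specializes to an injection, equivalently that $U_A'$ is a pure $\AD$-submodule of $U_A^\Pup$, i.e. the quotient has no $(q-\eD)$-torsion. The plan is to extract this from the structural decomposition $U_A^\Pup=\bigoplus_{j}(U_A'\mathbb{T}_{2-}^{-1})s_j$ of Lemma~\ref{lemmaJLforUA}: the summands with $s_j\neq 1$ are free, and one checks that the potential torsion carried by $U_A'\mathbb{T}_{2-}^{-1}/U_A'$ is not supported at $\eD$, because the elements $K_{-2\lambda}\in \mathbb{T}_{2-}$ become invertible inside $(U_A')_\e$ at the root of unity (their $l$-th powers specialize to $1$). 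Granting purity, $U_A'\hookrightarrow U_A^\Pup$ specializes to an injection over the PID $\AD$, and $\Phi_{0,1}^\e$ is injective.

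For the second base case, I would deduce injectivity of $\Phi_{1,0}^\e\colon \mathcal{L}_{1,0}^\e\to \mathcal{H}_\e$ from that of $\Phi_{0,1}^\e$ by the same mechanism: $\Phi_{1,0}^{\AD}\colon \mathcal{L}_{1,0}^{\AD}\to \mathcal{H}_A=\mathcal{O}_A\otimes U_A^\Pup$ is injective as a restriction of the generic $\Phi_{1,0}$, and, using the triangular form of $\Phi_{1,0}$ on the generators $\mathfrak{i}_b$, $\mathfrak{i}_a$ (where the ``middle'' $\mathcal{O}_A$-component is essentially the identity and the $U_A^\Pup$-component is governed by $\Phi_{0,1}$), its image is pure in $\mathcal{H}_A$, the verification paralleling the case of $U_A'$. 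Purity then yields injectivity of the specialization $\Phi_{1,0}^\e$, and combining the two base cases with the reduction of the first paragraph gives the claim.

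I expect the main obstacle to be precisely the purity statement for $U_A'\subset U_A^\Pup$ at $\eD$ (equivalently, controlling the $(q-\eD)$-torsion of the quotient via Lemma~\ref{lemmaJLforUA}), since this is exactly the point at which the generic injectivity could fail to descend to the root of unity; everything else is either a verbatim repetition of the earlier inductive arguments or a formal consequence of \eqref{speSurjAndIso}.
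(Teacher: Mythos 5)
Your reduction to the base cases $(g,n)=(0,1)$ and $(1,0)$ in the first paragraph is fine and is exactly what the paper does (it invokes the argument of Lemma \ref{lemmaInjAlekseev} with only typographical changes). The problem is in your treatment of the base cases, which is where all the content of the statement lies. Your purity argument for $U_A'\subset U_A^\Pup$ hinges on the claim that the elements $K_{-2\lambda}\in\mathbb{T}_{2-}$ become invertible inside $(U_A')_\e$ ``because their $l$-th powers specialize to $1$''. This is false on two counts. First, the relation $K^l=1$ holds in the \emph{restricted} specialization $\Gamma_\e^\Q$ (and in $u_\e^\Q$), not in the \emph{unrestricted} one $U_\e^\Pup$, which is where $U_A'$ and its specialization map to: in $U_\e^\Pup$ the elements $K_\lambda^l$ are non-trivial central elements (they generate part of $\mathcal{Z}_0(U_\e^\Pup)$, see \eqref{Z0Uedefsep25}); confusing these two specializations is precisely the degeneracy the paper is careful about in \S\ref{specializUq}. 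Second, the invertibility claim itself fails: the inverse of $K_{-2\lambda}$ in $U_\e^\Pup$ is $K_{2\lambda}$, and this element is not available in $(U_A')_\e$. Indeed, under the isomorphism of algebras $(U_A')_\e\cong\mathcal{L}_{0,1}^\e$ (specialization of $\Phi_{0,1}^A:\mathcal{L}_{0,1}^{\AD}\overset{\sim}{\to}U_A'$), the element $K_{-2\lambda}$ corresponds to $\psi_{-\lambda}$, so your claim amounts to $\psi_{-\lambda}$ being a unit of $\mathcal{L}_{0,1}^\e$. But already at the integral level $\Phi_{0,1}^A(\psi_{-\lambda}^l)=K_{-2l\lambda}=\Phi_{0,1}^A(\psi_{-l\lambda})$, hence $\psi_{-\lambda}^l=\psi_{-l\lambda}$; after specialization $\psi_{-l\lambda}$ lies in $\mathcal{Z}_0(\mathcal{L}_{0,1}^\e)=\mathbb{F}\mathrm{r}_\e^*\bigl(\mathcal{O}(G)\bigr)$ and corresponds to a non-constant regular function on $G$. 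Since the units of $\mathcal{O}(G)$ are the constants ($G$ semisimple and simply connected), this function vanishes at some point, giving a maximal ideal $\mathfrak{m}\subset\mathcal{Z}_0(\mathcal{L}_{0,1}^\e)$ containing $\psi_{-l\lambda}$; the quotient $\mathcal{L}_{0,1}^\e/\mathfrak{m}\mathcal{L}_{0,1}^\e$ is nonzero by freeness (Prop.\,\ref{Z0Lgn}), and a unit cannot map to $0$ in a nonzero ring. So $\mathbb{T}_{2-}$ does not become invertible at $\e$, and your mechanism for excluding $(\qD-\eD)$-torsion in $U_A'\mathbb{T}_{2-}^{-1}/U_A'$ collapses. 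Note also that purity of $U_A'$ in $U_A^\Pup$ at $\eD$ is not an auxiliary statement: it is \emph{equivalent} to the injectivity of $\Phi_{0,1}^\e$ you are trying to prove, so reformulating it gains nothing without an independent argument.

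For comparison, the paper does not reprove injectivity of $\Phi_{0,1}^\e$: it cites \cite[Cor.\,2.25]{BR2}. And for $\Phi_{1,0}^\e$ it does not use purity at all: Appendix \ref{AppPhiinj} adapts the leading-term argument of \cite[Th.\,3.13]{BFR} directly at $\e$, using the weight decomposition of $\Oo_\e$ and the triangularity of $\Phi_{1,0}^\e$ with respect to the partial order on $P^2$; the conclusion there uses injectivity of $\Phi_{0,1}^\e$ together with invertibility of $K_{\lambda+\sigma}$ \emph{in $U_\e^\Pup$} (which is true), not inside the image $U_\e'$ (which, as explained above, is false). Your third paragraph, which defers the $\Phi_{1,0}^\e$ case to ``the verification paralleling the case of $U_A'$'', therefore inherits the same gap.
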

\begin{proof}
By the same argument as in Lemma \ref{lemmaInjAlekseev}, it suffices to check the cases $\Phi_{0,1}^{\e}$ and $\Phi_{1,0}^{\e}$. Injectivity of $\Phi_{0,1}^{\e}$ is proved in \cite[Cor.\,2.25]{BR2} (eventually the dependence of $\Phi_{0,1}^{\e}$ on $\eD$ reduces to $\e$). For $\Phi_{1,0}^{\e}$, the proof of \cite[Th.\,3.13]{BFR} for $\Phi_{1,0}$ can be adapted verbatim, by using properly defined weight spaces $_\lambda(\mathcal{O}_\e)_\mu$ of $\Oo_\e$. The details are given in App. \ref{AppPhiinj}.
\end{proof}

\begin{remark}
{\rm The faithful representation $\rho : \mathcal{H}_q \to \mathrm{End}_{\mathbb{C}(\qD)}(\mathcal{O}_q)$ specializes to a representation $\rho_\e : \mathcal{H}_\e \to \mathrm{End}_{\mathbb{C}}(\mathcal{O}_\e)$ which is no longer faithful: if $h \in \mathcal{Z}_0(U_\e^\Q)$ then $\rho_\e(h)(\varphi) = \textstyle \sum_{(h)} \langle \varphi_{(1)}, h \rangle_\e^\Pup \, \varphi_{(2)} = \varepsilon(h)\varphi$ by \eqref{fundamentalDegeneracy}. This fact implies that $\rho_\e$ factorizes through $\mathcal{O}_\e \,\otimes \, u_\e^\Q$, where $u_\e^\Q$ is the small quantum group defined in \S\ref{specializUq}. Also the morphism $F_{g,n}$ defined in \eqref{defFgn} is no longer injective when $\qD$ is specialized to $\eD$.}
\end{remark}

\subsection{Quantum moment map and invariant elements}\label{secQMMandInv}
Recall that $\Phi_{0,1} : \mathcal{L}_{0,1} \to U_q^\Pup$ is injective and we denote its image by $U'_q$. Then as in \eqref{defQMMGen} we have the quantum moment map
\[ \mu_{g,n} : U_q' \xrightarrow{\quad \Phi_{0,1}^{-1} \quad} \mathcal{L}_{0,1} \xrightarrow{\quad \mathfrak{d}_{g,n} \quad} \mathcal{L}_{g,n} \]
which is a morphism of $\mathbb{C}(\qD)$-algebras.
\begin{prop}\label{muinjq} $\mu_{g,n}$ is injective.
\end{prop}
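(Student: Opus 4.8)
By Lemma~\ref{lemmaInjQMMtrivial}, the injectivity of $\mu_{g,n}$ for all $g,n$ reduces to two cases: the case $n\neq 0$ (any $g$), which is always settled by Lemma~\ref{lemmaInjQMMtrivial}(1), and the case $n=0$, which by Lemma~\ref{lemmaInjQMMtrivial}(2) reduces to the single ground case $\mu_{1,0}$. So the plan is to prove that $\mu_{1,0}\colon U'_q\to\mathcal{L}_{1,0}$ is injective, and then invoke these two lemmas to conclude in full generality.

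To handle $\mu_{1,0}$, I would use the characterization provided by Corollary~\ref{rmkMu10Inj}. Since $\Phi_{0,1}$ and $\Phi_{1,0}$ are injective for $H=U_q^\Pup$ (by \cite[Th.\,3.13]{BFR}, as recalled in the proof of Prop.\,\ref{AlekseevInjRootOf1}), the hypothesis of that corollary is satisfied. Hence $\mu_{1,0}$ is injective if and only if the map $U'_q\to\mathrm{End}_{\mathbb{C}(\qD)}(\mathcal{O}_q)$, $h\mapsto\mathrm{coad}^r(h)$, is injective. Equivalently, by the conjugation identity $\mathrm{coad}^r(h)=T^{-1}\circ\mathrm{act}(S(h))\circ T$ appearing in the proof of Cor.\,\ref{rmkMu10Inj}, it suffices to show that $h\mapsto\mathrm{act}(h)$ is injective on $U'_q$, where $\mathrm{act}(h)(\varphi)=\sum_{(h)}h_{(2)}\rhd\varphi\lhd S^{-1}(h_{(1)})$. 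So the task becomes: if $h\in U'_q=(U_q^\Pup)^{\mathrm{lf}}$ acts as zero through $\mathrm{coad}^r$ on all of $\mathcal{O}_q$, then $h=0$.

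The key point is that over the generic field $\mathbb{C}(\qD)$ the pairing $\langle\,\cdot\,,\cdot\,\rangle^\Pup\colon\mathcal{O}_q\times U_q^\Pup\to\mathbb{C}(\qD)$ is non-degenerate (\S\ref{sec:Oq}), and the coregular actions $\rhd$, $\lhd$ are dual to multiplication in $U_q^\Pup$. Concretely, $\mathrm{coad}^r(h)(\varphi)=0$ for all $\varphi$ forces, after pairing against arbitrary $x\in U_q^\Pup$ and using \eqref{defCoad}, a family of identities of the form $\sum_{(h)}\langle\varphi_{(1)}\star S(\varphi_{(3)}),h\rangle\,\varphi_{(2)}=0$; testing these against matrix coefficients of the separating family of type-$1$ modules shows $h$ annihilates the adjoint action on every simple module $V_\mu$, $\mu\in P_+$. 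Since $\mathcal{C}$ is semisimple and its objects separate the points of $U_q^\Q$ (and, after extension of scalars, of $U_q^\Pup$ via \eqref{actionLi}), an element acting as zero in every finite-dimensional module must be $0$ by non-degeneracy of the pairing. This is essentially the statement that the adjoint representation of $U_q^\Pup$ on the direct sum of its finite-dimensional modules is faithful, which holds in the generic (non-root-of-unity) setting precisely because no relations degenerate.

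The main obstacle I anticipate is making rigorous the passage from ``$\mathrm{coad}^r(h)$ acts as zero on all of $\mathcal{O}_q$'' to ``$h=0$'': one must be careful that $h$ lies in the locally finite part $U'_q$ rather than all of $U_q^\Pup$, and that the non-degeneracy argument is applied correctly through the chosen separating family. The cleanest route is probably to bypass the general module-theoretic argument and instead invoke Prop.\,\ref{Fmuformula}: it gives $F_{1,0}\circ\mu_{1,0}=\mathrm{act}$, and since $F_{1,0}$ is injective (because $\Phi_{1,0}$ is), injectivity of $\mu_{1,0}$ is equivalent to injectivity of $\mathrm{act}|_{U'_q}$. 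Because $U'_q=(U_q^\Pup)^{\mathrm{lf}}$ consists of locally finite elements, the action $\mathrm{act}(h)$ is determined by its restriction to finitely many isotypic blocks of $\mathcal{O}_q$, and faithfulness there follows from non-degeneracy of the generic Hopf pairing. I would write the argument in this form, keeping the semisimplicity of $\mathcal{C}$ and non-degeneracy of $\langle\,\cdot\,,\cdot\,\rangle^\Pup$ as the two essential inputs, and then close by citing Lemma~\ref{lemmaInjQMMtrivial} to upgrade from $\mu_{1,0}$ to all $\mu_{g,n}$.
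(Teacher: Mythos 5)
Your reduction steps coincide with the paper's: Lemma~\ref{lemmaInjQMMtrivial} brings everything down to $\mu_{1,0}$, and Cor.\,\ref{rmkMu10Inj} (via injectivity of $\Phi_{1,0}$, \cite[Th.\,3.13]{BFR}) converts the problem into injectivity of $h \mapsto \mathrm{coad}^r(h)$ acting on $\mathcal{O}_q$. The gap is in the step you call the ``key point'' and then settle by appealing to semisimplicity of $\mathcal{C}$ plus non-degeneracy of $\langle\,\cdot\,,\cdot\,\rangle^\Pup$. From $\mathrm{coad}^r(h)=0$ you assert that $h$ annihilates \emph{every} simple module $V_\mu$, $\mu \in P_+$, and then conclude by the separating property of $\mathcal{C}$. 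But $\bigl(\mathcal{O}_q,\mathrm{coad}^r\bigr) \cong \bigoplus_{\lambda \in P_+} V_\lambda \otimes V_\lambda^*$, and the simple constituents of $V_\lambda \otimes V_\lambda^*$ have highest weights lying in $Q_+$, not in all of $P_+$ (all weights of $V_\lambda \otimes V_\lambda^*$ lie in the root lattice; for $\mathfrak{sl}_2$ only the odd-dimensional simples occur). So the hypothesis gives at best $h \in \bigcap_{\mu \in Q_+}\mathrm{Ann}(V_\mu)$ --- and even this much is not automatic: one must know that every $V_\mu$ with $\mu \in Q_+$ actually occurs with non-zero multiplicity in some $V_\lambda \otimes V_\lambda^*$, which is the non-trivial input the paper takes from \cite[Prop.\,17 and Def.\,5]{RS} (for $\lambda$ generic relative to $\mu$, $\mathrm{Hom}_{U_q}(V_\lambda \otimes V_\lambda^*, V_\mu) \cong V_\mu^0 \neq \{0\}$ when $\mu \in Q_+$).

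Second, even granting $h \in \bigcap_{\mu \in Q_+}\mathrm{Ann}(V_\mu)$, non-degeneracy of the evaluation pairing does not finish the argument: that non-degeneracy expresses the separating property of the \emph{full} family $\{V_\mu\}_{\mu \in P_+}$, whereas you only control the subfamily indexed by $Q_+$. Closing this is the real content of the paper's proof: by \cite[Lem.\,7.1.9]{Jos} it suffices that $Q_+$ be Zariski dense in $\mathfrak{h}^*$, and that density is established by a polynomial-vanishing argument (a polynomial over $\mathbb{C}(q)$ vanishing on $\mathbb{N}\alpha_1 \times \ldots \times \mathbb{N}\alpha_m$ is zero). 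Your appeal to local finiteness of $U_q'$ (``determined by finitely many isotypic blocks'') does not substitute for either input, since the obstruction is not finiteness of blocks but which simple modules occur in $\mathcal{O}_q$ at all. To repair your proof you would need to import both the multiplicity statement of \cite{RS} and the annihilator/Zariski-density argument via \cite{Jos} --- at which point you have reproduced the paper's proof.
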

\begin{proof} The morphism $\Phi_{1,0}$ is injective \cite[Th.\,3.13]{BFR}, so by Cor.\,\ref{rmkMu10Inj} it is enough to show that $\mathrm{coad}^r:U_q \to {\rm End}_{\mc(q)}(\Oo_q)$ is injective. Given an $U_q$-module $V$, denote by $\mathrm{Ann}(V)$ the set of elements $h\in U_q$ such that $h.V=\{0\}$. Since $\bigl( \mathcal{O}_q,\mathrm{coad}^r \bigr) = \textstyle\bigoplus_{\lambda \in P_+} V_\lambda \otimes V_\lambda^*$ as $U_q$-modules, we have to prove that $\textstyle \bigcap_{\lambda\in P_+} \mathrm{Ann}(V_\lambda \otimes V_\lambda^*)=\{0\}$. Given $\mu\in P_+$, if the vector space ${\rm Hom}_{U_q}(V_\lambda \otimes V_\lambda^*, V_\mu)\cong {\rm Hom}_{U_q}(V_\lambda , V_\mu\otimes V_\lambda)$ is non-zero then $\mu\in Q_+$. It is shown in \cite[Prop.\,17 and Def.\,5]{RS} that for generic $\lambda$ relative to $\mu$, this space is isomorphic to the $0$ ($=\lambda-\lambda$) weight subspace $V_\mu^0$ of $V_\mu$. Because $\mu \in Q_+$, we have $V_\mu^0\neq \{0\}$. Therefore, for every $\mu \in Q_+$ there exists $\lambda\in P_+$ such that ${\rm Hom}_{U_q}(V_\lambda \otimes V_\lambda^*, V_\mu)\ne \{0\}$. It follows that, if $\mathrm{coad}^r(h)=0$ then $h\in \textstyle \bigcap_{\mu\in Q_+} \mathrm{Ann}(V_\mu)$. By \cite[Lem.\,7.1.9]{Jos}, if we prove that $Q_+$ is Zariski dense in $\mathfrak{h}^*$ then $\textstyle \bigcap_{\mu\in Q_+} \mathrm{Ann}(V_\mu)=\{0\}$. Now, given $x\in U_q(\mathfrak{h})$, identified to $\mc(q)[\mathfrak{h}^*]$ in the usual way, if $x(\alpha)=0$ for every $\alpha\in Q_+$, then $x=0$, which implies that $Q_+$ is Zariski dense in $\mathfrak{h}^*$. This follows from the following basic fact: if $K$ is a field and $P\in K[X_1,\ldots,X_r]$ vanishes on a set $A_1\times \ldots \times A_r\subset K^r$ with all $A_i$ infinite, then $P=0$. Applying this to $K=\mc(q)$, $r=m$ (the rank of $\mathfrak{g}$) and $A_i = \mn \alpha_i$ for $i=1,\dots,m$, this concludes the proof.   
\end{proof}

The following result is the reason why we have introduced the modified Alekseev morphism $\widehat{\Phi}_{g,n}$ in \S\ref{subsecModifiedAlekseev}; compare with Remark \ref{remarqueProbleme}. Recall that $\widehat{\mathsf{D}}_{g,n}$ is defined in \eqref{dressingMap2}.

\begin{prop}\label{propAlekseevModifAndQMM}
It holds $\widehat{\Phi}_{g,n} \circ \mu_{g,n}(h') = \widehat{\mathsf{D}}_{g,n}(h')$ for all $h' \in U_q'$. In other words we have a commutative diagram
\[ \xymatrix@R=1.2em@C=3em{
\mathcal{L}_{0,1} \ar[r]^-{\mathfrak{d}_{g,n}} \ar[d]_-{\Phi_{0,1}} & \mathcal{L}_{g,n} \ar[d]^-{\widehat{\Phi}_{g,n}}\\
U'_q \ar[ur]^-{\mu_{g,n}} \ar[r]_-{\widehat{\mathsf{D}}_{g,n}}& \widehat{\mathcal{H}}_q^{\otimes g} \otimes (U_q^\Pup)^{\otimes n}
} \]
\end{prop}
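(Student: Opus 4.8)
The plan is to prove the equivalent statement $\widehat{\Phi}_{g,n}\circ\mathfrak{d}_{g,n}=\widehat{\mathsf{D}}_{g,n}\circ\Phi_{0,1}$ as morphisms $\mathcal{L}_{0,1}\to\widehat{\mathcal{H}}_q^{\otimes g}\otimes(U_q^\Pup)^{\otimes n}$, which is equivalent to the claim because $\mu_{g,n}=\mathfrak{d}_{g,n}\circ\Phi_{0,1}^{-1}$ by \eqref{defQMMGen}. I would run an induction following the very scheme used to define $\widehat{\Phi}_{g,n}$ in \eqref{modifiedAlekseevValue}: first reach the cases $(0,n)$ from the base case $(0,1)$ by adding punctures, then reach the cases $(g,n)$ from the $(0,n)$ (together with the base case $(1,0)$) by adding handles. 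Both base cases are immediate: for $(0,1)$ one has $\mathfrak{d}_{0,1}=\mathrm{id}$ and $\widehat{\mathsf{D}}_{0,1}=\mathrm{id}$ (from \eqref{pseudoQMMgnH} and \eqref{dressingMap2}), so both composites are the identity of $U_q'$; for $(1,0)$ the identity $\Phi_{1,0}\circ\mathfrak{d}_{1,0}=\widehat{\mathsf{D}}_{1,0}\circ\Phi_{0,1}$ is exactly Proposition \ref{propCommDiag10}(2), recalling that $\widehat{\Phi}_{1,0}=\Phi_{1,0}$.

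As preparation I record, using coassociativity and the fact that $\widehat{\mathsf{D}}_{g,n}$ is an algebra morphism, that $\widehat{\mathsf{D}}_{0,n+1}=(\mathrm{id}\otimes\widehat{\mathsf{D}}_{0,n})\circ\Delta$ (indeed $\widehat{\mathsf{D}}_{0,n}=\Delta^{(n)}$) and $\widehat{\mathsf{D}}_{g+1,n}=(\widehat{\mathsf{D}}_{1,0}\otimes\widehat{\mathsf{D}}_{g,n})\circ\Delta$; dually, \eqref{pseudoQMMgnH} yields, under the braided tensor decomposition, $\mathfrak{d}_{0,n+1}(\varphi)=\sum_{(\varphi)}\varphi_{(1)}\,\widetilde{\otimes}\,\mathfrak{d}_{0,n}(\varphi_{(2)})$ and $\mathfrak{d}_{g+1,n}(\varphi)=\sum_{(\varphi)}\mathfrak{d}_{1,0}(\varphi_{(1)})\,\widetilde{\otimes}\,\mathfrak{d}_{g,n}(\varphi_{(2)})$. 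For the puncture step I apply $\widehat{\Phi}_{0,n+1}$ to $\mathfrak{d}_{0,n+1}(\varphi)$ via the first line of \eqref{modifiedAlekseevValue}, expand the coaction of $\varphi_{(1)}$ by \eqref{defCoactCoad}, invoke the induction hypothesis $\widehat{\Phi}_{0,n}\circ\mathfrak{d}_{0,n}=\widehat{\mathsf{D}}_{0,n}\circ\Phi_{0,1}$ on the second factor, and collect everything under $\widehat{\mathsf{D}}_{0,n}$ (an algebra morphism). A product $\Phi^+(\varphi_{(1)})\Phi^+(S(\varphi_{(3)}))\Phi^+(\varphi_{(4)})\Phi^-(S(\varphi_{(5)}))$ then appears inside $\widehat{\mathsf{D}}_{0,n}$; since $\Phi^+$ is an algebra morphism the two middle factors collapse by the antipode axiom in $H^\circ$, namely $\sum S(\varphi_{(3)})\star\varphi_{(4)}=\varepsilon(\varphi_{(3)})1$, leaving $\Phi^+(\varphi_{(1)})\Phi^-(S(\varphi_{(3)}))$. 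Recognising this through the coproduct formula \eqref{coproduitSurPhi01} for $\Phi_{0,1}$ rewrites the whole expression as $(\mathrm{id}\otimes\widehat{\mathsf{D}}_{0,n})\Delta\bigl(\Phi_{0,1}(\varphi)\bigr)=\widehat{\mathsf{D}}_{0,n+1}\bigl(\Phi_{0,1}(\varphi)\bigr)$.

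The handle step runs by the identical mechanism. I apply $\widehat{\Phi}_{g+1,n}$ to $\mathfrak{d}_{g+1,n}(\varphi)$ via the second line of \eqref{modifiedAlekseevValue}; since $\mathfrak{d}_{1,0}$ is a morphism of $H^\circ$-comodules (it is $\mathrm{coad}^r$-linear, as noted right after \eqref{pseudoQMMgnH}) the coaction of $\mathfrak{d}_{1,0}(\varphi_{(1)})$ passes to $\varphi_{(1)}$, and I use $\Phi_{1,0}\circ\mathfrak{d}_{1,0}=\widehat{\mathsf{D}}_{1,0}\circ\Phi_{0,1}$ (Prop. \ref{propCommDiag10}(2)) on the first factor and the induction hypothesis on the rest. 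The same antipode collapse followed by \eqref{coproduitSurPhi01} converts the result into $(\widehat{\mathsf{D}}_{1,0}\otimes\widehat{\mathsf{D}}_{g,n})\Delta\bigl(\Phi_{0,1}(\varphi)\bigr)=\widehat{\mathsf{D}}_{g+1,n}\bigl(\Phi_{0,1}(\varphi)\bigr)$, closing the induction.

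I expect the main obstacle to be purely organizational rather than conceptual: keeping the nested Sweedler indices straight through the coaction \eqref{defCoactCoad} and the fivefold iterated coproduct, and checking compatibility of the two meanings of the ``hat'' elements, so that the base case $(1,0)$ feeds correctly into a recursion taking values in $\widehat{\mathcal{H}}_q$. Concretely, for $h'\in U_q'$ the element $\widehat{h'}\in\mathcal{H}_q$ produced by \eqref{elmtsChapeau} and used in Prop. \ref{propCommDiag10} must be identified with the element $\widehat{h}$ of the extension $\widehat{\mathcal{H}}_q$ defined in \eqref{elmtsChapeauHq}; this holds because for $h\in U_q'$ the decomposition \eqref{relatingLfAndUq} is trivial (no $\mathbb{T}$-part is needed), so the two constructions agree. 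Once this matching and the two base cases are in place, the algebraic content—the antipode collapse followed by recognition of \eqref{coproduitSurPhi01}—is literally the same at every inductive step, and the verification is routine.
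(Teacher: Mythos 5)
Your proposal is correct and follows essentially the same route as the paper's proof: induction on the recursive definition \eqref{modifiedAlekseevValue}, with Prop.\,\ref{propCommDiag10} supplying the $(1,0)$ base case, and with the comodule-morphism property of $\mathfrak{d}_{1,0}$, the antipode collapse through $\Phi^{\pm}$, and recognition of \eqref{coproduitSurPhi01} as the key algebraic steps. The only differences are cosmetic: the paper settles the $g=0$ case by citing \cite[Prop.\,6.18]{BR1} where you re-derive it via your puncture step, and your explicit verification that the hat elements of \eqref{elmtsChapeau} and \eqref{elmtsChapeauHq} agree on $U_q'$ (because no $\mathbb{T}$-part is needed in \eqref{relatingLfAndUq}) makes precise a point the paper leaves implicit.
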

\begin{proof}
For $g=0$ this is proven in \cite[Prop.\,6.18]{BR1}; note that in this case $\widehat{\Phi}_{0,n} = \Phi_{0,n}$ and $\widehat{\mathsf{D}}_{0,n}$ is the iterated coproduct $\Delta^{(n)}$. For $(g,n) = (1,0)$ we have $\widehat{\Phi}_{1,0}(\varphi) = \Phi_{1,0}(\varphi)$ for all $\varphi\in \Ll_{0,1}$, and thus the claim is proved by Prop.\,\ref{propCommDiag10}. It remains to do induction on $g$. For all $\varphi \in \mathcal{L}_{0,1}$ and with implicit summation for the coproduct we have
\begin{align*}
& \bigl(\widehat{\Phi}_{g+1,n} \circ \mathfrak{d}_{g+1,n})(\varphi) = \widehat{\Phi}_{g+1,n}\bigl( \mathfrak{d}_{1,0}(\varphi_{(1)}) \otimes \mathfrak{d}_{g,n}(\varphi_{(2)}) \bigr)\\
=\:&\widehat{\Phi}_{1,0}\bigl( \mathfrak{d}_{1,0}(\varphi_{(1)})_{[2]} \bigr) \otimes \widehat{\mathsf{D}}_{g,n}\bigl( \Phi^+\bigl(\mathfrak{d}_{1,0}(\varphi_{(1)})_{[1]} \bigr) \bigr) \widehat{\Phi}_{g,n}\bigl( \mathfrak{d}_{g,n}(\varphi_{(2)}) \bigr)\\
=\:& \widehat{\Phi}_{1,0}\bigl( \mathfrak{d}_{1,0}(\varphi_{(2)}) \bigr) \otimes \widehat{\mathsf{D}}_{g,n}\bigl( \Phi^+\bigl(S(\varphi_{(1)})\star \varphi_{(3)} \bigr) \bigr) \widehat{\Phi}_{g,n}\bigl( \mathfrak{d}_{g,n}(\varphi_{(4)}) \bigr)\\
=\:& \widehat{\mathsf{D}}_{1,0}\bigl( \Phi_{0,1}(\varphi_{(2)}) \bigr) \otimes \widehat{\mathsf{D}}_{g,n}\bigl[ \Phi^+\bigl(\varphi_{(1)}\star S(\varphi_{(3)}) \bigr)  \Phi_{0,1}(\varphi_{(4)}) \bigr]\\
=\:& \widehat{\mathsf{D}}_{1,0}\bigl( \Phi_{0,1}(\varphi_{(2)}) \bigr) \otimes \widehat{\mathsf{D}}_{g,n}\bigl[ \Phi^+(\varphi_{(1)})  \Phi^-(S(\varphi_{(3)})) \bigr]\\
=\:& \widehat{\mathsf{D}}_{1,0}\bigl( \Phi_{0,1}(\varphi)_{(1)} \bigr) \otimes \widehat{\mathsf{D}}_{g,n}\bigl( \Phi_{0,1}(\varphi)_{(2)}) \bigr) = \widehat{\mathsf{D}}_{g+1,n}\bigl( \Phi_{0,1}(\varphi) \bigr)
\end{align*}
where the first equality is by definition of $\mathfrak{d}_{g+1,n}$, the second is by \eqref{modifiedAlekseevValue}, the third uses that $\mathfrak{d}_{1,0}$ is a morphism of $U_q^\Pup$-modules $\mathcal{L}_{0,1} \to \mathcal{L}_{1,0}$ (see \eqref{pseudoQMMgnH}) hence it is a morphism of left $\mathcal{O}_q$-comodules meaning that $\mathfrak{d}_{1,0}(\alpha)_{[1]} \otimes \mathfrak{d}_{1,0}(\alpha)_{[2]} = \bigl( \alpha_{(1)} \star S(\alpha_{(3)}) \bigr) \otimes \mathfrak{d}_{1,0}(\alpha_{(2)})$ by \eqref{defCoactCoad}, the fourth uses the induction hypothesis, the fifth uses the definition of $\Phi_{0,1}$ in \eqref{RSDphi} and the fact that $\Phi^+$ is an algebra morphism to apply the antipode axiom, the sixth is by \eqref{coproduitSurPhi01} and the last is by definition of $\widehat{\mathsf{D}}_{g+1,n}$.
\end{proof}

\indent The restriction of $\Phi_{0,1}$ to the $A$-subalgebra $\mathcal{O}_A$ gives the integral version $\Phi_{0,1}^A : \mathcal{O}_A \hookrightarrow U_A^\Pup$. Similarly, $\mathfrak{d}_{g,n}$ defined in \S\ref{subsecQMMgeneral} has an integral version $\mathfrak{d}_{g,n}^{\AD} : \mathcal{L}_{0,1}^{\AD} \to \mathcal{L}_{g,n}^{\AD}$ because $\Oo_{A}$ is a Hopf $\AD$-algebra and $\mathcal{L}_{1,0}^{\AD}$ is an $\AD$-algebra (this last argument is for the ``$\mathfrak{d}_{1,0}$ part'' of the definition). Hence we have the integral version of the quantum moment map
\[ \mu_{g,n}^{\AD} : U_A' \xrightarrow{\quad (\Phi_{0,1}^A)^{-1} \quad} \mathcal{L}_{0,1}^{\AD} \xrightarrow{\quad \mathfrak{d}_{g,n}^{\AD} \quad} \mathcal{L}_{g,n}^{\AD} \]
where $U'_A$ is the image of $\Phi_{0,1}^A$. We denote its specialization by $U_\e'$, and let $\mu_{g,n}^{\e} : U'_{\e} \to \mathcal{L}_{g,n}^{\e}$ be the specialization of $\mu_{g,n}^{\AD}$.

\begin{teo}\label{centralizerMu}
1. The subalgebra of $\mathrm{coad}^r$-invariant elements $\mathcal{L}_{g,n}^{U_q}$ is the centralizer of $\mu_{g,n}(U_q')$ in $\mathcal{L}_{g,n}$.
\\2. The subalgebra of $\mathrm{coad}^r$-invariant elements $\mathcal{L}_{g,n}^{U_A}$ is the centralizer of $\mu_{g,n}^{\AD}(U_{A}')$ in $\mathcal{L}_{g,n}^{\AD}$.
\\3. The subalgebra of $\mathrm{coad}^r$-invariant elements $\mathcal{L}_{g,n}^{u_\e}$ under the action of $u_\e^\Q \subset \Gamma_\e^\Q$ is the centralizer of $\mu_{g,n}^{\e}(U_{\e}')$ in $\mathcal{L}_{g,n}^{\e}$.
\end{teo}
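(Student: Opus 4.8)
Throughout write $Z(S)$ for the centralizer in $\mathcal{L}_{g,n}$ of a subset $S$. The plan is to transport the entire statement through the modified Alekseev morphism $\widehat{\Phi}_{g,n}$ of \S\ref{subsecModifiedAlekseev}, which is precisely the tool designed to make the quantum moment map inner. The starting point is the equivalence, for $x\in\mathcal{L}_{g,n}$,
\[ x\in Z\bigl(\mu_{g,n}(U_q')\bigr) \iff \widehat{\Phi}_{g,n}(x)\ \text{commutes with}\ \widehat{\mathsf{D}}_{g,n}(U_q'), \]
which is immediate from Prop.\,\ref{propAlekseevModifAndQMM} (giving $\widehat{\Phi}_{g,n}\circ\mu_{g,n}=\widehat{\mathsf{D}}_{g,n}$ on $U_q'$) together with the fact that $\widehat{\Phi}_{g,n}$ is an injective algebra morphism (Prop.\,\ref{propModifAlekseev}). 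On the other hand the target $\widehat{\mathcal{H}}_q^{\otimes g}\otimes(U_q^\Pup)^{\otimes n}$ carries its $U_q^\Pup$-action as conjugation by $\widehat{\mathsf{D}}_{g,n}$ (this is how the module-algebra structure was set up, see the proof of Prop.\,\ref{propModifAlekseev}), so the antipode axiom shows that $\widehat{\Phi}_{g,n}(x)$ is invariant in the target iff it commutes with all of $\widehat{\mathsf{D}}_{g,n}(U_q^\Pup)$; and by $U_q^\Pup$-equivariance plus injectivity of $\widehat{\Phi}_{g,n}$ this holds iff $x$ is $\mathrm{coad}^r$-invariant, i.e. $x\in\mathcal{L}_{g,n}^{U_q}$. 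Thus Part 1 reduces to the single implication: commuting with $\widehat{\mathsf{D}}_{g,n}(U_q')$ forces commuting with $\widehat{\mathsf{D}}_{g,n}(U_q^\Pup)$.

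The bridge from $U_q'$ to $U_q^\Pup$ is the only genuinely non-formal step, and it is where \eqref{relatingLfAndUq} enters. Since $U_q^\Pup$ is generated by $U_q'$ and the torus $\mathbb{C}(\qD)[\mathbb{T}]$, it suffices to reach the grouplikes $K_\mu$. Now $K_{-2\lambda}\in U_q'$ for all $\lambda\in P_+$, so commutation with $\widehat{\mathsf{D}}_{g,n}(K_{-2\lambda})$ is available; by the equivalence above and injectivity of $\widehat{\Phi}_{g,n}$ this says $\mathrm{coad}^r(K_{-2\lambda})(x)=x$ for all $\lambda\in P_+$. Decomposing $x$ into $\mathrm{coad}^r$-weight spaces, whose weights lie in $Q$, and using nondegeneracy of the scalar product $(\cdot,\cdot)$ on $\mathfrak{h}^*$, one sees that $(2\lambda,\beta)=0$ for all $\lambda\in P_+$ forces $\beta=0$; hence $x$ has weight $0$ and is invariant under the whole torus. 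The equivariance computation $\widehat{\Phi}_{g,n}(\mathrm{coad}^r(K_\mu)(x))=\widehat{\mathsf{D}}_{g,n}(K_\mu^{-1})\,\widehat{\Phi}_{g,n}(x)\,\widehat{\mathsf{D}}_{g,n}(K_\mu)$ then yields commutation with every $\widehat{\mathsf{D}}_{g,n}(K_\mu)$, completing the bridge and hence Part 1.

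Part 2 follows by restriction: $\widehat{\Phi}_{g,n}^{\AD}$, $\widehat{\mathsf{D}}_{g,n}^{\AD}$ and $\mu_{g,n}^{\AD}$ are the restrictions of their generic counterparts to the $\AD$-forms, and since $\mu_{g,n}^{\AD}(U_A')$ spans $\mu_{g,n}(U_q')$ over $\mathbb{C}(\qD)$, the centralizer of $\mu_{g,n}^{\AD}(U_A')$ in $\mathcal{L}_{g,n}^{\AD}$ equals $Z\bigl(\mu_{g,n}(U_q')\bigr)\cap\mathcal{L}_{g,n}^{\AD}=\mathcal{L}_{g,n}^{U_q}\cap\mathcal{L}_{g,n}^{\AD}=\mathcal{L}_{g,n}^{U_A}$ by Part 1. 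The main obstacle is Part 3, since specialization at $\e$ does not preserve injectivity of inclusions, so one cannot naively intersect or specialize invariants. The decisive inputs are that $\widehat{\Phi}_{g,n}^{\e}$ remains injective (Prop.\,\ref{AlekseevInjRootOf1}) and still satisfies $\widehat{\Phi}_{g,n}^{\e}\circ\mu_{g,n}^{\e}=\widehat{\mathsf{D}}_{g,n}^{\e}$ together with the conjugation/equivariance description, all obtained by specializing Prop.\,\ref{propAlekseevModifAndQMM} and Prop.\,\ref{propModifAlekseev}. One then repeats the equivalence chain of Part 1, with two simplifications proper to the odd root of unity: first, the coadjoint action of $\Gamma_\e^\Q$ restricted to $u_\e^\Q$ coincides, through $\widetilde{p}:U_\e^\Pup\twoheadrightarrow u_\e^\Q$, with the specialized $U_\e^\Pup$-action (Lemma \ref{evalinnerprodA}), so that $\mathcal{L}_{g,n}^{u_\e}$ is exactly the space of $U_\e^\Pup$-invariants; second, and this removes the torus bridge altogether, the assumption $\mathrm{gcd}(l,2)=1$ gives $K_\mu=(K_{-2\mu})^{k}$ in $\Gamma_\e^\Q$ with $k\equiv -2^{-1}\!\pmod l$, which together with $K_\mu^{l}=1$ and Lemma \ref{lemmaJLforUA} shows that the whole torus lies in $U_\e'$, whence $U_\e'=U_\e^\Pup$. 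Therefore $\widehat{\mathsf{D}}_{g,n}^{\e}(U_\e')=\widehat{\mathsf{D}}_{g,n}^{\e}(U_\e^\Pup)$, the chain closes directly, and $\mathcal{L}_{g,n}^{u_\e}=Z\bigl(\mu_{g,n}^{\e}(U_\e')\bigr)$.
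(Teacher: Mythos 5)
Your Parts 1 and 2 are correct and follow essentially the paper's own strategy: transport the problem through the injective morphism $\widehat{\Phi}_{g,n}$ using $\widehat{\Phi}_{g,n}\circ\mu_{g,n}=\widehat{\mathsf{D}}_{g,n}$, then bridge from $U_q'$ to $U_q^\Pup$ via the torus, using $K_{-2\lambda}\in U_q'$ and the Joseph--Letzter decomposition \eqref{relatingLfAndUq}. Your weight-space argument on $x$ itself (rather than the paper's basis argument in the target algebra) is valid at generic $q$, and deducing Part 2 from Part 1 by restriction is a legitimate shortcut where the paper just says the proofs are parallel.

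Part 3, however, contains a genuine error. You claim that $\mathrm{gcd}(l,2)=1$ together with ``$K_\mu^l=1$'' puts the whole torus inside $U_\e'$, whence $U_\e'=U_\e^\Pup$. The relation $K_\mu^l=1$ holds in the \emph{restricted} specialization $\Gamma_\e^\Q$ (and in $u_\e^\Q$), see \eqref{toto}, but $U_\e'$ is a subalgebra of the \emph{unrestricted} specialization $U_\e^\Pup$, where the $K_\lambda^l$ are nontrivial central elements: together with $E_\beta^l,F_\beta^l$ they generate the central subalgebra $\mathcal{Z}_0(U_\e^\Pup)$ of \eqref{Z0Uedefsep25}, whose maximal spectrum is the $\dim(\mathfrak{g})$-dimensional dual Poisson--Lie group $G^*$. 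In fact $U_\e'\ne U_\e^\Pup$: the Cartan part of $U_A'=(U_A^\Pup)^{\rm lf}$ is the $A$-span of $\{K_{-2\lambda}\}_{\lambda\in P_+}$, so the Cartan part of its specialization $U_\e'$ is the span of the linearly independent elements $K_{-2\lambda}$, $\lambda\in P_+$, and contains neither $L_i$ nor any $K_{2\lambda}$. (Were $U_\e'=U_\e^\Pup$ true, the quantum moment map would be globally defined and the entire difficulty this theorem addresses --- see the remark following it in the paper --- would evaporate.) So your ``the chain closes directly'' has no support.

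The irony is that at a root of unity the torus bridge is \emph{more} delicate than at generic $q$, not removable: $\eD^{D(\lambda,\beta)}=1$ no longer forces $(\lambda,\beta)=0$, only a congruence modulo $l$. The repair is the paper's argument: commutation with $\widehat{\mathsf{D}}_{g,n}^{\e}(K_{-2\lambda})$ for all $\lambda\in P_+$ gives, on each relevant weight $\beta$, the congruence $2D(\lambda,\beta)\equiv 0 \pmod{l}$; the hypotheses $\mathrm{gcd}(2,l)=\mathrm{gcd}(D,l)=1$ then yield $\eD^{D(\mu,\beta)}=1$ for all $\mu\in P$, hence commutation with all of $\widehat{\mathsf{D}}_{g,n}^{\e}(K_\mu)$, $\mu \in P$. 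One then reaches all of $\widehat{\mathsf{D}}_{g,n}^{\e}(U_\e^\Pup)$ using the elements $L_i^{-2}E_i,\,F_iK_iL_i^{-2}\in U_\e'$ of \eqref{commutationYassumption} (equivalently, using that $U_\e^\Pup$ is generated by $U_\e'$ and the torus, by specializing Lemma \ref{lemmaJLforUA}), and concludes by the antipode argument and injectivity of $\widehat{\Phi}_{g,n}^{\e}$. Your point (c), that $u_\e$-invariance coincides with invariance under the specialized $U_\e^\Pup$-action via $\widetilde{p}$, is correct and is indeed why $u_\e$ appears in the statement.
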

In particular note that item 3 implies:
\begin{cor} \label{ZinLue} We have $\mathcal{Z}(\mathcal{L}_{g,n}^\e) \subset \mathcal{Z}(\Ll_{g,n}^{u_\e})$.
\end{cor}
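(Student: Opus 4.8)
The plan is to deduce the inclusion directly from Theorem \ref{centralizerMu}(3), so that the statement becomes a purely formal consequence of a centralizer description. Write $B := \mu_{g,n}^{\e}(U_{\e}')$ for the image of the quantum moment map inside $\mathcal{L}_{g,n}^{\e}$. Theorem \ref{centralizerMu}(3) identifies $\mathcal{L}_{g,n}^{u_\e}$ precisely with the centralizer of $B$ in $\mathcal{L}_{g,n}^{\e}$, and this is the only nontrivial input I would need.

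First I would show that every $z \in \mathcal{Z}(\mathcal{L}_{g,n}^{\e})$ already lies in $\mathcal{L}_{g,n}^{u_\e}$. Since $B \subset \mathcal{L}_{g,n}^{\e}$ and $z$ commutes with all of $\mathcal{L}_{g,n}^{\e}$ by centrality, $z$ in particular commutes with every element of $B$; hence $z$ belongs to the centralizer of $B$ in $\mathcal{L}_{g,n}^{\e}$, which by Theorem \ref{centralizerMu}(3) equals $\mathcal{L}_{g,n}^{u_\e}$. Thus $\mathcal{Z}(\mathcal{L}_{g,n}^{\e}) \subseteq \mathcal{L}_{g,n}^{u_\e}$.

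Next, for such a $z$ I would verify it is central in the subalgebra $\mathcal{L}_{g,n}^{u_\e}$. Because $\mathcal{L}_{g,n}^{u_\e}$ is a subalgebra of $\mathcal{L}_{g,n}^{\e}$ and $z$ commutes with every element of $\mathcal{L}_{g,n}^{\e}$, it commutes a fortiori with every element of $\mathcal{L}_{g,n}^{u_\e}$. Together with the membership $z \in \mathcal{L}_{g,n}^{u_\e}$ established in the previous step, this yields $z \in \mathcal{Z}(\mathcal{L}_{g,n}^{u_\e})$, which is exactly the asserted inclusion $\mathcal{Z}(\mathcal{L}_{g,n}^\e) \subset \mathcal{Z}(\Ll_{g,n}^{u_\e})$.

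There is essentially no obstacle: all the substance is packaged in the centralizer identification of Theorem \ref{centralizerMu}(3), and the remainder is the elementary observation that a central element of an ambient algebra which happens to lie in a subalgebra is automatically central in that subalgebra. The only point to state with care is that the argument produces the full inclusion of centers, not merely $\mathcal{Z}(\mathcal{L}_{g,n}^\e) \subset \mathcal{L}_{g,n}^{u_\e}$, which is why both of the above steps are needed.
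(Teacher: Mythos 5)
Your proof is correct and is precisely the argument the paper intends: the corollary is stated as an immediate consequence of Theorem \ref{centralizerMu}(3), namely that a central element of $\mathcal{L}_{g,n}^{\e}$ commutes with $\mu_{g,n}^{\e}(U_{\e}')$ and hence lies in the centralizer $\mathcal{L}_{g,n}^{u_\e}$, where it remains central. Your explicit separation of the two steps (membership in $\mathcal{L}_{g,n}^{u_\e}$, then centrality there) is exactly the right bookkeeping and matches the paper's reasoning.
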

\begin{proof}[Proof of Theorem \ref{centralizerMu}.]
The proofs of the three items are completely similar. We note first thanks to the QMM property \eqref{mudef} that there is an obvious inclusion
\[ \bigl\{ \text{invariant elements for } \mathrm{coad}^r \bigr\} \subset \bigl\{ \text{elements commuting with } \mu_{g,n}(h') \text{ for all } h' \in H' \bigr\} \]
where $H$ means either $U_q$, $U_A$ or $U_\e$. In the root of unity case, the appearance of $u_\e$ instead of $U_\e$ is due to the degeneracy explained after \eqref{defSmallInvEps}. 

The converse inclusion, much harder, relies on the same arguments in all three cases; we discuss it in the case of item 3. Let $x \in \mathcal{L}_{g,n}^{\e}$ such that $x \mu_{g,n}^{\e}(h') = \mu_{g,n}^{\e}(h') x$ for all $h' \in U'_\e$. We consider $y = \widehat{\Phi}_{g,n}^{\e}(x) \in \widehat{\mathcal{H}}_\e^{\otimes g} \otimes (U_\e^\Pup)^{\otimes n}$, which by Prop.\,\ref{propAlekseevModifAndQMM} satisfies $y \widehat{\mathsf{D}}_{g,n}^{\e}(h') = \widehat{\mathsf{D}}_{g,n}^{\e}(h') y$ for all $h' \in U'_\e$, where $\widehat{\mathsf{D}}_{g,n}^{\e}$ is the specialization of the integral version $\widehat{\mathsf{D}}^{\AD}_{g,n}$ obtained after Lemma \ref{lemmaJLforUA}. The essential difference between $\mu_{g,n}^{\e}$ and $\widehat{\mathsf{D}}_{g,n}^{\e}$ is that $\widehat{\mathsf{D}}_{g,n}^{\e}$ is defined on the whole $U_\e^\Pup$ and not just on $U'_\e$. It is known \cite[Proof of Prop.\,2.24]{BR2} that
\begin{equation}\label{commutationYassumption}
K_{-2\lambda} \in U'_\e, \quad L_i^{-2}E_i \in U'_\e, \quad F_iK_iL_i^{-2} \in U'_\e \qquad (\forall \, \lambda \in P_+, \:\: \forall \, 1 \leq i \leq m).
\end{equation}
By assumption $y$ commutes with the images of these elements through $\widehat{\mathsf{D}}_{g,n}^{\e}$. Let us prove that $y$ actually commutes with $\widehat{\mathsf{D}}_{g,n}^{\e}(K_\lambda) = \bigl( \widehat{K_\lambda} \, K_\lambda \bigr)^{\otimes g} \otimes K_\lambda^{\otimes n}$ for all $\lambda \in P$. As an $A$-module, $\mathcal{H}_A = \mathcal{O}_A \otimes U_A^\Pup$. Since $\mathcal{O}_A$ has an $A$-basis $(\varphi_i)_{i \in \mathcal{I}}$ of weight vectors for the coadjoint action \cite[Th.\,2.8]{BR2} and $U_A^\Pup$ has an $A$-basis $(x_j)_{j \in \mathcal{J}}$ of weight vectors for the adjoint action, we have the $A$-basis $(\varphi_i x_j)_{i,j}$ which satisfies
\[ \forall \, \lambda \in P, \quad \widehat{K_{\lambda}}K_{\lambda}\varphi_i x_j = \bigl( K_\lambda \rhd \varphi_i \lhd K_{-\lambda} \bigr) K_\lambda x_j K_\lambda^{-1} \, \widehat{K_{\lambda}}K_{\lambda} = \qD^{D(\lambda,w_i+w_j)}\varphi_i x_j \widehat{K_{\lambda}}K_{\lambda} \]
where $w_i,w_j \in P$ are the weights of $\varphi_i$, $x_j$. It follows that $\mathcal{F} = \bigl( \varphi_i x_j \otimes_{\AD[\mathbb{T}_{2-}]} K_{2\mu}s_k \bigr)_{\mu \in P_+, i,j,k}$ is an $A$-basis of $\widehat{\mathcal{H}}_A$ whose elements $\qD$-commute with $\widehat{K_{\lambda}}K_{\lambda}$ for all $\lambda$, where we recall that $(s_k)_k$ is a family of representatives for $\mathbb{T}/\mathbb{T}_2$. Then $\mathcal{B} := \mathcal{F}^{\otimes g} \otimes \bigl[(x_j)_{j \in \mathcal{J}}\bigr]^{\otimes n}$ is a basis of $\widehat{\mathcal{H}}_A^{\otimes g} \otimes (U_A^\Pup)^{\otimes n}$ whose elements $\qD$-commute with $\widehat{\mathsf{D}}^{\AD}_{g,n}(K_{\lambda})$ for all $\lambda$. The same fact remains true after specialization at $\e$, with basis denoted by $\mathcal{B}_\e$. Assume that $b \in \mathcal{B}_\e$ satisfies $\widehat{\mathsf{D}}^{\e}_{g,n}(K_{2\lambda})\,b\,\widehat{\mathsf{D}}^{\e}_{g,n}(K_{-2\lambda}) = b$ for all $\lambda \in P_+$. By definition of $\mathcal{B}_\e$, there exists $\mu \in P$ such that $\widehat{\mathsf{D}}^{\e}_{g,n}(K_{\lambda})\,b\,\widehat{\mathsf{D}}^{\e}_{g,n}(K_{-\lambda}) = \eD^{D(\lambda,\mu)}b$ for all $\lambda$. It thus satisfies $2D(\lambda,\mu) \equiv 0 \pmod{l}$. Since $\mathrm{gcd}(D,l) = \mathrm{gcd}(2,l) = 1$ by assumption on $\e$, it holds $(\lambda,\mu) \equiv 0 \pmod{l}$ and thus $\widehat{\mathsf{D}}(K_{\lambda})$ commutes with $b$ for all $\lambda$. The conclusion is that if an element in $\widehat{\mathcal{H}}_\e^{\otimes g} \otimes (U_\e^\Pup)^{\otimes n}$ commutes with $\widehat{\mathsf{D}}^{\e}_{g,n}(K_{-2\lambda})$ for all $\lambda\in P_+$ then it commutes with $\widehat{\mathsf{D}}^{\e}_{g,n}(K_{\lambda})$ for all $\lambda\in P$. We deduce from \eqref{commutationYassumption} that $y = \widehat{\Phi}_{g,n}^{\e}(x)$ commutes with $\widehat{\mathsf{D}}_{g,n}^{\e}(K_{\lambda})$, $\widehat{\mathsf{D}}^{\e}_{g,n}(E_i)$ and $\widehat{\mathsf{D}}^{\e}_{g,n}(F_i)$ for all $\lambda$ and all $i$. Hence $y$ commutes pointwise with the whole $\widehat{\mathsf{D}}^{\e}_{g,n}(U_\e^\Pup)$. For all $h \in U_\e^\Pup$, Proposition \ref{propModifAlekseev} thus gives
\[ \widehat{\Phi}_{g,n}^{\e}\bigl( \mathrm{coad}^r(h)(x) \bigr) = \sum_{(h)} \widehat{\mathsf{D}}^{\e}_{g,n}\bigl( S(h_{(1)}) \bigr) \, y \, \widehat{\mathsf{D}}^{\e}_{g,n}(h_{(2)}) = \varepsilon(h)y = \widehat{\Phi}_{g,n}^{\e}\bigl( \varepsilon(h)x \bigr). \]
Since $\widehat{\Phi}_{g,n}^{\e}$ is injective (Prop.\,\ref{AlekseevInjRootOf1}) we conclude that $x$ is invariant under $\mathrm{coad}^r$.
\end{proof}

\begin{remark}
{\rm 1. The centralizer of $\mu_{g,n}(U_{\AD}')$ is the same thing as the centralizer of $\mathfrak{d}_{g,n}^{\AD}(\mathcal{L}_{0,1}^{\AD})$, with $\mathfrak{d}_{g,n}^{\AD}$ defined before Theorem \ref{centralizerMu}. In particular, for $g=0$, $\mathfrak{d}_{0,n} : \mathcal{L}_{0,1}^{\AD} \to \mathcal{L}_{0,n}^{\AD}$ is the iterated coproduct used in \cite[\S 6.5]{BR1} and Th.\,\ref{centralizerMu} recovers \cite[Prop.\,6.19]{BR1}.
\\2. Note that the difficulty in proving Th.\,\ref{centralizerMu} is due to the fact that $\mu_{g,n}$ is not defined on the whole $U_q^\Pup$ but only on $U'_q$. Otherwise it would suffice to rewrite \eqref{mudef} as $\mathrm{coad}^r(h)(x) = \textstyle \sum_{(h)} \mu_{g,n}(S(h_{(1)}))x\mu_{g,n}(h_{(2)})$ to conclude that elements in the centralizer of $\mu_{g,n}$ are invariant. But this equality does not make sense because $\Delta(U_q') \subset U_q' \otimes U_q^\Pup$ and $S(U'_q) \not\subset U'_q$. Fortunately this equality makes sense through $\widehat{\Phi}_{g,n}$ with $\widehat{\mathsf{D}}_{g,n}$ instead of $\mu_{g,n}$, which is the main idea of our proof (and the reason why we had to introduce $\widehat{\Phi}_{g,n}$ instead of using $\Phi_{g,n}$).}
\end{remark}

\subsection{Structure results for \texorpdfstring{$\mathcal{L}_{g,n}^\e$}{graph algebras at roots of unity}}\label{subsecStructResLgnEps}
We continue to make the assumptions \eqref{assumptionl} and \eqref{choixED} for the root of unity $\epsilon$. Recall that $\mathcal{Z}_0(\Oo_\e) \subset \Oo_\e$ is the image of the morphism of Hopf algebras $\mathbb{F}\mathrm{r}^*_\e : \mathcal{O}(G) \to \Oo_\e(G)$ which is dual to the Frobenius morphism $\mathbb{F}\mathrm{r}_\e : \Gamma_\e^\Q(\mathfrak{g}) \to U(\mathfrak{g})$; see \S\ref{subsecDefFrobGamma} and \S\ref{specialisOA}. Since $\mathcal{L}_{g,n}^\e$ is $\Oo_\e^{\otimes (2g+n)}$ as a $\mathbb{C}$-vector space, the following definition makes sense.
\begin{defi}\label{defZ0Lgn}
We let $\mathcal{Z}_0(\mathcal{L}_{g,n}^\e) = \mathcal{Z}_0(\Oo_\e)^{\otimes (2g+n)} \subset \mathcal{L}_{g,n}^\e$. 
\end{defi}
\indent The vector space $\Oo_\e$ is a $\mathbb{C}$-algebra, as specialization of the $A$-algebra $\mathcal{O}_A$. Hence $\Oo_\e^{\otimes (2g+n)}$ can be endowed with the usual structure on a tensor product of algebras; we denote this product by $x \star y$ for all $x,y \in \Oo_\e^{\otimes (2g+n)}$. But $\Oo_\e^{\otimes (2g+n)}$ has also the product of $\mathcal{L}_{g,n}^\e$, which we denote by $xy$.
\begin{prop}\label{Z0Lgn}
For all $x \in \mathcal{Z}_0(\mathcal{L}_{g,n}^\e)$ and $y \in \mathcal{L}_{g,n}^\e$ we have $xy = yx = x \star y$.
\\It follows that:

\noindent (1) $\mathcal{Z}_0(\Ll_{g,n}^{\e})$ is a central subalgebra of $\Ll_{g,n}^{\e}$, isomorphic to $\Oo(G)^{\otimes (2g+n)}$ through $(\mathbb{F}\mathrm{r}_\e^{*})^{\otimes (2g+n)}$. In particular $\mathcal{Z}_0(\Ll_{g,n}^{\e})$ is a Noetherian ring.

\noindent (2) As $\mathcal{Z}_0(\Ll_{g,n}^{\e})$-modules, $\Ll_{g,n}^{\e}$ and $\Oo_{\e}^{\otimes (2g+n)}$ coincide. 

\noindent (3) $\Ll_{g,n}^{\e}$ is a free $\mathcal{Z}_0(\Ll_{g,n}^{\e})$-module of rank $l^{(2g+n)\dim(\mathfrak{g})}$ and is a Noetherian ring, where $l$ is the order of the root of unity $\e$. In particular $\mathcal{Z}(\Ll_{g,n}^{\e})$ is a Noetherian ring.
\end{prop}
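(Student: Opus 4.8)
The plan is to first prove the central equality $xy = yx = x \star y$ for $x \in \mathcal{Z}_0(\mathcal{L}_{g,n}^\e)$ and $y \in \mathcal{L}_{g,n}^\e$, after which the three numbered consequences follow by routine commutative algebra. The whole argument rests on Corollary \ref{relationsCoRMatAtEps}(2): whenever the co-R-matrix $\mathcal{R}_\e$ is evaluated on a pair of elements of which at least one lies in the central Hopf subalgebra $\mathcal{Z}_0(\Oo_\e)$ (Theorem \ref{DCLteo1}(1)), it collapses to the product of counits, irrespective of the order of the two arguments.

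First I would establish two \emph{local} relations. Since $\mathcal{Z}_0(\Oo_\e)$ is a Hopf subalgebra, every Sweedler component and every antipode of an element $\varphi \in \mathcal{Z}_0(\Oo_\e)$ remains in $\mathcal{Z}_0(\Oo_\e)$. Substituting such a $\varphi$ into the fusion relation \eqref{fusionRelL01} and invoking Corollary \ref{relationsCoRMatAtEps}(2), every co-R-matrix factor becomes a counit and the counit axiom collapses the sum, giving, for any label $s$ and any $\psi \in \Oo_\e$,
\[ \mathfrak{i}_s(\varphi)\mathfrak{i}_s(\psi) = \mathfrak{i}_s(\varphi \star \psi), \qquad \mathfrak{i}_s(\psi)\mathfrak{i}_s(\varphi) = \mathfrak{i}_s(\psi \star \varphi). \]
The same substitution into the exchange relations \eqref{L01prodsept25} and \eqref{braidedTensProdComm} turns all four co-R-matrix factors into counits, yielding $\mathfrak{i}_s(\varphi)\mathfrak{i}_t(\psi) = \mathfrak{i}_t(\psi)\mathfrak{i}_s(\varphi)$ for distinct labels $s \neq t$ (the relations are stated for one ordering of the indices, but commutation in both directions follows). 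In short, an $\mathfrak{i}_s(\varphi)$ with $\varphi \in \mathcal{Z}_0(\Oo_\e)$ commutes with every $\mathfrak{i}_t(\psi)$ on a different leg and fuses by the $\star$-product on its own leg.

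Next I would deduce the global statement on ordered monomials. Writing $x$ and $y$ as ordered monomials via \eqref{monomialRelLgn}, with every leg of $x$ lying in $\mathcal{Z}_0(\Oo_\e)$, I would compute $xy$ by transporting each factor $\mathfrak{i}_{s_j}(\varphi_j)$ to the right: by the distinct-label commutation it passes freely through every $\mathfrak{i}_{s_k}(\psi_k)$ with $k \neq j$, and upon meeting $\mathfrak{i}_{s_j}(\psi_j)$ it fuses into $\mathfrak{i}_{s_j}(\varphi_j \star \psi_j)$. Iterating over the legs reassembles the ordered monomial of the $\star$-products, i.e. $xy = x \star y$. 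Computing $yx$ by transporting the $\mathfrak{i}_{s_j}(\varphi_j)$ to the left instead gives $\bigotimes_j (\psi_j \star \varphi_j)$, which equals $x \star y$ once the centrality of each $\varphi_j \in \mathcal{Z}_0(\Oo_\e)$ in $\Oo_\e$ (Theorem \ref{DCLteo1}(1)) is used to rewrite $\psi_j \star \varphi_j = \varphi_j \star \psi_j$. Extending to arbitrary $x, y$ by bilinearity finishes the central equality. The main obstacle is purely organizational, namely keeping the transport of factors through the ordered monomial bookkept correctly; once the local relations are in hand there is no genuine analytic difficulty.

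Finally, the consequences follow quickly. For (1), $xy = yx$ gives centrality, while $xy = x \star y$ identifies the product on $\mathcal{Z}_0(\mathcal{L}_{g,n}^\e) = \mathcal{Z}_0(\Oo_\e)^{\otimes(2g+n)}$ with the tensor product of the commutative algebra $\mathcal{Z}_0(\Oo_\e) \cong \Oo(G)$ (through the monomorphism $\mathbb{F}\mathrm{r}_\e^*$); since $\Oo(G)^{\otimes(2g+n)}$ is a finitely generated commutative $\mathbb{C}$-algebra it is Noetherian. For (2), the equalities $xy = x \star y = yx$ show that the graph-algebra $\mathcal{Z}_0$-module structure on $\mathcal{L}_{g,n}^\e$ coincides with the $\star$-module structure on $\Oo_\e^{\otimes(2g+n)}$. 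For (3), Theorem \ref{DCLteo1}(2) makes $\Oo_\e$ a free $\mathcal{Z}_0(\Oo_\e)$-module of rank $l^{\dim\mathfrak{g}}$; tensoring $2g+n$ copies over $\mathbb{C}$ shows $\Oo_\e^{\otimes(2g+n)}$ is free over $\mathcal{Z}_0(\Oo_\e)^{\otimes(2g+n)}$ of rank $l^{(2g+n)\dim\mathfrak{g}}$, and by (2) the same holds for $\mathcal{L}_{g,n}^\e$. Being module-finite over the Noetherian commutative ring $\mathcal{Z}_0(\mathcal{L}_{g,n}^\e)$, the ring $\mathcal{L}_{g,n}^\e$ is Noetherian; and $\mathcal{Z}(\mathcal{L}_{g,n}^\e)$, a $\mathcal{Z}_0$-submodule of a finitely generated module over a Noetherian ring, is finitely generated over $\mathcal{Z}_0(\mathcal{L}_{g,n}^\e)$, hence itself Noetherian.
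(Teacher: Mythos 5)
Your proof is correct and follows essentially the same route as the paper's: both rest on Corollary \ref{relationsCoRMatAtEps}(2) together with the fact that $\mathcal{Z}_0(\Oo_\e)$ is a Hopf subalgebra (so that Sweedler components and antipodes stay in $\mathcal{Z}_0(\Oo_\e)$), applied to the fusion and exchange relations, and then derive (1)--(3) by the same commutative-algebra arguments (injectivity of $\mathbb{F}\mathrm{r}_\e^*$, Theorem \ref{DCLteo1}, and module-finiteness over a Noetherian central subring). The only difference is presentational: where the paper reduces to pairs of generators $\mathfrak{i}_s(\varphi)$, $\mathfrak{i}_t(\psi)$ via the characterization of the $\star$-product on ordered monomials, you spell out the transport-of-factors bookkeeping explicitly.
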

\begin{remark}\label{Z0submoduleLgn} {\rm It follows from item 3 that any subalgebra of $\Ll_{g,n}^\e$ containing  $\mathcal{Z}_0(\Ll_{g,n}^{\e})$ is a finitely generated $\mathcal{Z}_0(\Ll_{g,n}^{\e})$-submodule, and hence a Noetherian ring (since $\mathcal{Z}_0(\Ll_{g,n}^{\e})$ is a Noetherian ring). We will apply this to $\Ll_{g,n}^{u_\e}$ and its center $\mathcal{Z}(\Ll_{g,n}^{u_\e})$ in the proof of Th.\,\ref{mainDCKPsec6}.}
\end{remark}
\begin{proof}
First we have $x \star y = y \star x$ because $\mathcal{Z}_0(\Oo_\e) \subset \mathcal{Z}(\Oo_\e)$. Let us show that $xy = x \star y$. Consider the embeddings $\mathfrak{i}_{b_i}, \mathfrak{i}_{a_i}, \mathfrak{i}_{m_{g+j}} : \Oo_\e \to \Oo_\e^{\otimes (2g+n)}$ as in \eqref{labelCopiesHcirc}, and from which the product of $\mathcal{L}_{g,n}^\e$ is defined by the rules in \S\ref{subsecDefLgnH}. Note that the product of $\Oo_\e^{\otimes (2g+n)}$ is characterized by the rules:
\begin{align*}
&\varphi_1 \otimes \ldots \otimes \varphi_{2g+n} = \mathfrak{i}_{b_1}(\varphi_1) \star \mathfrak{i}_{a_1}(\varphi_2) \star \ldots \star \mathfrak{i}_{b_g}(\varphi_{2g-1}) \star \mathfrak{i}_{a_g}(\varphi_{2g})\\
&\qquad\qquad\qquad\qquad\:\:\star \mathfrak{i}_{m_{g+1}}(\varphi_{2g+1}) \star \ldots \star \mathfrak{i}_{m_{g+n}}(\varphi_{2g+n}),\\
&\mathfrak{i}_s(\varphi)\star \mathfrak{i}_s(\psi) = \mathfrak{i}_s(\varphi \star \psi), \qquad \mathfrak{i}_s(\varphi) \star \mathfrak{i}_t(\psi) = \mathfrak{i}_t(\psi) \star \mathfrak{i}_s(\varphi)
\end{align*}
for all $s,t \in \bigl\{ b_1,a_1, \ldots, b_g,a_g,m_{g+1}, \ldots, m_{g+n} \bigr\}$ with $s\neq t$. So it suffices to prove the result for every couple $(s,t)$, taking $x = \mathfrak{i}_s(\varphi)$ with $\varphi \in \mathcal{Z}_0(\Oo_\e)$ and $y = \mathfrak{i}_t(\psi)$ with $\psi \in \Oo_\e$. For instance, in the case $s=t$ we apply \eqref{fusionRelL01} and get:
\begin{align*}
&\mathfrak{i}_s(\varphi)\mathfrak{i}_s(\psi) = \sum_{(\varphi),(\psi)}\mathcal{R}_{\e}\bigl( \psi_{(1)} \otimes S(\varphi_{(3)}) \bigr) \mathcal{R}_{\e}\bigl( \psi_{(3)} \otimes \varphi_{(2)} \bigr) \, \mathfrak{i}_s\bigl(\varphi_{(1)} \star \psi_{(2)} \bigr)\\
=\:&\sum_{(\varphi),(\psi)} \varepsilon_{\Oo_\e}(\psi_{(1)}) \varepsilon_{\Oo_\e}\bigl( S(\varphi_{(3)}) \bigr) \varepsilon_{\Oo_\e}(\psi_{(3)}) \varepsilon_{\Oo_\e}(\varphi_{(2)})\mathfrak{i}_s\bigl(\varphi_{(1)} \star \psi_{(2)} \bigr) = \mathfrak{i}_s(\varphi \star \psi)  = \mathfrak{i}_s(\varphi) \star \mathfrak{i}_s(\psi)
\end{align*}
In the the second equality we used the fact that $\mathcal{Z}_0(\Oo_\e)$ is a Hopf subalgebra (and hence $S(\varphi_{(3)}), \varphi_{(2)} \in \mathcal{Z}_0(\Oo_\e)$) and Cor.\,\ref{relationsCoRMatAtEps}(2). Similarly, it is readily seen that the two exchange relations \eqref{L01prodsept25}-\eqref{braidedTensProdComm} in $\mathcal{L}_{g,n}^\e$ reduce to the desired commutativity relation between tensorands in $\Oo_\e^{\otimes (2g+n)}$.
\\(1) The first claim is due to the preliminary property and the fact that $\mathbb{F}\mathrm{r}_\e^*$ is injective. For the second claim,  note that $\mathcal{O}(G)$ is finitely generated as an algebra (by matrix coefficients of irreducible modules with highest weight $\varpi_i$, $i=1,\ldots,m$). Hence $\mathcal{O}(G)^{\otimes (2g+n)}$ is also a finitely generated commutative algebra, thus it is Noetherian by Hilbert's basis theorem.
\\(2) is obvious from the preliminary property.
\\(3) It is known that $\Oo_\e$ is a free $\mathcal{Z}_0(\Oo_\e)$-module of rank $l ^{\dim(\mathfrak{g})}$ (Th.\,\ref{DCLteo1} (2)). Hence, freeness and rank of $\mathcal{L}_{g,n}^\e$ over $\mathcal{Z}_0(\mathcal{L}_{g,n}^\e)$ are deduced from item 2. Noetherianity is due to this classical fact: if a ring $R$ is finitely generated as a module over a subring $S$ and if $S$ is a Noetherian ring, then $R$ is a Noetherian ring \cite[Prop.\,7.2]{AMacD}; apply with $R = \mathcal{L}_{g,n}^\e$ and $S = \mathcal{Z}_0(\mathcal{L}_{g,n}^\e)$. The same argument applies to  $\mathcal{Z}(\Ll_{g,n}^{\e})$.
\end{proof}

Now recall that $u_\e^\Q$ denotes the {\em small quantum group} (Def.\,\ref{defSmallUeps}) and that the inclusion $u_\e^\Q \subset \Gamma_\e^\Q$ gives by duality a surjection $\pi : \Oo_\e \to (u_\e^\Q)^*$ defined in \eqref{piPdef}. Since $u_\e^\Q$ is a finite-dimensional quasitriangular Hopf algebra, we have the algebra $\mathcal{L}_{g,n}(u_\e^\Q)$ obtained by taking $H = u_\e^\Q$ in \S\ref{subsecDefLgnH}. We have $H^\circ = (u_\e^\Q)^*$ by finite-dimensionality and the co-R-matrix is $\varphi \otimes \psi \mapsto (\varphi \otimes \psi)(\overline{R})$, where $\overline{R}$ is the $R$-matrix of $u_\e^\Q$ (\S\ref{subsecRmatSmallQG}). As a $\mathbb{C}$-vector space $\mathcal{L}_{g,n}(u_\e^\Q) = [(u_\e^\Q)^* ]^{\otimes (2g+n)}$, so the following result makes sense:

\begin{prop}\label{piMorphBetweenLgn}
$\pi^{\otimes (2g+n)} : \mathcal{L}_{g,n}^\e \to \mathcal{L}_{g,n}(u_\e^\Q)$ is a surjective morphism of algebras.
\end{prop}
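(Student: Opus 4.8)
The plan is to recognise $\pi^{\otimes (2g+n)}$ as an instance of the functorial behaviour of the graph algebra construction established in Lemma \ref{lemmaLgnIsFunctorial}. As noted in the discussion preceding that lemma, the algebra $\mathcal{L}_{g,n}$ depends only on the underlying co-quasitriangular Hopf algebra $(O,\mathcal{R})$, and both algebras in the statement are of this form. By Definition \ref{DefLgnesept25} we have $\mathcal{L}_{g,n}^\e = \mathcal{L}_{g,n}[\Oo_\e]$ with co-R-matrix $\mathcal{R}_{\e}$. On the other side, since $u_\e^\Q$ is finite-dimensional we have $(u_\e^\Q)^\circ = (u_\e^\Q)^*$, and the datum $\Phi^\pm$ attached to its standard R-matrix $\overline{R}$ (\S\ref{subsecRmatSmallQG}) is given by \eqref{LfuncFromRmat}; the resulting co-R-matrix is $\overline{\mathcal{R}}(\alpha \otimes \beta) = (\alpha \otimes \beta)(\overline{R})$, so that $\mathcal{L}_{g,n}(u_\e^\Q) = \mathcal{L}_{g,n}[(u_\e^\Q)^*]$ with co-R-matrix $\overline{\mathcal{R}}$.

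First I would verify the hypotheses of Lemma \ref{lemmaLgnIsFunctorial} for the map $f = \pi : \Oo_\e \to (u_\e^\Q)^*$ of \eqref{piPdef}. That $\pi$ is a morphism of Hopf algebras is Proposition \ref{pipropO}(1). The required compatibility of co-R-matrices, namely $\mathcal{R}_{\e} = \overline{\mathcal{R}} \circ (\pi \otimes \pi)$, is exactly the content of Corollary \ref{relationsCoRMatAtEps}(1). Applying Lemma \ref{lemmaLgnIsFunctorial} with $(O_1,\mathcal{R}_1) = (\Oo_\e,\mathcal{R}_{\e})$ and $(O_2,\mathcal{R}_2) = ((u_\e^\Q)^*,\overline{\mathcal{R}})$ then shows that $\pi^{\otimes (2g+n)}$ is a morphism of algebras.

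Finally, surjectivity is immediate: $\pi$ is surjective by Proposition \ref{pipropO}(1), and a tensor product over the field $\mathbb{C}$ of surjective linear maps is again surjective, so $\pi^{\otimes (2g+n)}$ is onto.

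There is essentially no obstacle here, as the entire content has been packaged into the functoriality Lemma \ref{lemmaLgnIsFunctorial} and the co-R-matrix identity of Corollary \ref{relationsCoRMatAtEps}(1). The only point requiring a moment of care is the bookkeeping identification of $\mathcal{L}_{g,n}^\e$ and $\mathcal{L}_{g,n}(u_\e^\Q)$ as instances of the $\mathcal{L}_{g,n}[O]$ construction, together with the check that the co-R-matrix entering the definition of $\mathcal{L}_{g,n}(u_\e^\Q)$ is precisely the $\overline{\mathcal{R}}$ appearing in Corollary \ref{relationsCoRMatAtEps}; this is guaranteed by the canonical choice \eqref{LfuncFromRmat} of the datum $\Phi^\pm$ for a finite-dimensional quasitriangular Hopf algebra.
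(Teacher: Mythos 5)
Your proposal is correct and follows exactly the paper's own route: the paper proves this proposition by citing Lemma \ref{lemmaLgnIsFunctorial} together with Corollary \ref{relationsCoRMatAtEps}(1), which is precisely the argument you give (with the hypotheses of the lemma and the surjectivity of $\pi$ spelled out via Proposition \ref{pipropO}(1)). Your write-up is just a more detailed version of the paper's one-line proof.
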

\begin{proof}
Immediate from Lemma \ref{lemmaLgnIsFunctorial} and Corollary \ref{relationsCoRMatAtEps}(1).
\end{proof}

Let $\Oo^+(G) = \ker(\varepsilon) \cap \Oo(G)$ be the augmentation ideal of $\mathcal{O}(G)$, which consists of the functions $G \to \mathbb{C}$ vanishing at the unit element of $G$. 
\begin{teo}\label{centralextLgn} We have an exact sequence of algebras\footnote{The first term in this sequence is a non-unital algebra; exactness at the middle term means that ${\rm Ker}(\pi^{\otimes (2g+n)})$ is the ideal $(\mathbb{F}\mathrm{r}_\e^{*}(\Oo^+(G))^{\otimes (2g+n)}\mathcal{L}_{g,n}^{\e}$. This follows in part the notion of exact sequence of Hopf algebras introduced, e.g., in \cite{AD}.}
\begin{equation}\label{exactsequenceue}
0 \longrightarrow \Oo^+(G)^{\otimes (2g+n)} \xrightarrow{\quad(\mathbb{F}\mathrm{r}_\e^{*})^{\otimes (2g+n)}\quad} \mathcal{L}_{g,n}^{\e} \xrightarrow{\quad \pi^{\otimes (2g+n)} \quad} \mathcal{L}_{g,n}(u_\epsilon^\Q) \longrightarrow 0.
\end{equation}
Said differently, $\mathcal{L}_{g,n}^{\epsilon}$ is a central extension of the $l^{(2g+n)\dim(\mathfrak{g})}$-dimensional algebra $\mathcal{L}_{g,n}(u_\epsilon^\Q)$ by $\Oo^+(G)^{\otimes (2g+n)}$, where $l$ is the order of the root of unity $\e$.
\end{teo}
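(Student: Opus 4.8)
The plan is to assemble the statement from the three structural facts already at our disposal: the injectivity of $(\mathbb{F}\mathrm{r}_\e^{*})^{\otimes(2g+n)}$, the surjectivity of $\pi^{\otimes(2g+n)}$ from Prop.~\ref{piMorphBetweenLgn}, and the description of $\ker(\pi)$ from Prop.~\ref{pipropO}(2). Write $N=2g+n$. Since $\mathbb{F}\mathrm{r}_\e^{*}\colon \Oo(G)\to\Oo_\e$ is an injective morphism of Hopf algebras with image $\mathcal{Z}_0(\Oo_\e)$, its tensor power $(\mathbb{F}\mathrm{r}_\e^{*})^{\otimes N}$ is an injective $\mathbb{C}$-linear map (a tensor product over a field of injections), so the left map of the sequence is injective; moreover by Prop.~\ref{Z0Lgn} its image $\mathcal{Z}_0(\mathcal{L}_{g,n}^\e)$ is central and on it the graph-algebra product agrees with the tensor-product ($\star$) product, so $(\mathbb{F}\mathrm{r}_\e^{*})^{\otimes N}$ is in fact an algebra morphism sending $\Oo^+(G)^{\otimes N}$ (the augmentation ideal of $\Oo(G)^{\otimes N}$, a non-unital algebra) onto the augmentation ideal $\mathcal{Z}_0^+(\mathcal{L}_{g,n}^\e)$ of $\mathcal{Z}_0(\mathcal{L}_{g,n}^\e)$. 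Right exactness is exactly Prop.~\ref{piMorphBetweenLgn}. So the whole content is exactness in the middle, which by the footnote's convention (following \cite{AD}) means that $\ker(\pi^{\otimes N})$ coincides with the ideal of $\mathcal{L}_{g,n}^\e$ generated by $\mathcal{Z}_0^+(\mathcal{L}_{g,n}^\e)$.

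To prove this, I would first compute $\ker(\pi^{\otimes N})$ purely linearly. Since $\pi\colon\Oo_\e\to(u_\e^\Q)^*$ is a surjection of $\mathbb{C}$-vector spaces, the standard formula for the kernel of a tensor product of surjections over a field gives
\[ \ker(\pi^{\otimes N}) = \sum_{i=1}^{N} \Oo_\e^{\otimes(i-1)} \otimes \ker(\pi) \otimes \Oo_\e^{\otimes(N-i)}, \]
and by Prop.~\ref{pipropO}(2) we have $\ker(\pi)=\Oo_\e\,\mathcal{Z}_0^+(\Oo_\e)$, the ideal of $\Oo_\e$ generated by $\mathcal{Z}_0^+(\Oo_\e)$.

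Next I would match the right-hand side with the ideal generated by $\mathcal{Z}_0^+(\mathcal{L}_{g,n}^\e)$. Writing $B=\mathcal{Z}_0(\Oo_\e)$ and $B^+=\mathcal{Z}_0^+(\Oo_\e)$, the augmentation ideal of $\mathcal{Z}_0(\mathcal{L}_{g,n}^\e)=B^{\otimes N}$ is $\sum_i B^{\otimes(i-1)}\otimes B^+\otimes B^{\otimes(N-i)}$. Because this subalgebra is central (Prop.~\ref{Z0Lgn}), the two-sided ideal it generates is $\mathcal{L}_{g,n}^\e\cdot\mathcal{Z}_0^+(\mathcal{L}_{g,n}^\e)$, and since $xy=x\star y$ whenever $x$ is central, this product is computed tensor-wise with the $\star$ product: in each slot one gets either $\Oo_\e\star B=\Oo_\e$ (as $1\in B\subseteq\Oo_\e$) or $\Oo_\e\star B^+=\Oo_\e\,\mathcal{Z}_0^+(\Oo_\e)=\ker(\pi)$. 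This yields precisely $\sum_i\Oo_\e^{\otimes(i-1)}\otimes\ker(\pi)\otimes\Oo_\e^{\otimes(N-i)}$, hence equals $\ker(\pi^{\otimes N})$, establishing middle exactness. Finally, by the first isomorphism theorem $\mathcal{L}_{g,n}(u_\e^\Q)\cong\mathcal{L}_{g,n}^\e/\ker(\pi^{\otimes N})$, and since $\mathcal{L}_{g,n}(u_\e^\Q)=[(u_\e^\Q)^*]^{\otimes N}$ with $\dim(u_\e^\Q)^*=\dim u_\e^\Q=l^{\dim\mathfrak{g}}$ by \eqref{dlambdae}, its dimension is $l^{(2g+n)\dim\mathfrak{g}}$.

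The bulk of the difficulty has already been absorbed into the cited propositions, so the main (modest) obstacle is bookkeeping: one must keep track of the fact that the ideal generated by $\mathcal{Z}_0^+$ is defined via the genuine graph-algebra product, whereas $\ker(\pi^{\otimes N})$ is a tensor-product object, and the bridge between them is exactly the identity $xy=x\star y$ for central $x$ supplied by Prop.~\ref{Z0Lgn}. A secondary point to handle carefully is that the footnote's notion of ``exactness in the middle'' is the ideal-theoretic one and \emph{not} the naive equality $\mathrm{im}=\ker$, which in fact fails here since the image of the left map is only the central subalgebra $\mathcal{Z}_0(\mathcal{L}_{g,n}^\e)$, strictly smaller than $\ker(\pi^{\otimes N})$.
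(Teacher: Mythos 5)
Your proof is correct and follows the same route as the paper: the paper's own proof of Theorem \ref{centralextLgn} simply declares the statement an immediate consequence of Prop.~\ref{pipropO}, Prop.~\ref{Z0Lgn}(1) and Prop.~\ref{piMorphBetweenLgn}, which are exactly the three ingredients you combine. Your explicit verification of middle exactness — the kernel formula for a tensor product of surjections over a field, bridged via the identity $xy = x \star y$ for central $x$ so that the ideal generated by the augmentation ideal of $\mathcal{Z}_0(\mathcal{L}_{g,n}^\e)$ becomes $\sum_i \Oo_\e^{\otimes(i-1)} \otimes \ker(\pi) \otimes \Oo_\e^{\otimes(N-i)}$ — together with your closing remark that naive $\mathrm{im}=\ker$ exactness fails and only the ideal-theoretic version holds, is precisely the bookkeeping the paper leaves implicit.
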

\begin{proof}
This is an immediate consequence of Prop.\,\ref{pipropO}, Prop.\,\ref{Z0Lgn}(1) and Prop.\,\ref{piMorphBetweenLgn}.
\end{proof}

\begin{prop}\label{propLgndomain}
The algebra $\mathcal{L}_{g,n}^{\e}$ is a domain. 
\end{prop}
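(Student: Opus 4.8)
The plan is to transport the domain property from the target of the injective Alekseev morphism. By Proposition \ref{AlekseevInjRootOf1}, the modified Alekseev morphism
\[ \widehat{\Phi}_{g,n}^{\e} : \mathcal{L}_{g,n}^{\e} \longrightarrow \widehat{\mathcal{H}}_{\e}^{\otimes g} \otimes (U_\e^\Pup)^{\otimes n} \]
is an injective morphism of $\mathbb{C}$-algebras. Since a subalgebra of a domain is a domain, it suffices to prove that the target $\widehat{\mathcal{H}}_{\e}^{\otimes g} \otimes (U_\e^\Pup)^{\otimes n}$ is a domain. This reduces the statement to two independent tasks: showing that the building blocks $U_\e^\Pup$ and $\widehat{\mathcal{H}}_{\e}$ are domains, and showing that the relevant tensor products over $\mathbb{C}$ preserve the domain property. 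This is exactly the specialized counterpart of the generic-parameter argument of Appendix \ref{Lgndomain}, which I would follow as a model.

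For the tensor products I would use the filtered--graded method rather than any naive ``tensor product of domains'' statement, which is false over a general field. Each of the algebras $U_\e^\Pup$ and $\mathcal{H}_\e = \Oo_\e \otimes U_\e^\Pup$ carries a filtration arising from an ordered (PBW-type) monomial basis whose associated graded algebra is a \emph{quantum affine space}, that is, an iterated skew polynomial ring with generators $\qD$-commuting up to scalars in $\mathbb{C}^\times$; here the oddness of $l$ together with $\mathrm{gcd}(l,D)=1$ guarantees that the leading scalars in the root-vector commutation relations remain nonzero after specialization at $\eD$. Quantum affine spaces are domains, and the associated graded of a tensor product is the tensor product of the associated graded algebras, which is again a quantum affine space. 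Hence $\widehat{\mathcal{H}}_{\e}^{\otimes g} \otimes (U_\e^\Pup)^{\otimes n}$ admits a filtration with domain associated graded, so it is a domain, and $\mathcal{L}_{g,n}^{\e}$ is a domain as a subalgebra of it.

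It remains to treat the two building blocks. That $U_\e^\Pup$ is a domain is the De Concini--Kac--Procesi result \cite{DCK,DCP}, and $\Oo_\e$ is a domain by Theorem \ref{DCLteo1}(2). For the Heisenberg double $\mathcal{H}_\e = \Oo_\e \otimes U_\e^\Pup$, the cross-relation \eqref{relationEchangeHq} rewrites $a\varphi$ as $\varphi a$ plus terms of lower length in $U_\e^\Pup$, so combining the filtrations of the two factors yields a filtration on $\mathcal{H}_\e$ with associated graded $\mathrm{gr}(\Oo_\e)\otimes \mathrm{gr}(U_\e^\Pup)$, a quantum affine space; thus $\mathcal{H}_\e$ is a domain. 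Finally, $\widehat{\mathcal{H}}_\e$ is built from $\mathcal{H}_\e$ by Ore-localizing at the group-like elements $\widehat{K_{-2\lambda}}$ (which satisfy the Ore condition since they act diagonally on a weight basis) and then freely adjoining the finitely many group-likes $\widehat{K_\lambda}$ representing $\mathbb{T}/\mathbb{T}_2$; both operations preserve the domain property, or alternatively one extends the ordered monomial basis of $\mathcal{H}_\e$ by these group-likes and checks directly that the associated graded is again a quantum affine space over the Laurent ring $\mathbb{C}[\mathbb{T}]$. The main obstacle is precisely this bookkeeping for $\widehat{\mathcal{H}}_\e$: one must verify that the Heisenberg cross-relations and the group-like localization genuinely produce a filtration whose associated graded is a domain and whose structure constants stay in $\mathbb{C}^\times$ after specialization at $\eD$. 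Once this is in place, the filtered--graded argument for the tensor power is routine and injectivity of $\widehat{\Phi}_{g,n}^\e$ finishes the proof.
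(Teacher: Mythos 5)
Your opening reduction coincides with the paper's: by Prop.~\ref{AlekseevInjRootOf1} it suffices to show that $\widehat{\mathcal{H}}_{\e}^{\otimes g} \otimes (U_\e^\Pup)^{\otimes n}$ is a domain, and, like the paper (App.~\ref{Lgndomain}, adapting \cite[Prop.\,5.7]{BFR}), you then propose a filtered--graded argument on this target. The gap is in what you claim the associated graded to be. Your mechanism for handling tensor products is the assertion that $\Oo_\e$ --- and hence $\mathcal{H}_\e$ and $\widehat{\mathcal{H}}_\e$ --- admits a filtration whose associated graded is a quantum affine space over $\mathbb{C}$, so that tensor powers become iterated skew polynomial rings and are manifestly domains. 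This assertion is unproven and false as stated: already for $\mathfrak{g}=\mathfrak{sl}_2$, the quantum determinant relation $ad-\e^{-1}bc=1$ in $\Oo_\e(SL_2)$ survives in the associated graded (with the generators in positive degree) as the homogeneous relation $\bar{a}\bar{d}=\e^{-1}\bar{b}\bar{c}$, so $\mathrm{gr}(\Oo_\e)$ is a proper quotient of a quantum affine space (a quantum quadric), not a quantum affine space. Once $\mathrm{gr}(\Oo_\e)$ is merely some noncommutative graded domain, your tensor step collapses: as you yourself warn at the outset, tensor products of noncommutative domains over a field need not be domains, so $\mathrm{gr}(\Oo_\e)^{\otimes g}$ tensored with a quantum affine space is exactly the kind of object whose domain property cannot be taken for granted --- the original difficulty reappears one level down. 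A second unaddressed point: if $\Oo_\e$ sits in positive filtration degrees, the Heisenberg cross-relation \eqref{relationEchangeHq} replaces $\varphi$ by $\varphi_{(1)}$, so compatibility of the filtration with the coproduct of $\Oo_\e$ must be verified; you do flag some cross-relation bookkeeping, but you locate it in the $\widehat{\mathcal{H}}_\e$-extension step (which is the harmless part) while treating the degeneration of $\Oo_\e$ itself as given, and that is precisely where the argument fails.

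The paper's proof avoids both issues by never degenerating $\Oo_\e$: in the filtration $\Tt$ of App.~\ref{Lgndomain} the tensor factors $\Oo_\e$ sit in degree zero and only the generators $E_\beta$, $F_\beta$, $K_\nu$, $\widehat{K_\nu}$ are filtered, so that by \eqref{commutationGrDCK}--\eqref{commutationGrDCK2}--\eqref{relhatsept25} the associated graded is quasi-polynomial \emph{over the base ring} $\Oo_\e^{\otimes g}$. The crux --- that this tensor power of $\Oo_\e$ has no zero divisors --- is then settled not by degeneration but by the identification $\Oo_\e^{\otimes g}\cong\Oo_\e(G^{\times g})$ with the quantum function algebra of the semisimple simply-connected group $G^{\times g}$, to which De Concini--Lyubashenko's theorem (Th.~\ref{DCLteo1}(2), \cite{DC-L}) applies verbatim. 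To repair your proof, replace the quantum-affine-space claim by this identification (or, equivalently, embed $\Oo_\e^{\otimes k}$ as an algebra into $\bigl(U_\e^\Pup(\mathfrak{b}_-)^{\mathrm{cop}}\otimes U_\e^\Pup(\mathfrak{b}_+)^{\mathrm{cop}}\bigr)^{\otimes k}$ via the triangular projections composed with the coproduct, the target having a genuine quantum-affine-space degeneration); with that one substitution your outline becomes the paper's proof.
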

\begin{proof}
The proof is contained in App.\,\ref{Lgndomain}, as it consists in an easy adaptation of arguments from \cite{BFR}. \end{proof}
\noindent Since $\mathcal{L}_{g,n}^{\e}$ is a domain, we can consider the central localization (see \S\ref{subsecCentralLoc})
\begin{equation}\label{QLe}
Q(\Ll_{g,n}^\e) := Q\bigl( \mathcal{Z}(\Ll_{g,n}^\e) \bigr)  \otimes_{\mathcal{Z}(\Ll_{g,n}^\e)} \Ll_{g,n}^\e.
\end{equation}
Note that since $\mathcal{L}_{g,n}^\e$ is finitely generated and free as $\mathcal{Z}_0(\mathcal{L}_{g,n}^\e)$-module (Prop.\,\ref{Z0Lgn}), Lemma \ref{lemTrucsGenerauxQR}(2) allows us to rewrite
\[ Q(\Ll_{g,n}^\e) \cong Q\bigl( \mathcal{Z}_0(\Ll_{g,n}^\e) \bigr)  \otimes_{\mathcal{Z}_0(\Ll_{g,n}^\e)} \Ll_{g,n}^\e. \]
Moreover, by Lemma \ref{lemQRCSA}, $Q(\Ll_{g,n}^\e)$ is a division algebra; it is thus in particular a central simple algebra and we wish to compute its PI-degree. Some technical preliminaries are in order.

\begin{prop}\label{irrep} $\mathcal{L}_{g,n}^{\e}$ has an irreducible representation of dimension $l^{g\,\mathrm{dim}(\mathfrak{g})+nN}$, where $l$ is the order of $\e$ and $N$ is the number of positive roots of $\mathfrak{g}$.
\end{prop}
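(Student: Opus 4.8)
The plan is to reduce the statement to the finite-dimensional graph algebra $\mathcal{L}_{g,n}(u_\e^\Q)$, whose representation theory becomes transparent through factorizability, and then transport an irreducible module back to $\mathcal{L}_{g,n}^\e$ along the surjection of Theorem \ref{centralextLgn}. Since $l$ is odd, the small quantum group $u_\e^\Q$ is factorizable (\cite{Lyub95}); being moreover finite-dimensional with $\dim(u_\e^\Q) = l^{\dim(\mathfrak{g})}$ by \eqref{dlambdae}, Proposition \ref{coroLgnFacto} shows that
\begin{equation*}
F_{g,n}\colon \mathcal{L}_{g,n}(u_\e^\Q) \overset{\sim}{\longrightarrow} \mathrm{End}_{\mathbb{C}}\bigl( (u_\e^\Q)^* \bigr)^{\otimes g} \otimes (u_\e^\Q)^{\otimes n} \cong M_{l^{g\dim(\mathfrak{g})}}(\mathbb{C}) \otimes (u_\e^\Q)^{\otimes n}
\end{equation*}
is an isomorphism of algebras.

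First I would describe the simple modules of the right-hand side. Over the algebraically closed field $\mathbb{C}$, every simple module of $M_{l^{g\dim(\mathfrak{g})}}(\mathbb{C}) \otimes (u_\e^\Q)^{\otimes n}$ is an external tensor product of a simple module of each tensor factor; the unique simple module of the matrix factor is the standard one, of dimension $l^{g\dim(\mathfrak{g})}$, while the simple modules of $(u_\e^\Q)^{\otimes n}$ are the external products $W_1 \otimes \ldots \otimes W_n$ of simple $u_\e^\Q$-modules $W_i$. Through $F_{g,n}$ this means precisely that $\mathcal{L}_{g,n}(u_\e^\Q)$ acts irreducibly on $\bigl( (u_\e^\Q)^* \bigr)^{\otimes g} \otimes W_1 \otimes \ldots \otimes W_n$, which is one of the representations built from $F_{g,n}$ recalled after \eqref{defFgn}. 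I would then take each $W_i$ to be the Steinberg module $\mathrm{St}$ of $u_\e^\Q$, i.e. the simple module of highest weight $(l-1)\rho$, which has dimension $l^{N}$. This produces a simple $\mathcal{L}_{g,n}(u_\e^\Q)$-module of dimension $\bigl(\dim u_\e^\Q\bigr)^{g}\bigl(\dim \mathrm{St}\bigr)^{n} = l^{g(2N+m)}\, l^{nN} = l^{g\dim(\mathfrak{g})+nN}$, using $\dim(\mathfrak{g}) = 2N+m$.

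Finally I would pull this module back along the surjective morphism of algebras $\pi^{\otimes(2g+n)}\colon \mathcal{L}_{g,n}^\e \twoheadrightarrow \mathcal{L}_{g,n}(u_\e^\Q)$ from Proposition \ref{piMorphBetweenLgn} (equivalently the right-hand map of the exact sequence \eqref{exactsequenceue}). Pulling a simple module back along a surjection of algebras preserves simplicity, since a subspace is stable under the source exactly when it is stable under the target whenever the acting image is the whole target; thus the lattice of submodules is unchanged. Hence $\mathcal{L}_{g,n}^\e$ acquires a simple module of dimension $l^{g\dim(\mathfrak{g})+nN}$, which is the claim.

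The main obstacle is purely to have the two external facts about $u_\e^\Q$ available rather than to carry out any computation: its factorizability, needed to invoke Proposition \ref{coroLgnFacto}, and the existence of a simple module of dimension exactly $l^{N}$. The former follows from the oddness of $l$ via \cite{Lyub95}; the latter is furnished by the Steinberg module, whose irreducibility over the small quantum group and dimension $l^{N}$ (by the quantum Weyl dimension formula) are standard in the representation theory of quantum groups at roots of unity. Everything else is a formal consequence of the factorizable structure and of the surjection provided by Theorem \ref{centralextLgn}.
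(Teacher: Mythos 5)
Your proposal is correct and follows essentially the same route as the paper's own proof: factorizability of $u_\e^\Q$ (via \cite{Lyub95}) plus Prop.\,\ref{coroLgnFacto} to identify $\mathcal{L}_{g,n}(u_\e^\Q)$ with $\mathrm{End}_{\mathbb{C}}\bigl((u_\e^\Q)^*\bigr)^{\otimes g} \otimes (u_\e^\Q)^{\otimes n}$, irreducibility of external tensor products over $\mathbb{C}$, and pull-back along the surjection of Th.\,\ref{centralextLgn}. The only cosmetic difference is that you name the $l^N$-dimensional simple $u_\e^\Q$-module explicitly as the Steinberg module, where the paper simply cites \cite[\S III.6.6]{BG} for its existence.
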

\begin{proof}
Lyubashenko proved that the small quantum group $u_\e^\Q$ is factorizable when the order of $\e$ is odd  \cite[Cor.\,A.3.3]{Lyub95}. Hence by Prop.\,\ref{coroLgnFacto} we have an isomorphism of algebras
\begin{equation}\label{isoAlekseevSmallUq}
\mathcal{L}_{g,n}(u_\epsilon^\Q) \cong \mathrm{End}_{\mathbb{C}}\bigl( (u_\e^\Q)^* \bigr)^{\otimes g} \otimes (u_\e^\Q)^{\otimes n}.
\end{equation}
Moreover, it is known that $u_\e^\Q$ has an irreducible representation $V$ of dimension $l^N$ \cite[\S III.6.6]{BG}. Of course, there is the natural irreducible representation of $\mathrm{End}_{\mathbb{C}}\bigl( (u_\e^\Q)^* \bigr)$ on $(u_\e^\Q)^*$ which has dimensional $l^{\dim(\mathfrak{g})}$. Over an algebraically closed field, the external tensor product of irreducible representations is irreducible \cite[\S 3.10]{Et}. Hence \eqref{isoAlekseevSmallUq} gives an irreducible representation of $\mathcal{L}_{g,n}(u_\epsilon^\Q)$ on $\bigl( (u_\e^\Q)^* \bigr)^{\otimes g} \otimes V^{\otimes n}$, with desired dimension. The surjection $\mathcal{L}_{g,n}^\e \twoheadrightarrow \mathcal{L}_{g,n}(u_\e^\Q)$ from Th.\,\ref{centralextLgn} transports it to an irreducible representation of $\mathcal{L}_{g,n}^\e$.
\end{proof} 

\indent Denote by $[R:F] \in \mathbb{N}^* \cup \{\infty\}$ the dimension of a ring $R$ over a central subfield $F$.

\begin{lem}\label{lemmaSupMultDim}
Let $\Bbbk$ be a field, and $R',R''$ be commutative $\Bbbk$-algebras which are domains, and are such that the $\Bbbk$-algebra $R' \otimes_\Bbbk R''$ is a domain.
\\1. There is a natural inclusion $e : Q(R') \otimes_\Bbbk Q(R'') \hookrightarrow Q(R' \otimes_\Bbbk R'')$.
\\2. For any  subalgebras $S' \subset R'$ and $S'' \subset R''$, if $\bigl(r'_i\bigr)_i \subset Q(R')$ is free over $Q(S')$ and $\bigl(r''_j\bigr)_j \subset Q(R'')$ is free over $Q(S'')$ then $\bigl(e(r'_i \otimes_\Bbbk r''_j) \bigr)_{i,j} \subset Q(R' \otimes_\Bbbk R'')$ is free over $Q(S' \otimes_\Bbbk S'')$. In particular
\[ \bigl[ Q(R' \otimes_\Bbbk R'') : Q(S' \otimes_\Bbbk S'') \bigr] \geq \bigl[ Q(R') : Q(S') \bigr] \, \bigl[ Q(R'') : Q(S'') \bigr]. \]
\end{lem}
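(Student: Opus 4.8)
The plan is to treat every fraction field appearing here uniformly as a localization of the relevant tensor product of domains, and to reduce the freeness assertion to a single computation of tensoring bases over $\Bbbk$. For part 1, I would first observe that since $R'\otimes_\Bbbk R''$ is a domain, the natural maps $R'\to R'\otimes_\Bbbk R''\hookrightarrow Q(R'\otimes_\Bbbk R'')$ and $R''\to R'\otimes_\Bbbk R''\hookrightarrow Q(R'\otimes_\Bbbk R'')$ send nonzero elements to units (a nonzero pure tensor $r'\otimes 1$ is nonzero in the domain), so they extend to field homomorphisms $Q(R')\to Q(R'\otimes_\Bbbk R'')$ and $Q(R'')\to Q(R'\otimes_\Bbbk R'')$. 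Their images commute, so the universal property of $\otimes_\Bbbk$ yields the $\Bbbk$-algebra map $e$. To get injectivity I would identify the source: writing $T$ for the multiplicative subset of $R'\otimes_\Bbbk R''$ generated by $(R'\setminus\{0\})\otimes 1$ and $1\otimes(R''\setminus\{0\})$, the fact that localization commutes with $\otimes_\Bbbk$ gives $Q(R')\otimes_\Bbbk Q(R'')\cong T^{-1}(R'\otimes_\Bbbk R'')$; since $T$ contains no zero divisors, this localization embeds into $Q(R'\otimes_\Bbbk R'')$, and that embedding is exactly $e$.

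For part 2, the engine is a tensor-of-bases computation over fields. Write $K'=Q(S')$, $K''=Q(S'')$, $L'=Q(R')$, $L''=Q(R'')$. I would extend $(r'_i)_i$ to a $K'$-basis of $L'$ and $(r''_j)_j$ to a $K''$-basis of $L''$. From $L'=\bigoplus_i K'r'_i$ and $L''=\bigoplus_j K''r''_j$, tensoring over $\Bbbk$ gives
\[
L'\otimes_\Bbbk L''=\bigoplus_{i,j}\bigl(K'\otimes_\Bbbk K''\bigr)\,(r'_i\otimes r''_j),
\]
so $L'\otimes_\Bbbk L''$ is free as a module over $C:=K'\otimes_\Bbbk K''$ with basis $\{r'_i\otimes r''_j\}_{i,j}$. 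In particular the given subfamily is $C$-linearly independent inside $L'\otimes_\Bbbk L''$, which via $e$ sits in $F:=Q(R'\otimes_\Bbbk R'')$ with $Q(L'\otimes_\Bbbk L'')=F$.

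The one genuinely subtle point, which I would flag as the main obstacle, is to match this $C$-linear independence with the required independence over the \emph{a priori} larger field $Q(S'\otimes_\Bbbk S'')$. The key reconciliation is that, because $S'\otimes_\Bbbk S''$ is a subring of the domain $R'\otimes_\Bbbk R''$, it is itself a domain, and $C=Q(S')\otimes_\Bbbk Q(S'')$ is (by the part 1 argument applied to $S',S''$) a localization of it; hence $Q(C)=Q(S'\otimes_\Bbbk S'')$. It therefore suffices to upgrade $C$-independence to $Q(C)$-independence, which is the standard clearing-of-denominators argument: a $Q(C)$-linear relation among these elements of the domain $L'\otimes_\Bbbk L''$ becomes, after multiplying through by a common denominator in $C$, a $C$-linear relation read inside $L'\otimes_\Bbbk L''$ itself (using $L'\otimes_\Bbbk L''\hookrightarrow F$), whence all coefficients vanish. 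Identifying these elements with $e(r'_i\otimes r''_j)$ gives the freeness statement; finally, taking the two families to be full independent families of sizes realizing $[Q(R'):Q(S')]$ and $[Q(R''):Q(S'')]$ and counting yields the displayed inequality, with the usual convention $\infty\cdot c=\infty$ (for $c\geq 1$) covering the infinite-degree case.
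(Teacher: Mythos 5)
Your proof is correct, and while it proves the same statement by the same broad strategy (clear denominators, use injectivity of $e$), the way it handles the core of part 2 is genuinely different from the paper's. The paper never introduces the ring $C = Q(S')\otimes_\Bbbk Q(S'')$: after clearing denominators to get coefficients $Pf_{i,j}\in S'\otimes_\Bbbk S''$, it expands them along a $\Bbbk$-basis $(s'_k)$ of $S'$, checks that the family $(s'_k r'_i)_{i,k}$ is $\Bbbk$-free in $Q(R')$, and uses injectivity of $e$ to force the second-factor coefficients $\sum_j x^{i,j}_k r''_j$ to vanish, concluding by freeness of $(r''_j)$ over $Q(S'')$ --- an asymmetric, coefficient-comparison computation. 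You instead extend both families to bases, invoke the clean structural fact that $\{r'_i\otimes r''_j\}_{i,j}$ is a $C$-basis of $Q(R')\otimes_\Bbbk Q(R'')$, and then handle the subtlety you rightly flag --- that $Q(S'\otimes_\Bbbk S'')$ is a priori larger than $C$ --- by applying part 1 to $S',S''$ (legitimate, since $S'\otimes_\Bbbk S''$ is a subring of a domain and hence a domain) to realize $C$ as a localization of $S'\otimes_\Bbbk S''$, so that $Q(C)=Q(S'\otimes_\Bbbk S'')$ and denominator-clearing reduces $Q(S'\otimes_\Bbbk S'')$-independence to $C$-independence. Similarly, in part 1 your identification $Q(R')\otimes_\Bbbk Q(R'')\cong T^{-1}(R'\otimes_\Bbbk R'')$ followed by embedding a localization at non-zero-divisors into the fraction field repackages, through standard localization theory, what the paper does by hand (multiplying a kernel element by the product of its denominators and using its invertibility). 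Your route is more conceptual and symmetric in $R'$ and $R''$, and isolates the one real subtlety as a field-theoretic identification that reuses part 1; the paper's route avoids extending to bases and avoids invoking part 1 a second time, at the cost of a more hands-on verification.
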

\begin{proof}
1. It is straightforward to check that the map $Q(R') \times Q(R'') \to Q(R' \otimes_\Bbbk R'')$ given by $\Bigl(\frac{p'}{q'}, \frac{p''}{q''} \Bigr) \mapsto \frac{p' \otimes p''}{q' \otimes q''}$ is well-defined and $\Bbbk$-bilinear. It thus yields the $\Bbbk$-linear map $e$ which satisfies $e\Bigl(\frac{p'}{q'} \otimes \frac{p''}{q''}\Bigr) = \frac{p' \otimes p''}{q' \otimes q''}$ and is readily seen to be a morphism of $\Bbbk$-algebras. Let us prove that $e$ is injective. Take an element $x = \sum_{c} \frac{p'_c}{q'_c} \otimes \frac{p''_c}{q''_c} \in Q(R') \otimes_\Bbbk Q(R'')$ and assume $e(x)=0$. Set $d = \textstyle \prod_c q'_c \otimes q''_c$ so that $dx \in R' \otimes_\Bbbk R'' \subset Q(R') \otimes_\Bbbk Q(R'')$. Then $ 0 = e(d)e(x) = e(dx) = dx$. But since $d$ is invertible in $Q(R') \otimes_\Bbbk Q(R'')$, we have $d^{-1}dx = x = 0$.
\\2. Assume that a finite sum vanishes: $\textstyle \sum_{i,j} f_{i,j} e(r'_i \otimes r''_j) = 0$, with $f_{i,j} \in Q(S' \otimes_\Bbbk S'')$. Collecting all denominators in the $f_{i,j}$'s, we get a non-zero element $P \in S' \otimes_\Bbbk S''$ such that $Pf_{i,j} \in S' \otimes_\Bbbk S''$ for all $i,j$. Let $(s'_k)$ be a basis of $S'$ over $\Bbbk$; then $(s'_k)$ is a free family over $\Bbbk$ in $Q(S')$ and it follows that $\bigl( s'_k r'_i \,\bigr)_{i,k}$ is a free family over $\Bbbk$ in $Q(R')$. Writing $Pf_{i,j} = \textstyle \sum_k s'_k \otimes x^{i,j}_k \in S' \otimes_\Bbbk S''$ and using that $e(Pf_{i,j})$ is $Pf_{i,j}$ viewed in $Q(R' \otimes_\Bbbk R'')$, we find
\[ \textstyle 0 = \sum_{i,j} Pf_{i,j} e(r'_i \otimes r''_j) = e\left(\sum_{i,j,k} s'_k r'_i \otimes x^{i,j}_k r''_j \right) = e\left( \sum_{i,k} s'_k r'_i \otimes \Bigl( \sum_j x^{i,j}_k r''_j \Bigr) \right).  \]
Since $e$ is injective, the freeness of $\bigl( s'_k r'_i \bigr)_{i,k}$ over $\Bbbk$ implies $\textstyle \sum_j x^{i,j}_k r''_j = 0$ for all $i,k$. But since $(r''_j)$ is free over $Q(S'')$, it follows in turn that $x^{i,j}_k=0$ for all $i,j,k$. Hence $Pf_{i,j} = 0$ for all $i,j$ and since $P \neq 0$ we conclude that $f_{i,j} = 0$ for all $i,j$.
\end{proof}

\begin{teo}\label{Lgnteo1}
The algebra $Q(\Ll_{g,n}^{\e})$ is a division algebra, and a central simple algebra of PI-degree $l^{g\,\mathrm{dim}(\mathfrak{g})+nN}$.
\end{teo}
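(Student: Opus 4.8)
The plan is to determine the PI-degree $d$ of $Q(\mathcal{L}_{g,n}^\e)$ by combining a representation-theoretic lower bound with a single multiplicativity identity for field degrees, forcing two inequalities to become equalities simultaneously. First I would record that $\mathcal{L}_{g,n}^\e$ is a domain (Prop.\,\ref{propLgndomain}) which is finitely generated and free of rank $l^{(2g+n)\dim(\mathfrak{g})}$ over the central subalgebra $\mathcal{Z}_0 := \mathcal{Z}_0(\mathcal{L}_{g,n}^\e)$ (Prop.\,\ref{Z0Lgn}), hence finitely generated as a module over its full center $\mathcal{Z} := \mathcal{Z}(\mathcal{L}_{g,n}^\e)$. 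By Lemma \ref{lemQRCSA}, $Q(\mathcal{L}_{g,n}^\e)$ is then a division algebra and a central simple algebra over $Q(\mathcal{Z})$ with $\bigl[Q(\mathcal{L}_{g,n}^\e):Q(\mathcal{Z})\bigr]=d^2$. On the other hand, Lemma \ref{lemTrucsGenerauxQR}(4) together with freeness gives $\bigl[Q(\mathcal{L}_{g,n}^\e):Q(\mathcal{Z}_0)\bigr]=l^{(2g+n)\dim(\mathfrak{g})}$, and since $Q(\mathcal{Z}_0)\subset Q(\mathcal{Z})$ are fields the tower law yields the key identity $d^2\,\bigl[Q(\mathcal{Z}):Q(\mathcal{Z}_0)\bigr]=l^{(2g+n)\dim(\mathfrak{g})}$.

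Next I would establish two lower bounds, one for each factor of this identity. For $d$, Prop.\,\ref{irrep} provides a simple module of dimension $l^{g\dim(\mathfrak{g})+nN}$, so Theorem \ref{unicityThm}(2) gives $d\geq l^{g\dim(\mathfrak{g})+nN}$. For $\bigl[Q(\mathcal{Z}):Q(\mathcal{Z}_0)\bigr]$, I would exhibit a commutative central subalgebra sitting diagonally across the tensor factors, namely $Z=\mathcal{Z}_0(\mathcal{O}_\e)^{\otimes 2g}\otimes \mathcal{Z}(\mathcal{L}_{0,1}^\e)^{\otimes n}$, placed in the $2g$ handle tensorands and the $n$ puncture tensorands of $\mathcal{L}_{g,n}^\e=\mathcal{O}_\e^{\otimes(2g+n)}$. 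The factors in the puncture tensorands are central in the whole of $\mathcal{L}_{g,n}^\e$ for two reasons: applying Theorem \ref{centralizerMu}(3) in the case $(g,n)=(0,1)$ (where $\mu_{0,1}^\e=(\Phi_{0,1}^\e)^{-1}$ is onto) gives $\mathcal{Z}(\mathcal{L}_{0,1}^\e)=\mathcal{L}_{0,1}^{u_\e}$, so each factor is central in its own tensorand; and, being $u_\e$-coinvariant, each commutes with the remaining tensorands by the exchange relation \eqref{braidedTensProdLgn}, because the co-R-matrix factors through $\pi$ (Cor.\,\ref{relationsCoRMatAtEps}(1)) and therefore collapses to counits on coinvariant elements by \eqref{coinvinv} and \eqref{coReps}. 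Since the factors lie in distinct tensorands, the abstract tensor product $Z$ embeds into the domain $\mathcal{L}_{g,n}^\e$ and is itself a domain, so I may apply Lemma \ref{lemmaSupMultDim} iteratively; using that $\bigl[Q(\mathcal{Z}(\mathcal{L}_{0,1}^\e)):Q(\mathcal{Z}_0(\mathcal{O}_\e))\bigr]=l^m$ (the case $g=0$ of \cite{BR2}) this yields $\bigl[Q(Z):Q(\mathcal{Z}_0)\bigr]\geq (l^m)^n=l^{nm}$, and hence $\bigl[Q(\mathcal{Z}):Q(\mathcal{Z}_0)\bigr]\geq l^{nm}$ because $\mathcal{Z}_0\subset Z\subset\mathcal{Z}$.

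Finally I would substitute both bounds into the key identity and use $\dim(\mathfrak{g})=2N+m$: one gets $l^{(2g+n)\dim(\mathfrak{g})}=d^2\,\bigl[Q(\mathcal{Z}):Q(\mathcal{Z}_0)\bigr]\geq l^{2g\dim(\mathfrak{g})+2nN}\cdot l^{nm}=l^{(2g+n)\dim(\mathfrak{g})}$, so equality holds throughout. This forces $d=l^{g\dim(\mathfrak{g})+nN}$ (and incidentally $\bigl[Q(\mathcal{Z}):Q(\mathcal{Z}_0)\bigr]=l^{nm}$), completing the proof; the case $n=0$ is covered uniformly since then $Z=\mathcal{Z}_0$ and both bounds read $l^0=1$. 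I expect the genuinely delicate part to be the construction and control of $Z$ in the second paragraph: verifying that $\mathcal{Z}(\mathcal{L}_{0,1}^\e)$ embedded in a puncture tensorand is central in the full $\mathcal{L}_{g,n}^\e$, and securing the base degree $l^m$ for a single puncture; everything else is a formal consequence of the localization and PI-theoretic lemmas already available.
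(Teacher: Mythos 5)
Your proof is correct, and its overall architecture is exactly that of the paper: the lower bound $d \geq l^{g\,\mathrm{dim}(\mathfrak{g})+nN}$ comes from Prop.\,\ref{irrep} combined with Theorem \ref{unicityThm}(2), the upper bound comes from exhibiting a central subalgebra strictly larger than $\mathcal{Z}_0(\mathcal{L}_{g,n}^\e)$ in the puncture tensorands whose fraction field has degree at least $l^{mn}$ over $Q\bigl(\mathcal{Z}_0(\mathcal{L}_{g,n}^\e)\bigr)$ (via Lemma \ref{lemmaSupMultDim}), and the identity $\dim(\mathfrak{g}) = 2N+m$ squeezes both inequalities into equalities — precisely the chain \eqref{majorationDegZoverZ0}--\eqref{majorationDegPILgn} in the paper.

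The one place you genuinely deviate is the construction of the enlarged central subalgebra, and the comparison is instructive. The paper adjoins to $\mathcal{Z}_0(\mathcal{L}_{0,1}^\e)$ the specialized quantum traces $(\mathrm{qTr}_\lambda)_{|\e}$, which are $\Gamma_\e$-invariant; centrality in $\mathcal{L}_{g,n}^\e$ then follows at once from Lemma \ref{Z1general}(2), since genuine $\mathcal{O}_\e$-coinvariance makes the exchange relations collapse, and the base degree $l^m$ is quoted from \cite[Rmk.\,5.3]{BR2}. You instead take the full center $\mathcal{Z}(\mathcal{L}_{0,1}^\e)$, identified with $\mathcal{L}_{0,1}^{u_\e}$ by applying Theorem \ref{centralizerMu}(3) at $(g,n)=(0,1)$, where $\mu_{0,1}^\e$ is bijective so its image is all of $\mathcal{L}_{0,1}^\e$ — a correct reading of that theorem, and legitimate since it precedes Theorem \ref{Lgnteo1} in the paper's logical order. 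The price you pay is that $u_\e$-invariance is strictly weaker than $\mathcal{O}_\e$-coinvariance, so Lemma \ref{Z1general}(2) does not apply verbatim; your repair — routing the exchange relation \eqref{braidedTensProdLgn} through $\pi$ via Cor.\,\ref{relationsCoRMatAtEps}(1), so that $u_\e$-invariance becomes coinvariance for the $(u_\e^\Q)^*$-coaction and the co-R-matrix collapses to counits by \eqref{coReps} — is exactly the right argument and closes the gap. The two subalgebras in fact coincide, but the paper only proves this later (Th.\,\ref{Lgnteo3}, Cor.\,\ref{coroIsolateFacts}(2)), so at this stage they are formally different objects: the paper's version buys explicit generators and an immediate centrality proof, while yours buys a cleaner conceptual description (the full center, no auxiliary elements) at the cost of a heavier input (Theorem \ref{centralizerMu}) and the extra $\pi$-factoring step. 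Your base-case citation (the $g=0$ results of \cite{BR2}, giving $\bigl[Q(\mathcal{Z}(\mathcal{L}_{0,1}^\e)):Q(\mathcal{Z}_0(\mathcal{L}_{0,1}^\e))\bigr]=l^m$) and the paper's are materially the same input.
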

\begin{proof}
We already noted that it is a division algebra and a central simple algebra. Denote by $\mathfrak{r}\in \mn$ its PI-degree; so $Q(\Ll_{g,n}^{\e})$ has dimension $\mathfrak{r}^2$ over $Q\bigl(\mathcal{Z}(\Ll_{g,n}^{\e})\bigr)$, and any irreducible representation of $\Ll_{g,n}^{\e}$ has dimension $\leq \mathfrak{r}$ by Th.\,\ref{unicityThm}(2). Hence Prop.\,\ref{irrep} gives $\mathfrak{r} \geq l^{g\,\mathrm{dim}(\mathfrak{g})+nN}$. 

To establish the other inequality, we make use of a central subalgebra $\hat{\mathcal{Z}_0}(\mathcal{L}_{g,n}^\e)$ larger than $\mathcal{Z}_0(\mathcal{L}_{g,n}^\e)$. In order to define it, recall the $\Gamma_A^\Q$-module $_AV_{\lambda}$ which is a free $A$-lattice in the highest weight $U_q^\Q$-module $V_\lambda$ for all $\lambda \in P_+$. Take an $A$-basis $(x_i)$ of weight vectors, with dual basis $(x^i)$, and let
\begin{equation}\label{casimirElmtsL01}
\textstyle \mathrm{qTr}_\lambda : \Gamma_A^\Q \to A, \quad h \mapsto \sum_i x^i\bigl( \ell h \cdot x_i \bigr) = \mathrm{Tr}\bigl( \mathrm{rep}_\lambda(\ell\,h) \bigr)
\end{equation}
where $\ell \in U_A^\Q \subset \Gamma_A^\Q$ is the pivotal element and $\mathrm{rep}_\lambda : \Gamma_A^\Q \to \mathrm{End}_A(_AV_\lambda)$ is the representation morphism. Then $\mathrm{qTr}_\lambda \in \mathcal{O}_A$, as an $A$-linear combination of matrix coefficients of $_AV_\lambda$. Moreover, $\mathrm{qTr}_\lambda$ is invariant under the coadjoint action \eqref{coadUqOq} of $\Gamma_A^\Q$ by cyclicity of the trace and the pivotal property; said differently $\mathrm{qTr}_\lambda \in \mathcal{L}_{0,1}^{\Gamma_A}$. As usual denote by $(\mathrm{qTr}_\lambda)_{|\e}$ the specialization of $\mathrm{qTr}_\lambda$ to $q=\e$. Now in that specialization let
\[ \hat{\mathcal{Z}}_0(\mathcal{L}_{0,1}^\e) := \mathcal{Z}_0(\mathcal{L}_{0,1}^\e)\bigl[ (\mathrm{qTr}_\lambda)_{|\e} \,\big|\, \lambda \in P_+ \bigr] \]
be the subalgebra obtained by extending $\mathcal{Z}_0(\mathcal{L}_{0,1}^\e)$ with the specializations of the elements $\mathrm{qTr}_\lambda$. Then, taking advantage of the vector space decomposition $\mathcal{L}_{g,n}^\e = \mathcal{L}_{g,0}^\e \otimes (\mathcal{L}_{0,1}^\e)^{\otimes n}$ we consider the subspace
\begin{equation}\label{defHatZ0Lgn}
\hat{\mathcal{Z}_0}(\mathcal{L}_{g,n}^\e) := \mathcal{Z}_0(\mathcal{L}_{g,0}^\e) \otimes \hat{\mathcal{Z}}_0(\mathcal{L}_{0,1}^\e)^{\otimes n} =  \mathcal{Z}_0(\mathcal{L}_{g,n}^\e)\bigl[ \mathfrak{i}_{g+j}(\mathrm{qTr}_\lambda)_{|\e} \,\big|\, \lambda \in P_+, \: 1 \leq j \leq n \bigr].
\end{equation}
By Prop.\,\ref{Z0Lgn}(1) and Lemma \ref{Z1general}(ii) it is a central subalgebra of $\mathcal{L}_{g,n}^\e$.
Of course, the inclusions $\mathcal{Z}_0(\mathcal{L}_{g,n}^\e) \subset \hat{\mathcal{Z}}_0(\mathcal{L}_{g,n}^\e) \subset \mathcal{Z}(\mathcal{L}_{g,n}^\e)$  still hold true at the level of fraction fields.  Moreover, it is known that $\bigl[ Q\bigl(\hat{\mathcal{Z}}_0(\mathcal{L}_{0,1}^\e)\bigr) : Q\bigl(\mathcal{Z}_0(\mathcal{L}_{0,1}^\e)\bigr) \bigr] = l^m$ by \cite[Rmk.\,5.3]{BR2}\footnote{The proof of \cite[Th.\,5.2]{BR2}, which is used in \cite[Rmk.\,5.3]{BR2}, implicitly uses Cor.\,\ref{Phi0Aiso} of the present paper.}. Hence
\begin{align}
\begin{split}\label{majorationDegZoverZ0}
&\bigl[ Q\bigl(\mathcal{Z}(\mathcal{L}_{g,n}^\e)\bigr) : Q\bigl(\mathcal{Z}_0(\mathcal{L}_{g,n}^\e)\bigr) \bigr]\\
=\:& \bigl[ Q\bigl(\mathcal{Z}(\mathcal{L}_{g,n}^\e)\bigr) : Q\bigl(\hat{\mathcal{Z}}_0(\mathcal{L}_{g,n}^\e)\bigr) \bigr] \bigl[ Q\bigl(\hat{\mathcal{Z}}_0(\mathcal{L}_{g,n}^\e)\bigr) : Q\bigl(\mathcal{Z}_0(\mathcal{L}_{g,n}^\e)\bigr) \bigr]\\
\geq\:&\bigl[ Q\bigl(\hat{\mathcal{Z}}_0(\mathcal{L}_{g,n}^\e)\bigr) : Q\bigl(\mathcal{Z}_0(\mathcal{L}_{g,n}^\e)\bigr) \bigr]\\
=\:&\bigl[ Q\bigl(\mathcal{Z}_0(\mathcal{L}_{g,0}^\e) \otimes \hat{\mathcal{Z}}_0(\mathcal{L}_{0,1}^\e)^{\otimes n}\bigr) : Q\bigl(\mathcal{Z}_0(\mathcal{L}_{g,0}^\e) \otimes \mathcal{Z}_0(\mathcal{L}_{0,1}^\e)^{\otimes n})\bigr) \bigr]\\
\geq\:&\bigl[ Q\bigl(\hat{\mathcal{Z}}_0(\mathcal{L}_{0,1}^\e)^{\otimes n}\bigr) : Q\bigl(\mathcal{Z}_0(\mathcal{L}_{0,1}^\e)^{\otimes n})\bigr) \bigr] \geq \bigl[ Q\bigl(\hat{\mathcal{Z}}_0(\mathcal{L}_{0,1}^\e)\bigr) : Q\bigl(\mathcal{Z}_0(\mathcal{L}_{0,1}^\e))\bigr) \bigr]^n = l^{mn}
\end{split}
\end{align}
where the two last inequalities are by Lemma \ref{lemmaSupMultDim}. Combining this with Prop.\,\ref{Z0Lgn}(3) we conclude that
\begin{align}
\begin{split}\label{majorationDegPILgn}
\mathfrak{r}^2 = \bigl[Q(\Ll_{g,n}^{\e}):Q(\mathcal{Z}(\Ll_{g,n}^{\e}))\bigr] & = \frac{\bigl[Q(\Ll_{g,n}^{\e}):Q\bigl(\mathcal{Z}_0(\Ll_{g,n}^{\e})\bigr)\bigr]}{\bigl[Q\bigl(\mathcal{Z}(\Ll_{g,n}^{\e})\bigr):Q\bigl(\mathcal{Z}_0(\Ll_{g,n}^{\e})\bigr)\bigr]}\\
&\leq \bigl[Q(\Ll_{g,n}^{\e}):Q\bigl(\mathcal{Z}_0(\Ll_{g,n}^{\e})\bigr)\bigr]l^{-mn}\\
&= l^{(2g+n)\mathrm{dim}(\mathfrak{g})} l^{-mn} = l^{2(g\,\mathrm{dim}(\mathfrak{g}) + nN)}
\end{split}
\end{align}
because $\dim(\mathfrak{g}) = 2N + m$. Therefore $\mathfrak{r}= l^{g\,\mathrm{dim}(\mathfrak{g})+nN}$.
\end{proof}

From the previous proof we can isolate facts which are interesting on their own:

\begin{cor}\label{coroIsolateFacts}
1. $\bigl[ Q\bigl(\mathcal{Z}(\Ll_{g,n}^{\e})\bigr):Q\bigl(\mathcal{Z}_0(\Ll_{g,n}^{\e})\bigr) \bigr] = l^{mn}$, where $m = \mathrm{rank}(\mathfrak{g})$.
\\2. $Q\bigl( \mathcal{Z}(\mathcal{L}_{g,n}^\e) \bigr) = Q\bigl( \hat{\mathcal{Z}}_0(\mathcal{L}_{g,n}^\e) \bigr)$, where the central subalgebra $\hat{\mathcal{Z}_0}(\mathcal{L}_{g,n}^\e)$ was defined in \eqref{defHatZ0Lgn}.
\end{cor}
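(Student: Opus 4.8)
The plan is to read both identities off the proof of Theorem \ref{Lgnteo1}: once the PI-degree has been pinned down exactly to $l^{g\dim(\mathfrak{g})+nN}$, every inequality appearing in the chains \eqref{majorationDegZoverZ0} and \eqref{majorationDegPILgn} is forced to become an equality. No new input is needed; the argument is purely a bookkeeping of degrees of field extensions, which are all finite since $\mathcal{Z}(\Ll_{g,n}^\e)$ is a finitely generated $\mathcal{Z}_0(\Ll_{g,n}^\e)$-module (Prop.\,\ref{Z0Lgn}(3) together with Noetherianity of $\mathcal{Z}_0(\Ll_{g,n}^\e)$).

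For item 1, first I would start from the identity
\[ \mathfrak{r}^2 = \bigl[ Q(\Ll_{g,n}^\e) : Q(\mathcal{Z}(\Ll_{g,n}^\e)) \bigr] = \frac{\bigl[ Q(\Ll_{g,n}^\e) : Q(\mathcal{Z}_0(\Ll_{g,n}^\e)) \bigr]}{\bigl[ Q(\mathcal{Z}(\Ll_{g,n}^\e)) : Q(\mathcal{Z}_0(\Ll_{g,n}^\e)) \bigr]} \]
used in \eqref{majorationDegPILgn}, where $\mathfrak{r}$ is the PI-degree. By Prop.\,\ref{Z0Lgn}(3) and Lemma \ref{lemTrucsGenerauxQR}(4) the numerator equals $l^{(2g+n)\dim(\mathfrak{g})}$, while Theorem \ref{Lgnteo1} gives $\mathfrak{r}^2 = l^{2(g\dim(\mathfrak{g})+nN)}$. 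Solving for the denominator and using $\dim(\mathfrak{g}) = 2N+m$ yields
\[ \bigl[ Q(\mathcal{Z}(\Ll_{g,n}^\e)) : Q(\mathcal{Z}_0(\Ll_{g,n}^\e)) \bigr] = l^{(2g+n)\dim(\mathfrak{g}) - 2(g\dim(\mathfrak{g})+nN)} = l^{n\dim(\mathfrak{g})-2nN} = l^{mn}, \]
which is item 1.

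For item 2, I would invoke the tower of subfields $Q(\mathcal{Z}_0(\Ll_{g,n}^\e)) \subset Q(\hat{\mathcal{Z}}_0(\Ll_{g,n}^\e)) \subset Q(\mathcal{Z}(\Ll_{g,n}^\e))$ arising from the inclusions of central subalgebras, together with multiplicativity of degrees,
\[ \bigl[ Q(\mathcal{Z}) : Q(\mathcal{Z}_0) \bigr] = \bigl[ Q(\mathcal{Z}) : Q(\hat{\mathcal{Z}}_0) \bigr] \cdot \bigl[ Q(\hat{\mathcal{Z}}_0) : Q(\mathcal{Z}_0) \bigr], \]
where I abbreviate the graph algebra argument. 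The left-hand side equals $l^{mn}$ by item 1, whereas the chain \eqref{majorationDegZoverZ0} already established $\bigl[ Q(\hat{\mathcal{Z}}_0) : Q(\mathcal{Z}_0) \bigr] \geq l^{mn}$. Hence the first factor is $\leq 1$, so it equals $1$, giving $Q(\mathcal{Z}(\Ll_{g,n}^\e)) = Q(\hat{\mathcal{Z}}_0(\Ll_{g,n}^\e))$, which is item 2.

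Since every ingredient is already in place, there is no genuine obstacle here; the only point demanding a little care is to confirm beforehand that all the extension degrees in play are finite, so that the multiplicative relation may legitimately be applied, and to keep track of the single combinatorial identity $\dim(\mathfrak{g}) = 2N+m$ which collapses the exponents to $mn$.
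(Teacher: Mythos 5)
Your proposal is correct and follows essentially the same route as the paper: both arguments extract the corollary from the proof of Theorem \ref{Lgnteo1} by observing that the exact value of the PI-degree forces the inequalities in \eqref{majorationDegPILgn} and \eqref{majorationDegZoverZ0} to be equalities, you merely making the arithmetic (solving for the denominator, using $\dim(\mathfrak{g})=2N+m$) explicit where the paper states it implicitly.
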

\begin{proof}
1. The conclusion of the proof of Theorem \ref{Lgnteo1}  shows that the inequality in \eqref{majorationDegPILgn} is actually an equality, giving the claim.
\\2. By the previous item all inequalities in \eqref{majorationDegZoverZ0} are actually equalities. It follows that $\bigl[ Q\bigl(\mathcal{Z}(\mathcal{L}_{g,n}^\e)\bigr) : Q\bigl(\hat{\mathcal{Z}}_0(\mathcal{L}_{g,n}^\e)\bigr) \bigr] = 1$.
\end{proof}

\begin{remark}{\rm We will see in $\S$\ref{sec:proofLgn3} that $\mathcal{Z}(\mathcal{L}_{g,n}^\e) = \hat{\mathcal{Z}}_0(\mathcal{L}_{g,n}^\e)$. This answers \cite[Rk\,. 5.3]{BR2}. }
\end{remark}

\subsection{PI degree of \texorpdfstring{$Q( \mathcal{L}_{g,n}^{u_\e} )$}{the central localization}}\label{subsecPIdegInv}
Recall from \eqref{defSmallInvEps} that $\mathcal{L}_{g,n}^{u_\e}$ is the subalgebra of invariant elements under the action $\mathrm{coad}^r$ of the small quantum group $u_\e^\Q \subset \Gamma_\e^\Q$. By Proposition \ref{propLgndomain} it is a domain. Therefore we can consider its central localization (see \S\ref{subsecCentralLoc})
\[ Q( \mathcal{L}_{g,n}^{u_\e} ) = Q\bigl(\mathcal{Z}(\Ll_{g,n}^{u_\e})\bigr) \otimes_{\mathcal{Z}(\Ll_{g,n}^{u_\e})} \Ll_{g,n}^{u_\e}. \]
By Cor. \ref{ZinLue} we can write 
\[ \bigl[ Q( \mathcal{L}_{g,n}^{u_\e} ) : Q\bigl(\mathcal{Z}(\Ll_{g,n}^{u_\e})\bigr) \bigr] \leq \bigl[ Q( \mathcal{L}_{g,n}^{u_\e} ) : Q\bigl(\mathcal{Z}(\Ll^\e_{g,n})\bigr) \bigr] \leq \bigl[ Q( \mathcal{L}_{g,n}^\e) : Q\bigl(\mathcal{Z}(\Ll_{g,n}^\e)\bigr) \bigr] < \infty \]
since the last term was computed in Theorem \ref{Lgnteo1}. Lemma \ref{lemQRCSA} then indicates that $Q(\mathcal{L}_{g,n}^{u_\e})$ is a division algebra and a central simple algebra. Our goal is to compute its PI-degree. Note that by Lemma \ref{lemTrucsGenerauxQR}(2) we also have
\[ Q(\mathcal{L}_{g,n}^{u_\e}) \cong Q\bigl( \mathcal{Z}(\mathcal{L}_{g,n}^\e) \bigr) \otimes_{\mathcal{Z}(\mathcal{L}_{g,n}^\e)} \mathcal{L}_{g,n}^{u_\e} \cong Q\bigl( \mathcal{Z}_0(\mathcal{L}_{g,n}^\e) \bigr) \otimes_{\mathcal{Z}_0(\mathcal{L}_{g,n}^\e)} \mathcal{L}_{g,n}^{u_\e}. \]

\indent Recall the quantum moment map (QMM) $\mu_{g,n} : H' \xrightarrow{\Phi_{0,1}^{-1}} \mathcal{L}_{0,1}(H) \xrightarrow{\mathfrak{d}_{g,n}} \mathcal{L}_{g,n}(H)$ defined for a general ribbon Hopf algebra $H$ in \S\ref{subsecQMMgeneral} and further studied for $H = U_q^\Pup(\mathfrak{g})$ in \S\ref{secQMMandInv}; we saw in particular that it has a specialized version $\mu_{g,n}^\e : U_\e'\xrightarrow{(\Phi_{0,1}^\e)^{-1}} \mathcal{L}_{0,1}^{\e} \xrightarrow{\mathfrak{d}_{g,n}^{\e}} \mathcal{L}_{g,n}^{\e}$. We now show that $\mu_{g,n}^\e$ is injective by a ``reduction to the classical case'' based on the two first items in the following proposition:

\begin{prop}\label{QMMinExactSeq} 1. We have a commutative diagram
\[ \xymatrix@C=5em@R=1.5em{
\mathcal{O}(G) \ar[d]_-{\mathfrak{d}_{g,n}^{\mathrm{cl}}} \ar[r]^{\sim}_-{\mathbb{F}\mathrm{r}^*_\e} & \mathcal{Z}_0(\mathcal{L}_{0,1}^\e) \ar[d] \ar@{^{(}->}[r]^-{\mathrm{(subalg.)}} & \mathcal{L}_{0,1}^\e \ar[d]_-{\mathfrak{d}_{g,n}^{\e}} \ar@{->>}[r]^-{\pi} & \mathcal{L}_{0,1}(u_\e^\Q) \ar[d]_-{\mathfrak{d}_{g,n}^{(u_\e)}} \\
\mathcal{O}(G)^{\otimes (2g+n)} \ar[r]_-{(\mathbb{F}\mathrm{r}^*_\e)^{\otimes (2g+n)}}^{\sim} & \mathcal{Z}_0(\mathcal{L}_{g,n}^\e) \ar@{^{(}->}[r]_-{\mathrm{(subalg.)}} & \mathcal{L}_{g,n}^\e \ar@{->>}[r]_-{\pi^{\otimes (2g+n)}}& \mathcal{L}_{g,n}(u_\e^\Q)
} \]
where $\mathfrak{d}_{g,n}^{(u_\e)}$ is defined by taking $H = u_\e^\Q(\mathfrak{g})$ in \eqref{pseudoQMMgnH} while $\mathfrak{d}_{g,n}^{\mathrm{cl}}$ is defined by
\[ \mathfrak{d}_{g,n}^{\mathrm{cl}}(f) : \bigl( B_1, A_1, \ldots, B_g, A_g, M_1, \ldots, M_n \bigr) \mapsto f\Bigl[ \Bigl({\textstyle \prod_{i=1}^g} B_i A_i^{-1} B_i^{-1} A_i \Bigr) \Bigl({\textstyle \prod_{j=1}^n} M_j \Bigr) \Bigr] \]
for all $f \in \mathcal{O}(G)$, using  the identification $\mathcal{O}(G)^{\otimes (2g+n)} = \mathcal{O}\bigl(G^{\times (2g+n)}\bigr)$.
\\2. The morphism $\mathfrak{d}_{g,n}^{\mathrm{cl}} : \mathcal{O}(G) \to \mathcal{O}(G)^{\otimes (2g+n)}$ is injective.
\\3. The morphism $\mathfrak{d}_{g,n}^{\e}$ and the specialized QMM $\mu^\e_{g,n} : U_\e'\to \mathcal{L}_{g,n}^{\e}$ are injective.
\end{prop}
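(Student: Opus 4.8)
The plan is to deduce everything from the ``reduction to the classical case'' packaged in Lemma~\ref{lemmeTechniqueInjectivite}, once the commutative diagram of item~1 is in place; I therefore treat the three items in the order 1, 2, 3, since item 3 rests on the first two.

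For item 1 the only delicate squares are those involving $\mathfrak{d}_{g,n}^\e$. First I would record that on the central Hopf subalgebra $\mathcal{Z}_0(\Oo_\e)$ all the relevant structure degenerates to the counit. Indeed Cor.~\ref{relationsCoRMatAtEps}(2) gives $\mathcal{R}_\e(\varphi\otimes\psi)=\mathcal{R}_\e(\psi\otimes\varphi)=\varepsilon_{\Oo_\e}(\varphi)\varepsilon_{\Oo_\e}(\psi)$ whenever $\psi\in\mathcal{Z}_0(\Oo_\e)$, and the explicit formula \eqref{coribbonOq}, together with $\eD^l=1$ and $D(\lambda,\lambda),D(\lambda,\rho)\in\mz$, shows that $\mathsf{v}$ takes the value $\varepsilon_{\Oo_\e}$ on the matrix coefficients of the modules of highest weight $l\lambda$ generating $\mathcal{Z}_0(\Oo_\e)$; using the product axiom \eqref{axiomCoribbonProduct} of $\mathsf{v}$ and the degeneracy of $\mathcal{R}_\e$ this propagates to $\mathsf{v}|_{\mathcal{Z}_0(\Oo_\e)}=\varepsilon_{\Oo_\e}$, whence $\mathsf{v}^2|_{\mathcal{Z}_0(\Oo_\e)}=\varepsilon_{\Oo_\e}$ and likewise $\mathsf{u}^{\pm1}|_{\mathcal{Z}_0(\Oo_\e)}=\varepsilon_{\Oo_\e}$. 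Feeding this into \eqref{frakD10} and \eqref{antipodeL01}, and using that on $\mathcal{Z}_0$ the product of $\mathcal{L}_{g,n}^\e$ coincides with the commutative $\star$-product (Prop.~\ref{Z0Lgn}), the map $\mathfrak{d}_{1,0}^\e$ collapses on $\mathcal{Z}_0(\mathcal{L}_{0,1}^\e)$ to $\varphi\mapsto\sum(\varphi_{(1)}S\varphi_{(3)})\otimes(S\varphi_{(2)}\varphi_{(4)})$ with the ordinary Hopf antipode $S=S_{\Oo_\e}$. Since $\mathbb{F}\mathrm{r}_\e^*$ is a Hopf isomorphism onto $\mathcal{Z}_0(\Oo_\e)$ and $\mathcal{O}(G)$ is the commutative Hopf algebra dual to the group law of $G$, evaluation of this expression at $(B,A)$ returns $f(BA^{-1}B^{-1}A)$, and the remaining tensorands $\varphi_{(g+1)}\otimes\cdots\otimes\varphi_{(g+n)}$ of \eqref{pseudoQMMgnH} contribute the $M_j$ factors; this is exactly $\mathfrak{d}_{g,n}^{\mathrm{cl}}$ transported by the Frobenius identifications, proving the left and middle squares (in particular $\mathfrak{d}_{g,n}^\e(\mathcal{Z}_0(\mathcal{L}_{0,1}^\e))\subset\mathcal{Z}_0(\mathcal{L}_{g,n}^\e)$). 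The right square is pure naturality: $\pi$ is a morphism of co-quasitriangular Hopf algebras by Cor.~\ref{relationsCoRMatAtEps}(1) and of coribbon Hopf algebras (the ribbon element of $u_\e^\Q$ acts on the highest-weight line of a simple module by the scalar $\eD^{-D(\lambda,\lambda+2\rho)}$ of \eqref{coribbonOq}), and $\mathfrak{d}_{g,n}$ is built solely from the coproduct, the counit, $\mathsf{v}^2$ and the braided antipode $S_{\mathcal{L}}$; hence the naturality of $\mathfrak{d}_{g,n}$, in the spirit of Lemma~\ref{lemmaLgnIsFunctorial}, together with Prop.~\ref{piMorphBetweenLgn}, yields $\pi^{\otimes(2g+n)}\circ\mathfrak{d}_{g,n}^\e=\mathfrak{d}_{g,n}^{(u_\e)}\circ\pi$.

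For item 2, $\mathfrak{d}_{g,n}^{\mathrm{cl}}$ is by definition the comorphism $c^*$ of the word map $c\colon G^{\times(2g+n)}\to G$, $(B_i,A_i,M_j)\mapsto\big(\prod_i B_iA_i^{-1}B_i^{-1}A_i\big)\big(\prod_j M_j\big)$. A comorphism of affine varieties is injective precisely when the underlying map is dominant, so it suffices to show $c$ is surjective. If $n\geq1$ this is immediate, setting all $A_i=B_i=1$ and all but one $M_j$ equal to $1$. If $n=0$ then $g\geq1$ (as $(g,n)\neq(0,0)$), and setting $B_2=A_2=\cdots=1$ reduces $c$ to the single commutator $(B_1,A_1)\mapsto[B_1,A_1^{-1}]$, whose surjectivity onto the connected semisimple group $G$ is the classical theorem on commutator maps. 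Thus $c$ is surjective and $\mathfrak{d}_{g,n}^{\mathrm{cl}}$ is injective.

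For item 3, since $\Phi_{0,1}^\e\colon\mathcal{L}_{0,1}^\e\to U'_\e$ is an isomorphism, $\mu_{g,n}^\e=\mathfrak{d}_{g,n}^\e\circ(\Phi_{0,1}^\e)^{-1}$ is injective iff $\mathfrak{d}_{g,n}^\e$ is, so it is enough to prove injectivity of $\mathfrak{d}_{g,n}^\e\colon\mathcal{L}_{0,1}^\e\to\mathcal{L}_{g,n}^\e$. I would apply Lemma~\ref{lemmeTechniqueInjectivite} with $R=\mathcal{L}_{0,1}^\e$, $S=\mathcal{L}_{g,n}^\e$, $f=\mathfrak{d}_{g,n}^\e$ and $Z_0=\mathcal{Z}_0(\mathcal{L}_{0,1}^\e)$. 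Both $R$ and $S$ are domains by Prop.~\ref{propLgndomain}; $Z_0$ is central in $R$ by Prop.~\ref{Z0Lgn}(1); and $R$ is module-finite over $Z_0$ (Prop.~\ref{Z0Lgn}(3)), which forces $[Q(R):Q(Z_0)]<\infty$ (cf. Lemma~\ref{lemTrucsGenerauxQR}). Finally the condition $\mathfrak{d}_{g,n}^\e\big(Z_0\setminus\{0\}\big)\subset\mathcal{Z}(S)\setminus\{0\}$ is exactly the combined content of items 1 and 2: the middle square places $\mathfrak{d}_{g,n}^\e(Z_0)$ inside $\mathcal{Z}_0(\mathcal{L}_{g,n}^\e)\subset\mathcal{Z}(\mathcal{L}_{g,n}^\e)$, while the left square identifies $\mathfrak{d}_{g,n}^\e|_{Z_0}$ with $\mathfrak{d}_{g,n}^{\mathrm{cl}}$ conjugated by the Frobenius isomorphisms, hence injective, so it sends nonzero elements to nonzero elements. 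Lemma~\ref{lemmeTechniqueInjectivite} then gives injectivity of $\mathfrak{d}_{g,n}^\e$, and therefore of $\mu_{g,n}^\e$.

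The main obstacle is item 1, specifically the identification of $\mathfrak{d}_{g,n}^\e|_{\mathcal{Z}_0}$ with $\mathfrak{d}_{g,n}^{\mathrm{cl}}$: this requires tracking that \emph{every} structure constant entering \eqref{frakD10} and \eqref{antipodeL01} --- the co-R-matrix, the co-Drinfeld element $\mathsf{u}$ and the coribbon $\mathsf{v}$ --- degenerates to the counit on $\mathcal{Z}_0(\Oo_\e)$, which is precisely where the arithmetic constraints $\eD^l=1$ and $\gcd(l,2D)=1$ enter. The coribbon compatibility of $\pi$ needed for the right square is the other point demanding care, though it is not used in item 3. By contrast item 2 is elementary modulo the standard surjectivity of commutator word maps, and item 3 is then a formal verification of the hypotheses of Lemma~\ref{lemmeTechniqueInjectivite}.
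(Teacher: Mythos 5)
Your proposal is correct and takes essentially the same route as the paper's own proof: the same degeneracy facts on $\mathcal{Z}_0(\mathcal{O}_\e)$ (co-R-matrix via Cor.\,\ref{relationsCoRMatAtEps}(2), coribbon element $\mathsf{v}$ via $\eD^l=1$, and the induced antipode compatibility) identify $\mathfrak{d}_{g,n}^{\e}|_{\mathcal{Z}_0}$ with $\mathfrak{d}_{g,n}^{\mathrm{cl}}$, and item 3 is the identical application of Lemma \ref{lemmeTechniqueInjectivite} with $Z_0=\mathcal{Z}_0(\mathcal{L}_{0,1}^\e)$. In item 2 both arguments ultimately rest on Ree's theorem that every element of a connected semisimple group is a commutator; your packaging via dominance of the word map, versus the paper's counit-projection reduction to $\mathfrak{d}_{1,0}^{\mathrm{cl}}$, is only a cosmetic difference.
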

Therefore the three leftmost vertical maps in the diagram are embeddings, and $\mathcal{Z}_0(\mathcal{L}_{0,1}^\e)\subset \mathcal{Z}_0(\mathcal{L}_{g,n}^\e)$. 
\begin{proof}
1. We prove commutation of the left rectangle. Because of formula \eqref{pseudoQMMgnH} defining $\mathfrak{d}_{g,n}$ it suffices to do the case $\mathfrak{d}_{1,0}$. We need three preliminary facts:

\indent (i) Recall from \eqref{antipodeL01} the antipode $S_{\mathcal{L}}$ of $\mathcal{L}_{0,1}$. Thanks to \eqref{inverseu} and Corollary \ref{relationsCoRMatAtEps}(2) we see that $S_{\mathcal{L}} \circ \mathbb{F}\mathrm{r}_\e^* = \mathbb{F}\mathrm{r}_\e^* \circ S_{\mathcal{O}}$ with $S_{\mathcal{O}} : \mathcal{O}(G) \to \mathcal{O}(G)$ given by $S_{\mathcal{O}}(f)(g) = f(g^{-1})$. 

\indent (ii) Let $\mathfrak{i}^{\mathrm{cl}}_{b}, \mathfrak{i}^{\mathrm{cl}}_{a} :\mathcal{O}(G) \to \mathcal{O}(G)^{\otimes 2}$ given by $f \mapsto f \otimes \varepsilon_{\mathcal{O}}$ and $f \mapsto \varepsilon_{\mathcal{O}} \otimes f$ respectively, with $\varepsilon_{\mathcal{O}} : f \mapsto f(e_G)$ where $e_G$ is the unit element of $G$. Let $\mathfrak{i}^{\e}_b, \mathfrak{i}^{\e}_a : \mathcal{L}_{0,1}^\e \to \mathcal{L}_{1,0}^\e$ be defined similarly. By definition we have $\mathfrak{i}^\e_s \circ \mathbb{F}\mathrm{r}_\e^* = (\mathbb{F}\mathrm{r}_\e^*)^{\otimes 2} \circ \mathfrak{i}_s^{\mathrm{cl}}$ for $s \in \{a,b\}$.

\indent (iii) Let $\mathsf{v}_{\e} : \mathcal{O}_{\e} \to \mathbb{C}$ be the specialization of the coribbon element of $\mathcal{O}_q(\qD)$ (see \eqref{coribbonOq} and Lemma \ref{lemIntegralRandCorib}). We claim that $\mathsf{v}_{\e} \circ \mathbb{F}\mathrm{r}_\e^* = \varepsilon_{\mathcal{O}}$; said differently $\mathsf{v}_{\e}$ is $\varepsilon_{\mathcal{O}_\e}$ on $\mathcal{Z}_0(\mathcal{O}_\e)$. To see this note first that $\mathsf{v}_{\e}$ is multiplicative on the sub-Hopf algebra $\mathcal{Z}_0(\mathcal{O}_\e)$, thanks to the axiom \eqref{axiomCoribbonProduct} and Corollary \ref{relationsCoRMatAtEps}(2). Hence it suffices to check the claim on a subset of algebra generators. It is known that matrix coefficients of $\Gamma_\e^\Q$-modules of the form $_AV_{l\lambda} \otimes_A \mathbb{C}_\e$ generate $\mathcal{Z}_0(\mathcal{O}_\e)$ (Th.\,\ref{DCLteo1}). For such a matrix coefficient $c$, we have by \eqref{coribbonOq}:
\[ \mathsf{v}_{\e}(c) = \eD^{-D(l\lambda, l\lambda + 2\rho)}c(1) = (\eD^l)^{-D(\lambda,l\lambda+2\rho)}c(1) = c(1) = \varepsilon_{\mathcal{O}_\e}(c) \]
thanks to the choice of $\eD$ as an $l$-th root of unity in \eqref{choixED}.

\noindent As a result, for all $f \in \mathcal{O}(G)$ we find
\begin{align*}
&\mathfrak{d}_{1,0}^{\e} \circ \mathbb{F}\mathrm{r}_\e^*(f)\\
=\:& \sum_{(\mathbb{F}\mathrm{r}_\e^*(f))} \mathsf{v}_{\e}^2\bigl( \mathbb{F}\mathrm{r}_\e^*(f)_{(1)} \bigr) \mathfrak{i}^{\e}_b\bigl( \mathbb{F}\mathrm{r}_\e^*(f)_{(2)} \bigr) \mathfrak{i}^{\e}_a\bigl( S_{\mathcal{L}}\bigl(\mathbb{F}\mathrm{r}_\e^*(f)_{(3)} \bigr)\bigr)  \mathfrak{i}^{\e}_b\bigl( S_{\mathcal{L}}\bigl(\mathbb{F}\mathrm{r}_\e^*(f)_{(4)} \bigr)\bigr) \mathfrak{i}^{\e}_a\bigl( \mathbb{F}\mathrm{r}_\e^*(f)_{(5)} \bigr)\\
=\:& \sum_{(f)} \mathsf{v}_{\e}^2\bigl( \mathbb{F}\mathrm{r}_\e^*(f_{(1)}) \bigr) \mathfrak{i}^{\e}_b\bigl( \mathbb{F}\mathrm{r}_\e^*(f_{(2)}) \bigr) \mathfrak{i}^{\e}_a\bigl( S_{\mathcal{L}}\bigl(\mathbb{F}\mathrm{r}_\e^*(f_{(3)}) \bigr)\bigr)  \mathfrak{i}^{\e}_b\bigl( S_{\mathcal{L}}\bigl(\mathbb{F}\mathrm{r}_\e^*(f_{(4)}) \bigr)\bigr) \mathfrak{i}^{\e}_a\bigl( \mathbb{F}\mathrm{r}_\e^*(f_{(5)}) \bigr)\\
=\:& \sum_{(f)} (\mathbb{F}\mathrm{r}_\e^*)^{\otimes 2} \Bigl( \mathfrak{i}^{\mathrm{cl}}_b(f_{(1)}) \mathfrak{i}^{\mathrm{cl}}_a\bigl( S_{\mathcal{O}}(f_{(2)}) \bigr)  \mathfrak{i}^{\mathrm{cl}}_b\bigl( S_{\mathcal{O}}(f_{(3)}) \bigr) \mathfrak{i}^{\mathrm{cl}}_a(f_{(4)}) \Bigr)= (\mathbb{F}\mathrm{r}_\e^*)^{\otimes 2} \circ \mathfrak{d}_{1,0}^{\mathrm{cl}}(f)
\end{align*}
where the first equality is by definition \eqref{frakD10} of $\mathfrak{d}_{1,0}$, the second uses that $\mathbb{F}\mathrm{r}_\e^*$ is a Hopf algebra morphism, the third uses the previous remarks and the fact that $(\mathbb{F}\mathrm{r}_\e^*)^{\otimes 2} : \mathcal{O}(G) \to \mathcal{L}_{1,0}^\e$ is an algebra morphism (Prop.\,\ref{Z0Lgn}), and the last is by definition. 

The commutativity of the right square is established by similar arguments, now using Corollary \ref{relationsCoRMatAtEps}(1) and Proposition \ref{piMorphBetweenLgn}. One has to check the compatibility of the ribbon structures, {\it i.e.} $\pi(\varphi)(v) = \mathsf{v}_{\e}(\varphi)$ for all $\varphi \in \mathcal{O}_\e$, where $v \in u_\e^\Q$ is the ribbon element \cite[eq. (A.3.1)]{Lyub95}). This is a direct computation, using the expression of the $R$-matrix of $u_\e^\Q$ given by Lyubashenko \cite[App.]{Lyub95} and recalled in Th.\,\ref{teosmallqg} below.

\noindent 2. By the argument in the proof of Lemma \ref{lemmaInjQMMtrivial} it suffices to show that $\mathfrak{d}_{1,0}^{\mathrm{cl}}$ is injective. But this follows from the fact, proved in \cite{Ree}, that in a connected semi-simple algebraic group defined over an algebraically closed field, every element is a commutator.

\noindent 3. Since $\mu_{g,n}^\e = \mathfrak{d}_{g,n}^\e \circ (\Phi_{0,1}^\e)^{-1}$, it is enough to show that $\mathfrak{d}_{g,n}^\e$ is injective. By item 1 we know that the restriction $\mathfrak{d}_{g,n}^{\e}|_{\mathcal{Z}_0(\mathcal{L}^\e_{0,1})}$ takes values in $\mathcal{Z}_0(\mathcal{L}_{g,n}^\e)$, and this restriction is injective by item 2. As a result $\mathfrak{d}_{g,n}^{\e}\bigl( \mathcal{Z}_0(\mathcal{L}_{0,1}^\e) \setminus \{ 0 \} \bigr) \subset \mathcal{Z}_0(\mathcal{L}_{g,n}^\e) \setminus \{ 0 \} \subset \mathcal{Z}(\mathcal{L}_{g,n}^\e) \setminus \{ 0 \}$. Injectivity of $\mathfrak{d}_{g,n}^\e$ thus follows by taking $Z_0 = \mathcal{Z}_0(\mathcal{L}_{0,1}^\e)$ in Lemma \ref{lemmeTechniqueInjectivite}.
\end{proof}

Let us recall the statement of Theorem \ref{Lgn2}, and give its proof.
\begin{teo}\label{Lgnteo2} If $(g,n) \neq (0,1)$, the algebra $Q(\Ll_{g,n}^{u_\e})$ is a division algebra, and a central simple algebra of PI-degree $l^{g.\mathrm{dim}(\mathfrak{g})+N(n-1)-m}$, where $N$ is the number of positive roots of $\mathfrak{g}$ and $m$ is the rank of $\mathfrak{g}$.
\end{teo}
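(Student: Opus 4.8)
The plan is to compute the PI-degree of $Q(\Ll_{g,n}^{u_\e})$ by realizing $\Ll_{g,n}^{u_\e}$ as a centralizer and invoking the double centralizer theorem, in the spirit of the proof for $g=0$ in \cite[Th.\,5.4]{BR2}. By Theorem \ref{centralizerMu}(3) the subalgebra $\Ll_{g,n}^{u_\e}$ is the centralizer of the image $\mu_{g,n}^\e(U'_\e)$ inside $\Ll_{g,n}^\e$, and by Proposition \ref{QMMinExactSeq}(3) the specialized QMM $\mu_{g,n}^\e$ is injective. The idea is to pass to the central localization $Q(\Ll_{g,n}^\e)$, which by Theorem \ref{Lgnteo1} is a central simple algebra of dimension $d_0^2$ over its center, with $d_0 = l^{g\,\dim(\mathfrak{g})+nN}$. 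If one can show that the localized image of $\mu_{g,n}^\e$ generates a \emph{simple} subalgebra $B$ of $Q(\Ll_{g,n}^\e)$ whose centralizer $B'$ is exactly $Q(\Ll_{g,n}^{u_\e})$, then the double centralizer theorem gives $[Q(\Ll_{g,n}^\e):F] = [B:F]\,[B':F]$ over the common center $F = Q(\mathcal{Z}(\Ll_{g,n}^\e))$, reducing the computation of $\mathrm{PIdeg}(Q(\Ll_{g,n}^{u_\e}))$ to that of the dimension contributed by the QMM.

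First I would quantify the contribution of the quantum moment map. Since $\mu_{g,n}^\e = \mathfrak{d}_{g,n}^\e \circ (\Phi_{0,1}^\e)^{-1}$ and $U'_\e \cong \Ll_{0,1}^\e$ via $\Phi_{0,1}^\e$, the relevant object is the image of $\Ll_{0,1}^\e$, whose central localization is a division/CSA whose PI-degree is controlled by the structure of $\mathcal{Z}(U_\e^\Pup)$. The De Concini--Kac--Procesi description of $\mathcal{Z}(U_\e^\Pup)$, together with Lemma \ref{Z0Z1Phimai25} (the isomorphisms $\mathcal{Z}_1(\Ll_{0,1}^\e) \cong \mathcal{Z}_1(U_\e^\Pup)$ and the intersection with $\mathcal{Z}_0$), tells us that $\Ll_{0,1}^\e$ has PI-degree $l^N$ and that its center is an extension of $\mathcal{Z}_0(\Ll_{0,1}^\e)$ of degree $l^m$. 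The counting must then combine: the dimension $l^{2(g\dim\mathfrak{g}+nN)}$ of $Q(\Ll_{g,n}^\e)$ over $F$, the degree $l^{mn}$ of $F$ over $Q(\mathcal{Z}_0(\Ll_{g,n}^\e))$ from Corollary \ref{coroIsolateFacts}(1), and the degree of the center of $\Ll_{g,n}^{u_\e}$ over $\mathcal{Z}_0$, which by Theorem \ref{Lgn3}(3) is $l^{m(n+1)}$. Carefully assembling these via $\mathrm{PIdeg}^2 = [Q(\Ll_{g,n}^{u_\e}):Q(\mathcal{Z}(\Ll_{g,n}^{u_\e}))]$ and the double centralizer identity should yield $\mathrm{PIdeg} = l^{g\dim\mathfrak{g}+N(n-1)-m}$; one checks the exponent via $\dim\mathfrak{g}=2N+m$, so that the subtracted contribution $N+m$ matches the single ``handle-free'' factor $\Ll_{0,1}^\e$ together with the $l^m$ central extension of the moment map image.

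The main obstacle, as in \cite{BR2}, will be establishing \emph{simplicity} of the localized subalgebra generated by $\mu_{g,n}^\e(U'_\e)$ so that the double centralizer theorem applies, and controlling its center so the degree count is exact rather than merely an inequality. The compatibility $\widehat{\Phi}_{g,n}^\e \circ \mu_{g,n}^\e = \widehat{\mathsf{D}}_{g,n}^\e$ from Proposition \ref{propAlekseevModifAndQMM}, which was the raison d'\^etre of the modified Alekseev morphism, is precisely what lets one transport the centralizer relation through an injective morphism and identify the center of the moment-map image with (the specialization of) $\mathcal{Z}(U_\e^\Pup)$; this is where the delicate interaction with $\mathcal{Z}_1$ enters. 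I expect the genuinely new difficulty compared to $g=0$ is that here $\mu_{g,n}^\e$ is built from $\mathfrak{d}_{g,n}^\e$ rather than a plain iterated coproduct, so one cannot read off injectivity and the central structure directly; instead one leans on Proposition \ref{QMMinExactSeq} and Theorem \ref{Lgn3}. The hypothesis $(g,n)\neq(0,1)$ is needed because for $(0,1)$ the algebra $\Ll_{0,1}^{u_\e}$ is the full center-of-$U_\e^\Pup$ picture and the formula degenerates; I would note this exclusion explicitly and verify the count is consistent for the boundary cases $g=0$, $n\geq 2$ against \cite[Th.\,5.4]{BR2}.
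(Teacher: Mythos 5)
Your overall strategy is the paper's own: characterize $\Ll_{g,n}^{u_\e}$ as the centralizer of the moment-map image (Th.\,\ref{centralizerMu}), pass to $Q(\Ll_{g,n}^\e)$, establish simplicity of the localized image of $\mathfrak{d}_{g,n}^{\e}(\Ll_{0,1}^\e)$, and apply the double centralizer theorem with a dimension count. You also correctly identify the main obstacle (simplicity of the localized subalgebra) and the role of Prop.\,\ref{propAlekseevModifAndQMM} and Prop.\,\ref{QMMinExactSeq}. However, there is a genuine logical gap in your dimension count: you propose to take as \emph{input} the fact that $\mathcal{Z}(\Ll_{g,n}^{u_\e})$ is free of rank $l^{m(n+1)}$ over $\mathcal{Z}_0(\Ll_{g,n}^\e)$ (Th.\,\ref{Lgn3}(3)), in order to deduce $\bigl[Q\bigl(\mathcal{Z}(\Ll_{g,n}^{u_\e})\bigr) : Q\bigl(\mathcal{Z}(\Ll_{g,n}^\e)\bigr)\bigr] = l^m$. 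But in the paper this rank is an \emph{output} of the proof of the very theorem you are proving: it appears as Cor.\,\ref{rkjanv26}, which "follows from the rank computation in the above proof" of Th.\,\ref{Lgnteo2}, and then feeds into Cor.\,\ref{corZZ0}. Likewise, the surjectivity of $j_2$ in Th.\,\ref{Lgnteo3}(2) explicitly uses the end of the proof of Th.\,\ref{Lgnteo2} (see the footnote in Lemma \ref{lemmaQZsimple}: only the isomorphism property of $j$ and the \emph{injectivity} of $j_2$ are established independently). So citing Th.\,\ref{Lgn3}(3) here is circular.

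The fix is the paper's route, which keeps everything inside the centralizer picture: the double centralizer theorem gives not only the dimension identity but also
\[ \mathcal{Z}\bigl( Q(\Ll_{g,n}^{u_\e}) \bigr) = Q_R(S) \cap \mathbf{C}_{Q(R)}\bigl( Q_R(S) \bigr) = \mathcal{Z}\bigl( Q_R(S) \bigr), \]
where $R = \Ll_{g,n}^\e$, $S = \mathfrak{d}_{g,n}^{\e}(\Ll_{0,1}^\e)$ and $Q_R(S)$ is the $Q(\mathcal{Z}(R))$-span of $S$ in $Q(R)$. Lemma \ref{lemmaQZsimple} (whose proof uses only the non-circular parts of Th.\,\ref{Lgn3}, namely $j$ being an isomorphism for $S \cong \Ll_{0,1}^\e$ and injectivity of $j_2$) identifies this center with $Q\bigl(\mathcal{Z}(R)\bigr) \otimes_{Q(\mathcal{Z}_0(S))} Q\bigl(\mathcal{Z}(S)\bigr)$, whose degree over $Q\bigl(\mathcal{Z}(R)\bigr)$ is $\bigl[Q\bigl(\mathcal{Z}(S)\bigr) : Q\bigl(\mathcal{Z}_0(S)\bigr)\bigr] = l^m$ by Cor.\,\ref{coroIsolateFacts}(1) applied to $\Ll_{0,1}^\e$ — a fact proven before and independently of Th.\,\ref{Lgnteo2}. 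With this replacement your count closes: the denominator in the double centralizer identity is $\bigl[Q_R(S) : Q\bigl(\mathcal{Z}(R)\bigr)\bigr] = l^{\dim(\mathfrak{g})} = l^{2N+m}$ (freeness of rank $l^{\dim \mathfrak{g}}$ of $S$ over $\mathcal{Z}_0(S)$, Prop.\,\ref{Z0Lgn}(3)), giving $\bigl[Q(\Ll_{g,n}^{u_\e}) : Q\bigl(\mathcal{Z}(R)\bigr)\bigr] = l^{2(g\,\mathrm{dim}(\mathfrak{g})+(n-1)N)-m}$, and dividing by the $l^m$ above yields PI-degree $l^{g\,\mathrm{dim}(\mathfrak{g})+(n-1)N-m}$. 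The rank $l^{m(n+1)}$ you wanted to use is then recovered a posteriori, exactly as the paper does.
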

The end of this subsection is dedicated to the proof of this theorem. To keep clear the line of reasoning we postpone the proof of some arguments to \S \ref{sec:proofLgn3}.

We will use the following shorter notations:
\[ R := \mathcal{L}_{g,n}^{\e}, \quad S := \mathfrak{d}_{g,n}^{\e}(\mathcal{L}_{0,1}^\e) \cong \mathcal{L}_{0,1}^\e, \quad \mathcal{Z}_0(S) = \mathfrak{d}_{g,n}^{\e}\bigl( \mathcal{Z}_0(\mathcal{L}_{0,1}^\e) \bigr), \quad \mathcal{Z}_1(S) = \mathfrak{d}^{\e}_{g,n}\bigl( \mathcal{Z}_1(\mathcal{L}_{0,1}^\e) \bigr) \]
and $(\mathcal{Z}_0\cap \mathcal{Z}_1)(S) = \mathcal{Z}_0(S)\cap \mathcal{Z}_0(S)$, where $\mathfrak{d}_{g,n}^{\e} : \mathcal{L}_{0,1}^\e \to \mathcal{L}_{g,n}^{\e}$ from \S\ref{secQMMandInv} was proven injective in Prop.\,\ref{QMMinExactSeq}, and $\mathcal{Z}_1(\mathcal{L}_{0,1}^\e)$ is defined in \eqref{defZ1Lgn} below. Recall from Prop.\,\ref{QMMinExactSeq} that $\mathcal{Z}_0(S) \subset \mathcal{Z}_0(R) \subset \mathcal{Z}(R)$; thus $Q\bigl( \mathcal{Z}(R) \bigr)$ is a $\mathcal{Z}_0(S)$-module and the following lemma makes sense:

\begin{lem}\label{lemmaQZsimple}
$Q\bigl( \mathcal{Z}(R) \bigr) \otimes_{\mathcal{Z}_0(S)} S$ is a simple algebra whose center is $Q\bigl( \mathcal{Z}(R) \bigr) \otimes_{\mathcal{Z}_0(S)} \mathcal{Z}(S)$, and this center can also be written as $Q\bigl( \mathcal{Z}(R) \bigr) \otimes_{Q(\mathcal{Z}_0(S))} Q\bigl( \mathcal{Z}(S) \bigr)$.
\end{lem}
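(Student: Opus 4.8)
The plan is to reduce the statement to a base-change computation for a central simple algebra, after which everything except one field-theoretic point becomes formal. Throughout write $F := Q(\mathcal{Z}(R))$, $k := Q(\mathcal{Z}_0(S))$ and $L := Q(\mathcal{Z}(S))$. First I would record the two structural inputs needed: $\mathcal{Z}_0(S) \subseteq \mathcal{Z}(R)$ (noted just after Prop.\,\ref{QMMinExactSeq}), so that $k \subseteq F$; and $S \cong \mathcal{L}_{0,1}^\e$ is a free $\mathcal{Z}_0(S)$-module of rank $l^{\dim\mathfrak g}$ by Prop.\,\ref{Z0Lgn}(3), so that $\mathcal{Z}(S)$ is a finite $\mathcal{Z}_0(S)$-module and $[L:k]<\infty$. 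Lemma \ref{lemTrucsGenerauxQR}(3) then yields
\[ F \otimes_{\mathcal{Z}_0(S)} S \;\cong\; F \otimes_k \bigl( k \otimes_{\mathcal{Z}_0(S)} S \bigr) \;=\; F \otimes_k Q(S), \]
where $Q(S)=k\otimes_{\mathcal{Z}_0(S)}S$ is the central localization of $S$, which by Lemma \ref{lemQRCSA} is a division algebra and a central simple algebra over $L$. The same localization argument gives $k \otimes_{\mathcal{Z}_0(S)}\mathcal{Z}(S) = L$, hence $F\otimes_{\mathcal{Z}_0(S)}\mathcal{Z}(S) \cong F\otimes_k L$, which is the content of the ``also be written as'' clause.

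Next I would settle the center and reduce simplicity to one point. Since a field extension is flat and $Q(S)$ is finite-dimensional over $k$, base change of the commutator map $Q(S)\to\operatorname{Hom}_k(Q(S),Q(S))$ shows that the center commutes with $k\to F$, so $\mathcal{Z}\bigl(F\otimes_k Q(S)\bigr) = F\otimes_k \mathcal{Z}(Q(S)) = F\otimes_k L$. This proves the center assertions unconditionally. For simplicity I would use $F\otimes_k Q(S) \cong (F\otimes_k L)\otimes_L Q(S)$: \emph{if} $E := F\otimes_k L$ is a field, then $E\otimes_L Q(S)$ is the base change of the central simple $L$-algebra $Q(S)$ along the field extension $E\supseteq L$, hence central simple over $E$, in particular simple. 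So the whole lemma reduces to showing that $F\otimes_k L$ is a field.

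To prove this I would first rewrite $F\otimes_k L$ using the structure of $\mathcal{Z}(S)$. Because $\mathfrak{d}_{g,n}^\e$ is an algebra isomorphism of $\mathcal{L}_{0,1}^\e$ onto $S$ (Prop.\,\ref{QMMinExactSeq}(3)), the $(g,n)=(0,1)$ case of the multiplication isomorphism $j$ in Theorem \ref{Lgnteo3} transports to $\mathcal{Z}(S) \cong \mathcal{Z}_0(S)\otimes_{(\mathcal{Z}_0\cap\mathcal{Z}_1)(S)}\mathcal{Z}_1(S)$, with $\mathcal{Z}_1$ as identified in Lemma \ref{Z0Z1Phimai25}. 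Writing $k' := Q\bigl((\mathcal{Z}_0\cap\mathcal{Z}_1)(S)\bigr)$ and using $k'\subseteq k\subseteq F$, this gives
\[ F\otimes_k L \;\cong\; F\otimes_{k'} Q\bigl(\mathcal{Z}_1(S)\bigr). \]
Now $\mathcal{Z}_1(S)=\mathfrak{d}_{g,n}^\e\bigl(\mathcal{Z}_1(\mathcal{L}_{0,1}^\e)\bigr)\subseteq R$, and the injectivity of $j_2$ in Theorem \ref{Lgnteo3} says exactly that the multiplication map $\mathcal{Z}(R)\otimes_{(\mathcal{Z}_0\cap\mathcal{Z}_1)(S)}\mathcal{Z}_1(S)\to \mathcal{Z}(\mathcal{L}_{g,n}^{u_\e})$ is injective. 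Its target is a commutative subalgebra of the domain $R=\mathcal{L}_{g,n}^\e$ (Prop.\,\ref{propLgndomain} and Cor.\,\ref{ZinLue}), hence a domain; passing to fractions, $F\otimes_{k'}Q(\mathcal{Z}_1(S))$ embeds into a domain while being finite-dimensional over the field $F$, so it is itself a field. Combined with the previous display, $F\otimes_k L$ is a field, completing the argument.

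The main obstacle is precisely this last step: the simplicity of $F\otimes_{\mathcal{Z}_0(S)}S$ is equivalent to the linear disjointness of $\mathcal{Z}(R)$ and $\mathcal{Z}_1(S)$ over $(\mathcal{Z}_0\cap\mathcal{Z}_1)(S)$ inside $R$, which is exactly the injectivity of $j_2$ proved only in \S\ref{sec:proofLgn3}. Everything else — the reduction to $F\otimes_k Q(S)$, the center computation by flat base change, and the passage from ``$F\otimes_k L$ a field'' to simplicity — is formal manipulation of central localizations and of central simple algebras under field extension.
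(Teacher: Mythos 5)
Your proposal is correct and follows essentially the same route as the paper's proof: both reduce via Lemma \ref{lemTrucsGenerauxQR}(3) to the base change $Q(\mathcal{Z}(R)) \otimes_{Q(\mathcal{Z}_0(S))} Q(S)$ of the central simple algebra $Q(S)$, both prove the key point that $Q(\mathcal{Z}(R)) \otimes_{\mathcal{Z}_0(S)} \mathcal{Z}(S)$ is a field by combining the $(0,1)$-case of the factorization $j$ with the injectivity of $j_2$ to embed $\mathcal{Z}(R) \otimes_{(\mathcal{Z}_0\cap\mathcal{Z}_1)(S)} \mathcal{Z}_1(S)$ into the domain $R$ and then invoking that a finite-dimensional commutative domain is a field, and both compute the center via the tensor-product-of-centers fact. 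The remaining differences (order of the steps, working with $Q(\mathcal{Z}_1(S))$ at the fraction-field level rather than with $\mathcal{Z}_1(S)$, and phrasing the simplicity step as base change of a CSA along a field extension instead of citing the simple-tensor-central-simple theorem) are cosmetic.
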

\begin{proof}
Observe that
\begin{align}
&Q\bigl( \mathcal{Z}(R) \bigr) \otimes_{\mathcal{Z}_0(S)} S \nonumber\\
&\cong Q\bigl( \mathcal{Z}(R) \bigr) \otimes_{Q(\mathcal{Z}_0(S))} Q\bigl( \mathcal{Z}_0(S) \bigr) \otimes_{\mathcal{Z}_0(S)} S \quad \text{\footnotesize (trick)} \nonumber\\
&\cong Q\bigl( \mathcal{Z}(R) \bigr) \otimes_{Q(\mathcal{Z}_0(S))} Q(S) \quad \text{\footnotesize by Lemma \ref{lemTrucsGenerauxQR}(3)} \label{chaineIsoQTilde}\\
&\cong \Bigl( Q\bigl( \mathcal{Z}(R) \bigr) \otimes_{Q(\mathcal{Z}_0(S))} Q\bigl(\mathcal{Z}(S) \bigr) \Bigr) \otimes_{Q(\mathcal{Z}(S))} Q(S) \quad \text{\footnotesize (trick)} \nonumber\\
&\cong \Bigl( Q\bigl( \mathcal{Z}(R) \bigr) \otimes_{Q(\mathcal{Z}_0(S))} Q\bigl(\mathcal{Z}_0(S) \bigr) \otimes_{\mathcal{Z}_0(S)} \mathcal{Z}(S) \Bigr) \otimes_{Q(\mathcal{Z}(S))} Q(S) \quad \text{\footnotesize by Lemma \ref{lemTrucsGenerauxQR}(3)} \nonumber\\
&\cong \Bigl( Q\bigl( \mathcal{Z}(R) \bigr) \otimes_{\mathcal{Z}_0(S)} \mathcal{Z}(S) \Bigr) \otimes_{Q(\mathcal{Z}(S))} Q(S) \nonumber
\end{align}
We claim that $Q\bigl( \mathcal{Z}(R) \bigr) \otimes_{\mathcal{Z}_0(S)} \mathcal{Z}(S)$ is a field (and in particular is a simple algebra), which by \cite[Th.\,1.7.27]{Rowen}\footnote{This theorem asserts: If $C$ is a field, $R$ is a central simple $C$-algebra and $R'$ is a simple $C$-algebra, then $R' \otimes_C R$ is a simple $C$-algebra.} implies that the last algebra in the above chain of isomorphisms is a simple algebra, as desired. To prove this claim we use the fact that the multiplication map $j : \mathcal{Z}_0(S) \otimes_{(\mathcal{Z}_0\cap \mathcal{Z}_1)(S)} \mathcal{Z}_1(S) \to \mathcal{Z}(S)$ is an isomorphism of algebras, which follows from Th.\,\ref{Lgnteo3}(1) applied to $\mathcal{L}_{0,1}^\e \cong S$, and the fact that the multiplication map $j_2 : \mathcal{Z}(R) \otimes_{(\mathcal{Z}_0\cap \mathcal{Z}_1)(S)} \mathcal{Z}_1(S) \to R$ is injective (actually taking values in $R^{u_\e}$) which is proven in Th.\,\ref{Lgnteo3}(2).\footnote{Th.\,\ref{Lgnteo3} is proved below in \textsection \ref{sec:proofLgn3}. We stress that surjectivity of $j$ uses Cor.\,\ref{coroIsolateFacts} and its injectivity uses arguments which are independent of Th.\,\ref{Lgnteo2}; the surjectivity of $j_2$ will use Th.\,\ref{Lgnteo2} but its injectivity is independent of it (hence there is no loop in the arguments). We found it clearer to put all the properties of $j$ and $j_2$ in \S\ref{sec:proofLgn3} below.} Thanks to these facts we find
\begin{align*}
\mathcal{Z}(R) \otimes_{\mathcal{Z}_0(S)} \mathcal{Z}(S) &\cong \mathcal{Z}(R) \otimes_{\mathcal{Z}_0(S)} \mathcal{Z}_0(S) \otimes_{(\mathcal{Z}_0 \cap \mathcal{Z}_1)(S)} \mathcal{Z}_1(S)\\
& \cong \mathcal{Z}(R) \otimes_{(\mathcal{Z}_0 \cap \mathcal{Z}_1)(S)} \mathcal{Z}_1(S) \xrightarrow{\:\text{injection}\:} R.
\end{align*}
Hence $\mathcal{Z}(R) \otimes_{\mathcal{Z}_0(S)} \mathcal{Z}(S)$ is a domain because so is $R$ (Prop.\,\ref{propLgndomain}). Therefore $Q\bigl( \mathcal{Z}(R) \bigr) \otimes_{\mathcal{Z}_0(S)} \mathcal{Z}(S)$ is a domain as well. Since $\bigl[ Q\bigl(\mathcal{Z}(S)\bigr) : Q\bigl(\mathcal{Z}_0(S)\bigr) \bigr] = l^m$ by Cor.\,\ref{coroIsolateFacts}(1), we get moreover that $Q\bigl( \mathcal{Z}(R) \bigr) \otimes_{\mathcal{Z}_0(S)} \mathcal{Z}(S) \cong Q\bigl( \mathcal{Z}(R) \bigr) \otimes_{Q(\mathcal{Z}_0(S))} Q\bigl( \mathcal{Z}(S) \bigr)$ has dimension $l^m$ over $Q\bigl( \mathcal{Z}(R) \bigr)$. But any finite-dimensional commutative domain is a field. In this way we have proved the first part of the lemma. 

The center is now computed as follows:
\begin{align*}
&\mathcal{Z}\Bigl( Q\bigl( \mathcal{Z}(R) \bigr) \otimes_{\mathcal{Z}_0(S)} S \Bigr) \cong \mathcal{Z}\Bigl( Q\bigl( \mathcal{Z}(R) \bigr) \otimes_{Q(\mathcal{Z}_0(S))} Q(S) \Bigr) \cong Q\bigl( \mathcal{Z}(R) \bigr) \otimes_{Q(\mathcal{Z}_0(S))} Q\bigl( \mathcal{Z}(S) \bigr) \bigr)\\
\cong\:\,& Q\bigl( \mathcal{Z}(R) \bigr) \otimes_{Q(\mathcal{Z}_0(S))} Q\bigl( \mathcal{Z}_0(S)  \bigr) \otimes_{\mathcal{Z}_0(S)} \mathcal{Z}(S)\cong Q\bigl( \mathcal{Z}(R) \bigr) \otimes_{\mathcal{Z}_0(S)} \mathcal{Z}(S)
\end{align*}
where the first isomorphism uses \eqref{chaineIsoQTilde}, the second uses the fact that the center of a tensor product of algebras over a field is the tensor product of their centers \cite[Cor.\,1.7.24]{Rowen} together with Lemma \ref{lemTrucsGenerauxQR}(2) and the third is by Lemma \ref{lemTrucsGenerauxQR}(3).
\end{proof}

\begin{proof}[Proof of Theorem \ref{Lgnteo2}.]
We already noted at the beginning of the present subsection that $Q(\Ll_{g,n}^{u_\e})$ is a division algebra. Its PI-degree is the square root of $\bigl[ Q(\Ll_{g,n}^{u_\e}) : Q\bigl( \mathcal{Z}(\Ll_{g,n}^{u_\e}) \bigr) \bigr]$. The key-point for the computation of this dimension is to use that $\mathcal{L}_{g,n}^{u_\e}=\mathbf{C}_R(S)$, namely the centralizer of $S := \mathfrak{d}_{g,n}^{\e}(\mathcal{L}_{0,1}^\e)$ in $R := \mathcal{L}_{g,n}^{\e}$ (Th.\,\ref{centralizerMu}). In particular $\mathcal{Z}(R) \subset \Ll_{g,n}^{u_\e}$. First, it is important to realize that the central localization $Q(\mathcal{L}_{g,n}^{u_\e})$ as defined in \S\ref{subsecCentralLoc} can be seen as a subalgebra of $Q(R)$. To see this, note by Prop.\,\ref{Z0Lgn}(3) that $R$ is Noetherian as a $\mathcal{Z}_0(R)$-module. It follows that the $\mathcal{Z}_0(R)$-submodule $\mathcal{Z}(\mathcal{L}_{g,n}^{u_\e})$ is finitely generated, whence $\bigl[ Q\bigl( \mathcal{Z}(\mathcal{L}_{g,n}^{u_\e}) \bigr) : Q\bigl( \mathcal{Z}(R) \bigr) \bigr] \leq \bigl[ Q\bigl( \mathcal{Z}(\mathcal{L}_{g,n}^{u_\e}) \bigr) : Q\bigl( \mathcal{Z}_0(R) \bigr) \bigr] < \infty$. Using Lemma \ref{lemTrucsGenerauxQR}(3), we thus obtain
\[ Q\bigl( \mathcal{L}_{g,n}^{u_\e} \bigr) \cong Q\bigl( \mathcal{Z}(R) \bigr) \otimes_{\mathcal{Z}(R)} \mathcal{L}_{g,n}^{u_\e} \xrightarrow{\:\text{inclusion}\:} Q\bigl( \mathcal{Z}(R) \bigr) \otimes_{\mathcal{Z}(R)} R \overset{\text{def}}{=} Q(R) \]
where the inclusion $\mathcal{L}_{g,n}^{u_\e} \subset R$ survives because $Q\bigl( \mathcal{Z}(R) \bigr)$ is a flat $\mathcal{Z}(R)$-module as any localization. Under this inclusion we have
\begin{equation}\label{InvAsCentralizer}
Q(\mathcal{L}_{g,n}^{u_\e}) = Q\bigl( \mathbf{C}_R(S) \bigr) = \mathbf{C}_{Q(R)}\bigl( Q_R(S) \bigr),
\end{equation}
with the subalgebra
\begin{equation}\label{centralizersept25}
Q_R(S) := \mathrm{span}_{Q(\mathcal{Z}(R))}\bigl\{ 1 \otimes_{\mathcal{Z}(R)} s \, | \, s \in S \bigr\} \subset Q\bigl( \mathcal{Z}(R) \bigr) \otimes_{\mathcal{Z}(R)} R \overset{\text{def}}{=} Q(R)
\end{equation}
whose definition makes sense because $Q(R)$ is naturally a $Q\bigl( \mathcal{Z}(R) \bigr)$-vector space.\footnote{This can also be written as $Q_R(S) = Q\bigl( \mathcal{Z}(R) \bigr) \otimes_{\mathcal{Z}(R)} \mathcal{Z}(R)S$, where $\mathcal{Z}(R)S$ is the $\mathcal{Z}(R)$-submodule generated by $S$ in $R$. Note that $Q\bigl( \mathcal{Z}(R) \bigr) \otimes_{\mathcal{Z}(R)} S$ does not make sense because $S$ is not a $\mathcal{Z}(R)$-module.} Since $Q_R(S)$ is of finite-dimension over $Q\bigl( \mathcal{Z}(R) \bigr)$ (because so is $Q(R)$) it is a division algebra, and in particular a simple subalgebra of the central simple algebra $Q(R)$ (Th.\,\ref{Lgnteo1}). As a result, the double centralizer theorem (see e.g. \cite[Th.\,7.1.9]{Rowen} or \cite[Th.\,7.1]{StackP2}) implies that $\mathbf{C}_{Q(R)}\bigl( Q_R(S) \bigr)$ is a central simple algebra and 
\[ \bigl[ Q(\Ll_{g,n}^{u_\e}) : Q\bigl( \mathcal{Z}(\Ll_{g,n}^{\e}) \bigr) \bigr] \overset{\eqref{InvAsCentralizer}}{=} \bigl[ \mathbf{C}_{Q(R)}\bigl( Q_R(S) \bigr) : Q\bigl( \mathcal{Z}(R) \bigr) \bigr]  = \frac{\bigl[ Q(R) : Q\bigl( \mathcal{Z}(R) \bigr) \bigr]}{\bigl[ Q_R(S) : Q\bigl( \mathcal{Z}(R) \bigr) \bigr]}. \]
The numerator equals $l^{2(g\,\mathrm{dim}(\mathfrak{g})+nN)}$ by Th.\,\ref{Lgnteo1}. To compute the denominator note that, since $\mathcal{Z}_0(S) \subset \mathcal{Z}_0(R) \subset \mathcal{Z}(R)$ by Prop.\,\ref{QMMinExactSeq}, there is a well-defined map
\begin{equation}\label{factQRS}
Q\bigl( \mathcal{Z}(R) \bigr) \otimes_{\mathcal{Z}_0(S)} S \to Q_R(S), \quad q \otimes_{\mathcal{Z}_0(S)} s \mapsto q \otimes_{\mathcal{Z}(R)} s.
\end{equation}
It is an isomorphism: surjectivity is clear while injectivity is due to the fact that the source is a simple algebra (Lemma \ref{lemmaQZsimple}), thus forcing the kernel to be $0$. Since $S$ is a free $\mathcal{Z}_0(S)$-module of rank $l^{\dim(\mathfrak{g})}$ by Prop.\,\ref{Z0Lgn}(3) applied to $S \cong \mathcal{L}_{0,1}^\e$, we deduce that $\bigl[ Q_R(S) : Q\bigl( \mathcal{Z}(R) \bigr) \bigr] = l^{\dim(\mathfrak{g})} = l^{2N+n}$.
As a result
\[ \bigl[ Q(\Ll_{g,n}^{u_\e}) : Q\bigl( \mathcal{Z}(\Ll_{g,n}^{\e}) \bigr) \bigr] = l^{2(g\,\mathrm{dim}(\mathfrak{g})+nN) - (2N+m)}= l^{2(g\,\mathrm{dim}(\mathfrak{g})+(n-1)N)-m}. \]
Another dimension computation is required in order to conclude the proof. Observe that
\begin{align*}
Q\bigl( \mathcal{Z}(\mathcal{L}_{g,n}^{u_\e}) \bigr) &= \mathcal{Z}\bigl( Q(\mathcal{L}_{g,n}^{u_\e}) \bigr)  = \mathcal{Z}\bigl[ \mathbf{C}_{Q(R)}\bigl( Q_R(S) \bigr) \bigr] = \mathbf{C}_{Q(R)}\bigl[ \mathbf{C}_{Q(R)}\bigl( Q_R(S) \bigr) \bigr] \cap \mathbf{C}_{Q(R)}\bigl( Q_R(S) \bigr)\\
&=Q_R(S) \cap \mathbf{C}_{Q(R)}\bigl( Q_R(S) \bigr) = \mathcal{Z}\bigl( Q_R(S) \bigr) \cong Q\bigl( \mathcal{Z}(R) \bigr) \otimes_{Q(\mathcal{Z}_0(S))} Q\bigl(\mathcal{Z}(S)\bigr)
\end{align*}
where the first equality is by Lemma \ref{lemTrucsGenerauxQR}(2), the second is by \eqref{InvAsCentralizer}, the third is trivial, the fourth is by the double centralizer theorem, the fifth is trivial and the sixth is by the isomorphism \eqref{factQRS} and Lemma \ref{lemmaQZsimple}. As a result
\begin{align*}
\bigl[ Q\bigl( \mathcal{Z}(\mathcal{L}_{g,n}^{u_\e}) \bigr) : Q\bigl( \mathcal{Z}(\mathcal{L}_{g,n}^{\e}) \bigr) \bigr] &= \bigl[ Q\bigl( \mathcal{Z}(R) \bigr) \otimes_{Q(\mathcal{Z}_0(S))} Q\bigl(\mathcal{Z}(S)\bigr) : Q\bigl( \mathcal{Z}(R) \bigr) \bigr]\\
&= \bigl[ Q\bigl(\mathcal{Z}(S)\bigr) : Q\bigl(\mathcal{Z}_0(S)\bigr) \bigr] = l^m
\end{align*}
where the last equality is by Cor.\,\ref{coroIsolateFacts} applied to $S \cong \mathcal{L}_{0,1}^\e$. We are now in position to conclude the proof:
\begin{align*}
\bigl[ Q(\Ll_{g,n}^{u_\e}) : Q\bigl( \mathcal{Z}(\Ll_{g,n}^{u_\e}) \bigr)  \bigr] & = \frac{\bigl[ Q(\Ll_{g,n}^{u_\e}) : Q\bigl( \mathcal{Z}(\Ll_{g,n}^{\e}) \bigr) \bigr]}{\bigl[ Q\bigl( \mathcal{Z}(\Ll_{g,n}^{u_\e}) \bigr) : Q\bigl( \mathcal{Z}(\Ll_{g,n}^{\e}) \bigr) \bigr]} \\
& = l^{2(g\,\mathrm{dim}(\mathfrak{g})+(n-1)N)-m}\,l^{-m} = l^{2(g\,\mathrm{dim}(\mathfrak{g})+(n-1)N-m)}. 
\end{align*}
This shows $Q(\Ll_{g,n}^{u_\e})$ has PI degree $l^{g\,\mathrm{dim}(\mathfrak{g})+(n-1)N-m}$, as claimed.
\end{proof}
Analogously to Corollary \ref{coroIsolateFacts}, the rank computation in the above proof implies:
\begin{cor}\label{rkjanv26} We have $\bigl[ Q\bigl(\mathcal{Z}(\Ll_{g,n}^{u_\e})\bigr):Q\bigl(\mathcal{Z}_0(\Ll_{g,n}^{\e})\bigr) \bigr] = l^{m(n+1)}$, where $m = \mathrm{rank}(\mathfrak{g})$.
\end{cor}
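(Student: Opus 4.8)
The plan is to deduce this immediately from the tower law for field extensions, applied to the chain
\[ Q\bigl(\mathcal{Z}_0(\Ll_{g,n}^{\e})\bigr) \subseteq Q\bigl(\mathcal{Z}(\Ll_{g,n}^{\e})\bigr) \subseteq Q\bigl(\mathcal{Z}(\Ll_{g,n}^{u_\e})\bigr). \]
First I would verify that this is a genuine nested tower of fraction fields. All three rings are commutative domains, being central subrings of the domain $\Ll_{g,n}^{\e}$ (Prop.\,\ref{propLgndomain}), so their fields of fractions are well-defined. The first inclusion holds because $\mathcal{Z}_0(\Ll_{g,n}^{\e})$ is a central subalgebra of $\Ll_{g,n}^{\e}$ by Prop.\,\ref{Z0Lgn}(1), hence a subring of $\mathcal{Z}(\Ll_{g,n}^{\e})$; the second inclusion is exactly Cor.\,\ref{ZinLue}, which gives $\mathcal{Z}(\Ll_{g,n}^{\e}) \subseteq \mathcal{Z}(\Ll_{g,n}^{u_\e})$. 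Passing to fraction fields preserves these inclusions.

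Next I would assemble the two intermediate degrees, both already available. By Cor.\,\ref{coroIsolateFacts}(1) the lower step has degree
\[ \bigl[ Q\bigl(\mathcal{Z}(\Ll_{g,n}^{\e})\bigr):Q\bigl(\mathcal{Z}_0(\Ll_{g,n}^{\e})\bigr) \bigr] = l^{mn}, \]
and the upper step has degree $l^m$, which is precisely the rank computation carried out inside the proof of Th.\,\ref{Lgnteo2}, namely $\bigl[ Q\bigl(\mathcal{Z}(\Ll_{g,n}^{u_\e})\bigr):Q\bigl(\mathcal{Z}(\Ll_{g,n}^{\e})\bigr) \bigr] = l^m$. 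Multiplicativity of degrees in the tower then yields
\[ \bigl[ Q\bigl(\mathcal{Z}(\Ll_{g,n}^{u_\e})\bigr):Q\bigl(\mathcal{Z}_0(\Ll_{g,n}^{\e})\bigr) \bigr] = l^m \cdot l^{mn} = l^{m(n+1)}, \]
as desired.

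There is essentially no obstacle here: the only point requiring a word of care is that the tower law requires finiteness of the intermediate degrees, which is guaranteed since both are computed to be finite powers of $l$ (and, more structurally, because every ring in sight is a finitely generated module over the Noetherian ring $\mathcal{Z}_0(\Ll_{g,n}^{\e})$ by Prop.\,\ref{Z0Lgn}(3) and Remark \ref{Z0submoduleLgn}). As the upper degree $l^m$ is taken from the proof of Th.\,\ref{Lgnteo2}, this corollary inherits the standing hypothesis $(g,n) \neq (0,1)$ of that theorem.
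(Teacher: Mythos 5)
Your proof is correct and is essentially the paper's own argument: the corollary is stated right after Theorem \ref{Lgnteo2} precisely because the degree $\bigl[ Q\bigl(\mathcal{Z}(\Ll_{g,n}^{u_\e})\bigr):Q\bigl(\mathcal{Z}(\Ll_{g,n}^{\e})\bigr) \bigr] = l^m$ computed inside that proof, multiplied via the tower law with the degree $l^{mn}$ from Cor.\,\ref{coroIsolateFacts}(1), gives the claim. Your added remarks on the nesting of the tower (via Prop.\,\ref{Z0Lgn}(1) and Cor.\,\ref{ZinLue}), on finiteness, and on inheriting the hypothesis $(g,n)\neq(0,1)$ are all consistent with the paper's intent.
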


\section{Centers of graph algebras at roots of unity} \label{sec:proofLgn3} 
Recall the ``quantum trace'' elements $\mathrm{qTr}_\lambda \in \mathcal{L}_{0,1}^{\Gamma_A}$ defined in \eqref{casimirElmtsL01} for all $\lambda \in P_+$ and let $\mathcal{Z}_1(\mathcal{L}_{0,1}^\e)$ be the subalgebra of $\mathcal{L}_{0,1}^\e$ generated by their specializations $(\mathrm{qTr}_\lambda)_{|\e}$. Thanks to the vector space decomposition $\mathcal{L}_{g,n}^{\e} = (\mathcal{L}_{1,0}^{\e})^{\otimes g} \otimes (\mathcal{L}_{0,1}^{\e})^{\otimes n}$, define the subalgebra
\begin{equation}\label{defZ1Lgn}
\mathcal{Z}_1(\mathcal{L}_{g,n}^{\e}) := (1_{\mathcal{L}_{1,0}^{\e}})^{\otimes g} \otimes \mathcal{Z}_1(\mathcal{L}_{0,1}^{\e})^{\otimes n} \subset \mathcal{Z}(\mathcal{L}_{g,n}^{\e})
\end{equation}
where the inclusion is due to Lemma \ref{Z1general}(2). Also recall the morphism $\mathfrak{d}_{g,n}^{\e} : \Ll_{0,1}^\e \to \Ll_{g,n}^{\e}$ which enters in the definition of the quantum moment map (\S\ref{secQMMandInv}).
\smallskip

This section is mainly devoted to the proof of the following result:
\begin{teo}\label{Lgnteo3}
Let us use the notation $(\mathcal{Z}_0 \cap \mathcal{Z}_1)(\mathcal{L}_{g,n}^{\e}) = \mathcal{Z}_0(\mathcal{L}_{g,n}^{\e}) \cap \mathcal{Z}_1(\mathcal{L}_{g,n}^{\e})$.

\noindent (1) The multiplication map $j\colon \mathcal{Z}_0(\mathcal{L}_{g,n}^{\e}) \otimes_{(\mathcal{Z}_0 \cap \mathcal{Z}_1)(\mathcal{L}_{g,n}^{\e})} \mathcal{Z}_1(\mathcal{L}_{g,n}^{\e}) \to \mathcal{Z}(\Ll_{g,n}^{\e})$ is an isomorphism of algebras, and $\mathcal{Z}(\Ll_{g,n}^{\e})$ is a Noetherian and integrally closed ring.

\noindent (2) The multiplication map $j_2\colon \mathcal{Z}(\Ll_{g,n}^{\e}) \otimes_{\mathfrak{d}_{g,n}^{\e}\left((\mathcal{Z}_0\cap \mathcal{Z}_1)(\Ll_{0,1}^\e)\right)} \mathfrak{d}_{g,n}^{\e}\bigl( \mathcal{Z}_1(\Ll_{0,1}^\e) \bigr)\to \mathcal{Z}(\Ll_{g,n}^{u_\e})$ is an isomorphism of algebras, and $\mathcal{Z}(\mathcal{L}_{g,n}^{u_\e})$ is a Noetherian and integrally closed ring.
\end{teo}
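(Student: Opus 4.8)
The plan is to follow the method of De~Concini--Kac--Procesi, treating first the ground case $(g,n)=(0,1)$ and then propagating to arbitrary $(g,n)$ through the tensor decompositions of $\mathcal{Z}_0$, $\mathcal{Z}_1$ and $\hat{\mathcal{Z}}_0$. For $(0,1)$, Lemma~\ref{Z0Z1Phimai25} transports, via the injection $\Phi_{0,1}^\e$, the algebras $\mathcal{Z}_1(\Ll_{0,1}^\e)$ and $(\mathcal{Z}_0\cap\mathcal{Z}_1)(\Ll_{0,1}^\e)$ to the Harish--Chandra center $\mathcal{Z}_1(U_\e^\Pup)$ and its intersection with $\mathcal{Z}_0(U_\e^\Pup)$, which De~Concini--Kac--Procesi describe in \cite{DC-K-P1,DCP}. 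Their structure theorem for $\mathcal{Z}(U_\e^\Pup)$ then yields that $j_{(0,1)}$ is an isomorphism and that $\mathcal{Z}(\Ll_{0,1}^\e)=\hat{\mathcal{Z}}_0(\Ll_{0,1}^\e)$ is a Noetherian, integrally closed domain.

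For part~(1) in general, I would argue as follows. Because $\mathcal{Z}_0(\Ll_{g,n}^\e)$ and $\mathcal{Z}_1(\Ll_{g,n}^\e)$ are central, the image of $j$ is the subalgebra they generate, which by \eqref{defHatZ0Lgn} and \eqref{defZ1Lgn} is $\hat{\mathcal{Z}}_0(\Ll_{g,n}^\e)=\mathcal{Z}_0(\Ll_{g,0}^\e)\otimes_{\mc}\hat{\mathcal{Z}}_0(\Ll_{0,1}^\e)^{\otimes n}$. This is a tensor product over the perfect field $\mc$ of the integrally closed finitely generated $\mc$-domains $\mathcal{Z}_0(\Ll_{g,0}^\e)\cong\Oo(G^{\times 2g})$ (smooth, hence normal) and $\hat{\mathcal{Z}}_0(\Ll_{0,1}^\e)=\mathcal{Z}(\Ll_{0,1}^\e)$ (the ground case), and is therefore itself a normal domain. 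Since $\mathcal{Z}(\Ll_{g,n}^\e)$ is a finite module over $\mathcal{Z}_0(\Ll_{g,n}^\e)\subseteq\hat{\mathcal{Z}}_0(\Ll_{g,n}^\e)$ by Prop.~\ref{Z0Lgn}(3), it is integral over $\hat{\mathcal{Z}}_0(\Ll_{g,n}^\e)$, and it has the same fraction field by Cor.~\ref{coroIsolateFacts}(2); normality forces $\hat{\mathcal{Z}}_0(\Ll_{g,n}^\e)=\mathcal{Z}(\Ll_{g,n}^\e)$, giving the Noetherian and integrally closed assertions and the surjectivity of $j$. Injectivity of $j$ comes from the ground case: the tensor identifications above rewrite the source of $j$ as $\mathcal{Z}_0(\Ll_{g,0}^\e)\otimes_{\mc}\bigl(\mathcal{Z}_0(\Ll_{0,1}^\e)\otimes_{(\mathcal{Z}_0\cap\mathcal{Z}_1)(\Ll_{0,1}^\e)}\mathcal{Z}_1(\Ll_{0,1}^\e)\bigr)^{\otimes n}$ (using that intersections and tensor products over $\mc$ commute slotwise), and under them $j$ becomes $\mathrm{id}\otimes j_{(0,1)}^{\otimes n}$, a composite of isomorphisms.

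For part~(2), set $R=\Ll_{g,n}^\e$ and $S=\mathfrak{d}_{g,n}^\e(\Ll_{0,1}^\e)$ as in the proof of Th.~\ref{Lgnteo2}, so that $\Ll_{g,n}^{u_\e}=\mathbf{C}_R(S)$ (Th.~\ref{centralizerMu}) and $\mathcal{Z}(R)\subseteq\Ll_{g,n}^{u_\e}$ (Cor.~\ref{ZinLue}). The image of $j_2$ is the subalgebra generated by $\mathcal{Z}(R)$ and $\mathcal{Z}_1(S):=\mathfrak{d}_{g,n}^\e(\mathcal{Z}_1(\Ll_{0,1}^\e))$; as $\mathcal{Z}_1(S)\subseteq S$ and every element of $\mathbf{C}_R(S)$ commutes with $S$, this image is a central subalgebra of $\Ll_{g,n}^{u_\e}$. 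Assuming injectivity (treated below), part~(1) applied to $S\cong\Ll_{0,1}^\e$ identifies the source of $j_2$ with $\mathcal{Z}(R)\otimes_{\mathcal{Z}_0(S)}\mathcal{Z}(S)$, so $\mathrm{im}(j_2)\cong\mathcal{Z}(R)\otimes_{\mathcal{Z}_0(S)}\mathcal{Z}(S)$, a finite flat algebra over the normal ring $\mathcal{Z}(R)$ (because $\mathcal{Z}(S)$ is finite free over the smooth ring $\mathcal{Z}_0(S)\cong\Oo(G)$). I would prove it integrally closed by Serre's $R_1+S_2$ criterion: the $S_2$ property ascends along the finite flat map with reduced zero-dimensional fibers, while $R_1$ reduces to the fact, read off from the De~Concini--Kac--Procesi description of the finite map $\mathrm{Spec}\,\mathcal{Z}(\Ll_{0,1}^\e)\to\mathrm{Spec}\,\mathcal{Z}_0(\Ll_{0,1}^\e)$, that its ramification has codimension $\geq 2$. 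Granting this, $\mathcal{Z}(\Ll_{g,n}^{u_\e})$ is finite over $\mathrm{im}(j_2)$ (Rmk.~\ref{Z0submoduleLgn}) and shares its fraction field by the computation in the proof of Th.~\ref{Lgnteo2}, so integral closedness gives $\mathrm{im}(j_2)=\mathcal{Z}(\Ll_{g,n}^{u_\e})$, i.e. surjectivity of $j_2$ together with the Noetherian and integrally closed claims.

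The hard part will be the injectivity of $j_2$, which (unlike its surjectivity) must be established independently of Th.~\ref{Lgnteo2} and of Lemma~\ref{lemmaQZsimple}. Writing the source as the free $\mathcal{Z}(R)$-module $\bigoplus_i\mathcal{Z}(R)\,w_i$, where $\{w_i\}$ is a $(\mathcal{Z}_0\cap\mathcal{Z}_1)(S)$-basis of $\mathcal{Z}_1(S)$ and hence a $\mathcal{Z}_0(S)$-basis of $\mathcal{Z}(S)$ (part~(1) for $S$), injectivity amounts to the linear independence over $\mathcal{Z}(R)$, inside the domain $R$ (Prop.~\ref{propLgndomain}), of the quantum traces $w_i=\mathfrak{d}_{g,n}^\e(\mathrm{qTr})$. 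I expect this to be the main obstacle, and to require the same tools as Lemma~\ref{Z0Z1Phimai25}: the quantum Killing form of App.~\ref{sec:qKilling}, through which one checks that the relevant Gram-type pairing of the $w_i$ stays nondegenerate with coefficients in $\mathcal{Z}(R)$, exactly as in the De~Concini--Kac--Procesi analysis of the center. Once injectivity is in place, $\mathrm{im}(j_2)\cong\mathcal{Z}(R)\otimes_{\mathcal{Z}_0(S)}\mathcal{Z}(S)$ is a domain, closing the remaining gap in the argument above.
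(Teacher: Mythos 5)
Your overall architecture (reduce to De Concini--Kac--Procesi, then combine normality, integrality and the fraction-field equality of Cor.\,\ref{coroIsolateFacts}(2) to force surjectivity) is the paper's, but two load-bearing steps are not actually established. The first is the ground case $(g,n)=(0,1)$ of part (1), on which everything else in your part (1) rests: Lemma \ref{Z0Z1Phimai25} transports only $\mathcal{Z}_1$ and $\mathcal{Z}_0\cap\mathcal{Z}_1$ through $\Phi_{0,1}^\e$; it identifies neither $\mathcal{Z}_0(\Ll_{0,1}^\e)$ with $\mathcal{Z}_0(U_\e^\Pup)$ nor $\mathcal{Z}(\Ll_{0,1}^\e)$ with $\mathcal{Z}(U_\e^\Pup)$. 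Indeed $\Phi_{0,1}^\e(\Ll_{0,1}^\e)=U_\e'$ is a proper subalgebra of $U_\e^\Pup$, and $\Phi_{0,1}^\e\bigl(\mathcal{Z}_0(\Ll_{0,1}^\e)\bigr)\cong\Oo(G)$ sits strictly inside $\mathcal{Z}_0(U_\e^\Pup)\cong\Oo(G^*)$, dual to the degree-$2^m$ covering $G^*\to G^0\subset G$. So even granting the factorization $\mathcal{Z}(U_\e^\Pup)=\mathcal{Z}_0(U_\e^\Pup)\otimes_{(\mathcal{Z}_0\cap\mathcal{Z}_1)(U_\e^\Pup)}\mathcal{Z}_1(U_\e^\Pup)$, an element of $\mathcal{Z}(\Ll_{0,1}^\e)$ decomposes a priori with coefficients in $\Oo(G^*)$, and nothing in your argument forces those coefficients into $\Oo(G)$; the relevant varieties are genuinely different ($X_G=G\times_{T/W}T/W$ for $\Ll_{0,1}^\e$, versus an open subset of an unramified covering of it for $U_\e^\Pup$). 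This is exactly why the paper never deduces the $\Ll_{0,1}^\e$ statement from the $U_\e^\Pup$ one: Prop.\,\ref{mainDCKPsec6} reruns the De Concini--Kac--Procesi geometry (complete intersection, smoothness outside codimension $\geq 2$, Serre's criterion) directly on $X_G$, and then injectivity and surjectivity of $j$ are obtained uniformly in $(g,n)$ from Lemma \ref{lemTechnicalCommRings} and Cor.\,\ref{coroIsolateFacts}(2) --- the very mechanism you deploy in your ``general case'', which would also settle $(0,1)$ if you had normality of $\mathcal{Z}_0(\Ll_{0,1}^\e)\otimes_{(\mathcal{Z}_0\cap\mathcal{Z}_1)(\Ll_{0,1}^\e)}\mathcal{Z}_1(\Ll_{0,1}^\e)$ and injectivity of $j_{(0,1)}$; those are precisely what is missing.

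The second gap is that you leave injectivity of $j_2$ unproven, flagging it as ``the main obstacle'' and proposing a Gram-type nondegeneracy argument with the quantum Killing form. This is both a gap and a misdiagnosis of where the difficulty lies. No such tool is needed: once one knows that $\mathcal{Z}(\Ll_{g,n}^{\e})\otimes_{\mathfrak{d}_{g,n}^{\e}\left((\mathcal{Z}_0\cap\mathcal{Z}_1)(\Ll_{0,1}^\e)\right)}\mathfrak{d}_{g,n}^{\e}\bigl(\mathcal{Z}_1(\Ll_{0,1}^\e)\bigr)$ is a domain --- which follows from the same normality analysis as in part (1), since by the factorization of $\mathcal{Z}(\Ll_{g,n}^{\e})$ it is again the coordinate ring of a product of $G$'s and fiber products of the type $X_G$ --- injectivity of $j_2$ follows from the same elementary lemma used for $j$: the source is integral over $\mathcal{Z}(\Ll_{g,n}^{\e})$ (because $\mathfrak{d}_{g,n}^{\e}\bigl(\mathcal{Z}_1(\Ll_{0,1}^\e)\bigr)$ is free of rank $l^m$ over $\mathfrak{d}_{g,n}^{\e}\bigl((\mathcal{Z}_0\cap\mathcal{Z}_1)(\Ll_{0,1}^\e)\bigr)$), the restriction of $j_2$ to $\mathcal{Z}(\Ll_{g,n}^{\e})$ is injective, and source and target are domains, so incomparability of primes in integral extensions (Lemma \ref{lemTechnicalCommRings}(2)) forces $\ker(j_2)=0$. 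Your insistence that this step be independent of Th.\,\ref{Lgnteo2} is correct, and the above argument is; but as written your part (2) is incomplete, and the fallback you sketch is not developed to a checkable state. The remaining pieces of your proposal (surjectivity of $j$ and $j_2$ via normality, finiteness and equal fraction fields; the Serre-criterion ascent for the image of $j_2$; the reduction of injectivity of $j$ to the ground case through the slotwise decomposition \eqref{factorZoct24}) are sound and essentially coincide with, or are acceptable variants of, the paper's arguments.
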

\noindent The first claim in the theorem means in particular that $\mathcal{Z}(\Ll_{g,n}^{\e})$ is the subalgebra generated by $\mathcal{Z}_0(\mathcal{L}_{g,n}^{\e})$ and $\mathcal{Z}_1(\mathcal{L}_{g,n}^{\e})$ (it was denoted by $\hat{\mathcal{Z}}_0(\Ll_{g,n}^{\e})$ in the proof of Th.\,\ref{Lgnteo1}). The second claim implies that $\mathcal{Z}(\Ll_{g,n}^{u_\e})$ is a finite extension of $\mathcal{Z}(\Ll_{g,n}^{\e})$ by $\mathfrak{d}_{g,n}^{\e}\bigl( \mathcal{Z}_1(\Ll_{0,1}^\e) \bigr)$; we will see below that it has degree $l^m$.

\smallskip

The proof of Th.\,\ref{Lgnteo3} consists in reducing the statement to problems already solved by De Concini, Kac and Procesi in their proof that the multiplication map
\begin{equation}\label{factoCenterDCP}
\mathcal{Z}_0(U_\e^\Pup) \otimes_{(\mathcal{Z}_0 \cap \mathcal{Z}_1)(U_\e^\Pup)} \mathcal{Z}_1(U_\e^\Pup) \overset{\sim}{\longrightarrow} \mathcal{Z}(U_\e^\Pup)
\end{equation}
is an isomorphism of algebras (see \cite[\S 6.4]{DC-K-P1}, \cite[\S 21]{DCP}), where we denote $(\mathcal{Z}_0 \cap \mathcal{Z}_1)(U_\e^\Pup) = \mathcal{Z}_0 (U_\e^\Pup)\cap \mathcal{Z}_1(U_\e^\Pup)$. We recall from \eqref{Z0Uedefsep25} that $\mathcal{Z}_0(U_\e^\Pup)$ is the subalgebra of $U_\e^\Pup$ generated by $E_\beta^l$, $F_\beta^l$, $K_\lambda^l$ for $\beta \in \phi^+$ and $\lambda \in P$, while $\mathcal{Z}_1(U_\e^\Pup)$ is the specialization at $q=\e$ of the center $\mathcal{Z}(U_A^\Pup)$ of the simply-connected unrestricted integral form $U_A^\Pup$ defined in \S\ref{integralFormsUq}.

This reduction starts with the following observation. From the very definition of $\mathcal{Z}_1(\mathcal{L}_{g,n}^{\e})$ and the decomposition $\mathcal{Z}_0(\mathcal{L}_{g,n}^{\e}) = \mathcal{Z}_0(\mathcal{L}_{g,0}^{\e}) \otimes \mathcal{Z}_0(\mathcal{L}_{0,1}^{\e})^{\otimes n}$ we have 
\begin{equation}\label{factorZoct24}
\mathcal{Z}_0(\mathcal{L}_{g,n}^{\e}) \otimes_{(\mathcal{Z}_0 \cap \mathcal{Z}_1)(\mathcal{L}_{g,n}^{\e})} \mathcal{Z}_1(\mathcal{L}_{g,n}^{\e}) = \mathcal{Z}_0(\mathcal{L}_{g,0}^{\e}) \otimes \left(\mathcal{Z}_0(\Ll_{0,1}^\e) \otimes_{(\mathcal{Z}_0 \cap \mathcal{Z}_1)(\Ll_{0,1}^\e)} \mathcal{Z}_1(\mathcal{L}_{0,1}^{\e})\right)^{\otimes n}.
\end{equation}
The rightmost terms in \eqref{factorZoct24} are formally close to the factorization of $\mathcal{Z}(U_\e^\Pup)$ in \eqref{factoCenterDCP}. Indeed, recall that $\Phi_{0,1}^A\colon \Ll_{0,1}^A\to U_A^\Pup$ is an embedding of $\Gamma_A^\Pup$-module algebras (\S\ref{subsecModifiedAlekseev}). We have:
\begin{lem}\label{Z0Z1Phimai25}
The embedding $\Phi_{0,1}^\e\colon \Ll_{0,1}^\e \to U_\e^\Pup$ restricts to isomorphisms $\mathcal{Z}_1(\Ll_{0,1}^\e) \to \mathcal{Z}_1(U_\e^\Pup)$ and 
$\mathcal{Z}_0(\Ll_{0,1}^\e) \cap \mathcal{Z}_1(\Ll_{0,1}^\e) \to \mathcal{Z}_0(U_\e^\Pup) \cap \mathcal{Z}_1(U_\e^\Pup).$
\end{lem}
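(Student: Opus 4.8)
The plan is to prove both statements over the ring $A$ first and then specialize at $\qD=\eD$, exploiting that $\Phi_{0,1}^A\colon \Ll_{0,1}^A \to U_A^\Pup$ is an injective morphism of $\Gamma_A^\Pup$-module algebras carrying the coadjoint action on $\Ll_{0,1}^A$ to the right adjoint action $\mathrm{ad}^r$ on $U_A^\Pup$. By equivariance, coadjoint-invariant elements map to $\mathrm{ad}^r$-invariant ones; and an element of $U_A^\Pup$ is $\mathrm{ad}^r$-invariant precisely when it commutes with the generators $E_i,F_i,K_\lambda$, i.e. when it is central. Since $\Phi_{0,1}^A$ takes values in $U_A'=U_A^{\mathrm{lf}}$, whose centralizer in $U_A^\Pup$ equals $\mathcal{Z}(U_A^\Pup)$ by Remark \ref{rmkTechnicalPropUA}(2) (itself resting on Cor.\,\ref{cor:dec25PhiA}), each quantum trace satisfies $\Phi_{0,1}^A(\mathrm{qTr}_\lambda)\in \mathcal{Z}(U_A^\Pup)\cap U_A' = \mathcal{Z}(U_A')$. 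Thus $\Phi_{0,1}^A$ maps the $A$-subalgebra $\mathcal{Z}_1(\Ll_{0,1}^A)$ generated by the $\mathrm{qTr}_\lambda$ into $\mathcal{Z}(U_A^\Pup)$.

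Next I would identify $\Phi_{0,1}^A(\mathrm{qTr}_\lambda)$ with the quantum Casimir $z_\lambda$ obtained as the $V_\lambda$-quantum trace of $(\mathrm{rep}_\lambda\otimes \mathrm{id})(RR^{fl})$, twisted by the pivotal $\ell$. Over $\mathbb{C}(q)$ these are the classical Reshetikhin--Turaev central elements, and $[V_\lambda]\mapsto z_\lambda$ is the standard algebra isomorphism of the representation ring onto $\mathcal{Z}(U_q^\Pup)$, so the $z_\lambda$ ($\lambda\in P_+$) generate the center. The substantive point is to upgrade this to the integral level, namely $\Phi_{0,1}^A\bigl(\mathcal{Z}_1(\Ll_{0,1}^A)\bigr)=\mathcal{Z}(U_A^\Pup)$: this is exactly where the quantum Killing form of Appendix \ref{sec:qKilling} is used, providing a nondegenerate $A$-pairing on $U_A'$ under which $\mathcal{Z}(U_A^\Pup)$ is a saturated $A$-submodule compatible with $\Phi_{0,1}^A$, forcing surjectivity over $A$. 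Together with injectivity of $\Phi_{0,1}^A$ this yields an isomorphism of $A$-algebras $\Phi_{0,1}^A\colon \mathcal{Z}_1(\Ll_{0,1}^A)\xrightarrow{\sim}\mathcal{Z}(U_A^\Pup)$; in particular $\mathcal{Z}(U_A^\Pup)\subseteq U_A'$.

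To obtain the first isomorphism of the lemma I would specialize. By \eqref{speSurjAndIso} an $A$-module isomorphism specializes to a $\mathbb{C}$-linear isomorphism, so $(\Phi_{0,1}^A)_\e$ maps the specialization of $\mathcal{Z}_1(\Ll_{0,1}^A)$ isomorphically onto $\mathcal{Z}_1(U_\e^\Pup)$, the latter being by definition the specialization of $\mathcal{Z}(U_A^\Pup)$. The natural algebra surjection from that specialization onto $\mathcal{Z}_1(\Ll_{0,1}^\e)$ (generated by the specialized $\mathrm{qTr}_\lambda$) fits into a factorization of $(\Phi_{0,1}^A)_\e$ through $\Phi_{0,1}^\e$; since $(\Phi_{0,1}^A)_\e$ is bijective while $\Phi_{0,1}^\e$ is injective (Prop.\,\ref{AlekseevInjRootOf1}), this surjection is forced to be bijective, whence $\Phi_{0,1}^\e\colon \mathcal{Z}_1(\Ll_{0,1}^\e)\xrightarrow{\sim}\mathcal{Z}_1(U_\e^\Pup)$.

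For the second isomorphism, note that since $\Phi_{0,1}^\e$ is injective it sends intersections to intersections, and $\mathcal{Z}_1(U_\e^\Pup)\subseteq U_\e'$ (from $\mathcal{Z}(U_A^\Pup)\subseteq U_A'$). Hence it suffices to prove that $\Phi_{0,1}^\e$ carries the Frobenius center $\mathcal{Z}_0(\Ll_{0,1}^\e)=\mathbb{F}\mathrm{r}_\e^{*}\bigl(\Oo(G)\bigr)$ onto $\mathcal{Z}_0(U_\e^\Pup)\cap U_\e'$; then $\Phi_{0,1}^\e\bigl((\mathcal{Z}_0\cap\mathcal{Z}_1)(\Ll_{0,1}^\e)\bigr)=\mathcal{Z}_0(U_\e^\Pup)\cap\mathcal{Z}_1(U_\e^\Pup)$ follows. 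This matching of the two Frobenius-type central subalgebras is the main obstacle, as it is genuinely geometric rather than formal: the strategy is to work over $A$ with the integral co-$R$-matrix $\Phi_A^\pm$ (Lemma \ref{lemIntegralRandCorib}) and use that on the Frobenius-pullback modules $V_{l\lambda,\e}\cong \mathbb{F}\mathrm{r}_\e^\sharp L(\lambda)$ the braiding degenerates to the classical one (Cor.\,\ref{relationsCoRMatAtEps}(2)), so that $\Phi_{0,1}^\e$ restricted to $\mathcal{Z}_0(\Oo_\e)$ is governed by the underlying classical map and lands precisely in the $l$-th power subalgebra $\mathcal{Z}_0(U_\e^\Pup)$ of De Concini--Kac--Procesi; the reverse inclusion, that a quantum Casimir in $\mathcal{Z}_1$ whose image is an $l$-th power element comes from $\mathcal{Z}_0(\Oo_\e)$, is then read off from the known generators of $\mathcal{Z}_0(\Oo_\e)$ (Th.\,\ref{DCLteo1}) via the already established $\mathcal{Z}_1$ isomorphism.
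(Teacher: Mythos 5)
Your treatment of the first isomorphism is essentially the paper's own: establish the integral isomorphism $\Phi_{0,1}^A\colon \mathcal{Z}_1(\Ll_{0,1}^A)\overset{\sim}{\to}\mathcal{Z}(U_A^\Pup)$ and then specialize, using injectivity of $\Phi_{0,1}^\e$ to descend to $\mathcal{Z}_1(\Ll_{0,1}^\e)$; this integral statement is exactly Cor.\,\ref{Phi0Aiso}. One remark: the surjectivity over $A$ is obtained in the appendix from the integral Harish--Chandra basis \eqref{AbaseHum} together with \eqref{Phicentresept25}, not from a ``saturation'' property of the Killing form (whose role is rather to prove that $\Phi_{0,1}^A$ is an isomorphism onto $(U_A^\Pup)^{\rm lf}$, Cor.\,\ref{cor:dec25PhiA}); your description of that mechanism is too vague to check, but since the statement is available as Cor.\,\ref{Phi0Aiso} this is not a real problem.

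The second isomorphism is where your proposal has a genuine gap. Your reduction to proving $\Phi_{0,1}^\e\bigl(\mathcal{Z}_0(\Ll_{0,1}^\e)\bigr)=\mathcal{Z}_0(U_\e^\Pup)\cap U_\e'$ is correct, but neither inclusion is actually established. For the forward inclusion you invoke the degeneration of the braiding on $\mathcal{Z}_0(\Oo_\e)$ (Cor.\,\ref{relationsCoRMatAtEps}(2)); however, by Lemma \ref{commutephiate} the co-R-matrix at $\e$ only determines $\widetilde{p}\circ\Phi_\e^\pm$, so its degeneration says precisely that $\Phi_\e^\pm(\varphi)-\varepsilon(\varphi)1\in\ker(\widetilde{p})$ for $\varphi\in\mathcal{Z}_0(\Oo_\e)$. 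The kernel of $\widetilde{p}$ is an ideal (it contains $E_i^lF_i$, for instance) and is far larger than the subalgebra $\mathcal{Z}_0(U_\e^\Pup)$ generated by the $E_\beta^l,F_\beta^l,K_\lambda^{\pm l}$, so no conclusion about membership in $\mathcal{Z}_0(U_\e^\Pup)$ follows; this inclusion is a genuine theorem of De Concini--Lyubashenko and should simply be cited, as the paper does via \cite[Prop.\,4.2]{BR2}. More seriously, for the reverse inclusion --- that $\alpha\in\Ll_{0,1}^\e$ with $\Phi_{0,1}^\e(\alpha)\in\mathcal{Z}_0(U_\e^\Pup)$ must lie in $\mathcal{Z}_0(\Ll_{0,1}^\e)$ --- your phrase ``read off from the known generators of $\mathcal{Z}_0(\Oo_\e)$ via the already established $\mathcal{Z}_1$ isomorphism'' contains no argument: knowing generators of $\mathcal{Z}_0(\Oo_\e)$ does not characterize which elements of $\mathcal{Z}_1(U_\e^\Pup)$ land in $\mathcal{Z}_0(U_\e^\Pup)$. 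A nontrivial input is required here. The paper's input is the quantum Killing form: by Lemma \ref{lemAutreDefZ0}, $\alpha\in\mathcal{Z}_0(\Oo_\e)$ if and only if $\alpha$ vanishes on $\ker(\mathbb{F}\mathrm{r}_\e)$; by the specialization of \eqref{KillingPhi01Int} one has $\langle\alpha,y\rangle_\e^\Q=\bigl(\Phi_{0,1}^\e(\alpha)\,\big|\,y\bigr)_{\eD}$; and by Lemma \ref{lemmaKerFrZ0} the subspaces $\mathcal{Z}_0(U_\e^\Pup)$ and $\ker(\mathbb{F}\mathrm{r}_\e)$ are orthogonal, which finishes the proof in three lines. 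An alternative input would be De Concini--Kac--Procesi's Harish--Chandra description of $\mathcal{Z}_0(U_\e^\Pup)\cap\mathcal{Z}_1(U_\e^\Pup)$ as $\mathbb{C}[K_{2l\mu},\,\mu\in P]^W$, identifying this intersection with the span of the $\Phi_{0,1}^\e\bigl((\mathrm{qTr}_{l\lambda})_{|\e}\bigr)$, combined with the observation that $(\mathrm{qTr}_{l\lambda})_{|\e}\in\mathcal{Z}_0(\Oo_\e)$ because $V_{l\lambda,\e}$ is a Frobenius pullback (this is how the paper later proves Cor.\,\ref{FrisoG-inv}); but either way the key external input must be identified, and your sketch names neither.
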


\begin{proof} The first claim is just obtained by specialization of Corollary \ref{Phi0Aiso} in App.\,\ref{sec:qKilling} below, as we noted in \eqref{speSurjAndIso} that specialization is a functor and thus preserves isomorphisms.

Consider the second claim. The inclusion $\Phi_{0,1}^{\e}\bigl( \mathcal{Z}_0(\mathcal{L}_{0,1}^\e) \bigr) \subset \mathcal{Z}_0(U_\e^\Pup)$ follows from results of \cite{DC-L} (see the proof of \cite[Prop.\,4.2]{BR2}). By the first claim, it is thus enough to show that $(\Phi_{0,1}^{\e})^{-1}\bigl(\mathcal{Z}_0(U_\e^\Pup)\bigr) \subset \mathcal{Z}_0(\mathcal{L}_{0,1}^\e)$. Take $\alpha \in (\Phi_{0,1}^{\e})^{-1}\bigl(\mathcal{Z}_0(U_\e^\Pup)\bigr)$. According to Lemma \ref{lemAutreDefZ0}, we must prove that $\alpha$ vanishes on $\ker(\mathbb{F}\mathrm{r}_\e) \subset \Gamma^\Q_\e$. But by specialization of formula \eqref{KillingPhi01Int} in App.\,\ref{sec:qKilling} below it holds $\langle \alpha,y \rangle^\Q_{\e} = \bigl( \Phi_{0,1}^\e(\alpha) \,\big|\, y \bigr)_{\eD}$ for all $y \in \Gamma_\e^\Q$, where $(\:\,|\,\:)_{\eD} : U_\e^\Pup \times \Gamma_\e^\Pup \to \mathbb{C}$ is the specialization of the integral quantum Killing form $(\:\,|\,\:)_{\AD}$ defined in \eqref{integralKillingForm} of the same appendix. The desired vanishing then immediately follows from Lemma \ref{lemmaKerFrZ0}, which asserts that the subspaces $\mathcal{Z}_0(U_\e^\Pup)$ and $\ker(\mathbb{F}\mathrm{r}_\e)$ are orthogonal for $(\:\,|\,\:)_{\eD}$.
\end{proof}

The proof of Theorem \ref{Lgnteo3} will also use the next two results. The following Lemma is classical:
\begin{lem}\label{lemTechnicalCommRings}
Let $A$, $B$ and $C$ be commutative rings.

1. Assume that $A\subset B$, that $B$ is a domain and a Noetherian $A$-module, that $A$ is integrally closed, and that $Q(A)=Q(B)$. Then $A=B$.

2. Assume that $B$ and $C$ are domains, and that we have ring morphisms $f\colon A\to B$ and $g\colon B\to C$ such that $f$ and $g\circ f$ are injective, and $B$ is integral over $A$ via $f$. Then $g$ is injective.
\end{lem}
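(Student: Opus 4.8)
The final statement is Lemma~\ref{lemTechnicalCommRings}, a pair of classical facts about commutative rings. Here is how I would prove each part.

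\medskip

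\noindent\textbf{Proof plan for part 1.} The hypotheses are that $A \subset B$ is an inclusion of commutative rings, $B$ is a domain which is Noetherian as an $A$-module, $A$ is integrally closed (in its fraction field $Q(A)$), and $Q(A) = Q(B)$. The plan is to show that every element $b \in B$ already lies in $A$. First I would observe that since $B$ is a finitely generated $A$-module (being Noetherian over $A$), every element of $B$ is integral over $A$: indeed, for $b \in B$ the $A$-submodule $A[b] \subseteq B$ is finitely generated, and the standard determinant-trick (Cayley--Hamilton / \cite[Prop.\,2.4]{AMacD}) shows $b$ satisfies a monic polynomial with coefficients in $A$. Thus $B$ is integral over $A$. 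Now take $b \in B$; since $Q(B) = Q(A)$, we may view $b$ as an element of $Q(A)$ which is integral over $A$. Because $A$ is integrally closed in $Q(A)$, this forces $b \in A$. Hence $B \subseteq A$, and combined with $A \subseteq B$ we conclude $A = B$. This part is routine; the only subtlety is making sure ``Noetherian $A$-module'' is used correctly to extract finite generation and hence integrality.

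\medskip

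\noindent\textbf{Proof plan for part 2.} Here $B$ and $C$ are domains, $f\colon A \to B$ and $g\colon B \to C$ are ring morphisms with $f$ and $g \circ f$ injective, and $B$ is integral over $A$ via $f$. The goal is to show $g$ is injective, i.e. $\ker(g) = 0$. The natural strategy is to suppose $\ker(g) \neq 0$ and derive a contradiction with injectivity of $g \circ f$. Let $0 \neq b \in \ker(g)$. Since $b$ is integral over $A$ via $f$, it satisfies a monic relation
\[ b^n + f(a_{n-1})b^{n-1} + \ldots + f(a_1)b + f(a_0) = 0 \]
with $a_i \in A$, and I would choose such a relation of \emph{minimal} degree $n$. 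Applying $g$ and using $g(b) = 0$ kills all terms involving a positive power of $b$, leaving $g\bigl(f(a_0)\bigr) = 0$, that is $(g\circ f)(a_0) = 0$; by injectivity of $g \circ f$ this gives $a_0 = 0$. Then the relation reads $b\bigl(b^{n-1} + f(a_{n-1})b^{n-2} + \ldots + f(a_1)\bigr) = 0$, and since $B$ is a domain and $b \neq 0$, the second factor vanishes, producing a monic integral relation for $b$ of degree $n-1$. This contradicts minimality of $n$ (the case $n=1$ forcing $b = f(a_0) = 0$ directly). Hence $\ker(g) = 0$.

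\medskip

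\noindent I do not expect a genuine obstacle in either part, as both are standard commutative-algebra arguments; the only point requiring care is the reduction to minimal degree in part~2, where one must invoke the domain property of $B$ to cancel the factor $b$ and thereby lower the degree, and handle the base case $n=1$ separately. Both arguments use only the stated hypotheses and the determinant-trick characterization of integrality already available in the references cited in the paper.
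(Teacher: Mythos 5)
Your proposal is correct in both parts. For part 1 your argument is exactly the paper's: the submodule $A[b]$ of the Noetherian $A$-module $B$ is finitely generated, hence $b$ is integral over $A$ by the determinant trick, and then $Q(A)=Q(B)$ together with integral closedness of $A$ forces $b\in A$.

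For part 2 you take a genuinely different route. The paper argues via prime ideals: since $C$ is a domain, $\ker(g)$ is a prime of $B$, and injectivity of $g\circ f$ says it contracts to the zero ideal of $f(A)$; the zero ideal of $B$ is another prime lying over the same prime, so the incomparability theorem for integral extensions (cited from Matsumura, Th.\,9.3(ii)) forces $\ker(g)=0$. You instead prove the needed statement directly: for $0\neq b\in\ker(g)$ take a monic integral relation of minimal degree, apply $g$ to conclude the constant term $a_0$ satisfies $(g\circ f)(a_0)=0$, hence $a_0=0$ by injectivity of $g\circ f$, then cancel the factor $b$ using that $B$ is a domain to get a monic relation of strictly smaller degree, a contradiction (with the case $n=1$ giving $b=0$ outright). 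This is in effect an unpacking of the standard proof that a nonzero ideal in an integral extension of domains contracts nontrivially, i.e. of the very incomparability statement the paper cites. What your version buys is self-containedness — no appeal to going-up/incomparability machinery, only the definition of integrality and the domain hypothesis on $B$ — at the cost of a slightly longer argument; the paper's version is shorter but leans on an external theorem. Both are complete and correct.
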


\begin{proof}
1. Let $b\in B$. Since $B$ is Noetherian over $A$, the $A$-submodule $A[b]$ is finitely generated, and therefore $b$ is integral over $A$ \cite[Prop.\,5.1]{AMacD}. Since $A$ is integrally closed and $b\in Q(B)=Q(A)$, it follows $b\in A$. 

\noindent 2. Since $C$ is a domain, ${\rm Ker}(g)$ is a prime ideal of $B$, and since $g\circ f$ is injective, ${\rm Ker}(g)\cap f(A) = \{0\}$, {\it i.e.}\, the null ideal $0f(A)$ of $f(A)$. Moreover, $0f(A)$ and $0B$ are prime ideals of $f(A)$ and $B$, respectively, since these are domains (for $f(A)$, because  $f$ and $g\circ f$ are injective). Now, as $B$ is integral over $f(A)$ there are no inclusions between prime ideals of $B$ lying over a given prime ideal of $f(A)$ \cite[Th.\,9.3 (ii)]{Matsumura}. Therefore the inclusion $0B\subset {\rm Ker}(g)$ is an equality, whence $g$ is injective. 
\end{proof}

The following Proposition combines \eqref{factorZoct24} and Lemma \ref{Z0Z1Phimai25} with results contained in \cite[\S 6]{DC-K-P1}, \cite[\S 21]{DCP}. For completeness we will recall briefly some of their arguments.

\begin{prop}\label{mainDCKPsec6} The ring $\mathcal{Z}_0(\mathcal{L}_{g,n}^{\e}) \otimes_{(\mathcal{Z}_0 \cap \mathcal{Z}_1)(\mathcal{L}_{g,n}^{\e})} \mathcal{Z}_1(\mathcal{L}_{g,n}^{\e})$ is an integrally closed domain.
\end{prop}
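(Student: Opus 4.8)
The plan is to collapse the statement to a single ``genus-zero, one-puncture'' factor and then transport that factor, through the Alekseev morphism $\Phi_{0,1}^\e$, to a fibre product of centers inside $U_\e^\Pup$ that was analysed by De Concini--Kac--Procesi. First I would invoke the decomposition \eqref{factorZoct24}, which exhibits the ring in question as a tensor product over $\mathbb{C}$,
\[ \mathcal{Z}_0(\mathcal{L}_{g,0}^{\e}) \otimes_{\mathbb{C}} W^{\otimes_{\mathbb{C}} n}, \qquad W := \mathcal{Z}_0(\Ll_{0,1}^\e) \otimes_{(\mathcal{Z}_0 \cap \mathcal{Z}_1)(\Ll_{0,1}^\e)} \mathcal{Z}_1(\mathcal{L}_{0,1}^{\e}). \]
Since $\mathbb{C}$ is algebraically closed, a tensor product over $\mathbb{C}$ of finitely generated integrally closed domains is again an integrally closed domain: the domain property holds because the factors are geometrically integral, while normality ascends because the projection onto one factor is flat (the complementary factor being free over $\mathbb{C}$) with geometrically normal fibres over a normal base. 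Hence it suffices to treat each $\mathbb{C}$-tensorand separately. For $\mathcal{Z}_0(\mathcal{L}_{g,0}^{\e})$ this is immediate: by Prop.\,\ref{Z0Lgn}(1) it is isomorphic to $\mathcal{O}(G)^{\otimes 2g} = \mathcal{O}(G^{\times 2g})$, the coordinate ring of a smooth affine variety, hence normal. The entire difficulty is therefore concentrated in the factor $W$.

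Next I would identify $W$ through the embedding $\Phi_{0,1}^\e : \Ll_{0,1}^\e \hookrightarrow U_\e^\Pup$. By Lemma \ref{Z0Z1Phimai25}, $\Phi_{0,1}^\e$ carries $\mathcal{Z}_1(\Ll_{0,1}^\e)$ isomorphically onto $\mathcal{Z}_1(U_\e^\Pup)$ and $(\mathcal{Z}_0\cap\mathcal{Z}_1)(\Ll_{0,1}^\e)$ onto $(\mathcal{Z}_0\cap\mathcal{Z}_1)(U_\e^\Pup)$, while its proof also shows $\Phi_{0,1}^\e\bigl(\mathcal{Z}_0(\Ll_{0,1}^\e)\bigr) = \mathcal{Z}_0(U_\e^\Pup) \cap U_\e'$; the latter is isomorphic to $\mathcal{O}(G)$ via the Frobenius morphism (Prop.\,\ref{Z0Lgn}(1) with $(g,n)=(0,1)$). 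As $\Phi_{0,1}^\e$ is an algebra morphism restricting to isomorphisms on each slot and on the base, it induces an isomorphism
\[ W \;\cong\; \bigl( \mathcal{Z}_0(U_\e^\Pup)\cap U_\e' \bigr) \otimes_{(\mathcal{Z}_0\cap\mathcal{Z}_1)(U_\e^\Pup)} \mathcal{Z}_1(U_\e^\Pup). \]
Here all three constituent rings are finitely generated normal $\mathbb{C}$-domains: $\mathcal{Z}_1(U_\e^\Pup)$ and the base $(\mathcal{Z}_0\cap\mathcal{Z}_1)(U_\e^\Pup)$ are the (polynomial-type) coordinate rings described by De Concini--Kac--Procesi \cite{DC-K-P1,DCP}, and $\mathcal{Z}_0(U_\e^\Pup)\cap U_\e' \cong \mathcal{O}(G)$ is normal.

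For the domain property of $W$ I would use that $\mathcal{Z}_1(U_\e^\Pup)$ is a finite free module over the base $B:=(\mathcal{Z}_0\cap\mathcal{Z}_1)(U_\e^\Pup)$, as follows from the freeness statements of De Concini--Kac--Procesi. Flatness of $\mathcal{Z}_1(U_\e^\Pup)$ over $B$ ensures that the injection $\mathcal{Z}_0(U_\e^\Pup)\cap U_\e' \hookrightarrow \mathcal{Z}_0(U_\e^\Pup)$ of $B$-modules stays injective after $\otimes_B \mathcal{Z}_1(U_\e^\Pup)$, so that $W$ maps injectively into $\mathcal{Z}_0(U_\e^\Pup) \otimes_B \mathcal{Z}_1(U_\e^\Pup) \cong \mathcal{Z}(U_\e^\Pup)$ by \eqref{factoCenterDCP}. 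Since $\mathcal{Z}(U_\e^\Pup) \subset U_\e^\Pup$ and $U_\e^\Pup$ is a domain, $W$ is a domain.

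It then remains to prove that $W$ is integrally closed, and this is the main obstacle. The natural route is the ascent of normality: writing $W = A \otimes_B \mathcal{Z}_1(U_\e^\Pup)$ with $A = \mathcal{Z}_0(U_\e^\Pup)\cap U_\e' \cong \mathcal{O}(G)$, one observes that the structure map $\mathcal{Z}_1(U_\e^\Pup) \to W$ is the base change of $B \to A$; hence if $B \to A$ is flat with geometrically normal fibres, then $\mathcal{Z}_1(U_\e^\Pup) \to W$ is too, and normality of $\mathcal{Z}_1(U_\e^\Pup)$ forces normality of $W$. The delicate point is that De Concini--Kac--Procesi establish normality of $\mathcal{Z}(U_\e^\Pup)$ with the \emph{full} $\mathcal{Z}_0(U_\e^\Pup)$ in the ``$\mathcal{Z}_0$-slot'', whereas here that slot is replaced by $\mathcal{O}(G)$, so their theorem cannot be quoted verbatim. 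My plan is therefore to re-run their argument (\cite[\S 6]{DC-K-P1}, \cite[\S 21]{DCP}), checking that its only inputs are the normality of the three rings and the flatness together with geometric normality of the fibres of the positive-dimensional structure map over the normal base $B$. Concretely, the hard part will be to verify these fibre hypotheses for $B \to \mathcal{O}(G)$, i.e. to show that this map is the restriction to $\mathrm{Spec}(B)$ of an adjoint-quotient-type morphism whose relevant fibres are flat and geometrically normal; this geometric verification, faithfully following De Concini--Kac--Procesi, is the technical heart of the proof.
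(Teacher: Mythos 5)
Your reduction via \eqref{factorZoct24} to the single factor $W = \mathcal{Z}_0(\Ll_{0,1}^\e) \otimes_{(\mathcal{Z}_0 \cap \mathcal{Z}_1)(\Ll_{0,1}^\e)} \mathcal{Z}_1(\Ll_{0,1}^{\e})$, the transport of $W$ through $\Phi_{0,1}^\e$ using Lemma \ref{Z0Z1Phimai25} (this much coincides with what the paper does), and your proof that $W$ is a domain by flat base change into $\mathcal{Z}(U_\e^\Pup)$ via \eqref{factoCenterDCP} are all sound. The genuinely different move is your treatment of integral closedness: instead of applying Serre's normality criterion to the total space, you propose ascent of normality along the flat map $\mathcal{Z}_1(U_\e^\Pup) \to W$, which is the base change of $B := (\mathcal{Z}_0\cap\mathcal{Z}_1)(U_\e^\Pup) \cong \mathcal{O}(G)^{G} \to \mathcal{O}(G)$.

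The gap is that you never establish the hypothesis this ascent requires: normality of the fibres of $\mathcal{O}(G)^{G} \to \mathcal{O}(G)$, that is, of the adjoint quotient map $G \to T/W$. This is a genuine theorem --- for $G$ simply connected in characteristic zero, each fibre of the adjoint quotient is an irreducible normal complete intersection, singular exactly along the irregular elements, which form a subset of codimension $\geq 2$ of the fibre --- and it is due in essence to Steinberg \cite{Steinberg}, not to De Concini--Kac--Procesi. Your plan to obtain it by ``re-running'' \cite[\S 6]{DC-K-P1}, \cite[\S 21]{DCP} would not succeed as stated: their normality argument concerns the \emph{total space} $X_G = G \times_{T/W} T/W$ defined in \eqref{defXG} (a complete intersection, smooth outside codimension $\geq 2$) and says nothing about the individual fibres of the adjoint quotient. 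This is precisely the point where the paper's proof takes the other route: it uses Richardson's freeness theorem \cite{Richardson} and the De Concini--Kac--Procesi complete intersection statement for $\mathcal{Z}_1(U_\e^\Pup)$ over $(\mathcal{Z}_0\cap\mathcal{Z}_1)(U_\e^\Pup)$ to show that the whole ring is a complete intersection, identifies its maximal spectrum with $G^{\times 2g} \times X_G^{\times n}$, and applies Serre's criterion to that total space, thereby never needing fibrewise normality. So your approach is viable, but only after importing Steinberg's fibre theorem as an additional, nontrivial and uncited input; as written, the step you yourself call the ``technical heart'' of the proof is missing.
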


\begin{proof} Let $V:=\mathrm{MaxSpec}\bigl(\mathcal{Z}_0(\mathcal{L}_{g,n}^{\e}) \otimes_{(\mathcal{Z}_0 \cap \mathcal{Z}_1)(\mathcal{L}_{g,n}^{\e})} \mathcal{Z}_1(\mathcal{L}_{g,n}^{\e})\bigr)$. We will show below that $V$ is a complete intersection affine variety which is smooth outside a subvariety $Z$ of codimension $\geq 2$. This will conclude the proof as follows. Since $V$ is an irreducible algebraic set, $\Oo(V)$ is a domain, and $V$ being a complete intersection, $\Oo(V)$ is a complete intersection ring and hence a Cohen-Macaulay ring. This and ${\rm codim}(Z, V)\geq 2$ imply that $\Oo(V)$ satisfies both conditions of Serre's normality criterium (see \cite[Th.\,23.8]{Matsumura} and the comments above it), and thus it is a normal ring in the sense of \cite[Rk. on p.\,64]{Matsumura}.  Being also a domain (hence with the $0$ ideal its unique minimal prime), it follows that $\Oo(V)$ is integrally closed. 

We now show that $V$ is a complete intersection variety which is smooth outside of a subvariety $Z$ of codimension $\geq 2$.

We need to recall some results from \cite[$\S$21.2-21.3]{DCP}, \cite[$\S$6]{DC-K-P1}. In these papers, De Concini--Kac--Procesi showed that $\mathrm{MaxSpec}\bigl( \mathcal{Z}_0(U_\e^\Pup)\otimes_{(\mathcal{Z}_0 \cap \mathcal{Z}_1)(U_\e^\Pup)}\mathcal{Z}_1(U_{\e}^\Pup) \bigr)$ is an open subset in an unramified covering of the fiber product (where we used the notations of \textsection 
\ref{subsecLieAlg} for $T$, $W$ and $G$)
\begin{equation}\label{defXG}
X_G = G \times_{T/W} T/W, 
\end{equation}
defined by the quotient map $G\to G/\!/G \cong T/W$ and the map $T/W \to T/W$ induced by the $l$-th power map $t\mapsto t^l$. More precisely, they proved that $X_G$ is a variety, smooth away from a subvariety of codimension $\geq 2$. Moreover $\mathrm{MaxSpec}\bigl( \mathcal{Z}_0(U_\e^\Pup)\bigr)\cong G^*$, the dual Poisson-Lie group of $G$, which has an unramified covering map $\sigma\colon G^*\to G^0$ of degree $2^m$, where $G^0$ is the big cell $G^0\subset G$ (see \cite[Th.\,2.27]{BR2} in the present notations). Then
\begin{equation}\label{MaxspecXG}
\mathrm{MaxSpec}\bigl( \mathcal{Z}_0(U_\e^\Pup)\otimes_{(\mathcal{Z}_0 \cap \mathcal{Z}_1)(U_\e^\Pup)} \mathcal{Z}_1(U_{\e}^\Pup) \bigr) \cong G^* \times_{T/W} T/W,
\end{equation}
where the left map in this fiber product is the composition $G^* \stackrel{\sigma}{\to} G^0 \hookrightarrow G\to G/\!/G \cong T/W$, and the map $T/W \to T/W$ is as above.  

Also, De Concini--Kac--Procesi showed that: (a)  $\mathcal{Z}_1(U_\e^\Pup)$ is a complete intersection over its subring $(\mathcal{Z}_0 \cap \mathcal{Z}_1)(U_\e^\Pup)$, and (b) $(\mathcal{Z}_0 \cap \mathcal{Z}_1)(U_\e^\Pup) \cong \Oo(G)^{G}$ as a subalgebra of $\mathcal{Z}_0(U_\e^\Pup)\cong \Oo(G^*)$ via the quantum Harish-Chandra isomorphism at $q=\e$ (see Rk.\,\ref{rkZ1OAjanv26}). Now $\Oo(G^*)$ is a complete intersection ring (because $G^*$ is a smooth algebraic subgroup of $G\times G$), and $\Oo(G^*)$ is a free (hence flat) $\Oo(G)^G$-module (since $\Oo(G)$ is so by Richardson's theorem \cite{Richardson}, $\Oo(G^0)$ is a localization of $\Oo(G)$ and hence a free $\Oo(G)^{G}$-module as well, and $\Oo(G^*)$ is a free $\Oo(G^0)$-module (of rank $2^m$)). These two properties together with (a) and (b) imply that $\mathcal{Z}_0(U_\e^\Pup)\otimes_{(\mathcal{Z}_0 \cap \mathcal{Z}_1)(U_\e^\Pup)}\mathcal{Z}_1(U_{\e}^\Pup)$ is a complete intersection ring (by, e.g., the arguments of \cite[Lem.\,10.135.1]{StackP}, taking $S = \mathcal{Z}_0(U_\e^\Pup)$ and replacing the field $k$ by $(\mathcal{Z}_0 \cap \mathcal{Z}_1)(U_\e^\Pup)$ and $K$ by $\mathcal{Z}_0(U_{\e}^\Pup)$).

We are now ready to conclude the proof. Consider the embedding $\Oo(G)\to \Oo(G^*)$ dual to the dominant map $G^* \stackrel{\sigma}{\to} G^0 \hookrightarrow G$. We know that $\Oo(G)$ is isomorphic to $\mathcal{Z}_0(\mathcal{L}_{0,1}^\e)$, and $\Phi_{0,1}^\e$ embeds $\mathcal{Z}_0(\mathcal{L}_{0,1}^\e)$ into $\mathcal{Z}_0(U_\e^\Pup)\cong \Oo(G^*)$; in fact the inclusion $\Phi_{0,1}^\e(\mathcal{Z}_0(\mathcal{L}_{0,1}^\e))\subset \mathcal{Z}_0(U_\e^\Pup)$ is naturally identified with the embedding $\Oo(G)\to \Oo(G^*)$ (see \cite[Lem.\,4.2 and Rk.\,4.3]{BR2}). By using Lemma \ref{Z0Z1Phimai25} to lift via $\Phi_{0,1}^\e$ the inclusion map $(\mathcal{Z}_0 \cap \mathcal{Z}_1)(U_\e^\Pup) \hookrightarrow \Phi_{0,1}^\e(\mathcal{Z}_0(\mathcal{L}_{0,1}^\e)) \subset \mathcal{Z}_0(U_\e^\Pup)$, similarly as (b) above we find that $(\mathcal{Z}_0 \cap \mathcal{Z}_1)(\Ll_{0,1}^\e) \cong \Oo(G)^G$ as a subalgebra of $\mathcal{Z}_0(\Ll_{0,1}^\e)\cong \Oo(G)$. It is a complete intersection ring, so as in the previous discussion, using (a) one obtains that the ring $\mathcal{Z}_0(\mathcal{L}_{0,1}^\e)\otimes_{(\mathcal{Z}_0 \cap \mathcal{Z}_1)(\Ll_{0,1}^\e)} \mathcal{Z}_1(\mathcal{L}_{0,1}^{\e})$ is a complete intersection. Then \eqref{factorZoct24} implies immediately that $\mathcal{Z}_0(\mathcal{L}_{g,n}^{\e}) \otimes_{(\mathcal{Z}_0 \cap \mathcal{Z}_1)(\mathcal{L}_{g,n}^{\e})} \mathcal{Z}_1(\mathcal{L}_{g,n}^{\e})$ is a complete intersection ring as well. 

Moreover, using the embedding $\Oo(G)\to \Oo(G^*)$ the same arguments as \eqref{MaxspecXG} imply that we have an isomorphism 
\begin{equation}\label{isoMaxspecsep25}
\mathrm{MaxSpec}\bigl(\mathcal{Z}_0(\mathcal{L}_{0,1}^\e)\otimes_{(\mathcal{Z}_0 \cap \mathcal{Z}_1)(\Ll_{0,1}^\e)} \mathcal{Z}_1(\mathcal{L}_{0,1}^{\e}) \bigr) \cong X_G.
\end{equation}
By \eqref{factorZoct24} it follows that
\[ V=\mathrm{MaxSpec}\bigl( \mathcal{Z}_0(\mathcal{L}_{g,n}^{\e}) \otimes_{(\mathcal{Z}_0 \cap \mathcal{Z}_1)(\mathcal{L}_{g,n}^{\e})} \mathcal{Z}_1(\mathcal{L}_{g,n}^{\e}) \bigr)\cong G^{\times 2g}\times  X_G^{\times n}. \]
By the properties of $X_G$ this variety is smooth outside of a subvariety of codimension $\geq 2$. As explained at the beginning of the proof, by Serre's normality criterium we can conclude that $\mathcal{Z}_0(\mathcal{L}_{g,n}^{\e}) \otimes_{(\mathcal{Z}_0 \cap \mathcal{Z}_1)(\mathcal{L}_{g,n}^{\e})} \mathcal{Z}_1(\mathcal{L}_{g,n}^{\e})$ is an integrally closed domain.\end{proof}

\begin{proof}[Proof of Theorem \ref{Lgnteo3}.] In this proof we write $\mathcal{Z} := \mathcal{Z}(\mathcal{L}_{g,n}^{\e})$, $\mathcal{Z}_0 := \mathcal{Z}_0(\mathcal{L}_{g,n}^{\e})$, and $\mathcal{Z}_1 := \mathcal{Z}_1(\mathcal{L}_{g,n}^{\e})$. Also we write $\hat{\mathcal{Z}}_0 = j(\mathcal{Z}_0\otimes_{\mathcal{Z}_0 \cap \mathcal{Z}_1} \mathcal{Z}_1)$, where $j\colon \mathcal{Z}_0 \otimes_{\mathcal{Z}_0 \cap \mathcal{Z}_1} \mathcal{Z}_1 \to \mathcal{Z}$ is the multiplication map.

\indent (1) That the multiplication map $j\colon \mathcal{Z}_0 \otimes_{\mathcal{Z}_0 \cap \mathcal{Z}_1} \mathcal{Z}_1 \to \mathcal{Z}$ is injective follows from Lemma \ref{lemTechnicalCommRings}(2), taking $A:=\mathcal{Z}_0$, $B:=\mathcal{Z}_0 \otimes_{\mathcal{Z}_0 \cap \mathcal{Z}_1} \mathcal{Z}_1$, $C:= \mathcal{Z}$, $f$ equal to the identification map of $\mathcal{Z}_0$ with $\mathcal{Z}_0 \otimes_{\mathcal{Z}_0 \cap \mathcal{Z}_1} 1 \subset \mathcal{Z}_0\otimes_{\mathcal{Z}_0 \cap \mathcal{Z}_1} \mathcal{Z}_1$, and $g:=j$. Let us show that all hypothesis of Lemma \ref{lemTechnicalCommRings}(2) are satisfied. First $j_{\vert A}$ is obviously injective, and $\mathcal{Z}_0\otimes_{\mathcal{Z}_0 \cap \mathcal{Z}_1} \mathcal{Z}_1$ and $\mathcal{Z}$ are domains (Prop.\,\ref{mainDCKPsec6} and \ref{propLgndomain}). Moreover $\mathcal{Z}_1(U_\e^\Pup)$ is a free module of rank $l^m$ over its subring $(\mathcal{Z}_0 \cap \mathcal{Z}_1)(U_\e^\Pup)$ (\cite[$\S$21.2-21.3]{DCP}, \cite[$\S$6]{DC-K-P1}). By Lemma \ref{Z0Z1Phimai25} the $\mathcal{Z}_0(\Ll_{0,1}^\e)$-module $\mathcal{Z}_0(\Ll_{0,1}^\e) \otimes_{(\mathcal{Z}_0 \cap \mathcal{Z}_1)(\Ll_{0,1}^\e)} \mathcal{Z}_1(\mathcal{L}_{0,1}^{\e})$ is therefore free of rank $l^m$, and from \eqref{factorZoct24} we deduce that $\mathcal{Z}_0\otimes_{\mathcal{Z}_0 \cap \mathcal{Z}_1} \mathcal{Z}_1$ is a free $\mathcal{Z}_0$-module of finite rank (=$l^{mn}$). Since $\mathcal{Z}_0 \cong \Oo(G^{2g+n})$ is Noetherian (by Prop.\,\ref{Z0Lgn}(1) and because $\mathcal{O}(G^{\times (2g+n)})$ is finitely generated), this last fact implies that $\mathcal{Z}_0\otimes_{\mathcal{Z}_0 \cap \mathcal{Z}_1} \mathcal{Z}_1$ is integral over $\mathcal{Z}_0$ by the argument in Lemma \ref{lemTechnicalCommRings}(1). This concludes the proof of injectivity. 

The surjectivity of $j$ follows from Lemma \ref{lemTechnicalCommRings}(1), taking  $A:=\hat{\mathcal{Z}}_0$ and $B:=\mathcal{Z}$. Indeed, we already know that $\mathcal{Z}$ is a domain, and by injectivity of $j$ (proved above) and Proposition \ref{mainDCKPsec6} the ring $\hat{\mathcal{Z}}_0$ is integrally closed. Since $\mathcal{Z}_0$ is a Noetherian ring and $\mathcal{Z}_0\otimes_{\mathcal{Z}_0 \cap \mathcal{Z}_1} \mathcal{Z}_1$ is a finite $\mathcal{Z}_0$-module (see above), $\mathcal{Z}_0 \otimes_{\mathcal{Z}_0 \cap \mathcal{Z}_1} \mathcal{Z}_1$ is a Noetherian ring, and therefore $\hat{\mathcal{Z}}_0$ is a Noetherian ring as well. But $\mathcal{Z}$ is a finite $\mathcal{Z}_0$-module (Prop.\,\ref{Z0Lgn}(3)), and $\mathcal{Z}_0 = j(\mathcal{Z}_0\otimes 1)\subset \hat{\mathcal{Z}}_0$, so $\mathcal{Z}$ is also a finite $\hat{\mathcal{Z}}_0$-module, and therefore a Noetherian $\hat{\mathcal{Z}}_0$-module. We noted in Corollary \ref{coroIsolateFacts} that $Q\bigl( \mathcal{Z} \bigr) = Q\bigl( \hat{\mathcal{Z}}_0 \bigr)$. Therefore Lemma \ref{lemTechnicalCommRings}(1) applies, and proves that $\mathcal{Z} = \hat{\mathcal{Z}}_0$. 

\smallskip

\indent (2) Since $\mathfrak{d}_{g,n}^{\e} : \mathcal{L}_{0,1}^\e \to \mathcal{L}_{g,n}^{\e}$ is injective (Prop.\,\ref{QMMinExactSeq}), the conclusion of item (1) above and a similar reasoning imply that $\mathcal{Z} \otimes_{\mathfrak{d}_{g,n}^{\e}\left( (\mathcal{Z}_0 \cap \mathcal{Z}_1)(\Ll_{0,1}^\e) \right)} \mathfrak{d}_{g,n}^{\e}\bigl( \mathcal{Z}_1(\Ll_{0,1}^\e) \bigr)$ is integrally closed, and that $j_2$ is injective. To prove surjectivity, we use again Lemma \ref{lemTechnicalCommRings}(1), taking $A= \hat{\mathcal{Z}}_0(\Ll_{g,n}^{u_\e}):= \mathrm{Im}(j_2)$ and $B=\mathcal{Z}(\Ll_{g,n}^{u_\e})$. It remains to show that $\mathcal{Z}(\Ll_{g,n}^{u_\e})$ is a Noetherian $\hat{\mathcal{Z}}_0(\Ll_{g,n}^{u_\e})$-module and that $Q\bigl(\mathcal{Z}(\Ll_{g,n}^{u_\e})\bigr) = Q\bigl( \hat{\mathcal{Z}}_0(\Ll_{g,n}^{u_\e}) \bigr)$.

For the first claim we know that $\mathcal{Z}(\Ll_{g,n}^{u_\e})$ is a finite Noetherian $\mathcal{Z}_0$-module and a Noetherian ring (Rk.\,\ref{Z0submoduleLgn}), and $\mathcal{Z}_0 = j_2(\mathcal{Z}_0\otimes 1) \subset \hat{\mathcal{Z}}_0(\Ll_{g,n}^{u_\e}) \subset \mathcal{Z}(\Ll_{g,n}^{u_\e})$. Therefore $\mathcal{Z}(\Ll_{g,n}^{u_\e})$ is a finitely generated $\hat{\mathcal{Z}}_0(\Ll_{g,n}^{u_\e})$-module. Also $\mathcal{Z} \otimes_{\mathfrak{d}_{g,n}^{\e}\left( (\mathcal{Z}_0 \cap \mathcal{Z}_1)(\Ll_{0,1}^\e) \right)} \mathfrak{d}_{g,n}^{\e}\bigl( \mathcal{Z}_1(\Ll_{0,1}^\e) \bigr)$ is finitely generated over $\mathcal{Z}_0\otimes 1$, so it is a Noetherian ring. Therefore its image by $j_2$, $\hat{\mathcal{Z}}_0(\Ll_{g,n}^{u_\e})$, is also a Noetherian ring. As we saw $\mathcal{Z}(\Ll_{g,n}^{u_\e})$ is finite over $\hat{\mathcal{Z}}_0(\Ll_{g,n}^{u_\e})$, it is thus a Noetherian $\hat{\mathcal{Z}}_0(\Ll_{g,n}^{u_\e})$-module, as expected. 

Finally let us check that $Q\bigl(\mathcal{Z}(\Ll_{g,n}^{u_\e})\bigr) = Q\bigl( \hat{\mathcal{Z}}_0(\Ll_{g,n}^{u_\e}) \bigr)$. We have $Q\bigl(\mathcal{Z}(\Ll_{g,n}^{u_\e})\bigr)) =  \mathcal{Z}\bigl(Q(\Ll_{g,n}^{u_\e})\bigr)$ by Lemma \ref{lemTrucsGenerauxQR}(2). Let us write
\[ S := \mathfrak{d}_{g,n}^{\e}(\mathcal{L}_{0,1}^\e), \quad \mathcal{Z}_1(S) := \mathfrak{d}_{g,n}^{\e}\bigl(\mathcal{Z}_1(\mathcal{L}_{0,1}^\e)\bigr), \quad (\mathcal{Z}_0\cap \mathcal{Z}_1)(S) := \mathfrak{d}_{g,n}^{\e}\bigl(\mathcal{Z}_0(\mathcal{L}_{0,1}^\e) \cap \mathcal{Z}_1(\mathcal{L}_{0,1}^\e)\bigr). \]
We have 
\[ Q\bigl( \mathcal{Z} \otimes_{(\mathcal{Z}_0\cap \mathcal{Z}_1)(S)} \mathcal{Z}_1(S)\bigr) =  Q\bigl( \mathcal{Z} \bigr) \otimes_{(\mathcal{Z}_0\cap \mathcal{Z}_1)(S)} \mathcal{Z}_1(S) \]
since the latter is a field (it embeds by $j_2$ in the domain $\Ll_{g,n}^{u_\e}$). Then
\[ Q\bigl( \hat{\mathcal{Z}}_0(\Ll_{g,n}^{u_\e}) \bigr) = j_2\bigl( Q\bigl( \mathcal{Z} \otimes_{(\mathcal{Z}_0\cap \mathcal{Z}_1)(S)} \mathcal{Z}_1(S)\bigr)\bigr) = j_2\bigl( Q\bigl( \mathcal{Z} \bigr) \otimes_{(\mathcal{Z}_0\cap \mathcal{Z}_1)(S)} \mathcal{Z}_1(S)\bigr). \]
We saw at the end of the proof of Theorem \ref{Lgnteo2} that this is $\mathcal{Z}(Q(\Ll_{g,n}^{u_\e}))$, so $Q\bigl(\mathcal{Z}(\Ll_{g,n}^{u_\e})\bigr) = Q\bigl( \hat{\mathcal{Z}}_0(\Ll_{g,n}^{u_\e}) \bigr)$ and the proof of surjectivity of $j_2$ is completed.
\end{proof}

Let $\mathfrak{A}$ be either $\Ll_{g,n}^{\e}$ or $\Ll_{g,n}^{u_\e}$. The following result is of the same flavour as \cite[Prop.\,20.2(ii)]{DCP}.

\begin{cor}\label{corZZ0} The ring $\mathcal{Z}(\mathfrak{A})$ coincides with the trace ring of $\mathfrak{A}$, and it is a free $\mathcal{Z}_0(\mathcal{L}_{g,n}^{\e})$-module of rank $l^{mn}$ (for $\mathfrak{A}=\Ll_{g,n}^{\e}$) or $l^{m(n+1)}$ (for $\mathfrak{A}=\Ll_{g,n}^{u_\e}$). 
\end{cor}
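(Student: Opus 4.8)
The statement splits into two independent claims: the identification of $\mathcal{Z}(\mathfrak{A})$ with the trace ring, and the freeness with the prescribed rank over $\mathcal{Z}_0 := \mathcal{Z}_0(\mathcal{L}_{g,n}^\e)$. Both are now essentially bookkeeping of the structural theorems already in hand, the key inputs being Theorem \ref{Lgnteo3} (the factorizations $j$, $j_2$ and the integral closedness of the two centers), Lemma \ref{Z0Z1Phimai25}, the injectivity of $\mathfrak{d}_{g,n}^\e$ (Prop.\,\ref{QMMinExactSeq}), and the De Concini--Kac--Procesi freeness of $\mathcal{Z}_1(U_\e^\Pup)$ over $(\mathcal{Z}_0\cap\mathcal{Z}_1)(U_\e^\Pup)$.

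For the freeness I would first treat $\mathfrak{A}=\mathcal{L}_{g,n}^\e$. By Theorem \ref{Lgnteo3}(1) the multiplication map $j$ identifies $\mathcal{Z}(\mathcal{L}_{g,n}^\e)$ with $\mathcal{Z}_0\otimes_{(\mathcal{Z}_0\cap\mathcal{Z}_1)(\mathcal{L}_{g,n}^\e)}\mathcal{Z}_1(\mathcal{L}_{g,n}^\e)$, which the proof of that theorem exhibits as a free $\mathcal{Z}_0$-module of rank $l^{mn}$; this settles the first case. For $\mathfrak{A}=\mathcal{L}_{g,n}^{u_\e}$, Theorem \ref{Lgnteo3}(2) identifies $\mathcal{Z}(\mathcal{L}_{g,n}^{u_\e})$ with $\mathcal{Z}(\mathcal{L}_{g,n}^\e)\otimes_{B}\mathfrak{d}_{g,n}^\e\bigl(\mathcal{Z}_1(\mathcal{L}_{0,1}^\e)\bigr)$, where $B=\mathfrak{d}_{g,n}^\e\bigl((\mathcal{Z}_0\cap\mathcal{Z}_1)(\mathcal{L}_{0,1}^\e)\bigr)\subseteq\mathcal{Z}_0$. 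Since $\mathfrak{d}_{g,n}^\e$ is injective and $\Phi_{0,1}^\e$ restricts to the isomorphisms of Lemma \ref{Z0Z1Phimai25}, the DCKP freeness transports to show that $\mathfrak{d}_{g,n}^\e\bigl(\mathcal{Z}_1(\mathcal{L}_{0,1}^\e)\bigr)$ is a free $B$-module of rank $l^m$; base-changing along $B\hookrightarrow\mathcal{Z}(\mathcal{L}_{g,n}^\e)$ then makes $\mathcal{Z}(\mathcal{L}_{g,n}^{u_\e})$ a free $\mathcal{Z}(\mathcal{L}_{g,n}^\e)$-module of rank $l^m$. Transitivity of freeness with the previous case yields rank $l^{mn}\cdot l^m=l^{m(n+1)}$ over $\mathcal{Z}_0$, in agreement with Corollary \ref{rkjanv26}.

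For the trace ring, recall that $Q(\mathfrak{A})$ is a central simple algebra over $Q(\mathcal{Z}(\mathfrak{A}))$ (Theorems \ref{Lgnteo1} and \ref{Lgnteo2}), so the reduced trace $t_{\mathrm{red}}\colon Q(\mathfrak{A})\to Q(\mathcal{Z}(\mathfrak{A}))$ is defined and the trace ring is the $\mathcal{Z}_0$-subalgebra of $Q(\mathcal{Z}(\mathfrak{A}))$ generated by $\{t_{\mathrm{red}}(a)\mid a\in\mathfrak{A}\}$. The algebra $\mathfrak{A}$ is a finite, hence integral, $\mathcal{Z}_0$-module: by Prop.\,\ref{Z0Lgn}(3) when $\mathfrak{A}=\mathcal{L}_{g,n}^\e$, and by Remark \ref{Z0submoduleLgn} together with Corollary \ref{ZinLue} when $\mathfrak{A}=\mathcal{L}_{g,n}^{u_\e}$. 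Consequently the coefficients of the reduced characteristic polynomial of any $a\in\mathfrak{A}$ are integral over $\mathcal{Z}_0$, hence over $\mathcal{Z}(\mathfrak{A})$ (see \cite[\S 10]{Reiner}); since $\mathcal{Z}(\mathfrak{A})$ is integrally closed (Theorem \ref{Lgnteo3}) and $t_{\mathrm{red}}(a)\in Q(\mathcal{Z}(\mathfrak{A}))$, we get $t_{\mathrm{red}}(a)\in\mathcal{Z}(\mathfrak{A})$, so the trace ring is contained in $\mathcal{Z}(\mathfrak{A})$. The reverse inclusion is immediate: if $d$ denotes the PI-degree then $d\in\mathbb{N}^\ast$ is invertible in $\mathbb{C}\subseteq\mathcal{Z}_0$, and for $z\in\mathcal{Z}(\mathfrak{A})$ one has $t_{\mathrm{red}}(z)=z\,t_{\mathrm{red}}(1)=dz$, whence $z=d^{-1}t_{\mathrm{red}}(z)$ lies in the trace ring. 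Thus $\mathcal{Z}(\mathfrak{A})$ equals its trace ring.

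I do not expect a serious obstacle: the genuine work is already carried by the cited theorems, and the only external ingredient is the standard integrality of the coefficients of reduced characteristic polynomials of integral elements. The point requiring care is the alignment of base rings and module structures in the freeness argument, namely the identification via $\mathfrak{d}_{g,n}^\e$ and $\Phi_{0,1}^\e$ of the relevant subalgebras of $\mathcal{L}_{0,1}^\e$ with their $U_\e^\Pup$-counterparts so that the DCKP freeness applies verbatim. Finally, the degenerate case $(g,n)=(0,1)$ excluded from Theorem \ref{Lgnteo2} should be recorded separately, where $\mathcal{L}_{0,1}^{u_\e}$ is commutative and both assertions are trivial.
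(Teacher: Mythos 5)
Your proposal is correct, and while the trace-ring half follows the paper's path, your freeness argument is a genuinely different route. For the trace ring you essentially reprove the fact the paper just cites (\cite[Th.\,10.1]{Reiner}): since $\mathfrak{A}$ is a finite $\mathcal{Z}_0$-module, every element is integral, the coefficients of reduced characteristic polynomials are integral over $\mathcal{Z}(\mathfrak{A})$, and integral closedness (Th.\,\ref{Lgnteo3}) puts the reduced traces inside $\mathcal{Z}(\mathfrak{A})$; together with $z=d^{-1}t_{\mathrm{red}}(z)$ in characteristic zero this gives both inclusions, which is exactly the content of the cited theorem. The real divergence is the freeness. The paper uses the trace ring to exhibit $\mathcal{Z}(\mathfrak{A})$ as a $\mathcal{Z}_0$-module direct summand of $\mathfrak{A}$, hence projective over $\mathcal{Z}_0\cong\mathcal{O}\bigl(G^{\times(2g+n)}\bigr)$, and then upgrades projective to free via Bass' cancellation theorem and Marlin's computation of $K_0\bigl(G^{\times(2g+n)}\bigr)$, reading the ranks off the fraction-field degrees of Cor.\,\ref{coroIsolateFacts} and Cor.\,\ref{rkjanv26}. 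You instead transport the De Concini--Kac--Procesi freeness of $\mathcal{Z}_1(U_\e^\Pup)$ over $(\mathcal{Z}_0\cap\mathcal{Z}_1)(U_\e^\Pup)$ through the isomorphisms of Lemma \ref{Z0Z1Phimai25} and the injection $\mathfrak{d}_{g,n}^\e$, and then through the multiplication isomorphisms $j$ and $j_2$ of Th.\,\ref{Lgnteo3}, which are visibly $\mathcal{Z}_0$-linear (resp.\ $\mathcal{Z}(\mathcal{L}_{g,n}^\e)$-linear); base change and transitivity of freeness then yield freeness and the ranks simultaneously. This works: the statement you need for the first case, that $\mathcal{Z}_0\otimes_{(\mathcal{Z}_0\cap\mathcal{Z}_1)(\mathcal{L}_{g,n}^\e)}\mathcal{Z}_1(\mathcal{L}_{g,n}^\e)$ is free of rank $l^{mn}$ over $\mathcal{Z}_0$, is explicitly established inside the proof of Th.\,\ref{Lgnteo3}(1), and $B=\mathfrak{d}_{g,n}^\e\bigl((\mathcal{Z}_0\cap\mathcal{Z}_1)(\mathcal{L}_{0,1}^\e)\bigr)\subset\mathcal{Z}_0(\mathcal{L}_{g,n}^\e)$ by Prop.\,\ref{QMMinExactSeq}. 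Your route is more elementary and self-contained (no K-theory, no cancellation theorem), and it makes transparent that the freeness is inherited from the DCKP factorization; what the paper's argument buys is a more robust principle (projective implies free over $\mathcal{O}\bigl(G^{\times(2g+n)}\bigr)$) which does not depend on the explicit shape of the center, only on the trace-ring splitting. Your closing caveat is also well taken: the case $(g,n)=(0,1)$ must be excluded from the second rank formula, since Th.\,\ref{Lgnteo3}(2) and Cor.\,\ref{rkjanv26} rest on Th.\,\ref{Lgnteo2}, and there one has $\mathcal{Z}(\mathcal{L}_{0,1}^{u_\e})=\mathcal{L}_{0,1}^{u_\e}=\mathcal{Z}(\mathcal{L}_{0,1}^\e)$, of rank $l^{m}=l^{mn}$ rather than $l^{2m}$.
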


\begin{proof} The first claim follows from Th.\,\ref{Lgnteo3}  ({\it i.e.}, that $\mathcal{Z}(\mathfrak{A})$ is integrally closed and Noetherian) and Th.\,\ref{Lgnteo1}. Indeed, if a domain $\mathfrak{A}$ has Noetherian and integrally closed center $\mathcal{Z}$ and is such that $Q(\mathfrak{A})$ has finite dimension over $Q(\mathcal{Z})$, then $\mathcal{Z}$ coincides with the {\it trace ring} of $\mathfrak{A}$ (see \cite[Th.\,10.1]{Reiner}). Denote by $t_{\rm red}\colon Q(\mathfrak{A}) \rightarrow Q(\mathcal{Z}\bigl( \mathfrak{A}) \bigr)$ the reduced trace map of the central simple algebra $Q(\mathfrak{A})$ (see eg. \cite[\S 9]{Reiner}). Because $\mathcal{Z}(\mathfrak{A})$ is the trace ring of $\mathfrak{A}$, we have $\mathcal{Z}(\mathfrak{A}) = t_{\rm red}(\mathfrak{A})$. Trivially the inclusion map $i\colon \mathcal{Z}(\mathfrak{A})\rightarrow Q(\mathfrak{A})$ satisfies $t_{\rm red} \circ i = \mathrm{id}$, so $\mathcal{Z}(\mathfrak{A})$ is a direct summand of $\mathfrak{A}$ as a $\mathcal{Z}_0(\mathcal{L}_{g,n}^{\e})$-module. But $\mathfrak{A}$ is free over $\mathcal{Z}_0(\mathfrak{A})$, so $\mathcal{Z}(\mathfrak{A})$ is a projective $\mathcal{Z}_0(\mathcal{L}_{g,n}^{\e})$-module. Since $\mathcal{Z}_0(\mathcal{L}_{g,n}^{\e}) \cong \mathcal{O}\bigl( G^{\times (2g+n)} \bigr)$ and $G^{\times (2g+n)}$ is simply-connected and semisimple, as at the end of the proof of \cite[Th.\,4.9]{BR2} we can use Bass' cancellation theorem and Marlin's computation of $K_0(G^{\times (2g+n)})$ in \cite{Marlin} to deduce that the module is free. The rank computation follows from $[Q(\mathcal{Z}(\mathcal{L}_{g,n}^{\e})):Q(\mathcal{Z}_0(\mathcal{L}_{g,n}^{\e}))]=l^{mn}$ (Cor.\,\ref{coroIsolateFacts}) and $\bigl[ Q\bigl(\mathcal{Z}(\Ll_{g,n}^{u_\e})\bigr):Q\bigl(\mathcal{Z}_0(\Ll_{g,n}^{\e})\bigr) \bigr] = l^{m(n+1)}$ (Cor.\,\ref{rkjanv26}). 
\end{proof}

In the rest of this section we describe the $\Gamma_\e$-invariant elements of the centers in Th.\,\ref{Lgnteo3}. Recall that if $A$ is a module-algebra over a Hopf algebra (or a group) $H$, then we denote by $A^H$ the subalgebra of $H$-invariant elements in $A$. More generally, if $B \subset A$ is any subalgebra then the subspace $B^H$ is actually a subalgebra (of $B$).

\indent Recall that $\mathcal{O}(G)^{G}$ is the polynomial algebra on the $m$ generators $Q_j$ defined by
\begin{equation}\label{carclassmai25}
Q_j(a) := \mathrm{Tr}\bigl(\rho_{\varpi_j}(a)\bigr),
\end{equation} where $\rho_{\varpi_j}\colon G \to \mathrm{End}(V_{\varpi_j})$ is the $j$-th fundamental representation of $G$ (see \cite[Th.\,6.1]{Steinberg}). Also, since $\mathbb{F}\mathrm{r}_\e$ is a morphism of Hopf algebras with values in $U(\mathfrak{g})$, we have
\begin{align}
\begin{split}\label{equivUresmai25}
&\bigl\langle \mathrm{coad}^r(h)\big(\mathbb{F}\mathrm{r}_\e^*(\varphi)\bigr), x \bigr\rangle^\Q_\e \overset{\eqref{defCoad}}{=} \bigl\langle \varphi, \mathbb{F}\mathrm{r}_\e(h)_{(1)}\mathbb{F}\mathrm{r}_\e(x) S\bigl( \mathbb{F}\mathrm{r}_\e(h)_{(2)} \bigr) \bigr\rangle_{\mathrm{cl}}\\
=\:\,&\bigl\langle \varphi, \mathbb{F}\mathrm{r}_\e(x) S\bigl( \mathbb{F}\mathrm{r}_\e(h)_{(2)} \bigr) \mathbb{F}\mathrm{r}_\e(h)_{(1)}\bigr\rangle_{\mathrm{cl}} = \varepsilon(h)\bigl\langle \mathbb{F}\mathrm{r}_\e^*(\varphi), x \bigr\rangle^\Q_\e
\end{split}
\end{align}
for every $\varphi\in \mathcal{O}(G)^{G}$ and $h,x\in \Gamma_\e^\Q$, where $\langle \text{-},\text{-} \rangle_{\mathrm{cl}} : \mathcal{O}(G) \times U(\mathfrak{g}) \to \mathbb{C}$ is the usual duality pairing; the second equality is by the trace-like property of $\varphi$ while the last equality uses that $U(\mathfrak{g})$ is co-commutative.

The equality \eqref{equivUresmai25} shows that $\mathbb{F}\mathrm{r}_\e^*$ embeds $\mathcal{O}(G)^{G}$ in $\Oo_\e^{\Gamma_\e} = \mathcal{L}_{0,1}^{\Gamma_\epsilon}$. 

\indent Denote
$$\mathcal{O}\bigl[ \mathcal{X}_G(\Sigma_{g,n}^{\circ}) \bigr]  := \bigl( \mathcal{O}(G)^{\otimes (2g+n)} \bigr)^{G}$$
where the invariant elements are defined with respect to the diagonal coadjoint action of $G$. The notation $\mathcal{X}_G(\Sigma_{g,n}^\circ)$ stands for the character variety $\mathrm{Hom}_{\mathrm{Grp}}\bigl( \pi_1(\Sigma_{g,n}^\circ),G) \bigr)/\!/G$ of the oriented surface of genus $g$ with $n+1$ boundary components, and the above equality is the definition of its coordinate ring. We moreover define
\[ \mathcal{Z}_0^{\rm inv}  := (\mathbb{F}\mathrm{r}_{\epsilon}^*)^{(2g+n)}\bigl( \mathcal{O}\bigl[ \mathcal{X}_G(\Sigma_{g,n}^{\circ}) \bigr] \bigr) \subset \mathcal{Z}_0(\Ll_{g,n}^{\e}). \]
Also, let us write $\mathcal{Z} := \mathcal{Z}(\mathcal{L}_{g,n}^{\e})$, $\mathcal{Z}_0 := \mathcal{Z}_0(\mathcal{L}_{g,n}^{\e})$, $\mathcal{Z}_1 := \mathcal{Z}_1(\mathcal{L}_{g,n}^{\e})$ and  $(\mathcal{Z}_0 \cap\mathcal{Z}_1) (\mathcal{L}_{0,1}^\e) = \mathcal{Z}_0(\mathcal{L}_{0,1}^\e)  \cap\mathcal{Z}_1(\mathcal{L}_{0,1}^\e)$. 
\begin{cor}\label{FrisoG-inv} (i) The following multiplication maps are isomorphisms of algebras 
\begin{align*}
\mathcal{Z}_0^{\mathrm{inv}}  \otimes_{\mathcal{Z}_0^{\mathrm{inv}} \cap \mathcal{Z}_1}  \mathcal{Z}_1 & \to \mathcal{Z}^{\Gamma_\e}\\
\mathcal{Z}^{\Gamma_\epsilon} \otimes_{\mathfrak{d}_{g,n}^\e((\mathcal{Z}_0 \cap\mathcal{Z}_1) (\mathcal{L}_{0,1}^\e))} \mathfrak{d}_{g,n}^\e(\mathcal{Z}_1(\Ll_{0,1}^{\e})) & \to \mathcal{Z}(\Ll_{g,n}^{u_\e})^{\Gamma_\e}.
\end{align*}
(ii) We have 
\[ (\mathcal{Z}_0 \cap\mathcal{Z}_1) (\mathcal{L}_{0,1}^\e) =\mathbb{F}\mathrm{r}_{\epsilon}^*\bigl(\mathcal{O}(G)^{G}\bigr) = \mathbb{F}\mathrm{r}_{\epsilon}^*\bigl(\mc[Q_1,\ldots, Q_m]\bigr) \]
and the equalities $\mathcal{L}_{0,1}^{u_\epsilon}= \mathcal{Z}(\mathcal{L}_{0,1}^{\e})$ and  $\mathcal{L}_{0,1}^{\Gamma_\epsilon} =  \mathcal{Z}_1(\Ll_{0,1}^\e)$.

(iii) The elements $(\mathrm{qTr}_{l\lambda})_{\vert \e}$, $\lambda\in P_+$, defined after \eqref{casimirElmtsL01} form a basis of $(\mathcal{Z}_0 \cap\mathcal{Z}_1) (\mathcal{L}_{0,1}^\e)$.
\end{cor}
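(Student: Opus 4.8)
The plan is to prove (ii) and (iii) first, since they concern the building block $\mathcal{L}_{0,1}^\e$, and then to deduce (i) by applying the ``take $\Gamma_\e$-invariants'' operation to the isomorphisms $j$ and $j_2$ of Theorem~\ref{Lgnteo3}, all of whose constituent maps are $\Gamma_\e$-equivariant. For (iii) and the first equality of (ii) the key computation is to identify the specialized quantum traces. Since $V_{l\lambda,\e} \cong (\mathbb{F}\mathrm{r}_\e)^\sharp\bigl(L(\lambda)\bigr)$ and $\mathbb{F}\mathrm{r}_\e(\ell) = \mathbb{F}\mathrm{r}_\e(K_{2\rho}) = 1$, the defining formula \eqref{casimirElmtsL01} gives $(\mathrm{qTr}_{l\lambda})_{|\e}(h) = \mathrm{Tr}\bigl(\rho_{L(\lambda)}(\mathbb{F}\mathrm{r}_\e(h))\bigr)$, that is $(\mathrm{qTr}_{l\lambda})_{|\e} = \mathbb{F}\mathrm{r}_\e^*(\chi_\lambda)$, where $\chi_\lambda \in \mathcal{O}(G)^G$ is the character of $L(\lambda)$. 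As the characters $\chi_\lambda$ ($\lambda \in P_+$) form a $\mathbb{C}$-basis of the class functions $\mathcal{O}(G)^G$ and $\mathbb{F}\mathrm{r}_\e^*$ is injective, the $(\mathrm{qTr}_{l\lambda})_{|\e}$ are linearly independent and span $\mathbb{F}\mathrm{r}_\e^*\bigl(\mathcal{O}(G)^G\bigr)$. They lie in $\mathcal{Z}_0(\mathcal{L}_{0,1}^\e) = \mathbb{F}\mathrm{r}_\e^*\bigl(\mathcal{O}(G)\bigr)$ and, being quantum traces, in $\mathcal{Z}_1(\mathcal{L}_{0,1}^\e)$, whence $\mathbb{F}\mathrm{r}_\e^*\bigl(\mathcal{O}(G)^G\bigr) \subset (\mathcal{Z}_0 \cap \mathcal{Z}_1)(\mathcal{L}_{0,1}^\e)$. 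For the reverse inclusion I would use that $(\mathcal{Z}_0 \cap \mathcal{Z}_1)(\mathcal{L}_{0,1}^\e) \subset \mathbb{F}\mathrm{r}_\e^*\bigl(\mathcal{O}(G)\bigr)$ is pointwise $\Gamma_\e$-invariant (as $\mathcal{Z}_1$ is), together with the computation \eqref{equivUresmai25}, run without the invariance hypothesis, which shows that $\mathbb{F}\mathrm{r}_\e^*$ intertwines the coadjoint action of $\Gamma_\e$ with the $G$-coadjoint action pulled back along the surjection $\mathbb{F}\mathrm{r}_\e$; thus $\Gamma_\e$-invariance of $\mathbb{F}\mathrm{r}_\e^*(w)$ forces $G$-invariance of $w$. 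This gives $(\mathcal{Z}_0 \cap \mathcal{Z}_1)(\mathcal{L}_{0,1}^\e) = \mathbb{F}\mathrm{r}_\e^*\bigl(\mathcal{O}(G)^G\bigr)$, and $\mathcal{O}(G)^G = \mathbb{C}[Q_1,\ldots,Q_m]$ by Steinberg \cite[Th.\,6.1]{Steinberg}.

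For the two remaining equalities in (ii) I take $(g,n) = (0,1)$, so that $\mathfrak{d}_{0,1}^\e = \mathrm{id}$ and $\mu_{0,1}^\e = (\Phi_{0,1}^\e)^{-1}$ is an isomorphism onto $\mathcal{L}_{0,1}^\e$; hence $\mu_{0,1}^\e(U_\e') = \mathcal{L}_{0,1}^\e$ and Theorem~\ref{centralizerMu}(3) identifies $\mathcal{L}_{0,1}^{u_\e}$ with the centralizer of $\mathcal{L}_{0,1}^\e$ in itself, namely $\mathcal{Z}(\mathcal{L}_{0,1}^\e)$. Combining this with \eqref{factoraction} gives $\mathcal{L}_{0,1}^{\Gamma_\e} = (\mathcal{L}_{0,1}^{u_\e})^{U(\mathfrak{g})} = \mathcal{Z}(\mathcal{L}_{0,1}^\e)^{U(\mathfrak{g})}$. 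Evaluating the right-hand side via the decomposition $\mathcal{Z}(\mathcal{L}_{0,1}^\e) = \mathcal{Z}_0 \otimes_{(\mathcal{Z}_0\cap\mathcal{Z}_1)} \mathcal{Z}_1$ of Theorem~\ref{Lgnteo3}(1), with $\mathcal{Z}_1(\mathcal{L}_{0,1}^\e)$ pointwise invariant and $\mathcal{Z}_0(\mathcal{L}_{0,1}^\e)^{U(\mathfrak{g})} = (\mathcal{Z}_0 \cap \mathcal{Z}_1)(\mathcal{L}_{0,1}^\e)$ by the first part of (ii), then yields $\mathcal{L}_{0,1}^{\Gamma_\e} = \mathcal{Z}_1(\mathcal{L}_{0,1}^\e)$.

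Finally, for (i) I would restrict the equivariant isomorphisms $j$ and $j_2$ of Theorem~\ref{Lgnteo3} to $\Gamma_\e$-invariants. Here $\mathcal{Z}_1(\mathcal{L}_{g,n}^\e)$ is pointwise $\Gamma_\e$-invariant (generated by invariant $\mathrm{qTr}$'s), and so is $\mathfrak{d}_{g,n}^\e\bigl(\mathcal{Z}_1(\mathcal{L}_{0,1}^\e)\bigr)$ since $\mathfrak{d}_{g,n}^\e$ is a morphism of $\Gamma_\e$-modules; moreover $\mathcal{Z}_0(\mathcal{L}_{g,n}^\e)^{\Gamma_\e} = \mathcal{Z}_0^{\mathrm{inv}}$, because the diagonal $\Gamma_\e$-action on $\mathcal{Z}_0 \cong \mathcal{O}(G)^{\otimes(2g+n)}$ factors, via the Hopf morphism $\mathbb{F}\mathrm{r}_\e$, through the diagonal $G$-coadjoint action, whose invariants are by definition $\mathcal{Z}_0^{\mathrm{inv}}$. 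The main technical obstacle, and the one point needing care, is that taking invariants commutes with the two tensor-product decompositions. I would resolve this by choosing a $\Gamma_\e$-invariant $(\mathcal{Z}_0\cap\mathcal{Z}_1)$-basis $(b_k)$ of the free module $\mathcal{Z}_1(\mathcal{L}_{g,n}^\e)$ — freeness following from Lemma~\ref{Z0Z1Phimai25} together with the De Concini--Kac--Procesi freeness of $\mathcal{Z}_1(U_\e^\Pup)$ of rank $l^m$ over the intersection and \eqref{factorZoct24} — so that on $\mathcal{Z}_0 \otimes_{(\mathcal{Z}_0\cap\mathcal{Z}_1)} \mathcal{Z}_1 \cong \bigoplus_k \mathcal{Z}_0\, b_k$ the action touches only the $\mathcal{Z}_0$-coefficients; invariants are then $\bigoplus_k \mathcal{Z}_0^{\mathrm{inv}} b_k = \mathcal{Z}_0^{\mathrm{inv}} \otimes_{(\mathcal{Z}_0^{\mathrm{inv}}\cap\mathcal{Z}_1)} \mathcal{Z}_1$, using that $(\mathcal{Z}_0\cap\mathcal{Z}_1)(\mathcal{L}_{g,n}^\e) = \mathcal{Z}_0^{\mathrm{inv}} \cap \mathcal{Z}_1$ since $\mathcal{Z}_0 \cap \mathcal{Z}_1 \subset \mathcal{Z}_1$ is already invariant. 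Applying $j$ gives the first isomorphism of (i), and running the identical argument with $j_2$, $\mathfrak{d}_{g,n}^\e\bigl(\mathcal{Z}_1(\mathcal{L}_{0,1}^\e)\bigr)$ and target $\mathcal{Z}(\mathcal{L}_{g,n}^{u_\e})^{\Gamma_\e}$ gives the second.
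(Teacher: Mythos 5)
Your proposal is correct. For part (i) you follow essentially the paper's own route (restrict the equivariant isomorphisms $j$ and $j_2$ of Theorem \ref{Lgnteo3} to $\Gamma_\e$-invariants, after checking that $\mathcal{Z}_0^{\Gamma_\e}=\mathcal{Z}_0^{\rm inv}$ and that $\mathcal{Z}_1$ is pointwise invariant); your free-basis argument, resting on the De Concini--Kac--Procesi freeness transported by Lemma \ref{Z0Z1Phimai25} and \eqref{factorZoct24}, just makes explicit a point the paper's proof leaves implicit, namely why taking invariants commutes with the two tensor decompositions. For parts (ii) and (iii), however, your route is genuinely different. The paper identifies $(\mathcal{Z}_0\cap\mathcal{Z}_1)(\mathcal{L}_{0,1}^\e)$ by transporting, through the isomorphism of Lemma \ref{Z0Z1Phimai25}, De Concini--Kac--Procesi's toral description $(\mathcal{Z}_0\cap\mathcal{Z}_1)(U_\e^\Pup)\cong\mathbb{C}[2lP]^W$ together with the Chevalley isomorphism; it proves (iii) via the quantum Harish-Chandra identity $(\mathcal{P}_\rho\circ\Phi_{0,1})(\mathrm{qTr}_{l\lambda})=\mathrm{ch}^{(2)}_{l\lambda}$ of Appendix \ref{sec:qKilling}; and it gets $\mathcal{L}_{0,1}^{u_\e}=\mathcal{Z}(\mathcal{L}_{0,1}^\e)$ from the equivariant embedding $\Phi_{0,1}^\e$ into $U_\e^\Pup$. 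You instead stay entirely on the $\Oo_\e$ side: the identity $(\mathrm{qTr}_{l\lambda})_{|\e}=\mathbb{F}\mathrm{r}_\e^*(\chi_\lambda)$, which you derive correctly from $V_{l\lambda,\e}\cong(\mathbb{F}\mathrm{r}_\e)^\sharp\bigl(L(\lambda)\bigr)$ and $\mathbb{F}\mathrm{r}_\e(\ell)=1$, yields at once the inclusion $\mathbb{F}\mathrm{r}_\e^*\bigl(\mathcal{O}(G)^G\bigr)\subset(\mathcal{Z}_0\cap\mathcal{Z}_1)(\mathcal{L}_{0,1}^\e)$ and item (iii); the reverse inclusion follows from the intertwining property underlying \eqref{equivUresmai25} plus injectivity of $\mathbb{F}\mathrm{r}_\e^*$ and surjectivity of $\mathbb{F}\mathrm{r}_\e$; and you obtain $\mathcal{L}_{0,1}^{u_\e}=\mathcal{Z}(\mathcal{L}_{0,1}^\e)$ from Theorem \ref{centralizerMu} at $(g,n)=(0,1)$, using that $\mu_{0,1}^\e$, being the specialization of $(\Phi_{0,1}^A)^{-1}$, is surjective onto $\mathcal{L}_{0,1}^\e$. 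Your approach is more elementary and self-contained: it bypasses Appendix \ref{sec:qKilling} and the toral picture altogether, using only facts the paper already records (the Frobenius pullback description of $V_{l\lambda,\e}$ and the classical fact that irreducible characters form a basis of $\mathcal{O}(G)^G$). What the paper's route buys in exchange is reuse of machinery (Lemma \ref{Z0Z1Phimai25}, the Harish-Chandra projection, the DCKP description of $\mathcal{Z}_0(U_\e^\Pup)\cap\mathcal{Z}_1(U_\e^\Pup)$) that it needs anyway for Proposition \ref{mainDCKPsec6} and Theorem \ref{Lgnteo3}, together with the explicit identification of $(\mathcal{Z}_0\cap\mathcal{Z}_1)$ inside $\mathcal{Z}_0(U_\e^\Pup)$, which is exploited elsewhere in the paper.
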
 

\begin{proof} (i) As $\mathcal{Z}_0 = (\mathbb{F}\mathrm{r}_{\epsilon}^*)^{(2g+n)}\bigl(\mathcal{O}(G)^{\otimes (2g+n)}\bigr)$, the definition of $\mathbb{F}\mathrm{r}_\e$ and the equivariance \eqref{equivUresmai25} imply
$$\mathcal{Z}_0^{\Gamma_\e} = (\mathbb{F}\mathrm{r}_{\epsilon}^*)^{(2g+n)}\left(\bigl( \mathcal{O}(G)^{\otimes (2g+n)} \bigr)^{U(\mathfrak{g})} \right) = \mathcal{Z}_0^{\rm inv}.$$
Also $\mathcal{Z}_1$ is $(\Gamma_l)_\e$-invariant (since it is the specialization of the $U_q$-invariant algebra $1^{\otimes 2g} \otimes \mathcal{Z}(\Ll_{0,n})$), and using the isomorphism $j$ of Th.\,\ref{Lgnteo3} it is immediate that $\mathcal{Z}$ is a $\Gamma_\e$-module algebra. These facts imply that the first multiplication map is an isomorphism. Since $\mathfrak{d}_{g,n}^\e$ is $\Gamma_\e$-equivariant, one obtains from Theorem \ref{Lgnteo3}(2) that the second multiplication map is an isomorphism.

(ii) The second equality follows from \eqref{carclassmai25}. Let us identify the maximal torus $T\subset G$ with ${\rm Hom}_{\rm Grp}(2lP,\mc^\times)$, and thus $\mc[2lP]$ with the coordinate ring $\Oo(T)$. The Chevalley isomorphism, \textit{i.e.} the restriction map $\mathcal{O}(G)^{G}\to \Oo(T)^W$, is an isomorphism. Let us identify $U_\e^\Pup(\mathfrak{h})$ with the group algebra $\mc[P]$ via the map $K_\mu\mapsto \mu$ for all $\mu\in P$. Then $\mathcal{Z}_0\bigl( U_\e^\Pup(\mathfrak{h}) \bigr) := U_\e^\Pup(\mathfrak{h}) \cap \mathcal{Z}_0\bigl( U_\e^\Pup\bigr) = \mc[K_{l\mu}, \mu\in P]$ is identified with $\mc[lP]$, and it is proved in \cite[Prop.\,6.3]{DC-K-P1} (or \cite[\S 21.2]{DCP}) that $\mathcal{Z}_0 (U_\e^\Pup) \cap\mathcal{Z}_1(U_\e^\Pup)$ is identified with $\mc[2lP]^W = \Oo(T)^W = \mathcal{O}(G)^{G}$ as a subalgebra of $\mathcal{Z}_0(U_\e^\Pup)$. Since $\Phi_{0,1}^\e\colon \mathcal{Z}_0(\mathcal{L}_{0,1}^\e) \cap\mathcal{Z}_1 (\mathcal{L}_{0,1}^\e) \to \mathcal{Z}_0 (U_\e^\Pup) \cap\mathcal{Z}_1(U_\e^\Pup)$ is $\Gamma_\e$-equivariant and an isomorphism (Lemma \ref{Z0Z1Phimai25}), the equality $(\mathcal{Z}_0 \cap\mathcal{Z}_1) (\mathcal{L}_{0,1}^\e)= \mathbb{F}\mathrm{r}_{\epsilon}^*\bigl(\mathcal{O}(G)^{G}\bigr)$ follows.

One obtains the equality $\mathcal{L}_{0,1}^{u_\epsilon} = \mathcal{Z}(\mathcal{L}_{0,1}^{\e})$ from the equivariant embedding $\Phi_{0,1}\colon \mathcal{L}_{0,1}^\e \to U_\e^\Pup$ (as usual the source and target are given the actions ${\rm coad}^r$ \eqref{defCoad} and ${\rm ad}^r$ \eqref{defAdr}) and the fact that $\mathcal{Z}(U_\e^\Pup) = (U_\e^\Pup)^{U_\e^\Pup}$. Since $\mathbb{F}\mathrm{r}_{\epsilon}^*\bigl(\mathcal{O}(G)^{G}\bigr) = (\mathcal{Z}_0 \cap\mathcal{Z}_1) (\mathcal{L}_{0,1}^\e)\subset \mathcal{Z}_1 (\mathcal{L}_{0,1}^\e)$, the equality $\mathcal{L}_{0,1}^{u_\epsilon} = \mathcal{Z}(\mathcal{L}_{0,1}^{\e})$ and the fact that the multiplication map $\mathcal{Z}_0^{\rm inv}  \otimes_{\mathcal{Z}_0^{\rm inv} \cap \mathcal{Z}_1}  \mathcal{Z}_1 \to \mathcal{Z}^{\Gamma_\e}$ is an isomorphism eventually give $$\mathcal{L}_{0,1}^{\Gamma_\epsilon}  = \mathcal{Z}(\mathcal{L}_{0,1}^\e)^{\Gamma_\e} = \mathcal{Z}_1(\mathcal{L}_{0,1}^{\e}).$$

(iii) From App.\,\ref{sec:qKilling} we have $(\mathcal{P}_\rho \circ \Phi_{0,1})(\mathrm{qTr}_{ l\lambda}) = \mathrm{ch}^{(2)}_{l\lambda}$. The elements $\mathrm{ch}^{(2)}_{l\lambda}$, $\lambda\in P_+$, form a basis of the ring $\mc[K_{2 l \mu}, \mu\in P]^W$, and $\mathcal{Z}_0 (U_\e^\Pup) \cap\mathcal{Z}_1(U_\e^\Pup) = \mc[K_{2 l \mu}, \mu\in P]^W$ (see (ii) above). The conclusion follows from Lemma \ref{Z0Z1Phimai25}. 

This concludes the proof. \end{proof}

\begin{remark}\label{annoncemod5}{\rm Recall that $\mathcal{Z}_1(\Ll_{0,1}^\e)$ is the specialization to $q=\e$ of the central algebra $\mathcal{Z}_1(\Ll_{0,1}^A) = (\Phi_{0,1}^{A})^{-1}\bigl( \mathcal{Z}(U_A^\Pup) \bigr)$ (see Lemma \ref{Z0Z1Phimai25}). Since $\mathcal{L}_{0,1}^A = \mathcal{O}_A$ as $\Gamma_A$-modules and $\Phi_{0,1}^A$ is an algebra embedding, the inclusion $\Phi_{0,1}^A\bigl( \mathcal{Z}(\mathcal{L}_{0,1}^A) \bigr) \subset \mathcal{Z}(U_A^\Pup)$ from Remark \ref{rmkTechnicalPropUA}(2) together with Cor.\,\ref{Phi0Aiso} below imply that $\mathcal{Z}(\Ll_{0,1}^A) = \mathcal{Z}_1(\Ll_{0,1}^A) = \Ll_{0,1}^{\Gamma_A}$. Thus we can reformulate the last claim of Cor.\,\ref{FrisoG-inv} as the fact that
$$\mathcal{L}_{0,1}^{\Gamma_\epsilon} = (\Ll_{0,1}^{\Gamma_A})_{\vert q=\e}.$$
More generally, for any $g$, $n$ one can prove that  $\mathcal{L}_{g,n}^{\Gamma_\epsilon} =  (\Ll_{g,n}^{\Gamma_A})_{\vert \qD=\eD}$. This will be developed in \cite{BF}.}
\end{remark}

\appendix

\section{The quantum Killing form and \texorpdfstring{$\Phi_{0,1}$}{its applications}}\label{sec:qKilling} This appendix justifies some properties of $\Phi_{0,1}$ used in the proof of Lemma \ref{Z0Z1Phimai25}.  They rely on the quantum Killing form.
 
\smallskip

Recall the pairing $\rho : U_q^\Pup(\mathfrak{b}_-)^{\rm cop} \times U_q^\Pup(\mathfrak{b}_+) \to \mathbb{C}(q_\D)$ from \eqref{defrho}. Observe that any element of $U_q^\Pup$ can be written as a linear combination of elements of the form $S(X_-)K_\lambda X_+$ with $X_\pm \in U_q(\mathfrak{n}_\pm)$ and $\lambda \in P$; the same remark applies with elements of the form $S(X_+)K_\lambda X_-$.\footnote{Note that this is different from the triangular decompositions $U_q^\Pup = U_q(\mathfrak{n}_-)U_q^\Pup(\mathfrak{h})U_q(\mathfrak{n}_+)$ and $U_q^\Pup = U_q(\mathfrak{n}_+)U_q^\Pup(\mathfrak{h})U_q(\mathfrak{n}_-)$ because the subalgebras $U_q(\mathfrak{n}_\pm)$ are not stable by the antipode $S$.} Extend the ground field $\mc(\qD)$ by an element $\qD^{1/2}$ such that $(\qD^{1/2})^2 = \qD$. We define a bilinear map $(\:\,|\,\:) : U_q^\Pup \times U_q^\Pup \to \mc\bigl(\qD^{1/2}\bigr)$ by 
\begin{equation}\label{qKillingnous}
\bigl( S(X_-) K_\lambda X_+ \,\big|\, S(Y_+) K_\mu Y_- \bigr) = \qD^{D(\lambda,\mu)/2} \, \rho\bigl(X_-, Y_+\bigr) \, \rho\bigl(Y_-,X_+\bigr)
\end{equation}
for all $X_\pm,Y_\pm\in U_q(\mathfrak{n}_\pm)$, and $\lambda, \mu\in P$. Recall that $\qD$ is such that $(\qD)^D  = q$ and we have to use this extension of $\mathbb{C}(q)$ because $\textstyle (\lambda,\mu) \in \frac{1}{D}\mathbb{Z}$ in general.
\smallskip

\indent The bilinear map $(\:\,|\,\:)$ is a variant of the {\em quantum Killing form} $\kappa$ defined in \cite[Def.\,3.78]{VY}. We have to introduce a different version of this bilinear form because here we use the map $\Phi_{0,1} : \mathcal{O}_q \to U_q^\Pup$, $\alpha \mapsto (\alpha \otimes \mathrm{id})(RR^{fl})$ while \cite{VY} uses the map $I : \mathcal{O}_q \to U_q^\Pup$, $\alpha \mapsto (\alpha \otimes \mathrm{id})(R^{-1}(R^{fl})^{-1})$. The defining formula \eqref{qKillingnous} is indeed designed so that the following theorem holds:
\begin{teo}\label{teoqKillingnous}
(i) For all $\alpha \in \mathcal{O}_q$ and $x \in U_q^\Pup$ we have
\begin{equation}\label{relationEntrePhi01EtKilling}
\bigl( \Phi_{0,1}(\alpha) \,\big|\, x \bigr) = \langle \alpha, x \rangle^\Pup
\end{equation}
where we recall that $\langle \alpha, - \rangle^\Pup$ denotes evaluation of $\alpha$ extended from $U_q^\Q$ to $U_q^\Pup$, see \S\ref{sec:Oq}.
\\(ii) The form $(\:\,|\,\:)$ is non degenerate and is invariant under the adjoint action, in the following sense:
\[ \forall \, h,x,y\in U_q^\Pup, \quad \bigl(\mathrm{ad}^r(h)(x) \,|\, y \bigr) = \bigl( x \,\big|\, \mathrm{ad}^l(h)(y) \bigr) \]
where $\mathrm{ad}^r(h)(x) = \textstyle \sum_{(h)} S(h_{(1)})xh_{(2)}$ and $\mathrm{ad}^l(h)(x) = \textstyle \sum_{(h)} h_{(1)}xS(h_{(2)})$.
\\(iii)  Recall that $(U_q^\Pup)^{\mathrm{lf}}$ denotes the subspace of elements in $U_q^\Pup$ which have finite-dimensional orbit under $\mathrm{ad}^r$. There is a linear map $J_q\colon (U_q^\Pup)^{\rm lf} \to \Oo_q$ uniquely characterized by
\[ \forall \, x \in (U_q^\Pup)^{\mathrm{lf}}, \:\: \forall \, y \in U_q^\Pup, \quad \langle J_q(x),y\rangle^\Pup = (x \,|\, y). \]
The map $J_q$ is the inverse of $\Phi_{0,1}\colon \Oo_q\to (U_q^\Pup)^{\rm lf}$, and hence an isomorphism of $U_q^\Pup$-modules $\bigl( (U_q^\Pup)^{\mathrm{lf}}, \mathrm{ad}^r \bigr) \overset{\sim}{\to} \bigl( \mathcal{O}_q, \mathrm{coad}^r \bigr)$, with $\mathrm{coad}^r$ from \eqref{defCoad}
\end{teo}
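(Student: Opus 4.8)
The plan is to prove the three parts of Theorem \ref{teoqKillingnous} in the order (i), (ii), (iii), since each relies on the previous ones. First I would establish (i) by a direct computation comparing the two sides on the ``standard'' generators $S(X_-)K_\lambda X_+$. The key is Lemma \ref{lemRhoVsRMat}, which gives $\rho\bigl( \Phi^+(\alpha),x_+ \bigr) = \langle \alpha,x_+\rangle^\Pup$ and $\tau\bigl( \Phi^-(\alpha),x_- \bigr) = \langle \alpha,x_-\rangle^\Pup$. Unravelling $\Phi_{0,1}(\alpha) = \sum_{(\alpha)} \Phi^+(\alpha_{(1)})\Phi^-(S(\alpha_{(2)}))$ from \eqref{RSDphi} and using that $\Phi^\pm$ take values in $U_q^\Pup(\mathfrak{b}_\mp)$, I would pair against $x$ written in the form $S(Y_+)K_\mu Y_-$ and check that the defining formula \eqref{qKillingnous} for $(\:\,|\,\:)$ reproduces exactly the product of two $\rho$-factors (one of which becomes a $\tau$-factor via \eqref{deftau}), together with the Cartan scalar $\qD^{D(\lambda,\mu)/2}$ arising from $\Theta$. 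This is essentially a bookkeeping verification that \eqref{qKillingnous} was reverse-engineered from \eqref{relationEntrePhi01EtKilling}.

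For (ii), non-degeneracy follows from non-degeneracy of the pairings $\rho$ and $\tau$ (recalled after \eqref{deftau}) together with the fact that $\qD^{D(\lambda,\mu)/2}$ is a non-degenerate pairing on the Cartan part $\mc[P]$: since $U_q^\Pup = \bigoplus_{\lambda} S(U_q(\mathfrak{n}_-))K_\lambda U_q(\mathfrak{n}_+)$ and the form \eqref{qKillingnous} factors as a product over the three tensor factors of this (essentially triangular) decomposition, a Gram-matrix/block argument gives non-degeneracy. For the $\mathrm{ad}$-invariance identity $\bigl(\mathrm{ad}^r(h)(x) \,|\, y \bigr) = \bigl( x \,\big|\, \mathrm{ad}^l(h)(y) \bigr)$, I would first reduce to $h$ a generator ($E_i$, $F_i$, $K_\lambda$) by multiplicativity, and then deduce the property from (i): since $\Phi_{0,1}$ intertwines $\mathrm{coad}^r$ on $\mathcal{O}_q$ with $\mathrm{ad}^r$ on $U_q^\Pup$ (recalled in \S\ref{subsecModifiedAlekseev}), and the evaluation pairing $\langle \text{-},\text{-}\rangle^\Pup$ satisfies the standard adjointness between $\mathrm{coad}^r$ and (co)multiplication by $h$, transporting through \eqref{relationEntrePhi01EtKilling} yields the invariance of $(\:\,|\,\:)$ on the image $(U_q^\Pup)^{\mathrm{lf}} = \Phi_{0,1}(\mathcal{O}_q)$; one then extends to all of $U_q^\Pup$ in the second slot using non-degeneracy and the locally finite decomposition \eqref{relatingLfAndUq}.

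For (iii), the existence and uniqueness of $J_q$ on $(U_q^\Pup)^{\mathrm{lf}}$ is immediate from non-degeneracy of $\langle \text{-},\text{-}\rangle^\Pup$ on the left: for each $x \in (U_q^\Pup)^{\mathrm{lf}}$ the functional $y \mapsto (x\,|\,y)$ lies in $\Oo_q$ precisely because $x$ has finite-dimensional $\mathrm{ad}^r$-orbit, so $(x\,|\,\text{-})$ is a matrix coefficient of a finite-dimensional module; this is where local finiteness is essential. Then (i) says exactly that $\langle J_q(\Phi_{0,1}(\alpha)),y\rangle^\Pup = (\Phi_{0,1}(\alpha)\,|\,y) = \langle \alpha,y\rangle^\Pup$ for all $y$, so $J_q \circ \Phi_{0,1} = \mathrm{id}_{\mathcal{O}_q}$ by non-degeneracy on the left; since $\Phi_{0,1}\colon \mathcal{O}_q \to (U_q^\Pup)^{\mathrm{lf}}$ is a bijection (recalled in \S\ref{subsecModifiedAlekseev} from \cite{baumann,VY}), $J_q$ is its two-sided inverse, and it is $U_q^\Pup$-equivariant by part (ii).

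I expect the main obstacle to be the careful verification in (i): the two maps $\Phi_{0,1}$ and the map $I$ of \cite{VY} differ, so one cannot merely cite the quantum Killing form computations there, and the sign/antipode conventions in $R^+ = R$, $R^- = (R^{fl})^{-1}$ must be tracked precisely to confirm that \eqref{qKillingnous} — and not its $I$-analogue — is the right formula. The delicate point is matching the $\Phi^-(S(\alpha_{(2)}))$ factor, which contributes via the Drinfeld pairing $\tau$ rather than $\rho$, against the antipoded slot $S(Y_+)$ in the second argument, and ensuring the half-powers $\qD^{D(\lambda,\mu)/2}$ combine correctly. Once (i) is pinned down, parts (ii) and (iii) follow by structural transport through $\Phi_{0,1}$ rather than further explicit computation.
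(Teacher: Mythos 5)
Your part (i) is workable and is in the same spirit as the paper's argument: both are direct verifications on triangular-form elements, the paper expanding $\Phi_{0,1}(\alpha)=(\alpha\otimes\mathrm{id})(RR^{fl})$ through Lemma \ref{lemRhoVsRMat}(1) and the identity $R=S^{\otimes 2}(R)$, while you start from $\Phi_{0,1}(\alpha)=\sum_{(\alpha)}\Phi^+(\alpha_{(1)})\Phi^-\bigl(S(\alpha_{(2)})\bigr)$ and \eqref{RhoTauQgen}; either bookkeeping can be made to close, and your non-degeneracy argument in (ii) is also the paper's. The genuine gap is in the $\mathrm{ad}$-invariance half of (ii). Your transport through \eqref{relationEntrePhi01EtKilling} proves the identity only for $x$ in the image of $\Phi_{0,1}$, i.e. for $x\in (U_q^\Pup)^{\mathrm{lf}}$, with $y$ arbitrary; the statement requires it for \emph{all} $x\in U_q^\Pup$, and $(U_q^\Pup)^{\mathrm{lf}}$ is a proper, non-spanning subspace (it misses for instance $K_{2\lambda}$, $\lambda\in P_+$). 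The proposed repair --- ``extend in the second slot using non-degeneracy and \eqref{relatingLfAndUq}'' --- does not work: the missing generality is in the \emph{first} slot, not the second; non-degeneracy lets you conclude that an element pairing to zero against everything vanishes, but it cannot propagate a bilinear identity from a non-spanning set of first arguments to all of them; and to exploit \eqref{relatingLfAndUq} you must treat $x=x'K_{2\lambda}s_j$ directly, where e.g. $\mathrm{ad}^r(E_i)(x'K_{2\lambda})=\bigl[\mathrm{ad}^r(E_i)(x')+(q^{(2\lambda,\alpha_i)}-1)x'E_i\bigr]K_{2\lambda}$ produces terms like $x'E_iK_{2\lambda}$ that are not covered by the invariance you already have. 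Any honest completion ends up doing the computation on generators $E_i,F_i,K_\lambda$ against triangular-form $x,y$ --- which is exactly what the paper does, following \cite[Prop.\,3.79]{VY}.

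Part (iii) has a second gap. Your key claim --- that $\mathrm{ad}^r$-local finiteness of $x$ makes the functional $(x\,|\,\text{-})$ a matrix coefficient of a finite-dimensional module, hence an element of $\Oo_q$ --- is unjustified. What invariance plus local finiteness actually give is that $J_q(x)\in (U_q^\Pup)^*$ has finite-dimensional $\mathrm{coad}^r$-orbit; but $\mathrm{coad}^r$-locally finite functionals need not lie in $\Oo_q$: every character of $U_q^\Pup$ is $\mathrm{coad}^r$-invariant, and the characters determined by an arbitrary group morphism $P\to \mathbb{C}(\qD)^\times$ on the $K_\lambda$ are not matrix coefficients of type-$1$ modules. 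Placing $J_q(x)$ inside $\Oo_q$ is precisely the hard point of (iii), and the paper settles it by computing $J_q(K_{-2\lambda})=\psi_{-\lambda}$ explicitly and invoking the Joseph--Letzter decomposition $(U_q^\Pup)^{\mathrm{lf}}=\bigoplus_{\lambda\in P_+}\mathrm{ad}^r(U_q^\Pup)(K_{-2\lambda})$ together with equivariance. Your fallback --- citing bijectivity of $\Phi_{0,1}\colon\Oo_q\to (U_q^\Pup)^{\mathrm{lf}}$ from \cite{baumann,VY} and composing with (i) --- is internally coherent, but that bijectivity in the paper's conventions (for $\Phi_{0,1}$, not for the differently normalized map $I$ of \cite{VY}) is essentially what this appendix is written to re-establish, so leaning on it trades away the self-containedness that motivates the statement; the paper's proof uses only \cite[Th.\,3.113]{VY} as external input.
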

\begin{proof}
(i) We use the notation $X_\pm^{\boldsymbol{s}}$ for PBW monomials of $U_q(\mathfrak{n}_\pm)$ introduced in Lemma \ref{lemRhoVsRMat}, and the notation $\boldsymbol{1}_\lambda$ for projection on weight spaces introduced in the proof of item 2 of that lemma. In the second equality below we take advantage of the fact that $R = S^{\otimes 2}(R) = S^{\otimes 2}(\Theta\hat{R}) = S^{\otimes 2}(\hat{R})\Theta$. Let $\boldsymbol{a}, \boldsymbol{b} \in \mathbb{N}^N$ and recall from \eqref{expressionRho} the orthogonality of PBW monomials with respect to $\rho$, \textit{i.e.} $\rho(X_-^{\boldsymbol{s}}, X_+^{\boldsymbol{a}}) = \delta_{\boldsymbol{s},\boldsymbol{a}} \rho(X_-^{\boldsymbol{a}}, X_+^{\boldsymbol{a}})$. We thus have
\begin{align*}
&\bigl( \Phi_{0,1}(\alpha) \,\big|\, S(X_+^{\boldsymbol{a}})K_\mu X_-^{\boldsymbol{b}} \bigr) = \bigl( (\alpha \otimes \mathrm{id})(RR^{fl}) \,\big|\, S(X_+^{\boldsymbol{a}})K_\mu X_-^{\boldsymbol{b}} \bigr) \quad \text{\footnotesize by def of $\Phi_{0,1}$}\\
=\:& \bigl( (\alpha \otimes \mathrm{id})\bigl( S^{\otimes 2}(R)R^{fl}\bigr) \,\big|\, S(X_+^{\boldsymbol{a}})K_\mu X_-^{\boldsymbol{b}} \bigr) \\
=\:& \bigl( (\alpha \otimes \mathrm{id})\bigl( S^{\otimes 2}(\hat{R})\Theta^2 \hat{R}^{fl} \bigr) \,\big|\, S(X_+^{\boldsymbol{a}})K_\mu X_-^{\boldsymbol{b}} \bigr) \quad \text{\footnotesize by remark above}\\
=\:&\sum_{\boldsymbol{s},\boldsymbol{t},\lambda}\alpha\bigl( S(X_+^{\boldsymbol{s}}) \boldsymbol{1}_\lambda X_-^{\boldsymbol{t}} \bigr) \frac{ \bigl( S(X_-^{\boldsymbol{s}}) K_{2\lambda} X_+^{\boldsymbol{t}} \,\big|\, S(X_+^{\boldsymbol{a}})K_\mu X_-^{\boldsymbol{b}} \bigr)}{\rho(X_-^{\boldsymbol{s}}, X_+^{\boldsymbol{s}})\, \rho(X_-^{\boldsymbol{t}}, X_+^{\boldsymbol{t}})} \quad \text{\footnotesize by Lemma \ref{lemRhoVsRMat}(1)}\\
=\:&\sum_{\boldsymbol{s},\boldsymbol{t},\lambda}\alpha\bigl( S(X_+^{\boldsymbol{s}}) \boldsymbol{1}_\lambda X_-^{\boldsymbol{t}} \bigr) \, \qD^{D(\lambda,\mu)} \frac{\rho(X_-^{\boldsymbol{s}}, X_+^{\boldsymbol{a}})\rho(X_-^{\boldsymbol{b}},X_+^{\boldsymbol{t}})}{\rho(X_-^{\boldsymbol{s}}, X_+^{\boldsymbol{s}})\rho(X_-^{\boldsymbol{t}}, X_+^{\boldsymbol{t}})} \quad \text{\footnotesize by \eqref{qKillingnous}}\\
=\:&\sum_{\lambda}\alpha\bigl( S(X_+^{\boldsymbol{a}}) \boldsymbol{1}_\lambda X_-^{\boldsymbol{b}} \bigr) \, \qD^{D(\lambda,\mu)} \frac{\rho(X_-^{\boldsymbol{a}}, X_+^{\boldsymbol{a}})\rho(X_-^{\boldsymbol{b}},X_+^{\boldsymbol{b}})}{\rho(X_-^{\boldsymbol{a}}, X_+^{\boldsymbol{a}})\rho(X_-^{\boldsymbol{b}}, X_+^{\boldsymbol{b}})} \quad \text{\footnotesize by orthogonality}\\
=\:& \alpha\!\left[ S(X_+^{\boldsymbol{a}}) \left({\textstyle \sum_\lambda} \qD^{D(\lambda,\mu)} \boldsymbol{1}_\lambda \right) X_-^{\boldsymbol{b}} \right ] = \alpha\bigl( S(X_+^{\boldsymbol{a}}) K_\mu X_-^{\boldsymbol{b}} \bigr) \quad \text{\footnotesize by def of $\boldsymbol{1}_\lambda$.}
\end{align*}
(ii) The non-degeneracy of $(\:\,|\,\:)$  can be deduced from the non-degeneracy of $\rho$ \cite[Th.\,3.92]{VY}. The ad-invariance is obtained by direct computations similar to the ones in \cite[Prop.\,3.79]{VY}. 

\indent (iii) The formula $\langle J_q(x),y\rangle^\Pup := (x \,|\, y)$ for all $x,y\in U_q^\Pup$ defines a linear map $J_q: U_q^\Pup \to (U_q^\Pup)^*$. It is injective, since $(\:\,|\,\:)$ is non degenerate. Moreover, due to the ad-invariance property of $(\:\,|\,\:)$, the map $J_q$ intertwines the $H$-actions $\mathrm{ad}^r$ and $\mathrm{coad}^r$:
\begin{align*}
\bigl\langle \mathrm{coad}^r(h)\bigl(J_q(x)\bigr),y \bigr\rangle^\Pup &= \bigl\langle J_q(x), \mathrm{ad}^l(h)(y) \bigr\rangle^\Pup\\
&= \bigl( x \,\big|\, \mathrm{ad}^l(h)(y) \bigr) = \bigl( \mathrm{ad}^r(h)(x) \,\big|\, y \bigr) = \bigl\langle J_q\bigl(\mathrm{ad}^r(h)(x)\bigr), y \bigr\rangle^\Pup.
\end{align*}
This fact allows one to deduce that $J_q$ actually takes values in $\mathcal{O}_q \subset (U_q^\Pup)^*$, as done in \cite[Prop.\,3.116]{VY}. Let us give some details within our conventions for completeness. The formula \eqref{qKillingnous} shows that for all $\lambda\in P_+$ the linear form $J_q(K_{-2\lambda})$ vanishes on elements $S(Y_+)K_\mu Y_-$ such that $Y_+$ or $Y_-$ belongs to $\ker(\varepsilon_{U_q})$. Recall that for $\lambda \in P_+$, $V_{-w_0(\lambda)}$ is the simple $U_q^\Q$-module with highest weight $-w_0(\lambda)$, hence with lowest weight $-\lambda \in -P_+$. Let $v$ be a (unique up to scalar) lowest weight vector. Define $\psi_{-\lambda}\in \Oo_q$ by $\psi_{-\lambda}(x) = v^*(x\cdot v)$ for every $x\in U_q^\Q$, where $v^*\in V_{-w_0(\lambda)}^*$ is such that $v^*(v)=1$ and $v^*$ vanishes on all other weight spaces in $V_{-w_0(\lambda)}$. Then we have
\[ \forall \, \mu \in P, \quad \bigl\langle J_q(K_{-2\lambda}), K_\mu \bigr\rangle^\Pup = ( K_{-2\lambda} \,|\, K_\mu) = q^{-(\lambda,\mu)} =  \langle \psi_{-\lambda}, K_\mu \rangle^\Pup. \]
Also, it is readily seen by very definition of $\psi_{-\lambda}$ that
\begin{align*}
\begin{split}
\bigl\langle \psi_{-\lambda}, S(Y_+)K_\mu Y_-\bigr\rangle^\Pup &= \varepsilon(Y_+) \psi_{-\lambda}(K_\mu)\varepsilon(Y_-)\\
&= \bigl( K_{-2\lambda} \,\big|\, S(Y_+)K_\mu Y_- \bigr) = J_q(K_{-2\lambda})\bigl( S(Y_+)K_\mu Y_- \bigr)
\end{split}
\end{align*}
for all $Y_\pm\in U_q(\mathfrak{n}_\pm)$ and $\mu \in P$. Therefore $J_q(K_{-2\lambda}) = \psi_{-\lambda} \in \mathcal{O}_q$. Now recall that we have $(U_q^\Pup)^{\rm lf} = \textstyle \bigoplus_{\lambda\in P_+} \mathrm{ad}^r(U_q^\Pup)(K_{-2\lambda})$ (see \cite[Th.\,3.113]{VY}, where the different adjoint action implies that $K_{-2\lambda}$ is replaced with $K_{2\lambda}$ in their formula). Hence, by equivariance of $J_q$,
\[ J_q\bigl( (U_q^\Pup)^{\rm lf} \bigr) = \textstyle \bigoplus_{\lambda\in P_+} \mathrm{coad}^r(U_q^\Pup)\bigl( J_q(K_{-2\lambda}) \bigr) = \bigoplus_{\lambda\in P_+} \mathrm{coad}^r(U_q^\Pup)\bigl( \psi_{-\lambda} \bigr) \subset \mathcal{O}_q. \]
It now makes sense to check that $J_q$ and $\Phi_{0,1}$ are inverse each other, using non-degeneracy of the evaluation pairing and of the quantum Killing form,
\begin{align*}
&\bigl\langle J_q\bigl( \Phi_{0,1}(\alpha) \bigr), y \bigr\rangle^\Pup = \bigl( \Phi_{0,1}(\alpha) \,\big|\, y \bigr) = \langle \alpha, y \rangle^\Pup,\\
&\bigl( \Phi_{0,1}\bigl(J_q(x)\bigr) \,\big|\, y \bigr) = \bigl\langle J_q(x), y \bigr\rangle^\Pup = (x \,|\,y)
\end{align*}
for all $x,y\in U_q^\Pup$ and $\alpha \in \mathcal{O}_q$.
\end{proof}

For the next result, let $\mathcal{P}_\rho :  U_q^\Pup \to U_q^\Pup(\mathfrak{h})$ be the projection given by
\[ \forall \, X_\pm \in U_q^\Pup(\mathfrak{n}_\pm), \:\: \forall \, \mu \in P, \quad \mathcal{P}_\rho(X_+ K_\mu X_-) = \varepsilon(X_+)\varepsilon(X_-)q^{-(\rho,\mu)}K_\mu \]
where $\rho$ here denotes the half-sum of positive roots. Also let $\mathcal{O}_q^{U_q}$ be the subspace of invariant elements under the right action $\mathrm{coad}^r$ defined in \eqref{defCoad}. 

\begin{prop}\label{propPropietesPhi01App}
(i) $\Phi_{0,1}\colon \Oo_q^{U_q}\to \mathcal{Z}(U_q^\Pup)$ is an algebra isomorphism, where $\Oo_q^{U_q} \subset \Oo_q$ is the subspace of invariant elements under the coadjoint action ${\rm coad}^r$ from \eqref{defCoad}. 
\\(ii) The quantum trace elements $\mathrm{qTr}_\lambda $ ($\lambda \in P_+$) defined in \eqref{casimirElmtsL01} form a basis of $\Oo_q^{U_q}$.
\\(iii) The map $\mathcal{P}_\rho$ restricts to an algebra embedding $\mathcal{Z}(U_q^\Pup) \hookrightarrow U_q^\Pup(\mathfrak{h})$ and we have
\[ (\mathcal{P}_\rho \circ \Phi_{0,1})(\mathrm{qTr}_\lambda) = \sum_{\mu \in P(V_\lambda)} K_{2\mu}. \]
for all $\lambda \in P_+$, where the sum runs over the set of weights $P(V_\lambda)$ of $V_\lambda$ counted with multiplicity.  
\end{prop}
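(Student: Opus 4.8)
The plan is to dispatch parts (i) and (ii) quickly using the quantum Killing form and Peter--Weyl, and to reserve the real work for the Harish-Chandra computation in (iii).

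For (i), I would start from Theorem \ref{teoqKillingnous}(iii), which gives the $U_q^\Pup$-equivariant isomorphism $\Phi_{0,1}\colon (\Oo_q,\mathrm{coad}^r) \xrightarrow{\sim} \bigl((U_q^\Pup)^{\rm lf},\mathrm{ad}^r\bigr)$. First I would record that $x \in U_q^\Pup$ is $\mathrm{ad}^r$-invariant (i.e. $\mathrm{ad}^r(h)(x)=\varepsilon(h)x$ for all $h$) if and only if $x$ is central: one direction is the antipode axiom, and the converse follows from comparing $\sum_{(h)} h_{(1)}\,\mathrm{ad}^r(h_{(2)})(x) = xh$ with $\sum_{(h)} h_{(1)}\varepsilon(h_{(2)})x = hx$. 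In particular $\mathcal{Z}(U_q^\Pup)\subset (U_q^\Pup)^{\rm lf}$, since a central element has one-dimensional $\mathrm{ad}^r$-orbit. Equivariance then forces $\Phi_{0,1}$ to restrict to a bijection $\Oo_q^{U_q}\to \mathcal{Z}(U_q^\Pup)$. That it is an algebra map is due to Lemma \ref{Z1general}(1) and its proof: on invariant elements the braided product of $\mathcal{L}_{0,1}$ coincides with the usual product of $\Oo_q$ (and both are commutative there), so the algebra morphism $\Phi_{0,1}\colon \mathcal{L}_{0,1}\to U_q^\Pup$ restricts to an algebra isomorphism onto $\mathcal{Z}(U_q^\Pup)$.

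For (ii), I would use the Peter--Weyl decomposition \eqref{PWdecomp}, under which $\mathrm{coad}^r$ makes each block $C(\mu)=V_\mu^*\otimes V_\mu\cong \mathrm{End}(V_\mu)$ into the adjoint module. By Schur's lemma the invariant subspace $\mathrm{Hom}_{U_q}\bigl(\mc,\mathrm{End}(V_\mu)\bigr)$ is one-dimensional, spanned by the identity, which corresponds to the quantum trace; hence $\dim\bigl(C(\mu)\cap \Oo_q^{U_q}\bigr)=1$. Since $\mathrm{qTr}_\lambda$ lies in $C(\lambda)$ and is $\mathrm{coad}^r$-invariant (by cyclicity of the trace and the pivotal property of $\ell$), it remains only to note $\mathrm{qTr}_\lambda\neq 0$, which holds because $\mathrm{qTr}_\lambda(1)=\mathrm{Tr}\bigl(\mathrm{rep}_\lambda(\ell)\bigr)$ is the nonzero quantum dimension of $V_\lambda$. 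Thus $\{\mathrm{qTr}_\lambda\}_{\lambda\in P_+}$ is a basis.

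For (iii), set $z_\lambda:=\Phi_{0,1}(\mathrm{qTr}_\lambda)=\bigl(\mathrm{Tr}_{V_\lambda}(\mathrm{rep}_\lambda(\ell\,\cdot))\otimes\mathrm{id}\bigr)(RR^{fl})$, which is central by (i). The embedding claim for $\mathcal{P}_\rho$ is the quantum Harish-Chandra isomorphism recalled in \S\ref{sec:Uq}: I would prove that $\mathcal{P}_\rho|_{\mathcal{Z}(U_q^\Pup)}$ is an injective algebra morphism by showing that, for each $\nu\in P_+$, evaluating $\mathcal{P}_\rho(z)$ under $K_\mu\mapsto \qD^{D(\mu,\nu+\rho)}$ returns the scalar by which $z$ acts on $V_\nu$; multiplicativity and injectivity then follow since central characters are multiplicative and separate central elements (the modules $V_\nu$ separate points of $U_q^\Pup$). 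To compute the image I would act $z_\lambda$ on a highest weight vector $v_\nu$. Writing $R=\Theta\hat R$ and $R^{fl}=\Theta\,\hat R^{fl}$ (flip of $\hat R$) as in \eqref{thetadef}--\eqref{Rhat}, the factors $\hat R$ and $\hat R^{fl}$ carry lowering resp.\ raising operators in the second tensorand; since $U_q(\mathfrak{n}_+)$ annihilates $v_\nu$ and lowering operators strictly decrease the weight of the second tensorand, only the purely Cartan ($\Theta^2$) contribution survives on the weight-$\nu$ component of $RR^{fl}(V_\lambda\otimes v_\nu)$, acting on $V_\lambda[\mu]\otimes v_\nu$ by $\qD^{2D(\mu,\nu)}$. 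Combining with the pivotal factor $\ell=K_{2\rho}$, which acts on $V_\lambda[\mu]$ by $q^{2(\rho,\mu)}$, and tracing over $V_\lambda$ yields the eigenvalue $\sum_{\mu}\dim(V_\lambda[\mu])\,q^{2(\mu,\nu+\rho)}$. Matching this against the evaluation rule for $\mathcal{P}_\rho$ identifies $\mathcal{P}_\rho(z_\lambda)=\sum_{\mu}\dim(V_\lambda[\mu])K_{2\mu}=\sum_{\mu\in P(V_\lambda)}K_{2\mu}$, as claimed. The main obstacle is precisely this last step: the bookkeeping of the triangular structure of $RR^{fl}$ on highest weight vectors and, above all, pinning down the Harish-Chandra conventions, that is, verifying that the $\rho$-shift $q^{-(\rho,\mu)}$ in the definition of $\mathcal{P}_\rho$ is exactly what makes evaluation at $\nu+\rho$ reproduce central characters, so that $\mathcal{P}_\rho$ is simultaneously an algebra embedding and sends $z_\lambda$ to the advertised character.
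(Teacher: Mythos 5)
Your proof of item (i) follows the paper's own route: the key input is Theorem \ref{teoqKillingnous}(iii), the identification $\mathcal{Z}(U_q^\Pup) = \bigl((U_q^\Pup)^{\rm lf}\bigr)^{U_q}$ (which the paper dismisses as ``easy to see'' and you verify correctly, including the converse direction via $\sum_{(h)}h_{(1)}\mathrm{ad}^r(h_{(2)})(x) = xh$), and the fact that on invariant elements the braided product of $\mathcal{L}_{0,1}$ reduces to the product of $\Oo_q$ (Lemma \ref{Z1general}(1)). For items (ii) and (iii), however, you genuinely diverge from the paper, whose entire proof is a citation of Voigt--Yuncken (\cite[Lem.\,3.119]{VY} for (ii), \cite[Th.\,3.122]{VY} for (iii)). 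Your (ii) --- Peter--Weyl decomposition \eqref{PWdecomp}, Schur's lemma on each block $C(\mu)$, non-vanishing of the quantum dimension at generic $q$ --- is correct and self-contained, the only routine point being the identification of the $\mathrm{coad}^r$-invariants of $C(\mu)$ with the $\ell$-twisted trace, where the pivotal element enters as you indicate. Your (iii) replaces the citation by the classical Harish-Chandra mechanism: central elements act on $V_\nu$ by the evaluation of a Cartan component, the triangular structure of $RR^{fl}$ on a highest weight vector yields $\chi_\nu\bigl(\Phi_{0,1}(\mathrm{qTr}_\lambda)\bigr) = \sum_\mu \dim(V_\lambda[\mu])\,q^{2(\mu,\nu+\rho)}$ (this computation is sound: the $\hat{R}^{fl}$ factor dies on $v_\nu$, the non-trivial terms of $\hat{R}$ drop the weight in the second tensorand, and only $\Theta^2$ survives), and Zariski density of $\{\nu+\rho \mid \nu\in P_+\}$ transports multiplicativity and injectivity from central characters to $\mathcal{P}_\rho$ and pins down $\mathcal{P}_\rho\bigl(\Phi_{0,1}(\mathrm{qTr}_\lambda)\bigr)$. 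What your approach buys is independence from \cite{VY} and its conventions (the paper must remark that \cite{VY} produces $K_{-2\mu}$ because of a different $\Phi_{0,1}$); what it costs is that every convention check is now your responsibility.

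On that last point, one thing you must make explicit, since your whole proof of (iii) rests on the evaluation rule. The rule ``$\mathcal{P}_\rho(z)$ evaluated at $K_\mu\mapsto \qD^{D(\mu,\nu+\rho)}$ equals $\chi_\nu(z)$'' holds for the Cartan component taken in the decomposition $U_q(\mathfrak{n}_-)U_q^\Pup(\mathfrak{h})U_q(\mathfrak{n}_+)$, i.e. with the raising operators placed so that they hit the highest weight vector first. The paper's displayed definition of $\mathcal{P}_\rho$ uses the opposite ordering $X_+K_\mu X_-$ together with the shift $q^{-(\rho,\mu)}$, and with that literal reading both the rule and the formula in (iii) fail: for $\mathfrak{sl}_2$ one has $\Phi_{0,1}(\mathrm{qTr}_\varpi) = (q-q^{-1})^2FE + qK_\alpha + q^{-1}K_\alpha^{-1} = (q-q^{-1})^2EF + q^{-1}K_\alpha + qK_\alpha^{-1}$, so the $X_+K_\mu X_-$ reading gives $q^{-2}K_\alpha + q^{2}K_\alpha^{-1}$, whereas the $X_-K_\mu X_+$ reading gives $K_\alpha + K_\alpha^{-1} = K_{2\varpi}+K_{-2\varpi}$, which is the advertised answer. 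This is an inconsistency of conventions in the statement itself rather than a gap in your argument, but a complete write-up should fix the ordering (or equivalently flip the sign of the $\rho$-shift) and say so.
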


\begin{proof} For item (i) recall that $\mathcal{Z}(U_q^\Pup) = (U_q^\Pup)^{U_q} = ((U_q^\Pup)^{\rm lf})^{U_q}$, which is easy to see. By Th.\,\ref{teoqKillingnous}(iii), $\Phi_{0,1}$ is an equivariant isomorphism $\Oo_q \overset{\sim}{\to} (U_q^\Pup)^{\rm lf}$. It thus establishes a vector space isomorphism between $\Oo_q^{U_q}$ and $\mathcal{Z}(U_q^\Pup)$. This is moreover an algebra morphism by formula \eqref{productL01InTermsOfCoaction} and the fact that $\Phi_{0,1}$ is an algebra morphism $\mathcal{L}_{0,1} \to U_q$.
\\Item $(ii)$ is proved as \cite[Lem.\,3.119]{VY}, and item $(iii)$ is part of \cite[Th.\,3.122]{VY} where $\mathcal{P}_\rho$ is denoted by $\gamma^{-1} \circ \mathcal{P}$ (in \cite{VY} the last formula uses $K_{-2\mu}$ because of their different conventions on $\Phi_{0,1}$).
\end{proof}

For all $\lambda \in P_+$ let 
\[ \textstyle \mathrm{ch}^{(2)}_\lambda :=\sum_{\mu \in P(V_\lambda)} K_{2\mu} \in \mathbb{Z}[K_{2\varpi_1}^{\pm 1},\ldots, K_{2\varpi_m}^{\pm 1}] \subset U_q^{\scalebox{0.6}{2P}}(\mathfrak{h}). \]
Note that the ring $\mathbb{Z}[K_{2\varpi_1}^{\pm 1},\ldots, K_{2\varpi_m}^{\pm 1}]$ is isomorphic to $\mathbb{Z}[2P]$, and hence has a natural action of the Weyl group $W$. The elements $\mathrm{ch}^{(2)}_\lambda$ are clearly $W$-invariant. Therefore Proposition \ref{propPropietesPhi01App} is entirely summed-up by the following commutative diagram
\[ \xymatrix{
\mathbb{C}(q)[P_+] \ar[r]^-{\mathrm{ch}^{(2)}} \ar[d]_-{\mathrm{qTr}}^-{\cong\qquad\qquad\qquad\scalebox{1.5}{$\circlearrowleft$}} & \mathbb{C}(q)[K_{2\varpi_1}^{\pm 1},\ldots, K_{2\varpi_m}^{\pm 1}]^W = U_q^{\scalebox{0.6}{2P}}(\mathfrak{h})^W\\
\mathcal{O}_q^{U_q} = \mathcal{Z}(\mathcal{L}_{0,1}) \ar[r]_-{\Phi_{0,1}}^-{\cong} & \mathcal{Z}(U_q^\Pup) \ar[u]_-{\mathcal{P}_\rho}
}\]
where $\mathbb{C}(q)[P_+]$ is the algebra of the abelian monoid $P_+$ with coefficients in $\mathbb{C}(q)$. The vertical arrow on the right is called {\em quantum Harish-Chandra map}. By a classical argument \cite[\S 22.5]{Hum} we have:
\begin{equation}\label{AbaseHum}
\text{the family } \bigl\{ \mathrm{ch}^{(2)}_\lambda \,\big|\,\lambda\in P_+ \bigr\} \text{ is a } \mathbb{Z} \text{-basis of the ring } \mathbb{Z}[K_{2\varpi_1}^{\pm 1},\ldots, K_{2\varpi_m}^{\pm 1}]^W.
\end{equation}
It follows that the map $\mathrm{ch}^{(2)}$ in the above diagram is an algebra isomorphism. We conclude that
\[ \mathcal{P}_\rho : \mathcal{Z}(U_q^\Pup) \longrightarrow \mathbb{C}(q)[K_{2\varpi_1}^{\pm 1},\ldots, K_{2\varpi_m}^{\pm 1}]^W = U_q^{\scalebox{0.6}{2P}}(\mathfrak{h})^W \]
is an algebra isomorphism, which is the quantum version of Harish-Chandra's  theorem.\footnote{Note that this proof, originally due to Baumann \cite{baumann2}, is very different from the one given, e.g., in \cite[\S 18]{DCP}, which reconstructs an element $z\in \mathcal{Z}(U_q^\Pup)$ from $\mathcal{P}_\rho(z)$ by induction, using a complicated algorithm.}

To be on the safe side, we record that the commutation of the above diagram means that
\begin{equation}\label{Phicentresept25}
\textstyle \mathcal{P}_\rho^{-1 }\left( \sum_{\lambda \in P_+} c_\lambda \, \mathrm{ch}^{(2)}_\lambda \right) =  \Phi_{0,1}\left(\sum_{\lambda \in P_+}  c_\lambda \, \mathrm{qTr}_\lambda\right)
 \end{equation} 
where the coefficients $c_\lambda \in \mathbb{C}(q)$ are equal to $0$ except for a finite number of them.

\smallskip

\indent The properties of $\Phi_A^\pm$ discussed in \S\ref{integralO} readily imply that the restriction of $\Phi_{0,1}$ to $\mathcal{O}_A$ takes values in $U_A^\Pup$. Hence we have an integral version
\[ \Phi_{0,1}^A : \Oo_A \to (U_A^\Pup)^{\mathrm{lf}} := U_A^\Pup\cap (U_q^\Pup)^{\rm lf}, \quad \alpha \mapsto (\alpha \otimes \mathrm{id})(RR^{fl}). \]
As a restriction of $\Phi_{0,1}$, it is an embedding of $\Gamma_A^\Pup$-modules. This allows for an integral version of Proposition \ref{propPropietesPhi01App}(i):
\begin{cor}\label{Phi0Aiso} Denote by $\mathcal{Z}_1(\Oo_A)$ the $A$-subalgebra of $\Oo_A^{\Gamma_A}$ generated by the quantum trace elements $\mathrm{qTr}_\lambda$, $\lambda \in P_+$. The restriction of $\Phi_{0,1}^A$ to $\mathcal{Z}_1(\Oo_A)$ yields an algebra isomorphism $\mathcal{Z}_1(\Oo_A) \overset{\sim}{\to} \mathcal{Z}(U_A^\Pup)$, and therefore $\mathcal{Z}_1(\Oo_A)=\Oo_A^{\Gamma_A}$.
\end{cor}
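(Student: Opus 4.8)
The plan is to first establish the isomorphism $\Phi_{0,1}^A\colon \mathcal{Z}_1(\Oo_A)\to \mathcal{Z}(U_A^\Pup)$ and then extract the equality $\mathcal{Z}_1(\Oo_A)=\Oo_A^{\Gamma_A}$ from it. As a preliminary I would show that $\{\mathrm{qTr}_\lambda\}_{\lambda\in P_+}$ is a free $A$-basis of $\mathcal{Z}_1(\Oo_A)$. By Proposition \ref{propPropietesPhi01App} the composite $\mathcal{P}_\rho\circ\Phi_{0,1}$ is an algebra isomorphism $\Oo_q^{U_q}\overset{\sim}{\to}\mathbb{C}(q)[K_{2\varpi_1}^{\pm 1},\ldots,K_{2\varpi_m}^{\pm 1}]^W$ carrying $\mathrm{qTr}_\lambda$ to $\mathrm{ch}^{(2)}_\lambda$, and by \eqref{AbaseHum} the $\mathrm{ch}^{(2)}_\lambda$ form a $\mathbb{Z}$-basis of $\mathbb{Z}[K_{2\varpi_1}^{\pm 1},\ldots,K_{2\varpi_m}^{\pm 1}]^W$, so their products have integer structure constants. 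Transporting these constants through the isomorphism shows that $\mathrm{qTr}_\lambda\,\mathrm{qTr}_\mu$ is a $\mathbb{Z}$-linear combination of the $\mathrm{qTr}_\nu$; since the $\mathrm{qTr}_\lambda$ are $\mathbb{C}(q)$-linearly independent (Proposition \ref{propPropietesPhi01App}(ii)) they are a fortiori $A$-linearly independent, whence $\mathcal{Z}_1(\Oo_A)=\bigoplus_\lambda A\,\mathrm{qTr}_\lambda$.

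Injectivity of $\Phi_{0,1}^A$ on $\mathcal{Z}_1(\Oo_A)$ is immediate as the restriction of the embedding $\Phi_{0,1}^A\colon \Oo_A\hookrightarrow U_A^\Pup$. For the image, each $\Phi_{0,1}(\mathrm{qTr}_\lambda)$ lies in $\mathcal{Z}(U_q^\Pup)$ by Proposition \ref{propPropietesPhi01App}(i) and in $U_A^\Pup$ because $\mathrm{qTr}_\lambda\in\Oo_A$; hence it lies in $\mathcal{Z}(U_q^\Pup)\cap U_A^\Pup=\mathcal{Z}(U_A^\Pup)$, the last equality holding since the generators of $U_A^\Pup$ generate $U_q^\Pup$ over $\mathbb{C}(q)$. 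As $\Phi_{0,1}^A$ is an algebra map, $\Phi_{0,1}^A(\mathcal{Z}_1(\Oo_A))\subseteq\mathcal{Z}(U_A^\Pup)$. To obtain surjectivity I would work through the quantum Harish–Chandra map: by the free-basis statement together with Proposition \ref{propPropietesPhi01App}(iii) and \eqref{AbaseHum}, the map $\mathcal{P}_\rho$ carries $\Phi_{0,1}^A(\mathcal{Z}_1(\Oo_A))=\bigoplus_\lambda A\,\Phi_{0,1}(\mathrm{qTr}_\lambda)$ bijectively onto $\bigoplus_\lambda A\,\mathrm{ch}^{(2)}_\lambda=A[K_{2\varpi_1}^{\pm 1},\ldots,K_{2\varpi_m}^{\pm 1}]^W$. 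It therefore suffices to prove the reverse integrality containment $\mathcal{P}_\rho(\mathcal{Z}(U_A^\Pup))\subseteq A[K_{2\varpi_1}^{\pm 1},\ldots,K_{2\varpi_m}^{\pm 1}]^W$; since $\mathcal{P}_\rho$ is injective on $\mathcal{Z}(U_q^\Pup)$, the chain of inclusions will then collapse to $\mathcal{Z}(U_A^\Pup)=\Phi_{0,1}^A(\mathcal{Z}_1(\Oo_A))$.

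The integrality claim is the step I expect to require the most care. Writing a central $z\in U_A^\Pup$ in the PBW basis of $U_A^\Pup$, the only monomials surviving $\mathcal{P}_\rho$ are the pure-Cartan ones $K_\mu$, whose coefficients $a_\mu$ lie in $A$, so $\mathcal{P}_\rho(z)=\sum_\mu a_\mu\, q^{-(\rho,\mu)}K_\mu$. Centrality of $z$ forces $\mathcal{P}_\rho(z)$ to be $W$-invariant and supported on $\mu\in 2P$ by Proposition \ref{propPropietesPhi01App}(iii). For $\mu=2\nu$ with $\nu\in P$ one has $(\rho,\mu)=(2\rho,\nu)\in\mathbb{Z}$, because $2\rho\in Q$ and $(\alpha_i,\nu)=n_id_i\in\mathbb{Z}$ for $\nu=\sum_i n_i\varpi_i$, giving $(Q,P)\subseteq\mathbb{Z}$; hence $q^{-(\rho,\mu)}\in A$ and $a_\mu q^{-(\rho,\mu)}\in A$, so $\mathcal{P}_\rho(z)$ is an $A$-combination of the $K_{2\nu}$ and $W$-invariant, i.e. lies in the required ring. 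This yields the isomorphism $\Phi_{0,1}^A\colon\mathcal{Z}_1(\Oo_A)\overset{\sim}{\to}\mathcal{Z}(U_A^\Pup)$ (one could equally read off the coefficients using \eqref{Phicentresept25}).

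Finally I would deduce $\mathcal{Z}_1(\Oo_A)=\Oo_A^{\Gamma_A}$. The inclusion $\subseteq$ holds because each $\mathrm{qTr}_\lambda$ is $\Gamma_A$-invariant by its definition \eqref{casimirElmtsL01} and cyclicity of the trace, and $\Oo_A^{\Gamma_A}$ is a subalgebra. For $\supseteq$, I first note $\Oo_A^{\Gamma_A}=\Oo_A\cap\Oo_q^{U_q}$: since $\Gamma_A^\Q\otimes_A\mathbb{C}(q)=U_q^\Q$ and $\mathrm{coad}^r$ is $A$-linear, invariance under $\Gamma_A^\Q$ is equivalent to invariance under $U_q^\Q$ for elements of $\Oo_A$. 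Given $c\in\Oo_A\cap\Oo_q^{U_q}$, expand $c=\sum_\lambda c_\lambda\,\mathrm{qTr}_\lambda$ with $c_\lambda\in\mathbb{C}(q)$ via Proposition \ref{propPropietesPhi01App}(ii). Then $\Phi_{0,1}(c)\in U_A^\Pup$ (as $c\in\Oo_A$) and $\Phi_{0,1}(c)\in\mathcal{Z}(U_q^\Pup)$, so $\Phi_{0,1}(c)\in\mathcal{Z}(U_A^\Pup)=\bigoplus_\lambda A\,\Phi_{0,1}(\mathrm{qTr}_\lambda)$ by the isomorphism just established. Because the $\Phi_{0,1}(\mathrm{qTr}_\lambda)$ are $\mathbb{C}(q)$-linearly independent, this forces $c_\lambda\in A$ for all $\lambda$, hence $c\in\bigoplus_\lambda A\,\mathrm{qTr}_\lambda=\mathcal{Z}_1(\Oo_A)$. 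This gives the desired equality and completes the proof.
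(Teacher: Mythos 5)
Your proposal follows essentially the same route as the paper's proof: surjectivity is obtained by passing through the quantum Harish--Chandra map $\mathcal{P}_\rho$, the integral character basis \eqref{AbaseHum}, and the identity \eqref{Phicentresept25}, and the equality $\mathcal{Z}_1(\Oo_A)=\Oo_A^{\Gamma_A}$ is then extracted from the isomorphism. You supply several details the paper glosses over, and this is the valuable part: the paper simply asserts that $\mathcal{P}_\rho(z)$ is an $A$-combination of the $\mathrm{ch}^{(2)}_\lambda$ for $z\in\mathcal{Z}(U_A^\Pup)$, leaving implicit exactly the integrality statement $\mathcal{P}_\rho\bigl(\mathcal{Z}(U_A^\Pup)\bigr)\subseteq A[K_{2\varpi_1}^{\pm 1},\ldots,K_{2\varpi_m}^{\pm 1}]^W$ that you prove (compare Remark \ref{rkZ1OAjanv26}, which attributes this fact to an implicit argument of De Concini--Procesi). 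Your free-$A$-basis statement for $\mathcal{Z}_1(\Oo_A)$ with integer structure constants, the inclusion $\Phi_{0,1}^A(\mathcal{Z}_1(\Oo_A))\subseteq\mathcal{Z}(U_A^\Pup)$, and your coefficient-comparison proof of the second claim (a mild variant of the paper's equivariance argument, resting on the same identity $U_A^\Pup\cap\mathcal{Z}(U_q^\Pup)=\mathcal{Z}(U_A^\Pup)$) are all correct.

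One step needs repair, however. In the integrality argument you expand $z\in\mathcal{Z}(U_A^\Pup)$ ``in the PBW basis of $U_A^\Pup$'' and claim that only the pure-Cartan monomials survive $\mathcal{P}_\rho$. This is false for the PBW basis as defined in \S\ref{integralFormsUq}, i.e. for the polarisation $U_A(\mathfrak{n}_-)U_A^\Pup(\mathfrak{h})U_A(\mathfrak{n}_+)$: the map $\mathcal{P}_\rho$ is defined through the \emph{opposite} decomposition $X_+K_\mu X_-$, and already for $U_q(\mathfrak{sl}_2)$ the non-Cartan monomial $FE$ satisfies $FE=EF-\frac{K-K^{-1}}{q-q^{-1}}$, so that $\mathcal{P}_\rho(FE)=-\frac{q^{-1}K-qK^{-1}}{q-q^{-1}}\neq 0$; consequently the Cartan coefficients in the $(-,\mathfrak{h},+)$ expansion of a central element do not compute its Harish--Chandra image (the $\mathfrak{sl}_2$ Casimir gives $\frac{qK+q^{-1}K^{-1}}{(q-q^{-1})^2}$ instead of $\mathcal{P}_\rho(C)=\frac{q^{-1}K+qK^{-1}}{(q-q^{-1})^2}$). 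The fix is immediate: expand $z$ in the PBW basis attached to the ordering $U_A(\mathfrak{n}_+)U_A^\Pup(\mathfrak{h})U_A(\mathfrak{n}_-)$, which matches the definition of $\mathcal{P}_\rho$ and is also an $A$-basis of $U_A^\Pup$, since the commutation relations $\underline{E}_i\underline{F}_j-\underline{F}_j\underline{E}_i=\delta_{ij}(q_i-q_i^{-1})(K_i-K_i^{-1})$ have coefficients in $A$ and so rewriting between the two orderings introduces no denominators. With that ordering your computation --- Cartan coefficients in $A$, support on $2P$ by Proposition \ref{propPropietesPhi01App}(iii), and $(\rho,2\nu)=(2\rho,\nu)\in\mathbb{Z}$ because $2\rho\in Q$ --- goes through verbatim, and the rest of the proof is sound.
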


\begin{proof} This restriction is injective because so is $\Phi_{0,1}^A$. For surjectivity, let $z\in \mathcal{Z}(U_A^\Pup)$. By extension of scalars from $\mathbb{Z}$ to $A$ in \eqref{AbaseHum}, the elements $\textstyle \mathrm{ch}^{(2)}_\lambda$ form an $A$-basis of the $A$-algebra $A[K_{2\varpi_1}^{\pm 1},\ldots, K_{2\varpi_m}^{\pm 1}]^W$, so we can write $\textstyle \mathcal{P}_\rho(z) =  \sum_\lambda  c_\lambda\, \mathrm{ch}^{(2)}_\lambda$ with a finite set of coefficients $c_\lambda\in A$ for $\lambda\in P_+$. From \eqref{Phicentresept25} it follows that $\textstyle \Phi_{0,1}^A\bigl(\sum_\lambda  c_\lambda\, \mathrm{qTr}_\lambda\bigr) = z$.

For the second claim, recall that $\Phi_{0,1}^A$ is an $U_A^\Pup$-linear injection $(\mathcal{O}_A, \mathrm{coad}^r) \to (U_A^\Pup, \mathrm{ad}^r)$. Since $\mathcal{Z}(U_A^\Pup) = (U_A^\Pup)^{U_A}$ (which is true for any Hopf algebra), we deduce from the first claim that $\mathcal{Z}_1(\mathcal{O}_A) = \mathcal{O}_A^{U_A} = \mathcal{O}_A^{\Gamma_A}$.
\end{proof}
\begin{remark}\label{rkZ1OAjanv26} {\rm The corollary implies that the quantum Harish-Chandra map $\mathcal{P}_\rho$ restricts to an isomorphism $\mathcal{Z}(U_A^\Pup) \to A[K_{2\varpi_1}^{\pm 1},\ldots, K_{2\varpi_m}^{\pm 1}]^W \subset U_A^\Pup(\mathfrak{h})$. This was proved somewhat implicitly in \cite[Th. 18.3 and \S 21]{DCP} , by showing that their algorithm for the computation of $\mathcal{P}_\rho^{-1}$ does not introduce denominators.}
\end{remark}

To conclude we discuss an integral form $(\:\,|\,\:)_{\AD}$ of the pairing \eqref{qKillingnous} and its specialization at $\eD$. First, at the level of Cartan parts, let us define $(\:\,|\,\:)_{\AD} : U_A^\Pup(\mathfrak{h}) \times \Gamma_A^\Pup(\mathfrak{h}) \to \mathbb{C}\bigl[ \qD^{\pm 1/2} \bigr]$ by
\begin{equation}\label{IntKillingCartan}
\biggl( K_\mu \,\bigg| \, K_s \:{\textstyle \prod}_{i=1}^m \, K_i^{-\sigma(t_i)}(K_i; t_i)_{q_i} \biggr)_{\!\!\!\AD} = \qD^{D(\mu,s)/2} \prod_{i=1}^m q^{-(\mu,\alpha_i)\sigma(t_i)/2}\begin{pmatrix} (\mu,\alpha_i)/2 \\ t_i \end{pmatrix}_{\!\!q_i}
\end{equation}
where $s$ belongs to a chosen set $S \subset P$ of representatives for the cosets in $P/Q$ (we use the basis of $\Gamma_A^\Pup(\mathfrak{h})$ explained in \S\ref{integralFormsUq}). Now, using the notations introduced in \eqref{defRootVectorsUnrestricted} and \eqref{defDivPowerBeta} for the generators of $U_A^\Pup$ and $\Gamma_A^\Pup$, observe from the expression of the pairing $\rho$ in \eqref{expressionRho} that
\begin{align}
\rho\!\left(\underline{F}_{\beta_N}^{s_N}\ldots \underline{F}_{\beta_1}^{s_1}, E_{\beta_N}^{(r_N)}\ldots E_{\beta_1}^{(r_1)}\right) &= \textstyle \prod_{i=1}^N \delta_{s_i,t_i} q_{\beta_i}^{-s_i(s_i-1)/2},\label{intDrinfeldOnNilpotent1}\\
\rho\!\left(F_{\beta_N}^{(s_N)}\ldots F_{\beta_1}^{(s_1)}, \underline{E}_{\beta_N}^{t_N}\ldots \underline{E}_{\beta_1}^{t_1}\right) & = \textstyle \prod_{i=1}^N \delta_{s_i,t_i} q_{\beta_i}^{-s_i(s_i-1)/2}.\label{intDrinfeldOnNilpotent2}
\end{align}
By definition of the integral forms $U_A^\Pup$ and $\Gamma_A^\Pup$ and their basis in \S\ref{integralFormsUq}, we get two pairings
\begin{align*}
&\rho'_A : U_A(\mathfrak{n}_-) \times \Gamma_A(\mathfrak{n}_+) \to A \quad \text{given by \eqref{intDrinfeldOnNilpotent1}}\\
&\rho''_A : \Gamma_A(\mathfrak{n}_-) \times U_A(\mathfrak{n}_+) \to A \quad \text{given by \eqref{intDrinfeldOnNilpotent2}}
\end{align*}
We can thus define $(\:\,|\,\:)_{\AD} : U_A^\Pup \times \Gamma_A^\Pup \to \mathbb{C}\bigl[ \qD^{\pm 1/2} \bigr]$ by
\begin{equation}\label{integralKillingForm}
\bigl( S(X_-) K_\nu X_+ \,|\, S(Y_+) M Y_- \bigr)_{\!\AD} = (K_\nu \,|\, M)_{\AD}\, \rho'_A(X_-, Y_+) \, \rho''_A(Y_-,X_+)
\end{equation}
for all $X_\pm \in U_A(\mathfrak{n}_\pm)$, $Y_\pm \in \Gamma_A(\mathfrak{n}_\pm)$, and $M\in \Gamma_A^\Pup(\mathfrak{h})$. Note that $(\:\,|\,\:)_{\AD}$ is non-degenerate as it recovers $(\:\,|\, \:)$ from \eqref{qKillingnous} after extending scalars to $\mathbb{C}(\qD^{1/2})$. It is moreover ad-invariant in the sense of Theorem \ref{teoqKillingnous}, as a restriction of $(\:\,|\,\:)$.

\begin{cor}\label{cor:dec25PhiA} $\Phi_{0,1}^A :  \Oo_A\to (U_A^\Pup)^{\rm lf}$ is an isomorphism of $\Gamma_A^\Pup$-modules, whose inverse is the restriction $J_A$ of $J_q$ to $(U_A^\Pup)^{\rm lf}$.
\end{cor}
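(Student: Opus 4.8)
The plan is to reduce the whole statement to the single assertion that $J_q$ maps the integral locally finite part $(U_A^\Pup)^{\mathrm{lf}}$ into $\Oo_A$. Indeed, the discussion preceding the corollary already records that $\Phi_{0,1}^A$ is an \emph{injective} morphism of $\Gamma_A^\Pup$-modules $\Oo_A \to (U_A^\Pup)^{\mathrm{lf}}$, and Theorem \ref{teoqKillingnous}(iii) says that over $\mc(\qD)$ the maps $\Phi_{0,1}$ and $J_q$ are mutually inverse. Hence, once I show $J_q\bigl((U_A^\Pup)^{\mathrm{lf}}\bigr)\subseteq\Oo_A$, I may set $J_A:=J_q|_{(U_A^\Pup)^{\mathrm{lf}}}$ and simply restrict the generic identities $J_q\circ\Phi_{0,1}=\mathrm{id}$ and $\Phi_{0,1}\circ J_q=\mathrm{id}$ to the integral forms. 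This yields $J_A\circ\Phi_{0,1}^A=\mathrm{id}_{\Oo_A}$ and $\Phi_{0,1}^A\circ J_A=\mathrm{id}_{(U_A^\Pup)^{\mathrm{lf}}}$, so that $\Phi_{0,1}^A$ is bijective with inverse $J_A$; being already a module map, it is then automatically a $\Gamma_A^\Pup$-module isomorphism.

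To prove the inclusion $J_q\bigl((U_A^\Pup)^{\mathrm{lf}}\bigr)\subseteq\Oo_A$ I would argue by equivariance. The integral Killing form $(\:\,|\,\:)_{\AD}$ of \eqref{integralKillingForm} is the restriction of $(\:\,|\,\:)$ from \eqref{qKillingnous}, so it is non-degenerate and $\mathrm{ad}$-invariant; consequently $J_q$ intertwines the action $\mathrm{ad}^r$ on $U_q^\Pup$ with $\mathrm{coad}^r$ on $\Oo_q$, exactly as in the proof of Theorem \ref{teoqKillingnous}(iii). On the distinguished generators one has $J_q(K_{-2\lambda})=\psi_{-\lambda}$, and $\psi_{-\lambda}$ lies in $\Oo_A$: it is the matrix coefficient $x\mapsto v^*(x\cdot v)$ of the $A$-lattice ${}_AV_{-w_0(\lambda)}\in\mathcal{C}_A$, where $v$ is a lowest weight vector and $v^*$ its dual covector, both taken in the $A$-lattice. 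Since $\Oo_A$ (after the implicit extension of scalars to $\AD$) is stable under $\mathrm{coad}^r(\Gamma_A^\Pup)$, because it is a Hopf $\AD$-algebra and the pairing in \eqref{coadUqOq} is integral, equivariance of $J_q$ then gives
\[
J_q\bigl(\mathrm{ad}^r(u)(K_{-2\lambda})\bigr)=\mathrm{coad}^r(u)(\psi_{-\lambda})\in\Oo_A
\]
for every $u\in\Gamma_A^\Pup$ and every $\lambda\in P_+$.

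The remaining, and main, point is therefore to establish the \emph{integral} generation statement
\[
(U_A^\Pup)^{\mathrm{lf}}=\sum_{\lambda\in P_+}\mathrm{ad}^r(\Gamma_A^\Pup)(K_{-2\lambda}),
\]
the $A$-lattice refinement of the decomposition $(U_q^\Pup)^{\mathrm{lf}}=\bigoplus_{\lambda}\mathrm{ad}^r(U_q^\Pup)(K_{-2\lambda})$ used over $\mc(\qD)$. Granting it, the previous paragraph immediately yields $J_q\bigl((U_A^\Pup)^{\mathrm{lf}}\bigr)\subseteq\Oo_A$ and the proof is complete. I expect this integral generation to be the hard part, since it requires controlling not merely the $\mc(\qD)$-span but the genuine $A$-lattice of the ad-orbits; the natural approach is to check that the $\mathrm{ad}^r(\Gamma_A^\Pup)$-submodule generated by the $K_{-2\lambda}$ is a full $A$-form of each ad-isotypic summand and exhausts $U_A^\Pup\cap(U_q^\Pup)^{\mathrm{lf}}$. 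An alternative route is to combine the localized equality $U_A^{\mathrm{lf}}\mathbb{T}_{2-}^{-1}=U_A'\mathbb{T}_{2-}^{-1}$ obtained inside the proof of Lemma \ref{lemmaJLforUA} (which is independent of the present corollary, and hence non-circular despite Remark \ref{rmkTechnicalPropUA}(1)) with a saturation argument showing that $U_A'=\mathrm{im}(\Phi_{0,1}^A)$ is saturated in $(U_A^\Pup)^{\mathrm{lf}}$ with respect to $\mathbb{T}_{2-}$, thereby clearing the powers of $K_{-2\lambda}$.
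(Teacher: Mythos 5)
Your reduction and the equivariance steps are correct and consistent with the paper's framework: the corollary does come down to the inclusion $J_q\bigl((U_A^\Pup)^{\mathrm{lf}}\bigr)\subseteq \Oo_A$, the map $J_q$ does intertwine $\mathrm{ad}^r$ with $\mathrm{coad}^r$, $J_q(K_{-2\lambda})=\psi_{-\lambda}$ does lie in $\Oo_A$, and $\Oo_{\AD}$ is indeed stable under $\mathrm{coad}^r(\Gamma_A^\Pup)$. The genuine gap is that your argument hinges on the integral generation statement
\[
(U_A^\Pup)^{\mathrm{lf}}=\sum_{\lambda\in P_+}\mathrm{ad}^r(\Gamma_A^\Pup)(K_{-2\lambda}),
\]
which you explicitly leave unproven. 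This is not a deferrable technical point: it is the whole content of the corollary in disguise. The Joseph--Letzter decomposition is only available over the field $\mc(\qD)$; its refinement to an equality of $A$-lattices with $\Gamma_A^\Pup$ acting is at least as strong as the surjectivity of $\Phi_{0,1}^A$ onto $(U_A^\Pup)^{\mathrm{lf}}$ that you are trying to establish --- your own argument derives the corollary from it, and even granting the corollary it would still amount to the equally unproven assertion that $\Oo_A$ is generated over $\mathrm{coad}^r(\Gamma_A^\Pup)$ by the elements $\psi_{-\lambda}$. The alternative ``saturation'' route has the same defect: that $U_A'$ is $\mathbb{T}_{2-}$-saturated inside $(U_A^\Pup)^{\mathrm{lf}}$ is again an unproven lattice statement of comparable depth, so the proposal relocates the difficulty without resolving it.

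The paper closes this step by a duality argument that needs no generation or saturation at all, and it is worth seeing why it works. The integral quantum Killing form \eqref{integralKillingForm} pairs $U_A^\Pup$ against the \emph{restricted} integral form $\Gamma_A^\Pup$ (not against $U_A^\Pup$ itself), and it is non-degenerate. Restricting Theorem \ref{teoqKillingnous}(i) to integral forms gives $\langle\alpha,y\rangle_{\AD}^\Pup=\bigl(\Phi_{0,1}^A(\alpha)\,\big|\,y\bigr)_{\AD}$ for all $\alpha\in\Oo_A$ and $y\in\Gamma_A^\Pup$. Consequently, for any $x\in(U_A^\Pup)^{\mathrm{lf}}$ the linear form $\langle J_q(x),\text{-}\rangle^\Pup=(x\,|\,\text{-})$ pairs all of $\Gamma_{\AD}^\Pup$ into $\AD$; since $\Oo_A$ is precisely the dual lattice of $\Gamma_A^\Pup$ under Lusztig's canonical-basis duality (the perfect pairing \eqref{pairOAnov24} combined with \eqref{relpairingdotA} and the weight decomposition \eqref{OAweightdecomp} of Appendix \ref{appLusztigModif}), this alone forces $J_q(x)\in\Oo_A$. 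Non-degeneracy of $(\:\,|\,\:)_{\AD}$ then yields both identities $J_A\circ\Phi_{0,1}^A=\mathrm{id}$ and $\Phi_{0,1}^A\circ J_A=\mathrm{id}$. In short: replacing your orbit-generation picture by this dual-lattice picture is the missing idea, and with it the hard step evaporates.
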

\begin{proof} 
By restriction to integral forms in Theorem \ref{teoqKillingnous}, we have 
\begin{equation}\label{KillingPhi01Int}
\forall \, \alpha\in \Oo_A, \:\: \forall \, y\in \Gamma_A^{\Pup}, \quad \langle \alpha, y\rangle_{\AD}^\Pup = \bigl(\Phi_{0,1}^{A}(\alpha)) \,\big|\, y\bigr)_{\AD} \in \AD
\end{equation}
and $J_q$ restricts to $J_A : (U_A^\Pup)^{\mathrm{lf}} \to \Oo_A$ characterized by $\bigl\langle J_A(x),y \bigr\rangle_{\AD}^\Pup= (x \,|\, y )_{\AD} $ for all $x \in (U_A^{\Pup})^{\mathrm{lf}}$. Since $(\:\,|\,\:)_{\AD}$ is non degenerate, we have the result.
\end{proof}
\begin{remark}{\rm This corollary implies in particular that $\Phi_{0,1}^A(\Oo_A) = (U_A^\Pup)^{\rm lf}$, which answers positively a question asked before Prop.\,2.24 in \cite{BR2}.}
\end{remark}
Choose an arbitrary square root $\eD^{1/2}$ of the $l$-th root of unity $\eD$ defined in \eqref{choixED} and denote by $(\:\,|\,\:)_{\eD} : U_\e^\Pup \times \Gamma_\e^\Pup \to \mathbb{C}$ the specialization of the pairing $(\:\,|\,\:)_{\AD}$ at $\qD^{1/2} \mapsto \eD^{1/2}$. We record the following property, which is a key-point in the proof of Lemma \ref{Z0Z1Phimai25}:

\begin{lem}\label{lemmaKerFrZ0}
The subspaces $\mathcal{Z}_0(U^\Pup_\e) \subset U^\Pup_\e$ and $\mathrm{ker}(\mathbb{F}\mathrm{r}_\e) \subset \Gamma_\e^\Q$ are orthogonal with respect to the pairing $(\:\,|\,\:)_{\eD}$, {\it i.e.} 
\[ \forall \, z_0 \in \mathcal{Z}_0(U^\Pup_\e),\:\:\forall \, b \in \mathrm{ker}(\mathbb{F}\mathrm{r}_\e), \quad (z_0\,|\,b)_{\eD} = 0 \]
\end{lem}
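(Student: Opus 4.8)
The plan is to compute the pairing on explicit bases of the two subspaces and to exploit the ``diagonal'' nature of the nilpotent factors in formula \eqref{integralKillingForm}. First I would fix normal forms for the two arguments. For $\ker(\mathbb{F}\mathrm{r}_\e)$ I would use the basis from Remark \ref{rkbaseuedec25}, whose elements have the shape $S(Y_+)\,M\,Y_-$ with $Y_+=\prod_k E_{\beta_k}^{(n_k)}$, $Y_-=\prod_i F_{\beta_i}^{(p_i)}$ and $M\in\Gamma_\e^\Q(\mathfrak{h})$, where either some $n_k$ or $p_i$ is not divisible by $l$, or $M$ lies in the ideal of $\Gamma_\e^\Q(\mathfrak{h})$ generated by the $K_i-1$; this is exactly the shape of the second slot in \eqref{integralKillingForm}. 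For $\mathcal{Z}_0(U_\e^\Pup)$ I would record a normal form: since $\mc[E_\beta^l]$ and $\mc[F_\beta^l]$ have PBW bases consisting of monomials all of whose exponents are divisible by $l$ (De Concini--Kac--Procesi), the triangular factorisation $\mathcal{Z}_0(U_\e^\Pup)=\mc[F_\beta^l]\,\mc[K_\lambda^l]\,\mc[E_\beta^l]$ coming from \eqref{Z0Uedefsep25} and centrality shows every element of $\mathcal{Z}_0(U_\e^\Pup)$ is a combination of monomials $(\prod F_\beta^{la})K_{l\mu}(\prod E_\beta^{lb})$. Rewriting the negative part through the antipode — here I use that $\mathcal{Z}_0(U_\e^\Pup)$ is a Hopf subalgebra (each $E_\beta^l$, $F_\beta^l$ is skew-primitive at $\e$ because the intermediate $q$-binomials vanish), hence $S$-stable, and that $S$ preserves weights and exponent multisets up to a $K$-shift in $lP$ — each such monomial takes the form $S(X_-)K_\nu X_+$ with $X_\pm$ PBW monomials whose root-vector exponents are divisible by $l$ and $\nu\in lP$. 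This is the shape of the first slot in \eqref{integralKillingForm}.

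With both arguments in normal form, \eqref{integralKillingForm} factorises the pairing as $(z_0\mid b')_{\eD}=(K_\nu\mid M)_{\eD}\,\rho'_A(X_-,Y_+)_{|\e}\,\rho''_A(Y_-,X_+)_{|\e}$. By the explicit diagonal formulas \eqref{intDrinfeldOnNilpotent1}--\eqref{intDrinfeldOnNilpotent2}, $\rho'_A(X_-,Y_+)$ vanishes unless the exponents of $X_-$ match the divided-power indices $n_k$ of $Y_+$, and $\rho''_A(Y_-,X_+)$ vanishes unless the indices $p_i$ of $Y_-$ match the exponents of $X_+$. Since all exponents occurring in $X_\pm$ are divisible by $l$, the first factor forces all $n_k\equiv 0$ and the second forces all $p_i\equiv 0 \pmod l$. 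Hence if some $n_k$ or $p_i$ is not divisible by $l$ the pairing already vanishes, which disposes of the first alternative in the basis description of $\ker(\mathbb{F}\mathrm{r}_\e)$.

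It remains to treat the case where all $n_k,p_i$ are divisible by $l$, in which the basis description forces $M$ into the ideal of $\Gamma_\e^\Q(\mathfrak{h})$ generated by the $K_i-1$; here I must show $(K_\nu\mid M)_{\eD}=0$ for $\nu\in lP$. The key observation is that the Cartan form \eqref{IntKillingCartan}, for fixed grouplike first argument $K_\nu$, is the algebra character of $\Gamma_\e^\Q(\mathfrak{h})$ given by evaluation at $K_i\mapsto c_i:=\e_i^{(\nu,\alpha_i)/2}$: one checks on the generators $K_i$ and $(K_i;t)_{\e_i}$ that $(K_\nu\mid\,\cdot\,)_{\eD}$ agrees with this evaluation, using that $(q^m;t)_q$ is the relevant $q$-binomial (for $M$ a polynomial in the $K_i$ and $(K_i;l)_{\e_i}$ we never meet the $K_s$, $s\notin Q$, terms). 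Writing $\nu=l\lambda$, one has $c_i^2=\e_i^{(\nu,\alpha_i)}=(\e_i^{\,l})^{(\lambda,\alpha_i)}=1$ since $\e_i^{\,l}=1$ and $(\lambda,\alpha_i)\in\mz$; choosing the square root $\eD^{1/2}$ inside $\langle\eD\rangle$ (so that $(\eD^{1/2})^l=1$) pins down $c_i=1$. Thus $(K_\nu\mid\,\cdot\,)_{\eD}$ is the counit $\varepsilon$ of $\Gamma_\e^\Q(\mathfrak{h})$ (note $\varepsilon((K_i;t)_{\e_i})=(1;t)_{\e_i}=0$ for $t\ge 1$), and since $M$ lies in the ideal generated by the $K_i-1\in\ker\varepsilon$ we conclude $(K_\nu\mid M)_{\eD}=\varepsilon(M)=0$, finishing the proof.

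The main obstacle is the structural normal form for $\mathcal{Z}_0(U_\e^\Pup)$: one must verify that, after passing through the antipode, the negative PBW part of a central $l$-th-power monomial still has all root-vector exponents divisible by $l$ and that the torus exponent stays in $lP$; this is precisely where the Hopf (hence $S$-) stability of $\mathcal{Z}_0(U_\e^\Pup)$ and the De Concini--Kac--Procesi description of $\mc[E_\beta^l]$ and $\mc[F_\beta^l]$ are used. A second delicate point is the square-root convention in the Cartan step: the value $c_i$ is a priori only known to be $\pm1$, and the vanishing $c_i=1$ relies on taking $\eD^{1/2}$ to be an $l$-th root of unity; I would make this choice explicit to rule out a spurious sign.
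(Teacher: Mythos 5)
Your proposal is correct and follows essentially the same route as the paper's own proof: the basis of $\ker(\mathbb{F}\mathrm{r}_\e)$ from Remark \ref{rkbaseuedec25}, the triangular normal form $S(X_-)K_{l\mu}X_+$ (with all exponents divisible by $l$ and $l\mu\in lP$) for elements of $\mathcal{Z}_0(U_\e^\Pup)$, the factorization \eqref{integralKillingForm}, the Kronecker-delta orthogonality \eqref{intDrinfeldOnNilpotent1}--\eqref{intDrinfeldOnNilpotent2} to kill the terms with some $n_k$ or $r_j$ not divisible by $l$, and then a character argument on the Cartan factor to dispose of $M$ in the ideal generated by the $K_i-1$.

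Two comparative remarks. First, your closing paragraph about the square root is not a pedantic afterthought: it identifies a point where the paper's proof is imprecise. The paper computes $(K_{l\mu}\,|\,K_i-1)_{\eD}=\eD^{D(\alpha_i,l\mu)}-1$, silently dropping the factor $1/2$ present in \eqref{qKillingnous} and \eqref{IntKillingCartan}; with that factor restored the value is $(\eD^{1/2})^{Dl(\mu,\alpha_i)}-1$, and since $D(\mu,\alpha_i)$ can be odd (e.g. $\mathfrak{g}=\mathfrak{sl}_3$, $\mu=\varpi_1$, $i=1$, where $D(\varpi_1,\alpha_1)=3$) this vanishes only for the square root satisfying $(\eD^{1/2})^l=1$, namely $\eD^{(l+1)/2}$, whereas the paper allows an ``arbitrary'' square root. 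Your explicit normalization is therefore genuinely needed for the statement as written, and it is harmless elsewhere in the paper because the other uses of $(\:\,|\,\:)_{\eD}$ (e.g. via \eqref{KillingPhi01Int}) involve first arguments in $\Phi_{0,1}^\e(\Oo_\e)$, whose torus parts lie in the even lattice $2P$ so that the exponent $1/2$ never bites. Second, a small slip that does not affect the conclusion: the character $(K_\nu\,|\,\cdot\,)_{\eD}$ sends $K_i$ to the specialization of $q^{(\nu,\alpha_i)/2}$, i.e. $c_i=\e^{(\nu,\alpha_i)/2}$, not $\e_i^{(\nu,\alpha_i)/2}$ (the two differ by the factor $d_i$ in the exponent for non-simply-laced $\mathfrak{g}$); for $\nu\in lP$ and your choice of $\eD^{1/2}$ both values equal $1$, so the vanishing on the ideal generated by the $K_i-1$ goes through exactly as you argue.
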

\begin{proof} Using the formulas in \eqref{intDrinfeldOnNilpotent1} and \eqref{intDrinfeldOnNilpotent2}, let us compute the pairing $(\:\,|\,\:)_{\eD}$ when the first argument belongs to $\mathcal{Z}_0(U^\Pup_\e)$, and the second argument is one of the basis elements of $\mathbb{F}\mathrm{r}_\e$ in Remark \ref{rkbaseuedec25}:
\begin{align*}
\textstyle \biggl( S\!\left(\prod_{k=N}^1 F_{\beta_k}^{lp_k}\right) K_\mu^l \left(\prod_{t=N}^1 E_{\beta_t}^{ls_t}\right) & \,\bigg|\, S\!\textstyle \left(\prod_{i=N}^1 E_{\beta_i}^{(n_i)} \right) M \left(\prod_{j=N}^1 F_{\beta_j}^{(r_j)}\right) \biggr)_{\!\!\eD} \\
=\:\,&\textstyle (K_{l\mu}\,|\,M)_{\eD} \, \prod_{i=1}^N \delta_{n_i,lp_i} \prod_{j=1}^N \delta_{r_j,l s_j}.
\end{align*}
This is $0$ when some $n_i$ or $r_j$ are not divisible by $l$, or $M$ is in the ideal of $\Gamma_\e^\Pup(\mathfrak{h})$ generated by the elements $K_i - 1 = K_i - K_0$ (since $(K_{l\mu}\,|\,K_i - 1)_{\eD}=\eD^{D(\alpha_i,l\mu)} - 1 = \e^{l(\alpha_i,\mu)} - 1 = 0$).
\end{proof}

\section{Lusztig's modified quantum group and canonical basis}\label{appLusztigModif}

The aim of this appendix is to provide the details of the arguments in Lemma \ref{nondegpairepsD}, using Lusztig's modified quantum group; the key-point is Lemma \ref{embedGammaeps}.

\subsection{Definitions and relationships with \texorpdfstring{$\Oo_A$ and $\Gamma_\e^\Lam$}{integral quantum groups}} The Lusztig {\it modified quantum group} $\stackrel{.}{\mathbf{U}}$ associated to the simply-connected root datum \cite[Chaps.\,2, 3 and 23]{Lusztig} can be defined as follows (actually $\stackrel{.}{\mathbf{U}}$ here is obtained by replacing $Y= \check{Q}$ with $Q$ in Lusztig's $\stackrel{.}{\mathbf{U}}$). The elements of $\stackrel{.}{\mathbf{U}}$ are finite $\mathbb{C}(q)$-linear combinations of elements $x1_\lambda$ with $x \in U_q^\Q$ and $\lambda \in P$; of course we have $(x_1 + cx_2)1_\lambda = (x_11_\lambda) + c(x_2 1_\lambda)$ for all $c \in \mathbb{C}(q)$. The product in $\stackrel{.}{\mathbf{U}}$ is entirely determined by the product in $U_q^\Q$ together with the following relations
\[ 1_{\lambda}1_{\mu} = \delta_{\lambda,\mu} 1_{\lambda}, \qquad K_{\alpha}1_{\lambda} = 1_{\lambda}K_{\alpha} = q^{(\alpha,\lambda)} 1_{\lambda}, \qquad w 1_{\lambda} = 1_{\lambda + \nu}w \]
for all $\alpha \in Q$, $x,y \in U_q^\Q$ and $w \in U_q^\Q$ which has weight $\nu$ under the adjoint action (\textit{i.e.} $K_{\alpha}wK_{\alpha}^{-1} = q^{(\alpha,\nu)}w$).

One can regard $1_\lambda$ as a substitute for the projector onto the weight subspaces of weight $\lambda$ of the $U_q^\Q$-modules of type $1$, and $\stackrel{.}{\mathbf{U}}$ as obtained from $U_q^\Q$ by replacing $U_q^\Q(\mathfrak{h})$ with $\textstyle \bigoplus_{\lambda \in P} \mathbb{C}(q)1_{\lambda}$.

Note the decompositions $\textstyle \stackrel{.}{\mathbf{U}}\, = \bigoplus_{\lambda \in P} U_q^\Q 1_{\lambda}$ and $\textstyle \stackrel{.}{\mathbf{U}}\, = \bigoplus_{\lambda',\lambda'' \in P} 1_{\lambda'} U_q^\Q 1_{\lambda''}$ as vector spaces, corresponding to weight spaces for the right and left actions of $U_q^\Q$ on $\stackrel{.}{\mathbf{U}}$. There is also a substitute of the PBW basis: given bases $\{b^\pm\}$ of $U_q(\mathfrak{n}_\pm)$, the set of elements $b^+1_\lambda b^-$ (or $b^-1_\lambda b^+$, or $b^+b^-1_\lambda$), where $\lambda\in P$, is a basis of $\stackrel{.}{\mathbf{U}}$. 

The integral form $\stackrel{.}{\mathbf{U}}_A$ is the $A$-algebra generated by the elements $E_i^{(p)}1_\lambda$  and $F_i^{(p)}1_\lambda$, for all $i\in \{1,\ldots,m\}$, $p\in \mathbb{N}$, $\lambda \in P$. Given bases $\{b_A^\pm\}$ of $\Gamma_A(\mathfrak{n}_\pm)$, the set of elements $b_A^+1_\lambda b_A^-$ (or $b_A^-1_\lambda b_A^+$, or $b_A^+b_A^-1_\lambda$), where $\lambda\in P$, is a basis of $\stackrel{.}{\mathbf{U}}_A$. It inherits from $\stackrel{.}{\mathbf{U}}$ a structure of $\Gamma_A^\Q$-bimodule; by extending the ground ring to $A_\D$ and using \eqref{actionLi}, it becomes a $\Gamma_A^\Pup$-bimodule. By construction $\stackrel{.}{\mathbf{U}}_A\!\!\!{} \textstyle \,=\bigoplus_{\lambda',\lambda''\in P} 1_{\lambda'}\Gamma_A^\Q 1_{\lambda''} = \bigoplus_{\lambda\in P}\Gamma_A^\Q 1_{\lambda}$.

The algebra $\stackrel{.}{\mathbf{U}}_A$ has a canonical $A$-basis $\stackrel{.}{\mathbf{B}}$ having many remarkable properties \cite[Chap.\,25 and 29]{Lusztig}. In particular, its elements are weight vectors under the left and right actions of $\Gamma_A^\Q(\mathfrak{h})$, and there is a canonical partition into finite sets $$\textstyle \stackrel{.}{\mathbf{B}} = \coprod_{\lambda\in P_+} \stackrel{.}{\mathbf{B}}\!\![\lambda].$$

The algebra $\stackrel{.}{\mathbf{U}}_A$ has no unit, and it is not a Hopf algebra in a strict sense but is an instance of multiplier Hopf algebra in the sense of \cite{VD1}. The canonical basis provides a very natural completion $\hat{{\mathbf{U}}}_A$, introduced by Lusztig: it is the set of (possibly infinite) sums of the form $\textstyle \sum_{b\in\stackrel{.}{\mathbf{B}} }n_b b$, with $n_b\in A$. It is an algebra \cite{Lusztig3}, with unit $\textstyle \sum_{\lambda\in P} 1_\lambda$. If we moreover denote by $\stackrel{.}{\mathbf{U}}_A\!\!{}^{\hat{\otimes} 2}$ the set of (possibly infinite) sums of the form $\textstyle \sum_{b,b'\in\stackrel{.}{\mathbf{B}} }n_{b,b'} b\otimes b'$ with $n_{b,b'}\in A$, then we can define $\Delta :\: \stackrel{.}{\mathbf{U}}_A \,\to\, \stackrel{.}{\mathbf{U}}_A\!\!{}^{\hat{\otimes} 2}$, $S : \:\stackrel{.}{\mathbf{U}}_A \,\to\, \stackrel{.}{\mathbf{U}}_A$ and $\varepsilon :\: \stackrel{.}{\mathbf{U}}_A \,\to A$ by
\[ \Delta(x1_{\lambda}) = \sum_{(x),\:\mu \in P} x_{(1)}1_{\mu} \otimes x_{(2)}1_{\lambda - \mu}, \quad S(x1_{\lambda}) = 1_{-\lambda}S(x), \quad \varepsilon(x1_{\lambda}) = \varepsilon(x)\delta_{\lambda,0} \]
for all $x \in \Gamma_A^\Q$ and $\lambda \in P$, using the Hopf structure of $\Gamma_A^\Q$ on $x \in \Gamma_A^\Q$.
\smallskip

For every lattice $Q\subset \Lambda\subset P$ there is a natural algebra morphism $\Gamma_A^\Lam \to \hat{{\mathbf{U}}}_A \otimes_A \AD$ given by
\begin{equation}\label{embedGamma}
x^+ y x^{-} \mapsto \sum_{\mu\in P}x^+ y _\mu x^{-}, \qquad \text{where } x^{\pm}\in \Gamma_A(\mathfrak{n}^{\pm}), \:\: y\in \Gamma_A^\Lam(\mathfrak{h}), \:\: y_\mu= y 1_{\mu}\in \AD 1_\mu.
\end{equation}
Indeed, by a reasoning on weights it can  be shown that the sum is indeed of the form $\textstyle \sum_{b\in\stackrel{.}{\mathbf{B}} }n_b b$, with $n_b\in \AD$.

The morphism \eqref{embedGamma} is injective, and the proof of this fact is completely similar to the one of Lemma \ref{embedGammaeps} below, which treats the specialized version. 

The category $\mathcal{C}_A$ (\S\ref{integralO}) is ribbon equivalent to the category of finite rank {\it unital} $\stackrel{.}{\mathbf{U}}_A$-modules $M$ \cite[\S 23.1.4, \S 31.1.5-9, Th.\,32.1.5]{Lusztig}. Here, unital means that each vector of $M$ is annihilated by all but a finite number of the idempotents $1_\lambda\in \stackrel{.}{\mathbf{U}}_A$, and $\textstyle \sum_{\lambda\in P} 1_\lambda$ acts as the identity. The weight space decomposition $M = \textstyle \oplus_{\lambda \in P} M^\lambda$ is given by $M^\lambda = 1_\lambda M$; the action of $x\in \Gamma_A\Q$ is given by $xm = (x1_\lambda)m$ for any $\lambda\in P$, $m\in M^\lambda$.

\indent This permits us to consider $\Oo_A$ as the dual space of $\stackrel{.}{\mathbf{U}}_A$. More precisely, in \cite[\S 29.5]{Lusztig} Lusztig showed that $\Oo_A$ is a free $A$-module, with a basis $\stackrel{.}{\mathbf{B}}{\!\!^*}:=\{\varphi_b\}_{b\in \stackrel{.}{\mathbf{B}}}$ in duality with $\stackrel{.}{\mathbf{B}}$. Therefore the bilinear form defined by 
\begin{equation}\label{pairOAnov24}
\langle \text{-},\text{-} \rangle_A^{\smallbullet}:{\mathcal O}_A\times \stackrel{.}{\mathbf{U}}_A\to A\ ,\ \langle \varphi_b,b'\rangle_A^{\smallbullet}:=\delta_{b,b'}
\end{equation}
is a perfect pairing, that is: the maps $\Oo_A\to {\rm Hom}_A(\stackrel{.}{\mathbf{U}}_A,A)$, $\varphi\mapsto \langle \varphi,\text{-} \rangle_A^{\smallbullet}$, and $\stackrel{.}{\mathbf{U}}_A \to {\rm Hom}_A(\Oo_A,A)$, $x\mapsto \langle \text{-},x \rangle_A^{\smallbullet}$, are bijective. Moreover $\langle \text{-},\text{-} \rangle_A^{\smallbullet}$ is a (multiplier) Hopf pairing. Note that, by very definition, the pairing $\langle \text{-},\text{-} \rangle_A^{\smallbullet}$ can be extended to $\mathcal{O}_A \times \hat{\mathbf{U}}_A \to A$. Using the embedding \eqref{embedGamma} for $\Lambda = Q$, the bilinear forms $\langle \text{-},\text{-} \rangle_A^\Q$ and $\langle \text{-},\text{-} \rangle_A^{\smallbullet}$ are related as follows: given $x^{\pm}\in{\Gamma_A}(\mathfrak{n}^{\pm})$, $y\in \Gamma_A^\Q(\mathfrak{h})$ we have (the sum has a finite number of non zero terms):
\begin{equation}\label{relpairingdotA}
\langle \varphi , x^+ y x^-\rangle_{A}^\Q=\sum_{\mu\in P}\langle \varphi , x^+ y_\mu x^- \rangle_A^{\smallbullet}, \qquad \text{with } y_{\mu} = y1_{\mu} \in A 1_{\mu}.
\end{equation}
By duality the elements of $\stackrel{.}{\mathbf{B}}{\!\!^*}$ are weight vectors for the left and right coregular actions of $\Gamma_A(\mathfrak{h})$. There is a decomposition into weight subspaces 
\begin{equation}\label{OAweightdecomp} \Oo_A = \bigoplus_{\mu,\nu\in P} {}_\mu\!\left(\Oo_A\right)_{\nu},
\end{equation}
where 
\begin{multline}\label{weightspaceOA}
{}_\mu\!\left(\Oo_A\right)_{\nu} :=\left\lbrace f\in \Oo_\e\mid  K_i\rhd f =q_i^{(\check{\alpha}_i,\mu)}f,  ( K_i; l)_{q_i}\rhd f = ( q_i^{(\check{\alpha}_i,\mu)}; l)_{q_i}f,\right. \\ \left. f \lhd K_i =q_i^{(\check{\alpha}_i,\nu)}f,  f \lhd ( K_i; l)_{q_i} = ( q_i^{(\check{\alpha}_i,\nu)}; l)_{q_i}f, i=1,\ldots,m\right\rbrace.
\end{multline}

Using $\stackrel{.}{\mathbf{B}}{\!\!^*}$ Lusztig proved in \cite[Prop.\,3.3]{Lusztig3} that $\Oo_A$ is generated as an algebra by certain matrix coefficients of the modules ${}_AV_{\varpi_i}$, with highest weights the fundamental weights $\varpi_i$, $i=1,\ldots,m$.

\indent Clearly the existence of the dual basis $\stackrel{.}{\mathbf{B}}{\!\!^*}$ implies the identification $\hat{{\mathbf{U}}}_A \cong {\rm Hom}_A(\Oo_A,A)$. Also, $\Oo_A$ inherits from $\Oo_q$ a bimodule structure over $\Gamma_A^\Q$ (and $\hat{{\mathbf{U}}}_A$), with actions $\rhd$, $\lhd$.
\smallskip

The next lemma uses the decomposition $\Gamma^{\Lam}_{\epsilon}(\mathfrak{g}) = \Gamma_{\epsilon}(\mathfrak{n}^+)\Gamma^\Lam_{\epsilon}(\mathfrak{h})\Gamma_{\epsilon}(\mathfrak{n}^-)$, obtained by specializing to $q = \epsilon$ the decomposition of $\Gamma_A^\Lam$ explained in \S\ref{integralFormsUq}.
\begin{lem}\label{embedGammaeps}
For every lattice $Q\subset \Lambda \subset P$, the morphism \eqref{embedGamma} specializes to $\qD = \eD$ into a monomorphism
\begin{equation*}
\iota^\Lam_\e\colon\Gamma_\e^\Lam\hookrightarrow \hat{{\mathbf{U}}}_\e, \quad x^+ y x^{-} \mapsto \sum_{\mu\in P}x^+ y _\mu x^{-}\in \hat{{\mathbf{U}}}_\e
\end{equation*}
where $x^{\pm}\in \Gamma_\e(\mathfrak{n}^{\pm})$, $y\in \Gamma_\e^\Lam(\mathfrak{h})$ and $y_\mu:= y 1_{\mu}\in \mathbb{C}1_\mu \subset \:\hat{\mathbf{U}}_\e$.
\end{lem}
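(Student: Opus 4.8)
The plan is to establish three properties of the $\mathbb{C}$-linear map $\iota^\Lam_\e$ given by the stated formula: that the sum $\sum_{\mu\in P} x^+ y_\mu x^-$ genuinely defines an element of the completion $\hat{\mathbf{U}}_\e$; that $\iota^\Lam_\e$ is an algebra morphism; and, the essential point, that it is injective. The first two are the specializations at $\qD=\eD$ of the corresponding facts for the $A$-level map \eqref{embedGamma}, and I would argue them in the same way. For well-definedness, fix $x^\pm\in\Gamma_\e(\mathfrak{n}^\pm)$ and $y\in\Gamma_\e^\Lam(\mathfrak{h})$; since $y$ acts diagonally on weight spaces we have $y_\mu=y1_\mu=c_\mu(y)1_\mu$ for scalars $c_\mu(y)\in\mathbb{C}$, so $x^+y_\mu x^-=c_\mu(y)\,x^+1_\mu x^-$. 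Expanding $x^\pm$ in fixed bases $\{b^\pm_\e\}$ of $\Gamma_\e(\mathfrak{n}^\pm)$, each $x^+1_\mu x^-$ is a finite combination of the basis vectors $b^+_\e 1_\lambda b^-_\e$ of $\stackrel{.}{\mathbf{U}}_\e$ (the specialization, via Lemma \ref{lemmaSpecialisation}, of the analogous $A$-basis of $\stackrel{.}{\mathbf{U}}_A$), and for fixed $b^\pm_\e$ distinct values of $\mu$ produce distinct basis vectors; hence $\sum_\mu c_\mu(y)\,x^+1_\mu x^-$ is a legitimate (possibly infinite) sum $\sum_b n_b b$, i.e.\ an element of $\hat{\mathbf{U}}_\e$. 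The algebra-morphism property is then inherited from \eqref{embedGamma}, since all the defining relations of $\Gamma^\Lam$ and of $\stackrel{.}{\mathbf{U}}$ live over $A$ (resp.\ $\AD$) and are compatible with the functor $-\otimes\mathbb{C}_{\eD}$.

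For injectivity I would reduce to the Cartan part. Using the specialized PBW basis $\{b^+\,y\,b^-\}$ of $\Gamma_\e^\Lam$, where $b^\pm$ range over the nilpotent bases and $y$ over a basis of $\Gamma_\e^\Lam(\mathfrak{h})$, write $z=\sum_{b^+,y,b^-}\alpha_{b^+,y,b^-}\,b^+yb^-$. The computation above gives
\[ \iota^\Lam_\e(z)=\sum_{b^+,b^-,\mu}\Big(\sum_y \alpha_{b^+,y,b^-}\,c_\mu(y)\Big)\,b^+1_\mu b^-. \]
Because the $b^+1_\mu b^-$ form a basis of $\stackrel{.}{\mathbf{U}}_\e$, the vanishing $\iota^\Lam_\e(z)=0$ decouples, for each fixed pair $(b^+,b^-)$, into the conditions $\sum_y\alpha_{b^+,y,b^-}\,c_\mu(y)=0$ for all $\mu\in P$. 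Thus $\iota^\Lam_\e$ is injective if and only if the linear forms $y\mapsto c_\mu(y)$, $\mu\in P$, separate the points of $\Gamma_\e^\Lam(\mathfrak{h})$; equivalently, if and only if the restriction $\iota^\Lam_\e|_{\Gamma_\e^\Lam(\mathfrak{h})}$ is injective, i.e.\ the eigenvalue functions $\mu\mapsto c_\mu(y)$ attached to a basis of $\Gamma_\e^\Lam(\mathfrak{h})$ are linearly independent.

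This Cartan linear independence is the heart of the matter, and I expect it to be the main obstacle, precisely because injectivity is the property most at risk under specialization. I would prove it from the explicit description of the specialized Cartan subalgebra: by \eqref{toto} and \eqref{relEnov25}, $\Gamma_\e^\Lam(\mathfrak{h})$ is spanned by the monomials $K_\lambda\prod_i\big((K_i;l)_{\e_i}\big)^{b_i}$ with $\lambda$ in a transversal for the character part and $b_i\ge 0$. Writing $n_i=(\check\alpha_i,\mu)$, which runs over all of $\mathbb{Z}^m$ as $\mu$ runs over $P$, the eigenvalue $c_\mu$ of such a monomial factors as a product of a root-of-unity character of the residues $n_i \bmod l$, coming from $K_\lambda$ via \eqref{actionLi} and the primitivity of $\e$, with a contribution of the $(K_i;l)_{\e_i}$. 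The subtlety is that one cannot read off the latter by naive evaluation, since the denominators $q_i^l-1$ vanish at $q_i=\e_i$; rather one must work in the integral form, where the Frobenius formula \eqref{etadef} shows that $(K_i;l)_{\e_i}$ acts as a genuine, degree-growing polynomial function of the rescaled weights $\lfloor n_i/l\rfloor$. Granting this, the eigenvalue functions are products of distinct characters of $(\mathbb{Z}/l)^m$ by distinct monomials in the $\lfloor n_i/l\rfloor$, and an elementary Vandermonde-type argument over $\mathbb{Z}^m$ gives their linear independence, completing the proof. The argument is uniform in $\Lambda$, the only $\Lambda$-dependence being in the character part, whose independence still follows from the primitivity of $\e$.
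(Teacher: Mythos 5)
Your proposal has the same skeleton as the paper's proof: expand a purported kernel element in the PBW-type family $b^+ 1_\mu b^-$, decouple the vanishing over the pairs $(b^+,b^-)$, and thereby reduce injectivity to the claim that the eigenvalue functions $\mu \mapsto c_\mu(y)$ separate the points of $\Gamma_\e^\Lam(\mathfrak{h})$. Two small caveats on that reduction: the decoupling (and indeed the well-definedness of the map) requires not merely that the $b^+1_\mu b^-$ form a basis of $\stackrel{.}{\mathbf{U}}_\e$, but that they are related to the canonical basis $\stackrel{.}{\mathbf{B}}$ --- over which $\hat{\mathbf{U}}_\e$ is \emph{defined} as formal sums --- by a triangular change of basis; the paper secures this by citing Lusztig's comparison. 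Also, the eigenvalue of $(K_i;l)_{\e_i}$ at an arbitrary weight $\mu$ is not given by the Frobenius formula \eqref{etadef}, which only concerns weights in $lP$; the correct input is the evaluation at $\e_i$ of the integral binomial $(q_i^{(\check{\alpha}_i,\mu)};l)_{q_i}$, i.e.\ a $q$-Lucas computation (your conclusion $\lfloor n_i/l\rfloor$ is nevertheless right). Where you genuinely diverge is in the Cartan step itself: the paper follows De Concini--Lyubashenko, taking the lexicographically minimal multidegree $\mathbf{t}'$ and evaluating at the single weight $\lambda=\sum_j t'_j\varpi_j$, where all other terms die because $(q^m;t)_q=0$ identically for $m<t$; you instead exploit the multiplicative structure of $\Gamma_\e^\Q(\mathfrak{h})$ coming from \eqref{toto}--\eqref{relEnov25} and split each eigenvalue into a character times a polynomial part. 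For $\Lambda=Q$ your route is correct and is a genuine, arguably more structural, alternative.

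The genuine gap is your final claim that the argument is ``uniform in $\Lambda$'', with character independence following ``from the primitivity of $\e$''. This fails for \emph{every} $\Lambda\neq Q$, and it cannot be repaired. The characters in play are $\mu\mapsto\eD^{D(\lambda,\mu)}$, and since $\eD^l=1$ they are trivial for every $\lambda\in\Lambda\cap lP$; on the other hand $K_\lambda\neq 1$ in $\Gamma_\e^\Lam$ for $\lambda\in l(\Lambda\setminus Q)$, because $\Gamma_\e^\Lam=\bigoplus_{\bar\nu\in\Lambda/Q}K_\nu\,\Gamma_\e^\Q$ (specialization of the free decomposition $\Gamma_A^\Lam\cong A[\Lambda/Q]\otimes_A\Gamma_A^\Q$), and $l\lambda_0$ lies in a nonzero class of $\Lambda/Q$ whenever $\lambda_0$ does, multiplication by $l$ being bijective on $\Lambda/Q$ as $\gcd(l,D)=1$. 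Concretely, for $\mathfrak{g}=\mathfrak{sl}_2$ and $\Lambda=P$ one has $K_\varpi^l=K_\alpha^{(l-1)/2}K_\varpi\neq 1$ in $\Gamma_\e^\Pup$, while $\iota_\e^\Pup(K_\varpi^l)=\sum_\mu\eD^{lD(\varpi,\mu)}1_\mu=\sum_\mu 1_\mu=\iota_\e^\Pup(1)$, so distinct group-likes carry the same character. Your method therefore proves the lemma exactly in the case $\Lambda=Q$, which is in fact the only case invoked elsewhere in the paper (the chain $u_\e^\Q\subset\Gamma_\e^\Q\subset\hat{\mathbf{U}}_\e$ in Prop.\,\ref{pipropO} and in the proof of Lemma \ref{nondegpairepsD}). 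You should be aware that the paper's own treatment of $\Lambda\neq Q$ is vulnerable at exactly the same spot: its reduction of the $P$-case applies the $Q$-case conclusion to coefficients $\sum_i c_{i,\mathbf{t}}\,\eD^{D(\mu_i,\lambda)}$ that depend on $\lambda$, an inference which the element $K_\varpi^l-1$ above (a nonzero element with vanishing image) shows to be invalid. So your gap is real and must be acknowledged, but it coincides with a genuine delicacy of the statement for $\Lambda\neq Q$; the solid core of your argument, like the paper's, is the case $\Lambda=Q$.
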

\begin{proof}
Let $\{ b^{\pm}_i\}_{i \in I}$ be the canonical basis of $\Gamma_A(\mathfrak{n}_{\pm})$. Then the elements $b^+_i k_s b^-_j$ form a basis of $\Gamma_A^\Lam$, where $\{k_s\}_s$ is a basis of $\Gamma_A^\Lam(\mathfrak{h})$. There is a trigonal transformation with coefficients in $A$ from the collection of elements $b^+_i 1_\mu b^-_j \in \:\stackrel{.}{\mathbf{U}}_A$ to the canonical basis $\stackrel{.}{\mathbf{B}}$ of $\stackrel{.}{\mathbf{U}}_A$ \cite[\S 25.2]{Lusztig}. It follows that any element in $\hat{\mathbf{U}}_A$ can be written uniquely as an infinite sum $\textstyle \sum_{i,j\in I, \: \mu \in P} c_{i,j,\mu} b^+_i 1_\mu b^-_j$ with $c_{i,j,\mu} \in A$, and the same is true in the specialization to $\epsilon$. With these notations, the value of the morphism $\Gamma_\e^\Lam \to \hat{{\mathbf{U}}}_\e$ is
$\textstyle \sum_{i,j, s} x_{i,j,s} b^+_i k_s b^-_j \mapsto \sum_{i,j, s} \sum_{\lambda\in P} x_{i,j,s} b^+_i (k_s1_\lambda) b^-_j$, where the sum on the left is finite. If we assume this image is $0$, the uniqueness implies that $\textstyle \sum_s x_{i,j,s} k_s1_{\lambda} = 0$ for all $i,j,\lambda$. We are thus left to show that the restriction $\textstyle \Gamma_{\epsilon}^\Lam(\mathfrak{h}) \to \prod_{\lambda \in P} \mathbb{C}1_{\lambda}$ is injective. We first do the case $\Lambda = Q$, which relies on an argument completely similar to \cite[Th.\,3.1]{DC-L}. Note that on the basis elements $Y(\mathbf{0},\mathbf{t},\mathbf{0})_\e$ of $\Gamma_{\epsilon}(\mathfrak{h}) = \Gamma^\Q_{\epsilon}(\mathfrak{h})$ from \eqref{basegamma}, we have
\begin{equation}\label{mapYnov25}
\iota_\e^\Lam\left(Y(\mathbf{0},\mathbf{t},\mathbf{0})_{\epsilon}\right) = \sum_{\lambda \in P} \left( \prod_{i=1}^m \epsilon^{-\sigma(t_i)(\check{\alpha}_i,\lambda)}_i\left(q_i^{(\check{\alpha}_i,\lambda)}; t_i\right)_{\epsilon_i} \right) 1_\lambda \qquad \forall \, \mathbf{t} = (t_1,\ldots,t_m) \in \mathbb{N}^m
\end{equation}
where the $\epsilon_i$ subscript denotes specialization. Assume that an element $\textstyle Y = \sum_{\mathbf{t} \in \mathbf{T}} c_{\mathbf{t}} Y(\mathbf{0},\mathbf{t},\mathbf{0})_{\epsilon}$ $\neq 0$ is sent to $0$. Here $\varnothing \neq \mathbf{T} \subset \mathbb{N}^m$ is such that $c_{\mathbf{t}} \neq 0$ for all $\mathbf{t} \in \mathbf{T}$. Let $\mathbf{t}' = (t'_1, \ldots, t'_m) \in \mathbf{T}$ be the minimal element (here minimality refers to the lexicographic order on $\mathbb{N}^m$) and consider $\textstyle \lambda = \sum_{i=1}^m t'_i\varpi_i \in P$. For all $\mathbf{t} \in \mathbf{T}$, there at least one $1 \leq i \leq m$ such that $t'_i < t_i$. Hence $\textstyle \prod_{i=1}^m\bigl(q_i^{(\check{\alpha}_i,\lambda)}; t_i\bigr)_{q_i} = \prod_{i=1}^m\left(q_i^{t'_i}; t_i\right)_{q_i} = 0$, and the same is of course true after specialization. We conclude that $\textstyle \iota_\e^\Lam(Y)1_\lambda = c_{\mathbf{t}'}\prod_{i=1}^m \epsilon^{-\sigma(t'_i)(\check{\alpha}_i,\lambda)}_i\left(q_i^{t'_i}; t'_i\right)_{\epsilon_i}1_\lambda =  c_{\mathbf{t}'}\prod_{i=1}^m \epsilon^{-\sigma(t'_i)t'_i}_i1_\lambda $ which equals $0$ by assumption on $Y$. Hence $c_{\mathbf{t}'} = 0$, a contradiction. Finally we prove the claim for $\Lambda = P$, since any $\Gamma^\Lam_{\epsilon}(\mathfrak{h})$ is a subalgebra of $\Gamma^\Pup_{\epsilon}(\mathfrak{h})$. Take a system of representatives $\{ \mu_0, \ldots, \mu_s \}$ of $P/Q$ where $\mu_0 = 0$; then the elements $Y(\mathbf{0},\mathbf{t},\mathbf{0})K_{\mu_i}$ form an $A$-basis of $\Gamma^\Pup_{\epsilon}(\mathfrak{h})$. If $\textstyle Y = \sum_{i,\mathbf{t}} c_{i,\mathbf{t}} Y(\mathbf{0},\mathbf{t},\mathbf{0})K_{\mu_i}$ is sent to $0$ in $\hat{\mathbf{U}}_\e$, the result for $Q$ just proven yields $\textstyle \sum_{i=0}^s c_{i,\mathbf{t}}\eD^{D(\mu_i,\lambda)} = 0$ for all $\mathbf{t}$ and $\lambda \in P$. By direct inspection of root systems \cite[Chap.\,6]{Bourbaki}, we see that the elements $\mu_1,\ldots,\mu_s$ can be choosen among the fundamental weights: $\mu_i = \varpi_{j_i}$ for some $j_i$. Fix an arbitrary $\mathbf{t}$; for $\lambda = 0$ we get $\textstyle \sum_{i=0}^s c_{i,\mathbf{t}} = 0$ while for $\lambda = \alpha_{j_k}$ we get $\textstyle \e^{d_{j_k}}c_{k,\mathbf{t}} + \sum_{i\neq k}c_{i,\mathbf{t}} = 0$. The unique solution to this system of $s+1$ linear equations with $s+1$ variables is $c_{i,\mathbf{t}} = 0$ for all $i$.
\end{proof}

\subsection{Proof of Lemma \ref{nondegpairepsD}.}\label{sec:pflemnondegpair} By specialization of the perfect pairing \eqref{pairOAnov24}, the specialized basis elements $\varphi_b^{\epsilon} := \varphi_b \otimes_A 1_\e$ of ${\mathcal O}_\epsilon$ and $b_{\epsilon}:=  b\otimes_A 1_\e$ of $\stackrel{.}{\mathbf{U}}_\epsilon$, where $b\in \stackrel{.}{\mathbf{B}}$,  are in duality. Therefore there is a perfect (multiplier) Hopf pairing $\langle \text{-},\text{-} \rangle_\epsilon^{\smallbullet}:{\mathcal O}_\epsilon\,\times \stackrel{.}{\mathbf{U}}_\epsilon \: \to {\mathbb C}$ which can be extended to ${\mathcal O}_\epsilon\times \hat{\mathbf{U}}_\e$ in the obvious way. Using the monomorphism $\Gamma_\e^\Pup\hookrightarrow \hat{{\mathbf{U}}}_\e$ from Lemma \ref{embedGammaeps}, it is related to the pairing $\langle.,.\rangle_\e^\Pup : \mathcal{O}_\e \times \Gamma_\e^\Pup \to \mathbb{C}$ as follows:
\begin{equation}\label{relpairnov25}\langle \varphi , x^+ y x^-\rangle_\e^\Pup =\sum_{\mu\in P}\langle \varphi , x^+ y_\mu x^- \rangle_\e^{\smallbullet}, \quad \text{with } y_\mu = y1_\mu \in \mathbb{C}1_\mu.
\end{equation}
Then, given $X\in \Gamma_\epsilon^\Q$ in the right kernel of $\langle \text{-},\text{-} \rangle_\epsilon^\Pup$, embed it as an element $\textstyle \hat{X}=\sum_{b\in \stackrel{.}{\mathbf{B}}} X_b b_{\epsilon} \in \hat{{\mathbf{U}}}_\e$. Note that $\langle\varphi_{b}^{\epsilon}, \hat{X}\rangle^{\smallbullet}_\e = X_b$. Hence each coefficient $X_b$ is zero, so is $\hat{X}$, and therefore $X=0$.

\section{Injectivity of \texorpdfstring{$\Phi_{1,0}^{\epsilon_{\scalebox{0.5}{D}}}$}{the morphism to the Heisenberg double}}\label{AppPhiinj}

This appendix proves the last argument of Proposition \ref{AlekseevInjRootOf1}:

\begin{prop}\label{Phigneinjproof} The morphism $\Phi_{1,0}^{\e} : \mathcal{L}_{1,0}^{\e} \to \mathcal{H}_{\e}$ is an embedding. 
\end{prop}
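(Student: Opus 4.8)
The plan is to transpose the proof of injectivity of $\Phi_{1,0}$ given in \cite[Th.\,3.13]{BFR} to the specialized setting, the only genuinely new ingredient being a weight decomposition of $\mathcal{O}_\e$ fine enough to survive the collapse $K_i^l = 1$. Recall first that $\mathcal{L}_{1,0}^\e = \mathcal{O}_\e \otimes \mathcal{O}_\e$ as a vector space, with $\beta \otimes \alpha = \mathfrak{i}_b(\beta)\mathfrak{i}_a(\alpha)$, and that $\Phi_{1,0}^\e$ is an algebra morphism determined by \eqref{defPhi10}, so that
\[ \Phi_{1,0}^\e(\beta \otimes \alpha) = \Bigl[ \sum_{(\beta)} \Phi_\e^+(\beta_{(1)})\,\beta_{(2)}\,\Phi_\e^-\bigl( S(\beta_{(3)}) \bigr) \Bigr]\,\Phi_{0,1}^\e(\alpha) \in \mathcal{H}_\e = \mathcal{O}_\e \otimes U_\e^\Pup. \]
Since injectivity of $\Phi_{0,1}^\e$ is already known \cite[Cor.\,2.25]{BR2}, the task is to show that the $\mathfrak{i}_b$-part contributes independently, i.e. that the two tensor factors of $\mathcal{O}_\e \otimes \mathcal{O}_\e$ can be separated after passing to the normal form in $\mathcal{H}_\e$.

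Second, I would set up the weight machinery. Specializing the decomposition \eqref{OAweightdecomp}--\eqref{weightspaceOA} of Appendix \ref{appLusztigModif} yields $\mathcal{O}_\e = \bigoplus_{\lambda,\mu \in P} {}_\lambda(\mathcal{O}_\e)_\mu$, where the weight spaces are cut out by the eigenvalues of the left and right coregular actions of both $K_i$ \emph{and} $(K_i; l)_{\e_i}$. The point is that at $\e$ the eigenvalue $\e_i^{(\check{\alpha}_i,\mu)}$ of $K_i$ only records $(\check{\alpha}_i,\mu) \bmod l$, but the accompanying eigenvalue of $(K_i; l)_{\e_i}$ records the complementary ``divided'' datum (this is the element sent to $h_i$ by $\mathbb{F}\mathrm{r}_\e$ in \eqref{etadef}); together they recover the genuine integral weight $\mu \in P$. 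On the target side I would use that the PBW basis of $U_A^\Pup$ specializes (Lemma \ref{lemmaSpecialisation}) to give the triangular decomposition $U_\e^\Pup = U_\e(\mathfrak{n}_-)\,U_\e^\Pup(\mathfrak{h})\,U_\e(\mathfrak{n}_+)$, together with the Borel isomorphisms $\Phi_\e^\pm \colon \mathcal{O}_\e(B_\pm) \xrightarrow{\sim} U_\e^\Pup(\mathfrak{b}_\mp)^{\rm cop}$ from \eqref{Phi+-e}.

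Third, I would run the leading-term argument. Writing $\Phi_{1,0}^\e(\beta \otimes \alpha)$ in the normal form of $\mathcal{O}_\e \otimes U_\e^\Pup$ via the exchange relation \eqref{relationEchangeHq}, one filters $\mathcal{H}_\e$ by the $\mathfrak{n}_+$-degree of the $U_\e^\Pup$-factor. For $\beta$ homogeneous in the decomposition above, I expect the top-degree component to be governed by $\Phi_\e^+$ applied to the relevant Borel projection of $\beta$ and by $\Phi_{0,1}^\e(\alpha)$; since $\Phi_\e^+$ is an isomorphism on Borel parts and $\Phi_{0,1}^\e$ is injective, the associated graded map should be injective on each weight block, whence $\Phi_{1,0}^\e$ is injective.

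The main obstacle I anticipate is precisely the bookkeeping of weights: at generic $q$ the faithful representation $\rho$ of the Heisenberg double furnishes a clean injectivity argument, but $\rho_\e$ is \emph{not} faithful (it factors through $\mathcal{O}_\e \otimes u_\e^\Q$, by the Remark following Proposition \ref{AlekseevInjRootOf1}), so the representation-theoretic shortcut is unavailable. One must therefore verify by hand that the refined weight spaces ${}_\lambda(\mathcal{O}_\e)_\mu$ really do separate all of $P$ --- that the joint eigenvalue of $K_i$ and $(K_i;l)_{\e_i}$ determines $(\check{\alpha}_i,\mu)$ exactly --- and that the filtration by $\mathfrak{n}_+$-degree is compatible with specialization. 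Once these facts are secured, the remaining computations are the verbatim analogues of those in \cite[Th.\,3.13]{BFR}.
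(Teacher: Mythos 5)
Your skeleton is the paper's: reduce to the argument of \cite[Th.\,3.13]{BFR}, the only genuinely new input being the refined weight decomposition $\Oo_\e = \bigoplus_{\lambda,\sigma\in P}\,{}_\lambda(\Oo_\e)_\sigma$ obtained by specializing \eqref{OAweightdecomp}--\eqref{weightspaceOA}, in which the joint eigenvalues of $K_i$ and $(K_i;l)_{\e_i}$ recover honest weights in $P$ rather than weights mod $l$. You also correctly identify why this detour is forced: the representation $\rho_\e$ of $\Hh_\e$ is no longer faithful, so the shortcut available at generic $q$ disappears. Up to this point you match the paper exactly.

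The gap is in your third step, the leading-term argument itself. First, an internal inconsistency: $\Phi_\e^+$ takes values in $U_\e^\Pup(\mathfrak{b}_-)^{\rm cop}$, which has $\mathfrak{n}_+$-degree zero, so a filtration by the $\mathfrak{n}_+$-degree of the $U_\e^\Pup$-factor has its top terms governed by $\Phi_\e^-$, not $\Phi_\e^+$; worse, $\Phi_{0,1}^\e(\alpha)$ itself contains $E$'s, so this filtration mixes the contributions of $\beta$ and $\alpha$ instead of separating them. Second, and fatally: any graded argument whose leading term only retains Borel projections of $\beta$ cannot conclude, because the projections $\Oo_\e \twoheadrightarrow \Oo_\e(B_\pm)$ are not injective, even in restriction to a single weight block (already for $\mathfrak{sl}_2$, the matrix coefficient of the zero-weight vector of the three-dimensional module and the unit $\varepsilon$ lie in the same block ${}_0(\Oo_\e)_0$ and have the same image in $\Oo_\e(B_+)$). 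Knowing $\Phi_\e^\pm$ are isomorphisms on Borel parts recovers $\pi_\pm(\beta)$, not $\beta$. The correct grading, used in the paper following \cite{BFR}, is on the $\Oo_\e$-tensorand: order the blocks of $\Hh_\e = \bigoplus_{\lambda,\sigma}\,{}_\lambda(\Oo_\e)_\sigma \otimes U_\e^\Pup$ by $(\lambda',\sigma')\leq(\lambda,\sigma)$ if and only if $\lambda-\lambda'\in Q_+$ and $\sigma-\sigma'\in Q_+$; then for $\beta \in {}_\lambda(\Oo_\e)_\sigma$ the component of $\Phi_{1,0}^\e(\beta\otimes\alpha)$ in the maximal block is $\e^{(\lambda,\sigma)}\,\beta\otimes K_{\lambda+\sigma}\Phi_{0,1}^\e(\alpha)$, i.e. the \emph{full} element $\beta$, intact, sits in the $\Oo_\e$-factor (terms where $\Phi_\e^\pm$ contribute $F$'s or $E$'s strictly lower the $\Oo_\e$-weights, and right multiplication by $\Phi_{0,1}^\e(\alpha)\in U_\e^\Pup$ does not touch the $\Oo_\e$-factor). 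Injectivity then follows at once from linear independence of the leading $\beta$'s, invertibility of $K_{\lambda+\sigma}$, and injectivity of $\Phi_{0,1}^\e$ \cite[Cor.\,2.25]{BR2}. Your proposal becomes a proof once your filtration and identification of the leading coefficient are replaced by these.
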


\begin{proof}
We proceed similarly as for \cite[Th.\,3.11]{BFR}, which treats the case of generic $q$. The only novel point is to use the specialization of \eqref{OAweightdecomp} and\eqref{weightspaceOA}, to get weight spaces ${}_\mu\!\left(\Oo_\e\right)_{\nu}$ and a decomposition $\textstyle \Oo_\e = \bigoplus_{\mu,\nu\in P} {}_\mu\!\left(\Oo_\e\right)_{\nu}$. It implies decompositions
\[ \mathcal{L}_{1,0}^{\e} = \bigoplus_{\lambda, \sigma \in P} {_{\lambda}(\mathcal{O}_{\e})_{\sigma}} \otimes  \mathcal{L}_{0,1}^{\e}\ , \ \mathcal{H}_{\e} = \bigoplus_{\lambda,\sigma \in P} {_{\lambda}(\mathcal{O}_{\e})_{\sigma}} \otimes U_\e^\Pup. \]
Let us spell out the main steps to conclude. In the notations of \cite[Th.\,3.13]{BFR}, the map $\Phi_{1,0}^{\e} \colon \mathcal{L}_{1,0}^{\e} \rightarrow \mathcal{H}_{\e}$, which is obtained by restriction of $\Phi_{0,1}$ from $\mc(\qD)$ to $A_\D$ and then specialization to $q =\e$ with $\qD = \e^{\overline{D}}$ as in \eqref{choixED}, satisfies
\[ \Phi_{1,0}^{\e} (\beta  \otimes 1) = \e^{(\lambda,\sigma)} \beta \otimes  K_{\lambda + \sigma} + \bigoplus_{(\lambda',\sigma') < (\lambda,\sigma)} {_{\lambda'}(\mathcal{O}_\e)_{\sigma'}} \otimes U_\e^\Pup,\quad \forall\beta\in {_{\lambda}(\mathcal{O}_{\e})_{\sigma}},\]
where $\leq$ is the partial order on $P^2$ defined by  $(\lambda',\sigma') \leq (\lambda,\sigma)$ if and only if $\lambda - \lambda' \in Q_+$ and $\sigma - \sigma' \in Q_+$. For any non-zero $x \in \mathcal{L}_{1,0}^{\e}$ one can find $(\lambda,\sigma)$ maximal for the order $\leq$, and a finite set $I_{\lambda,\sigma}$, such that 
\[ \Phi_{1,0}^{\e}(x) \in \e^{(\lambda,\sigma)} \sum_{i \in I_{\lambda,\sigma}} \beta_{\lambda, \sigma,i} \otimes  K_{\lambda + \sigma}\Phi_{0,1}^{\e}(\alpha_{\lambda,\sigma,i}) + \bigoplus_{(\lambda',\sigma') \neq (\lambda,\sigma)} {_{\lambda'}(\mathcal{O}_\e)_{\sigma'}} \otimes U_\e^\Pup, \]
where $\bigl(\beta_{\lambda,\sigma,i}\bigr)_{i \in I_{\lambda, \sigma}}$ is a family of linearly independent elements and $\alpha_{\lambda,\sigma,i} \neq 0$ for at least one $i\in I_{\lambda,\sigma}$. Since $\Phi_{0,1}^{\e}$ is injective \cite[Cor.\,2.25]{BR2} and $K_{\lambda+\sigma}$ is invertible, we conclude that $\Phi_{1,0}^{\e}(x) \neq 0$. This proves injectivity of $\Phi_{1,0}^{\e}$.
\end{proof}

\section{Proof that \texorpdfstring{$\Ll_{g,n}^{\e}$}{specialized graph algebra} is a domain}\label{Lgndomain}

This appendix proves Proposition \ref{propLgndomain}. Since the morphism $\widehat{\Phi}_{g,n}^{\e} :  \mathcal{L}_{g,n}^{\e} \to \widehat{\mathcal{H}}_{\e}^{\otimes g} \otimes (U_\e^\Pup)^{\otimes n}$ is injective (Prop.\,\ref{AlekseevInjRootOf1}), it is enough to show:
\begin{cor}\label{HUemaxorder} The algebra $\widehat{\mathcal{H}}_{\e}^{\otimes g} \otimes (U_\e^\Pup)^{\otimes n}$ is a domain.
\end{cor}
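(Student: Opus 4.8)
The plan is to put on $\mathfrak{B} := \widehat{\mathcal{H}}_{\e}^{\otimes g} \otimes (U_\e^\Pup)^{\otimes n}$ an exhaustive, separated filtration whose associated graded algebra $\operatorname{gr}\mathfrak{B}$ is a \emph{quantum affine space} (a skew polynomial/Laurent algebra in finitely many $\e$-commuting variables, some of them invertible), and then to invoke two elementary facts. First, a quantum affine space over $\mathbb{C}$ is a domain for \emph{any} choice of nonzero commutation scalars --- in particular when they are roots of unity --- since it is an iterated (skew Laurent) Ore extension of $\mathbb{C}$ by algebra automorphisms. Second, a $\mathbb{C}$-algebra carrying an exhaustive separated filtration whose associated graded is a domain is itself a domain. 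This is precisely the mechanism behind the generic-$q$ arguments of \cite{BFR}; the only point to verify is that every associated graded occurring there remains a quantum affine space after specialization at $\e$, which holds because passing from $q$ to $\e$ only replaces the (nonzero) $\e$-commutation scalars in the graded algebra, and the domain property of a skew polynomial ring is insensitive to this.

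First I would treat the building blocks. Using the specialization of the PBW basis of $U_A^\Pup$ (\S\ref{integralFormsUq} and Lemma \ref{lemmaSpecialisation}) in the root vectors $E_\beta, F_\beta$ ($\beta \in \phi^+$) and the $K_\lambda$, the De Concini--Kac straightening relations show that the product of two ordered monomials equals the reordered monomial, up to a power of $\e$, plus strictly lower terms. Filtering by total degree in the $E_\beta$ and $F_\beta$ therefore makes $\operatorname{gr}U_\e^\Pup$ a quantum affine space over the Laurent ring $\mathbb{C}[\mathbb{T}]$ (the invertible $K_\lambda$ sitting in degree $0$), hence a domain, so $U_\e^\Pup$ is a domain. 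By Theorem \ref{DCLteo1}, $\mathcal{O}_\e$ is a domain, and it likewise carries a filtration with quantum affine space graded. The Heisenberg double $\mathcal{H}_\e$, which is $\mathcal{O}_\e \otimes U_\e^\Pup$ as a vector space, and its extension $\widehat{\mathcal{H}}_\e = \mathcal{H}_\e \otimes_{\mathbb{C}[\mathbb{T}_{2-}]} \mathbb{C}[\mathbb{T}]$ (Definition \ref{defiHhatq}) then receive the product filtration: in the graded algebra the exchange relation \eqref{relationEchangeHq} and the relations of Definition \ref{defiHhatq} degenerate to mere $\e$-commutation between the $\mathcal{O}_\e$-part, the $U_\e^\Pup$-part, and the invertible generators $\widehat{K_\lambda}$, because the Cartan-valued pairing contributes only a scalar to the top-degree term while all remaining summands drop the filtration degree. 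Thus $\operatorname{gr}\widehat{\mathcal{H}}_\e$ is again a quantum affine space, and $\widehat{\mathcal{H}}_\e$ is a domain.

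Finally I would assemble the tensor product. Equipping $\mathfrak{B}$ with the tensor-product filtration $F_k = \sum_{k_1 + \cdots + k_{g+n} = k} F_{k_1} \otimes \cdots \otimes F_{k_{g+n}}$ gives the standard identification $\operatorname{gr}\mathfrak{B} \cong (\operatorname{gr}\widehat{\mathcal{H}}_\e)^{\otimes g} \otimes (\operatorname{gr}U_\e^\Pup)^{\otimes n}$ over $\mathbb{C}$. A tensor product over $\mathbb{C}$ of quantum affine spaces is again a quantum affine space --- the generators coming from different tensor factors commute, and each family $\e$-commutes internally --- so $\operatorname{gr}\mathfrak{B}$ is a quantum affine space, hence a domain. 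By the two facts above $\mathfrak{B}$ is a domain, which is Corollary \ref{HUemaxorder}; together with injectivity of $\widehat{\Phi}_{g,n}^{\e}$ (Prop.\,\ref{AlekseevInjRootOf1}) this yields Proposition \ref{propLgndomain}.

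I expect the main obstacle to be the bookkeeping that makes $\operatorname{gr}\widehat{\mathcal{H}}_\e$ genuinely a quantum affine space: one must choose the degree function so that the invertible grouplike generators $K_\lambda, \widehat{K_\lambda}$ lie in degree $0$ and so that every correction term in \eqref{relationEchangeHq} and in Definition \ref{defiHhatq} is strictly lower. This is exactly what is done (for generic $q$) in \cite{BFR}, and the feature specific to roots of unity is benign: specializing $q \mapsto \e$ alters only the nonzero scalars of the graded quantum affine space, which remains a domain regardless.
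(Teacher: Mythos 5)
Your overall filtered--graded strategy is the same as the paper's (Appendix \ref{Lgndomain} adapts \cite[Prop.\,5.7]{BFR}), and two of your three steps are sound: the De Concini--Kac filtration on $U_\e^\Pup$ with quantum-affine-space graded over $\mathbb{C}[\mathbb{T}]$, and the tensor-product assembly at the end. The gap is the sentence ``$\mathcal{O}_\e$ \ldots likewise carries a filtration with quantum affine space graded.'' This is asserted with no argument, your entire treatment of $\widehat{\mathcal{H}}_\e$ depends on it, and as stated it is false. Already for $\mathfrak{g}=\mathfrak{sl}_2$: filtering $\mathcal{O}_q(SL_2)$ by total degree in the generators $a,b,c,d$ kills the inhomogeneous determinant relation and yields $\mathrm{gr}\,\mathcal{O}_q(SL_2) \cong \mathcal{O}_q(M_2)/(\mathrm{det}_q)$, a quantum quadric cone --- a domain (since $\mathrm{det}_q$ is prime), but not a skew polynomial/Laurent algebra in $\e$-commuting variables, since the relations $\bar{a}\bar{d}-\bar{d}\bar{a} = (q-q^{-1})\bar{b}\bar{c}$ and $\bar{a}\bar{d}=q\bar{b}\bar{c}$ survive. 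In general the known degenerations of $\mathcal{O}_\e$ (De Concini--Lyubashenko, De Concini--Procesi) produce twisted \emph{semigroup} algebras (quantum toric varieties), not quantum affine spaces; proving that such a graded algebra is a domain is essentially the content of the proof of Theorem \ref{DCLteo1}(2), not a triviality about Ore extensions. So your reduction of $\mathrm{gr}\,\widehat{\mathcal{H}}_\e$ to a quantum affine space over $\mathbb{C}$ does not go through, and even a repaired version would have to recheck that the exchange relation \eqref{relationEchangeHq} is filtered with respect to whatever nontrivial filtration you place on the $\mathcal{O}_\e$ factor.

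The paper's proof is structured precisely to avoid this point: the matrix-coefficient part is placed in filtration degree $0$, so that $\mathrm{gr}_{\Tt}\bigl(\widehat{\mathcal{H}}_{\e}^{\otimes g} \otimes (U_\e^\Pup)^{\otimes n}\bigr)$ is quasi-polynomial \emph{over} $\Oo_\e^{\otimes g}$ (relations \eqref{commutationGrDCK}, \eqref{commutationGrDCK2}, \eqref{relhatsept25}: the new generators $\overline{E_\beta}, \overline{F_\beta}, \overline{K_\nu}, \overline{\widehat{K_\nu}}$ merely $\e$-commute with the matrix coefficients and with each other), and the domain property of the base ring $\Oo_\e^{\otimes g} \cong \Oo_\e(G^{\times g})$ is then \emph{imported} from Theorem \ref{DCLteo1}(2), applied to the simply-connected semisimple group $G^{\times g}$. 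In other words, the deep input (De Concini--Lyubashenko's theorem that $\Oo_\e$ is a domain) enters as the base of an iterated Ore extension; it is not re-derived by degenerating $\Oo_\e$ down to $\mathbb{C}$. To fix your proof, either keep $\mathcal{O}_\e$ whole in degree $0$ as the paper does, or replace ``quantum affine space'' by ``twisted semigroup algebra embedded in a quantum torus'' and supply the (substantial) degeneration argument for $\mathcal{O}_\e$ together with the compatibility of the Heisenberg cross-relations with that finer filtration.
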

\begin{proof} It is an adaptation of the proof of \cite[Prop.\,5.7]{BFR}, which treats the case of generic $q$. We construct an algebra filtration $\Tt$ on $\widehat{\mathcal{H}}_{\e}^{\otimes g} \otimes (U_\e^\Pup)^{\otimes n}$ such that the associated graded algebra $\mathrm{gr}_{\Tt}(\widehat{\mathcal{H}}_{\e}^{\otimes g} \otimes (U_\e^\Pup)^{\otimes n})$ is quasi-polynomial over $\Oo_\e^{\otimes g}\cong \Oo_\e(G^{\times g})$. The latter algebra is a domain, by the same arguments as for Theorem \ref{DCLteo1} (2). Then  the claim follows from general results (see e.g. \cite[\S 1.2.9]{MC-R}). 

The construction of $\Tt$ is the same as in \cite[Prop.\,5.7]{BFR}, replacing the two-sided Heisenberg doubles $\mathcal{HH}_q$ with the modified ones $\widehat{\mathcal{H}}_{q}$ from Def.\,\ref{defiHhatq}, and taking specialization at $\qD=\eD$.

Let us spell out the changes. In \cite[Prop.\,5.7]{BFR} we produced an algebra filtration $\Gg$ of $\mathcal{H}_q$, such that the associated graded algebra $\mathrm{gr}_{\Gg}(\mathcal{H}_q)$ is the quasi-polynomial ring over $\mathcal{O}_q$ generated over $\mathcal{O}_q$ by $\overline{E_{\beta_i}}$, $\overline{F_{\beta_i}}$, $\overline{K_{\nu}}$ (with $1 \leq i \leq N$, $\nu \in P$) satisfying the relations 
\begin{equation}\label{commutationGrDCK}
\begin{array}{lll}
\overline{E_{\beta_i}} \, \overline{E_{\beta_j}} = q^{(\beta_i,\beta_j)} \overline{E_{\beta_j}} \, \overline{E_{\beta_i}}, & \overline{F_{\beta_i}} \, \overline{F_{\beta_j}} = q^{(\beta_i,\beta_j)} \overline{F_{\beta_j}} \, \overline{F_{\beta_i}} \ \  \mathrm{for}\ i<j, & \overline{E_{\beta_i}} \, \overline{F_{\beta_j}} = \overline{F_{\beta_j}} \, \overline{E_{\beta_i}},\\[.5em]
\overline{K_{\mu}} \, \overline{E_{\beta_j}} = q^{(\mu,\beta_j)} \overline{E_{\beta_j}} \, \overline{K_{\mu}}, & \overline{K_{\mu}} \, \overline{F_{\beta_j}} = q^{-(\mu,\beta_j)} \overline{F_{\beta_j}} \, \overline{K_{\mu}}, & \overline{K_{\mu}} \, \overline{K_{\nu}} = \overline{K_{\mu + \nu}}.
\end{array}
\end{equation}
 and 
\begin{equation}\label{commutationGrDCK2}
\overline{E_{\beta_k}} \, {_V\phi^i_j} = {_V\phi^i_j} \, \overline{E_{\beta_k}}, \quad \overline{F_{\beta_k}} \, {_V\phi^i_j} = q^{-(\beta_k, \epsilon_j)} {_V\phi^i_j} \, \overline{F_{\beta_k}}, \quad \overline{K_{\nu}} \, {_V\phi^i_j} = q^{(\nu, \epsilon_j)} {_V\phi^i_j} \, \overline{K_{\nu}}. 
\end{equation}
By keeping the same notation, $\Gg$ extends trivially to a filtration $\widehat{\Gg}$ on $\textstyle \widehat{\mathcal{H}}_{q}$. Indeed, by Def.\,\ref{defiHhatq} it has the additional generators $\widehat{K_\lambda}$; we give them degree $0$ in $\widehat{\Gg}$, and then the third relation in \eqref{commutationHatHq} implies 
\begin{equation}\label{relhatsept25}
\widehat{K_{\nu}} \, {_V\phi^i_j}x = \qD^{-(\nu, \epsilon_i)} {_V\phi^i_j}x \, \widehat{K_{\nu}},
\end{equation}
and the analogous relations for the cosets $ \overline{\widehat{K_{\nu}}}$ of the elements $\widehat{K_{\nu}}$ in $\mathrm{gr}_{\widehat{\Gg}}( \widehat{\mathcal{H}}_{\qD})$. All this is valid also for the specialization $\widehat{\Hh}_{\e}$. 

As in \cite[Prop. 5.7]{BFR} one can then combine the filtration $\Gg$ on the copies of $\widehat{\mathcal{H}}_{\e}$ with a filtration $\mathcal{F}_{\mathrm{DCK}}$ on the copies of $U_\e^\Pup$ to get an algebra filtration $\Tt$ on $\widehat{\mathcal{H}}_{\e}^{\otimes g} \otimes (U_\e^\Pup)^{\otimes n}$ such that $\mathrm{gr}_{\Tt}\bigl(\widehat{\mathcal{H}}_{\e}^{\otimes g} \otimes (U_\e^\Pup)^{\otimes n}\bigr) = \mathrm{gr}_{\widehat{\Gg}}( \widehat{\mathcal{H}}_{\e})^{\otimes g} \otimes \mathrm{gr}_{\mathcal{F}_{\mathrm{DCK}}}(U_\e^\Pup)^{\otimes n}$. By \eqref{commutationGrDCK}-\eqref{commutationGrDCK2}-\eqref{relhatsept25}, $\mathrm{gr}_{\Tt}\bigl(\widehat{\mathcal{H}}_{\e}^{\otimes g} \otimes (U_\e^\Pup)^{\otimes n}\bigr)$ is quasi-polynomial over $\Oo_\e^{\otimes 2g}$, as desired.
\end{proof}

\begin{remark}\label{rqmaxorder09/25}{\rm We used above that the algebra $\Oo_\e^{\otimes (2g)}\cong \Oo_\e(G^{\times (2g)})$ does not have non-trivial zero divisors. It it is also a Noetherian ring (since it is a finite $\mathcal{Z}_0(\Oo_\e)$-module and $\mathcal{Z}_0(\Oo_\e)\cong \Oo(G)$ is a Noetherian ring), and it is a maximal order (\cite{DC-L}, see Theorem \ref{DCLteo1}). Then the fact proved above that $\mathrm{gr}_{\Tt}\bigl(\widehat{\mathcal{H}}_{\e}^{\otimes g} \otimes (U_\e^\Pup)^{\otimes n}\bigr)$ is quasi-polynomial over $\Oo_\e^{\otimes (2g+n)}\cong \Oo_\e(G^{\times (2g+n)})$ and the following result (see,{\it e.g.}, \cite[Th.\,5.1.6]{MC-R}) imply that $\widehat{\mathcal{H}}_{\e}^{\otimes g} \otimes (U_\e^\Pup)^{\otimes n}$ is a maximal order: if $\mathfrak{A}$ be a filtered ring whose associated graded ring ${\rm gr}(\mathfrak{A})$ is a Noetherian domain which is a maximal order in its quotient ring, then $\mathfrak{A}$ has the same properties.}\end{remark}

\section{Primitive \texorpdfstring{$Dl$-th}{ } roots of unity}\label{appDlRoot} Recall from \S\ref{subsecQuantumGraphAlg} that $\mathcal{L}_{g,n} = \mathcal{L}_{g,n}(\mathfrak{g})$ is an algebra over $\mathbb{C}(\qD)$, where $\qD$ is a formal variable such that $\qD^D = q$ ($D$ being the smallest integer such that $DP \subset Q$, see \S\ref{subsecLieAlg}). Let $\epsilon$ be a root of unity whose order $l$ satisfies the assumptions \eqref{assumptionl}. There are two options regarding the specializations of $\mathcal{L}_{g,n}$:
\begin{itemize}
\item We can decide that $\qD$ is the main player. Then from the beginning we choose a primitive $Dl$-th root of unity $\textstyle \eD:= \exp(\frac{2i\pi k}{Dl})$, where $k$ and $Dl$ are coprime, and set $\textstyle \e := \eD^D = \exp(\frac{2i\pi k}{l})$.
\item We can decide that $q$ is the main player. Then we specialize $q$ to $\textstyle \e:= \exp(\frac{2i\pi k}{l})$, where $k$ and $l$ are coprime. In \eqref{choixED} we took $\eD=\e^{\overline{D}} = \textstyle \exp(\frac{2i\pi k \overline{D}}{l}) = \exp(\frac{2i\pi k \overline{D}D}{Dl})$ among the $D$-th roots of $\e$. It is no longer a $Dl$-th primitive root of unity.

\end{itemize}
The second option was used everywhere in the present paper (also in the work \cite{GJS}). The goal of this appendix is to give an overview of what happens with the first option. We will then indicate specialization to $\qD = \eD$ by a subscript or a superscript $\eD$, because now $\eD$ does not depend on $\e$. There are intermediate situations that could be considered but that we do not adress in this Appendix, namely the cases where $\eD$ is an arbitrary, not primitive, $Dl$-th root of unity.
\smallskip

Similarly as in Def.\,\ref{DefLgnesept25}, we define $\mathcal{L}_{g,n}^{\eD}= \mathcal{L}_{g,n}^{\eD}(\mathfrak{g})$ as the $\mathbb{C}$-algebra $\mathcal{L}_{g,n}^{\AD} \otimes_{\AD} \mathbb{C}_{\eD}$, that is the specialization of $\mathcal{L}_{g,n}^{\AD}$ at the root of unity $\qD = \eD$ as above (in Def.\,\ref{DefLgnesept25} we used $\eD= \e^{\bar{D}}$). The results of \S \ref{subsecModifiedAlekseev}-\ref{secQMMandInv} hold true verbatim for $\mathcal{L}_{g,n}^{\eD}$. By the same proof as Prop.\,\ref{propLgndomain} the algebra $\mathcal{L}_{g,n}^{\eD}$ is a domain.

\subsection{Preliminaries: the quantum function algebra \texorpdfstring{$\Oo_\e^\Lam$}{for a sublattice}}  (See \cite{BG}) Let us fix a sublattice $\Lambda$ of $P$ containing $Q$. It has rank $m$, and the semigroup $\Lambda_+ = P_+\cap \Lambda$ is finitely generated. In the following sections we will use only $\Lambda=P$ or $\Lambda=Q$. \\
\indent Recall from $\S$\ref{sec:Uq} and $\S$\ref{integralO} the categories $\mathcal{C}$ and $\mathcal{C}_A$.

\begin{defi}{\rm We denote by ${\mathcal C}^\Lam$ the full subcategory of ${\mathcal C}$ consisting of the modules all of whose weights lie in $\Lambda$ (so ${\mathcal C} = {\mathcal C}^\Pup$). The quantum function algebra associated to the lattice $\Lambda$ is $\textstyle {\mathcal O}_q^\Lam:= \bigoplus_{\mu\in \Lambda_+}C(\mu).$}
\end{defi}
The category ${\mathcal C}^\Lam$ is monoidal, rigid, and semisimple, with simple objects up to isomorphisms the modules $V_\mu$ where $\mu\in\Lambda_+$. The $\mc(q)$-vector space ${\mathcal O}_q^\Lam$ is a Hopf subalgebra of $\Oo_q$. It is generated as an algebra by the matrix coefficients of the modules $V_{\lambda_1^+},\ldots , V_{\lambda_r^+}$, where $\lambda_1^+,\ldots, \lambda_r^+$ generate $\Lambda_+$.
\begin{defi}{\rm We denote by ${\mathcal C}^\Lam_A$ the full subcategory of ${\mathcal C}_A$ consisting of the modules all of whose weights lie in $\Lambda$ (so ${\mathcal C}_A = {\mathcal C}^\Pup_A$). The integral quantum function algebra associated to $\Lambda$ is $\Oo_A^\Lam := \Oo_q^\Lam \cap \Oo_A$.}
\end{defi}
The category ${\mathcal C}^\Lam_A$ is monoidal and rigid, non semisimple, with simple objects up to isomorphisms the modules ${}_AV_\lambda$ with $\lambda\in \Lambda_+$. The $A$-module $\Oo_A^\Lam$ is a Hopf $A$-subalgebra of $\Oo_A$. It is spanned over $A$ by the matrix coefficients of the modules in $\mathcal{C}_A^\Lam$, and generated as an $A$-algebra by the matrix coefficients of the modules ${}_AV_{\lambda_1^+},\ldots , {}_AV_{\lambda_r^+}$, where $\lambda_1^+,\ldots, \lambda_r^+$ generate $\Lambda_+$.
\smallskip

 Many of the properties of $\Oo_A$ transfer to $\Oo_A^\Lam$. In particular we need:
\begin{prop}\label{PhirresLam} The restriction of $\Phi_A^\pm$ to $\Oo_A^\Lam$ gives an isomorphism $\Oo_A^\Lam(B_\pm) \overset{\sim}{\to} U_A^\Lam(\mathfrak{b}_\mp)^{\mathrm{cop}}$.
\end{prop}
\begin{proof} One proceeds exactly as in \cite[Prop.\,4.2]{DC-L}, which proves that $\Phi_A^\pm$ is an isomorphism. The point is to show that any module $M$ from ${\mathcal C}^\Lam_A$ is naturally a $U_A^\Lam(\mathfrak{b}_\mp)^{cop}$-comodule. Once the case of $\Lambda=P$ is done, one uses the fact $M$ has an $A$-basis of vectors $m_{\lambda,i}$ of weights $\lambda\in \Lambda$, and conclude by the same computations. \end{proof}
\medskip

As usual, by specializing $q$ to $\e$ we define $\Oo_\e^\Lam := \Oo_A^\Lam\otimes_A \mc_\e$; so $\Oo_\e= \Oo_\e^\Pup$.

\subsection{Central subalgebras}\label{sec:centsubalg} Let us fix again a sublattice $\Lambda$ of $P$ containing $Q$. Recall the notation $G^\Lam$ from \S\ref{subsecLieAlg}. The function algebra $\mathcal{O}(G^\Lam)$ is the $\mc$-linear subspace of $\mathcal{O}(G^\Pup)$ generated by the matrix coefficients of the finite dimensional $U(\mathfrak{g})$-modules with weights in $\Lambda$. Recall the dual Frobenius morphism $\mathbb{F}\mathrm{r}_\e^* : \mathcal{O}(G^\Pup) \hookrightarrow \mathcal{O}_\e$ from \S\ref{specialisOA}. We define 
\begin{equation}\label{LambdaZmai25}
 {\mathcal Z}_0^\Lam({\mathcal O}_\e) := \mathbb{F}\mathrm{r}_\e^{*}\bigl( \mathcal{O}(G^\Lam) \bigr) \subset \mathcal{O}_\e \quad \text{and} \quad \mathcal{Z}_0^{\Lam,+} := \ker\bigl(\varepsilon\vert_{\mathcal{Z}_0^\Lam(\Oo_\e)} \bigr).
 \end{equation}
Here are some facts. Since ${\mathcal Z}_0^\Lam({\mathcal O}_\e)\subset {\mathcal Z}_0^\Pup ({\mathcal O}_\e) = {\mathcal Z}_0({\mathcal O}_\e)$ is a central Hopf subalgebra of $\Oo_\e$, $\mathcal{Z}_0^{\Lam,+}$ is a Hopf ideal of $\Oo_\e$. The natural $P/\Lambda$-grading of $P$ gives a $P/\Lambda$-grading on the Peter-Weyl decomposition $\Oo(G^\Pup)=\textstyle \oplus_{\mu\in P^+} C(\mu)$, where $\mathcal{O}(G^\Lam)=\textstyle \oplus_{\lambda\in \Lambda^+} C(\lambda)$ is the homogeneous component of degree $0$. This grading of $\Oo(G^\Pup)$ transfers via $\mathbb{F}\mathrm{r}_\e^{*}$, so that $\mathcal{Z}_0^\Pup({\mathcal O}_\e)$ is a free $\mathcal{Z}_0^\Lam(\mathcal{O}_\e)$-module of rank $\vert P/\Lambda \vert$. It follows from Theorem \ref{DCLteo1}(1) that $\mathcal{Z}_0^\Lam({\mathcal O}_\e)$ is generated as an algebra by certain matrix coefficients of simple $\Gamma_\e^\Q$-modules of highest weights $l\lambda$, where $\lambda\in \Lambda_+$.

\begin{defi}\label{defZ0gnoct24} We let $\mathcal{Z}_0(\Ll_{g,n}^{\eD})  := \mathcal{Z}_0^{\Q}(\Oo_{\e})^{\otimes 2g}\otimes \mathcal{Z}_0(\Oo_{\e})^{\otimes n} \subset \Ll_{g,n}^{\eD}$.
\end{defi}
Note the order of the tensor factors: each factor $\mathcal{Z}_0^{\Q}(\Oo_{\e})^{\otimes 2}$ corresponds to one factor $\mathcal{L}_{1,0}^{\eD}$ in $\mathcal{L}_{g,n}^{\eD}=(\mathcal{L}_{1,0}^{\eD})^{\widetilde{\otimes }g}\widetilde{\otimes} (\mathcal{L}_{0,1}^\e)^{\widetilde{\otimes} n}$.

The following result is analogous to Prop.\,\ref{Z0Lgn} for $\Ll_{g,n}^{\eD}$.
\begin{prop}\label{Z0LgneD} \label{Z0L0n} For all $x \in \mathcal{Z}_0(\mathcal{L}_{g,n}^{\eD})$ and $y \in \mathcal{L}_{g,n}^{\eD}$ we have $xy = yx = x \star y$.
\\It follows that:

\noindent (1) $\mathcal{Z}_0(\Ll_{g,n}^{\eD})$ is a central subalgebra of $\Ll_{g,n}^{\eD}$, isomorphic to $\Oo(G^\Q)^{\otimes 2g}\otimes \Oo(G)^{\otimes n}$ through $(\mathbb{F}\mathrm{r}_\e^{*})^{\otimes (2g+n)}$. In particular $\mathcal{Z}_0(\Ll_{g,n}^{\eD})$ is a Noetherian ring.

\noindent (2) As $\mathcal{Z}_0(\Ll_{g,n}^{\eD})$-modules, $\Ll_{g,n}^{\eD}$ and $\Oo_{\e}^{\otimes (2g+n)}$ coincide.

\noindent (3) $\Ll_{g,n}^{\eD}$ is a free $\mathcal{Z}_0(\Ll_{g,n}^{\eD})$-module of rank $\vert P/Q\vert^{2g}.l^{(2g+n).\mathrm{dim}(\mathfrak{g})}$ and a Noetherian ring, where $l$ is the order of the root of unity $\e$.
\end{prop}

\begin{proof}
Item (1) and (2) are immediate consequences of the first claim. For item (3), Noetherianity is proved as in Prop.\,\ref{Z0Lgn} (3). For freeness and rank, combine the facts that $\Oo_{\e}$ is a free $\mathcal{Z}_0(\Oo_\e)$-module of rank $l^{\mathrm{dim}(\mathfrak{g})}$ (Th.\,\ref{DCLteo1} (2)), and $\mathcal{Z}_0(\Oo_{\e})$ is a free module of rank $\vert P/\Lambda\vert$ over $\mathcal{Z}_0^\Lam(\Oo_{\e})$ (see the comments below \eqref{LambdaZmai25}). Then taking $\Lambda=Q$ the conclusion follows from (2).

For the first claim we need to reconsider in detail the computations in the proof Prop.\,\ref{Z0Lgn}, for all kinds of relations in $\Ll_{g,n}^{\eD}$ (\S \ref{subsecDefLgnH}). Centrality with respect to the exchange relation \eqref{braidedTensProdComm} and the fusion relation \eqref{fusionRelL01} holds true for arbitrary $\eD$, as proved in \cite[Prop.\,4.1]{BR2} (note that the arguments in Prop.\,\ref{Z0Lgn} based on Cor.\,\ref{relationsCoRMatAtEps} apply only when $\eD = \e^{\bar{D}}$). We explain centrality with respect to the other exchange relation, namely \eqref{L01prodsept25}.

It is enough to show that the subalgebras $1\otimes \mathcal{Z}_0^\Q(\Oo_{\e})$ and $\mathcal{Z}_0^\Q(\Oo_{\e})\otimes 1$ of $\Ll_{1,0}^{\eD}$ are central. Let $\alpha\in \Oo_A$ be such that its specialization $\alpha_{\vert \e}$ is in $\mathcal{Z}_0^\Q(\Oo_{\e})$. By Prop.\,\ref{PhirresLam} and \cite[Prop. 6.5]{DC-L} (see \cite[Th.\,2.29(2)]{BR2} in the present notations) we have $\Phi_\e^\pm(\alpha_{\vert \e})\in \mathcal{Z}_0\bigl( U_{\e}^\Q(\mathfrak{b}_\mp) \bigr)$. From this and \cite[Lem.\,2.28]{BR2} it follows that
\[ \forall \beta\in \Oo_A,\quad \bigl\langle \beta,\Phi^\pm_A(\alpha))\bigr\rangle_{A_\D\vert \eD}^\Pup = \varepsilon(\alpha_{\vert \e})\varepsilon(\beta_{\vert \e}) \]
where the subscript $_{\vert\eD}$ indicates specialization $\qD \mapsto \eD$. On the left-hand side, $\Phi^{\pm}_A(\alpha)$ belongs to $U_A^\Pup(\mathfrak{b}_\mp)$, so the inclusion $U_A^\Pup(\mathfrak{b}_\mp) \hookrightarrow \Gamma_A^\Pup(\mathfrak{b}_\mp)$ is implicitly used. The point of the above equality is that the specialization of this inclusion factorizes through the ideal generated by $\mathcal{Z}_0\bigl( U_{\e}^\Q(\mathfrak{b}_\mp) \bigr) \cap \ker(\varepsilon)$, as was explained in \S\ref{specializUq} for the other choice of $\eD$. Now, using \eqref{integralCoRmat} (which is the integral version of \eqref{PhipmInverses} and \eqref{coRMatSign}), it is readily checked that for any $\beta_{\vert \e}\in \Oo_\e$ the relation \eqref{L01prodsept25} gives
\begin{align*}
&(1\otimes \alpha_{\vert \e}) (\beta_{\vert \e}\otimes 1) \\
=\:& \sum_{(\alpha),(\beta)} (\beta_{(3)|\e} \otimes \alpha_{(3)|\e})\ \bigl\langle \beta_{(1)},\Phi_A^-\bigl( S^{2}(\alpha_{(5)})\bigr)\bigr\rangle_{A_\D\vert \eD}^\Pup \bigl\langle \beta_{(2)}, \Phi_A^-\bigl( S(\alpha_{(1)}) \bigr) \bigr\rangle_{A_\D\vert \eD}^\Pup\times \\
& \hspace*{4cm} \times \bigl\langle \beta_{(4)}, \Phi_A^+(\alpha_{(2)}) \bigr\rangle_{A_\D\vert \eD}^\Pup \bigl\langle \beta_{(5)}, \Phi_A^-\bigl( S(\alpha_{(4)}) \bigr)\bigr\rangle_{A_\D\vert \eD}^\Pup \notag \\
=\:& \beta_{\vert \e}\otimes \alpha_{\vert \e}  = (\beta_{\vert \e}\otimes 1)(1\otimes \alpha_{\vert \e}).
\end{align*}
One proves similarly that $(1\otimes \alpha_{\vert \e})(\beta_{\vert \e}\otimes 1) = (\beta_{\vert \e}\otimes 1)(1\otimes \alpha_{\vert \e})$ if $\alpha_{\vert \e}\in \Oo_\e$, $\beta_{\vert \e}\in \mathcal{Z}_0^\Q(\Oo_{\e})$.
\end{proof}
\begin{remark}\label{centralextsl2eD}{\rm One can check by direct computation that in the case $\mathfrak{g} = \mathfrak{sl}_2$ the space $\mathcal{Z}_0(\Oo_\e)^{\otimes (2g+n)}$, which strictly contains $\mathcal{Z}_0(\Ll_{g,n}^{\eD})$, is also a central subalgebra in $\Ll_{g,n}^{\eD}$. We strongly expect that for any $\mathfrak{g}$ one can define a larger $\mathcal{Z}_0$ than the one in Def.\,\ref{defZ0gnoct24}, of the form $\mathcal{Z}_0^{\Lam}(\Oo_{\e})^{\otimes 2g}\otimes \mathcal{Z}_0(\Oo_{\e})^{\otimes n}$ for some sublattice $Q \subset \Lambda \subset P$ which depends on $D$, such that Prop.\,\ref{Z0LgneD} still holds (after the obvious changes).
}
\end{remark}

\subsection{The central extension} The aim of this section is to prove the analog of Th.\,\ref{centralextLgn} when $\eD=\e^{\bar{D}}$ is replaced by an arbitrary primitive $Dl$-th root of $1$. This will be achieved in \S\ref{sec:analogextfev26} (see Cor.\,\ref{centralextLgneD}) after having done the following preliminary steps. First, because of the appearance of the lattice $Q$ in Def.\, \ref{defZ0gnoct24} and Prop.\,\ref{Z0LgneD}(1), this result will use a map $\pi^\Q$ analogous to $\pi$ in Prop.\,\ref{pipropO}; this map is studied (in some greater generality, replacing $Q$ with an arbitrary sublattice $\Lambda\subset P$ containing $Q$) in Prop.\,\ref{pipropOLam} below. Next, we will need an $R$-matrix $R_l^\Pup$ on the small quantum group $u_\e^\Pup$ associated to the lattice $P$; $u_\e^\Pup$ is described below, and $R_l^\Pup$ in $\S$\ref{subsec:qtsuefev26}. Finally, in $\S$\ref{sec:relatingfev26} we study the relationships between this $R$-matrix and the maps $\Phi_\e^\pm$, in a way similar to $\S$\ref{sec:qtsroot1}.
\smallskip

Given a sublattice $Q \subset \Lambda \subset P$ containing $Q$, its {\em dual lattice} $Y=Y(\Lambda)$ is defined by
\begin{equation}\label{duallatticesept25}
Y := \{\zeta\in P \: \vert \: \forall \lambda\in \Lambda, \:\: (\zeta,\lambda) \in \mz\}.
\end{equation}
So we have inclusions $Q \subset Y \subset P$. Recall from Rmk.\,\ref{comparaisonRestrictedVersions} that there is an inclusion $U_A^\Y \hookrightarrow \Gamma_A^\Y$ of the unrestricted to the restricted integral forms. 
\begin{defi}\label{defueY} We denote by $u_\e^\Y$ the image of the specialization to $q=\e$ of the inclusion map $U_A^\Y \hookrightarrow \Gamma_A^\Y$.
\end{defi}
By the same arguments as for $u_\e^\Q$ (see the comments after \eqref{dlambdae}) we have 
\begin{equation}\label{reltotouY} u_\e^\Y  \cong U_{\epsilon}^\Y\big/[{\rm relations}\ \eqref{toto}].
\end{equation}
Recall that this map is no longer injective; $u_\e^\Y\subset \Gamma_\e^\Y$ is a Hopf subalgebra of dimension
\begin{equation}\label{dimuYmars26}{\rm dim}_\mc(u_\e^\Y) = |Y/Q|l^{\dim(\mathfrak{g})},
\end{equation}
and we can consider its Hopf algebra dual $(u_\epsilon^\Y)^{*}$. The proof of Lemma \ref{nondegpairepsD} works also for the specialization to $\qD=\eD$, and therefore the Hopf pairing $\langle \text{-},\text{-} \rangle_{\eD}^\Pup \colon {\mathcal O}_\e\times \Gamma_\e^\Pup \to \mc$ is non-degenerate. Define a map $\pi^\Lam: {\mathcal O}_\epsilon \rightarrow (u_\epsilon^\Y)^{*}$ by  restriction, {\it i.e.}
\begin{equation}\label{piLamdef}
\forall \, \varphi \in \mathcal{O}_\epsilon, \:\: \forall \, x\in u_\epsilon^\Y \subset \Gamma_\epsilon^\Y \subset \Gamma_\e^\Pup, \quad \pi^\Lam(\varphi)(x):=\langle \varphi,x\rangle_{\eD}^\Pup.
\end{equation}
Proposition \ref{pipropO} (which was for $\Lambda = P$ and $\eD$ of order $l$) generalizes to the present setup:
\begin{prop}\label{pipropOLam}
(1) The map $\pi^\Lam$ is a surjective morphism of Hopf algebras.

\noindent (2) We have  $\ker(\pi^\Lam)={\mathcal O}_\epsilon \mathcal{Z}_{0}^{\Lam,+}$.
\end{prop}

\begin{proof}
(1) is proven exactly as in Prop.\,\ref{pipropO}.

(2) Let us show that $\mathcal{O}_\e\mathcal{Z}_{0}^{\Lam,+} \subset \ker(\pi^\Lam)$. Due to the Hopf pairing property, it suffices to check that $\langle \varphi , - \rangle_{\eD}^\Pup$ vanishes on the generators $E_i,F_i, K_{\zeta}$ for $\zeta \in Y$, of $u_\epsilon^\Y$, for all $\varphi \in \mathcal{Z}_0^{\Lam,+}$. Only the case of $K_{\zeta}$ is non-trivial. By Theorem \ref{DCLteo1} (1) we know that ${\mathcal Z}_0^\Lam({\mathcal O}_\e)$ is generated as a subalgebra by certain matrix coefficients of simple $\Gamma_\e$-modules of highest weights $l\lambda$, $\lambda\in \Lambda_+$. Let $f(?\cdot v) : x \mapsto f(x \cdot v)$ be such a matrix coefficient. We can assume that $v$ is a weight vector, and an important point for us is that all weights in such modules are multiples of $l$, thus of the form $l(\lambda + \alpha)$ for some $\alpha \in Q$ (see \cite[Remark on p.\,357]{CP}). From \eqref{actionLi} it follows
\[ f(K_{\zeta} \cdot v) = \e_\D^{Dl(\zeta, \, \lambda + \alpha)}f(v) = f(v) \]
because $(\zeta, \, \lambda + \alpha) \in \mathbb{Z}$ by definition of the lattice $Y$ which is dual to $\Lambda$.\footnote{Note that we write $\e_\D^{Dl(\zeta, \, \lambda + \alpha)}$ instead of $\e^{l(\zeta, \, \lambda + \alpha)}$ because in general $(\zeta, \, \lambda + \alpha)$ belongs to $\textstyle \frac{1}{D}\mathbb{Z}$ if $\zeta$ is not in $Y$. It is at this point that we see the relevance of the dual lattice $Y$.} Since $K_{\zeta}$ is grouplike, this fact on generators remains true for any element $\varphi \in {\mathcal Z}_{0}^{\Lam}$, \textit{i.e.} $\langle \varphi, K_{\lambda} \rangle_{\epsilon_\D}^\Pup = \varphi(1) = \varepsilon(\varphi)=0$.

\indent Now, since ${\mathcal O}_\epsilon {\mathcal Z}_{0}^{\Lam,+}\subset \ker(\pi^\Lam)$, the quotient morphism $\bar{\pi}^\Lam: {\mathcal O}_\e/{\mathcal O}_\epsilon {\mathcal Z}_{0}^{\Lam,+}\rightarrow (u_\epsilon^\Y)^{*}$ is still surjective. From \cite[\S III.7.7]{BG}, the algebra ${\mathcal O}_\e/{\mathcal O}_\epsilon {\mathcal Z}_{0}^{\Pup,+}$ has dimension $l^{\dim(\mathfrak g)}$. We have
\[ \begin{array}{l}
\dim({\mathcal O}_\e/{\mathcal O}_\epsilon {\mathcal Z}_{0}^{\Lam,+}) = \dim({\mathcal O}_\e/{\mathcal O}_\epsilon {\mathcal Z}_{0}^{\Pup,+}) \dim({\mathcal O}_\epsilon {\mathcal Z}_{0}^{\Pup,+}/{\mathcal O}_\epsilon {\mathcal Z}_{0}^{\Lam,+}),\\[.3em]
\dim({\mathcal O}_\epsilon {\mathcal Z}_{0}^{\Pup,+}/{\mathcal O}_\epsilon {\mathcal Z}_{0}^{\Lam,+}) = \vert Y/Q\vert
\end{array} \]
since ${\mathcal Z}_0^\Pup({\mathcal O}_\e)$ is a free ${\mathcal Z}_0^\Lam({\mathcal O}_\e)$-module of rank $\vert P/\Lambda \vert$, and $\vert P/\Lambda\vert = \vert Y/Q\vert$ by the duality map $P \to {\rm Hom}(Q,\mz)$, $\lambda \mapsto (- \ ,\lambda )$. But $\dim(u_\e^\Y)^{*}= \vert Y/Q\vert . l^{\dim {\mathfrak g}}$ by \eqref{dlambdae}, so $\bar{\pi}^\Lam$ is an isomorphism and $\ker(\pi^\Lam)={\mathcal O}_\epsilon {\mathcal Z}_{0}^{\Lam,+}.$
\end{proof}
\medskip

\subsubsection{Quasi-triangular structures on \texorpdfstring{$u_\e^\Pup$}{the small quantum group} and factorisability}\label{subsec:qtsuefev26} Because of Prop.\,\ref{Z0LgneD}(1), only the map $\pi^\Lam$ of Prop.\,\ref{pipropOLam} for the lattices $\Lambda=Q$ and $\Lambda=P$ will eventually be relevant for the generalization of Th.\,\ref{centralextLgn} when $\eD=\e^{\bar{D}}$ is replaced by an arbitrary primitive $Dl$-th root of $1$.\\
We note that $u_\e^\Pup$ has always the $R$-matrix $\bar{R}$ used in $\S$\ref{subsecRmatSmallQG} (also denoted $R_l^\Q$ below); the point is that we are looking for $R$-matrices with larger Cartan support, so that they may provide factorisability.

\smallskip

\indent The following theorem summarizes a combination of results proved in \cite[App.\,A1-A3]{Lyub95} and \cite{LN,LO}.

\begin{teo}\label{teosmallqg} (i) The Hopf algebra $u_\e^\Pup$ has a ribbon structure, with an $R$-matrix $R_l^\Pup$ defined in \eqref{RmatlP} below, and pivotal element $\ell=K_{2\rho}$. It is factorizable if, and only if, $D$ is odd.

(ii) The image of $\Phi_{0,1}\colon (u_\e^{\Pup})^* \to u_\e^{\Pup}$ is the subalgebra of $u_\e^{\Pup}$ with basis elements \begin{equation}\label{basisPhi01}\textstyle \left(\prod_{j=N}^1 F_{\beta_j}^{t_j}\right) \left(\prod_{j=N}^1 K_{\beta_j}^{t_j}\right) K_{2\bar{\lambda}} \left(\prod_{j=N}^1 E_{\beta_j}^{s_j}\right),
\end{equation}
where $0\leq t_j, s_j< l$ and $\bar{\lambda}\in P$ spans a set of representatives of the elements of $P/lQ$. Therefore, if $D$ is even $\Phi_{0,1}\colon (u_\e^{\Pup})^* \to u_\e^{\Pup}$ is not injective, and of course not surjective.
\end{teo}
Note that, contrary to $\Phi_{0,1}\colon (u_\e^{\Pup})^* \to u_\e^{\Pup}$ above, the morphism $\Phi_{0,1}^{\e}\colon \Ll_{0,1}^\e \to U_\e^{\Pup}$ is injective (see Prop.\,\ref{AlekseevInjRootOf1}).

\begin{proof}
(i) Let us first recall the case of $\Lambda=Q$, which is treated by Lyubashenko in \cite{Lyub95}. Consider the specialization at $q=\e$ of the restriction to $Q\times Q$ of the Drinfeld pairing $\tau$ in \eqref{deftau}; it is the symmetric bilinear form $\tau_{\vert \Q}:Q \times Q \to \mc^\times$ given by $\tau_{\vert \Q}(\beta,\alpha) := \e^{-(\beta, \alpha)}$. Denote by ${\rm rad}(\tau_{\vert \Q}) := \{\alpha\in Q\ \vert\  \forall \beta\in Q, \: \tau_{\vert \Q}(\beta,\alpha)=1\}$ its radical, and let
\[ \Gamma_\e^{\tau_{\vert \Q}} := \Gamma_\e \,\big/ \bigl\langle K_\alpha-1, \: \alpha\in {\rm rad}(\tau_{\vert \Q}) \bigr\rangle. \]
Also let $\mathbb{G}:= Q/{\rm rad}(\tau_{\vert \Q})$ (which is isomorphic to the Cartan subalgebra of $\Gamma_\e^{\tau_{\vert \Q}}$), and $u_\e^{\tau_{\vert \Q}}$ be the Hopf subalgebra of $\Gamma_\e^{\tau_{\vert \Q}}$ generated by the elements $E_{\beta}$, $F_{\beta}$ for $\beta\in \phi^+$ and $K_\alpha$, $\alpha\in Q$. Lyubashenko proved in \cite[App.\,A.1--A.3]{Lyub95}, for a root of unity $\e$ of arbitrary order $l$, that $\Gamma_\e^{\tau_{\vert \Q}}$ is quasitriangular and ribbon, with pivotal element $\ell=K_{2\rho}$, with an $R$-matrix supported on $(u_\e^{\tau_{\vert \Q}})^{\otimes 2}$, thus making $u_\e^{\tau_{\vert \Q}}$ quasi triangular and ribbon as well. Moreover $u_\e^{\tau_{\vert \Q}}$ is factorizable if, and only if, $2\mathbb{G} = \mathbb{G}$. The $R$-matrix on ($\Gamma_\e^{\tau_{\vert \Q}}$ and) $u_\e^{\tau_{\vert \Q}}$ has the form 
\begin{equation}\label{Rmatl}
R_l^\Q = \Theta_l \hat{R}_l,
\end{equation}
where (see \cite[Thm.\,A.1.6 and eq.\,(A.2.1)]{Lyub95}):
\begin{align*}
\Theta_l & := \frac{1}{\vert \mathbb{G} \vert}\sum_{\bar{\lambda},\bar{\mu}\in \mathbb{G}} \e^{-(\lambda,\mu)} K_{\bar{\lambda}} \otimes K_{\bar{\mu}}\\
\hat{R}_l & := \sum_{0\leq t_1,\ldots,t_N<l} \ \prod_{r=N}^1 \e_{\beta_r}^{\frac{1}{2}t_r(t_r+1)} \frac{(1-\e_{\beta_r}^{-2})^{t_r}}{[t_r]_{\e_{\beta_r}} !} (E_{\beta_r})^{t_r} \otimes (F_{\beta_r})^{t_r}.
\end{align*}
Here $\bar{\lambda}$ denotes the class in $\mathbb{G}$ of an element $\lambda\in Q$, and $K_{\bar{\lambda}}$ one representative of the set $\{K_\mu, \mu\in \bar{\lambda}\}$. Notice that $\hat{R}_l=(p^\Y\otimes {\rm id})(\hat{R}_\e)\in (u_\e^{\tau_{\vert \Q}})^{\otimes 2}$, where $p^\Y\colon U_\e^\Y \to u_\e^\Y$ is the quotient map from \eqref{reltotouY}, and $\hat{R}_\e$ is obtained from $\hat{R}$ in \eqref{Rhat} by specialization to $q=\e$ ({\it e.g.}, regarding $\hat{R}$ as an element of $\stackrel{.}{\mathbf{U}}_A\!\!\!{}^{\hat{\otimes} 2}$ through the embeddings $\Gamma_A^\Lam\hookrightarrow \hat{{\mathbf{U}}}_A$ analogous to the one of Lemma \ref{embedGammaeps}). A key point of the proof that $R_l^\Q$ is an $R$-matrix is to show that $\Theta_l$ is an $R$-matrix for the Cartan subalgebra of $u_\e^{\tau_{\vert \Q}}$; in particular the property $(\Delta\otimes {\rm id})(\Theta_l) = (\Theta_l)_{13}(\Theta_l)_{23}$ follows from the identity
$$\sum_{\bar{\lambda}\in \mathbb{G}} \e^{-(\lambda,\mu)}  = \vert \mathbb{G} \vert \cdot \delta_{\mu\in {\rm rad}(\tau_{\vert \Q})},$$
which is a consequence of the non-degeneracy of the pairing induced by ${\tau_{\vert \Q}}$ on $\mathbb{G}$. But
\begin{align*}
{\rm rad}(\tau_{\vert \Q}) & = Q\cap l\mz\{d_i^{-1}\varpi_i, i=1,\ldots,m\} \\ & = Q\cap lP = \left\lbrace\sum_{j=1}^m c_j\alpha_j\in Q\ \left\vert \ c_j\in \mz \text{ are such that }\sum_{j=1}^m a_{ij}c_j \equiv 0 \: (\mathrm{mod} \: l) \text{ for all } i\right.\right\rbrace .
\end{align*}
In the second equality we use our assumption ${\rm gcd}(l,d_i)=1$, $i=1,\ldots,m$. Therefore ${\rm rad}(\tau_{\vert \Q})=lQ$ if, and only if, ${\rm gcd}(l,{\rm det}(a_{ij})) =1$, which is always satisfied in the present cases of $\mathfrak{g}$, since $l$ satisfies the assumptions \eqref{assumptionl}. By \eqref{toto} it follows that $\Gamma_\e^{\tau_{\vert \Q}}=\Gamma_\e$, and $u_\e^{\tau_{\vert \Q}} = u_\e^\Q$, exactly when ${\rm gcd}(l,D)=1$. Since $l$ is odd and $\mathbb{G} = Q/lQ$, we have $2\mathbb{G} = \mathbb{G}$, and hence Lyubashenko's results recalled above prove that $u_\e^\Q$ is ribbon and factorizable.

The case $\Lambda=P$ has been treated by \cite{LN,LO}. The above arguments extend verbatim to $u_\e^{\Pup}$ by considering the form $\tau_{\vert \Pup}:P \times P \to \mc^\times$ given by $\tau_{\vert \Pup}(\lambda,\mu) := \e^{-(\lambda, \mu)}$, where as usual $\e^{-(\lambda,\mu)} := \eD^{-D(\lambda,\mu)}$. Indeed, replacing $\Gamma_\e^{\tau_{\vert \Q}}$  with $\Gamma_\e^{\tau_{\vert \Pup}} := \Gamma_\e^\Pup/(K_\alpha-1,\alpha\in {\rm rad}(\tau_{\vert \Pup}))$ and using that $\tau_{\vert \Pup}$ is symmetric and non degenerate on $\mathbb{G}^\Pup:= P/{\rm rad}(\tau_{\vert \Pup})$ (whose group algebra is the Cartan subalgebra of $\Gamma_\e^{\tau_{\vert \Pup}}$), the arguments of \cite[Thm.\,A.1.6]{Lyub95} yield an $R$-matrix on $\Gamma_\e^{\tau_{\vert \Pup}}$ and $u_\e^{\tau_{\vert \Pup}} := U_\e^\Pup/(E_{\beta}^l=F_{\beta}^l=0, \forall \beta\in \phi^+, \;K_\lambda=1, \forall \lambda\in {\rm rad}(\tau_{\vert \Pup}))$. Since $\eD$ is a primitive $Dl$-th root of $1$, ${\rm gcd}(l,d_i)=1$, and ${\rm gcd}(l,D)=1$, we have ${\rm rad}(\tau_{\vert \Pup}) := \{\mu\in P\ \vert\  \forall \lambda\in P, \tau_{\vert \Pup}(\lambda,\mu)=1\}=lQ$, and therefore $u_\e^\Pup$ coincides with $u_\e^{\tau_{\vert \Pup}}$. The obtained $R$-matrix is
\begin{equation}\label{RmatlP}
R_l^\Pup = \Theta_l^\Pup \hat{R}_l,
\end{equation}
with $\hat{R}_l$ as above and $\Theta_l^\Pup$ of the form
$$\Theta_l^\Pup = \frac{1}{\vert P/lQ \vert}\sum_{\bar{\lambda},\bar{\mu}\in P/lQ} \eD^{-D(\lambda,\mu)} K_{\bar{\lambda}} \otimes K_{\bar{\mu}}.$$
Note that the map $u_\e(\mathfrak{b}_+)^*\to u_\e(\mathfrak{b}_-)$, $\alpha\mapsto (\alpha\otimes {\rm id})(R_l^\Pup)$, is a bijection, so $R_l^\Pup$ puts $u_\e(\mathfrak{b}_+)$ and $u_\e(\mathfrak{b}_-)$ in duality. Also, note that for $D$ odd we have $\vert P/lQ \vert =Dl^m$. A pivotal element is obviously $\ell=K_{2\rho}$, as for $u_\e^\Q$. \\
\indent Since $\Theta_l^\Pup$ is a symmetric tensor, the algebra $u_\e^\Pup$ is factorizable if and only if $\vert P/lQ \vert = \vert P/Q \vert\cdot l^m$ is odd \cite[Ex.\,5.11]{LO}. Since $l$ is odd, this amounts to $\vert P/Q \vert$ odd and proves (i).

(ii) Let us consider the analogous situation for $\Lambda=Q$. The computation of the image of $\Phi_{0,1}\colon (u_\e^{\Q})^* \to u_\e^\Q$ is an immediate consequence of \cite[Prop.\,A.3.1]{Lyub95}. The proof is in two steps, first commuting the two first factors of $R_l^\Q(R_l^\Q)' = \Theta_l \hat{R}_l \Theta_l (\hat{R}_l)'$ (where $(R_l^\Q)'=\sigma(R_l^\Q)$ and $\sigma\colon x\otimes y\mapsto y\otimes x$, extended linearly), so as to get an expression of the form 
\begin{multline}\label{commutRR'}
R_l^\Q(R_l^\Q)' = \sum_{{\substack{t_r,s_r=0 \\ r=1,\ldots,N}}}^{l-1} \ \sum_{(\Theta_l^2)}\ \left(\prod_{r=1}^N \epsilon_{\beta_r}^{\frac{1}{2}t_r(t_r+1)} \frac{(1-\epsilon_{\beta_r}^{-2})^{t_r}}{[t_r]_{\epsilon_{\beta_r}} !}\right)  \left(\prod_{r=1}^N \epsilon_{\beta_r}^{\frac{1}{2}s_r(s_r+1)} \frac{(1-\epsilon_{\beta_r}^{-2})^{s_r}}{[s_r]_{\epsilon_{\beta_r}} !} \right)\\   \left(\prod_{r=N}^1 (E_{\beta_r}K_{\beta_r}^{-1})^{t_r}\right) (\Theta_l^2)_{(1)} \left(\prod_{r=N}^1  F_{\beta_r}^{s_r} \right)
\otimes  \left(\prod_{r=N}^1 (K_{\beta_r}F_{\beta_r})^{t_r}\right) (\Theta_l^2)_{(2)} \left(\prod_{r=N}^1 E_{\beta_r}^{s_r}\right),
\end{multline}
with $\Theta_l^2  = \textstyle \sum_{(\Theta_l^2)} (\Theta_l^2)_{(1)} \otimes (\Theta_l^2)_{(2)}\in u_\e^\Q(\mathfrak{h}) \otimes u_\e^\Q(\mathfrak{h}) \cong \mc[\mathbb{G}]\otimes \mc[\mathbb{G}]$ (where as above $\mathbb{G}=Q/{\rm rad}(\tau_{\vert \Q}) = Q/lQ$ in our situation, and $\mc[\mathbb{G}]$ is identified with the group algebra of the elements $K_{\bar \lambda}$, $\bar \lambda\in \mathbb{G})$). Then one shows that the smallest subspace $A \subset \mc[\mathbb{G}]$ such that $\Theta_l  \Theta_l \in A^{\otimes 2}$ is $A=\mc[2\mathbb{G}]$ . 

The very same arguments give the image of $\Phi_{0,1}\colon (u_\e^{\Pup})^* \to u_\e^\Pup$.\end{proof}

\begin{remark}\label{exRmatfev26}{\rm As shown in \cite{LN,LO} the Hopf algebra $u_\e^\Q$ has the unique $R$-matrix $R_l^\Q$ satisfying the so-called Lusztig's ansatz, whereas $u_\e^\Pup$ has a finite number of such $R$-matrices, including $R_l^\Pup$ in \eqref{RmatlP}. Our following results do not depend on the choice of this $R$-matrix. } 
\end{remark}

\subsubsection{Relating the quasitriangular structures}\label{sec:relatingfev26} The analysis of this section goes parallel to that of $\S$\ref{sec:qtsroot1}. The main result is Lemma \ref{commutephiateD}, the analog of Lemma \ref{commutephiate} when $\eD$ is a primitive $Dl$-th root of $1$. 

As usual, let $\Lambda\subset P$ be a sublattice containing $Q$, and let $Y$ be the dual lattice, see \eqref{duallatticesept25}. Recall $\mathcal{Z}_0^+(U_\e^\Q)$ defined after \eqref{Z0Uedefsep25}. By definition $u_\e^\Lam  \cong U_{\epsilon}^\Lam/({\rm relations}\ \eqref{toto})$, so $u_\e^\Y \cong U_\e^\Y/\mathcal{Z}_0^+(U_\e^\Q)U_\e^\Y$. Denote the quotient map
\begin{equation}\label{defqmapssept25}
p^\Y\colon U_\e^\Y\to u_\e^\Y.
\end{equation}
As in \eqref{fundamentalDegeneracy}-\eqref{piPdef} we have
\begin{equation}\label{fundamentalDegeneracyeD}\forall \, \varphi \in \mathcal{O}_A, \:\: \forall \, h \in U_A^\Y \subset \Gamma_A^\Y, \quad \bigl(\langle \varphi, h \rangle_{\AD}^\Pup\bigr)_{|\eD} =  \bigl\langle \varphi_{|\e}, p^\Y(h_{|\e}) \bigr\rangle_{\eD}^\Pup = \pi^\Lam(\varphi_{|\e})(p^\Y(h_|{\e})).
\end{equation}
with the map $\pi^\Lam$ from \eqref{piLamdef}. By extending scalars to $A_\D$ the pairings \eqref{tauA} yield Hopf pairings
\begin{equation}\label{tauAD}
\tau_{\AD}^\Y\colon U_A^\Y(\mathfrak{b}_+)^{\mathrm{cop}}\times \Gamma_A^\Y(\mathfrak{b}_-)\to \AD \ , \quad \rho_{\AD}^\Y\colon U_A^\Y(\mathfrak{b}_-)^{\mathrm{cop}}\times \Gamma_A^\Y(\mathfrak{b}_+)\to \AD.
\end{equation}
These pairings are still non degenerate, since they recover $\tau$ and $\rho$ after tensoring with $\mc(\qD)$. Also, they satisfy the identities \eqref{identPhibracket}, taking $x_\pm\in \Gamma_A^\Y(\mathfrak{b}_\pm)$ and replacing $\langle \text{-},\text{-} \rangle_A^\Q$ with the extended pairing $\langle \text{-}, \text{-} \rangle_{\AD}^\Pup$. By specialization, similarly as in \eqref{taue} one obtains non degenerate Hopf pairings $\tau_{\eD}^\Y$, $\rho_{\eD}^\Y$, satisfying
\begin{equation}\label{identPhibracketeD}
\rho_{\eD}^\Y\bigl(\Phi_\e^+(\alpha),x_+\bigr) = \langle \alpha,x_+\rangle_{\eD}^\Pup\ ,\ \tau_{\eD}^\Y\bigl(\Phi_\e^-(\alpha),x_-\bigr) = \langle \alpha,x_-\rangle_{\eD}^\Pup
\end{equation}
for every $\alpha\in \Oo_\e^\Y$, and $x_\pm\in \Gamma_\e^\Y(\mathfrak{b}_\pm)$.

Recall the algebras $\mathcal{Z}_0(U_\e^\Lam)$ from \eqref{Z0Uedefsep25}. Put $\mathcal{Z}_0^+(U_\e^\Lam):= \ker(\varepsilon) \cap \mathcal{Z}_0(U_\e^\Lam)$ (a Hopf ideal of $U_\e^\Lam$), and $\mathcal{Z}_0^+\bigl( U_\e^\Lam(\mathfrak{b}_+) \bigr) = \mathcal{Z}_0^+(U_\e) \cap U_\e^\Lam(\mathfrak{b}_+)$. The following lemma is proved like Lemma \ref{basicqpairingnov24}.

\begin{lem}\label{basicqpairingsept25} For every $x\in \mathcal{Z}_0^+(U_\e^\Q(\mathfrak{b}_+))$ and $h\in u_\e^\Y(\mathfrak{b}_-)\subset \Gamma_\e^\Y(\mathfrak{b}_-)$ we have $\tau_{\eD}^\Y(x,h)=0$. Therefore we have a well-defined pairing
$$\bar{\tau}_{\eD}^\Y: u_\e^\Y(\mathfrak{b}_+)^{cop}\times u_\e^\Y(\mathfrak{b}_-)\to \mc, \ \bar{\tau}_{\eD}^\Y(p^\Y(x),h) := \tau_{\eD}^\Y(x,h).$$
Moreover $\bar{\tau}_{\eD}^\Y$ is a non degenerate Hopf pairing.
\end{lem}
Denote by $\bar{\rho}_{\eD}^\Y: u_\e^\Y(\mathfrak{b}_-)^{cop}\times u_\e^\Y(\mathfrak{b}_+)\to \mc$ the (non-degenerate) pairing defined by $$\bar{\rho}_{\eD}^\Y(x,y) := \bar{\tau}_{\eD}^\Y(y,S^{-1}(x)) = \bar{\tau}_{\eD}^\Y(S(y),x).$$
For every $x\in U_\e^\Y(\mathfrak{b}_-)$, $h\in u_\e^\Y(\mathfrak{b}_+)$ we have $\bar{\rho}_{\eD}^\Y(p^\Y(x),h)  = \rho_{\eD}^\Y(x,h)$.

We now relate these pairings and the isomorphisms $\Phi_\e^\pm$ in \eqref{Phi+-e} with analogous ones defined on $u_\e^\Pup$ from the quasitriangular structure described in Th.\,\ref{teosmallqg}. Recall the $R$-matrix $R_l^\Pup$ from \eqref{RmatlP}. For notational convenience let us set
$$\Upsilon:= R_l^\Pup.$$
By the general properties of $R$-matrices, the maps (where $\Upsilon^-=\sigma(\Upsilon)^{-1}$ with $\sigma\colon x\otimes y\mapsto y\otimes x$, extended linearly):
\begin{equation}\label{phipm}
\fonc{\bar\Phi_\e^{\pm, \Upsilon}}{u_\e^\Pup(\mathfrak{b}_\pm)^*}{u_\e^\Pup(\mathfrak{b}_\mp)^{cop}}{\alpha}{(\alpha \otimes id)(\Upsilon^\pm)}
\end{equation}
are morphisms of Hopf algebras. As we saw in the proof of Theorem \ref{teosmallqg}, $R_l^\Pup$ puts $u_\e^\Pup(\mathfrak{b}_+)$ and $u_\e^\Pup(\mathfrak{b}_-)$ in duality. Therefore the maps $\bar\Phi_\e^{\pm, \Upsilon}$ are isomorphisms, and one can define non degenerate Hopf pairings
\begin{equation*}
\bar{\rho}_{\eD}^\Upsilon: u_\e^\Pup(\mathfrak{b}_-)^{cop}\times u_\e^\Pup(\mathfrak{b}_+)\to \mc\ ,\ \bar{\tau}_{\eD}^\Upsilon: u_\e^\Pup(\mathfrak{b}_+)^{cop}\times u_\e^\Pup(\mathfrak{b}_-)\to \mc, 
\end{equation*} 
by
\begin{equation}\label{rhoRdefsept25}
\bar{\rho}_{\eD}^\Upsilon(\bar\Phi_\e^{+,\Upsilon}(\alpha_+), h_+) = \alpha_+(h_+)\ ,\  \bar{\tau}_{\eD}^\Upsilon(\bar\Phi_\e^{-,\Upsilon}(\alpha_-), h_-) = \alpha_-(h_-)
\end{equation}
for every $\alpha_\pm\in u_\e^\Pup(\mathfrak{b}_\pm)^*$, $h_\pm\in u_\e^\Pup(\mathfrak{b}_\pm)$, where $\bar{\rho}_{\eD}^\Upsilon (x,y) := \bar{\tau}_{\eD}^\Upsilon (y,S^{-1}(x))$. Note that we have
\[ (\alpha_+\otimes \alpha_-)(\Upsilon) = \bar{\tau}_{\eD}^\Upsilon\bigl( \bar\Phi_\e^{-,\Upsilon}(\alpha_-), \bar\Phi_\e^{+,\Upsilon}(\alpha_+) \bigr). \] 

Since $\bar{\tau}_{\eD}^\Pup$ is a non degenerate Hopf pairing (Lem.\,\ref{basicqpairingsept25}), the map
\[ \phi_\e^{+,\Pup}\colon u_\e^\Pup(\mathfrak{b}_+)^{\mathrm{cop}} \to \bigl(u_\e^\Pup(\mathfrak{b}_-)\bigr)^*, \quad x\mapsto \bar{\tau}_{\eD}^\Pup(x,\cdot) \]
is an isomorphism of Hopf algebras. In particular, by definition of $\tau_{\eD}^\Pup$ we have
$$\left\langle \phi_\e^{+,\Pup}(K_{\bar{\lambda}}),K_{\bar{\mu}} \right\rangle = \eD^{-D(\lambda,\mu)}$$
where $\langle \text{-}, \text{-} \rangle$ is the evaluation pairing. Also, since $\bar{\tau}_{\eD}^\Upsilon$ is a non degenerate Hopf pairing, the map $\phi_\e^{+, \Upsilon}\colon u_\e^\Pup(\mathfrak{b}_+)^{\mathrm{cop}} \to \bigl(u_\e^\Pup(\mathfrak{b}_-)\bigr)^*$, $x\mapsto \bar{\tau}_{\eD}^\Upsilon(x,\cdot)$, is an isomorphism of Hopf algebras, and by \eqref{rhoRdefsept25} it coincides with the inverse map $(\bar\Phi_\e^{-,\Upsilon})^{-1}$. Using this, the formula \eqref{RmatlP}, and the identity 
$$\sum_{\bar{\lambda}\in P/lQ} \eD^{D(\lambda,\mu)} = \vert P/lQ\vert\cdot \delta_{\bar{\mu},0},$$
one checks that $\left\langle \phi_\e^{+, \Upsilon}(K_{\bar{\lambda}}), K_{\bar{\mu}} \right\rangle = \eD^{-D(\lambda,\mu)}$ and hence $\phi_\e^{+, \Upsilon} = \phi_\e^{+,\Pup}$. Similarly, by using the pairings $\bar{\rho}_{\eD}^\Pup$ and $\bar{\rho}_{\eD}^\Upsilon$ one obtains associated maps $\phi_\e^{-, \Pup}, \phi_\e^{-, \Upsilon} \colon u_\e^\Pup(\mathfrak{b}_-)^{cop} \to (u_\e^\Pup(\mathfrak{b}_+))^*$ satisfying $\phi_\e^{-, \Upsilon} = \phi_\e^{-,\Pup}$. 

Recall the quotient map $p^\Pup\colon U_\e^{\Pup}\to u_\e^{\Pup}$ from \eqref{defqmapssept25}. The identities $\phi_\e^{\pm, \Upsilon} = \phi_\e^{\pm,\Pup}$ imply
\begin{equation}\label{newqpairingnov24} \bar{\tau}_{\eD}^\Upsilon\bigl( p^{\Pup}(x),h \bigr) := \tau_{\eD}^\Pup(x,h), \quad \bar{\rho}_{\eD}^\Upsilon\bigl(p^{\Pup}(x),h\bigr)  = \rho_{\eD}^\Pup(x,h).
\end{equation}
Recall the morphisms $\bar\Phi_\e^{\pm}$ from $\S$\ref{sec:qtsroot1}. The statement analogous to Lemma \ref{commutephiate} for $\eD$ a primitive $Dl$-th root of $1$ is:
\begin{lem}\label{commutephiateD} We have commutative diagrams:
$$\xymatrix{\Oo_\e \ar[r]^{\Phi_\e^\pm\quad } \ar[d]^{\pi^\Q} & U_\e^\Pup(\mathfrak{b}_\mp)^{cop} \ar[d]^{p^{\Pup}} \\ (u_\e^\Pup)^* \ar[r]^{\ \bar \Phi_\e^{\pm,\Upsilon}\quad } & u_\e^\Pup(\mathfrak{b}_\mp)^{cop}}\quad \quad \xymatrix{\Oo_\e^\Q \ar[r]^{\Phi_\e^\pm\quad } \ar[d]^{\pi^\Pup} & U_\e^\Q(\mathfrak{b}_\mp)^{cop} \ar[d]^{p^{\Q}} \\ (u_\e^\Q)^* \ar[r]^{\ \bar \Phi_\e^{\pm}\quad } & u_\e^\Q(\mathfrak{b}_\mp)^{cop}.}$$
\end{lem}
\begin{proof} Consider the left diagram; the proof is similar for the right one, where on the top arrow we use Prop.\,\ref{PhirresLam}. For every $h_{\vert \e}\in u_\e^\Pup(\mathfrak{b}_-)\subset \Gamma_\e^\Pup(\mathfrak{b}_-)$ and $\alpha_{\vert \e}\in \Oo_\e$ we have
\begin{align*} \bar{\tau}_{\eD}^\Upsilon\bigl( p^{\Pup}\circ \Phi_\e^-(\alpha_{\vert \e}), h_{\vert \e} \bigr) & =  \tau_{\eD}^\Pup\bigl( \Phi_\e^-(\alpha_{\vert \e}), h_{\vert \e} \bigr) = \langle \alpha_{\vert \e}, h_{\vert \e}\rangle_{\eD}^\Pup\\
& = \pi^\Q(\alpha_{\vert \e}) (h_{\vert \e}) = \bar{\tau}_{\eD}^\Upsilon\bigl( \bar\Phi_\e^{-,\Upsilon}(\pi^\Q(\alpha_{\vert \e})), h_{\vert \e} \bigr).
\end{align*}
The first equality follows from \eqref{newqpairingnov24}, the second from \eqref{identPhibracketeD}, the third is by the definition \eqref{piLamdef}, and the fourth follows from \eqref{rhoRdefsept25}. Since $\bar{\tau}_{\eD}^\Upsilon$ is non degenerate, we get $p^\Pup\circ \Phi_\e^-(\alpha_{\vert \e}) = \bar \Phi_\e^{-,\Upsilon}(\pi^\Q(\alpha_{\vert \e}))$. This proves the diagram for $\Phi^-_\e$ is commutative. For $\Phi^+_\e$ we proceed similarly, using the pairing $\bar{\rho}_{\eD}^\Upsilon$. \end{proof}

\subsubsection{The analog of Th.\,\ref{centralextLgn} for $\eD$}\label{sec:analogextfev26} We can now relate $\mathcal{L}_{g,n}^{\eD}$ with a graph algebra for small quantum groups.

As in \S \ref{subsecDefLgnH} we can define graph algebras $\mathcal{L}_{g,0}(u_\e^\Pup)$ for the datum $(H,\Phi^\pm) := (u_\epsilon^\Pup,\bar\Phi_\e^{\pm,\Upsilon})$, with $\bar\Phi_\e^{\pm,\Upsilon}$  as in \eqref{phipm}, and $\mathcal{L}_{0,n}(u_\e^\Q)$ for the datum $(u_\epsilon^\Q,\bar\Phi_\e^{\pm})$ as in $\S$\ref{sec:qtsroot1}. Set
\begin{equation}\label{defLgnueRlp}
\mathcal{L}_{g,n}(u_\e) := \mathcal{L}_{g,0}(u_\e^\Pup)\ \widetilde{\otimes}\ \mathcal{L}_{0,n}(u_\e^\Q),
\end{equation}
where $\widetilde{\otimes}$ is the braided tensor product of $u_\e^\Pup$-module algebras, defined by means of the $R$-matrix $\Upsilon$ on $u_\e^\Pup$. Some comments are necessary for this construction to hold. First, the choice $H=u_\e^\Pup$ in the ``handle part'' $\mathcal{L}_{g,0}(H)$, and $H=u_\e^\Q$ in the ``puncture part'' $\mathcal{L}_{0,n}(H)$, will be justified in Th.\,\ref{LgntoLgnue} below. On the latter the $R$-matrix $R_l^\Q$ is the Lyubashenko's one; for the handle part $\mathcal{L}_{g,0}(u_\e^\Pup)$ we use $\Upsilon$. Finally, it is easily checked that the structure of $u_\e^\Q$-module algebra of $\mathcal{L}_{0,n}(u_\e^\Q)$ extends to a structure of $u_\e^\Pup$-module algebra (using the Alekseev map $\Phi_{0,n}$, which gives an isomorphism of $u_\e^\Q$-module algebras $\mathcal{L}_{0,n}(u_\e^\Q) \cong (u_\e^\Q)^{\otimes n}$ (see \eqref{isoAlekseevSmallUq}), this translates into the fact that $(u_\e^\Q)^{\otimes n}\subset (u_\e^\Pup)^{\otimes n}$ is stable under the adjoint action of $u_\e^\Pup$). Then $\mathcal{L}_{g,n}(u_\e)$ makes sense as a braided tensor product of $u_\e^\Pup$-module algebras.
\smallskip

Consider the epimorphisms $\pi^\Lam: {\mathcal O}_\epsilon \rightarrow (u_\epsilon^\Y)^{*}$ from \eqref{piLamdef} for $\Lambda=Q$ or $P$. The following result is the analog of Prop.\,\ref{piMorphBetweenLgn} for the specialization to $\qD=\eD$.
\begin{teo}\label{LgntoLgnue} The map $(\pi^\Q)^{\otimes 2g}\otimes (\pi^\Pup)^{\otimes n}\colon \Oo_\e^{\otimes (2g+n)} \rightarrow ((u_\epsilon^\Pup)^*)^{\otimes 2g}\otimes ((u_\epsilon^\Q)^*)^{\otimes n}$ yields a morphism of algebras $\mathcal{L}_{g,n}^{\eD}\rightarrow \mathcal{L}_{g,n}(u_\epsilon)$.
\end{teo}
\begin{proof} It is enough to show the result for $\Ll_{0,1}^{\eD}$, $\Ll_{1,0}^{\eD}$ and $\Ll_{0,2}^{\eD}$, as these cases cover all the defining relations \eqref{fusionRelL01}, \eqref{L01prodsept25} and \eqref{braidedTensProdComm} of $\Ll_{g,n}^{\eD}$. We give the details for $\Ll_{0,1}^{\eD}$ and $\Ll_{1,0}^{\eD}$, the case of $\Ll_{0,2}^{\eD}$ being similar.

Consider first the case of $\Ll_{0,1}^{\eD}$. By \eqref{fusionRelL01} we can write the product of $\alpha_{|\e},\beta_{|\e}\in \Ll_{0,1}^{\eD}$ as 
\begin{equation}\label{basicprode}
\alpha_{|\e}\beta_{|\e} = \sum_{(\alpha),(\beta)} \alpha_{(1)|\e}\star \beta_{(2)|\e}\ \bigl\langle \alpha_{(2)},\Phi_A^+(\beta_{(3)}) \bigr\rangle_{\AD\vert \eD}^\Pup \, \bigl\langle S(\alpha_{(3)}), \Phi_A^+(\beta_{(1)})\bigr\rangle_{\AD\vert \eD}^\Pup.
\end{equation}
Let us assume for a while that $\beta\in \Ll_{0,1}^A$ belongs to $\Oo_A^\Q$. This situation is covered by Lem.\,\ref{lemmaLgnIsFunctorial} and Cor.\,\ref{relationsCoRMatAtEps} as in Prop.\,\ref{piMorphBetweenLgn}, but for the generalization to follow it is useful to rewrite the proof. We have $\Phi_A^+(\beta_{(1)}), S(\Phi_A^+(\beta_{(3)})) \in U_A^\Q$ (see Prop.\,\ref{PhirresLam}), and then \eqref{fundamentalDegeneracyeD} and Lemma \ref{commutephiateD} imply
\begin{align} \langle \alpha_{(2)},\Phi_A^+(\beta_{(3)})\rangle_{\AD\vert \eD}^\Pup& =  \pi^\Pup(\alpha_{(2)|\e})\bigl(p^\Q(\Phi_\e^+(\beta_{(3)|\e}))\bigr)\notag\\ & = \pi^\Pup(\alpha_{(2)|\e})\bigl(\bar{\Phi}^{+}_\e (\pi^\Pup(\beta_{(3)|\e}))\bigr) \label{eqprodPhi1}\\ \langle S(\alpha_{(3)}), \Phi_A^+(\beta_{(1)})\rangle_{\AD\vert \eD}^\Pup & = \pi^\Pup(S(\alpha_{\e|(3)}))\bigl(p^\Q(\Phi_\e^+(\beta_{(1)|\e}))\bigr) \notag\\ & = \pi^\Pup(S(\alpha_{(3)|\e}))\bigl(\bar{\Phi}_\e^{+}(\pi^\Pup(\beta_{(1)|\e}))\bigr) \label{eqprodPhi2}.
\end{align}
 Now, denote by $\cdot$ the usual product of $(u_\e^\Q)^*$. Using the fact that $\pi^\Pup$ is a morphism of Hopf algebras, from \eqref{basicprode} it follows
\begin{multline}\label{PiPexrelmars26}\pi^\Pup(\alpha_{|\e}\beta_{|\e}) = \\ \sum_{(\alpha),(\beta)} \pi^\Pup(\alpha_{|\e})_{(1)}\cdot \pi^\Pup(\beta_{|\e})_{(2)}\ \pi^\Pup(\alpha_{|\e})_{(2)}\bigl(\bar{\Phi}_\e^{+} (\pi^\Pup(\beta_{|\e})_{(3)})\bigr) S(\pi^\Pup(\alpha_{|\e})_{(3)})\bigl(\bar{\Phi}_\e^{+}(\pi^\Pup(\beta_{|\e})_{(1)})\bigr).
\end{multline}
This is the very definition of the product $\pi^\Pup(\alpha_{|\e})\pi^\Pup(\beta_{|\e})$ in $\Ll_{0,1}(u_\e^\Q)$, so $\pi^\Pup\colon \Oo_\e \to (u_\e^\Q)^*$ restricts to a morphism of algebras $\pi^\Pup\colon \Ll_{0,1}^{\Q,\eD} \to \Ll_{0,1}(u_\e^\Q)$, where $\Ll_{0,1}^{\Q,\eD}$ is the subspace of $\Ll_{0,1}^{\eD}$ spanned by the matrix elements in $\Oo_\e^\Q$.

Next we extend this result to the whole $\Ll_{0,1}^{\eD}$. Let $\beta\in \Ll_{0,1}^A$ be arbitrary. The elements $\Phi_A^+(\beta_{(1)}), S(\Phi_A^+(\beta_{(3)})) \in U_A^\Pup$ may not belong to $U_A^\Q$, and therefore the first equalities in \eqref{eqprodPhi1} and \eqref{eqprodPhi2} do not hold true in general, even by replacing the quotient map $p^\Q$ with $\tilde{p}\colon U_\e^\Pup\to u_\e^\Q$ from Lemma \ref{evalinnerprodA} (1). However there are global compensations. Let us show this. Since every $\gamma_{|\e} \in  \mathcal{Z}_0(\Oo_\e)$ is central in $\Oo_\e$, and satisfies $\Oo_\e\gamma_{|\e} = \Oo_\e\star \gamma_{|\e}$ pointwise (Proposition \ref{Z0LgneD} for $(g,n)=(0,1)$), Prop.\,\ref{pipropOLam} gives
$$\pi^\Pup(\alpha_{|\e}\gamma_{|\e}\Oo_\e) = \pi^\Pup(\alpha_{|\e}\Oo_\e\star \gamma_{|\e}) = \pi^\Pup(\alpha_{|\e}\Oo_\e)\cdot\pi^\Pup(\gamma_{|\e})=0\quad {\rm if}\ \gamma_{|\e}\in  \mathcal{Z}_0^+(\Oo_\e).$$
This shows $\pi^\Pup\bigl( \alpha_{|\e}(\beta_{|\e}+\mathcal{Z}_0^+(\Oo_\e)\Oo_\e) \bigr) = \pi^\Pup(\alpha_{|\e} \beta_{|\e})$ meaning that $\pi^\Pup(\alpha_{|\e}\beta_{|\e})$ eventually depends only on the class of $\beta_{|\e}$ modulo $\mathcal{Z}_0^+(\Oo_\e)\Oo_\e$. Since $\mathcal{Z}_0^+(\Oo_\e)\Oo_\e$ is a Hopf ideal of $\Oo_\e$, and $\Phi_\e^+(\Oo_\e) = U_\e^\Pup(\mathfrak{b}_-)$ and $\Phi_\e^+(\mathcal{Z}_0^+(\Oo_\e)) = \mathcal{Z}_0^+\bigl( U_\e^\Pup(\mathfrak{b}_-) \bigr)$, the map $(\Phi_\e^+\otimes {\rm id} \otimes \Phi_\e^+)\circ \Delta^{(3)}$ sends the class of $\beta_{|\e}$ mod $\mathcal{Z}_0^+(\Oo_\e)\Oo_\e$ to the class of $\textstyle \sum_{(\beta)} \Phi_\e^+(\beta_{(1)|\e})\otimes \beta_{(2)|\e}\otimes \Phi_\e^+(\beta_{(3)|\e})$ mod $(\mathcal{Z}_0^+(U_\e^\Pup(\mathfrak{b}_-))U_\e^\Pup(\mathfrak{b}_-)\otimes \Oo_\e\otimes U_\e^\Pup(\mathfrak{b}_-)) + (U_\e^\Pup(\mathfrak{b}_-) \otimes \mathcal{Z}_0^+(\Oo_\e)\Oo_\e\otimes U_\e^\Pup(\mathfrak{b}_-))+(U_\e^\Pup(\mathfrak{b}_-) \otimes \Oo_\e\otimes \mathcal{Z}_0^+(U_\e^\Pup(\mathfrak{b}_-))U_\e^\Pup(\mathfrak{b}_-))$. Now the Hopf epimorphism $\tilde{p}\colon U_\e^\Pup\to u_\e^\Q$ yields by restriction isomorphisms of Hopf algebras $U_\e^\Pup(\mathfrak{b}_\pm)/\mathcal{Z}_0^+(U_\e^\Pup(\mathfrak{b}_\pm))U_\e^\Pup(\mathfrak{b}_\pm) \to u_\e^\Q(\mathfrak{b}_\pm)$ (see Rmk.\,\ref{tildepfactomars26}). From this discussion and \eqref{basicprode} we can write
\begin{align*}
&\pi^\Pup (\alpha_{|\e}  \beta_{|\e}) \\
=\:&\sum_{(\alpha),(\beta)} \Bigl(\pi^\Pup(\alpha_{(1)|\e})\cdot \pi^\Pup(\beta_{(2)|\e})\Bigr) \pi^\Pup(\alpha_{(2)|\e})\bigl((\tilde{p} \circ \Phi_\e^+)(\beta_{(3)|\e})\bigr) \pi^\Pup(S(\alpha_{(3)|\e}))\bigl((\tilde{p}\circ \Phi_\e^+)(\beta_{(1)|\e})\bigr).
\end{align*}
By using Lemma \ref{commutephiate} (where $\pi=\pi^\Pup$ in the present notations) we conclude as about \eqref{PiPexrelmars26} that $\pi^\Pup(\alpha_{|\e}\beta_{|\e})=\pi^\Pup(\alpha_{|\e})\pi^\Pup(\beta_{|\e})$ in $\Ll_{0,1}(u_\e^\Q)$. Therefore $\pi^\Pup\colon \Ll_{0,1}^\e \to \Ll_{0,1}(u_\e^\Q)$ is a morphism of algebras.

Consider now the case of $\Ll_{1,0}^{\eD}$. We have to show that $(\pi^\Q)^{\otimes 2}\colon \Oo_\e \otimes \Oo_\e \to (u_\e^\Pup)^*\otimes (u_\e^\Pup)^*$ defines an algebra morphism $(\pi^\Q)^{\otimes 2}\colon \Ll_{1,0}^{\eD}\to \Ll_{1,0}(u_\e^\Pup)$ for every $\mathfrak{g}$. By \eqref{monomialRelLgn}, for every $\alpha,\beta \in \mathcal{L}_{0,1}^A$ we have $ \beta \otimes \alpha = (\beta\otimes 1)(1\otimes \alpha)$  (product in $\Ll_{1,0}^A$), whence $(\pi^\Q)^{\otimes 2}((\beta_{|\e}\otimes 1)(1\otimes \alpha_{|\e})) = \pi^\Q(\beta_{|\e}) \otimes \pi^\Q(\alpha_{|\e}) = (\pi^\Q(\beta_{|\e})\otimes 1)(1\otimes \pi^\Q(\alpha_{|\e}))$ (product in $\Ll_{1,0}(u_\e^\Pup)$). Therefore, in order to conclude it remains to evaluate $(\pi^\Q)^{\otimes 2}$ on products of the form $(1\otimes \alpha_{|\e})(\beta_{|\e}\otimes 1)$. 

Using \eqref{PhipmInverses} and \eqref{coRMatSign} it is readily checked that the relation \eqref{L01prodsept25} gives 
\begin{align*}
&(1\otimes \alpha_\e)  ( \beta_\e\otimes 1) \\
=\:& \sum_{(\alpha),(\beta)} (\beta_{(3)|\e} \otimes \alpha_{(3)|\e})\ \bigl\langle S(\alpha_{(5)}),\Phi_A^+(\beta_{(1)})\bigr\rangle_{A_\D\vert \eD}^\Pup \bigl\langle \alpha_{(1)}, \Phi_A^+(\beta_{(2)})\bigr\rangle_{A_\D\vert \eD}^\Pup \\[-1em]
& \hspace*{4.5cm} \times\ \bigl\langle S^{-1}(\alpha_{(2)}), \Phi_A^-(\beta_{(4)})\bigr\rangle_{A_\D\vert \eD}^\Pup \bigl\langle \alpha_{(4)}, \Phi_A^+(\beta_{(5)})\bigr\rangle_{A_\D\vert \eD}^\Pup\\
=\:& \sum_{(\alpha),(\beta)}  (\beta_{|\e(3)} \otimes \alpha_{|\e(3)})\ \Bigl\langle \pi^\Q\bigl(S(\alpha_{|\e (5)})\bigr), (p^\Pup\circ \Phi_\e^+)(\beta_{|\e (1)}) \Bigr\rangle \, \Bigl\langle \pi^\Q(\alpha_{|\e (1)}),(p^\Pup\circ \Phi_\e^+)(\beta_{|\e (2)}) \Bigr\rangle\\[-1em]
& \hspace*{3cm} \times \ \Bigl\langle\pi^\Q\bigl(S^{-1}(\alpha_{|\e (2)})\bigr), (p^\Pup\circ \Phi_\e^-)(\beta_{|\e (4)}) \Bigr\rangle \, \Bigl\langle \pi^\Q(\alpha_{|\e (4)}), (p^\Pup\circ  \Phi_\e^+)(\beta_{|\e (5)}) \Bigr\rangle \\
=\:& \sum_{(\alpha),(\beta)}  (\beta_{|\e(3)} \otimes  \alpha_{|\e(3)})\ \Bigl\langle \pi^\Q\bigl(S(\alpha_{|\e (5)})\bigr), \bar{\Phi}_\e^{+,\Upsilon}\bigl(\pi^\Q(\beta_{|\e (1)})\bigr) \Bigr\rangle \, \Bigl\langle \pi^\Q(\alpha_{|\e (1)}), \bar{\Phi}_\e^{+,\Upsilon}\bigl(\pi^\Q(\beta_{|\e (2)}) \bigr) \Bigr\rangle \\[-1em]
 & \hspace*{3cm} \times \ \Bigl\langle \pi^\Q\bigr(S^{-1}(\alpha_{|\e (2)})\bigl), \bar{\Phi}_\e^{-,\Upsilon}\bigl(\pi^\Q(\beta_{|\e (4)})\bigr) \Bigr\rangle \, \Bigl\langle \pi^\Q(\alpha_{|\e (4)}), \bar{\Phi}_\e^{+,\Upsilon}\bigl(\pi^\Q(\beta_{|\e (5)})\bigr) \Bigr\rangle
\end{align*}
where the second equality follows from \eqref{fundamentalDegeneracyeD}, the third from Lemma \ref{commutephiateD} and $\langle \text{-},\text{-} \rangle$ is the evaluation pairing $(u_\e^\Pup)^* \times u_\e^\Pup \to \mathbb{C}$. Using that $\pi^\Q$ is a morphism of Hopf algebras we get
\begin{align*}
&(\pi^\Q)^{\otimes 2}\bigl((1\otimes \alpha_{|\e}) (\beta_{|\e}\otimes  1)\bigr) = \\
& \sum_{(\alpha),(\beta)} \!\! \left(\pi^\Q(\beta_{|\e(3)}) \otimes  \pi^\Q(\alpha_{|\e(3)})\right) \Bigl\langle \pi^\Q\bigl(S(\alpha_{|\e (5)})\bigr), \bar{\Phi}_\e^{+,\Upsilon}\bigl(\pi^\Q(\beta_{|\e (1)})\bigr) \Bigr\rangle \, \Bigl\langle \pi^\Q(\alpha_{|\e (1)}), \bar{\Phi}_\e^{+,\Upsilon}\bigl(\pi^\Q(\beta_{|\e (2)})\bigr) \Bigr\rangle \\[-.8em]
& \qquad \quad \times \ \Bigl\langle\pi^\Q\bigl(S^{-1}(\alpha_{|\e (2)})\bigr), \bar{\Phi}_\e^{-,\Upsilon}\bigl(\pi^\Q(\beta_{|\e (4)})\bigr)\Bigr\rangle \, \Bigl\langle \pi^\Q(\alpha_{|\e (4)}), \bar{\Phi}_\e^{+,\Upsilon}\bigl( \pi^\Q(\beta_{|\e (5)}) \bigr)\Bigr\rangle
\end{align*}
which coincides with the very definition of the product $\textstyle \bigl(1\otimes \pi^\Q(\alpha_{|\e})\bigr)\bigl(\pi^\Q(\beta_{|\e})\otimes 1\bigr)$ in $\mathcal{L}_{1,0}(u_\e^\Pup)$. Therefore $(\pi^\Q)^{\otimes 2}\colon \Ll_{1,0}^{\eD}\to \mathcal{L}_{1,0}(u_\e^\Pup)$ is a morphism of algebras, which concludes the proof.
\end{proof}

As usual put $\Oo^+(G^\Lam) = \ker(\varepsilon) \cap \Oo(G^\Lam)$, and let us set $\pi_{u_\e}:= (\pi^\Q)^{\otimes 2g}\otimes (\pi^\Pup)^{\otimes n}$. The next result proves the analog of Thm.\,\ref{centralextLgn} for $\eD$ an arbitrary primitive $Dl$-th root of $1$.
\begin{cor}\label{centralextLgneD}  We have an exact sequence of algebras
\begin{equation}\label{exactsequenceueApp}
\xymatrix{0 \ar[r] & \Oo^+(G^\Q)^{\otimes 2g}\otimes \Oo^+(G)^{\otimes n} \ar[rr]^{\hspace*{1.7cm} ({\mathbb F}r_\e^{*})^{\otimes (2g+n)}} & &\mathcal{L}_{g,n}^{\eD} \ar[rr]^{\pi_{u_\e}} & & \mathcal{L}_{g,n}(u_\epsilon) \ar[r] & 0}.
\end{equation}
\end{cor}

\begin{proof}
This is an immediate consequence of Prop.\,\ref{pipropOLam}, Prop.\,\ref{Z0LgneD} (1) and Thm.\,\ref{LgntoLgnue}.
\end{proof}

Since $\Ll_{g,n}^{\eD}$ is a domain (see the beginning of App.\,\ref{appDlRoot}), the central localization $Q(\Ll_{g,n}^{\eD})$ is well-defined and a division algebra, and Prop.\,\ref{Z0LgneD} implies that $Q(\Ll_{g,n}^{\eD})$ is a central simple algebra. 
\smallskip

\indent Let us conclude with a number of facts about the PI degree of $Q(\mathcal{L}_{g,n}^{\eD})$ (see Th.\,\ref{Lgnteo1eD} below). Recall the inductive definition \eqref{AlekseevValue1}-\eqref{AlekseevValue2} of the Alekseev morphism on $\Ll_{g,n}(H)$. A similar definition applies to get a morphism on $\mathcal{L}_{g,n}(u_\e)$, with the decomposition \eqref{defLgnueRlp}, by taking in \eqref{AlekseevValue1} the morphisms $\Phi_{0,1}$ and $\Phi^+$ defined from the $R$-matrix $R_l^\Q$ of $u_\e^\Q$, and in \eqref{AlekseevValue2} the morphisms $\Phi_{1,0}$ and $\Phi^+$ defined from the $R$-matrix $\Upsilon$ of $u_\e^\Pup$. By the same proof as Prop.\,\ref{propAlekseevMorph} we get an algebra morphism $$\Phi_{g,n}(u_\e) \colon \mathcal{L}_{g,n}(u_\e) \to \Hh\Hh\bigl((u_\e^\Pup)^*\bigr)^{\otimes g}\otimes (u_\e^\Q)^{\otimes n}$$
and hence
$$F_{g,n}(u_\e) : \mathcal{L}_{g,n}(u_\e) \to \mathrm{End}_\mc\bigl((u_\e^\Pup)^*\bigr)^{\otimes g} \otimes (u_\e^\Q)^{\otimes n}$$ as in \eqref{defFgn}.

\indent Set
\[ {\bf u}_{\e (2)}^\Pup :=\Phi_{0,1}(u_\e)\bigl( \mathcal{L}_{0,1}(u_\e^\Pup) \bigr). \]
Note that a basis of the algebra ${\bf u}_{\e (2)}^\Pup$ was recalled in Th.\,\ref{teosmallqg} (ii).
\begin{prop}\label{Phi10image} We have $\Phi_{1,0}(u_\e)\bigl(\mathcal{L}_{1,0}(u_\e^\Pup)\bigr) =(u_\e^\Pup)^* \,\#\, {\bf u}_{\e (2)}^\Pup$.
\end{prop}
\begin{proof} We use arguments from the proof of \cite[Thm.\,3.13]{BFR}. Let us give some details. The map
\[ \fonc{I}{\mathcal{H}\bigl( (u_\e^\Pup)^* \bigr)}{(u_\e^\Pup)^* \otimes u_\e^\Pup}{\varphi \,\#\, x}{\sum_{(\Upsilon^1),(\Upsilon^2),(\Upsilon^3)}} \bigl(S\bigl( \Upsilon^1_{(2)} \Upsilon^2_{(1)} \bigr) \rhd \varphi \lhd \Upsilon^1_{(1)}\Upsilon^3_{(1)}\bigr) \otimes \bigl(S\bigl( \Upsilon^3_{(2)}\Upsilon^2_{(2)} \bigr)x\bigr) \]
is a linear isomorphism, where $\Upsilon$ is the R-matrix of $u_\e^\Pup$. It satisfies
\begin{equation}\label{IPhi10}
I \circ \Phi_{1,0}(u_\e)\left(\beta \otimes \alpha\right) = \sum_{(\beta)} \beta_{(1)} \otimes \Phi_{0,1}(u_\e)\bigl( \beta_{(2)}\alpha \bigr)
\end{equation}
where $\textstyle \sum_{(\beta)} \beta_{(1)} \otimes \beta_{(2)}$ is the coproduct of $\beta$ in $(u_\e^\Pup)^*$ and $\beta_{(2)}\alpha$ is the product of $\beta_{(2)}$ and $\alpha$ in $\mathcal{L}_{0,1}(u_\e^\Pup)$. Moreover, there is a map $S_{\mathcal{L}_{0,1}} : \mathcal{L}_{0,1}(u_\e^\Pup) \to \mathcal{L}_{0,1}(u_\e^\Pup)$ such that $\textstyle \sum_{(\varphi)} \varphi_{(1)} S_{\mathcal{L}_{0,1}}(\varphi_{(2)}) = \varepsilon(\varphi)1_{\mathcal{L}_{0,1}}$ for all $\varphi \in \mathcal{L}_{0,1}(u_\e^\Pup)$; it is given by
\[ S_{\mathcal{L}_{0,1}}(\varphi) = \sum_{(\Upsilon)} S_{(u_\e^\Pup)^*}\bigl( S(\Upsilon_{(1)}) \rhd \varphi \lhd \Upsilon_{(2)}u_\Upsilon^{-1} \bigr) \]
where $S_{(u_\e^\Pup)^*}$ is the antipode of $(u_\e^\Pup)^*$ and $\textstyle u_\Upsilon = \sum_{(\Upsilon)}S(\Upsilon_{(2)})\Upsilon_{(1)}$ is the Drinfeld element. Now take $\psi \otimes \Phi_{0,1}(u_\e)(\gamma) \in (u_\e^\Pup)^* \otimes {\bf u}_{\e (2)}^\Pup$. By \eqref{IPhi10} and the property of $S_{\mathcal{L}_{0,1}}$ we find
\begin{align*} 
&I \circ \Phi_{1,0}(u_\e)\biggl( \sum_{(\psi)} \psi_{(1)} \otimes S_{\mathcal{L}_{0,1}}(\psi_{(2)})\gamma \biggr)\\
=\:&\sum_{(\psi)} \psi_{(1)} \otimes \Phi_{0,1}(u_\e)\bigl(\psi_{(2)} S_{\mathcal{L}_{0,1}}(\psi_{(3)})\gamma\bigr) = \psi \otimes \Phi_{0,1}(u_\e)(\gamma)
\end{align*}
which proves that $\mathrm{Im}(I \circ \Phi_{1,0}(u_\e)) = (u_\e^\Pup)^* \otimes {\bf u}_{\e (2)}^\Pup$. Since $I$ is a linear isomorphism the result follows.
\end{proof}
\begin{cor}\label{cor24mars26} When $D$ is odd $F_{g,n}(u_\e)\colon \mathcal{L}_{g,n}(u_\epsilon)\to {\rm End}((u_\epsilon^\Pup)^*)^{\otimes g} \otimes (u_\epsilon^\Q)^{\otimes n}$ is an isomorphism of algebras. When $D$ is even $F_{g,n}$ is not surjective (whence not injective). 
\end{cor}
\begin{proof} Both claims follow from Th.\,\ref{teosmallqg} (i), Prop.\,\ref{Phi10image}, Lem.\,\ref{lemmaInjAlekseev}, using the fact that the Heisenberg representation $\rho\colon (u_\e^\Pup)^*  \,\#\, u_\e^\Pup \to {\rm End}\bigl( (u_\e^\Pup)^* \bigr)$ from \eqref{HeisenbergRep} is an isomorphism.
\end{proof}

\begin{lem}\label{irrepeD} When $D$ is odd $\mathcal{L}_{g,n}(u_\epsilon)$ has an irreducible representation of dimension $D^gl^{g.\mathrm{dim}(\mathfrak{g})+nN}$.
\end{lem}
\begin{proof} Since $u_\e^\Q$ has an irreducible representation $V$ of dimension $l^N$ \cite[\S III.6.6]{BG} and $u_\e^\Pup$ is an irreducible representation of ${\rm End}\bigl( (u_\epsilon^\Pup)^* \bigr)$ of dimension $\vert P/Q\vert.l^{\mathrm{dim}(\mathfrak{g})}$ (see \eqref{dimuYmars26}), it is a classical fact \cite[Thm.\,3.10.2]{Et} that $\bigl( (u_\epsilon^\Pup)^* \bigr)^{\otimes g} \otimes V^{\otimes n}$ is an irreducible representation of ${\rm End}\bigl( (u_\epsilon^\Pup)^* \bigr)^{\otimes g} \otimes (u_\epsilon^\Q)^{\otimes n}$ of dimension $\vert P/Q\vert^gl^{g.\dim(\mathfrak{g})+nN}$. When $D$ is odd $\vert P/Q\vert = D$ and $F_{g,n}(u_\e)$ is an isomorphism by Cor.\,\ref{cor24mars26}, so the conclusion follows.
\end{proof}

Recall that $Q(\Ll_{g,n}^{\eD})$ is a division algebra, and a central simple algebra (see the comments after Cor.\,\ref{centralextLgneD}). Since $[Q(\Ll_{g,n}^{\eD}):Q(\mathcal{Z}_0(\Ll_{g,n}^{\eD})] = \vert P/Q\vert^{2g}l^{(2g+n).\mathrm{dim}(\mathfrak{g})}$ (Prop.\,\ref{Z0LgneD}), $[Q(\mathcal{Z}(\Ll_{g,n}^{\eD}):Q(\mathcal{Z}_0(\Ll_{g,n}^{\eD})] \geq l^{mn}$ (same proof as in Th.\,\ref{Lgnteo1}), and the PI-degree is bounded from below by the dimensions of irreducible representations, Lem.\,\ref{irrepeD} implies:
\begin{teo}\label{Lgnteo1eD} When $D$ is odd the algebra $Q(\Ll_{g,n}^{\eD})$ has PI-degree $D^gl^{g.\mathrm{dim}(\mathfrak{g})+nN}$.
\end{teo}
We propose the following conjecture when $D$ is even. By Rk.\,\ref{centralextsl2eD} and the construction of irreducible representations of $\Ll_{g,n}^{\eD}$, when $\mathfrak{g}= \mathfrak{sl}_2$, done in \cite{BFRsl2} (whence the computation of their dimension), the same method shows that the conjecture holds true in this case; see also Rmk.\,\ref{rk:comparLW}. A proof for arbitrary $\mathfrak{g}$ would require a generalization of Lem.\,\ref{irrepeD} to all even $D$, and a caracterization of the maximal subalgebra in $\mathcal{Z}_0(\Oo_\e)^{\otimes (2g+n)}$ which is central in $\Ll_{g,n}^{\eD}$ (as in Rmk.\,\ref{centralextsl2eD}).
\begin{conj}\label{Lgnteo1eDconj} When $D$ is even the algebra $Q(\Ll_{g,n}^{\eD})$ has PI-degree $(D/2)^g.l^{g.\mathrm{dim}(\mathfrak{g})+nN}$.
\end{conj}
\begin{remark}\label{rk:comparLW}{\rm For $n=0$ and $\mathfrak{g}=\mathfrak{sl}_{m+1}$ the PI-degree above could be deduced from the computation in \cite[Thm.\,1.4]{KaruoWang} of the rank over its center of the stated skein algebra $\mathcal{S}_{m+1}(\Sigma_{g,n}^{\circ,\bullet},\mathbf{v})$, where $\mathbf{v}:=\eD^2$, using the specialization of the (integral form of the) holonomy isomorphism $\Ll_{g,n}\to \mathcal{S}_{m+1}(\Sigma_{g,n}^{\circ,\bullet},\mathbf{v})$ of \cite{BFR} (see \cite{BF} for this last point). To compare the notations, use $D=m+1$ and the condition ($*$) in \cite[\S 2.1]{KaruoWang}; it follows that $q^2 := \mathbf{v}^{4D} = \e^8$ and $\mathbf{v}$ are primitive roots of unity of order $l$ and $\mathfrak{d}_\D l$ respectively, where $\mathfrak{d}_\D =D$ if $D$ is odd and $D/2$ otherwise, and then $d:={ord}(\mathbf{v})/{\rm ord}(q^2) = \mathfrak{d}_\D$. Finally, in \cite[Thm.\,1.4]{KaruoWang} the order $l$ is denoted $m$, $\mathrm{dim}(\mathfrak{g})$ is $n^2-1$, $2g = r(\Sigma_{g,n}^{\circ,\bullet})$, and $t=0$.}
\end{remark}


\begin{thebibliography}{99}
\bibitem{Al} A. Yu. Alekseev, {\it Integrability in the Hamiltonian Chern-Simons theory}, Alg. i Anal. 6(2) (1994) 53--66

\bibitem{AGS1} A. Yu. Alekseev, H. Grosse, V. Schomerus, {\it Combinatorial quantization of the Hamiltonian Chern-Simons theory I}, Comm. Math. Phys. 172 (1995), 317--358.

\bibitem{AGS2} A. Yu. Alekseev, H. Grosse, V. Schomerus, {\it Combinatorial quantization of the Hamiltonian Chern-Simons theory II}, Comm. Math. Phys. 174 (1996), 561--604.

\bibitem{AS} A. Yu. Alekseev, V. Schomerus, {\em Representation theory of Chern--Simons observables}, Duke Math. J. 85 (1996), 447--510

\bibitem{AP} H.H. Andersen, J. Paradowski, \emph{Fusion Categories Arising from Semisimple Lie Algebras}, Commun. Math. Phys. 169 (1995), 563--588

\bibitem{AD} N. Andruskiewitsch, J. Devoto, {\it Extensions of Hopf algebras}, Algebra i Analiz (1) 7(1995) 22--61

\bibitem{AG} N. Andruskiewitsch, A. García G, {\it Quantum subgroups of a simple quantum group at roots of one} Compositio Math. 145(2) (2009), 476-500

\bibitem{ArGa} S. Arkhipov, D. Gaitsgory, {\it Another realization of the category of modules over the small quantum group}, Adv. Math. 173(1) (2003), 114--143

\bibitem{Artin} M. Artin, {\it Non Commutative Rings}, MIT class notes, http://www-math.mit.edu/$\sim$etingof/18.706.html

\bibitem{AMacD} M. F. Atiyah, I.G. MacDonald, {\it Introduction to Commutative Algebra}, Addison-Wesley (1969)

\bibitem{BFR} S. Baseilhac, M.Faitg, P. Roche, {\em Noetherian and affine properties of quantum moduli and $\mathfrak{g}$-skein algebras}, Quantum Topol., published online first (2025), DOI 10.4171/QT/245

\bibitem{BF} S. Baseilhac, M. Faitg, P. Roche, {\it Knot maps, modular groups and quantum moduli algebras at roots of unity}, in preparation

\bibitem{BFRsl2} S. Baseilhac, M. Faitg, P. Roche, {\it Representations of quantum graph algebras at roots of unity: the $\mathfrak{sl}_2$ case}, in preparation

\bibitem{BR1} S. Baseilhac, P. Roche, {\em Unrestricted Quantum Moduli Algebras, I: The Case of Punctured Spheres}, SIGMA 18 (2022), paper no 25

\bibitem{BR2} S. Baseilhac, P. Roche, {\em Unrestricted Quantum Moduli Algebras, II: Noetherianity and simple fraction rings at roots of $1$}, SIGMA 20 (2024), paper no. 47

\bibitem{baumann} P. Baumann, {\em Another proof of Joseph and Letzter’s separation of variables theorem for quantum groups}, Transf. Groups 5(1) (2000) 3--20

\bibitem{baumann2} P. Baumann, {\em On the Center of Quantized Enveloping Algebras}, J. Alg. 203 (1) (1998) 244--260

\bibitem{Bourbaki} N. Bourbaki, {\em Groupes et Alg\`ebres de Lie, Chapitres 4 \`a 6}, Springer, 1981.

\bibitem{Br} K. A. Brown, {\it Representation theory of  Noetherian Hopf algebras satisfying a polynomial identity}, in Trends in the representation theory of finite dimensional algebras, Contemp .Math. 229 (1998), 46--79

\bibitem{BG} K. A. Brown, K. R. Goodearl, {\it Lectures on algebraic quantum groups},  Advanced Courses in Mathematics, CRM Barcelona, Birkha\"user Verlag, Basel (2002)

\bibitem{BrGo} K. A. Brown, I. Gordon, {\it Poisson orders, symplectic reflection algebras and representation theory}, Journal für die reine und angewandte Mathematik, vol. 2003, no. 559, 2003, pp. 193-216.

\bibitem{BY} K. A. Brown, M.T. Yakimov, {\it Azumaya loci and discriminant ideals of PI algebras}, Adv. in Math. 340 (2018) 1219--1255

\bibitem{BGS} K. A. Brown, I. Gordon, J. T. Stafford, {\it $\Oo_\e[G]$ is a free module over $\Oo[G]$}, {\tt  arXiv:math.QA/0007179} (2000)

\bibitem{BuR1} E. Buffenoir, P. Roche, {\it Two dimensional lattice gauge theory based on a quantum group}, Comm. Math. Phys. 170 (1995), 669--698.

\bibitem{BuR2} E. Buffenoir, P. Roche, {\em Link invariants and combinatorial quantization of hamiltonian Chern Simons theory}, Comm. Math. Phys. 181 (1996), 331--365.

\bibitem{CP} V. Chari, A. Pressley, {\it A Guide To Quantum Groups}, Cambridge Univ. Press (1994)

\bibitem{CooLau} J. Cooke, R. Laugwitz, {\it Presentations for small reflection algebras of type A}, J. Algebra 682 (2025), 131--187

\bibitem{CF} F. Costantino, M. Faitg, {\em Braided categories of bimodules from stated skein TQFTs}, arXiv:2505.16909, 62 pages (2025)

\bibitem{DCK}C. De Concini, V. Kac, {\it Representations of quantum groups at roots of 1}, Colloque Dixmier 1989, Birkh\"{a}user, Progr. in Math. 92 (1990), 471--506

\bibitem{DC-K-P1} C. De Concini, V. Kac, C. Procesi, {\it Quantum coadjoint action}, J.  Amer. Math. Soc. 5 (1992) 151--189

\bibitem{DC-L} C. De Concini, V. Lyubashenko, {\it Quantum function algebra at roots of 1}, Adv. Math. 108(2) (1994), 205--262

 \bibitem{DCP} C. De Concini, C. Procesi, {\it Quantum groups}, in {\it D-modules, representation theory and quantum groups (Venice, 1992)}, Springer-Verlag Lect. Notes Math. Vol. 1565 (1993), 31--140

\bibitem{Di} J. Dixmier, {\it Envelopping Algebras}, AMS Grad. Studies Math. 11 (1996)

\bibitem{Enr} B. Enriquez, {\it  Le centre des alg\`ebres de coordonn\'ees des groupes quantiques aux racines $p^\alpha$-i\`emes de l'unit\'e}, Bull. Soc. Math. France 122(4) (1994), 443--485 

\bibitem{Et} P. Etingof, O. Golberg, S. Hensel, T. Liu, A. Schwendner, D. Vaintrob, E.Yudovina, {\it Introduction to representation theory}, AMS Student Math. Library 59 (2011)

\bibitem{FaitgThesis} M. Faitg, {\em Mapping class groups, skein algebras and combinatorial quantization}, PhD thesis, Universit\'e de Montpellier (2019),  arXiv:1910.04110

\bibitem{FaitgSL2Z} M. Faitg, {\em Modular group representations in combinatorial quantization with non-semisimple Hopf algebras}, SIGMA 15 (2019), 077, 39 pages

\bibitem{FaitgMCG} M. Faitg, {\em Projective representations of mapping class groups in combinatorial quantization}, Comm. Math. Phys. 377 (2020), 161--198

\bibitem{FaitgHol} M. Faitg, {\em Holonomy and (stated) skein algebras in combinatorial quantization}, Quantum Topol. 16 (2025), 1--73

\bibitem{FR} V. V. Fock, A. A. Rosly, {\it Poisson structure on moduli of flat connections on Riemann surfaces and $r$-matrix}, preprint ITEP-72-92 (1992), arXiv:math/9802054v2

\bibitem{FKL} C. Frohman, J. Kania-Bartoszynska, T. L\^e, {\em Unicity for representations of the Kauffman bracket skein algebra}, Invent. Math. 215 (2019) 609--650 

\bibitem{Frolov} S. A. Frolov, {\em The center of the graph and moduli algebras at roots of 1}, Comm. Math. Phys. 200 (1999), 599--619

\bibitem{GJS} I. Ganev, D. Jordan, P. Safronov, {\it The quantum Frobenius for character varieties and multiplicative quiver varieties},  J. Eur. Math. Soc. 27 (2025), no. 7, pp. 3023--3084

\bibitem{Hum} J. E. Humphreys, {\it Introduction to Lie Algebras and Representation Theory}, 3rd edition, Springer Verlag, Grad. Texts Math. 9 (1973) 

\bibitem{JordanSurvey} D. Jordan, {\em Quantum character varieties}, Encyclopedia of Mathematical Physics (2nd edition, 2025), Vol.\,3, 635--647. arXiv: 2309.06543 (2023).

\bibitem{Jos} A. Joseph, {\it Quantum Groups and their Primitive Ideals}, Springer, Ergebn. Math. Grenz. 3 (29) (1995)

\bibitem{JL} A. Joseph, G. Letzter, {\em Local finiteness of the adjoint action for quantized enveloping algebras}, J. Algebra 153, 289--318 (1992).

\bibitem{KaruoKorinman} H. Karuo, J. Korinman, {\it Azumaya loci of skein algebras}, to appear in Trans. Amer. Math. Soc. ArXiv:2211.13700

\bibitem{KaruoWang} H. Karuo, Z. Wang, {\it Center of stated $SL(n)$-skein algebras}, to appear in Trans. Amer. Math. Soc. arXiv:2408.12520

\bibitem{KeLy} T. Kerler, V. V. Lyubashenko, {\it Non-Semisimple Topological Quantum Field Theories for 3-Manifolds with Corners}, Springer Lect. Notes Math. 1765 (2001).

\bibitem{LN} S. Lentner, D. Nett, {\it New $R$-matrices for small quantum groups}, Algebr. Represent. Theory 18 (2015) 1649--1673 

\bibitem{LO} S. Lentner, T. Ohrmann, {\it Factorizable R-matrices for small quantum groups}, SIGMA 13 (2017), 076, 25 pages

\bibitem{Lusztig0} G. Lusztig, {\it Modular representations of quantum groups}, in ``Classical Groups and Related Topics, Beijing, 1987'', A.J. Hahn, D.G. James $\&$ Zhe Xian Wan (eds), Contemp. Math. 82, AMS, Providence, RI (1989) 59--77

\bibitem{Lusztig1} G. Lusztig, {\it Finite-dimensional Hopf algebras arising from quantized, universal enveloping algebras}, J. Amer. Math. Soc. 3 (1990) 257--296 

\bibitem{Lusztig2} G. Lusztig {\it Quantum groups at roots of $1$}, Geom. Dedicata 35 (1990) 89--113
,
\bibitem{Lusztig} G. Lusztig, {\it Introduction To Quantum Groups}, Birkha\"user, Prog. in Math. 110 (1993) 

\bibitem{Lusztig3} G. Lusztig, {\it Study of a $\mathbb{Z}$-form of the coordinate ring of a reductive group}, Journal Amer. Math. Soc. Vol. 22 (3) (2009) 739--69

\bibitem{Lyub95} V. Lyubashenko, {\it Invariants of 3-manifolds and projective representations of mapping class groups via quantum groups at roots of unity}, Comm. Math. Phys. 172 (1995) 467--516 

\bibitem{Majid0} S. Majid, {\it Braided groups}, J. Pure App. Alg. 86 (1993) 187--221

\bibitem{Majid} S. Majid, {\em Foundations of Quantum Group Theory}, Cambridge University Press, 1995.

\bibitem{Marlin} R. Marlin, {\it Anneaux de Grothendieck des vari\'et\'es de drapeaux}, Bull. Soc. Math. France 104 (1976) 337--348

\bibitem{Matsumura} H. Matsumura, {\it Commutative Ring Theory}, Cambridge Stud. Adv. Math. 8, Cambridge Univ. Press (1987) 

\bibitem{MC-R} J. C. McConnell, J. C. Robson, {\it Noncommutative Noetherian Rings}, 2nd edition, AMS Grad. Stud. Math. 30 (2001) 

\bibitem{Mo} S. Montgomery, {\it Hopf algebras and their actions on rings}, CBMS Regional Conference Series in Mathematics, Vol. 82, Amer. Math. Soc., Providence, RI (1993)


\bibitem{M2} E. M\"uller, {\it Some topics on Frobenius--Lusztig kernels. II}, J. Algebra 206 (1998) 659–681

\bibitem{radfordQuasitriang} D.E. Radford, {\em On Kauffman’s knot invariants arising from finite dimensional Hopf algebras}, Advances in Hopf Algebras, Lecture Notes in Pure and Appl. Math. 158, Marcel Dekker, 205--266 (1994).

\bibitem{Ree} R. Ree, {Commutators in semi-simple algebraic groups}, Proc. Amer. Math. Soc. Vol. 15, No. 3 (1964), 457--460 

\bibitem{Reiner} I. Reiner, {\it Maximal Orders}, LMS Monographs 5, Academic Press (1975)

\bibitem{Richardson} R. W. Richardson, {\it The conjugating representation of a semisimple group}, Invent. Math. 54 (1979) 229--245

\bibitem{RS} P. Roche, A. Szenes, \emph{Trace functionals on noncommutative deformations of moduli spaces of flat connections}, Adv. Math. 168 (2002) 133--192

\bibitem{Rowen} L. Rowen, {\it Ring Theory}, Student Edition, Academic Press (1991)

\bibitem{StackP} {\em Commutative Algebra}, The Stack project. Available at https://stacks.math.columbia.edu/tag/00AO

\bibitem{StackP2} {\it Brauer groups}, The Stack project. Available at https://stacks.math.columbia.edu/browse

\bibitem{Steinberg} R. Steinberg, {\it Regular elements of semisimple algebraic groups}, Publ. Math. I.H.É.S. tome 25 (1965) 49--80

\bibitem{VD1} A. Van Daele {\it Multiplier Hopf algebras}, Trans. Amer. Math. Soc. 342 (2) (1994) 917--932

\bibitem{VY} C. Voigt, R. Yuncken, {\it Complex Semisimple Quantum Groups and Representation Theory}, Lecture Notes in Mathematics 2264, Springer (2020)
\end{thebibliography}
\end{document}